\documentclass[a4paper,reqno]{amsart}

\usepackage[french,british]{babel}
\usepackage{amssymb,amsmath,amsthm,amscd,mathrsfs,graphicx,color,multicol,booktabs,bm,manfnt} 
\usepackage[cmtip,all,matrix,arrow,tips,curve]{xy}
\usepackage[T1]{fontenc} 
\usepackage{booktabs}

\usepackage{caption}     
\usepackage{blindtext}        
\usepackage{tikz} 

\usepackage{mathrsfs}

\usepackage[colorlinks=true,citecolor=blue, urlcolor=blue, linkcolor=blue, pagebackref]{hyperref}

\usepackage{mathabx}

\usepackage{manfnt} 
\usepackage{centernot}    

\newcommand{\bD}{{\mathbf D}}
\newcommand{\bE}{{\mathbf E}}
\newcommand{\bepsilon}{{\bm\epsilon}}

\newcommand{\biota}{{\bm\iota}}
\newcommand{\bP}{{\mathbf P}}

\newcommand{\brho}{{\bm\rho}}
\newcommand{\btau}{{\bm\tau}}
\newcommand{\bu}{\bullet}

\newcommand{\bvarphi}{{\bm\varphi}}

\newcommand{\CC}{{\mathbb C}}
\newcommand{\cA}{{\mathscr A}}

\newcommand{\cB}{{\mathscr B}}
\newcommand{\cC}{{\mathscr C}}
\newcommand{\cCH}{\mathscr{C\mkern-4mu H}}
\newcommand{\cD}{{\mathscr D}}
\newcommand{\cE}{{\mathscr E}}
\newcommand{\cF}{{\mathscr F}}
\newcommand{\cG}{{\mathscr G}} 

\newcommand{\cH}{{\mathscr H}}
\newcommand{\cI}{{\mathscr I}}
  
\newcommand{\cK}{{\mathscr K}}
\newcommand{\cL}{{\mathscr L}}
\newcommand{\cM}{{\mathscr M}}
\newcommand{\cN}{{\mathscr N}}

\newcommand{\cO}{{\mathscr O}}
\newcommand{\cP}{{\mathscr P}}  
\newcommand{\cQ}{{\mathscr Q}}
\newcommand{\cR}{{\mathscr R}}
\newcommand{\cS}{{\mathscr S}}

\newcommand{\cU}{{\mathscr U}}
\newcommand{\cV}{{\mathscr V}} 
\newcommand{\cW}{{\mathscr W}}

\newcommand{\cX}{{\mathscr X}} 
\newcommand{\cY}{{\mathscr Y}}
\newcommand{\cZ}{{\mathscr Z}}

\newcommand{\dra}{\dashrightarrow}
\newcommand{\EE}{{\mathbb E}}
\newcommand{\es}{\emptyset}

\newcommand{\GG}{{\mathbb G}}

\newcommand{\gotg}{\mathfrak{g}}

\newcommand{\gm}{\mathfrak{m}}

\newcommand{\hra}{\hookrightarrow}

\newcommand{\la}{\langle}

\newcommand{\lra}{\longrightarrow}

\newcommand{\NN}{{\mathbb N}}
\newcommand{\ov}{\overline}
\newcommand{\PP}{{\mathbb P}}

\newcommand{\QQ}{{\mathbb Q}}
\newcommand{\ra}{\rangle}
\newcommand{\RR}{{\mathbb R}}

\newcommand{\ul}{\underline}

\newcommand{\wh}{\widehat}
\newcommand{\wt}{\widetilde}
\newcommand{\ww}{{\bf w}}
\newcommand{\ZZ}{{\mathbb Z}}
\newcommand{\Gr}{\mathrm{Gr}}

\theoremstyle{plain} 

\newtheorem{thm}{Theorem}[section]
\newtheorem*{thm*}{Theorem}

\newtheorem{clm}[thm]{Claim}

\newtheorem{crl}[thm]{Corollary}

\newtheorem*{hyp*}{Hypothesis}
\newtheorem{lmm}[thm]{Lemma}
\newtheorem{prp}[thm]{Proposition}
\newtheorem{prp-dfn}[thm]{Proposition-Definition}

\theoremstyle{definition}

\newtheorem{ass}[thm]{Assumption}
\newtheorem{ass-dfn}[thm]{Assumption-Definition}
\newtheorem{hyp-dfn}[thm]{Hypothesis-Definition}
\newtheorem{dfn}[thm]{Definition}

\newtheorem{ntn}[thm]{Notation}

\theoremstyle{remark}

\newtheorem*{qst*}{Main Question}
\newtheorem{rmk}[thm]{Remark}
\newtheorem{rmk-dfn}[thm]{Remark-Definition}
\newtheorem{key-rmk}[thm]{Key Remark}

\DeclareMathOperator{\Amp}{Amp}
\DeclareMathOperator{\Ann}{Ann}

\DeclareMathOperator{\BKR}{BKR}

\DeclareMathOperator{\ch}{ch}
\DeclareMathOperator{\CH}{CH}

\DeclareMathOperator{\cl}{cl}

\DeclareMathOperator{\coker}{coker}

\DeclareMathOperator{\divisore}{div}

\DeclareMathOperator{\ext}{ext}
\DeclareMathOperator{\Ext}{Ext}

\DeclareMathOperator{\GL}{GL}

\DeclareMathOperator{\Hom}{Hom}
\DeclareMathOperator{\Id}{Id}
\DeclareMathOperator{\im}{Im}

\DeclareMathOperator{\Kum}{Kum}

\DeclareMathOperator{\Mon}{Mon}

\DeclareMathOperator{\Nef}{Nef}

\DeclareMathOperator{\NS}{NS}

\DeclareMathOperator{\PGL}{PGL}

\DeclareMathOperator{\pr}{pr}
\DeclareMathOperator{\Princ}{Princ}

\DeclareMathOperator{\SL}{SL}
\DeclareMathOperator{\Span}{span}
\DeclareMathOperator{\Spec}{Spec}

\DeclareMathOperator{\supp}{supp}
\DeclareMathOperator{\Sym}{Sym}
\DeclareMathOperator{\Td}{Td}

\DeclareMathOperator{\Tor}{Tor}
\DeclareMathOperator{\Tors}{Tors}

\usepackage{multirow}

 \makeindex
 
\begin{document}  
 \title[HK manifolds of Type $K3^{[a^2+1]}$ as moduli spaces of projective bundles]
 {HK manifolds of Type $K3^{[a^2+1]}$ as moduli spaces of projective bundles on  HK manifolds of Type $K3^{[2]}$}
 \author{Kieran G. O'Grady}
 \address{Dipartimento di Matematica, 
 Sapienza Universit\`a di Roma,
 P.le A.~Moro 5,
 00185 Roma - ITALIA}
 \email{ogrady@mat.uniroma1.it}
\date{\today}
\thanks{Partially supported by PRIN 2017YRA3LK}
\begin{abstract}
We prove results on moduli spaces of $\omega$ slope stable bundles of projective spaces on a hyperk\"ahler (HK) manifold $X$ of Type $K3^{[2]}$ (a HK manifold
is of \emph{Type $K3^{[m]}$} if it  deforms to  the Hilbert scheme $S^{[m]}$ parametrizing length-$m$ subschemes of a $K3$ surface $S$) with K\"ahler class  $\omega$. 
Let $X$ be projective and  $h$ be a (generic) ample class of $X$. We prove that the moduli space  $M_{\ov{\ww}_a}(X,h)$  parametrizing $h$ slope stable bundles with   a suitable mock Mukai vector $\ov{\ww}_a$ contains an irreducible component 
$M_{\ov{\ww}_a}(X,h)^{\bullet}$ whose normalization $\wt{M}_{\ov{\ww}_a}(X,h)^{\bullet}$ is a (projective)
HK manifold of Type $K3^{[a^2+1]}$, and that conversely every projective
HK manifold $W$ of Type $K3^{[a^2+1]}$ is isomorphic to $\wt{M}_{\ov{\ww}_a}(X,h)^{\bullet}$ for a suitable $(X,h)$ as above.  

Moreover the universal bundle of projective spaces on $X\times \wt{M}_{\ov{\ww}_a}(X,h)^{\bullet}$ defines a vector bundle  whose
$2nd$ Chern class defines a rational Hodge isometry $H^2(X)\to H^2(\wt{M}_{\ov{\ww}_a}(X,h)^{\bullet})$. From this and a result of Markman one gets that the analogue of the Shafarevich  conjecture (a special case of the Hodge conjecture) holds for rational Hodge isometries $H^2(W_1) \lra H^2(W_2)$ between projective HK manifolds $W_1,W_2$ of Types $K3^{[a_1^2+1]}$ and 
$K3^{[a_2^2+1]}$ respectively.

We prove results also for $(X,\omega)$  a general HK manifold
 of Type $K3^{[2]}$. In fact one ingredient in our proof is Verbistsky's theory of projectively hyperhomolorphic vector bundles.
\end{abstract}

  \maketitle
\bibliographystyle{amsalpha}
\tableofcontents
\section{Introduction}\label{sec:intro}
\subsection{Main results}\label{subsec:risulprinc}
\setcounter{equation}{0}
In  order to present our results we briefly discuss  $\PP^{r-1}$-bundles over a complex variety $X$. 
Such a bundle is   a holomorphic map  $\rho\colon\cP\to X$ of complex spaces with fiber $\PP^{r-1}$ which is locally trivial in the \emph{classical} topology. One associates (see~\cite[Sect.~2]{huybrechts-schroer:brauergrp}) to $\cP$ a class $\eta_r(\cP)\in H^2(X,\mu_r) $ as follows.
The exact sequence of sheaves (for the classical topology)
\begin{equation*}
1\lra \mu_r\lra \SL_r(\cO_X)\lra\PGL_r(\cO_X)\lra 1
\end{equation*}
defines an exact sequence of cohomology groups
\begin{equation}\label{esseproj}
H^1(X, \mu_r)\lra H^1(X,\SL_r(\cO_X))\lra H^1(X, \PGL_r(\cO_X))\overset{\partial}{\lra} H^2(X,\mu_r).
\end{equation}
A  local trivialization of $\cP$ gives a $1$-cocycle with values in $\PGL_r(\cO_X)$, whose cohomology class $\gamma(\cP)\in H^1(X, \PGL_r(\cO_X))$ is independent of the trivialization. Set
\begin{equation}\label{etadipi}
\eta_r(\cP)\coloneq \partial(\gamma(\cP)).
\end{equation}
Next we define a class $\Delta(\cP)\in H^4(X,\ZZ) $. 
Let
\begin{equation}
\gotg(\cP)\coloneq\rho_{*}\Theta_{\cP/Y}
\end{equation}
be the pushforward of the  bundle of vertical tangent vectors. If $U\subset X$ is an open subset   such that 
$\rho^{-1}(U)\xrightarrow{\sim} U\times\PP(F)$, where $F\to U$ is a (holomorphic) vector bundle  of rank $r$ then
\begin{equation}\label{senzatraccia}
\gotg(\cP)_{|U}\cong End^0(F),
\end{equation}
where $End^0(F)$ is the vector bundle of traceless endomorphisms of $F$. Thus $\gotg(\cP)$ is locally-free of rank $r^2-1$. 
Set
\begin{equation}
\Delta(\cP)\coloneq c_2(\gotg(\cP)).
\end{equation}
 If $(X,\omega)$ is a compact K\"ahler manifold there is the notion of $\omega$-slope stability for projective bundles on $X$, see Subsection~\ref{subsec:twistandstab}. Fix a \emph{mock Mukai vector}
\begin{equation}\label{doppiavubarra}
\ov{\ww}=(r,\eta,s)\in \NN_{+}\times H^2(X,\mu_r)\times H^{2,2}_{\ZZ}(X).
\end{equation}
 There is a moduli space $M_{\ov{\ww}}(X,\omega)$ (an analytic space) of $\omega$-slope stable projective $\PP^{r-1}$-bundles $\cP\to X$ with 
 $\eta_r(\cP)=\eta$ and $\Delta(\cP)=s$. 
 
Next we  define, for $a\in\NN_{+}$, a mock Mukai vector $\ov{\ww}_a$ for a HK manifold $X$ of Type $K3^{[2]}$ as follows. Let 
 $q_X$ be the Beauville-Bogomolov-Fujiki (BBF) quadratic  form of $X$ ($q_X$ denotes both the BBF quadratic form and its polarization), and
\begin{equation}
\ov{\gamma}_a=[4a^2\wh{\gamma}_a]\in H^2(X,\mu_{8a^3})\cong H^2(X;\ZZ)/8a^3 H^2(X;\ZZ)
\end{equation}
 where $\wh{\gamma}_a=[\gamma_a]\in H^2(X,\ZZ)/2a H^2(X;\ZZ)$ with  $\gamma_a\in H^2(X,\ZZ)$ such that
\begin{equation}\label{divequad}
\divisore (\gamma_a)=1,
\qquad
q_X(\gamma_a)\equiv 2a^2-2  \pmod{4a}. 
\end{equation}
Then
\begin{equation}\label{eccowbarra}
\ov{\ww}_a\coloneq(8a^3,\ov{\gamma}_a,  \frac{ 4a^6}{3} c_2(X) ).
\end{equation}
\begin{thm}\label{thm:isogmodvb}
Let $X$ be hyperk\"ahler (HK) manifold of Type $K3^{[2]}$. Let $\omega$ be a K\"ahler class on $X$ such that
\begin{equation}\label{nonsumuro}
\{\xi\in H^{1,1}_{\ZZ}(X)\mid q_X(\omega,\xi)=0,\quad -2^{16}\cdot 5\cdot a^{18}\le q_X(\xi)<0\}=\es,
\end{equation}
The following hold:
\begin{enumerate}
\item
  $M_{\ov{\ww}_a}(X,\omega)$ has an irreducible component $M_{\ov{\ww}_a}(X,\omega)^{\bullet}$ whose normalization  is a holomorphic symplectic manifold  $\wt{M}_{\ov{\ww}_a}(X,\omega)^{\bullet}$ bimeromorphic to a HK manifold of Type $K3^{[a^2+1]}$ (and deformation equivalent to such a manifold).
\item
If $H^{1,1}_{\ZZ}(X)=\{0\}$, or $X$ is projective and $\omega=c_1(L)$ for an ample line bundle $L$, then  
$\wt{M}_{\ov{\ww}_a}(X,\omega)^{\bullet}$ is a HK manifold of Type $K3^{[a^2+1]}$.
\item
Let $\wt{\cP}_{\ov{\ww}_a}$ be the pull-back to $X\times \wt{M}_{\ov{\ww}_a}(X,\omega)^{\bullet}$ of the universal $\PP^{8a^3-1}$-bundle on $X\times M_{\ov{\ww}_a}(X,\omega)^{\bullet}$, and let $\gotg(\wt{\cP}^{\bullet}_{\ov{\ww}_a})$ be the corresponding holomorphic vector bundle on 
$X\times \wt{M}_{\ov{\ww}_a}(X,\omega)^{\bullet}$. Let
\begin{equation}\label{ciduedipi}
c_2(\gotg(\wt{\cP}^{\bullet}_{\ov{\ww}_a}))^{2,2}\in H^2(X)\otimes H^2(\wt{M}_{\ov{\ww}_a}(X,\omega)^{\bullet})
\end{equation}
be the K\"unneth $(2,2)$-component of $c_2(\gotg(\wt{\cP}^{\bullet}_{\ov{\ww}_a}))$, which we view as an element 
of $H^2(X)^{\vee}\otimes H^2(\wt{M}_{\ov{\ww}_a}(X,\omega)^{\bullet})$ via the 
 isomorphism $H^2(X)\xrightarrow{\sim}H^2(X)^{\vee}$ defined by  the (non degenerate)   quadratic form $q_X$.
Then 
\begin{equation}\label{stessisuqu}
32^{-1} \cdot a^{-4}\cdot c_2(\gotg(\wt{\cP}^{\bullet}_{\ov{\ww}_a}))^{2,2}\colon H^2(X)\lra H^2(\wt{M}_{\ov{\ww}_a}(X,\omega)^{\bullet})
\end{equation}
 is a rational Hodge isometry, i.e.~an isomorphism of rational Hodge structures  matching the BBF quadratic forms (this make sense  even if  $\wt{M}_{\ov{\ww}_a}(X,\omega)^{\bullet}$ is not K\"ahler   because of Item~(1)).  
\end{enumerate}
\end{thm}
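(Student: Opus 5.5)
The plan is to reduce to a special case by deformation, following the paper's strategy of realizing the result first on a favourable $X$ and then propagating it using Verbitsky's theory of projectively hyperholomorphic bundles.

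\textbf{Step 1: a model case.} First I will build a model. Take a $K3$ surface $S$ and a moduli space $W=M_v(S)$ of stable sheaves with $v^2=2a^2$, so that $W$ is of Type $K3^{[a^2+1]}$. Let $X=S^{[2]}$, or more generally a Type $K3^{[2]}$ manifold related to $S$. Using a universal (twisted) family $\cE$ on $S\times W$, I will build, for each $w\in W$, a projective bundle on $X$. The natural choice is a BKR/tautological-type construction applied to $\cE_w\boxtimes\cE_w$ on $S^2$ and descended to $S^{[2]}$. Its rank should be $8a^3$ after suitable twisting. I will then compute $\eta$ and $\Delta$ and check that they give $\ov{\ww}_a$. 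The main inputs are Grothendieck--Riemann--Roch, together with the observation that every class in $H^4$ of the relevant monodromy-invariant part is a multiple of $c_2(X)$ or $q^\vee$.

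This produces a map $W\to M_{\ov{\ww}_a}(X,h)$. To show it is injective and a local isomorphism onto a component, I will check two things: the bundles are simple, and the deformation space $H^1(\gotg(\cP_w))$ has dimension $2(a^2+1)$. This is done by computing $\Ext$ groups through the Fourier--Mukai/BKR description, which reduces them to $\Ext$'s of $\cE_w$ on $S$.

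\textbf{Step 2: stability in the model.} Next I will prove slope stability for $h$ close to a suitable chamber. I will use the explicit structure of the bundles, for instance restriction to Lagrangian fibrations or to subvarieties $S\times\{pt\}$. The hypothesis~\eqref{nonsumuro} enters here. The discriminant $\Delta$ bounds the walls: any destabilizing subsheaf yields a class $\xi$ with $q_X(\omega,\xi)=0$ and $-2^{16}\cdot 5\cdot a^{18}\le q_X(\xi)<0$, via a Bogomolov-type inequality applied to $\gotg(\cP)$. This shows that the wall/chamber structure is governed by exactly the classes excluded in~\eqref{nonsumuro}, so stability holds throughout the allowed chamber.

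\textbf{Step 3: deformation.} This is the core step. For $\omega$ satisfying~\eqref{nonsumuro}, the stable bundles $\gotg(\cP)$ have $c_2$ invariant under the monodromy/hyperkähler rotation, because $\Delta$ is a multiple of $c_2(X)$. Verbitsky's theory then makes $\gotg(\cP)$ projectively hyperholomorphic, so the bundles and their moduli extend along twistor lines. Using twistor lines through points where~\eqref{nonsumuro} holds, and the connectedness of this set in the period domain, I will transport the component $M^\bullet$ from the model to an arbitrary $(X,\omega)$. By Verbitsky, the moduli space of hyperholomorphic bundles has a singular hyperkähler structure, and its normalization is smooth holomorphic symplectic. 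This gives Item~(1), where bimeromorphic equivalence suffices. For Item~(2), when $H^{1,1}_\ZZ=0$ or $\omega$ is ample, I will argue that the normalization is Kähler: it is projective via GIT-type polarizations in the ample case, and has no curves in the first case. A bimeromorphic Kähler Type $K3^{[a^2+1]}$ manifold is then of that type.

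\textbf{Step 4: the isometry.} For Item~(3), I will compute $c_2(\gotg)^{2,2}$ in the model using the Mukai-type map $H^2(S)\supset v^\perp\to H^2(W)$ and the identification $H^2(X)\supset H^2(S)$. I expect it to equal $32a^4$ times a map that is an isometry up to the rational scaling of $\delta$. Since this property is locally constant in families, it persists under the deformations of Step 3. The Hodge property is automatic, because $c_2$ is a Hodge class.

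The main obstacle is Step 2 together with Step 1: the explicit construction and computation of $\ov{\ww}_a$, and proving stability with precisely the stated bound. I would first try to get stability by restriction to a generic Lagrangian fiber in a degenerate model, where the bundle should become slope-semistable with simple, controllable factors.
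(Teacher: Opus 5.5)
Your overall architecture matches the paper's: a model on $S^{[2]}$ coming from a moduli space on a $K3$ surface, wall-crossing for modular sheaves, Verbitsky transport along chains of generic twistor lines, and a Donaldson--Mukai computation. However, the model in Step~1 is the wrong one, and the main difficulty of the proof is missing.

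\textbf{The model is wrong.} The BKR transform of $\cE_w\boxtimes\cE_w$ (with either linearization) is $\cE_w[2]^{\pm}$, of rank $r(\cE_w)^2$. This equals $8a^3$ only if $2a$ is a square, and no line-bundle twist changes it. More seriously, it is not modular. On $X(S)$, with $r=r(\cE_w)$, $v=v(\cE_w)$ and $e=\rho^{*}\delta$, one finds
\[
\rho^{*}\Delta(\cE_w[2]^{\pm})=r^2\bigl(v^2+2r^2\bigr)\bigl(\tau_1^{*}\eta_S+\tau_2^{*}\eta_S\bigr)-\tfrac{r^2(r^2-1)}{4}\,e^2 ,
\]
while $\rho^{*}c_2(S^{[2]})=24(\tau_1^{*}\eta_S+\tau_2^{*}\eta_S)-3e^2$. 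These are proportional if and only if $v^2=-2$. So for $v^2=2a^2$ the discriminant has a non-zero $\delta^2$-component and does not stay of type $(2,2)$ along twistor lines. Verbitsky's theorem therefore does not apply, and nothing in your construction forces $\Delta$ into the monodromy-invariant line of $H^4$.

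The paper instead uses $\cG(\cE_1,\cE_2)$, the BKR transform of $\cE_1\boxtimes\cE_2\oplus\cE_2\boxtimes\cE_1$. Here $\cE_1$ is a \emph{fixed} stable spherical bundle of rank $2a$ and $v(\cE_2)=av(\cE_1)-(0,0,1)$. The two conditions $c_1(\cE_1)/r(\cE_1)=c_1(\cE_2)/r(\cE_2)$ and $\la v(\cE_1),v(\cE_2)\ra=0$ are exactly what kill the non-modular part, and they give rank $8a^3$.

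\textbf{The map of Step~1 does not exist as stated.} Even with the right sheaves, you do not get a morphism $W\to M_{\ov{\ww}_a}(X,h)$.
\begin{enumerate}
\item Identifying deformations of $\cG(\cE_1,\cE_2)$ with those of $\cE_2$ needs $\Ext^1_S(\cE_1,\cE_2)=0$. This fails exactly on the Brill--Noether divisor $\cD=\{[\cE_2]\mid \Hom_S(\cE_2,\cE_1)\neq 0\}$.
\item On $\cD$, $\cG(\cE_1,\cE_2)$ has the quotient $\cE_1[2]^{-}\otimes\Hom_S(\cE_2,\cE_1)^{\vee}$, whose $c_1/r$ is smaller by $\delta/4a$. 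Hence it is unstable for \emph{every} polarization.
\item On $\cD^a$, where $\cE_2$ is not locally free, $\cG(\cE_1,\cE_2)$ is not even locally free.
\end{enumerate}
No choice of chamber in Step~2 repairs this. Any stability argument (the paper's goes through $\bm{\mu}(h_S)$-stability of $\tau^{*}(\cE_1\boxtimes\cE_2)$ on $X(S)$) only covers $\cM_{v(a)}\setminus\cD$.

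Most of the paper is the semistable reduction you skip. One iteratively blows up the Brill--Noether strata $\cD^a\subset\cdots\subset\cD^1$, performs elementary modifications along the exceptional divisors (organized by complete collineations), and proves that the resulting extensions $\cB(\cH,J_{\cE_2})$ and $\cB(p,V_a)$ are locally free, slope stable, and depend only on $[\cE_2]$. This yields a bijective morphism $\cM_{v(a)}\to M^{\bullet}_{\ww_a}$. So $\cM_{v(a)}$ is the \emph{normalization} of the component, not a local isomorphism onto it; the component is expected to be singular for $a>1$.

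\textbf{Consequence for Step~4.} For the unmodified family $\cG(\cE_1,\EE_2)$, the K\"unneth $(2,2)$-component of the discriminant kills $\delta$. The required $\delta\mapsto\theta_{v(a)}(v_1)$ comes entirely from the correction term $-16a^4\la\gamma,\delta\ra\cl(\cD)$ created by the elementary modification along $\cD$. Without the semistable reduction, your computation cannot produce an isometry.
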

\begin{rmk}\label{rmk:qualecomp}
We do not know whether $M_{\ov{\ww}_a}(X,\omega)$ is irreducible. Independently of this open problem,
 $M_{\ov{\ww}_a}(X,\omega)^{\bullet}$ can be described as follows: it is obtained by deformation of a well-defined irreducible component of a moduli space of vector bundles on a suitably polarized $S^{[2]}$ where  $S$ is  a $K3$ surface (see Subsection~\ref{subsec:panorama}). In theory monodromy could \lq\lq generate\rq\rq\ more than one such irreducible component.
\end{rmk}
\begin{thm}\label{thm:ognik3a2+1}
Let $W$ be HK manifold of Type $K3^{[a^2+1]}$, where $a$ is a positive integer. There exist a HK manifold $X$ of Type $K3^{[2]}$ and a K\"ahler class $\omega$  on $X$ for which~\eqref{nonsumuro}  holds, such that  
$W$ is bimeromorphic to  
$\wt{M}_{\ov{\ww}_a}(X,\omega)^{\bullet}$, where the latter is as in Item~(1) of Theorem~\ref{thm:isogmodvb}. If $H^{1,1}_{\ZZ}(W)=\{0\}$, 
then there exist $(X,\omega)$ as above such that $W$ is isomorphic to  
$\wt{M}_{\ov{\ww}_a}(X,\omega)^{\bullet}$. If $W$ is projective then there exists a polarized HK manifold $(X,h)$ of Type $K3^{[2]}$
(for which~\eqref{nonsumuro}  holds with $\omega$ replaced by $h$) such that $W$ is isomorphic to  
$\wt{M}_{\ov{\ww}_a}(X,h)^{\bullet}$. 
\end{thm}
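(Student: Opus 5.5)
Theorem~\ref{thm:ognik3a2+1} is a surjectivity statement, and I will derive it from Theorem~\ref{thm:isogmodvb} using the global Torelli theorem together with the surjectivity of the period map for both deformation types.

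\textbf{Step 1: build a global family of moduli spaces.} Let $\Lambda$ be the $K3^{[2]}$ lattice. Consider marked pairs $(X,\phi)$ of Type $K3^{[2]}$ together with a Kähler class $\omega$ satisfying~\eqref{nonsumuro}. By Item~(1) of Theorem~\ref{thm:isogmodvb}, the component $\wt{M}_{\ov{\ww}_a}(X,\omega)^{\bullet}$ exists for each such pair and is deformation equivalent to $K3^{[a^2+1]}$. The class $\gamma_a$ is defined only modulo $2a$ and monodromy. I therefore fix it once and for all in $\Lambda$. Item~(3) then gives a rational Hodge isometry
\[
\psi_X := 32^{-1} a^{-4} c_2(\gotg)^{2,2}\colon H^2(X)\to H^2(\wt{M}^{\bullet}).
\]
By deformation invariance, $\psi_X$ is a fixed rational isometry $\psi\colon \Lambda_{\QQ}\to \Lambda'_{\QQ}$ after transport of markings, where $\Lambda'$ is the $K3^{[a^2+1]}$ lattice. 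Here I use that the whole construction varies in families over the locus where~\eqref{nonsumuro} holds, and hence over the connected moduli of marked pairs.

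\textbf{Step 2: invert the period.} Given $W$, choose a marking $\phi'$ lying in the correct component of marked moduli. Set
\[
\ell := \phi'\bigl(H^{2,0}(W)\bigr),\qquad \psi^{-1}(\ell)\subset \Lambda_{\CC}.
\]
The line $\psi^{-1}(\ell)$ is a period point, since $\psi$ is an isometry. By surjectivity of the period map there is a marked $(X,\phi)$ with this period. Moreover, the integral Hodge classes correspond up to scaling, so
\[
H^{1,1}_{\ZZ}(X)=0 \iff H^{1,1}_{\ZZ}(W)=0.
\]
Next I choose $\omega$. If $H^{1,1}_{\ZZ}(X)=0$, condition~\eqref{nonsumuro} is automatic. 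In general the excluded set consists of finitely many walls $\xi^\perp$ with $q_X(\xi)$ bounded, so a generic Kähler class avoids them. In the projective case I take $h$ to be an ample class with $\psi(h)$ in the positive cone. Using Markman's description of the Kähler cone, I can arrange that $\psi(h)$ is itself ample on $W$ after replacing $W$ within its bimeromorphic class, and that $h$ avoids the finitely many walls.

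\textbf{Step 3: compare $W$ with the moduli space.} By construction, $W$ and $\wt{M}^{\bullet}$ have the same period under compatible markings, namely $\phi'$ and $\phi\circ\psi$ transported. Verbitsky–Markman global Torelli then says $W$ is bimeromorphic to $\wt{M}^{\bullet}$, and that they are isomorphic when the markings correspond to a parallel-transport operator sending Kähler cone to Kähler cone. When $H^{1,1}_{\ZZ}=0$ there is a single Kähler chamber, so the two are isomorphic; here I also use Item~(2). In the projective case Item~(2) gives that $\wt{M}^{\bullet}$ is HK. To conclude isomorphism, I adjust the marking of $W$ by a monodromy operator and choose $h$ so that the ample classes match.

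\textbf{Main obstacle.} The delicate step is monodromy bookkeeping. I must ensure that the marked component reached by $\phi\circ\psi$ can be matched with that of $W$, which means showing that the image of $\Mon$ of $X$ under $\psi$, together with the choice of $\gamma_a$, reaches every orientation and component, using Markman's monodromy group for $K3^{[n]}$. I must also control the ample cone in the projective case. For the latter, the first thing I would try is to replace $h$ by $h+\epsilon\xi$ generic. I would then argue that $\psi$ maps a chamber of $X$'s positive cone onto a union containing a Kähler-type chamber of $W$, and that the polarization condition is open.
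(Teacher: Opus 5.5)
\textbf{Main gap: the projective statement.} Global Torelli on the $K3^{[a^2+1]}$ side only tells you that $W$ and $\wt{M}_{\ov{\ww}_a}(X,h)^{\bullet}$ lie in the same fibre of the period map, i.e.\ that they are bimeromorphic. To get an isomorphism you need a parallel-transport Hodge isometry between their $H^2$'s that sends an ample class to an ample class.

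Your Step~2 runs in the wrong direction. You fix an ample $h$ on $X$ and then make $\psi(h)$ ample ``after replacing $W$ within its bimeromorphic class''. But $W$ is given, so this only reproves the bimeromorphic statement. The later suggestions (perturbing $h$ to $h+\epsilon\xi$, changing the marking of $W$) never explain why the chamber selected by $h$ should be the K\"ahler cone of this particular $W$.

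The freedom has to be spent on the $K3^{[2]}$ side instead:
\begin{enumerate}
\item Take a rational ample class $\alpha$ on $W$, perturbed off the $\psi$-images of the locally finite set of ${\mathsf a}(a)$-walls.
\item Among the marked $K3^{[2]}$ manifolds in the fibre of the period map over $\psi^{-1}(\ell)$, choose the one whose K\"ahler-type chamber contains $\psi^{-1}(\alpha)$ (Markman, Amerik--Verbitsky). You must also check that $\psi$ respects the distinguished components of the positive cones; this follows from Corollary~\ref{crl:kahlcomp} and continuity.
\item Then $h:=\phi^{-1}\psi^{-1}(\alpha)$ is ample and satisfies~\eqref{nonsumuro}, and $\wt{M}_{\ov{\ww}_a}(X,h)^{\bullet}$ is HK by Item~(2).
\item The class corresponding to $\alpha$ is ample on it by Proposition~\ref{prp:ampiolambda}, pulled back to the normalization.
\item Markman's Hodge-theoretic Torelli then gives $W\cong\wt{M}_{\ov{\ww}_a}(X,h)^{\bullet}$.
\end{enumerate}

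The paper gets around this differently. It first treats the case $H^{1,1}_{\ZZ}(W)=\ZZ\alpha$ with $q_W(\alpha)>0$. There $H^{1,1}_{\ZZ}(X)=\ZZ h$ with $h$ ample, there is a single ${\mathsf a}(a)$-chamber, and a bimeromorphic map between HK manifolds of Picard number one is an isomorphism. It then reaches an arbitrary projective $W$ through a projective family degenerating to Picard rank one and the relative moduli space. The final step is a limit-cycle argument, as in Huybrechts' proof of the K\"ahler cone criterion, using that $\Gamma^{*}\wt{\lambda}_{\ov{\ww}_a}(h_{t_1})$ is ample.

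\textbf{Repairs needed for the first two statements.} Theorem~\ref{thm:isogmodvb} does not give a family of moduli spaces over the whole moduli of marked pairs (compare Remark~\ref{rmk:qualecomp}). So two facts must be extracted from its proof rather than its statement: the relation $\phi''\circ\psi_X=\psi\circ\phi$ for the transported marking $\phi''$ of $\wt{M}_{\ov{\ww}_a}(X,\omega)^{\bullet}$, and the fact that $\phi''$ lies in the same component as a marking of $W$. The proof supplies them via a chain of generic twistor lines from $S^{[2]}$ to $(X,\phi)$, Verbitsky's extension theorem, and flatness of $c_2(\gotg)$ along the chain. Once this is done, your ``main obstacle'' about $\psi(\Mon)$ disappears: simply mark $W$ by parallel transport from $\cM_{v(a)}(S,h_S)$.

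Also, Item~(1) only says that $\wt{M}_{\ov{\ww}_a}(X,\omega)^{\bullet}$ is holomorphic symplectic. So global Torelli must be applied to the HK twistor fibre bimeromorphic to it (Remark~\ref{rmk:esorciccio}), not to the moduli space directly.

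With these repairs, your route to the bimeromorphic and $H^{1,1}_{\ZZ}(W)=\{0\}$ statements is a valid alternative: surjectivity of the $K3^{[2]}$ period map plus global Torelli for $K3^{[a^2+1]}$. The paper instead takes a chain of generic twistor lines from $\wt{M}_{\ov{\ww}_a}(S^{[2]},\omega)^{\bullet}$ to $W$ and transports it to the $K3^{[2]}$ side via $\wt{\varphi}_{\ov{\ww}_a}$, using~\eqref{gelato} at the junctions. That way the last twistor fibre is $W$ itself, and no comparison of periods is needed.
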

\begin{rmk}\label{rmk:unopiuaquadro}
Let $m\ge 2$. If $X$ is a  HK manifold of Type $K3^{[m]}$ then  $H^2(X;\ZZ)$ equipped with the BBF quadratic form is isometric to 
\begin{equation}\label{lambdaemme}
\Lambda_m\coloneq U^{\oplus 3}\oplus (-E_8)^{\oplus 2}\oplus (-2(m-1)),
\end{equation}
 where direct sums are orthogonal, $U$ is the hyperbolic lattice, $-E_8$ is the unique rank-$8$ unimodular even negative definite lattice, and  
 $(-2(m-1))$ is $\ZZ$ with generator of square $-2(m-1)$.  If $m,n\ge 2$ and there is an isometry 
 $\Lambda_m\otimes\QQ\xrightarrow{\sim}\Lambda_n\otimes\QQ$, then $(m-1)/(n-1)$ is the square of a rational number because the discriminants of  $\Lambda_m,\Lambda_n$ are equal to $2(m-1),2(n-1)$. 
This shows the necessity of the \lq\lq Type $K3^{[a^2+1]}$\rq\rq\ in  the statements of 
 Theorems~\ref{thm:isogmodvb} and~\ref{thm:ognik3a2+1}.  It suggests that it should be possible to generalize the statements above 
 with \lq\lq Type $K3^{[2]}$\rq\rq\ and  \lq\lq Type $K3^{[a^2+1]}$\rq\rq\ replaced by  \lq\lq Type $K3^{[m]}$\rq\rq\ and  \lq\lq Type $K3^{[a^2(m-1)+1]}$\rq\rq\ respectively.
\end{rmk}
\begin{rmk}\label{rmk:biswas}
In~\cite{biswschum:modprinckaehler}  the authors define a K\"ahler form on the moduli space of $\omega$ slope stable principal bundles on a (compact) K\"ahler manifold $(X,\omega)$ (see also~\cite[Thm.~6.3]{verb:iperolom}). If this gives a K\"ahler structure also on a singular moduli space (a statement which sounds plausible) then Item~(2) of Theorem~\ref{thm:isogmodvb} holds for any $X$, and the isomorphism 
$W\cong\wt{M}_{\ov{\ww}_a}(X,\omega)^{\bullet}$ in Theorem~\ref{thm:ognik3a2+1}  holds for any 
 HK manifold $W$ of Type $K3^{[a^2+1]}$.
\end{rmk}
\begin{rmk}\label{rmk:quantevolte}
A natural question  is the following. Given a HK manifold $W$ of Type $K3^{[a^2+1]}$, how many are the isomorphism   (or bimeromorphic equivalence) classes of 
HK manifolds $X$ of Type $K3^{[2]}$ such that $W$ is isomorphic to  
$\wt{M}_{\ov{\ww}_a}(X,\omega)^{\bullet}$? This brings to mind  the degree of the maps $\cF_{2n^2 k}\to\cF_{2k}$ between  moduli spaces of polarized $K3$ surfaces of degrees $2n^2 k$ and $2k$, see~\cite[Ch.~2]{og:tesiphd}. 
\end{rmk}
\begin{thm}\label{thm:isogenie}
Let $W_i$, for $i\in\{1,2\}$,  be projective HK manifolds of Types $K3^{[a_i^2+1]}$  ($a_i$ are integers), and suppose that  there exists a  rational Hodge isometry
\begin{equation*}
f\colon H^2(W_1) \lra H^2(W_2).
\end{equation*}
 There exists an algebraic cycle class $Z\in\CH^{2a_1^2+2}_{\QQ}(W_1\times W_2)$ whose cohomology class  belongs to $H^{4a_1^2+2}(W_1;\QQ) \otimes H^2(W_2;\QQ)\cong H^2(W_1;\QQ)^{\vee}\otimes H^2(W_2;\QQ)$  and  equals $f$.
\end{thm}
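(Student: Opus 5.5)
The plan is to reduce to the case of a \emph{single} HK manifold $X$ of Type $K3^{[2]}$ which parametrizes both $W_1$ and $W_2$. Then one composes the algebraic correspondences given by the universal bundles of Theorem~\ref{thm:isogmodvb}, and corrects the result by a rational Hodge isometry of $H^2(X)$ itself, which is algebraic by Markman's theorem. (In what follows I use Markman's results on algebraicity of rational Hodge isometries between HK manifolds of the same $K3^{[n]}$ deformation type, in a form allowing composition with parallel transport / monodromy operators.)

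\textbf{Step 1.} Apply Theorem~\ref{thm:ognik3a2+1} to $W_1$. This gives a polarized $(X_1,h_1)$ of Type $K3^{[2]}$ with $W_1\cong\wt{M}_{\ov{\ww}_{a_1}}(X_1,h_1)^{\bullet}$. By Item~(3) of Theorem~\ref{thm:isogmodvb}, the class $\phi_1:=32^{-1}a_1^{-4}c_2(\gotg(\wt{\cP}))^{2,2}$ is a rational Hodge isometry $H^2(X_1)\to H^2(W_1)$. It is algebraic: $c_2$ of a vector bundle on $X_1\times W_1$ is algebraic, and its Künneth components are algebraic by the standard Künneth theorem. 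Indeed, the Künneth projectors are algebraic for HK manifolds of Type $K3^{[n]}$, e.g.\ via the Lefschetz standard conjecture, known for $K3^{[n]}$ type by Charles--Markman; alternatively, one can cut out the $(2,2)$ part by multiplying by powers of the ample classes and using hard Lefschetz with algebraic inverses. To land in $H^{4a_1^2+2}(W_1)\otimes H^2(W_2)$, I will transpose this class using Poincaré duality on $W_1$ and the identification $H^2(W_1)\cong H^2(W_1)^\vee$ given by $q_{W_1}$. The latter is realized algebraically by multiplication with a suitable polynomial in $c_2(W_1)$ and an ample class, using Fujiki relations. Do the same for $W_2$, getting $(X_2,h_2)$ and $\phi_2\colon H^2(X_2)\to H^2(W_2)$.

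\textbf{Step 2.} Consider $g:=\phi_2^{-1}\circ f\circ\phi_1\colon H^2(X_1)\to H^2(X_2)$. It is a rational Hodge isometry between projective HK manifolds of the same Type $K3^{[2]}$. By Markman's result, $g$ is induced by an algebraic class on $X_1\times X_2$. For the inverse $\phi_2^{-1}$, I will use that the adjoint of an isometry is its inverse: $\phi_2^{-1}=\phi_2^{t}$, where the transpose is taken with respect to the BBF forms. As in Step~1, the transpose is algebraic once the identifications $H^2\cong H^{2\dim-2}$ via $q$ are algebraic. Composing correspondences gives $Z'=\phi_2\circ g\circ\phi_1^{t}$ in $\CH_{\QQ}(W_1\times W_2)$, which acts as $f$ on $H^2(W_1)$.

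\textbf{Step 3.} Project $Z'$ to the Künneth component $H^{4a_1^2+2}(W_1)\otimes H^2(W_2)$ using algebraic Künneth projectors, to get $Z$ as required.

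The main obstacle is Step~2 together with the algebraicity of the transposes and Künneth projectors. The cleanest route is to check that Markman's theorem covers the case where the two projective $K3^{[2]}$-type manifolds differ, in which case the rest is formal. If the projectors are problematic, I would instead build $Z$ directly: take the product of $c_2(\gotg)$ classes with suitable powers of ample divisors, and apply Fujiki-relation arguments to isolate the $H^2$ component.
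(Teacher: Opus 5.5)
Your proposal is correct and follows essentially the paper's route: you realize $W_i\cong\wt{M}_{\ov{\ww}_{a_i}}(X_i,h_i)^{\bullet}$ via Theorem~\ref{thm:ognik3a2+1}, apply Markman's theorem to $g=\phi_2^{-1}\circ f\circ\phi_1$ on $X_1\times X_2$ (it does cover distinct $K3^{[2]}$-type manifolds), and compose with algebraic cycles inducing $\phi_2$ and $\phi_1^{-1}=\phi_1^{t}$. Your opening remark about a single $X$, and the aside that you need $\phi_2^{-1}$ algebraically, do not match what you actually do, but they are harmless. The only real difference is in how you obtain Künneth-pure cycles: the paper (Lemma~\ref{lmm:ciclialg}) builds cycles for $\phi$ and $\phi^{-1}$ elementarily, by subtracting restrictions to fibers, multiplying by powers of an ample class and correcting via the Fujiki-type relation, so neither Künneth projectors nor Charles--Markman are needed. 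Purity of $Z_2\circ V\circ P_1$ is then automatic, since only $\cl(V)_{6,2}$ contributes to the composition.
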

\subsection{Background and motivation}\label{subsec:retroterra}
\setcounter{equation}{0}
Every known HK manifold is, up to deformation, a moduli space (possibly desingularized) of semistable sheaves on a $K3$ or abelian surface $S$
 (in the latter case we factor out  $S$ and $S^{\vee}$ from the Beauville-Bogomolov decomposition of the moduli space). The  key proviso in the last sentence is \lq\lq up to deformation\rq\rq. A smooth moduli space of semistable sheaves on a $K3$  surface $S$ is of Type $K3^{[m]}$. If $m\ge 2$ such a HK manifold  has $21$ moduli ($20$ moduli as projective manifold), while $S$ has $20$ moduli (respectively $19$ moduli as projective manifold). Similar considerations apply to HK manifolds of Type OG10  (the difference between the number of deformations of OG10 and a $K3$ surface is $2$), or to HK manifolds of Type $\Kum_m$ ($m\ge 2$) or of Type OG6 (the difference between the number of deformations of $\Kum_m$ or OG6 and those of a two-dimensional complex torus is $1$ and $2$ respectively).

The strength of our Theorem~\ref{thm:ognik3a2+1}  is that it realizes an arbitrary projective HK manifold of Type  $K3^{[a^2+1]}$ as a moduli space of projective bundles on a (projective) HK manifold of Type  $K3^{[2]}$. One may compare our result to~\cite[Cor.~29.5]{blmnps:hermeinstprinc} stating that, given   coprime integers $a,b$, the
moduli spaces of Bridgeland stable objects in the Kuznetsov components of  general cubic fourfolds with a suitable Mukai vector $v(a,b)$ 
give a locally complete family of polarized HK manifolds $(Y,h)$ of Type  $K3^{[a^2-ab+b^2+1]}$. Since objects  in the Kuznetsov component of a cubic fourfold $W$ are objects in the derived category of $W$, we see a similarity with our results
 on (twisted) vector bundles on a HK fourfold. On the other hand  Kuznetsov components are  $2$-dimensional CY categories: this accounts  for the fact that their moduli spaces are smooth, and the irreducibility statement in their result.
 To our advantage there is the possibility of considering moduli space of slope stable  bundles of projective spaces on arbitrary HK manifolds, and hence using Verbitsky's powerful theory of projectively hyperholomorphic (twisted) vector bundles.

The main effort in the proof of our results goes into studying moduli spaces of slope stable vector bundles with certain characteristic classes  on the Hilbert scheme $S^{[2]}$, where $S$ is a projective $K3$ surface, see Subsection~\ref{subsec:panorama}. Recently there has been a surge of interest in moduli spaces of semistable sheaves on $HK$ manifolds. The author realized~\cite{ogfascimod} that if one deals with so-called modular (torsion-free) sheaves then variation of slope-stability  behaves as if the HK manifold were  a surface, and that stable modular sheaves on Lagrangian-fibered HK manifolds  behave similarly to stable sheaves on elliptic $K3$ surfaces (up to a point). This was used to prove results on slope stable rigid vector bundles on higher dimensional $HK$ manifolds which are reminiscent of  known results valid for $K3$ surfaces, see loc.~cit.~and~\cite{og:fascik3n,og:fascikum}. Markman~\cite{markman:rank1obs} and Beckmann~\cite{beckmann:atomic} introduced the important notion of atomic sheaf, or complex of sheaves, on a  $HK$ manifold. Atomic sheaves are key ingredients in Markman's proof of   the Shafarevich 
conjecture  for couples of HK manifolds (both) of Type $K3^{[m]}$~\cite{markman:isogenies}. Bottini~\cite{bottini:towardsog10,bottini:og10asmodspace}   proved that certain moduli spaces of (twisted) atomic sheaves on HK manifolds of Type $K3^{[2]}$ have an irreducible component which is a HK manifold of Type OG10. All of the sheaves (which are actually locally free) that appear in the works mentioned above are atomic.  In the present work the sheaves are modular but \emph{not} atomic. On the other hand, they are (again) locally free\footnote{\emph{ Where have all the singular sheaves gone?}} and projectively hyperholomorphic. 

We feel that the results of the present paper (and those in~\cite{og:highdim}) give a glimpse of the theory of sheaves on HK manifolds of Type $K3^{[2]}$, and that it reinforces the impression that it resembles the  theory of sheaves on $K3$ surfaces. In fact we get moduli spaces which are of arbitrarily  large dimension and which contain a HK component (up to normalization). On the other hand it is  not clear (at least to me) what the 
 actual picture might be. We do not know the answer to the most basic questions: when does there exist a slope stable (twisted) vector  bundle  with  assigned  characteristic classes? Is there a reasonable expected dimension of the moduli space of stable sheaves with given mock Mukai vector (but see~\cite[Subsect.~3.4]{bottini:towardsog10})? Are HK (components of)  moduli spaces of semistable sheaves on HK manifolds necessarily   deformations of the known HK manifolds?
\subsection{Outline of the proofs}\label{subsec:panorama}
\setcounter{equation}{0}
The starting point is  the following. Let $S$ be a $K3$ surface, and suppose that $\cE_1,\cE_2$ are vector bundles on $S$ such that
\begin{equation}\label{condiforti}
r(\cE_2) \cdot c_1(\cE_1)=r(\cE_1) \cdot c_1(\cE_2),\qquad \la v(\cE_1),v(\cE_2)\ra=0,
\end{equation}
where $r(\cE_i),v(\cE_i)$ are the rank and Mukai vector of $\cE_i$ respectively. 
The involution of $S^2$ exchanging the factors lifts to an involution of    $\cE_1\boxtimes\cE_2\oplus \cE_2\boxtimes\cE_1$, and hence the Bridgeland-King-Reid (BKR) equivalence associates to $\cE_1\boxtimes\cE_2\oplus \cE_2\boxtimes\cE_1$ a vector bundle 
$\cG(\cE_1,\cE_2)$ on $S^{[2]}$.
In~\cite[Prop.~2.7]{og:highdim} we proved that the discriminant of $\cG(\cE_1,\cE_2)$ is a multiple of $c_2(S^{[2]})$. Thus $\cG(\cE_1,\cE_2)$ is modular, and if it is slope-stable then it is projectively hyperholomorphic. 

Suppose that $\cE_1$ is spherical with $r(\cE_1)=2a$.  If 
$v(\cE_2)= av_1-(0,0,1)$ then the equalities in~\eqref{condiforti} hold. Let $h_S$ be a polarization of $S$. Assume that $\cE_1,\cE_2$ are (Giesker-Maruyama) stable. 
Because of the second equality in~\eqref{condiforti}  we expect 
\begin{equation}\label{flowersgone}
\Ext^1(\cE_1,\cE_2)=\{0\}
\end{equation}
 if $\cE_2$ is  general. If that is the case then  every small deformation of $\cG(\cE_1,\cE_2)$ is isomorphic to
$\cG(\cE_1,\cE'_2)$ for a   small deformation $\cE'_2$ of $\cE_2$, and  one identifies the deformation space of $\cE_2$ with that of 
$\cG(\cE_1,\cE_2)$, see~\cite[Prop.~2.6]{og:highdim}. 
The upshot is that if for some $h_S$-stable $\cE_2$ the equality in~\eqref{flowersgone} holds and the vector bundle $\cG(\cE_1,\cE_2)$ is stable for a polarization $h_{S^{[2]}}$  of $S^{[2]}$, then we have  a birational map
\begin{equation}
\begin{matrix}
\cM_{v(a)}\coloneq\cM_{v(a)}(S,h_S) & \dra & M_{\ww_a}(S^{[2]},h_{S^{[2]}})^{\bu}\eqcolon M^{\bu}_{\ww_a} \\
\qquad\qquad [\cE_2] & \mapsto & [\cG(\cE_1,\cE_2)]\qquad\quad
\end{matrix}
\end{equation}
where $v(a)\coloneq av_1-(0,0,1)$, $\ww_a$ is the mock Mukai vector (see~\cite[Subsec.~1.2]{og:highdim}) associated to the vector bundles 
$\cG(\cE_1,\cE_2)$, and $M_{\ww_a}(S^{[2]},h_{S^{[2]}})^{\bu}$ is an irreducible component of the moduli space $M_{\ww_a}(S^{[2]},h_{S^{[2]}})$ of $h_{S^{[2]}}$ slope stable vector bundles on $S^{[2]}$ with  mock Mukai vector $\ww_a$.
 Note that $\cM_{v(a)}$ is a (projective) HK manifold of Type $K3^{[a^2+1]}$. 
In loc.~cit.~we proved that if $a>1$ and
$[\cE_2]\in\cM_{v(a)}$  is general, then $\cG(\cE_1,\cE_2)$ is $h_{S^{[2]}}$ (slope) stable for suitable $h_{S^{[2]}}$. In the present paper we determine for which  $[\cE_2]\in\cM_{v(a)}$ the sheaf $\cG(\cE_1,\cE_2)$ is  semistable, and what are the semistable replacements 
of  $\cG(\cE_1,\cE_2)$ for the remaining
 $[\cE_2]\in\cM_{v(a)}$. A key r\^ole is played by the (determinantal) locus $\cD_{v(a)}\subset \cM_{v(a)}$ parametrizing sheaves $\cE_2$ such that~\eqref{flowersgone} is violated, or equivalently such that $\Hom(\cE_2,\cE_1)\not=0$. This is an irreducible divisor  containing all points parametrizing non locally free sheaves. 
 If $[\cE_2]\in(\cM_{v(a)}\setminus\cD_{v(a)})$  then $\cE_2$  is slope stable  and locally free, and this  implies that $\cG(\cE_1,\cE_2)$ is 
 ${\bm\mu}(h_S)$ slope stable, where ${\bm\mu}(h_S)$ is the pull-back of the symmetrization of $h_S$ on $S^{(2)}$ via the Hilbert-to-Chow map $S^{[2]}\to S^{(2)}$ (here $h_S$ is an ${\mathsf a}(v(a))$-generic polarization of $S$).
Note that ${\bm\mu}(h_S)$ is big and nef, but not ample. One of the virtues of modular sheaves such as $\cG(\cE_1,\cE_2)$ is that variation of slope stability behaves as on surfaces. It follows that $\cG(\cE_1,\cE_2)$ is slope stable for any polarization $h_{S^{[2]}}$  close enough to ${\bm\mu}(h_S)$, where \lq\lq close enough\rq\rq\ is determined by $a$. 
This proves that we have a regular map
\begin{equation}\label{mappazza}
\begin{matrix}
\cM_{v(a)}\setminus\cD_{v(a)} & \xrightarrow{\psi} & M^{\bu}_{\ww_a} \\
[\cE_2] & \mapsto & [\cG(\cE_1,\cE_2)]
\end{matrix}
\end{equation}
defining an isomorphism between $\cM_{v(a)}\setminus\cD_{v(a)}$ and an open 
 dense subset of  $M_{\ww_a}^{\bullet}$. 

In the end we prove that $\psi$ extends to a  regular map  
\begin{equation}\label{psibarra}
\ov{\psi}\colon \cM_{v(a)}\to M_{\ww_a}^{\bullet}
\end{equation}
although $\cG(\cE_1,\cE_2)$ is slope unstable
for $[\cE_2]\in \cD_{v(a)}$. Thus   $\ov{\psi}([\cE_2])$  represents the isomorphism class of a vector bundle which is \emph{not} isomorphic to  
$\cG(\cE_1,\cE_2)$. In order to explain what goes on we need to give more details on the divisor $\cD_{v(a)}\subset \cM_{v(a)}$. Let 
$\cD^k_{v(a)}\subset \cD_{v(a)}$ be the (closed) subset of points $[\cE_2]$ such that $\dim\Hom(\cE_2,\cE_1)\ge k$. We have the (Brill-Noether) decreasing chain of closed subsets of $\cM_{v(a)}$.
\begin{equation*}
\cD_{v(a)}=\cD_{v(a)}^{1}\supset \ldots \supset \cD_{v(a)}^{k}\supset \cD_{v(a)}^{k+1}\supset  \ldots \supset\cD_{v(a)}^a.
\end{equation*}
 Let $1\le k<a$ and let
$[\cE_2]\in(\cD_{v(a)}^{k}\setminus \cD_{v(a)}^{k+1})$. The  map $\cE_2\to \cE_1\otimes\Hom(\cE_2,\cE_1)^{\vee}$ is surjective, and it induces a surjection $\Psi_{\cE_2}^{+}\colon\cG(\cE_1,\cE_2)\twoheadrightarrow \cE_1[2]^{-}\otimes V_k$, where 
$\cE_1[2]^{-}$ is the rigid slope stable vector bundle studied in~\cite{ogfascimod}, and $V_k\coloneq \Hom(\cE_2,\cE_1)^{\vee}$. 
Let $\cA(\cE_2)^{+}\coloneq\ker \Psi_{\cE_2}^{+}$. The exact sequence 
\begin{equation}\label{lottatralinci}
0\lra \cA(\cE_2)^{+}\lra \cG(\cE_1,\cE_2)\overset{\Psi_{\cE_2}^{-}}{\lra} \cE_1[2]^{-}\otimes V_k\lra 0,
\end{equation}
is  slope desemistabilizing for any polarization of $S^{[2]}$.
The  map $\cE_2\to \cE_1\otimes\Hom(\cE_2,\cE_1)^{\vee}$ is no longer surjective if $[\cE_2]\in\cD_{v(a)}^a$, but there is an exact sequence similar to~\eqref{lottatralinci} which is slope desemistabiling for any polarization of $S^{[2]}$. Since $\cE_1[2]^{-}$ is slope stable, the first step of semistable reduction for a one-parameter family of semistable sheaves specializing to $\cG(\cE_1,\cE_2)$ (given by a suitable elementary modification) replaces 
$\cG(\cE_1,\cE_2)$ by a sheaf $\wt{\cG}$ sitting into an exact sequence obtained by switching the two sides of~\eqref{lottatralinci}:
\begin{equation}\label{volterra}
0\lra \cE_1[2]^{-}\otimes V_k\lra \wt{\cG}\overset{\Psi_{\cE_2}^{-}}{\lra} \cA(\cE_2)^{+}\lra 0.
\end{equation}
We prove that those extensions~\eqref{volterra} which actually occur after  elementary modification of a one-parameter family in a general direction normal to $\cD_{v(a)}^{k}\setminus \cD_{v(a)}^{k+1}$ (the latter space is smooth) are slope stable, provided that $h_S$ is an ${\mathsf a}(v(a))$-generic polarization of $S$ and the polarization of $S^{[2]}$ is close enough to $\bm{\mu}(h_S)$. Moreover the isomorphism class of $\wt{\cG}$ depends on $\cE_2$ but \emph{not} on the direction normal to $\cD_{v(a)}^{k}\setminus \cD_{v(a)}^{k+1}$.
Note that if $[\cE_2]\in\cD_{v(a)}^a$ then  $\cG(\cE_1,\cE_2)$ is not locally free, but the sheaves that we get after the elementary modifications  are locally free (pleasant surprise!).

The actual proof that $\psi$ extends   regularly to all of  $\cM_{v(a)}$ goes as follows. Let $\wh{f}\colon\wh{\cM}_{v(a)}\to\cM_{v(a)}$ be the birational map obtained by blowing up $\cD_{v(a)}^a$ (which is smooth), blowing up the strict transform of $\cD_{v(a)}^{a-1}$ (which is smooth) etc. Then $\wh{\cM}_{v(a)}$ contains smooth prime divisors $\wh{E}^a,\ldots,\wh{E}^1$ such that 
$\wh{f}(\wh{E}^k)=\cD_{v(a)}^k$, the divisor 
$\wh{E}^a+\ldots+\wh{E}^1$ has simple normal crossings, and 
 $\wh{f}$ identifies  $\wh{\cM}_{v(a)}\setminus(\wh{E}^a\cup\ldots\cup\wh{E}^1)$ with $\cM_{v(a)}\setminus\cD_{v(a)}$. For simplicity we assume that there is a universal sheaf on $S\times\cM_{v(a)}$ and hence also a  sheaf $\GG$ on 
$S^{[2]}\times \cM_{v(a)}$ which is $\cM_{v(a)}$-flat and such that $\GG_{|S^{[2]}\times \{[\cE_2]\}}\cong\cG(\cE_1,\cE_2)$ for every 
$[\cE_2]\in \cM_{v(a)}$. Let $\wh{\GG}$ be the pull-back of $\GG$ to $S^{[2]}\times \wh{\cM}_{v(a)}$. One performs  elementary modifications starting from $\wh{\GG}$, and the final result is  a vector bundle $\wt{\GG}$ on $S^{[2]}\times \wh{\cM}_{v(a)}$
which restricts to a slope stable vector bundle on $S^{[2]}\times\{x\}$ for every $x\in \wh{\cM}_{v(a)}$, and 
 which is isomorphic to  $\wh{\GG}$ on $S^{[2]}\times (\cM_{v(a)}\setminus\cD_{v(a)})$.
 This proves that the map $\psi$ in~\eqref{mappazza} extends to a regular map  $\wh{\psi}\colon\wh{\cM}_{v(a)}\to M_{\ww_a}^{\bullet}$. 
If $x\in(\wh{E}^k\setminus \wh{E}^{k+1})$ then the isomorphism class of $\wt{\GG}_{S^{[2]}\times\{x\}}$ depends only on $\wh{f}(x)\in(\cD^k_{v(a)}\setminus\cD^{k+1}_{v(a)})$, and is represented by a vector bundle fitting into \eqref{volterra}. This proves that $\wh{\psi}$ descends to 
a regular map  $\ov{\psi}\colon\cM_{v(a)}\to M_{\ww_a}^{\bullet}$ extending $\psi$. The explicit description of the vector bundles parametrized by $\ov{\psi}(\cD_{v(a)})$ gives that $\ov{\psi}$ is a bijection. Thus $\ov{\psi}$  identifies $\cM_{v(a)}$  with the normalization 
$\wt{M}_{\ww_a}^{\bullet}$ of 
$M_{\ww_a}^{\bullet}$. 
 
Items~(1) and~(2) of Theorem~\ref{thm:isogmodvb} follows from the existence of the extension $\ov{\psi}$ in~\eqref{psibarra} together with Verbitsky's fundamental results on projectively hyperholomorphic vector bundles. In fact, since 
\begin{equation*}
r(\cG(\cE_1,\cE_2))=8a^3,\quad \eta_{8a^3}(\PP\cG(\cE_1,\cE_2))=[4a^2({\bm\mu}(D)-a\delta)]\pmod{8a^3 H^2(S^{[2]};\ZZ)},
\end{equation*}
(see~\eqref{sanmichele} and~\eqref{duedelta} for the notation in the second equation), 
\begin{equation*}
c_2(\gotg(\PP\cG(\cE_1,\cE_2))=\frac{4a^6}{3}c_2(S^{[2]}),
\end{equation*}
and
the projectivization of a slope stable vector bundle is a slope stable (with respect to the same polarization) bundle of projective spaces, we may view  $M_{\ww_a}(S^{[2]},h_{S^{[2]}})^{\bullet}$ as a closed subset $M_{\ov{\ww}_a}(S^{[2]},h_{S^{[2]}})^{\bullet}$ of the moduli space $M_{\ov{\ww}_a}(S^{[2]},h_{S^{[2]}})$. By Verbitsky's results every $\PP^{8a^3-1}$-bundle $\cP_0$ parametrized by a point of
$M_{\ov{\ww}_a}(S^{[2]},h_{S^{[2]}})^{\bullet}$ extends to an $\omega_t$ slope stable $\PP^{8a^3-1}$-bundle $\cP_t$ on a(n arbitrary) twistor deformation $(X_t,\omega_t)$ of $(S^{[2]},\omega_0)$, where $\omega_0$ is any K\"ahler class in an open $2^{16}\cdot 5\cdot a^{18}$-chamber containing $\bm{\mu}(h_S)$ in its closure. Moreover the local (anaytic) structure of the moduli space 
$M_{\ov{\ww}_a(t)}(X_t,\omega_t)$ at $\cP_t$ is isomorphic to that of $M_{\ov{\ww}_a}(S^{[2]},\omega_0)$ at $\cP_0$.  This, together with some extra arguments, gives Item~(1) and~(2) of Theorem~\ref{thm:isogmodvb} because every deformation of $S^{[2]}$ can be reached by a chain of twistor families.

In order to prove Item~(3) of Theorem~\ref{thm:isogmodvb} it suffices to analyze the case of the pull-back of the universal $\PP^{8a^3-1}$-bundle to $S^{[2]}\times  \cM_{v(a)}$. We may assume that there is a universal sheaf $\EE$ on $S\times\cM_{v(a)}$. The relevant computation is done via the Mukai map $v(a)^{\bot}\to H^2(\cM_{v(a)})$ associated to $\EE$.

 Theorem~\ref{thm:ognik3a2+1} is proved as follows. Let $(X_0,\omega_0)$ be a HK manifold of Type $K3^{[2]}$ with K\"ahler class 
 $\omega_0$ such that $\wt{M}_{\ov{\ww}_a}(X_0,\omega_0)^{\bullet}$ is a HK of Type $K3^{[a^2+1]}$ (e.eg.~$X_0$ projective and $\omega_0$ an ample class).
 There exists a chain of generic twistor families deforming 
 $\wt{M}_{\ov{\ww}_a}(X_0,\omega_0)^{\bullet}$ to $W$.
Applying Item~(3) of  Theorem~\ref{thm:isogmodvb} one gets that there are corresponding (generic) twistor families deforming $(X_0,\omega_0)$ to 
 $(X,\omega)$ in such a way that $\wt{M}_{\ov{\ww}_a}(X,\omega)^{\bullet}$ 
 is bimeromorphic to $W$. If $H^{1,1}_{\ZZ}(W)=\{0\}$ one gets that the bimeromorphic map is an isomorphism. 
 If $W$ is projective a more refined argument is needed to prove isomorphism.
 
 Theorem~\ref{thm:isogenie} is proved by combining  Item~(3) of Theorem~\ref{thm:isogmodvb} and (the case $m=2$ of) Markman's Theorem~\cite[Thm.~1.1]{markman:isogenies} to the effect that   the Shafarevich 
conjecture  for couples of HK manifolds (both) of Type $K3^{[m]}$ holds. 
 
\subsection{Organization of the paper}
\setcounter{equation}{0}
In Section~\ref{sec:saperperdere} we discuss variation of slope stability for modular sheaves. This subject has been treated before for ample classes. Here we extend the discussion to arbitrary nef classes.
 We also discuss twisted (locally free) sheaves and variation of slope stability for such sheaves. 
 
 Section~\ref{sec:fascisuk3} is devoted to the moduli spaces $\cM_{v(a)}$ (and also $\cM_{v(t)}$ for $1\le t<a$) and the Brill-Noether stratification of the determinantal divisor $\cD_{v(a)}\subset \cM_{v(a)}$. The results are  analogous to those in~\cite{markman-bn-duality}, but we cannot refer to Markman's paper because the Mukai vector $v(a)$ has  divisible second entry (unless $a=1$).

  Section~\ref{sec:fibratistab} contains all the slope stability results for vector bundles on $S^{[2]}$ that we will be using.  Subsection~\ref{subsec:guidasezquattro} is a detailed road map for the section.
  
  Section~\ref{sec:estendo} contains the key result on moduli spaces of slope stable sheaves on $S^{[2]}$ that are needed to prove Item~(1) of Theorem~\ref{thm:isogmodvb}. A detailed road map is in Subsection~\ref{subsec:stazfori}.
  
  In Section~\ref{sec:conticonti} we carry out the computations needed to prove Item~(2) of Theorem~\ref{thm:isogmodvb} (once Item~(1) has been proved).
  
  Section~\ref{sec:bottofinale} contains the proofs of the main results. It consists of applying to the results of Sections~\ref{sec:estendo} and~\ref{sec:conticonti} Verbitsky's theory of projectively hyperholomorphic (twisted) vector bundles, coupled with the properties of twistor families.
\subsection{Notation}\label{subsec:notazione}
Schemes, varieties etc.~are over $\CC$. Sheaves are coherent sheaves unless we state differently.  
If $\cA,\cB$ are  sheaves on a variety $X$ we let  
\begin{equation*}
\ext^p_X(\cA,\cB)\coloneq \dim\Ext^p_X(\cA,\cB),\qquad \hom_X(\cA,\cB)\coloneq \dim\Hom_X(\cA,\cB).
\end{equation*}
A HK variety (in this paper) is a projective HK manifold.
We denote   complex vector spaces of dimension $d$ by
$V_d$, $W_d$ etc.
\subsection{Acknowledgements}
Many thanks go to Francesco Meazzini for several conversations on the subject of (projectively) hyperholomorphic vector bundles. Alessio Bottini and Emanuele Macr\`\i\  have made helpful comments to a preliminary version.
\section{HK-slope stability of (twisted) sheaves}\label{sec:saperperdere}
\subsection{Slope stability of sheaves}\label{subsubsec:pendenza}
\setcounter{equation}{0}
Let $X$ be a smooth projective irreducible variety of  dimension $m$. Let $h\in\NS(X)_{\QQ}\coloneq\NS(X)\otimes\QQ$. The \emph{$h$ slope} of  a sheaf $\cF$ on $X$ 
of positive rank $r(\cF)$  is 
\begin{equation}
\mu_h(\cF)\coloneq \int_X\frac{c_1(\cF)\cdot h^{m-1}}{r(\cF)}.
\end{equation}
One may define $h$ slope (semi)stability as in the case of  ample $h$. 
\begin{dfn}\label{dfn:pendstabgen}
Let $\cF$  be a torsion-free sheaf on $X$. Then $\cF$ is \emph{$h$ slope semistable} if for all  subsheaves $\cE\subset\cF$ with $0<r(\cE)<r(\cF)$ we have 
$\mu_h(\cE)\le \mu_h(\cF)$.
If strict inequality holds for all such $\cE$ then $\cF$ is \emph{$h$ slope stable}. 
\end{dfn}
Now assume that $X$ is a HK manifold  of dimension $2n$.  Let $h\in\NS(X)_{\QQ}$.    Let $\cF$ be a sheaf on $X$ of positive rank $r(\cF)$. The \emph{$h$ HK-slope} of $\cF$ is given by
\begin{equation}
\mu_h^{HK}(\cF)\coloneq q_X\left(\frac{c_1(\cF)}{r(\cF)}, h\right).
\end{equation}
\begin{dfn}\label{dfn:stabnef}
A torsion-free (non-zero) sheaf $\cF$ on  $X$ is \emph{$h$ HK-slope semistable} if for all  subsheaves $\cE\subset\cF$ with $0<r(\cE)<r(\cF)$ we have $\mu_h^{HK}(\cE)\le \mu_h^{HK}(\cF)$.
If strict inequality holds for all such $\cE$ then $\cF$ is \emph{$h$  HK-slope stable}. 
\end{dfn}
Let  $q_X$ be the Beauville-Bogomolov-Fujiki quadratic form of $X$, and let $c_X\in\QQ_{+}$ be the 
(small) Fujiki constant  of $X$. Then for  all $\alpha\in H^2(X)$
\begin{equation}\label{fujikiformula}
\int_X\alpha^{2n}=(2n-1)!! c_X q_X(\alpha)^{n}.
\end{equation}
\begin{rmk}\label{rmk:hkstabpos}
Let $\cF$ be a torsion-free (non-zero) sheaf on $X$. Let $h\in\NS(X)_{\QQ}$, and suppose that $q_X(h)>0$, e.g.~if $h$ is ample. By Fujiki's formula~\eqref{fujikiformula}  
\begin{equation*}
\int_X c_1(\cF)\cdot h^{2n-1}=(2n-1)!! c_X  q_X(c_1(\cF),h)\cdot q_X(h)^{n-1}.
\end{equation*}
 Thus $\cF$ is  $h$ HK-slope (semi)stable   if and only if it is $h$ slope (semi)stable. 
\end{rmk}
\begin{rmk}\label{rmk:hkstablag}
Suppose that  $X$ has a Lagrangian fibration $\pi\colon X\to\PP^n$. See~\cite[Prop.~5.9]{og:highdim}  for the meaning of $h$ HK-slope (semi)stability
if $h\coloneq\pi^{*}c_1(\cO_{\PP^n}(1))$.
\end{rmk}

\subsection{Variation of HK-slope (semi)stability for modular sheaves}\label{subsubsec:eccofascimod}
\setcounter{equation}{0}
The \emph{discriminant} of a  sheaf  $\cF$ on a manifold $X$ is the characteristic class given by
\begin{equation}\label{eccodisc}
 \Delta(\cF) \coloneq 2r(\cF) c_2(\cF)-(r(\cF)-1) c_1(\cF)^2=\ch_1(\cF)^2-2r(\cF)\ch_2(\cF).
\end{equation}
Let  $X$ be a HK manifold  of dimension $2n$.   A  torsion-free sheaf $\cF$  on $X$
  is \emph{modular}  if 
 there exists $d(\cF)\in\QQ$ such that  for all $\alpha\in H^2(X)$
\begin{equation}
\int_X \Delta(\cF) \cdot \alpha^{2n-2}=d(\cF) (2n-3)!! q_X(\alpha)^{n-1}.
\end{equation} 
 If $\cF$ is  modular  we let
\begin{equation}\label{esmeralda}
{\mathsf a}(\cF)\coloneq r(\cF)^2 \cdot d(\cF)/4c_X
\end{equation}  
\begin{rmk}
A  torsion-free sheaf   on  a HK manifold $X$ whose discriminant is a multiple of $c_2(X)$ is   modular. If $X$ is of type 
$K3^{[n]}$ and $\Delta(\cF)=kc_2(X)$ then 
\begin{equation}\label{formulaperaperdi}
d(\cF)=6(n+1)k,\qquad {\mathsf a}(\cF)=3 k(n+1)r(\cF)^2/ 2.
\end{equation} 
\end{rmk}
Let $S$ be a $K3$ surface and let $v\in \wt{H}^{1,1}(S)$ be a Mukai vector with positive first entry (here $\wt{H}(S)$ is the Mukai lattice of $S$). We let 
${\mathsf a}(v)\coloneq {\mathsf a}(\cF)$ 
where $\cF$ is any sheaf  such that $v(\cF)=v$. If $v=(r,l,s)$ then
\begin{equation}\label{adimukai}
{\mathsf a}(v)=r^2(v^2+2 r^2)/4.
\end{equation}
If $X$ is a HK manifold of dimension at least $4$  and  $\ww\in \NN_{+}\times \NS(X)\times H^{2,2}_{\ZZ}(X)$ is a mock Mukai vector (see~\cite[Sect.~1.2]{og:highdim})  we let ${\mathsf a}(\ww)\coloneq {\mathsf a}(\cF)$ 
where $\cF$ is any sheaf  such that $\ww(\cF)=\ww$.    
\begin{dfn}\label{dfn:poladatta}
Let $X$ be a  HK  variety, and let  $h_0\in\Nef(X)_{\QQ}$. 
Let  ${\mathsf a}> 0$. An ample class $h\in\Amp(X)_{\QQ}$ is \emph{${\mathsf a}$-suitable for $h_0$}  if  for all
 $\lambda\in \NS(X)$  such that $ -{\mathsf a}\le q_X(\lambda)<0$ one of the following holds:
\begin{enumerate}
\item
$q_X(\lambda,h)>0$ and $q_X(\lambda,h_0)\ge 0$.
\item
 $q_X(\lambda,h)=0$  and $q_X(\lambda,h_0)= 0$.
\item
 $q_X(\lambda,h)<0$  and $q_X(\lambda,h_0)\le 0$.
\end{enumerate}
\end{dfn}
\begin{rmk}
Let ${\mathsf a}$ be a positive natural number. A codimension-$1$ real subspace $W\subset\NS(X)_{\RR}\coloneq\NS(X)\otimes\RR$ is an ${\mathsf a}$-wall if $W=\xi^{\bot}$ (orthogonality with respect to the BBF quadratic form $q_X$) where $\xi\in\NS(X)$ is such that
$-{\mathsf a}\le q_X(\xi)<0$. The intersections of the  ${\mathsf a}$-walls with the  cone $\cC(X)_{\RR}\subset \Nef(X)_{\RR}$ of classes with positive square form a locally finite collection of codimension-$1$ submanifolds, and hence their union is closed in $\cC(X)_{\RR}$. A connected component of 
\begin{equation}
\cC(X)_{\RR}\setminus\bigcup_{\stackrel{\xi\in\NS(X)}{ -{\mathsf a}\le q_X(\xi)<0}}\xi^{\bot}
\end{equation}
 is an 
\emph{open ${\mathsf a}$-chamber}.  An ample class $h\in\Amp(X)_{\QQ}$ is \emph{${\mathsf a}$-generic} if it does not belong to any ${\mathsf a}$-wall, i.e.~it belongs to a (unique) open ${\mathsf a}$-chamber. Let $h_0,h_1$ be ${\mathsf a}$-generic ample classes which belong to the same 
open ${\mathsf a}$-chamber: then $h_0$ is ${\mathsf a}$-suitable for $h_1$ and viceversa.
\end{rmk} 
For sheaves $\cE,\cF$ on $X$  let
\begin{equation}
\lambda_{\cE,\cF}\coloneq r(\cF)\cdot c_1(\cE)-r(\cE)\cdot c_1(\cF).
\end{equation}
\begin{prp}\label{prp:paragonestab}
Let $X$ be a  HK  variety, and let  $h_0\in\Nef(X)_{\QQ}$. 
Let  ${\mathsf a}> 0$. Suppose that $h\in\Amp(X)_{\QQ}$  is ${\mathsf a}$-suitable for $h_0$. 
 Let  $\cF$ be  a torsion free  modular sheaf  on $X$ which  is
  $h_0$ HK-slope semistable  but not $h$ HK-slope stable, and
   that ${\mathsf a}(\cF)\le {\mathsf a}$.  
    Then there exists a subsheaf  $\cH\subset \cF$, with $0<r(\cH)<r(\cF)$, such that $q_X(\lambda_{\cH,\cF},h)\ge 0$ and $q_X(\lambda_{\cH,\cF},h_0)= 0$.
\end{prp}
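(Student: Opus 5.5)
The plan is to take a subsheaf witnessing the failure of $h$ HK-slope stability, to choose it carefully (saturated and semistable), and then to use a Bogomolov-type inequality for modular sheaves. That inequality should show that $\lambda_{\cH,\cF}$ is either zero or of square at least $-{\mathsf a}$, after which ${\mathsf a}$-suitability finishes the argument.

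\emph{Choice of the subsheaf.} Since $\cF$ is not $h$ HK-slope stable, there is $\cE\subset\cF$ with $0<r(\cE)<r(\cF)$ and $q_X(\lambda_{\cE,\cF},h)\ge 0$. Among all such subsheaves I would take one, $\cH$, that is saturated and for which $q_X(\lambda_{\cH,\cF},h_0)$ is as large as possible; these values lie in a discrete set bounded above by $0$, because $\cF$ is $h_0$-semistable. If $q_X(\lambda_{\cH,\cF},h_0)=0$ we are done, so assume $q_X(\lambda_{\cH,\cF},h_0)<0$ and aim for a contradiction.

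\emph{Bounding the square of $\lambda$.} The key input is the standard estimate (as in~\cite{ogfascimod}) for a modular sheaf that is semistable for a class lying in the same closed region: one may arrange, for instance, that $\cH$ and $\cF/\cH$ are $h$-semistable, by passing to an $h$ Harder–Narasimhan factor. Applying the Bogomolov inequality to $\cH$ and $\cF/\cH$ and using the identity
\begin{equation*}
\frac{\Delta(\cF)}{r(\cF)}=\frac{\Delta(\cH)}{r(\cH)}+\frac{\Delta(\cF/\cH)}{r(\cF/\cH)}-\frac{\lambda_{\cH,\cF}^2}{r(\cH)\,r(\cF/\cH)\,r(\cF)},
\end{equation*}
integrated against $h^{2n-2}$ via Fujiki's formula and modularity, I expect to obtain
\begin{equation*}
-{\mathsf a}(\cF)\le q_X(\lambda_{\cH,\cF})\le 0 .
\end{equation*}
Here the upper bound should come from Hodge index, since $q_X(\lambda_{\cH,\cF},h)$ and $q_X(\lambda_{\cH,\cF},h_0)$ have opposite signs or vanish. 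The hypothesis ${\mathsf a}(\cF)\le {\mathsf a}$ then gives $-{\mathsf a}\le q_X(\lambda_{\cH,\cF})<0$; the case $\lambda_{\cH,\cF}=0$ is impossible, since then $q_X(\lambda_{\cH,\cF},h_0)=0$.

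\emph{Conclusion.} Now ${\mathsf a}$-suitability applies to $\lambda=\lambda_{\cH,\cF}$. Case (1) forces $q_X(\lambda,h_0)\ge0$ and case (2) forces $q_X(\lambda,h_0)=0$, both contradicting $q_X(\lambda,h_0)<0$. Case (3) says $q_X(\lambda,h)<0$, which contradicts the choice of $\cH$. Hence $q_X(\lambda_{\cH,\cF},h_0)=0$.

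\emph{Main obstacle.} The hardest step is the Bogomolov-type bound $q_X(\lambda)\ge-{\mathsf a}(\cF)$. It requires semistability of the pieces with respect to some ample class, while the discriminants of $\cH$ and $\cF/\cH$ need not be modular. I would handle this as in~\cite{ogfascimod}. Take $\cH$ to be the first term of the $h$ Harder–Narasimhan filtration, or of the filtration for a class $h_t$ on the segment from $h_0$ to $h$ at the first wall crossed. Then apply Bogomolov to its semistable pieces, using the ample class, and control the cross terms with Hodge index on $\lambda^{\perp}$.
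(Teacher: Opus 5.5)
\textbf{There is a genuine gap.} It sits exactly at the step you flag as the main obstacle: the subsheaf you select is not one for which the bound is available.

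The estimate $-{\mathsf a}(\cF)\le q_X(\lambda)$, with $\lambda=\lambda_{\cH,\cF}$, is \cite[Prop.~3.10]{ogfascimod}, which the paper invokes. It requires $\cF$ to be slope semistable with respect to an ample class $h'$ with $q_X(\lambda,h')=0$. Then $\cH$ and $\cF/\cH$ are $h'$-semistable, and integrating your discriminant identity against $(h')^{2n-2}$ produces no cross term.

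Maximizing $q_X(\lambda_{\cH,\cF},h_0)$ does not supply such an $h'$. On the segment $h_t=(1-t)h_0+th$, the wall $\lambda^{\perp}$ is met at $t=-q_X(\lambda,h_0)/\bigl(q_X(\lambda,h)-q_X(\lambda,h_0)\bigr)$, and your criterion does not control where this is. Another $\cE$ can have $q_X(\lambda_{\cE,\cF},h_0)$ more negative but $q_X(\lambda_{\cE,\cF},h)$ much larger. Its wall then lies nearer $h_0$, and $\cE$ destabilizes $\cF$ at your $h_t$.

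Your fallback does not repair this. The first $h$ Harder--Narasimhan term is $h$-semistable, but $\cF/\cH$ need not be. Even granting Bogomolov for both pieces with respect to $h$, Fujiki gives $\int_X\lambda^2h^{2n-2}=c_X(2n-3)!!\,q_X(h)^{n-2}\bigl(q_X(h)q_X(\lambda)+(2n-2)q_X(\lambda,h)^2\bigr)$. This only yields $q_X(\lambda)\ge-{\mathsf a}(\cF)-(2n-2)q_X(\lambda,h)^2/q_X(h)$, which is useless when $q_X(\lambda,h)>0$, and Hodge index on $\lambda^{\perp}$ cannot remove that term.

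\textbf{What is missing.} You must take the wall nearest $h_0$ and prove $\cF$ is semistable there; this is the paper's argument. If $\cF$ is $h$-semistable but not $h$-stable, apply the bound directly at $h'=h$ to a subsheaf with $q_X(\lambda_{\cH,\cF},h)=0$; then case (2) of ${\mathsf a}$-suitability finishes.

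Otherwise write $h_s=(1-s)h+sh_0$. Any $\cH$ with $q_X(\lambda_{\cH,\cF},h_s)=0$ for some $s\in(0,1)$ has $\mu_h(\cH)\ge\mu_h(\cF)$, since $q_X(\lambda_{\cH,\cF},h_0)\le0$. By boundedness only finitely many such $s$ occur, and this set is nonempty; let $s_*$ be its maximum.
\begin{itemize}
\item If $\cF$ is not $h_{s_*}$-semistable, a destabilizing $\cH$ must have $q_X(\lambda_{\cH,\cF},h_0)=0$, since otherwise its wall would lie in $(s_*,1)$. Hence $q_X(\lambda_{\cH,\cF},h)>0$, and you are done without any Bogomolov bound.
\item If $\cF$ is $h_{s_*}$-semistable (and $h_{s_*}$ is ample), take $\cH$ on the wall at $s_*$. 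Now \cite[Prop.~3.10]{ogfascimod} applies at $h_{s_*}$. Suppose $q_X(\lambda_{\cH,\cF},h_0)<0$. Then case (3) of suitability gives $q_X(\lambda_{\cH,\cF},h)<0$ as well, contradicting $q_X(\lambda_{\cH,\cF},h_{s_*})=0$.
\end{itemize}

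Your phrase ``the first wall crossed'' points this way, but it is inconsistent with the subsheaf you actually choose. The two ingredients that make it work are absent: finiteness of the walls, and the semistable/non-semistable dichotomy at $h_{s_*}$.
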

\begin{proof}
This has been proved for $X$ with a Lagrangian fibration $\pi\colon X\to\PP^n$ and  
$h_0=\pi^{*}c_1(\cO_{\PP^n}(1))$, see~\cite[Prop.~5.9, Item~(a)]{og:highdim}. A similar argument  gives a proof of 
Proposition~\ref{prp:paragonestab}. We spell out the proof for the reader's convenience.

First assume  that $\cF$ is $h$ HK-slope semistable.  Since $\cF$ is not $h$ HK-slope stable there exists a subsheaf $\cH\subset\cF$  such that  $0<r(\cH)<r(\cF)$ and 
$q_X(\lambda_{\cH,\cF},h)= 0$. By~\cite[Prop.~3.10]{ogfascimod} we get that $-{\mathsf a}\le -{\mathsf a}(\cF)\le q_X(\lambda_{\cH,\cF})\le 0$. 
If $\lambda_{\cH,\cF}= 0$ then $q_X(\lambda_{\cH,\cF},h_0)=0$ trivially. If $\lambda_{\cH,\cF}\not= 0$ then 
$q_X(\lambda_{\cH,\cF})< 0$ (the restriction of $q_X$ to $h^{\bot}\cap \NS(X)$ is negative definite). Thus
\begin{equation}\label{barlume}
-{\mathsf a}\le  q_X(\lambda_{\cH,\cF})< 0.
\end{equation}
Since $h$ is ${\mathsf a}$-suitable for $h_0$ it follows that
$q_X(\lambda_{\cH,\cF},h_0)=0$. 

Next assume  that $\cF$ is not $h$ HK-slope semistable. Thus there exists $\cG\subset\cF$  such that  $0<r(\cG)<r(\cF)$ and 
$q_X(\lambda_{\cG,\cF},h)> 0$. If $q_X(\lambda_{\cG,\cF},h_0)=0$ we are done, hence we may assume that 
$q_X(\lambda_{\cG,\cF},h_0)\not=0$. Since $\cF$ is $h_0$ HK-slope semistable we have $q_X(\lambda_{\cG,\cF},f)<0$. 
Let $\mathsf S$ be the set  of rational numbers $s\in(0,1)$ for which there exists a subsheaf $\cH\subset\cF$,  with $0<r(\cH)<r(\cF)$,  such that 
\begin{equation}\label{comenovem}
q_X(\lambda_{\cH,\cF},(1-s)h+s h_0)=0.
\end{equation}
Then $\mathsf S$ is non  empty because there exists a  rational number $s\in(0,1)$ for which~\eqref{comenovem} holds with $\cH=\cG$.
 We claim that $\mathsf S$  is finite. In fact if~\eqref{comenovem} holds with $s\in(0,1)$ then $q_X(\lambda_{\cH,\cF},h)\ge 0$, because 
 $q_X(\lambda_{\cH,\cF},h_0)\le 0$ ($\cF$ is $h_0$ HK-slope semistable). The inequality $q_X(\lambda_{\cH,\cF},h)\ge 0$ holds if and only if 
 $\mu_h(\cH)\ge\mu_h(\cF)$ because $h$ is ample (see Remark~\ref{rmk:hkstabpos}). 
  Since the set of subsheaves  $\cH\subset\cF$ such that 
 $\mu_h(\cH)\ge\mu_h(\cF)$  is bounded it follows that $\mathsf S$  is finite. Hence $\mathsf S$ has a maximum 
 $s_{*}$. Let $h_{*}\coloneq (1-s_{*})h+s_{*} f$. 
 
 Suppose that $\cF$ is not $h_{*}$ HK-slope semistable. Then there exists a subsheaf 
  $\cH\subset\cF$    with $0<r(\cH)<r(\cF)$  such that $q_X(\lambda_{\cH,\cF},h_{*})>0$. If $q_X(\lambda_{\cH,\cF},h_0)<0$ then there exists $s\in (s_{*},1)$ such that~\eqref{comenovem} holds, and this is a contradiction because   $s_{*}$ is the maximum of    $\mathsf S$. Thus  $q_X(\lambda_{\cH,\cF},h_0)= 0$ ($\cF$ is $h_0$ HK-slope semistable).
   Since $q_X(\lambda_{\cH,\cF},h_{*})>0$ and $q_X(\lambda_{\cH,\cF},h_0)= 0$, we get that $q_X(\lambda_{\cH,\cF},h)>0$ and hence we are done. 

 Lastly suppose that $\cF$ is  $h_{*}$ HK-slope semistable.  Then there exists a subsheaf 
  $\cH\subset\cF$    with $0<r(\cH)<r(\cF)$  such that $q_X(\lambda_{\cH,\cF},h_{*})= 0$. We claim that 
  $q_X(\lambda_{\cH,\cF},h_0)=0$. Granting this for the moment being, we get that $q_X(\lambda_{\cH,\cF},h)= 0$ because
  $q_X(\lambda_{\cH,\cF},h_{*})= 0$, and hence we are done. We finish by proving that  $q_X(\lambda_{\cH,\cF},h_0)=0$. Suppose that
   $q_X(\lambda_{\cH,\cF},h_0)\not=0$. 
   By~\cite[Prop.~3.10]{ogfascimod} we have $-{\mathsf a}\le -{\mathsf a}(\cF)\le q_X(\lambda_{\cH,\cF})\le 0$, and since  $\lambda_{\cH,\cF}\not= 0$
   (because $q_X(\lambda_{\cH,\cF},h_0)\not=0$) we get that~\eqref{barlume} holds.
   Since   $q_X(\lambda_{\cH,\cF},h_0)\not=0$ we have
 $q_X(\lambda_{\cH,\cF},h_0)<0$ ($\cF$ is $h_0$ HK-slope semistable). Since~\eqref{barlume} holds and $h$ is ${\mathsf a}$-suitable for $h_0$, it follows that
$q_X(\lambda_{\cH,\cF},h)< 0$. The inequalities $q_X(\lambda_{\cH,\cF},h_0)<0$ 
and $q_X(\lambda_{\cH,\cF},h)< 0$ contradict the equality 
$q_X(\lambda_{\cH,\cF},h_{*})= 0$. Thus the hypothesis $q_X(\lambda_{\cH,\cF},h_0)\not=0$ leads to a contradiction. 
\end{proof}
\begin{crl}\label{crl:paragonestab}
Let $X$ be a  HK  variety, and let  $h_0\in\Nef(X)_{\QQ}$. 
Let  ${\mathsf a}> 0$. Suppose that $h\in\Amp(X)_{\QQ}$  is ${\mathsf a}$-suitable for $h_0$. 
 Let  $\cF$ be  a torsion free  modular sheaf  on $X$ such  that ${\mathsf a}(\cF)\le {\mathsf a}$. If $\cF$  is $h_0$ HK-slope stable
 then it is 
  $h$ HK-slope stable.
\end{crl}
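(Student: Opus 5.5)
The plan is to argue by contradiction, using Proposition~\ref{prp:paragonestab}. Suppose $\cF$ is $h_0$ HK-slope stable but not $h$ HK-slope stable. Since $h_0$ HK-slope stability implies $h_0$ HK-slope semistability, all hypotheses of Proposition~\ref{prp:paragonestab} hold: $h$ is ${\mathsf a}$-suitable for $h_0$, $\cF$ is torsion free and modular, and ${\mathsf a}(\cF)\le{\mathsf a}$. The proposition then gives a subsheaf $\cH\subset\cF$ with $0<r(\cH)<r(\cF)$ such that $q_X(\lambda_{\cH,\cF},h)\ge 0$ and $q_X(\lambda_{\cH,\cF},h_0)=0$.

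The key step is to translate the second equality into a statement about slopes. By definition,
\begin{equation*}
q_X(\lambda_{\cH,\cF},h_0)=r(\cH)r(\cF)\bigl(\mu^{HK}_{h_0}(\cH)-\mu^{HK}_{h_0}(\cF)\bigr).
\end{equation*}
Since both ranks are positive, vanishing of the left-hand side gives $\mu^{HK}_{h_0}(\cH)=\mu^{HK}_{h_0}(\cF)$. The subsheaf $\cH$ has rank strictly between $0$ and $r(\cF)$, so this equality contradicts the strict inequality required by $h_0$ HK-slope stability (Definition~\ref{dfn:stabnef}). Hence $\cF$ is $h$ HK-slope stable.

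The first inequality $q_X(\lambda_{\cH,\cF},h)\ge 0$ is not needed. There is no real obstacle here: all the work lies in Proposition~\ref{prp:paragonestab}, and the corollary is a direct application of it. The only point to verify is the sign and normalization identity relating $\lambda_{\cH,\cF}$ to the difference of HK-slopes, which follows immediately from bilinearity of $q_X$.
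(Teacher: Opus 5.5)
Your proposal is correct and follows the paper's own proof: apply Proposition~\ref{prp:paragonestab} to get a subsheaf $\cH\subset\cF$ with $0<r(\cH)<r(\cF)$ and $q_X(\lambda_{\cH,\cF},h_0)=0$, which contradicts $h_0$ HK-slope stability. Your explicit identity $q_X(\lambda_{\cH,\cF},h_0)=r(\cH)r(\cF)\bigl(\mu^{HK}_{h_0}(\cH)-\mu^{HK}_{h_0}(\cF)\bigr)$ just spells out the step the paper leaves implicit.
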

\begin{proof}
Suppose that $\cF$ is not  $h$ HK-slope stable. By Proposition~\ref{prp:paragonestab} there exists a subsheaf  $\cH\subset \cF$,
 with $0<r(\cH)<r(\cF)$, such that $q_X(\lambda_{\cH,\cF},h)\ge 0$ and $q_X(\lambda_{\cH,\cF},h_0)= 0$. The last equality contradicts 
 $h_0$ HK-slope stability of $\cF$.
\end{proof}
\subsection{Twisted sheaves and slope stability}\label{subsec:twistandstab}
\setcounter{equation}{0} 
\subsubsection{Twisted  sheaves}
Here $X$ is a complex manifold with the classical topology, and sheaves are (analytic) coherent sheaves. 
Let $\alpha\in H^2(X,\cO_X^{*})$ be a class represented by the $2$-cocycle $\{\alpha_{ijk}\}$ for an open cover $X=\bigcup_{i\in I}U_i$.  
An \emph{$\alpha$-twisted coherent sheaf} $\cE$ on $X$ consists of a collection $\{\cE_i\}_{i\in I}$ of coherent sheaves $\cE_i$ on $U_i$ and gluing (holomorphic) isomorphisms $\varphi_{ji}\colon {\cE_i}_{|U_{ij}}\xrightarrow{\sim}{\cE_j}_{|U_{ij}}$ such that $\varphi_{ii}=$, 
$\varphi_{ij}=\varphi_{ji}^{-1}$, and 
on the triple intersection $U_{ijk}$ we have 
${\varphi_{kj}}_{|U_{ijk}}\circ {\varphi_{ji}}_{|U_{ijk}}=\alpha_{ijk}\cdot{\varphi_{ki}}_{|U_{ijk}}$. Morphisms of 
$\alpha$-twisted coherent sheaves are given by morphisms of sheaves on the $U_i$'s which are compatible with the gluings. Up to isomorphism, the abelian category of $\alpha$-twisted coherent sheaves depends only on the class $\alpha\in H^2(X,\cO_X^{*})$, see~\cite{caldararu:thesis}. 
\begin{rmk}\label{rmk:tortolibero}
An $\alpha$-twisted sheaf $\cE$ is \emph{locally free sheaf of rank $r$}
if all the $\cE_i$ are locally free  of rank $r$ - we also say that it is an \emph{$\alpha$-twisted vector bundle}. If this is the case, the projectivized gluings
$\PP(\varphi_{ji})\colon {\PP(\cE_i)}_{|U_{ij}}\xrightarrow{\sim}{\PP(\cE_j)}_{|U_{ij}}$ give a projective $\PP^{r-1}$-bundle $\PP(\cE)\to X$. Note that  
$\alpha$ is the image of $\eta_r(\PP(\cE))$ (see~\eqref{etadipi}) for the map $H^2(X,\mu_r)\to H^2(X,\cO_X^{*})$.
If $\cP\to X$ is a $\PP^{r-1}$-bundle there exists an $\alpha$-twisted coherent sheaf $\cE$ on $X$ (unique up to isomorphism) such that $\cP\cong\PP(\cE)$. 
\end{rmk}
\begin{rmk}\label{rmk:ciunotorto}
Suppose that $\cE$ is an $\alpha$-twisted  vector bundle of rank $r$. Then  $\{\varphi_{ji}\}_{i,j\in I}$  is a $1$-cocycle $c$ with values in the sheaf $\PGL_r(\cO_X)$. Since the coboundary class $\delta(c)\in H^2(X,\mu_r)$ (see~\eqref{esseproj}) is equal to $\alpha$, we get 
that $\alpha^r$ is the trivial class in 
$H^2(X,\cO_X^{*})$, in particular $\alpha$ is a torsion class. 
We also get that we may replace $\varphi_{ij}$ by $\varphi'_{ij}=\lambda_{ij}\varphi_{ij}$, where $\lambda_{ij}$ is a suitable  1-cochain with values in $\cO_X^{*}$, so that 
$\{\det\varphi'_{ji}\}_{i,j\in I}$  is an honest $1$-cocycle with values in $\cO_X^{*}$. We denote by $\det\cE$ the corresponding line bundle. We let $c_1(\cE)\coloneq c_1  (\det\cE)$.  Now suppose that $\cE$ is an $\alpha$-twisted  vector bundle of rank $r$. Then there exists  an open subset $X^0\subset X$ 
with complement of codimension at least $2$ such that 
$\cE$  is locally free on $X^0$. Since  the restriction map $H^2(X;R)\to H^2(X^0;R)$ is an isomorphism for any coefficient group $R$, it follows that 
 $\alpha^r$ is the trivial class (as above), and we may  define $c_1(\cE)$ as the unique $c_1(\cE)\in H^2(X;\ZZ)$ such that $c_1(\cE)_{|X^0}=c_1(\cE_{|X^0})$. 
\end{rmk}
\begin{rmk}
Suppose that $\cE$ is an $\alpha$-twisted  vector bundle of rank $r$. Then the gluings of the 
vector bundles $\{\cE_i^{\vee}\otimes\cE_i\}_{i\in I}$ induced by the $\varphi_{ji}$'s match on triple intersections (not only up to homothety), and hence $\cE^{\vee}\otimes \cE$ is an honest vector bundle, isomorphic to $\cO_X\oplus\gotg(\PP(\cE)$.
If $\dim X\ge 2$ we let
\begin{equation}\label{hommefemme}
\Delta(\cE)\coloneq c_2(\cE^{\vee}\otimes \cE)=c_2(\gotg(\PP(\cE)).
\end{equation}
\end{rmk}
\begin{dfn}
Let $X$ be a HK manifold of dimension $2n$. An $\alpha$-twisted  vector bundle $\cE$ on $X$   is \emph{modular}  if 
 there exists $d(\cE)\in\QQ$ such that  for all $\alpha\in H^2(X)$ 
\begin{equation}\label{fernand}
\int_X  \Delta(\cE) \cdot\alpha^{2n-2}=d(\cE) (2n-3)!! q_X(\alpha)^{n-1}.
\end{equation} 
\end{dfn}
\begin{rmk}
Let $\cE$ be an $\alpha$-twisted modular vector bundle on
 a HK manifold $X$ of dimension $2n$. Then $\cE^{\vee}\otimes\cE$ is a(n honest) modular vector bundle because 
\begin{equation}\label{danglars}
\int_X  \Delta(\cE^{\vee}\otimes\cE) \cdot\alpha^{2n-2}=2\cdot r(\cE)^2\cdot d(\cE) (2n-3)!! q_X(\alpha)^{n-1}.
\end{equation} 
\end{rmk}
\subsubsection{Slope stability of twisted  sheaves}
In the present subsubsection $X$ is a compact K\"ahler manifold (of dimension $m$) with  a K\"ahler class $\omega\in H^{1,1}_{\RR}(X)$. 
Let  $\alpha\in H^2(X,\cO_X^{*})$. If $\cE$ is an 
$\alpha$-twisted torsion-free sheaf on $X$   the 
$\omega$ \emph{slope} of $\cE$ is defined as usual (recall that by Remark~\ref{rmk:ciunotorto} $c_1(\cE)$ is defined):
\begin{equation}
\mu_{\omega}(\cE)\coloneq \int_X\frac{c_1(\cE)\cdot \omega^{m-1}}{r(\cE)}.
\end{equation}
\begin{dfn}
 An $\alpha$-twisted torsion-free sheaf  $\cE$ on $X$ is $\omega$ \emph{slope} semistable if for all  $\alpha$-twisted 
   subsheaves $\cG\subset\cE$ with $0<r(\cG)<r(\cE)$ we have $\mu_{\omega}(\cG)\le \mu_{\omega}(\cE)$.
The sheaf $\cE$ is \emph{${\omega}$ slope stable} if strict inequality holds for all such $\cG$, and it is $\omega$ \emph{slope} polystable if it is a direct sum of $\omega$ slope stable 
torsion-free sheaves with equal $\omega$-slopes. 
\end{dfn}
\begin{rmk}\label{rmk:stabequiv}
Let $\cE$ be an $\alpha$-twisted locally-free sheaf of rank $r$  on $X$. There is a notion of  $\omega$ \emph{slope} (semi)stability for the  associated principal $\PGL_r$-bundle $\Princ(\cE)\to X$, see for example~\cite{anchbisw:hermeinstprinc}. These notions are equivalent, i.e.~$\cE$ is $\omega$ \emph{slope} semistable if and only if 
$\Princ(\cE)$ is. 
\end{rmk}
\begin{thm}[Anchouche - Biswas]\label{thm:princadj}
 An $\alpha$-twisted vector bundle $\cE$   on $X$ is $\omega$ slope polystable (respectively semistable) if and only if $\cE^{\vee}\otimes\cE$ is 
 $\omega$ slope polystable (respectively semistable).
\end{thm}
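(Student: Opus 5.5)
The plan is to reduce the statement to the untwisted Kähler case via the Kobayashi--Hitchin correspondence for principal bundles, together with Remark~\ref{rmk:stabequiv}. Let $\cE$ be an $\alpha$-twisted vector bundle of rank $r$. Then $\cE^{\vee}\otimes\cE\cong\cO_X\oplus\gotg(\PP(\cE))$ is the adjoint bundle of the principal $\PGL_r$-bundle $\Princ(\cE)$, up to the trivial summand.

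First I prove the direction ``$\cE$ polystable $\Rightarrow$ $\cE^{\vee}\otimes\cE$ polystable''. By Remark~\ref{rmk:stabequiv}, $\Princ(\cE)$ is $\omega$ slope polystable. The Anchouche--Biswas Kobayashi--Hitchin theorem (\cite{anchbisw:hermeinstprinc}) then gives $\Princ(\cE)$ a Hermitian--Einstein reduction to $PU(r)$. The induced metric on the adjoint bundle $\gotg(\PP(\cE))$ is Hermitian--Einstein with Einstein factor $0$, because the center of $\mathfrak{pgl}_r$ is trivial. The same holds for $\cO_X\oplus\gotg(\PP(\cE))$ with the flat metric on $\cO_X$. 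By the easy direction of Donaldson--Uhlenbeck--Yau on a compact Kähler manifold, a Hermitian--Einstein bundle is slope polystable, so $\cE^{\vee}\otimes\cE$ is polystable of slope $0$. Concretely one can also argue locally: on each $U_i$ a Hermitian--Einstein metric on $\cE_i$, determined up to a conformal factor, glues projectively, and the induced metric on $End(\cE_i)$ glues honestly.

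Next I prove the converse. Suppose $\cE^{\vee}\otimes\cE$ is polystable. I would show directly that $\cE$ is polystable.
\begin{enumerate}
\item Suppose $\cG\subset\cE$ is a saturated $\alpha$-twisted subsheaf with $\mu_\omega(\cG)>\mu_\omega(\cE)$.
\item Then $\cG\otimes(\cE/\cG)^{\vee}$ is an untwisted subsheaf of $\cE\otimes\cE^{\vee}$, on the open set where everything is locally free.
\item A direct computation gives $\mu_\omega(\cG\otimes(\cE/\cG)^{\vee})=\mu_\omega(\cG)-\mu_\omega(\cE/\cG)>0$.
\item This contradicts semistability of $\cE^{\vee}\otimes\cE$, which has slope $0$.
\end{enumerate}
This argument gives the semistable equivalence in both directions. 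For the forward semistable direction, I would instead use the Harder--Narasimhan filtration of $\cE^{\vee}\otimes\cE$. It is unique, hence invariant under the twisting, and its maximal destabilizing subsheaf is a subalgebra. This produces a parabolic reduction of $\Princ(\cE)$, contradicting the semistability of $\Princ(\cE)$ as in~\cite{anchbisw:hermeinstprinc}.

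For polystability in the converse direction, I would use the polystable decomposition of $\cE^{\vee}\otimes\cE$. The socle/identity-component argument, or again Hermitian--Einstein metrics on the adjoint bundle lifted to a $PU(r)$ reduction, yields a Hermitian--Einstein structure on $\Princ(\cE)$. Hence $\cE$ is polystable.

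The main obstacle is this last step: deducing polystability of $\cE$ from polystability of the adjoint bundle. One has to produce a reduction of structure group from an adjoint Hermitian--Einstein metric. This is exactly the content of Anchouche--Biswas, whose theorem I would invoke rather than reprove.
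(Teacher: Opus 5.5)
Your proposal is correct and follows the paper's route: pass to the principal $\PGL_r$-bundle $\Princ(\cE)$ via Remark~\ref{rmk:stabequiv} and invoke Anchouche--Biswas (Cor.~3.8 for polystability, Prop.~2.10 for semistability), and your Hermitian--Einstein and Harder--Narasimhan sketches are essentially the arguments behind those citations. One correction: the $\cG\otimes(\cE/\cG)^{\vee}$ argument proves only that semistability of $\cE^{\vee}\otimes\cE$ implies semistability of $\cE$, not ``both directions''; the forward implication genuinely needs the Harder--Narasimhan/parabolic-reduction argument, which relies on tensor products of semistable sheaves on a compact K\"ahler manifold being semistable.
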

\begin{proof}
The statement about polystability is~\cite[Cor.~3.8]{anchbisw:hermeinstprinc} (for $\PGL$-bundles), the statement about semistability is Prop.~2.10 in loc.~cit.
\end{proof}
Let $\ov{\ww}$ be as in~\eqref{doppiavubarra}. 
 The moduli space $M_{\ov{\ww}}(X,\omega)$ of $\omega$-slope stable projective $\PP^{r-1}$-bundles $\cP\to X$ with 
 $\eta_r(\cP)=\eta$ and $\Delta(\cP)=s$ is an  analytic space, see for example~\cite{anchbisw:hermeinstprinc}. By 
 Remarks~\ref{rmk:tortolibero} and~\ref{rmk:stabequiv} 
 we may, and will, view it as the moduli space of $\omega$-slope stable twisted locally-free sheaves $\cE$ on $X$ with $r(\cE)=r$, 
 $\eta_r(\PP(\cE))=\eta$, $\Delta(\cE)=s$.  
\subsubsection{Variation of slope stability for modular twisted  sheaves}
In the present subsubsection $X$ is a HK manifold. We let $c_X$ be the (small) Fujiki constant of $X$, see~\cite[(2.6.3)]{og:highdim}. 
Let $\cK(X)\subset H^{1,1}_{\RR}(X)$ be the K\"ahler cone 
(whose elements are the cohomology classes of K\"ahler metrics).  
Let ${\mathsf a}$ be a positive real number. An \emph{${\mathsf a}$-wall} of $\cK(X)$ is the intersection 
$\lambda^{\bot}\cap \cK(X)$, where 
  $\lambda\in H^{1,1}_{\ZZ}(X)$ is a class such that  
$ -{\mathsf a} \le q_X(\lambda)< 0$
(orthogonality is with respect to the BBF quadratic form $q_X$).
The set of ${\mathsf a}$-walls  is  locally finite, in particular the union of all the ${\mathsf a}$-walls  is closed in $\cK(X)$. An 
\emph{open ${\mathsf a}$-chamber} of $\cK(X)$  is a connected component of the complement of   the union of all the 
${\mathsf a}$-walls of $\cK(X)$. A K\"ahler class is ${\mathsf a}$-generic if it belongs to an open ${\mathsf a}$-chamber.

Let $\cE$ be an $\alpha$-twisted modular vector bundle on $X$, where  $\alpha\in H^2(X,\cO_X^{*})$. Set
\begin{equation}\label{atorto}
{\mathsf a}_{\rm tw}(\cE)\coloneq {\mathsf a}(\cE^{\vee}\otimes\cE)=\frac{r(\cE)^4\cdot d(\cE)}{2c_X},
\end{equation}
where $d(\cE)$ is as in~\eqref{fernand}. 
\begin{prp}\label{prp:campol}
Let $X$ be a  HK manifold,  $\omega_0\in\cK(X)$  an ${\mathsf a}$-generic  K\"ahler class on $X$, and  $\cCH\subset\cK(X)$ be the open 
${\mathsf a}$-chamber containing $\omega_0$. 
Let  $\cE$ be an $\alpha$-twisted modular vector bundle on $X$, where  $\alpha\in H^2(X,\cO_X^{*})$, such that 
${\mathsf a}_{\rm tw}(\cE)\le{\mathsf a}$. If $\cE$ is 
 $\omega_0$ slope stable then it is $\omega$ slope stable for every $\omega\in\cCH$. 
\end{prp}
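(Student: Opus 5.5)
The plan is to reduce to honest (untwisted) modular sheaves by passing to $\cE^{\vee}\otimes\cE$, and then to argue by connectedness of the chamber. By Theorem~\ref{thm:princadj}, $\cE$ is $\omega$ slope polystable (resp.\ semistable) if and only if $\cF\coloneq\cE^{\vee}\otimes\cE$ is. By~\eqref{danglars}, $\cF$ is a modular honest vector bundle with ${\mathsf a}(\cF)={\mathsf a}_{\rm tw}(\cE)\le{\mathsf a}$. Stability of $\cE$ is equivalent to polystability together with simplicity of $\cE$, i.e.\ $\End(\cE)=\CC$. Simplicity does not depend on $\omega$. Hence it suffices to show: if $\cE$ is $\omega_0$ slope stable, then $\cF$ is $\omega$ slope polystable for every $\omega\in\cCH$.

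\textbf{Step 1: replacing the projective argument.} I would first prove a Kähler analogue of Proposition~\ref{prp:paragonestab}, rather than appealing to it directly, since $X$ need not be projective. For a torsion-free modular sheaf $\cG$ with ${\mathsf a}(\cG)\le{\mathsf a}$ and a saturated subsheaf $\cH\subset\cG$ with $q_X(\lambda_{\cH,\cG},\omega)=0$ for some Kähler $\omega$ at which $\cG$ is semistable, the Bogomolov-type inequality of~\cite[Prop.~3.10]{ogfascimod} should give $-{\mathsf a}\le q_X(\lambda_{\cH,\cG})\le 0$. Here $\lambda_{\cH,\cG}\in H^{1,1}_{\ZZ}(X)$. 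I expect that inequality to hold for Kähler classes, since it comes from Bogomolov plus modularity, and Bogomolov holds for $\omega$-semistable sheaves on compact Kähler manifolds. Since $q_X$ is negative definite on $\omega^{\bot}\cap H^{1,1}_{\RR}$, either $\lambda_{\cH,\cG}=0$ or $\lambda_{\cH,\cG}^{\bot}$ is an ${\mathsf a}$-wall through $\omega$.

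\textbf{Step 2: openness and closedness of the good set.} Let $T\subset\cCH$ be the set of $\omega$ at which $\cF$ is $\omega$ slope polystable; it contains $\omega_0$. The set $T$ is open: slope stability is open in the Kähler class, because destabilizing subsheaves with bounded slope form a bounded family. Slope semistability is closed. Let $\omega_t\to\omega_\infty\in\cCH$ with $\omega_t\in T$; then $\cF$ is $\omega_\infty$ semistable. Suppose $\cH\subset\cF$ has $q_X(\lambda_{\cH,\cF},\omega_\infty)=0$. By Step 1, either $\lambda_{\cH,\cF}=0$, or $\omega_\infty$ lies on an ${\mathsf a}$-wall. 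The latter is impossible inside the open chamber. So every subsheaf of equal $\omega_\infty$-slope has $\lambda_{\cH,\cF}=0$, hence equal slope for all $\omega$, in particular for $\omega_0$. Since $\cF$ is $\omega_0$ polystable, this shows that the Jordan–Hölder filtrations at $\omega_\infty$ are built from subsheaves with $\lambda=0$. They are therefore the same as those at $\omega_0$, which split. A cleaner way to conclude is to note that the $\omega$-slopes of all saturated subsheaves of $\cF$ are now comparable uniformly across $\cCH$. Hence $\cF$ is $\omega$ semistable for all $\omega\in\cCH$. Applying Theorem~\ref{thm:princadj} to $\cE$ gives $\omega$ semistability of $\cE$. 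Applying Step 1 to $\cE$ itself (twisted, with $\lambda_{\cG,\cE}$ computed from twisted $c_1$), any $\cG\subset\cE$ of equal $\omega$-slope has $\lambda_{\cG,\cE}=0$, contradicting $\omega_0$ stability. The relevant bound is $r^2\cdot$ discriminant, which is dominated by ${\mathsf a}_{\rm tw}(\cE)$.

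\textbf{Main obstacle.} The main obstacle is Step 1 for twisted subsheaves and non-projective $X$. We need the Bogomolov–modularity bound $q_X(\lambda_{\cG,\cE})\ge-{\mathsf a}_{\rm tw}(\cE)$ for an $\omega$-semistable twisted $\cE$ and a subsheaf $\cG$ of equal slope. I would first try to deduce it by passing to $\cG^{\vee}\otimes\cE\subset\cE^{\vee}\otimes\cE$, or to $\cH om(\cE/\cG,\cG)$-type subsheaves of $\cF$, which are honest sheaves. Then I would apply~\cite[Prop.~3.10]{ogfascimod} to $\cF$, using ${\mathsf a}(\cF)={\mathsf a}_{\rm tw}(\cE)$. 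The subsheaf $\cG^{\vee}\otimes\cE$ has $\lambda$ proportional to $\lambda_{\cG,\cE}$ with a factor of size $r^2$, which is compatible with the normalization in~\eqref{atorto}.
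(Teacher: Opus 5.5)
Your main line is correct, but it is organized differently from the paper's proof.

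\emph{The paper's route.} It first obtains only \emph{semistability} of $\cE$ on all of $\cCH$. It does this by applying Theorem~\ref{thm:princadj} together with the untwisted chamber statement (Prop.~5.4 of~\cite{og:highdim}) to the honest modular bundle $\cE^{\vee}\otimes\cE$. It then upgrades to stability by a one-line extrapolation. Suppose $\cG\subset\cE$ has $q_X(\lambda,\omega)=0$, where $\lambda=r(\cE)c_1(\cG)-r(\cG)c_1(\cE)$. Then $q_X(\lambda,\omega_0)<0$ by $\omega_0$-stability. Hence $q_X(\lambda,(1-t)\omega_0+t\omega)=(1-t)q_X(\lambda,\omega_0)>0$ for $t$ slightly larger than $1$. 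These classes still lie in the open chamber $\cCH$, which contradicts semistability there.

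\emph{Your route.} You use ``stable $=$ polystable $+$ simple'' to reduce to \emph{polystability} of $\cF=\cE^{\vee}\otimes\cE$ on $\cCH$. You then prove that by an open--closed argument based on the Bogomolov wall bound. This is more self-contained: you are essentially reproving the cited untwisted result. Like the paper, it never needs a Bogomolov bound for twisted subsheaves. The price is that you must control polystability rather than semistability. You also need openness of stability in the K\"ahler class, a boundedness input (e.g.\ Toma's) when $X$ is not projective. In the paper that input is hidden inside the cited result, and the upgrade step uses only the openness of $\cCH$.

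Two points need tightening.

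First, polystability is not an open condition in general. Your set $T$ is open only for the following reason. By Step~1 at $\omega_1\in T$ and genericity of $\omega_1$, each stable summand $\cF_i$ of $\cF$ satisfies $\lambda_{\cF_i,\cF}=0$. So the summands keep equal slopes near $\omega_1$, and openness of stability can then be applied to each $\cF_i$. You need to say this. Closedness is best argued directly. Let $\cF=\bigoplus\cF_i$ be the $\omega_0$-decomposition (again $\lambda_{\cF_i,\cF}=0$). A subsheaf $\cH\subset\cF_i$ of equal $\omega_\infty$-slope has $\lambda_{\cH,\cF}=0$ by Step~1, hence equal $\omega_0$-slope, contradicting $\omega_0$-stability of $\cF_i$. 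Your sentence about slopes being ``comparable uniformly across $\cCH$'' is unjustified and not needed.

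Second, your ``main obstacle'' is not one. Once $\cF$ is polystable on $\cCH$, Theorem~\ref{thm:princadj} and simplicity of $\cE$ finish the proof, exactly as in your opening reduction. If you only had semistability, the paper's extrapolation trick would finish instead. If you still want the twisted wall bound, the right honest subsheaf is $\cE^{\vee}\otimes\cG\cong\cH om(\cE,\cG)\subset\cF$, not $\cG^{\vee}\otimes\cE$. It has $\lambda_{\cE^{\vee}\otimes\cG,\cF}=r(\cE)^2\lambda_{\cG,\cE}$, so Step~1 applied to $\cF$ gives the bound immediately.
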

\begin{proof}
First we note that if   $\omega\in\cC$ then $\cE$ is 
 $\omega$ slope semistable. In fact   suppose that $\cE$ is 
 not $\omega$ slope semistable.
 By Theorem~\ref{thm:princadj}
the (honest) vector bundle $\cE^{\vee}\otimes\cE$    is $\omega_0$ slope polystable, but  not $\omega$ slope semistable. Since 
$\cE^{\vee}\otimes\cE$    is modular, and ${\mathsf a}(\cE^{\vee}\otimes\cE)\le{\mathsf a}$, this contradicts the hypothesis that 
 $\omega_0,\omega$ belong to the same open ${\mathsf a}$-chamber. In fact  the proof is the same as the proof of~\cite[Prop.~5.4]{og:highdim}. 
  
  Now suppose that there exists $\omega\in\cC$ such that $\cE$ is 
 $\omega$ slope-semistable but not stable, i.e.~properly $\omega$ slope-semistable. Then there exists an $\alpha$-twisted subsheaf 
 $\cG\subset\cE$ with $0<r(\cG)<r(\cE)$ and (see~\cite[Lemma~5.1]{og:highdim})
\begin{equation}
q_X(r(\cE) c_1(\cG)-r(\cG) c_1(\cE),\omega)=0.
\end{equation}
Let $\omega_t\coloneq (1-t)\omega_0+t\omega$. 
Let $\epsilon>0$ such that $\omega_t\in\cK(X)$ for all $t\in (1,1+\epsilon)$. For such $t$ we have 
\begin{multline*}
q_X(r(\cE) c_1(\cG)-r(\cG) c_1(\cE),\omega_t)= \\
=(1-t)q_X(r(\cE) c_1(\cG)-r(\cG) c_1(\cE),\omega_0)+tq_X(r(\cE) c_1(\cG)-r(\cG) c_1(\cE),\omega)>0
\end{multline*}
because $\cE$ is 
 $\omega_0$ slope stable (see~\cite[Lemma~5.1]{og:highdim}). Thus  for all $t\in (1,1+\epsilon)$ the $\alpha$-twisted vector bundle $\cE$ is not $\omega_t$ slope-semistable. By  what we have just proved it follows that $\{\omega_t \mid t\in (1,1+\epsilon)\}\cap\cC=\es$:   contradiction because 
  $\cC$ is open. 
\end{proof}

\section{A series of moduli spaces of sheaves on K3 surfaces }\label{sec:fascisuk3}
\subsection{Set up}\label{subsec:pechino}
\setcounter{equation}{0}
Throughout the present section $S$ is a projective $K3$ surface and $h_S$ is an ample class on $S$. If $v=(r,l,s) \in 
H^0(S;\ZZ)\oplus\NS(S)\oplus H^4(S;\ZZ)$ is a vector with $r>0$ we let  $\cM_{v}=\cM_v(S,h_S)$ be the moduli space of $h_S$  semistable (meaning \emph{Gieseker-Maruyama} semistable) torsion-free sheaves on $S$ with Mukai vector $v(\cF)\coloneq\ch(\cF)\cdot\sqrt{\Td_S}$ equal to $v$. 
If $v$ is primitive and $h_S$ is ${\mathsf a}(v)$-generic then $\cM_{v}$ is a HK variety of Type $K3^{[n]}$ where $2n=2+\la v,v\ra$ and $\la ,\ra$ is the Mukai pairing. 
Whenever  there is no ambiguity we drop reference to $h_S$ when dealing with slope (semi)stability, or (semi)stability.
Let
\begin{equation}\label{eccovuuno}
v_1\coloneq (r_1,l_1,s_1)\in H^0(S;\ZZ)\oplus\NS(S)\oplus H^4(S;\ZZ)
\end{equation}
of square $-2$, with  $r_1>0$. Throughout the section we assume that $h_S$ is  ${\mathsf a}(v_1)$-generic. 
Since $v_1$ is primitive $\cM_{v_1}(S,h_S)$ is a singleton. We let $\cE_1$ be the unque (up to isomorphism) $h_S$ slope stable spherical vector bundle with 
\begin{equation}
v(\cE_1)=v_1= (r_1,l_1,s_1).
\end{equation}
Our notation ignores  the dependency of $\cE_1$  on $h_S$. 
For $t$ an  integer  we let
\begin{equation}\label{centocelle}
v(t)\coloneq tv_1-(0,0,1).
\end{equation}
\begin{rmk}\label{rmk:vutiprim}
The vector $v(t)$ is primitive. In fact suppose that a prime $p$ divides $v(t)$. By~\eqref{centocelle}  $p$ does not divide $t$ and hence it divides both $r_1$ and $l_1$. On 
the other hand  $p$ does not divide both $r_1$ and $l_1$ because 
 $v_1^2=-2$.  Contradiction.
\end{rmk}
\subsection{The boundary of  $\cM_{v(t)}$}
\setcounter{equation}{0}
\begin{dfn}
Let  $\cB_{v(t)}=\cB_{v(t)}(S,h_S)\subset \cM_{v(t)}$ be the closed subset  parametrizing  sheaves which are not locally free.
\end{dfn}
\begin{prp}\label{prp:bordodiemme}
Suppose that $h_S$ is ${\mathsf a}(v_1)$-generic.  Let $0<t\le r_1$. 
\begin{enumerate}
\item
If $[\cF]\in \cB_{v(t)}(S,h_S)$ then $\cF^{\vee\vee}\cong \cE_1^{\oplus t}$ and $\cF^{\vee\vee}/\cF\cong\CC_p$ for some $p\in S$. 
\item
By associating to $[\cF]\in \cB_{v(t)}(S,h_S)$ the support of $\cF^{\vee\vee}/\cF$ one gets a  fibration $\pi^a_{v(t)}\colon \cB_{v(t)}(S,h_S)\to S$ with fibers isomorphic to the Grassmannian $\Gr(r_1-t,r_1)$, locally trivial in the classical topology.
\end{enumerate}
\end{prp}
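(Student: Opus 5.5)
The plan for Item~(1) is to control $\cF^{\vee\vee}$ through its Euler pairing with the spherical bundle $\cE_1$. Let $[\cF]\in\cB_{v(t)}$ and let $\ell>0$ be the length of $Q\coloneq\cF^{\vee\vee}/\cF$. Then
\begin{equation*}
v(\cF^{\vee\vee})=tv_1+(0,0,\ell-1).
\end{equation*}
Since $v_1^2=-2$ and $\la v_1,(0,0,1)\ra=-r_1$,
\begin{equation*}
\chi(\cE_1,\cF^{\vee\vee})=-\la v_1,v(\cF^{\vee\vee})\ra=2t+r_1(\ell-1).
\end{equation*}
The sheaf $\cF^{\vee\vee}$ is locally free, because $S$ is a surface. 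It is $h_S$ slope semistable with the same slope as $\cE_1$, because $\cF$ is Gieseker semistable and $c_1(\cF)/r(\cF)=l_1/r_1$. Take a slope Jordan--H\"older filtration of $\cF^{\vee\vee}$ with slope stable factors $G_1,\ldots,G_N$, all of slope $\mu(\cE_1)$. Then $\chi(\cE_1,\cF^{\vee\vee})=\sum_i\chi(\cE_1,G_i)$, and I would bound each term separately.

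If $G_i^{\vee\vee}\not\cong\cE_1$, then $\Hom(\cE_1,G_i)=0$, and by Serre duality $\Ext^2(\cE_1,G_i)=\Hom(G_i,\cE_1)^{\vee}=0$. Both vanishings use only that a nonzero map between slope stable sheaves of equal slope is injective with torsion cokernel, and that $\cE_1$ is locally free. Hence $\chi(\cE_1,G_i)\le 0$. If $G_i^{\vee\vee}\cong\cE_1$, then $\chi(\cE_1,G_i)=2-\mathrm{length}(\cE_1/G_i)\le 2$. Let $m$ be the number of factors with $G_i^{\vee\vee}\cong\cE_1$; comparing ranks gives $m r_1\le tr_1$, so $m\le t$. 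Therefore
\begin{equation*}
2t+r_1(\ell-1)\le 2m\le 2t.
\end{equation*}
Since $\ell\ge1$, this forces $\ell=1$ and $m=t$. The $t$ factors of this kind then use up the whole rank $tr_1$, so every factor is (up to double dual) $\cE_1$, and equality in the bound forces each of them to be exactly $\cE_1$. Finally $\Ext^1(\cE_1,\cE_1)=0$, so all the extensions split and $\cF^{\vee\vee}\cong\cE_1^{\oplus t}$, with $Q\cong\CC_p$.

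For Item~(2), $\cF$ is the kernel of a surjection
\begin{equation*}
\cE_1^{\oplus t}=\cE_1\otimes\CC^t\to\CC_p.
\end{equation*}
Such a surjection is the same as a nonzero linear map $\phi\colon\CC^t\to\cE_1(p)^{\vee}$, taken up to scalars, and the isomorphism class of $\cF$ is the class of $\phi$ modulo $\GL_t=\mathrm{Aut}(\cE_1^{\oplus t})$. The plan has three steps.
\begin{enumerate}
\item If $\phi$ is not injective, a vector in $\ker\phi$ gives an inclusion $\cE_1\hookrightarrow\cF$. Its reduced Hilbert polynomial is strictly larger than that of $\cF$, since $\chi$ drops by $1$ while the rank is unchanged, so it destabilizes $\cF$. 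Hence stability forces $\phi$ injective.
\item Conversely, if $\phi$ is injective, I would show $\cF$ is Gieseker stable. A saturated subsheaf of the same slope has double dual of the form $\cE_1\otimes U$ for some $U\subset\CC^t$. Its intersection with $\cF$ is the kernel of $\cE_1\otimes U\to\CC_p$, and this map is nonzero because $\phi|_U\ne0$. A direct comparison of $\chi/r$ then gives the strict Gieseker inequality.
\item Injective maps modulo $\GL_t$ are exactly $t$-dimensional subspaces of $\cE_1(p)^{\vee}$. So the fibre of $\pi_{v(t)}$ over $p$ is $\Gr(t,r_1)\cong\Gr(r_1-t,r_1)$, and globally $\cB_{v(t)}$ is the relative Grassmannian $\Gr(t,\cE_1^{\vee})$ over $S$. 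The kernel of the tautological quotient on $S\times\Gr(t,\cE_1^{\vee})$ gives a flat family, hence a classifying morphism $\Gr(t,\cE_1^{\vee})\to\cB_{v(t)}$, which is bijective by the above.
\end{enumerate}

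The main obstacle I expect is upgrading this bijection to an isomorphism. This is needed to assert that $\cB_{v(t)}$ is a $\Gr$-fibration locally trivial in the classical topology, rather than merely having these fibres set-theoretically. I would first try to identify the Zariski tangent space of $\cB_{v(t)}$ with the tangent space of the relative Grassmannian, using the deformations of the pair $(\cE_1^{\oplus t}\to\CC_p)$ with $\cF^{\vee\vee}$ rigid, so that the classifying morphism is \'etale. If that is awkward, I would fall back on the normality of the source together with Zariski's main theorem, giving $\cB_{v(t)}$ the reduced structure.
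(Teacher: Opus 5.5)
\textbf{Item~(1).} Your proof is correct, but it takes a different route from the paper. The paper applies Lemma~\ref{lmm:pocosing} to $\cF^{\vee\vee}$. There, positivity of $\chi(\cE_1,\cF^{\vee\vee})=2t+qr_1$ forces a nonzero map $\cE_1\to\cF^{\vee\vee}$ or $\cF^{\vee\vee}\to\cE_1$. An induction on $t$ then peels off copies of $\cE_1$: it checks that the quotient is torsion-free and splits the extensions using $\Ext^1(\cE_1,\cE_1)=0$.

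You instead bound $\chi(\cE_1,-)$ factor by factor along a slope Jordan--H\"older filtration. A single inequality then gives $\ell=1$, $m=t$ and $G_i\cong\cE_1$, with no induction. The same estimate also proves Lemma~\ref{lmm:pocosing} itself.

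Two small corrections:
\begin{itemize}
\item The correct value is $\chi(\cE_1,G_i)=2-r_1\cdot\mathrm{length}(\cE_1/G_i)$, because $\chi(\cE_1,\CC_p)=r_1$. Your bound $\le 2$ and its equality case are unaffected.
\item To pass from a nonzero map between $G_i$ and $\cE_1$ to $G_i^{\vee\vee}\cong\cE_1$, say why. The map is injective between sheaves of equal rank and equal slope, so its cokernel is torsion with effective $c_1$ and $c_1\cdot h_S=0$. Hence the cokernel is supported in dimension $0$.
\end{itemize}

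\textbf{Item~(2).} Your Steps~1--3 are exactly the paper's argument: Lemma~\ref{lmm:stabse} together with the identification of $\GL(V_t)$-orbits of surjections $\cE_1(p)\to V_t^{\vee}$ with a Grassmannian. The paper stops at this pointwise description and never discusses the scheme structure of $\cB_{v(t)}$. So the obstacle you flag goes beyond what is written.

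However, your fallback misuses Zariski's main theorem. That theorem upgrades a bijective morphism to an isomorphism when the \emph{target} is normal. Normality of the source is of no help: the normalization of a cuspidal curve is bijective with smooth source, yet it is not an isomorphism.

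There are two sound routes:
\begin{itemize}
\item Show that the Kodaira--Spencer map of your family on $S\times\Gr(t,\cE_1^{\vee})$ is injective. Then the proper, pointwise-injective classifying map is a closed immersion into $\cM_{v(t)}$, and its image is $\cB_{v(t)}$ with the reduced structure.
\item In the later setting $r_1=2a$, $t\le a$, invoke Lemma~\ref{lmm:codtandetvar} and Proposition~\ref{prp:divuti}. These show that $\cB_{v(t)}=\cD^t_{v(t)}$ is smooth of dimension $t(r_1-t)+2=\dim\Gr(t,\cE_1^{\vee})$. After that, Zariski's main theorem does apply.
\end{itemize}
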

Before proving Proposition~\ref{prp:bordodiemme} we go through some preliminary results.  

\begin{lmm}\label{lmm:pocosing}
Suppose that $h_S$ is ${\mathsf a}(v_1)$-generic. Let  $\cF$ be  an $h_S$ slope semistable torsion free sheaf on $(S,h_S)$ such that 
\begin{equation}
v(\cF)=kv_1+q(0,0,1)
\end{equation}
for some $q\ge 0$. Then $\cF\cong \cE_1^{\oplus k}$. In particular $\cF$ is locally free and $q=0$.
\end{lmm}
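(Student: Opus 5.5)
Since $h_S$ is ${\mathsf a}(v_1)$-generic and $\cF$ is $h_S$ slope semistable with the same slope as $\cE_1$, the plan is to show that every slope Jordan–Hölder factor of $\cF$ has Mukai vector proportional in rank and $c_1$ to $v_1$. Then we use $v_1^2=-2$ and the Bogomolov-type inequality for stable sheaves on a K3 surface to force each factor to be $\cE_1$.

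Concretely, $r(\cF)=kr_1$ and $c_1(\cF)=kl_1$. Take a slope Jordan–Hölder filtration of $\cF$ with slope stable torsion-free factors $\cF_1,\dots,\cF_m$, all of slope $\mu_{h_S}(\cE_1)$. For each factor set $\lambda_i=r_1c_1(\cF_i)-r(\cF_i)l_1$. Then $\lambda_i\cdot h_S=0$, so $\lambda_i^2\le 0$ by Hodge index. The discriminant of $\cF$ equals that of $\cE_1^{\oplus k}$ plus a nonnegative contribution coming from $q\ge0$. Using Bogomolov for each factor and the standard decomposition of the discriminant of an extension, as in the surface case of \cite[Prop.~3.10]{ogfascimod}, one gets $-\lambda_i^2\le {\mathsf a}(v_1)$ up to the normalization built into the definition of ${\mathsf a}$. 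Genericity of $h_S$ then gives $\lambda_i=0$. Hence $v(\cF_i)=(r_i,\tfrac{r_i}{r_1}l_1,s_i)$.

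Next I control the $s_i$. Each $\cF_i$ is slope stable, hence simple, so $v(\cF_i)^2\ge -2$. Write $v(\cF_i)=\tfrac{r_i}{r_1}v_1+(0,0,\epsilon_i)$ with $\sum\epsilon_i=q\ge0$. Then
\[
v(\cF_i)^2=\tfrac{r_i^2}{r_1^2}(-2)-2r_i\epsilon_i\ge -2 .
\]
This gives $r_i\epsilon_i\le 1-r_i^2/r_1^2$.

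Because $v_1$ is primitive and $c_1(\cF_i)=\tfrac{r_i}{r_1}l_1$ is integral, $r_1/\gcd(r_1,l_1)$ divides $r_i$. Integrality of $v(\cF_i)$ together with $v_1^2=-2$ then forces $r_i$ to be a multiple of $r_1$: a prime dividing $r_1/r_i$ would divide both $r_1$ and $l_1$, contradicting $v_1^2=-2$, exactly as in Remark~\ref{rmk:vutiprim}. I expect this integrality bookkeeping to be the main obstacle. Once $r_i=n_ir_1$, we get $v(\cF_i)^2=-2n_i^2-2n_ir_1\epsilon_i\ge-2$, so $n_i=1$ and $\epsilon_i\le 0$. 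Since $\sum\epsilon_i=q\ge0$, all $\epsilon_i=0$, and each $\cF_i$ has $v(\cF_i)=v_1$.

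Finally I show the factors and the extensions are trivial. A slope stable sheaf with Mukai vector $v_1$ is Gieseker stable, so $\cF_i\in\cM_{v_1}$, which is a singleton. Thus $\cF_i\cong\cE_1$; this also covers non-locally-free factors, since those would lie in $\cM_{v_1}$ too. Then $\cF$ is an iterated extension of copies of $\cE_1$, and $\Ext^1(\cE_1,\cE_1)=0$ because $\cE_1$ is spherical. Hence all extensions split, $\cF\cong\cE_1^{\oplus k}$, and in particular $q=0$ and $\cF$ is locally free.
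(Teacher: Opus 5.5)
\textbf{The gap is in your first step,} which is the only place where the genericity hypothesis is used. The argument you cite (Bogomolov plus the discriminant identity, as in \cite[Prop.~3.10]{ogfascimod}) controls walls of $\cF$, not walls of $v_1$. It bounds $-\lambda_{\cH,\cF}^2$ by ${\mathsf a}(\cF)=r(\cF)^2\Delta(\cF)/4$. Since $\lambda_{\cF_i,\cF}=k\lambda_i$, this gives only $-\lambda_i^2\le r_1^2\Delta(\cF)/4$, which equals $k^2{\mathsf a}(v_1)$ when $q=0$. (Note also that $\Delta(\cF)=\Delta(\cE_1^{\oplus k})-2kqr_1$, so $q\ge 0$ lowers the discriminant rather than raising it.)

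Your $\lambda_i=r_1c_1(\cF_i)-r(\cF_i)l_1$ is already the normalization of $v_1$-walls. So no renormalization absorbs the factor $k^2$, and the lemma assumes only ${\mathsf a}(v_1)$-genericity. Here is a concrete failure. Take $r_1=2$ (so ${\mathsf a}(v_1)=6$), $k=2$, $q=0$, and Jordan--H\"older factors of ranks $1$ and $3$. Bogomolov plus the identity only give $-\lambda^2\le 18$, so $\lambda^2=-10$ is not excluded. Such a $\lambda$ is primitive, because $\lambda^2\equiv l_1^2\equiv 2\pmod 4$. Hence $\lambda^{\perp}$ is not an ${\mathsf a}(v_1)$-wall, and genericity says nothing.

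\textbf{How to repair it.} The step can be fixed, but you need the sharper input $v(\cF_i)^2\ge -2$ inside the identity, not just Bogomolov. Write $\rho_i=r(\cF_i)$; your $r_i$ clashes with $r_1=r(\cE_1)$. The exact identity is $\Delta(\cF)/r(\cF)=\sum_i\Delta(\cF_i)/\rho_i+r_1^{-2}\sum_i(-\lambda_i^2)/\rho_i$. Using $\Delta(\cF_i)\ge 2\rho_i^2-2$, it becomes $r_1^{-2}\sum_i(-\lambda_i^2)/\rho_i+2q\le 2\sum_i\rho_i^{-1}-2k/r_1$. Since $\lambda_j=0$ forces $r_1\mid\rho_j$, this contradicts the assumption that every nonzero $\lambda_j$ satisfies $-\lambda_j^2>{\mathsf a}(v_1)$. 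For $r_1=2$ you must also use the parity of $\lambda_j^2$ and the divisibility of $\lambda_j$ when $2\mid\rho_j$. So the argument is a contradiction over all factors at once, not a factor-by-factor bound.

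\textbf{How the paper does it.} The paper avoids wall estimates entirely. The equality $\chi(\cE_1,\cF)=-\langle v_1,v(\cF)\rangle=2k+qr_1>0$ and Serre duality give a nonzero map $\cE_1\to\cF$ or $\cF\to\cE_1$. Slope stability of $\cE_1$ (the only use of genericity) then shows the complementary piece (cokernel, resp.\ kernel) is torsion-free and semistable, with Mukai vector $(k-1)v_1+q'(0,0,1)$ for some $q'\ge 0$. Induction on $k$ together with $\Ext^1(\cE_1,\cE_1)=0$ finishes the proof.

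\textbf{Order of deductions later on.} Your later steps are fine except for the order. From $v(\cF_i)^2\ge-2$ you first get only $\epsilon_i\le 0$. You get $n_i=1$ only after $\sum_i\epsilon_i=q\ge 0$ has forced all $\epsilon_i=0$.
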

\begin{proof}
We have
\begin{equation}
\chi(S,\cE_1^{\vee}\otimes\cF)=-\la v_1,v(\cF)\ra=-\la v_1,kv_1+q(0,0,1)\ra=2k+q\cdot r_1.
\end{equation}
By Serre duality it follows that either there exists a non zero map
\begin{equation}\label{allblacks}
\cE_1\overset{\phi\not=0}{\lra} \cF
\end{equation}
or 
a non zero map
\begin{equation}\label{pumas}
\cF\overset{\psi\not=0}{\lra} \cE_1. 
\end{equation}
We argue by induction on $k$. Let $k=1$. If a map $\phi$ as in~\eqref{allblacks} exists, then it has maximal rank in codimension $1$ because 
$\mu_{h_S}(\cE_1)=\mu_{h_S}(\cF)$. Composing $\phi$ with the inclusion 
$\cF\to \cF^{\vee\vee}$ we get 
a  map of vector bundles $\cE_1\to \cF^{\vee\vee}$ which is an isomorphism in codimension $1$, hence an isomorphism. It follows that 
$ \cF^{\vee\vee}\cong\cE_1$, and hence 
$\cF= \cF^{\vee\vee}\cong\cE_1$.

If a map $\psi$ as in~\eqref{pumas} exists,  arguing as above we get that 
$\cF^{\vee\vee}\cong\cE_1$.  The exact sequence
\begin{equation}\label{hardasnails}
0\lra \cF\lra \cF^{\vee\vee}\lra \cA\lra 0,
\end{equation}
with $\cA$  an Artinian sheaf gives that $v(\cF)=v( \cF^{\vee\vee})-l( \cA)(0,0,1)=v_1-l( \cA)(0,0,1)$. It follows that 
$\cA=0$ and hence $\cF=\cF^{\vee\vee}\cong  \cE_1$.

Next we prove the inductive step. Thus $k\ge 2$. If a map $\phi$ as in~\eqref{allblacks} exists, then it has maximal rank in codimension $1$, and hence we get an exact sequence
\begin{equation}\label{smalto}
0\lra \cE_1\lra \cF\lra \cQ\lra 0,
\end{equation}
where $v(\cQ)=(k-1)v_1+q(0,0,1)$. 

We claim that $\cQ$ is  torsion free. In fact the two step locally free resolution of $\cQ$ in~\eqref{smalto} gives that 
$\Ext^2(\cQ,\CC_P)=0$  for any $P\in S$. But if   $\cQ$ has torsion, then $\Tors(\cQ)$ has zero dimensional support 
because $\cQ$ is locally free in codimension $1$ ($\phi$  has maximal rank in codimension $1$), and hence 
$\Ext^2(\cQ,\CC_P)\not=0$  for any $P$ in the support of $\Tors(\cQ)$. 

We have $\mu_{h_S}(\cQ)=\mu_{h_S}(\cF)$  because $ \mu_{h_S}(\cE_1)=\mu_{h_S}(\cF)$. Since every quotient  of $\cQ$ is a quotient  of $\cF$ it follows that $\cQ$ is slope semistable.  

Since $\cQ$ is torsion free, slope semistable, and $v(\cQ)=(k-1)v_1+q(0,0,1)$ it follows from the inductive hypothesis that 
 $\cQ\cong \cE_1^{\oplus (k-1)}$.
Since $\cE_1$ is spherical we have $\Ext^1(\cE_1^{\oplus (k-1)},\cE_1)=0$, and hence 
$ \cF\cong \cE_1^{\oplus k}$. This finishes the proof if a map $\phi$ as in~\eqref{allblacks} exists.

If a map $\psi$ as in~\eqref{pumas} exists one argues similarly.

\end{proof}
Let  $V_t$ be a complex vector space of dimension $t>0$, and let $f\colon\cE_1\otimes V_t\to\CC_p$ be a surjection, where $\CC_p$ is the skyscraper sheaf at $p\in S$.  Of course $f$ is determined by the non zero linear  map
$f(p)\colon\cE_1(p)\otimes V_t\to\CC$.  We view $f(p)$ as a map 
\begin{equation}
f(p)\colon \cE_1(p)\lra V^{\vee}_t.
\end{equation}
Let  $\cF\coloneq \ker(f)$. Thus $\cF$ fits into the exact sequence
\begin{equation}\label{arthurash}
0\lra\cF\lra \cE_1\otimes V_t\overset{f}{\lra}\CC_p \lra 0.
\end{equation}
\begin{lmm}\label{lmm:stabse}
Suppose that $h_S$ is ${\mathsf a}(v_1)$-generic.  The sheaf $\cF$ appearing in~\eqref{arthurash} is  semistable if and only if 
$f(p)\colon\cE_1(p)\to V^{\vee}_t$ is surjective, and then it is actually stable.
\end{lmm}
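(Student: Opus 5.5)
The plan is to compare $\cF$ with the polystable sheaf $\cE_1\otimes V_t$ and reduce Gieseker (semi)stability of $\cF$ to a computation of Euler characteristics of subsheaves of the form $\cF\cap(\cE_1\otimes W)$, with $W\subset V_t$.

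First, the numerics. From~\eqref{arthurash}, $v(\cF)=tv_1-(0,0,1)=v(t)$, so $\mu_{h_S}(\cF)=\mu_{h_S}(\cE_1)$. The reduced Hilbert polynomial of $\cF$ has constant term
\begin{equation*}
\frac{\chi(\cF)}{tr_1}=\frac{\chi(\cE_1)}{r_1}-\frac{1}{tr_1}.
\end{equation*}
Every subsheaf of $\cF$ is a subsheaf of $\cE_1\otimes V_t$, which is slope semistable, so $\cF$ is slope semistable. Hence only subsheaves $\cG\subset\cF$ with $\mu_{h_S}(\cG)=\mu_{h_S}(\cE_1)$ can violate Gieseker (semi)stability; subsheaves of smaller slope are harmless.

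Next, the structure of the equal-slope subsheaves. Let $\cG\subset\cF$ have equal slope, and let $\cG'$ be its saturation in $\cE_1\otimes V_t$. Then $\cG'$ is reflexive, hence locally free on the surface $S$, and it is slope semistable of slope $\mu_{h_S}(\cE_1)$. Since $\cE_1$ is slope stable ($h_S$ is ${\mathsf a}(v_1)$-generic) and $\cE_1\otimes V_t$ is polystable, every slope Jordan--H\"older factor of $\cG'$ is isomorphic to $\cE_1$ in codimension one. I would make this precise by arguing as in the proof of Lemma~\ref{lmm:pocosing}: take a nonzero map $\cE_1\to\cG'$ (or $\cG'\to\cE_1$) and induct, using $\Ext^1(\cE_1,\cE_1)=0$ and saturation to rule out torsion. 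This should give $\cG'\cong\cE_1^{\oplus k}$. Since $\Hom(\cE_1,\cE_1\otimes V_t)=V_t$, we then get $\cG'=\cE_1\otimes W$ for a subspace $W\subset V_t$ of dimension $k$. Consequently
\begin{equation*}
\cG\subset \cF\cap(\cE_1\otimes W)=\ker\bigl(f_{|\cE_1\otimes W}\bigr).
\end{equation*}
This identification of the saturations is the step I expect to be the main technical point; the rest is bookkeeping.

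Finally, the Euler characteristic comparison. The restriction $f_{|\cE_1\otimes W}$ vanishes if and only if $f(p)(\cE_1(p))\subset W^{\perp}\subset V_t^{\vee}$.
\begin{enumerate}
\item If $f(p)$ is not surjective, choose $W\neq 0$ to be the annihilator of $\im f(p)$. Then $\cE_1\otimes W\subset\cF$ is a proper subsheaf with reduced constant term $\chi(\cE_1)/r_1>\chi(\cF)/(tr_1)$, so $\cF$ is not semistable.
\item If $f(p)$ is surjective, then for every $W$ with $0<k=\dim W<t$ the map $f_{|\cE_1\otimes W}$ is nonzero, hence surjective onto $\CC_p$. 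For the corresponding $\cG$ (same rank $kr_1$) we get
\begin{equation*}
\chi(\cG)\le k\chi(\cE_1)-1,\qquad \frac{\chi(\cG)}{kr_1}\le\frac{\chi(\cE_1)}{r_1}-\frac{1}{kr_1}<\frac{\chi(\cF)}{tr_1},
\end{equation*}
where the last inequality uses $k<t$. The Hilbert polynomials agree in the higher-order terms because the slopes and the proportional first Chern classes match. So every proper saturated subsheaf is strictly less, and $\cF$ is stable.
\end{enumerate}
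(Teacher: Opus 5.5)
Your proposal is correct and takes essentially the same route as the paper. In the non-surjective case the paper exhibits $\cE_1\otimes\langle v\rangle\subset\cF$ for a single vector $v$ annihilating $\im f(p)$. In the surjective case it shows that a destabilizing $\cH$ has $\cH^{\vee\vee}=\cE_1\otimes U$ (your saturation $\cE_1\otimes W$), and then compares Euler characteristics using $\cH\subset\ker(f_{|\cE_1\otimes U})$. Your version is slightly more careful: you sketch why the saturation has the form $\cE_1\otimes W$, and you use the inequality $\chi(\cG)\le k\chi(\cE_1)-1$ rather than asserting that $\cH$ equals the kernel.
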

\begin{proof}
Assume that $f(p)\colon\cE_1(p)\to V^{\vee}_t$ is not surjective. Then there exists $0\not=v\in V_t$ such that $\langle f(p)(s),v\rangle=0$ for every  
$s\in\cE_1(p)$.  Let $\Lambda\colon\cE_1\to \cE_1\otimes V_t$ be given by 
$\Lambda(s)\coloneq s\otimes v$. Then $\Lambda$ is an injection, and since $f\circ\Lambda=0$, it factors through an injection 
$\ov{\Lambda}\colon\cE_1\to\cF$. Since $r(\cF)\cdot \chi(S,\cE_1(n))>r(\cE_1)\cdot \chi(S,\cF(n))$ for 
$n\gg 0$ it follows that $\cF$ is not semistable. 

Assume that $f(p)$ is surjective. Suppose that $\cH\subset\cF$ is a destabilizing subsheaf, i.e.~that $0<r(\cH)<r(\cF)$ and
\begin{equation}\label{destabilizzo}
 r(\cF)\cdot \chi(S,\cH(n))\ge r(\cH)\cdot \chi(S,\cF(n)),\quad n\gg 0.
\end{equation}
Then we get an inclusion 
$\cH^{\vee\vee}\hra \cE_1\otimes V_t$, and $\mu(\cH)\ge  \mu(\cE_1\otimes V_t)$. Since $\cE_1$ is slope stable, it follows that $\cH^{\vee\vee}=\cE_1\otimes U$ where $U\subset V_t$ is a  subspace such that $0<\dim U<\dim V_t$, 
and the 
 inclusion $\cH^{\vee\vee}\hra \cE_1\otimes V_t$ is induced by $U\subset V_t$. Since $f(p)$ has maximal rank, it follows that we have an exact sequence
\begin{equation}
0\lra\cH\lra \cE_1\otimes U\lra\CC_p \lra 0.
\end{equation}
This contradicts the inequality in~\eqref{destabilizzo}.
\end{proof}
\begin{proof}[Proof of Proposition~\ref{prp:bordodiemme}]
(1): Let $[\cF]\in \cB_{v(t)}$. Then $\cF^{\vee\vee}$ is slope semistable and  $v(\cF^{\vee\vee})=tv_1+q(0,0,1)$ for some $q\ge 0$.  By 
Lemma~\ref{lmm:pocosing} it follows that $\cF^{\vee\vee}\cong\cE_1^{\oplus t}$. Since $v(\cF)=tv(\cE_1)-(0,0,1)$ it follows that $\cF$ fits into an exact sequence
\begin{equation}\label{autunno}
0\lra\cF\lra \cE_1\otimes V_t\overset{f}{\lra}\CC_p \lra 0
\end{equation}
where $V_t$ is a complex vector space of dimension $t$, and $p\in S$. 

(2): Suppose that the sheaf $\cF$ fits into the exact sequence in~\eqref{autunno}. By Lemma~\ref{lmm:stabse} $\cF$ is semistable if and only if  $f(p)\colon \cE_1(p)\to V^{\vee}_t$ is surjective, and then 
$\cF$ is stable. The group $\GL(V_t)$ acts on the set of maps $\cE_1(p)\to V^{\vee}_t$ of maximal rank, and 
$\ker(f)$ is isomorphic to $\ker(g)$ (for $f,g\colon \cE_1(p)\to V^{\vee}_t$ of maximal rank) if and only if the maps $f$ and $g$ are in the same $\GL(V_t)$-orbit.  By associating to a map $\cE_1(p)\to V^{\vee}_t$ 
of maximal rank its kernel (an element of $\Gr(r_1-t,\cE_1(p))$)   we identify $\Gr(r_1-t,\cE_1(p))$ with 
the quotient   of the set of maps $\cE_1(p)\to V^{\vee}_t$ of maximal rank modulo the action of $\GL(V_t)$.
\end{proof}
\subsection{Brill-Noether loci}\label{subsec:divisoredet}
\setcounter{equation}{0}
From now on we assume that $r_1=2a$ in~\eqref{eccovuuno}. Thus
\begin{equation}\label{ipotesisubi}
v_1=(2a,l_1,s_1).
\end{equation}
We are mainly interested in the moduli space $\cM_{v_2}$, where
\begin{equation}\label{ricotta}
v_2=av_1-(0,0,1)=v(a).
\end{equation}
More precisely we are interested in the determinantal loci $\cD^k_{v_2}\subset\cM_{v_2}$ defined below, see Definition~\ref{dfn:divukappa}.  This leads to consider also the  moduli spaces $\cM_{v(t)}$ 
for $1\le t< a$ and   the analogous determinantal loci $\cD^k_{v(t)}\subset\cM_{v(t)}$. The collection of the closed subsets $\cD^k_{v(t)}$ form a dualizable collection in the sense of~\cite[Def.~2.3]{markman-bn-duality} (the maps $f_{\bu,\bu}$ are essentially given by the maps $\pi_{v(t)}^k$ appearing in~\eqref{fibgrass}).
\begin{lmm}\label{lmm:stabmuku}
Let $1\le t\le a$. If the polarization $h_S$  is ${\mathsf a}(v_2)$-generic then
\begin{enumerate}
\item[(A)]
the polarization $h_S$  is ${\mathsf a}(v(t))$-generic for every $1\le t\le a$, 
\item[(B)]
there is no strictly semistable sheaf $\cF$ on $S$ with $v(\cF)=v(t)$, and
\item[(C)]
all sheaves parametrized by $\cM_{v(t)}(S,h_S)$ are stable.
\item[(D)]
$\cM_{v(t)}(S,h_S)$ is smooth (and projective).
\end{enumerate}
\end{lmm}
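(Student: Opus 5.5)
The plan is to prove the four items in order. Items (A) and (B) carry the content; (C) and (D) then follow formally.

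For (A), I compute ${\mathsf a}(v(t))$ explicitly from~\eqref{adimukai}. With the Mukai pairing $\la (r,l,s),(r',l',s')\ra=l\cdot l'-rs'-r's$ we have $\la v_1,(0,0,1)\ra=-2a$ and $(0,0,1)^2=0$. Hence
\begin{equation*}
v(t)^2=-2t^2+4at,\qquad r(v(t))=2at.
\end{equation*}
Substituting into~\eqref{adimukai} gives
\begin{equation*}
{\mathsf a}(v(t))=a^2t^2\bigl(-2t^2+4at+8a^2t^2\bigr)=a^2\bigl((8a^2-2)t^4+4at^3\bigr).
\end{equation*}
This is increasing in $t>0$, so ${\mathsf a}(v(t))\le{\mathsf a}(v(a))={\mathsf a}(v_2)$ for $1\le t\le a$. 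The ${\mathsf a}$-walls for a smaller bound form a subset of those for a larger bound, because the condition $-{\mathsf a}\le q(\xi)<0$ only becomes weaker as ${\mathsf a}$ grows. Therefore an ${\mathsf a}(v_2)$-generic $h_S$ is ${\mathsf a}(v(t))$-generic.

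For (B), suppose $\cF$ is strictly semistable with $v(\cF)=v(t)$, and take a proper saturated subsheaf $\cG$ with the same reduced Hilbert polynomial. In particular $\mu_{h_S}(\cG)=\mu_{h_S}(\cF)$. Set $\xi\coloneq r(\cF)c_1(\cG)-r(\cG)c_1(\cF)$; then $\xi\cdot h_S=0$. The standard Bogomolov-type estimate for semistable sheaves on a K3 surface (as in Huybrechts--Lehn, Section 4.C, which is what underlies the definition of ${\mathsf a}(v)$) gives
\begin{equation*}
-{\mathsf a}(v(t))\le \xi^2\le 0.
\end{equation*}
By (A), $h_S$ is ${\mathsf a}(v(t))$-generic, so no class with $-{\mathsf a}(v(t))\le\xi^2<0$ is orthogonal to $h_S$. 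Hence $\xi^2=0$, and the Hodge index theorem forces $\xi=0$. Thus $c_1(\cG)/r(\cG)=c_1(\cF)/r(\cF)$. Equality of reduced Hilbert polynomials then also gives $\chi(\cG)/r(\cG)=\chi(\cF)/r(\cF)$, so $v(\cG)=c\,v(t)$ with $c=r(\cG)/r(\cF)\in\QQ\cap(0,1)$. But $v(t)$ is primitive (Remark~\ref{rmk:vutiprim}) and $v(\cG)$ is integral, so $c$ must be an integer, which contradicts $0<c<1$.

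For (C) and (D): item (C) is immediate from (B), since every point of $\cM_{v(t)}$ is represented by a semistable sheaf and none is strictly semistable. By Mukai's theorem, the moduli space of stable sheaves on a K3 surface is smooth at every point, of dimension $v(t)^2+2$, because $\Ext^2(\cF,\cF)_0=0$ by Serre duality and stability. Projectivity holds for Gieseker--Maruyama moduli in general.

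The only delicate point is the Bogomolov-type bound in (B), i.e.\ matching the normalization of ${\mathsf a}(v)$ in~\eqref{adimukai} with the lower bound on $\xi^2$. I expect this to be exactly the convention of~\cite{ogfascimod} (Prop.~3.10) specialized to surfaces, so it should hold with ${\mathsf a}(v(t))$ as the bound.
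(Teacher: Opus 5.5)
Your proposal is correct and follows the paper's proof. (A) uses ${\mathsf a}(v)=r^2(v^2+2r^2)/4$ together with $v(t)^2=2t(2a-t)$; (B) uses primitivity of $v(t)$ plus ${\mathsf a}(v(t))$-genericity; (C) follows from (B); and (D) follows from Mukai's (Mukai--Artamkin) smoothness criterion. The only difference is that in (B) you write out the standard wall argument that the paper takes as known, namely the Bogomolov bound $-\tfrac{r^2}{4}\Delta\le\xi^2\le 0$, then Hodge index, then the contradiction with primitivity. Your normalization of this bound does agree exactly with ${\mathsf a}(v)$.
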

\begin{proof}
(A): Equation~\eqref{adimukai} and the equality
\begin{equation}\label{cicciomalefico}
\la v(t),v(t)\ra=2t(2a-t)
\end{equation}
give that ${\mathsf a}(v(t))\le {\mathsf a}(v(a))={\mathsf a}(v_2)$.

(B): Since $l_1^2-4a s_1=-2$, we have 
$\gcd\{\divisore(l_1),2a\}=1$. It follows 
that $v(t)$ is primitive.  Item~(B) follows because $h_S$  is $v(t)$-generic. 

(C): follows from (B).

(D): follows from (C) and the Mukai-Artamkin criterion for smoothness. 
\end{proof}
\begin{ntn}\label{ntn:genpol}
Throughout the present subsection  $h_S$  is an  ${\mathsf a}(v_2)$-generic polarization of $S$, and 
$\cM_{v(t)}=\cM_{v(t)}(S,h_S)$. 
\end{ntn}
\begin{dfn}\label{dfn:divukappa}
 Let $0<t\le a$. For $k$ a natural number let 
\begin{equation*}
\cD_{v(t)}^k\coloneq\{[\cF]\in\cM_{v(t)} \mid \hom_S(\cF,\cE_1)\ge k\}.
\end{equation*}
We let $\cD_{v(t)}=\cD_{v(t)}^1$. 
\end{dfn}
We have the chain of closed subsets
\begin{equation}\label{catenadidet}
\cM_{v(t)}=\cD_{v(t)}^0\supset\cD_{v(t)}^{1}\supset \ldots \supset \cD_{v(t)}^{k}\supset  \cD_{v(t)}^{ k+1}\ldots
\end{equation}
The next remark gives each $\cD_{v(t)}^k$ a structure of closed subscheme of $\cM_{v(t)}$. 
\begin{rmk}\label{rmk:stratchiusa}
Let $[\cF]\in \cM_{v(t)}$. Since $\Hom_S(\cE_1,\cF)$   vanishes by stability of $\cF$, we have $\Ext^2_S(\cF,\cE_1)=\{0\}$ by Serre duality.
Since
\begin{equation}\label{chieffe}
\chi_S(\cF,\cE_1)=-\la v(t),v_1\ra=-2(a-t),
\end{equation}
it follows that 
\begin{equation}\label{extuno}
\ext^1_S(\cF,\cE_1)=\hom_S(\cF,\cE_1)+2(a-t).
\end{equation}
The vanishing of $\Ext^2_S(\cF,\cE_1)$ gives also that
 $\cD_{v(t)}^{k}$   can be locally  (in the analytic or \'etale topology) described as the degeneracy locus of a map of vector bundles, i.e.~as a determinantal variety. More precisely, locally we have a map  $\varphi\colon V^m\lra V^{m+2(a-t)}$ of vector bundles of ranks $m$ and $m+2(a-t)$ respectively such that $\cD_{v(t)}^k$ is the set of points $x$ such that $\ker\varphi(x)$ has dimension at least $k$. Form this we get that 
 $\cD_{v(t)}^k$ has a structure of 
closed  subscheme of $\cM_{v(t)}$, of  expected codimension $k(2a-2t+k)$. The last assertion means that if  $\cD_{v(t)}^k$ is non empty then every one of its irreducible components has codimension at most $k(2a-2t+k)$. 
\end{rmk}
From now on we view $\cD_{v(t)}^k$ as a
closed  subscheme of $\cM_{v(t)}$. 
Note that~\eqref{catenadidet} is a chain of inclusions of closed subchemes.
\begin{prp}\label{prp:pendbrill}
Suppose that  $h_S$ is ${\mathsf a}(v_2)$-generic. Let $2\le t\le a$ and let 
$[\cF]\in \cM_{v(t)}(S,h_S)$.  If $\cF$ is locally free then the following are equivalent:
\begin{enumerate}
\item
$\cF$ is not slope stable.  
\item
$[\cF]\in \cD_{v(t)}$, i.e.~$\hom_S(\cF,\cE_1)>0$, and there is an exact sequence
\begin{equation}\label{italiasvizzera}
0\lra \cH\lra \cF\overset{\psi}{\lra} \cE_1\otimes\Hom_S(\cF,\cE_1)^{\vee}\lra 0 
\end{equation}
where  $\cH$ is   slope stable (in particular $r(\cH)>0$),   and $\psi$  corresponds to $\Id\in \Hom_S(\cF,\cE_1)^{\vee}\otimes\Hom_S(\cF,\cE_1)$.
\end{enumerate}
\end{prp}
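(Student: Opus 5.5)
The implication (2)$\Rightarrow$(1) is immediate. Since $v(\cF)=tv_1-(0,0,1)$, we have $c_1(\cF)/r(\cF)=l_1/2a=c_1(\cE_1)/r(\cE_1)$. Hence the surjection $\psi$ in~\eqref{italiasvizzera} is a quotient of $\cF$ of the same slope. Because $r(\cH)>0$, this quotient has rank strictly between $0$ and $r(\cF)$, so $\cF$ is not slope stable.

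For (1)$\Rightarrow$(2), I would start from the fact that $\cF$ is Gieseker stable, hence slope semistable. Take a slope Jordan–Hölder filtration of $\cF$ with saturated terms; its factors $\cG_i$ are torsion free and slope stable, with $\mu(\cG_i)=\mu(\cF)$.

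\textbf{Step 1: pin down the Mukai vectors of the factors.} If $\lambda_{\cG_i,\cF}\neq 0$, it would lie on $h_S^{\bot}$ and have $-{\mathsf a}(v_2)\le q(\lambda)<0$ (Bogomolov-type bound, as in~\cite[Prop.~3.10]{ogfascimod}). This would put $h_S$ on an ${\mathsf a}(v_2)$-wall, so in fact $c_1(\cG_i)/r(\cG_i)=l_1/2a$. Since $\gcd\{\divisore(l_1),2a\}=1$, it follows that $2a\mid r(\cG_i)$, and therefore
\begin{equation*}
v(\cG_i)=m_iv_1+q_i(0,0,1),\qquad m_i\ge 1,\quad \sum_i m_i=t,\quad \sum_i q_i=-1 .
\end{equation*}
A slope stable sheaf is Gieseker stable, so $v(\cG_i)^2\ge -2$. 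As $\la v_1,(0,0,1)\ra=-2a$, this reads $-2m_i^2-4am_iq_i\ge -2$, which forces $q_i\le 0$, with $q_i=0$ only if $m_i=1$. In the case $q_i=0$, Lemma~\ref{lmm:pocosing} gives $\cG_i\cong\cE_1$. Hence exactly one factor, call it $\cH$, has $v(\cH)=mv_1-(0,0,1)$, and all the other factors are isomorphic to $\cE_1$.

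\textbf{Step 2: rearrange the filtration.} This is the step I expect to be the main obstacle. I want a filtration $0\subset\cH'\subset\cF$ with $\cH'\cong\cH$ and $\cF/\cH'$ having only factors $\cE_1$. Gieseker stability of $\cF$ gives $\Hom(\cE_1,\cF)=0$: the comparison of $\chi(\cE_1(n))$ with $\chi(\cF(n))$ is exactly as in Lemma~\ref{lmm:stabse}, since $v(\cF)$ has $q=-1<0$. Therefore the first term of the filtration cannot be $\cE_1$. Inductively, I would use $\Ext^1(\cE_1,\cE_1)=0$ to split off and commute any $\cE_1$ factor that sits below $\cH$, and $\Hom(\cE_1,\cF)=0$ to rule out every configuration except the one in which $\cH$ is the bottom term. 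Once this is done, $\cF/\cH$ is an iterated extension of copies of $\cE_1$, so $\cF/\cH\cong\cE_1\otimes V$ with $\dim V=t-m$, by the same vanishing $\Ext^1(\cE_1,\cE_1)=0$. Moreover $\cH\neq 0$, since otherwise $v(\cF)=tv_1$, which is false.

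\textbf{Step 3: identify $V$ with $\Hom_S(\cF,\cE_1)^{\vee}$.} Apply $\Hom(-,\cE_1)$ to $0\to\cH\to\cF\to\cE_1\otimes V\to 0$. It suffices to show $\Hom(\cH,\cE_1)=0$. Since $\cF$ is locally free and $\cH$ is saturated, $\cH$ is reflexive, hence locally free on the surface. A nonzero map $\cH\to\cE_1$ between slope stable bundles of the same slope would then be injective and an isomorphism in codimension $1$. This is only possible when $m=1$, and then it is an isomorphism of vector bundles, contradicting $v(\cH)\neq v_1$. So $\Hom(\cF,\cE_1)=\Hom(\cE_1\otimes V,\cE_1)=V^{\vee}$, which gives $\hom_S(\cF,\cE_1)>0$ and shows that the quotient map is the canonical $\psi$ of~\eqref{italiasvizzera}. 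Finally, $\cH$ is slope stable because it is a Jordan–Hölder factor.
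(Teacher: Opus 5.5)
Your proof is correct, but it is organized differently from the paper's.

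\textbf{The paper's route.} The paper takes a single destabilizing sequence $0\to\cH\to\cF\to\cQ\to 0$ with $\cH$ slope stable. Genericity gives $v(\cQ)=kv_1+q(0,0,1)$. Gieseker stability of $\cF$, through the inequality $\chi(\cF)/r(\cF)<\chi(\cQ)/r(\cQ)$, then gives $q\ge 0$. Lemma~\ref{lmm:pocosing} applied to the whole quotient yields $\cQ\cong\cE_1^{\oplus k}$ in one stroke.

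\textbf{Your route.} You pass to the full Jordan--H\"older filtration. You replace the Hilbert-polynomial comparison by the bound $v^2\ge -2$ on each simple factor. This shows that the defect $-(0,0,1)$ sits in exactly one factor and all other factors are $\cE_1$. Gieseker stability enters only through $\Hom(\cE_1,\cF)=0$. The paper's argument is shorter; yours determines the complete Jordan--H\"older structure of $\cF$.

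\textbf{Remarks.}
\begin{enumerate}
\item Your Step~2 is not an obstacle at all. Step~1 shows exactly one factor differs from $\cE_1$. The bottom factor is a subsheaf of $\cF$, so $\Hom(\cE_1,\cF)=0$ forces it to be $\cH$; no rearranging or splitting is needed. There is at least one $\cE_1$ factor because $\cF$ is not slope stable, so the filtration has length at least $2$. That is what gives $V\neq 0$, hence $\hom_S(\cF,\cE_1)>0$; the observation $\cH\neq 0$ is not the relevant nontriviality.
\item Your Step~3 supplies a detail that the paper compresses into ``since $\cH$ is slope stable.'' The argument runs: $\cH$ is saturated in a locally free sheaf, hence locally free, so a nonzero map to $\cE_1$ would be an isomorphism when $m=1$. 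This is genuinely needed, since $\ker(\cE_1\to\CC_p)$ is slope stable and does map nontrivially to $\cE_1$.
\item In Step~1 the Bogomolov-type wall bound is a statement about subsheaves. You should apply it to the filtration terms $\cF_i\subset\cF$, each of which then has $c_1/r=l_1/2a$; the statement for the factors follows.
\end{enumerate}
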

\begin{proof}
We prove that (1) implies (2). 
Since $\cF$ is not slope stable there exists an exact sequence of torsion free sheaves
\begin{equation*}
0\lra \cH\lra \cF\lra \cQ\lra 0
\end{equation*}
where $0<r(\cQ)<r(\cF)$, $\mu(\cF)=\mu(\cQ)$ and $\cH$ is slope stable. Since  $h_S$ is ${\mathsf a}(v_2)$-generic it is also 
 ${\mathsf a}(v(t))$-generic (see Lemma~\ref{lmm:stabmuku}).  
 It follows (see~\cite[Pro.~3.10]{ogfascimod}) that $r(\cQ)c_1(\cF)-r(\cF)c_1(\cQ)=0$. 
This gives  that   $r(\cQ)=2ka$ and $c_1(\cQ)=kl_1$ where $k$ is an integer  such that $1\le k\le t-1$, and hence we may write $v(\cQ)=(2ka,kl_1,m)$ for some integer $m$. 
By stability of $\cF$ (and the equality $\mu(\cF)=\mu(\cQ)$) it follows that
\begin{equation*}
1+\frac{s_1}{2a}-\frac{1}{2at}=\frac{\chi(S,\cF)}{r(\cF)}<\frac{\chi(S,\cQ)}{r(\cQ)}=1+\frac{m}{2ka}.
\end{equation*}
This gives that $m\ge ks_1$ (recall that $1\le k\le t-1$). Hence $v(\cQ)=kv_1+q(0,0,1)$ where $q\ge 0$. 
Since $\cF$ is slope semistable and $\mu(\cF)=\mu(\cQ)$ the sheaf $\cQ$ is slope semistable. By
 Lemma~\ref{lmm:pocosing}
 we get that  $\cQ\cong\cE_1^{\oplus k}$. Since $\cH$ is slope stable we have 
 $\Hom_S(\cH,\cE_1)=0$ and hence $\Hom_S(\cF,\cE_1)\cong \Hom_S(\cE_1^{\oplus k},\cE_1)$. Since $\cE_1$ is stable  it follows that $\dim\Hom_S(\cF,\cE_1)=k$. This proves that (1) implies (2).
 
(2) implies  (1)  because $c_1(\cF)/r(\cF)=l_1/2a=c_1(\cE_1)/r(\cE_1)$. 
\end{proof}
\begin{rmk}\label{rmk:anatra}
Let $\cH$ be the sheaf appearing in~\eqref{italiasvizzera}. Then $\Hom_S(\cH,\cE_1)=0$. In fact this follows by applying the functor 
$\Hom_S(-,\cE_1)$ to the exact sequence in~\eqref{italiasvizzera} and noting that $\Ext^1_S(\cE_1,\cE_1)=0$ because $\cE_1$ is spherical.
\end{rmk}
\begin{rmk}\label{rmk:arancia}
Applying the functor 
$\Hom_S(\cE_1,-)$ to the exact sequence in~\eqref{italiasvizzera} we get the coboundary map
\begin{equation}
\CC \Id_{\cE_1}\otimes\Hom_S(\cF,\cE_1)^{\vee}\overset{\partial}{\lra} \Ext^1_S(\cE_1,\cH)
\end{equation}
The map is injective because  by stability of $\cF$ we have
 $\Hom_S(\cE_1,\cF)=0$.
\end{rmk}
The result below gives a converse of Proposition~\ref{prp:pendbrill}. 
\begin{prp}\label{prp:interbarca}
Suppose that  $h_S$ is ${\mathsf a}(v_2)$-generic. 
  Let $0<k<t\le a$. Let
 $[\cH]\in (\cM_{v(t-k)}\setminus \cD_{v(t-k)})$, and let $V_k\subset\Ext^1_S(\cE_1,\cH)$ be a $k$-dimensional vector subspace (by~\eqref{extuno} we have $2k\le \ext^1(\cE_1,\cH)$). Let
\begin{equation}\label{estensione}
0\lra \cH\lra \cF\overset{\varphi}{\lra}\cE_1 \otimes V_k\lra 0 
\end{equation}
be the  extension with extension class in $\Ext^1_S(\cE_1\otimes V_k,\cH)=V_k^{\vee}\otimes \Ext^1_S(\cE_1,\cH)$ corresponding to the inclusion map $V_k\hra \Ext^1_S(\cE_1,\cH)$. Then $\cF$ is  a stable  sheaf,  $[\cF]\in \cD_{v(t)}$, and 
$\hom_S(\cF,\cE_1)=k$.
\end{prp}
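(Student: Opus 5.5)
The plan is to check the numerics first, then compute $\hom_S(\cF,\cE_1)$ and $\hom_S(\cE_1,\cF)$ from the two long exact sequences attached to~\eqref{estensione}, and finally to prove stability by intersecting a destabilizing subsheaf with $\cH$.

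\emph{Numerics and the $\Hom$ computation.} From~\eqref{estensione} we get $v(\cF)=v(t-k)+kv_1=tv_1-(0,0,1)=v(t)$. The sheaf $\cF$ is torsion free, being an extension of torsion free sheaves. Once stability is known, $[\cF]\in\cM_{v(t)}$. Applying $\Hom_S(-,\cE_1)$ to~\eqref{estensione} gives
\begin{equation*}
0\lra \Hom_S(\cE_1\otimes V_k,\cE_1)\lra\Hom_S(\cF,\cE_1)\lra\Hom_S(\cH,\cE_1).
\end{equation*}
The last term vanishes because $[\cH]\notin\cD_{v(t-k)}$. Hence $\hom_S(\cF,\cE_1)=k>0$, and so $[\cF]\in\cD_{v(t)}$. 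Applying $\Hom_S(\cE_1,-)$ instead, and using $\Hom_S(\cE_1,\cH)=0$ (stability of $\cH$), we see that $\Hom_S(\cE_1,\cF)$ is the kernel of the coboundary $V_k\to\Ext^1_S(\cE_1,\cH)$. That coboundary is the given inclusion, so $\Hom_S(\cE_1,\cF)=0$.

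\emph{Preliminaries on $\cH$.} I first show that $\cH$ is locally free and slope stable. By Proposition~\ref{prp:bordodiemme}, a non locally free $[\cH']\in\cM_{v(t-k)}$ has $\cH'^{\vee\vee}\cong\cE_1^{\oplus(t-k)}$, so $\Hom_S(\cH',\cE_1)\neq0$. Therefore $\cB_{v(t-k)}\subset\cD_{v(t-k)}$, and $\cH$ is locally free.
\begin{itemize}
\item If $t-k\ge2$, Proposition~\ref{prp:pendbrill} gives slope stability of $\cH$ directly.
\item If $t-k=1$, a slope destabilizing subsheaf would have $c_1$ proportional to $l_1$ by genericity (\cite[Prop.~3.10]{ogfascimod}). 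This is impossible since the rank is $2a$ and $\gcd\{\divisore(l_1),2a\}=1$.
\end{itemize}
In both cases $\mu(\cH)=\mu(\cE_1)=\mu(\cF)$. Hence $\cF$ is slope semistable, and any slope-equal subsheaf has $c_1/r=l_1/2a$.

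\emph{Stability.} Let $\cG\subset\cF$ be saturated with $0<r(\cG)<r(\cF)$ and $p(\cG)\ge p(\cF)$ (reduced Hilbert polynomials). Then $\mu(\cG)=\mu(\cF)$. Let $\cG_1=\cG\cap\cH$ and let $\cI$ be the image of $\cG$ in $\cE_1\otimes V_k$. Since $\mu(\cG_1)\le\mu$, $\mu(\cI)\le\mu$ and the ranks add up, both inequalities are equalities.

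Case $\cG_1=0$. Then $\cG\cong\cI$. Moreover $\cI^{\vee\vee}=\cE_1\otimes U$ for some $0\ne U\subset V_k$ ($\cE_1$ stable), with $\cI\subset\cE_1\otimes U$ of finite colength. The extension pulled back to $\cI$ splits. But $\Ext^1_S(\cE_1\otimes U,\cH)\to\Ext^1_S(\cI,\cH)$ is injective, because $\Ext^1_S(T,\cH)=0$ for $T$ zero-dimensional and $\cH$ locally free. The class restricted to $\cE_1\otimes U$ is nonzero since $U\hookrightarrow\Ext^1_S(\cE_1,\cH)$. This is a contradiction.

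Case $\cG_1\neq0$. Slope stability of $\cH$ forces $r(\cG_1)=r(\cH)$. So $\cQ=\cF/\cG$ is a nonzero torsion free quotient of $\cE_1\otimes V_k$ of the same slope. Its kernel $\cK$ is saturated in a vector bundle on a surface, hence reflexive and thus locally free. Being of slope $\mu$ inside the polystable bundle $\cE_1\otimes V_k$, it equals $\cE_1\otimes U'$. Hence $\cQ\cong\cE_1\otimes(V_k/U')$ and $p(\cQ)=p(\cE_1)$. Now compare $\chi/r$: it equals $1+\tfrac{s_1}{2a}$ for $\cE_1$ and $1+\tfrac{s_1}{2a}-\tfrac1{2at}$ for $\cF$. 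So $p(\cQ)>p(\cF)$, contradicting $p(\cG)\ge p(\cF)$. Therefore $\cF$ is stable.

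I expect the main obstacle to be the case $\cG\cap\cH=0$, where $\cI$ need not be locally free. The injectivity of $\Ext^1_S(\cE_1\otimes U,\cH)\to\Ext^1_S(\cI,\cH)$, which uses local freeness of $\cH$, is the step I would verify most carefully.
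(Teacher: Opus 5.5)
Your proof is correct and follows the paper's strategy. Like the paper, you split a destabilizing $\cG$ into $\cG\cap\cH$ and its image in $\cE_1\otimes V_k$, rule out the case where $\cG$ maps onto $\cE_1\otimes U$ via the choice of extension class, and dispose of the remaining case by comparing $\chi/r$. Two of your steps make explicit points the paper's weighted-average argument passes over quickly: slope stability of $\cH$ forces $\cH\subset\cG$ when the kernel is nonzero, and the vanishing $\Ext^1_S(T,\cH)=0$ handles an image of finite colength.
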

\begin{proof}
  Suppose that $\cF$ is  not stable. Then  $\cF$ is   unstable by Lemma~\ref{lmm:stabmuku}, i.e.~there exists a non zero subsheaf 
$\cG\subset\cF$ such that 
\begin{equation}\label{limite}
\frac{\chi(S,\cG(n))}{r(\cG)}>\frac{\chi(S,\cF(n))}{r(\cF)},\quad n\gg 0.
\end{equation}
 Consider the exact sequence
\begin{equation}
0\lra \ker(\varphi_{|\cG})\lra \cG\overset{\varphi_{|\cG}}{\lra}\varphi(\cG)\lra 0 
\end{equation}
Since $\mu(\cH)=\mu(\cF)=\mu(V_k\otimes_{\CC}\cE_1)$, it follows from the inequality in~\eqref{limite} that 
\begin{equation}\label{lalaland}
\mu(\cG)=\mu(\cF)=\mu(\ker(\varphi_{|\cG}))=\mu(\varphi(\cG)).
\end{equation}
(Of course $\mu(\ker(\varphi_{|\cG}))$ make sense  only if $\ker(\varphi_{|\cG})\not=0$ and similarly for $\mu(\varphi(\cG))$.)
Hence the inequality  in~\eqref{limite} implies that 
\begin{equation}
\frac{\chi(S,\cG)}{r(\cG)}>\frac{\chi(S,\cF)}{r(\cF)}.
\end{equation}
Suppose that $\ker(\varphi_{|\cG})$ is non zero. Then since it is a subsheaf of the stable sheaf $\cH$ we have 
\begin{equation*}
\frac{\chi(S,\ker(\varphi_{|\cG})}{r(\ker(\varphi_{|\cG}))}<\frac{\chi(S,\cH)}{r(\cH)}=1+\frac{s_1}{2a}-\frac{1}{2a(t-k)}
<1+\frac{s_1}{2a}-\frac{1}{2at}=\frac{\chi(S,\cF)}{r(\cF)}.
\end{equation*}
Similarly, if $\varphi(\cG)$ is non zero then $\chi(S,\varphi(\cG))/r(\varphi(\cG))\le \chi(S,V_k\otimes\cE_1)/r(V_k\otimes\cE_1)$, with equality if and only if $\varphi(\cG)=U_j\otimes \cE_1$ for some vector subspace $U_j\subset V_k$ of non zero dimension $j$ (recall the equalities in~\eqref{lalaland}).
The equality
\begin{equation*}
\frac{\chi(S,\cG)}{r(\cG)}=\frac{r(\ker(\varphi_{|\cG}))}{r(\cG)}\frac{\chi(S,\ker(\varphi_{|\cG})}{r(\ker(\varphi_{|\cG}))}+
\frac{r(\varphi(\cG))}{r(\cG)}\frac{\chi(S,\varphi(\cG))}{r(\varphi(\cG))}
\end{equation*}
gives that $\varphi_{|\cG}=0$ and $\varphi(\cG)=U_j\otimes \cE_1$. This is absurd: by our choice of extension class the inclusion 
$U_j\otimes \cE_1\subset V_k\otimes\cE_1$ does not lift to $\cF$.
\end{proof}
\begin{lmm}\label{lmm:codtandetvar}
Let $0<t\le a$. Suppose that $[\cF]\in (\cD^k_{v(t)}\setminus \cD^{k+1}_{v(t)})$. Then 
\begin{equation}\label{codtan}
\dim\cM_{v(t)}-\dim\Theta_{[\cF]}\cD^k_{v(t)}=k(2a-2t+k).
\end{equation}
\end{lmm}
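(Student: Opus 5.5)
The plan is to compute the Zariski tangent space of the determinantal subscheme $\cD^k_{v(t)}$ at $[\cF]$ using its local description from Remark~\ref{rmk:stratchiusa}, and to show that the relevant deformation-theoretic map has maximal rank. The tangent space $\Theta_{[\cF]}\cM_{v(t)}$ is $\Ext^1_S(\cF,\cF)$; this is smooth by Lemma~\ref{lmm:stabmuku}. Set $V\coloneq\Hom_S(\cF,\cE_1)$, so $\dim V=k$. By~\eqref{extuno}, $W\coloneq\Ext^1_S(\cF,\cE_1)$ has dimension $k+2(a-t)$.

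A first-order deformation $\xi\in\Ext^1_S(\cF,\cF)$ lies in the tangent space to the locus where $\hom\ge k$ if and only if every $\phi\in V$ extends to first order. The obstruction to extending $\phi$ along $\xi$ is $\phi\circ\xi\in W$. So the standard description of tangent spaces to degeneracy loci gives
\begin{equation*}
\Theta_{[\cF]}\cD^k_{v(t)}=\ker\bigl(\Ext^1_S(\cF,\cF)\overset{\beta}{\lra}\Hom(V,W)\bigr),\qquad \beta(\xi)(\phi)=\phi\circ\xi.
\end{equation*}
This agrees with the local model $\varphi\colon V^m\to V^{m+2(a-t)}$: the tangent space to $\{\dim\ker\ge k\}$ at a point with kernel exactly $V$ is the kernel of $d\varphi$ composed with $\Hom(\ker,\coker)$. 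Here $\ker\cong V$ and $\coker\cong W$, since $\Ext^2_S(\cF,\cE_1)=0$. Since $\dim\Hom(V,W)=k(k+2a-2t)$, formula~\eqref{codtan} is equivalent to the surjectivity of $\beta$.

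To prove surjectivity I would dualize via Serre duality. We have $\Ext^1_S(\cF,\cF)^{\vee}\cong\Ext^1_S(\cF,\cF)$ and $W^{\vee}\cong\Ext^1_S(\cE_1,\cF)$. Then $\beta^{\vee}\colon V\otimes\Ext^1_S(\cE_1,\cF)\to\Ext^1_S(\cF,\cF)$ is the Yoneda composition $\phi\otimes e\mapsto e\circ\phi$, and I must show it is injective. Take $\sum_i\phi_i\otimes e_i$ with $\phi_1,\dots,\phi_k$ a basis of $V$, and assemble these into $\Phi\colon\cF\to\cE_1\otimes V^{\vee}$ and $e\in\Ext^1_S(\cE_1\otimes V^{\vee},\cF)$, so that the element maps to $e\circ\Phi$.

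I would use the structure of $\cF$. Two cases arise.
\begin{enumerate}
\item If $\cF$ is locally free and $k<t$, Proposition~\ref{prp:pendbrill} shows that $\Phi$ is surjective with stable kernel $\cH$, as in~\eqref{italiasvizzera}.
\item If $\cF\in\cB_{v(t)}$, Proposition~\ref{prp:bordodiemme} gives $\cF\subset\cE_1^{\oplus t}$, and one gets a similar description.
\end{enumerate}
In the surjective case, apply $\Hom(-,\cF)$ to $0\to\cH\to\cF\to\cE_1\otimes V^{\vee}\to 0$. This gives that $\Phi^{*}\colon\Ext^1(\cE_1\otimes V^{\vee},\cF)\to\Ext^1(\cF,\cF)$ has kernel equal to the image of $\Hom(\cH,\cF)$ under the coboundary. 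Then I need $\Hom(\cH,\cF)$ to be spanned by the inclusion $\cH\hookrightarrow\cF$, together with the fact that the coboundary of that inclusion vanishes; the latter holds because the inclusion extends to $\Id_\cF$. The first point should follow from stability: a map $\cH\to\cF$ composed with $\Phi$ gives an element of $\Hom(\cH,\cE_1)\otimes V^{\vee}$, which is $0$ by Remark~\ref{rmk:anatra}, so the map lands in $\cH$ and is a scalar since $\cH$ is simple. Hence $\Phi^{*}$ is injective, and $\beta^{\vee}$ is injective.

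The main obstacle is the cases where $\Phi$ is not surjective, namely $\cF$ non-locally-free, or $k=t$ (for $t=a$). Here I would first try to show directly that $e\circ\Phi=0$ forces $e=0$. Write $\cI\coloneq\im\Phi$. The image factors through $\Ext^1(\cI,\cF)$, and $\cE_1\otimes V^{\vee}/\cI$ has zero-dimensional support (as in Proposition~\ref{prp:bordodiemme}) or small rank. The restriction $\Ext^1(\cE_1\otimes V^{\vee},\cF)\to\Ext^1(\cI,\cF)$ has kernel coming from $\Ext^1(\cQ,\cF)$ with $\cQ$ the cokernel; if $\cQ$ is Artinian this is dual to $\Ext^1(\cF,\cQ)$. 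In that case I would handle the kernel by an explicit local computation at the point $p$, using the surjectivity of $f(p)$ from Lemma~\ref{lmm:stabse}.
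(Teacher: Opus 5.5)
Your locally free case is correct, and it is the paper's argument read through Serre duality. The paper proves that $\Phi_\cF$ (your $\beta$) is onto by showing that $\alpha\colon\Ext^2_S(\cF,\cH)\to\Ext^2_S(\cF,\cF)$ is an isomorphism. The transpose of $\alpha$ is the restriction $\Hom_S(\cF,\cF)\to\Hom_S(\cH,\cF)$, and its surjectivity is exactly your claim that $\Hom_S(\cH,\cF)$ is spanned by the inclusion. You also rightly read off $\ext^1_S(\cF,\cE_1)=k+2(a-t)$ directly from~\eqref{extuno}.

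The gap is the non-locally-free case, which you leave as a plan, and the plan points at the wrong tools.

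First, the case split is cleaner than you state. By Items~(1) and~(3) of Proposition~\ref{prp:divuti}, whose proofs do not use this lemma, $k=t$ happens exactly when $[\cF]\in\cB_{v(t)}$. For locally free $\cF$, Proposition~\ref{prp:pendbrill} already makes $\Phi$ surjective. In the remaining case, $\Ext^i_S(\CC_p,\cE_1)=0$ for $i<2$, so restriction identifies $\Hom_S(\cF^{\vee\vee},\cE_1)$ with $\Hom_S(\cF,\cE_1)$. Hence your $\Phi$ is just the inclusion $\cF\hra\cF^{\vee\vee}=\cE_1\otimes V_t$, with cokernel $\CC_p$. There is no ``small rank'' cokernel and no reason to pass to an image $\cI$.

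Second, no local computation at $p$ is needed. Applying $\Hom_S(-,\cF)$ gives
\[
0=\Hom_S(\cE_1\otimes V_t,\cF)\lra\Hom_S(\cF,\cF)\overset{\delta}{\lra}\Ext^1_S(\CC_p,\cF)\lra\Ext^1_S(\cE_1\otimes V_t,\cF)\overset{\Phi^{*}}{\lra}\Ext^1_S(\cF,\cF),
\]
where the first term vanishes by stability of $\cF$ (Remark~\ref{rmk:stratchiusa}). So $\delta$ is injective. The missing idea is that $\Ext^1_S(\CC_p,\cF)$ is one-dimensional; then $\delta$ is an isomorphism, the next map is zero, and $\Phi^{*}$ is injective.

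The dimension count goes as follows. By Serre duality, $\ext^1_S(\CC_p,\cF)=\ext^1_S(\cF,\CC_p)$. Moreover:
\begin{itemize}
\item $\chi_S(\cF,\CC_p)=2at$;
\item $\hom_S(\cF,\CC_p)=2at+1$, since $\cF_p\cong\cO_{S,p}^{\oplus(2at-1)}\oplus\gm_p$;
\item $\ext^2_S(\cF,\CC_p)=\hom_S(\CC_p,\cF)=0$.
\end{itemize}
Hence $\ext^1_S(\cF,\CC_p)=1$.

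This is again the Serre dual of the paper's argument. The paper shows that $\partial\colon\Ext^1_S(\cF,\CC_p)\to\Ext^2_S(\cF,\cF)$ is a surjection between one-dimensional spaces. So the preceding map vanishes, $\Phi_\cF$ is onto, and the codimension is $t(2a-t)$, as required.
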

\begin{proof}
By hypothesis $\hom_S(\cF,\cE_1)=k$. Let 
\begin{equation}
V_k\coloneq \Hom_S(\cF,\cE_1)^{\vee},
\end{equation}
 and let $f\colon \cF\lra \cE_1\otimes V_k$ be the tautological map. We have the linear map
\begin{equation}\label{pippobaudo}
\begin{matrix}
\Ext^1_S(\cF,\cF) & \overset{\Phi_\cF}{\lra} & \Ext^1_S(\cF,\cE_1\otimes V_k) \\
e & \mapsto &  e\cup f
\end{matrix}
\end{equation}
The Zariski tangent space $\Theta_{[\cF]}\cD^k_{v(t)}\subset \Theta_{[\cF]}\cM_{v(t)}=\Ext^1_S(\cF,\cF)$ is given by
\begin{equation}
\Theta_{[\cF]}\cD^k_{v(t)}=\ker\Phi_\cF.
\end{equation}
Assume that $\cF$ is locally free.  By Proposition~\ref{prp:pendbrill} we have the exact sequence
\begin{equation}\label{beeone}
0\lra \cH\lra \cF\overset{\psi}{\lra} \cE_1\otimes V_k\lra 0, 
\end{equation}
where $\cH$ is slope stable with $v(\cH)=v(t-k)$. Applying the functor $\Hom_S(\cF,-)$ to the exact sequence in~\eqref{beeone} we get the exact sequence
\begin{equation*}
 \Ext^1_S(\cF,\cF)  \overset{\Phi_\cF}{\lra}  \Ext^1_S(\cF,\cE_1)\otimes V_k \lra \Ext^2_S(\cF,\cH)\overset{\alpha}{\lra} \Ext^2_S(\cF,\cF).
\end{equation*}
We claim that $\alpha$ is an isomorphism, i.e.~that the transpose  $\alpha^t$  is an isomorphism. In fact $\alpha^t$ is equal (via Serre duality) to the map $\Hom_S(\cF,\cF)\to 
\Hom_S(\cH,\cF)$ obtained by composing with the inclusion $\cH\hra\cF$. Since $\cF$ and $\cH$ are stable (and hence simple), in order to show that $\alpha^t$ is an isomorphism it suffices to prove that there are no non-zero maps $\cH\to \cE_1\otimes V_k$. This follows from slope stability of $\cH$.
Since $\alpha$ is an isomorphism the map $\Phi_\cF$ is surjective, and hence
\begin{equation*}
\dim\cM_{v(t)}-\dim\Theta_{[\cF]}\cD^k_{v(t)}=\ext^1_S(\cF,\cE_1\otimes V_k).
\end{equation*}
Applying the functor $\Hom_S(-,\cE_1)$ to the exact sequence  in~\eqref{beeone} we get the exact sequence
\begin{equation}\label{baroque}
0\to \Ext^1_S(\cF,\cE_1) \to \Ext^1_S(\cH,\cE_1) \to \Ext^2_S(\cE_1,\cE_1)\otimes V_k^{\vee}\to 0.
\end{equation}
We have $\ext^1_S(\cH,\cE_1)=2(a-t+k)$ by Remark~\ref{rmk:anatra} and Equation~\eqref{extuno} (for $\cF=\cH$). Hence
$\ext^1_S(\cF,\cE_1) = 2a-2t+k$
and  the equality in~\eqref{codtan} follows (for $\cF$  locally free). 

Next assume that $\cF$ is not locally free.  By Proposition~\ref{prp:bordodiemme} we have an exact sequence
\begin{equation}\label{doppioduale}
0\lra\cF\lra \cE_1\otimes V_t\overset{f}{\lra}\CC_p \lra 0.
\end{equation}
In particular $\hom_S(\cF,\cE_1)=t$, i.e.~$k=t$. 
Applying the functor $\Hom_S(\cF,-)$ to the above exact sequence one gets the exact sequence
\begin{equation*}
 \Ext^1_S(\cF,\cF)  \overset{\Phi_\cF}{\lra}  \Ext^1_S(\cF,\cE_1)\otimes V_t \lra \Ext^1_S(\cF,\CC_p)\overset{\partial}{\lra} \Ext^2_S(\cF,\cF)\lra 0.
\end{equation*}
We have $\chi_S(\cF,\CC_p)=-\la v(\cF),v(\CC_p)\ra=2at$. Since $\hom_S(\cF,\CC_p)=2at+1$ (the stalk  $\cF_p$ is isomorphic to $\cO_{S,p}^{\oplus(2at-1)}\oplus\gm_p$) and $\ext^2_S(\cF,\CC_p)=\hom_S(\CC_p,\cF)=0$, it follows that $\ext^1_S(\cF,\CC_p)=1$. Hence the exact sequence above gives that $\Phi_\cF$ is surjective. Thus
\begin{equation*}
\dim\cM_{v(t)}-\dim\Theta_{[\cF]}\cD^t_{v(t)}=\ext^1_S(\cF,\cE_1\otimes V_t).
\end{equation*}
Applying the functor $\Hom_S(-,\cE_1)$ to the exact sequence  in~\eqref{doppioduale} we get the exact sequence
\begin{equation}
0\to \Ext^1_S(\cF,\cE_1) \to \Ext^2_S(\CC_p,\cE_1) \to \Ext^2_S(\cE_1,\cE_1)\otimes V_t^{\vee}\to 0.
\end{equation}
Hence $\ext^1_S(\cF,\cE_1)=2a-t$ and the equality in~\eqref{codtan} follows.
\end{proof}
\begin{prp}\label{prp:divuti}
  Let $0<t\le a$. Let $k$ be a natural number.
\begin{enumerate}
\item
If $k>t$ then $\cD_{v(t)}^{k}$ is empty.
\item
Let $0\le k\le t$. Then $\cD_{v(t)}^{k}\setminus \cD_{v(t)}^{k+1}$ is non empty and smooth of the expected dimension.
\item
We have $\cD_{v(t)}^{t}=\cB_{v(t)}$. 
\item
Let  $0\le k< t$. There is a fibration (locally trivial in the classical topology)
\begin{equation}\label{fibgrass}
\xymatrix{ \Gr(k,2(a-t+k))\ar[rr]    &  &  \cD_{v(t)}^{ k}\setminus \cD_{v(t)}^{ k+1} \ar[d]^{\pi_{v(t)}^k}\\ 
   & & \cM_{v(t-k)}\setminus\cD_{v(t-k)} }
\end{equation}
defined by $\pi_{v(t)}^k([\cF])\coloneq[\cH]$ where $\cH$  is the sheaf appearing in~\eqref{italiasvizzera} ($\cF$ is locally free by Item~(3) and $[\cH]\in \cM_{v(t-k)}\setminus\cD_{v(t-k)}$ by Remark~\ref{rmk:anatra}), and $(\pi_{v(t)}^k)^{-1}([\cH])=\Gr(k,\Ext^1_S(\cE_1,\cH))$.  
\end{enumerate}
\end{prp}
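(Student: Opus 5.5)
The plan is to prove the four items together, by induction on $t$, using the tangent-space computation of Lemma~\ref{lmm:codtandetvar} together with the structure results Propositions~\ref{prp:bordodiemme}, \ref{prp:pendbrill} and~\ref{prp:interbarca}.

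\emph{Items (1) and (3).} Let $[\cF]\in\cM_{v(t)}$ with $\hom_S(\cF,\cE_1)=k\ge1$.
\begin{itemize}
\item If $\cF$ is not locally free, Proposition~\ref{prp:bordodiemme} gives $\cF\subset\cE_1\otimes V_t$ with quotient $\CC_p$. Applying $\Hom_S(-,\cE_1)$ to this sequence gives $\hom_S(\cF,\cE_1)=t$, as already observed in the proof of Lemma~\ref{lmm:codtandetvar}.
\item If $\cF$ is locally free, then $\cF$ is not slope stable: a nonzero map $\cF\to\cE_1$ between sheaves of equal slope destabilizes. Proposition~\ref{prp:pendbrill} (for $t\ge2$) then gives the sequence~\eqref{italiasvizzera} with $r(\cH)>0$. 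This sequence has $\cE_1^{\oplus k}$ as quotient, so $2ak<r(\cF)=2at$, i.e.\ $k<t$.
\item For $t=1$ a locally free $\cF$ with $\Hom(\cF,\cE_1)\neq0$ would be isomorphic to $\cE_1$ by ranks and slopes. This is impossible since $v(\cF)\ne v_1$.
\end{itemize}
Hence $k\le t$, with equality exactly on $\cB_{v(t)}$. This proves (1) and (3).

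\emph{Item (4).} Fix $0<k<t$. Proposition~\ref{prp:pendbrill} and Remark~\ref{rmk:anatra} give a map $[\cF]\mapsto[\cH]$ into $\cM_{v(t-k)}\setminus\cD_{v(t-k)}$. By Remark~\ref{rmk:arancia} the extension class gives a $k$-dimensional subspace $\partial(V_k)\subset\Ext^1_S(\cE_1,\cH)$. Proposition~\ref{prp:interbarca} gives the inverse construction. The two constructions are mutually inverse because both extensions are determined up to isomorphism by the subspace: the kernel $\cH$ is recovered as the kernel of the tautological map, and the automorphisms of $V_k$ act compatibly. The fibre over $[\cH]$ is therefore $\Gr(k,\Ext^1_S(\cE_1,\cH))$. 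Since $\Hom(\cE_1,\cH)=\Ext^2(\cE_1,\cH)=0$, the dimension $\ext^1_S(\cE_1,\cH)=2(a-t+k)$ is constant by~\eqref{extuno}. So the relative $\Ext^1$ is a vector bundle, at least locally in the classical topology, where a universal sheaf exists. Its relative Grassmannian carries a universal extension, which gives a morphism to $\cD^k_{v(t)}\setminus\cD^{k+1}_{v(t)}$. This morphism is bijective by the above, and it is an isomorphism once we know the target is smooth.

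\emph{Item (2), nonemptiness.}
\begin{itemize}
\item For $k=t$, $\cB_{v(t)}\neq\emptyset$ by Lemma~\ref{lmm:stabse}.
\item For $k=0$ and $0<k<t$, we use the fibration of item (4), which requires $\cM_{v(t-k)}\setminus\cD_{v(t-k)}\ne\emptyset$. This follows inductively: by the inductive dimension count, $\cD_{v(t-k)}$ is a proper closed subset. Indeed, its strata have dimension strictly less than $\dim\cM_{v(t-k)}=2+2(t-k)(2a-t+k)$, as the computation below shows.
\item The case $k=0$ itself then follows, since the proper strata $\cD^j_{v(t)}$, $j\ge1$, have positive codimension.
\end{itemize}

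\emph{Item (2), smoothness.} By Remark~\ref{rmk:stratchiusa}, every component of $\cD^k_{v(t)}$ has codimension at most $k(2a-2t+k)$. By Lemma~\ref{lmm:codtandetvar}, the Zariski tangent space at points of $\cD^k\setminus\cD^{k+1}$ has exactly this codimension. Hence $\dim_{[\cF]}\cD^k\ge\dim\Theta_{[\cF]}\cD^k$. Since the reverse inequality always holds, the stratum is smooth of the expected dimension at every point; nonemptiness was shown above.

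For the dimension check behind the induction: the stratum $\cD^k_{v(t)}\setminus\cD^{k+1}_{v(t)}$ has dimension
$$\dim\cM_{v(t-k)}+k\,(2(a-t+k)-k)=2+2(t-k)(2a-t+k)+k(2a-2t+k).$$
This should equal $\dim\cM_{v(t)}-k(2a-2t+k)$, and it does: $2t(2a-t)-2(t-k)(2a-t+k)=2k(2a-2t+k)$. For $k=t$ the stratum $\cB_{v(t)}$ has dimension $2+t(2a-t)$ by Proposition~\ref{prp:bordodiemme}, also matching.

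The main obstacle is making the inverse map in item (4) an algebraic, locally trivial morphism. This requires the uniqueness and compatibility of the extension in families, with care about the lack of a universal sheaf. I would handle it in the classical topology using a local universal family, on which the relative $\Ext^1$ is a vector bundle by cohomology and base change.
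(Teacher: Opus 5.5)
Your proposal is correct and follows the paper's proof. Items (1) and (3) come from Propositions~\ref{prp:bordodiemme} and~\ref{prp:pendbrill}. Smoothness comes from comparing the codimension bound of Remark~\ref{rmk:stratchiusa} with the tangent-space computation of Lemma~\ref{lmm:codtandetvar}. Item (4) and nonemptiness come from Propositions~\ref{prp:pendbrill} and~\ref{prp:interbarca}.

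The one place you deviate is minor. You use an induction on $t$ with a dimension count to get $\cD_{v(t-k)}\neq\cM_{v(t-k)}$, whereas the paper reads this off directly from Lemma~\ref{lmm:codtandetvar}: the Zariski tangent space of $\cD^k$ at a point of the open stratum is a proper subspace of $\Theta\cM$. Your explicit treatment of $t=1$ and of the family version of (4) supplies details the paper leaves implicit.
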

\begin{proof}
(1): Let $[\cF]\in\cD_{v(t)}^{k}$. Suppose that $\cF$ is locally free. Then $\cF$ fits into the exact sequence in~\eqref{italiasvizzera} with 
$\hom_S(\cF,\cE_1)\ge k$. Since $r(\cH)>0$ we have  $2at=r(\cF)>\hom_S(\cF,\cE_1) r(\cE_1)\ge kr(\cE_1)=2ak$. Thus $k<t$ if  $\cF$ is locally free. If $\cF$ is not locally free 
i.e.~$[\cF]\in \cB_{v(t)}$, then $k\le t$ by Item~(1) of Proposition~\ref{prp:bordodiemme}. 

(2): We have $\cB_{v(t)}\subset \cD_{v(t)}^{t}$ by Item~(1) of Proposition~\ref{prp:bordodiemme}. Thus $\cD_{v(t)}^{t}$ is non empty. Now let $0<k<t$. Then $\cM_{v(t-k)}$ is non empty because $\la v(t-k),v(t-k)\ra=2(t-k)(2a-t+k)>0$. Moreover $\cD_{v(t)}\not=\cM_{v(t)}$ by Lemma~\ref{lmm:codtandetvar}. Let $[\cH]\in (\cM_{v(t-k)}\setminus \cD_{v(t)})$, let $V_k\subset\Ext^1_S(\cE_1,\cH)$ be a $k$-dimensional vector subspace, and let $\cF$ be the sheaf fitting into the exact sequence in~\eqref{estensione}. By Proposition~\ref{prp:interbarca} the sheaf $\cF$ is 
stable and $[\cF]\in(\cD_{v(t)}^{k}\setminus \cD_{v(t)}^{k+1})$. This proves that $\cD_{v(t)}^{k}\setminus \cD_{v(t)}^{k+1}$ is non empty for $0<k\le t$. 
It is smooth of the expected dimension by Lemma~\ref{lmm:codtandetvar}, because the right-hand side of~\eqref{codtan} is equal to the expected codimension of $\cD_{v(t)}^{k}$. 

(3): In the proof of Item~(1) we have shown that we have the inclusion of sets
$\cD_{v(t)}^{t}\subset\cB_{v(t)}$. The reverse inclusion follows from Item~(1) of Proposition~\ref{prp:bordodiemme}.  

(4): This follows from Propositions~\ref{prp:pendbrill} and~\ref{prp:interbarca}.
\end{proof}
\begin{rmk}\label{rmk:eccoilnormale}
Let $[\cF]\in(\cD_{v(t)}^k\setminus \cD_{v(t)}^{k+1})$, and let $\cN_{\cD_{v(t)}^k/\cM_{v(t)}}([\cF])$ be the fiber at $[\cF]$ of the normal bundle of 
$\cD_{v(t)}^k$ in $\cM_{v(t)}$ (recall that $\cD_{v(t)}^k$ is smooth away from $\cD_{v(t)}^{k+1}$). The proof of Lemma~\ref{lmm:codtandetvar} gives an identification
\begin{equation}\label{normaleadi}
\cN_{\cD_{v(t)}^k/\cM_{v(t)}}([\cF])=\Hom_S(\cF,\cE_1)^{\vee}\otimes\Ext^1_S(\cF,\cE_1).
\end{equation}
\end{rmk}
\begin{rmk}\label{rmk:applin-normale}
Let $[\cF]\in(\cD_{v(t)}^k\setminus \cD_{v(t)}^{k+1})$. Without going through Lemma~\ref{lmm:codtandetvar} one defines  an (a priori) inclusion 
\begin{equation}\label{testarapata}
\cN_{\cD_{v(t)}^k/\cM_{v(t)}}([\cF])\subset \Hom_S(\cF,\cE_1)^{\vee}\otimes\Ext^1_S(\cF,\cE_1)
\end{equation}
as follows. Locally on $\cM_{v(t)}$ (in the Zariski topology if there exists a universal sheaf on 
$S\times \cM_{v(t)}$, in  the \'etale topology if  a universal sheaf does not exist) we have a morphism of vector bundles 
\begin{equation*}
V_d\otimes \cO_U \xrightarrow{\phi} W_{d+2(a-t)}\otimes \cO_U, 
\end{equation*}
where $V_d,W_{d+2(a-t)}$ are complex vector spaces of dimensions $d$ and $d+2(a-t)$ respectively such that 
$\cD^l\cap U$ (we work in the Zariski topology) for all $l$ is identified with the determinantal scheme 
$D^l(\phi)$.  Put differently, $\phi$ defines a morphism $\alpha\colon U\to V_d^{\vee}\otimes W_{d+2(a-t)}$
and $\cD^l\cap U=\alpha^{*}D^l(V_d^{\vee}\otimes W_{d+2(a-t)})$ where 
\begin{equation}\label{diellealto}
D^l(V_d^{\vee}\otimes W_{d+2(a-t)})\subset V_d^{\vee}\otimes W_{d+2(a-t)}
\end{equation}
is the determinantal subscheme parametrizes linear maps 
$V_d\to W_{d+2(a-t)}$ whose kernel has dimension at least $l$. Let $0=[\cF]\in U$, 
and let $K_0\coloneq \ker\phi(0)$,  $J_0\coloneq \coker\phi(0)$. Thus we have  identifications
\begin{equation}\label{kappajei}
K_0=\Hom_S(\cF,\cE_1),\qquad J_0=\Ext^1_S(\cF,\cE_1).
\end{equation}
Of course $K_0=\ker\alpha(0)$ and $J_0=\coker\alpha(0)$. Let $\mu_0\colon V_d^{\vee}\otimes W_{d+2(a-t)}\lra K_0^{\vee}\otimes J_0$ be the obvious map. 
An elementary  computation shows that 
\begin{equation*}
\Theta_{D^k(V_d^{\vee}\otimes W_{d+2(a-t)})}(\alpha(0))=
\ker\mu_0\subset V_d^{\vee}\otimes W_{d+2(a-t)}=\Theta_{V_d^{\vee}\otimes W_{d+2(a-t)}}
\end{equation*}
The above equality, together with the identifications in~\eqref{kappajei}, defines the inclusion in~\eqref{testarapata}.
\end{rmk}
\subsection{An iterated blow up}\label{subsec:scoppiascoppia}
\setcounter{equation}{0}
Throughout the present subsection we adopt the notation of Subsection~\ref{subsec:divisoredet}, in particular Notation~\ref{ntn:genpol} is in force. In addition  we let $\cM\coloneq \cM_{v_2}$ and $\cD^k\coloneq \cD^k_{v_2}$. 
\begin{dfn}\label{dfn:emmeconjei}
Let 
varieties $\cM(a),\ldots,\cM(0)$,  birational maps
\begin{equation*}
\cM(0)\xrightarrow{f_0}\cM(1)\xrightarrow{f_1}\ldots \cM(a)\xrightarrow{f_a}\cM,
\end{equation*}
  and for $j\in\{a,a-1,\ldots,0\}$  closed  subsets $E(j)^s,\cD(j)^t\subset \cM(j)$ for $s> j\ge  t\ge 0$
be as follows.
\begin{enumerate}
\item
$\cM(a)=\cM$,  $f_a\colon\cM(a)\to\cM$ is  the identity, $E(a)^s=\es$ and $\cD(a)^t=\cD^t$.
\item
 $f_{j-1}\colon \cM(j-1)\to \cM(j)$    and $E(j-1)^s,\cD(j-1)^t$ are defined inductively:
\begin{enumerate}
\item[(2a)]
$f_{j-1}$ is the blow up of $\cD(j)^{j}$
\item[(2b)]
$E(j-1)^{s}$ is the  strict transform of $E(j)^{s}$ if $ s> j$, and
 $E(j-1)^{j}$ is the exceptional divisor of $f_{j-1}$. 
\item[(2c)]
If $j-1\ge t\ge 0$ then 
$\cD(j-1)^t$ is the  strict transform of $\cD(j)^t$. 
\end{enumerate}
\end{enumerate}
\end{dfn}
Note that  $E(j)^s$ is non empty if and only if $a\ge s> j$. 
We let 
\begin{equation}\label{notacappuccio}
\cD(j)=\cD(j)^1,\quad\wh{\cM}=\cM(0),\quad \wh{E}^s=E(0)^{s}, \ a\ge s\ge 1.
\end{equation}
Note that $\cD(0)=\es$. Let
\begin{equation}\label{iteroscoppio}
\wh{f}\coloneq f_a\circ\ldots\circ f_0\colon \wh{\cM}\lra \cM.
\end{equation}
  We have $\wh{f}^{-1}(\cD)=\wh{E}^a\cup \ldots\cup\wh{E}^1$. 
\begin{rmk}
 Formally $f_a$ is the blow up of $\cD^{a+1}=\es$. The last blow up is an isomorphism because $\cD(1)$ (as any other $\cD(j)$) is a Cartier divisor. 
 The reason  to define the last blow up  is the following. According to  Proposition~\ref{prp:stabuno}, by associating to 
 $[\cE_2]\in\cM$ the sheaf $\cG(\cE_1,\cE_2)$ (notation as in loc.~cit.)   we get a rational map to a moduli space $M_{\ww}$ of semistable sheaves on $S^{[2]}$. The sheaves  $\cG(\cE_1,\cE_2)$ are non stable precisely when $[\cE_2]\in\cD$. The sequence of blow-ups  $f_j$ 
is dictated by Langton's algorithm for semistable reduction. The last blow up $f_0\colon \cM(0)\to \cM(1)$ is an isomorphism but it does change the sheaves parametrized by $\cD(1)$ (making them stable).
\end{rmk}
\begin{prp}\label{prp:incrocinormali}
$\wh{\cM}$ is smooth and the divisor on $\wh{\cM}$ given by $\wh{E}^a+ \ldots +\wh{E}^1$ has simple normal crossings.
\end{prp}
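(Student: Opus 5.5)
The plan is to reduce the statement to a local model which is a product of a smooth germ with a space of square matrices, and then use the classical fact that the iterated blow-up of a space of square matrices along its rank strata (the space of complete collineations) is smooth, with exceptional divisors forming a simple normal crossings divisor.

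First step: for $t=a$ the number $2(a-t)$ is zero, so by Remark~\ref{rmk:stratchiusa} and Remark~\ref{rmk:applin-normale} every point $x=[\cF]\in\cM$ has an étale (or analytic) neighbourhood $U$ with a map
\[
\alpha\colon U\to V_d^{\vee}\otimes W_d
\]
such that $\cD^l\cap U=\alpha^{*}D^l$ as schemes for every $l$. Let $k\coloneq\hom_S(\cF,\cE_1)$. I would first shrink the presentation so that it is minimal at $x$, i.e.~$d=k$ and $\phi(x)=0$; this is possible by splitting off an isomorphic summand of $\phi$ near $x$, which does not change the degeneracy schemes. Then $K_0=V_k$, $J_0=W_k$, and the differential of $\alpha$ at $x$ is the map $\Theta_x\cM\to\Hom(K_0,J_0)=K_0^{\vee}\otimes J_0$ of Remark~\ref{rmk:applin-normale}. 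Lemma~\ref{lmm:codtandetvar} (which also covers the non locally free points, i.e.~$k=t=a$) gives $\dim\cM-\dim\Theta_x\cD^k=k^2=\dim K_0^{\vee}\otimes J_0$. Since $\Theta_x\cD^k=\ker d\alpha(x)$, this says exactly that $d\alpha(x)$ is surjective. Hence $\alpha$ is smooth near $x$, and locally $\cM\cong\Hom(V_k,W_k)\times\CC^N$ with $\cD^l$ corresponding to $D^l\times\CC^N$ for every $l$.

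Second step: blow-ups commute with flat (in particular smooth) base change, and strict transforms are compatible with it as well. So over $U$ the tower of Definition~\ref{dfn:emmeconjei} is the pull-back, via $\alpha$, of the tower obtained from $\Hom(V_k,W_k)$ by blowing up $D^k=\{0\}$, then the strict transform of $D^{k-1}$, and so on. The divisors $E(0)^s$ are likewise the pull-backs of the corresponding exceptional divisors. Here the strata $D^s$ with $s>k$ are empty near $x$, so those steps are isomorphisms locally. It therefore suffices to prove the proposition for the matrix model.

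Third step, which I expect to be the main obstacle: the matrix model, handled by induction on $k$. Blow up the origin of $\Hom(V_k,W_k)$ and work on the standard chart where a fixed entry, say $x_{11}$, generates the exceptional ideal. After row and column operations (invertible over the chart) the matrix becomes $x_{11}\cdot\begin{pmatrix}1&0\\0&B\end{pmatrix}$, with $B$ a $(k-1)\times(k-1)$ matrix of free coordinates. In this chart:
\begin{enumerate}
\item the exceptional divisor is $\{x_{11}=0\}$;
\item the strict transform of $D^l$ for $l<k$ is $\{\mathrm{corank}\,B\ge l\}$, i.e.~$\CC\times D^l_{k-1}$.
\end{enumerate}
Thus the chart is $\CC_{x_{11}}\times\Hom(\CC^{k-1},\CC^{k-1})$ with the strict transforms being products, and the remaining blow-ups are pulled back from the $(k-1)$ case. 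By induction the final space is smooth. The divisors coming from the $(k-1)$ model form a simple normal crossings divisor pulled back from the second factor, and $\{x_{11}=0\}$ is transverse to all of them, so the total divisor has simple normal crossings. The delicate points are checking that the coordinate change is compatible with strict transforms of all the $D^l$ simultaneously, and that the scheme structures of the strict transforms are the reduced determinantal ones. If needed I would simply cite Vainsencher's description of complete collineations. Finally, the last blow-up $f_0$ is of a Cartier divisor and hence an isomorphism, which is consistent with the local model.
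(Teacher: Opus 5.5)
Your proof is correct, and it takes a different route from the paper.

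\textbf{Your route.} You use Lemma~\ref{lmm:codtandetvar} with $t=a$, where the codimension is $k^2$, to show that the minimal local presentation $\alpha\colon U\to\Hom(V_k,W_k)$ is a submersion at every point. By flat base change, the whole tower of Definition~\ref{dfn:emmeconjei} is then locally pulled back from the iterated blow-up of the affine space of $k\times k$ matrices along its rank loci. This includes the centers, strict transforms, exceptional divisors and their scheme structures; the latter are reduced because generic determinantal ideals are prime. You then settle the matrix model by the Schur-complement chart and induction on $k$. In that chart the coordinates are $x_{11}$, the $2(k-1)$ entries $y_{i1},y_{1j}$, and $B$, so the chart is $\CC\times\CC^{2(k-1)}\times\Hom(\CC^{k-1},\CC^{k-1})$ rather than $\CC\times\Hom(\CC^{k-1},\CC^{k-1})$; this extra smooth factor changes nothing.

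\textbf{The paper's route.} The paper runs a descending induction on $j$, proving that $\cM(j)$ and the next center $\cD(j)^j$ are smooth. Away from the exceptional divisors it uses Proposition~\ref{prp:divuti}. At a point of $E(j-1)^k\setminus E(j-1)^{k+1}$ it restricts to the fiber over $[\cE_2]\in\cD^k\setminus\cD^{k+1}$. By Lemma~\ref{lmm:conostrato}, this fiber is the iterated blow-up of $\PP(\Hom_S(\cE_2,\cE_1)^{\vee}\otimes\Ext^1_S(\cE_2,\cE_1))$ along its projectivized rank loci, and $\cD(j-1)^{j-1}$ meets it in a smooth strict transform.

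\textbf{Comparison.} Both arguments rest on the same transversality input: Lemma~\ref{lmm:conostrato} implicitly uses Lemma~\ref{lmm:codtandetvar} to transport normal cones from the matrix model. Your local-product version gives more.
\begin{itemize}
\item It proves the simple normal crossings claim outright. The paper's induction lists $(T_j)$ but never carries it through the inductive step.
\item It avoids inferring smoothness of a center from smoothness of the base stratum and of its intersections with the fibers, which the paper's argument needs.
\item It immediately yields the complete-collineation description of the fibers of $\cM(j)\to\cM$ used later in \eqref{strangelove}.
\end{itemize}
The paper's version is more intrinsic, since it involves no choice of presentation. It is also phrased directly in terms of the normal spaces $\Hom_S(\cE_2,\cE_1)^{\vee}\otimes\Ext^1_S(\cE_2,\cE_1)$, which are reused in Section~\ref{sec:estendo}.
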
 
Before proving Proposition~\ref{prp:incrocinormali} we describe the normal cone of $\cD^k$ in $\cD^h$ (for $0\le h\le k$) away from $\cD^{k+1}$. 
Let
 $[\cF]\in(\cD^k\setminus \cD^{k+1})$, and let $\cC_{\cD^k/\cD^h}([\cF])$ be the fiber at $[\cF]$ of the normal cone of 
$\cD^k$ in $\cD^h$. By the identification in~\eqref{normaleadi} we have an embedding
\begin{equation}\label{fibracono}
\cC_{\cD^k/\cD^h}([\cF])\subset\Hom_S(\cF,\cE_1)^{\vee}\otimes\Ext^1_S(\cF,\cE_1).
\end{equation}
For an integer $l$ we let 
\begin{equation}\label{diellebasso}
D_l(\Hom_S(\cF,\cE_1)^{\vee}\otimes\Ext^1_S(\cF,\cE_1))\subset 
\Hom_S(\cF,\cE_1)^{\vee}\otimes\Ext^1_S(\cF,\cE_1)
\end{equation}
be the determinantal subscheme  parametrizing maps of rank at most $l$ - note the difference between the meaning of $l$ in the above definition and in~\eqref{diellealto}.
\begin{lmm}\label{lmm:conostrato}
Let
 $[\cF]\in(\cD^k\setminus \cD^{k+1})$. Then, referring to  the 
embedding in~\eqref{fibracono}, we have
\begin{equation}\label{cononormale}
\cC_{\cD^k/\cD^h}([\cF])=D_{k-h}(\Hom_S(\cF,\cE_1)^{\vee}\otimes\Ext^1_S(\cF,\cE_1)).
\end{equation}
\end{lmm}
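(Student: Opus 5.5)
The plan is to reduce the statement to the analogous, purely linear-algebraic statement for the universal determinantal varieties, using the local model of Remark~\ref{rmk:applin-normale}. Near $[\cF]$, with $0=[\cF]\in U$, we have the morphism $\alpha\colon U\to V_d^{\vee}\otimes W_{d+2(a-t)}$ such that $\cD^l\cap U=\alpha^{*}D^l(V_d^{\vee}\otimes W_{d+2(a-t)})$ scheme-theoretically for every $l$. By Lemma~\ref{lmm:codtandetvar}, the differential $d\alpha(0)$ composed with $\mu_0$ is surjective onto $K_0^{\vee}\otimes J_0$. Indeed, its kernel is $\Theta_{[\cF]}\cD^k$, of codimension $k(2a-2t+k)=\dim K_0^{\vee}\otimes J_0$.

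The first step is to use this to show that, locally analytically at $0$, the pair $(U,\cD^l\cap U)$ is isomorphic to a product of a smooth germ with $(K_0^{\vee}\otimes J_0, D^{l}_{\rm loc})$. Here $D^{l}_{\rm loc}$ is the germ at $0$ of the locus of maps $K_0\to J_0$ of kernel dimension at least $l$, equivalently of rank at most $k-l$. To get this, first recall the standard fact that near $\alpha(0)$, which has kernel $K_0$ and cokernel $J_0$, the determinantal schemes $D^l(V_d^{\vee}\otimes W)$ are, up to a smooth factor, pulled back from $K_0^{\vee}\otimes J_0$. The map is the Schur-complement map, and in particular is a smooth morphism whose differential at $\alpha(0)$ is $\mu_0$. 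Composing with $\alpha$ gives a morphism $\beta\colon U\to K_0^{\vee}\otimes J_0$ which is submersive at $0$ by the surjectivity above, with $\cD^l\cap U=\beta^{*}D_{k-l}$ near $0$. Hence, after shrinking $U$, $U\cong (\cD^k\cap U)\times (K_0^{\vee}\otimes J_0)$ with $\cD^l$ corresponding to $(\cD^k\cap U)\times D_{k-l}$, and $\cD^k\cap U$ corresponding to $(\cD^k\cap U)\times\{0\}$.

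Second, the normal cone of $\cD^k$ in $\cD^h$ is then the product of $\cD^k\cap U$ with the normal cone of $\{0\}$ in $D_{k-h}$. Since $D_{k-h}$ is a cone (homogeneous ideal), its normal cone at the vertex is $D_{k-h}$ itself. This gives \eqref{cononormale}. We also need that the embedding into $\Hom_S(\cF,\cE_1)^{\vee}\otimes\Ext^1_S(\cF,\cE_1)$ produced this way agrees with \eqref{fibracono}. This holds because both are induced by $\mu_0\circ d\alpha(0)$ together with the identifications \eqref{kappajei}, as in Remark~\ref{rmk:eccoilnormale}.

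The main obstacle is the first step, namely the scheme-theoretic local product structure of the determinantal loci around a point of corank $k$. I would handle it by choosing bases so that $\alpha(x)=\begin{pmatrix}A(x)&B(x)\\ C(x)&E(x)\end{pmatrix}$ with $A(0)$ invertible. The minors ideals of $\alpha(x)$ then coincide with those of the Schur complement $E-CA^{-1}B$, a $J_0\times K_0$ matrix whose linear part at $0$ is $\mu_0\circ d\alpha(0)$. The ideal identity of minors under row and column operations is what makes the statement scheme-theoretic rather than merely set-theoretic.
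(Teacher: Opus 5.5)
Your proposal is correct and follows the same route as the paper. The paper also works in the local determinantal model of Remark~\ref{rmk:applin-normale}, computes the fiber at $\alpha(0)$ of the normal cone of $D^k$ in $D^h$ to be $D_{k-h}(K_0^{\vee}\otimes J_0)$, and transfers this to $\cM$ via \eqref{kappajei} and \eqref{normaleadi}. Your Schur-complement reduction and your use of Lemma~\ref{lmm:codtandetvar} to make $\beta$ a submersion (so that the universal normal cone genuinely pulls back) spell out the ``elementary computation'' and the transversality that the paper leaves implicit.
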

\begin{proof}
We adopt the notation of Remark~\ref{rmk:applin-normale}. An elementary computation gives the following description of the fiber of the normal cone of $D^k=D^k(V_d^{\vee}\otimes W_{d+2(a-t)})$ in  $D^h=D^h(V_d^{\vee}\otimes W_{d+2(a-t)})$ at $\alpha(0)$:
\begin{equation*}
\cC_{D^k/D^h}(\alpha(0))=D_{k-h}(K_0^{\vee}\otimes J_0).
\end{equation*}
The lemma follows from the above equality, the identifications in~\eqref{kappajei}, and the equality in~\eqref{normaleadi}.
\end{proof}
\begin{proof}[Proof of Proposition~\ref{prp:incrocinormali}]
Starting from $j=a$ we prove that for $j\in\{a,\ldots,0\}$ the following hold:
\begin{enumerate}
\item[($R_j$)]
$\cM(j)$ is smooth.
\item[($S_j$)]
$\cD(j)^{j}$ is smooth. 
\item[($T_j$)]
The divisor $E(j)^a+\ldots+E^{j+1}(j)$ on $\cM(j)$ has simple normal crossings. 
\end{enumerate}
We have $\cM(a)=\cM$, $\cD(a)^{a}=\cD^a$. Hence  $\cM(a)$ is smooth by Item~(D) of Lemma~\ref{lmm:stabmuku}, and $\cD(a)^{a}$  is smooth by Items~(1) and~(2) of Proposition~\ref{prp:divuti}.
Since the divisor in Item~($T_a$) is $0$, we have shown  that Items~($R_a$), ($S_a$) and~($T_a$) hold. Assume that  Items~($R_j$), ($S_j$) and~($T_j$) hold. Since $f_{j-1}\colon\cM(j-1)\to \cM(j)$ is the blow up of 
$\cD(j)^j$, and $\cM(j),\cD(j)^j$ are smooth by the inductive hypothesis, it follows that Item~($R_j$) holds.  
Let us prove that Item~($S_{j-1}$) holds.  The composition 
$f_a\circ f_{a-1}\circ\ldots\circ f_{j-1}\colon\cM(j-1)\to \cM$ restricts to an isomorphism 
$(\cM(j-1)\setminus E(j-1)^a\setminus\ldots\setminus E(j-1)^j)\xrightarrow{\sim} (\cM\setminus \cD^j)$, and it maps 
$\cD(j-1)^{j-1}\setminus \cD(j-1)^{j}$ to 
$\cD^{j-1}\setminus \cD^{j}$. By Proposition~\ref{prp:divuti} it follows that $\cD(j-1)^{j-1}$ is smooth away from 
the intersection with $E(j-1)^a\cup\ldots\cup E(j-1)^j$. Let 
$x\in \cD(j-1)^{j-1}\cap(E(j-1)^a\cup\ldots\cup E(j-1)^j)$.   There exists a maximum $k$ with $a\ge k\ge j$  such that $x\in E(j-1)^k$. The restriction of $f_{k-1}\circ \ldots\circ f_{j-1}$ to $E(j-1)^k\setminus E(j-1)^{k+1}$ defines a map
\begin{equation*}
\lambda\colon E(j-1)^k\setminus E(j-1)^{k+1}\lra E(k-1)^k\setminus E(k-1)^{k+1}. 
\end{equation*}
By~\eqref{normaleadi}
 the restriction of $f_{a}\circ \ldots\circ f_{k-1}$ to $E(k-1)^k\setminus E(k-1)^{k+1}$ is a fibration 
\begin{equation}
\begin{matrix}
\PP(\Hom_S(\cE_2,\cE_1)^{\vee}\otimes\Ext^1_S(\cE_2,\cE_1))   & \subset &  E(k-1)^k\setminus E(k-1)^{k+1} \\ 
\downarrow & & \downarrow\theta \\
 [\cE_2]  & \in & \cD^k\setminus\cD^{k+1} 
\end{matrix}
\end{equation}
Composing with $\lambda$ we get the map 
\begin{equation}\label{stangona}
\theta\circ\lambda\colon  E(j-1)^k\setminus E(j-1)^{k+1}\lra \cD^k\setminus\cD^{k+1}. 
\end{equation}
The fiber of $\theta\circ\lambda$ over $ [\cE_2]$ is mapped by $f_{a}\circ \ldots\circ f_{k-1}$ to 
$\PP(\Hom_S(\cE_2,\cE_1)^{\vee}\otimes\Ext^1_S(\cE_2,\cE_1))$, and by Lemma~\ref{lmm:conostrato}     this map is  the blow-up of the projectivized determinantal scheme 
 $\PP(D_1(\Hom_S(\cE_2,\cE_1)^{\vee}\otimes\Ext^1_S(\cE_2,\cE_1)))$, followed by the blow-up of the strict transform of  
 $\PP(D_2(\Hom_S(\cE_2,\cE_1)^{\vee}\otimes\Ext^1_S(\cE_2,\cE_1)))$, etc.
Moreover by Lemma~\ref{lmm:conostrato}  $\cD(j-1)^{j-1}\cap(E(j-1)^k\setminus E(j-1)^{k+1})$ intersects such fiber   in the strict transform of 
$\PP(D_{k-j}(\Hom_S(\cE_2,\cE_1)^{\vee}\otimes\Ext^1_S(\cE_2,\cE_1)))$. Said strict transform is smooth. 
This proves that $\cD(j-1)^{j-1}$ is smooth at $x$.
\end{proof}
\section{Families of slope stable vector bundles on $S^{[2]}$}\label{sec:fibratistab}
\subsection{Road map}\label{subsec:guidasezquattro}
\setcounter{equation}{0}  
In Subsection~\ref{subsec:stabsuaperto} we prove that $\cG(\cE_1,\cE_2)$ is slope stable for all 
$[\cE_2]\in(\cM_{v_2}(S,h_S)\setminus \cD_{v_2}(S,h_S)) $, provided  the polarization of $S^{[2]}$ is close enough to $\bm{\mu}(h_S)$ 
(here $h_S$ is an ${\mathsf a}(v_2)$-generic polarization of $S$), see Proposition~\ref{prp:stabuno}. 
In Subsection~\ref{subsec:semirepl} we prove that if $[\cE_2]\in \cD_{v_2}(S,h_S) $ then $\cG(\cE_1,\cE_2)$ is slope unstable for any choice of  polarization of $S^{[2]}$, see Proposition~\ref{prp:liberoinstab}. 
The remaining subsections are dedicated to the construction of the semistable replacements of the sheaves $\cG(\cE_1,\cE_2)$ for  $[\cE_2]\in \cD_{v_2}(S,h_S) $ - but note that the proof that they are the semistable replacements is given later, in  Section~\ref{sec:estendo}. 
Specifically, in Subsection~\ref{subsec:cohomcompandext} we define, for 
$[\cE_2]\in (\cD^k_{v_2}(S,h_S)\setminus \cD^{k+1}_{v_2}(S,h_S)) $ with $k<a$, the semistable replacement of $\cE_2$   as a double extension of sheaves 
$\cE_1[2]^{+}$, $\cE_1[2]^{-}$ and $\cG(\cE_1,\cH)$, where $\cE_1[2]^{\pm}$ are the rigid slope stable vector bundles studied in~\cite{ogfascimod}, and $[\cH]\in(\cM_{v(a-k)}(S,h_S)\setminus \cD_{v(a-k)}(S,h_S) )$, and we give a similar construction for the semistable replacement of  $\cE_2$ when $[\cE_2]\in \cD^a_{v_2}(S,h_S)$, see Definitions~\ref{dfn:fasciobicaltilde} and~\ref{dfn:replacedia}. Subsection~\ref{subsec:sonostabili} contains the proof that the extensions defined in the previous subsection are slope stable, see Propositions~\ref{prp:stabcalbi} and~\ref{prp:calbisingstab}. We also prove that the extensions are locally free, see Corollary~\ref{crl:localmentelibero} - this is obvious for the semistable replacements of $\cE_2$ if 
$[\cE_2]\notin \cD^a_{v_2}(S,h_S)$, but if $[\cE_2]\in \cD^a_{v_2}(S,h_S)$ it is a pleasant surprise.
\subsection{Stability of $\cG(\cE_1,\cE_2)$ for $[\cE_2]\in(\cM_{v_2}\setminus \cD_{v_2}) $}\label{subsec:stabsuaperto}
\setcounter{equation}{0}  
\begin{hyp-dfn}\label{hyp-dfn:esseaemme}
$S$ is a projective $K3$ surface, $a,k$ are positive integers,  and $D$ is a divisor on $S$ such that
\begin{equation}\label{castiglione}
 D\cdot D=2\cdot(2ak-1).
\end{equation}
The Mukai vectors $v_1,v_2$ on $S$ are given by
\begin{equation}\label{verovudue}
v_1\coloneq\left(2a,D,k\right),\quad v_2\coloneq av_1-(0,0,1)=\left(2a^2,aD,ak-1\right).
\end{equation}
\end{hyp-dfn}
Note that $v_1^2=-2$, $v_2^2=2a^2$ (here squares are with respect to Mukai's pairing). Moreover  $v_2$ is primitive by Remark~\ref{rmk:vutiprim}, 
 and of course also $v_1$.
Let $h_S$ be an ${\mathsf a}(v_2)$-generic polarization of $S$. The moduli space $\cM_{v_1}(S,h_S)$ is a reduced point 
because $h_S$ is also ${\mathsf a}(v_1)$-generic. Throughout the section we let
\begin{equation}
\cM_{v_1}(S,h_S)=\{[\cE_1]\}.
\end{equation}
Thus $\cE_1$ is a spherical vector bundle. The moduli space  $\cM_{v_2}(S,h_S)$ is a HK variety of Type $K3^{[a^2+1]}$. Let  $[\cE_2]\in \cM_{v_2}(S,h_S)$, and let $\cG(\cE_1,\cE_2)$ be the sheaf on $S^{[2]}$ defined in~\cite[Def.~2.1]{og:highdim}. 
By Lemma~3.4 loc.~cit.~$\cG(\cE_1,\cE_2)$ is a torsion free simple sheaf on $S^{[2]}$, and its mock Mukai vector is given by
\begin{equation}\label{sanmichele}
\ww(\cG(\cE_1,\cE_2)) =\ww(D,a)\coloneq 4a^2 \left(2a,   {\bm\mu}(D)-a\delta,   \frac{ a^4}{3} c_2(S^{[2]}) \right),
\end{equation}
where  ${\bm\mu}(D)$  is represented by the divisor in $S^{[2]}$ parametrizing subschemes  
intersecting $D $ non trivially (assume that  $D$ is reduced), see also~\eqref{eccomugrasso}, and 
\begin{equation}\label{duedelta}
\Delta\coloneq\{[Z]\in S^{[2]} \mid \text{$Z$ is non reduced}\},\qquad 
2\delta=\cl(\Delta).
\end{equation}
Below is the main result of the present subsection.
\begin{prp}\label{prp:stabuno}
Keep hypotheses and notation  as above, in particular $h_S$ is ${\mathsf a}(v_2)$-generic.  Let  $ h_{S^{[2]}}$ be a polarization of $S^{[2]}$ which is ${\mathsf a}(\ww(D,a))$-suitable for $\bm{\mu}(h_S)$. 
Let $[\cE_2]\in(\cM_{v_2}(S,h_S)\setminus \cD_{v_2}(S,h_S)) $.  Then 
$\cG(\cE_1,\cE_2)$ is an $h_{S^{[2]}}$ slope stable vector bundle. 
\end{prp}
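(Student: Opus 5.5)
The plan is to reduce to stability with respect to the nef class $\bm{\mu}(h_S)$, and then transfer it to $h_{S^{[2]}}$ by Corollary~\ref{crl:paragonestab}. The sheaf $\cG(\cE_1,\cE_2)$ is modular: its discriminant is a multiple of $c_2(S^{[2]})$ by~\cite[Prop.~2.7]{og:highdim}. Moreover ${\mathsf a}(\cG(\cE_1,\cE_2))={\mathsf a}(\ww(D,a))$, and $h_{S^{[2]}}$ is ${\mathsf a}(\ww(D,a))$-suitable for $\bm{\mu}(h_S)$. Hence it suffices to prove two things: that $\cG(\cE_1,\cE_2)$ is locally free, and that it is $\bm{\mu}(h_S)$ HK-slope stable.

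\emph{Step 1: local freeness.} First we need $\cE_2$ to be locally free. For $[\cE_2]\notin\cD_{v_2}$, Proposition~\ref{prp:divuti}(3) gives $\cB_{v_2}=\cD^a_{v_2}\subset\cD_{v_2}$, so $\cE_2$ is locally free. We also need it to be slope stable, which follows from Proposition~\ref{prp:pendbrill} since $\hom_S(\cE_2,\cE_1)=0$. Then $\cE_1\boxtimes\cE_2\oplus\cE_2\boxtimes\cE_1$ is a $\mathfrak S_2$-equivariant vector bundle on $S^2$. Its BKR transform is locally free exactly when the stabilizer of the diagonal acts on fibres over the diagonal compatibly with the structure of the isospectral Hilbert scheme; this is automatic for a bundle of the form $F\oplus\iota^*F$ with the swap involution, since over $\Delta_S$ the fibre is $V\oplus V$ with $\mathfrak S_2$ acting by exchange, hence free. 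I expect this part to be covered by~\cite[Sect.~2--3]{og:highdim}, where $\cG(\cE_1,\cE_2)=\pi_*(\rho^*(\cE_1\boxtimes\cE_2))$ for the double cover $\rho\colon\wt{S\times S}\to S^{[2]}$. In that description local freeness is immediate because $\rho$ is finite flat.

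\emph{Step 2: $\bm{\mu}(h_S)$-stability.} Here I use that $\bm{\mu}(h_S)$-slope only sees $c_1$ restricted to the open set $S^{[2]}\setminus\Delta$, which is a free quotient of $S^2\setminus\Delta_S$. Concretely, $q_X(\lambda,\bm{\mu}(h_S))$ is computed by pulling $\lambda$ back to $S\times S$ and pairing with $h_S\boxtimes h_S$-type classes; the $\delta$ component is killed since $q(\delta,\bm{\mu}(h_S))=0$. So let $\cH\subset\cG=\cG(\cE_1,\cE_2)$ be a saturated subsheaf with $\mu_{\bm\mu(h_S)}(\cH)\ge\mu(\cG)$. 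Pull it back to $S\times S$ (via $\rho$ and the blow-up, away from codimension $2$) to obtain an $\mathfrak S_2$-invariant subsheaf $\wt\cH\subset\cE_1\boxtimes\cE_2\oplus\cE_2\boxtimes\cE_1$. Its slope with respect to $p_1^*h_S+p_2^*h_S$ (a genuinely ample class on $S\times S$) is at least that of the ambient sheaf. Now $\cE_1,\cE_2$ are $h_S$ slope stable with equal slope, since $c_1(\cE_2)/r(\cE_2)=D/2a$. So $\cE_1\boxtimes\cE_2$ and $\cE_2\boxtimes\cE_1$ are slope stable for the product polarization, being tensor products of pullbacks of stable bundles; this is polystability via Hermite--Einstein metrics, plus simplicity from $\Hom(\cE_i,\cE_j)=\delta_{ij}\CC$. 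They are non-isomorphic since $r(\cE_1)\ne r(\cE_2)$ when $a>1$; for $a=1$ the ranks agree but $v_1\neq v_2$. Hence the ambient sheaf is polystable with two non-isomorphic summands. A destabilizing $\wt\cH$ must therefore be one of the two summands, and neither is $\mathfrak S_2$-invariant. This contradicts invariance, so $\cG$ is $\bm{\mu}(h_S)$ slope stable.

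\emph{Main obstacle.} The delicate point is making the comparison of slopes between $S^{[2]}$ with the nef class $\bm{\mu}(h_S)$ and $S^2$ with $p_1^*h_S+p_2^*h_S$ rigorous. I would handle it by noting that $\bm\mu(h_S)^{3}$-intersection numbers of $c_1(\cH)$ pull back, up to the factor $2$ from the double cover, to $(p_1^*h_S+p_2^*h_S)^3\cdot c_1(\wt\cH)$ on $S^2$. This works because both the exceptional divisor and $\Delta$ have zero intersection with $\bm\mu(h_S)^3$, and since $\bm\mu(h_S)$ is nef and pulled back from $S^{(2)}$, the HK-slope agrees with the $h^3$-slope by Fujiki's formula. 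With that identification the stability argument on $S\times S$ goes through, and Corollary~\ref{crl:paragonestab} finishes the proof.
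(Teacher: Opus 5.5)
Your argument is essentially the paper's: local freeness of $\cG(\cE_1,\cE_2)$ from that of $\cE_2$ (in the paper's notation $\cG(\cE_1,\cE_2)\cong\rho_*\tau^*(\cE_1\boxtimes\cE_2)$, cf.\ \cite[Prop.~2.4]{og:highdim}). Then $\bm{\mu}(h_S)$ slope stability is obtained by pulling back to $X(S)$ and comparing with the product polarization on $S^2$, using stability and non-isomorphism of the two box products together with $\Sigma_2$-invariance (Lemma~\ref{lmm:americanpie} and Proposition~\ref{prp:gistabperbig}), followed by Corollary~\ref{crl:paragonestab}; the only real difference is that you get stability of $\cE_1\boxtimes\cE_2$ from Hermite--Einstein metrics plus simplicity, rather than from the restriction-to-fibres argument of Proposition~\ref{prp:quadstab}.

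One small slip: Proposition~\ref{prp:pendbrill} requires $t\ge 2$, so for $a=1$ the slope stability of $\cE_2$ must instead come from the genericity of $h_S$ together with $\gcd(\divisore(D),2a)=1$, as in the proof of Lemma~\ref{lmm:americanpie}.
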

Below is a result which follows from Proposition~\ref{prp:stabuno}.
\begin{crl}\label{crl:stabuno}
With hypotheses as in Proposition~\ref{prp:stabuno}, and   $\ww\coloneq \ww(D,a)$, the map
\begin{equation}\label{isomaperti}
\begin{matrix}
\cM_{v_2}(S,h_S)\setminus \cD_{v_2}(S,h_S) & \lra & M_{\ww}(S^{[2]},h_{S^{[2]}}) \\
[\cE_2] & \mapsto & [\cG(\cE_1,\cE_2)]
\end{matrix}
\end{equation}
defines an isomorphism 
\begin{equation}\label{mappapsi}
\begin{matrix}
\cM_{v_2}(S,h_S)\setminus \cD_{v_2}(S,h_S) & \overset{\psi}{\overset{\sim}{\lra}} & M_{\ww}(S^{[2]},h_{S^{[2]}})^{\bullet} \\
[\cE_2] & \mapsto & [\cG(\cE_1,\cE_2)]
\end{matrix}
\end{equation}
where $M_{\ww}(S^{[2]},h_{S^{[2]}})^{\bullet}$ is an open dense subset of an 
irreducible component  of  $M_{\ww}(S^{[2]},h_{S^{[2]}})$.
\end{crl}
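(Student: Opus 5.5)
We deduce the corollary from Proposition~\ref{prp:stabuno} together with the deformation-theoretic comparison of~\cite[Prop.~2.6]{og:highdim}. The argument has four steps: the map is a morphism, it is injective, it is étale, and its image is open and dense in a component.

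\textbf{Step 1: $\psi$ is a morphism.} First we construct a family. Locally in the étale topology, or globally after twisting if a universal sheaf exists, choose a quasi-universal sheaf $\EE$ on $S\times(\cM_{v_2}\setminus\cD_{v_2})$. Apply the relative BKR construction to $\cE_1\boxtimes\EE\oplus\EE\boxtimes\cE_1$. This gives a sheaf $\GG$ on $S^{[2]}\times(\cM_{v_2}\setminus\cD_{v_2})$, flat over the base, whose fibers are the $\cG(\cE_1,\cE_2)$. By Proposition~\ref{prp:stabuno} every fiber is an $h_{S^{[2]}}$ slope stable vector bundle with mock Mukai vector $\ww$, by~\eqref{sanmichele}. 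The modular property of $M_{\ww}(S^{[2]},h_{S^{[2]}})$ then yields the morphism $\psi$. Twisting ambiguities in $\EE$ do not matter, because $\cG$ depends on $\cE_2$ only up to isomorphism.

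\textbf{Step 2: $\psi$ is injective.} We show that $\cE_2$ is recovered from $\cG(\cE_1,\cE_2)$. Via BKR, $\cG(\cE_1,\cE_2)$ corresponds to the $\mathfrak S_2$-equivariant sheaf $\cE_1\boxtimes\cE_2\oplus\cE_2\boxtimes\cE_1$ on $S^2$. Restrict to a fiber $\{x\}\times S$ away from the diagonal. This gives $\cE_1(x)\otimes\cE_2\oplus \cE_2(x)\otimes\cE_1$ on $S\setminus\{x\}$. Since $\cE_1$ and $\cE_2$ are non-isomorphic stable bundles of different rank, Krull--Schmidt recovers $\cE_2|_{S\setminus\{x\}}$. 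Reflexivity on a surface, together with local freeness of $\cE_2$ off $\cD_{v_2}\supset\cB_{v_2}$, then recovers $\cE_2$.

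\textbf{Step 3: $\psi$ is étale.} This is the main obstacle, and we rely on~\cite[Prop.~2.6]{og:highdim}. For $[\cE_2]\notin\cD_{v_2}$ we have $\Hom(\cE_2,\cE_1)=0$. Together with $\chi(\cE_2,\cE_1)=0$ and $\Ext^2(\cE_2,\cE_1)=\Hom(\cE_1,\cE_2)^\vee=0$, this gives $\Ext^1(\cE_1,\cE_2)=0$. Under that vanishing, loc.~cit.\ identifies $\Ext^1(\cG,\cG)$ with $\Ext^1(\cE_2,\cE_2)$ via the differential of $\psi$. It also shows that every small deformation of $\cG(\cE_1,\cE_2)$ is of the form $\cG(\cE_1,\cE_2')$. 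Consequently the germ of $M_{\ww}$ at $[\cG]$ is the image of a neighbourhood of $[\cE_2]$ in the smooth space $\cM_{v_2}$, and $d\psi$ is an isomorphism onto the Zariski tangent space. So $M_{\ww}$ is smooth of dimension $2a^2+2$ at these points, and $\psi$ is a local analytic isomorphism.

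\textbf{Step 4: conclusion.} An injective étale morphism is an open immersion. Its image is therefore an open subset of $M_{\ww}$. This image is irreducible, since $\cM_{v_2}\setminus\cD_{v_2}$ is irreducible, and it consists of smooth points of $M_{\ww}$. Hence it is open and dense in a unique irreducible component, and we define $M_{\ww}(S^{[2]},h_{S^{[2]}})^{\bullet}$ to be this image.

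The delicate points are concentrated in Step 3. First, $\psi$ must surject onto a full analytic neighbourhood, i.e.\ the image of $\psi$ must contain all nearby stable sheaves; this is exactly the statement that every deformation of $\cG$ comes from a deformation of $\cE_2$. Second, the reduced structure must be checked. I would handle both by comparing Kuranishi families using the vanishing of $\Ext^1(\cE_1,\cE_2)$.
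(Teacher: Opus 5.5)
Your proof is correct and has the same skeleton as the paper's. Item~(4) of \cite[Prop.~2.6]{og:highdim} identifies the deformation spaces of $\cE_2$ and $\cG(\cE_1,\cE_2)$; this is where the vanishing of $\Ext^1_S(\cE_1,\cE_2)$ enters. That identification gives smoothness of $M_{\ww}(S^{[2]},h_{S^{[2]}})$ along the image, and openness and density of the image in a component. Injectivity then finishes the argument.

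The one genuinely different step is injectivity.
\begin{itemize}
\item The paper gets it in one line from BKR and K\"unneth. For $[\cE_2]\neq[\cE'_2]$ one has $\Hom_{S^{[2]}}(\cG(\cE_1,\cE_2),\cG(\cE_1,\cE'_2))=0$, because every K\"unneth component contains a factor $\Hom_S(\cE_2,\cE'_2)=0$ or $\Hom_S(\cE_1,\cE'_2)=0$, both by stability. This uses full faithfulness of BKR and proves the stronger Hom-vanishing.
\item You use only the description of the pull-back of $\cG$ away from the diagonal, together with Hartogs and Krull--Schmidt. This is more elementary.
\end{itemize}

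Three small corrections.
\begin{itemize}
\item For $a=1$ the bundles $\cE_1$ and $\cE_2$ have the same rank $2a=2a^2=2$, so ``different rank'' is false there. Argue non-isomorphism from $v_1\neq v_2$ instead; that is all Krull--Schmidt needs.
\item It is cleaner to extend the isomorphism $\cE_1(x)\otimes\cE_2\oplus\cE_2(x)\otimes\cE_1\cong\cE_1(x)\otimes\cE'_2\oplus\cE'_2(x)\otimes\cE_1$ across $x$ first. Both sides are locally free on $S$, so this is possible. Then apply Krull--Schmidt on the proper surface $S$, rather than on $S\setminus\{x\}$.
\item In Step~1 you want an honest universal sheaf \'etale locally, not a quasi-universal one. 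A quasi-universal family with fibres $\cE_2\otimes V$ would produce fibres $\cG(\cE_1,\cE_2)\otimes V$ and would not give the classifying map.
\end{itemize}
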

\begin{proof}
Let $[\cE_2]\in(\cM_{v_2}(S,h_S)\setminus \cD_{v_2}(S,h_S))$. The map in~\eqref{isomaperti}  identifies the deformation space of $\cE_2$ with the deformation space of $\cG(\cE_1,\cE_2)$, see Item~(4) of~\cite[Prop.~2.6]{og:highdim}. This proves that $M_{\ww}(S^{[2]},h_{S^{[2]}})$ is smooth at each point of the image and that the image   is 
an open dense subset of an 
irreducible component  of  $M_{\ww}(S^{[2]},h_{S^{[2]}})$.  Hence in order to finish the proof it suffices to show that the map $\psi$ in~\eqref{mappapsi} is injective. This holds because if 
$[\cE_2]\not=[\cE'_2]\in(\cM_{v_2}(S,h_S)\setminus \cD_{v_2}(S,h_S))$ then $\Hom_{S^{[2]}}(\cG(\cE_1,\cE_2),\cG(\cE_1,\cE'_2))=0$ by the BKR correspondence.
\end{proof}

We prove Proposition~\ref{prp:stabuno} after a series of preliminary results. 
\begin{prp}\label{prp:quadstab}
Let $X_1,X_2$   be   irreducible smooth projective varieties. For $i\in\{1,2\}$  let $h_i$ be an ample class on $X_i$ and $\cV_i$ be an $h_i$ slope-stable vector bundle on $X_i$. 
 Let $\pr_i\colon X_1\times X_2\to X_i$ be the projection and  let $h\coloneq \pr_1^{*} h_1+ \pr_2^{*} h_2$.   
Then $\cV_1\boxtimes \cV_2$ is $h$ slope-stable. 
\end{prp}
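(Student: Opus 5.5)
We would use the standard route via Hermite--Einstein metrics rather than bare-hands subsheaf analysis. The algebraic alternative — restricting a destabilizing subsheaf to general fibres $\{x\}\times X_2$ and $X_1\times\{y\}$ — is awkward because $h^{n_1+n_2-1}$ mixes the two factors. So we argue analytically; for a possibly disconnected statement we note $X_1,X_2$ are connected and irreducible, so everything below applies.

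\emph{Step 1.} Choose Kähler forms $\omega_i$ representing $h_i$ on $X_i$. Then $\omega\coloneq\pr_1^*\omega_1+\pr_2^*\omega_2$ is a Kähler form representing $h$ on $X_1\times X_2$ (the product metric). By the Donaldson--Uhlenbeck--Yau theorem, each $\cV_i$, being $h_i$ slope stable, carries a Hermite--Einstein metric $H_i$ with $\Lambda_{\omega_i}F_{H_i}=\lambda_i\Id$, and $\cV_i$ is irreducible, i.e.\ simple.

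\emph{Step 2.} Equip $\cV_1\boxtimes\cV_2$ with the product metric $H=\pr_1^*H_1\otimes\pr_2^*H_2$. Its curvature is $F_H=\pr_1^*F_{H_1}\otimes\Id+\Id\otimes\pr_2^*F_{H_2}$. Since $\omega$ is a product form, contraction splits, giving
\[
\Lambda_\omega F_H=(\lambda_1+\lambda_2)\Id .
\]
Hence $H$ is Hermite--Einstein, and by the easy direction of Kobayashi--Lübke, $\cV_1\boxtimes\cV_2$ is $h$ slope polystable.

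\emph{Step 3.} It remains to show that $\cV_1\boxtimes\cV_2$ is indecomposable, i.e.\ simple. By Künneth,
\[
H^0(\mathcal{E}nd(\cV_1\boxtimes\cV_2))=H^0(\mathcal{E}nd\,\cV_1)\otimes H^0(\mathcal{E}nd\,\cV_2)=\CC,
\]
since stable bundles are simple. A polystable bundle with only scalar endomorphisms has a single summand, so $\cV_1\boxtimes\cV_2$ is stable.

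The main obstacle is Step 1's use of the Hitchin--Kobayashi correspondence together with the claim that $\Lambda_\omega$ of a product-type curvature splits as stated. The latter is a local computation: $\Lambda_\omega$ kills the mixed $(1,1)$ terms and acts on each factor as $\Lambda_{\omega_i}$. Should one prefer to avoid analysis, one could instead invoke the semistability of tensor products, but stability would still require the simplicity argument of Step 3 combined with polystability. Obtaining that polystability is precisely why the metric argument is the cleanest route.
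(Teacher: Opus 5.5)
Your proof is correct, but it takes a different route from the paper.

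\textbf{The paper's route.} The paper argues algebraically, by exactly the restriction method you call awkward. With $m=n_1+n_2-1$, only the terms $\pr_1^*h_1^{n_1-1}\pr_2^*h_2^{n_2}$ and $\pr_1^*h_1^{n_1}\pr_2^*h_2^{n_2-1}$ of $h^{m}$ survive. Since $\pr_2^*h_2^{n_2}$ (resp.\ $\pr_1^*h_1^{n_1}$) is $d_2$ (resp.\ $d_1$) times the class of a fibre $X_1\times\{p_2\}$ (resp.\ $\{p_1\}\times X_2$), a subsheaf $\cA$ satisfies, for general points,
$\mu_h(\cA)=d_2\binom{m}{n_2}\mu_{h_1}(\cA_{|X_1\times\{p_2\}})+d_1\binom{m}{n_1}\mu_{h_2}(\cA_{|\{p_1\}\times X_2})$.
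The analogous formula holds for $\cV_1\boxtimes\cV_2$. The restrictions $\cV_1\otimes\CC^{r_2}$ and $\CC^{r_1}\otimes\cV_2$ are polystable, so a destabilizing $\cA$ forces both fibrewise slopes to be maximal. Hence, in codimension $1$, $\cA_{|X_1\times\{p_2\}}=\cV_1\otimes U$ and $\cA_{|\{p_1\}\times X_2}=W\otimes\cV_2$. This forces $\cA$ to have full rank, so the mixing of the two factors in $h^m$ is harmless.

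\textbf{Your route.} You replace this by the Donaldson--Uhlenbeck--Yau theorem plus the easy Kobayashi--L\"ubke direction to get $h$-polystability. You then upgrade to stability via the K\"unneth computation $H^0(\mathcal{E}nd(\cV_1\boxtimes\cV_2))=\CC$. Each step is sound: the product metric has curvature $\pr_1^*F_{H_1}\otimes\Id+\Id\otimes\pr_2^*F_{H_2}$, and for a product K\"ahler form $\Lambda_\omega$ of a form pulled back from one factor is the pull-back of $\Lambda_{\omega_i}$.

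\textbf{Comparison.} The paper's argument is elementary and self-contained, needing only restriction to general fibres and a little linear algebra. Yours rests on a deep transcendental theorem, but it produces an explicit Hermite--Einstein metric on $\cV_1\boxtimes\cV_2$. It also applies verbatim to arbitrary K\"ahler classes on compact K\"ahler manifolds.
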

\begin{proof}
If $\cV_1\boxtimes \cV_2$ is not $h$ slope-stable there exists an exact sequence 
\begin{equation}
0\lra \cA \lra \cV_1\boxtimes \cV_2 \lra \cB\lra 0
\end{equation}
of  torsion free sheaves such that $0<r(\cA)<r(\cV_1\boxtimes \cV_2)$ and 
\begin{equation}\label{mexico}
\mu_h(\cA)\ge \mu_{h}(\cV_1\boxtimes \cV_2).
\end{equation}
For $i\in\{1,2\}$  let $n_i\coloneq \dim X_i$, and let $d_i\coloneq \int_{X_i} h_i^{n_i}$ be the degree of $X_i$. We have
\begin{equation}\label{pendab}
\mu_{h}(\cV_1\boxtimes \cV_2)= d_2 {m\choose n_2}\mu_{h_1}(\cV_1)+d_1 {m\choose n_1}\mu_{h_2}(\cV_2).  
\end{equation}
where $m\coloneq n_1+n_2-1$.
Let  $p_i\in X_i$  be general points.
Since  
$\cA$ is locally free in codimension $1$ we have
\begin{equation}\label{seansean}
\mu_{h}(\cA)=d_2 {m\choose n_2}\mu_{h_1}(\cA_{|{X_1}\times \{p_2\}})+ 
d_1 {m\choose n_1}\mu_{h_2}(\cA_{|\{p_1\}\times X_2}).
\end{equation}
Let $r_i\coloneq r(\cV_i)$. Since the restrictions of $\cV_1\boxtimes \cV_2$  to $X_1\times \{p_2\}$ and to $\{p_1\}\times X_2$ are isomorphic to the polystable vector bundles   $\cV_1\otimes_{\CC}\CC^{r_2}$ and $\CC^{r_1}\otimes_{\CC}\cV_2$ respectively,  it follows from~\eqref{mexico}, \eqref{pendab}, \eqref{seansean}   that $\mu_{h_1}(\cA_{|X_1\times \{p_2\}})=\mu_{h_1}(\cV_1)$ and 
$\mu_{h_2}(\cA_{|\{p_1\}\times X_2})=\mu_{h_2}(\cV_2)$. These equalities give that there exist vector subspaces 
$0\not=U\subset \CC^{r_2}$ and $0\not=W\subset \CC^{r_1}$ such that in codimension $1$ we have
$\cA_{|X_1\times \{p_2\}}=\cV_1\otimes_{\CC} U$ and $\cA_{|\{p_1\}\times X_2}=W\otimes_{\CC}\cV_2$ respectively. It follows that 
$r(\cA)=r(\cV_1\boxtimes \cV_2)$. This is a contradiction.
\end{proof}
Let $\tau\colon X(S)\to S^2$ be the the blow up of the diagonal $D(S)$. We have a commutative diagram
\begin{equation}\label{commiso}
\xymatrix{ X(S)\ar[d]_{\rho}\ar[rr]^{\tau}    &  &  S^2 \ar[d]^{\pi}\\ 
  S^{[2]}  \ar[rr]^{\gamma} & & S^{(2)} }
\end{equation}
where $\rho,\pi$ are the quotient maps for the natural $\ZZ/(2)$ actions, and $\gamma$ is the cycle (or Hilbert-to-Chow) map. 
For $i\in\{1,2\}$ let $\pr_i\colon S^2\to S$ be the $i$-th projection, and 
let $\tau_i\colon X(S)\to S$ be the composition $\tau_i\coloneq\pr_i\circ \tau$. 
Recall that $v(t)\coloneq tv_1-(0,0,1)$.
\begin{lmm}\label{lmm:americanpie}
Let $0<t\le a$. Let $h_S$ be an ${\mathsf a}(v_2)$-generic polarization of $S$.  
Let
\begin{equation}
[\cH]\in (\cM_{v(t)}(S,h_S)\setminus \cD_{v(t)}(S,h_S)).
\end{equation}
 Then 
$\tau_1^{*}\cE_1\otimes\tau_2^{*}\cH$ and $\tau_1^{*}\cH\otimes\tau_2^{*}\cE_1$
are $\rho^{*}\bm{\mu}(h_S)$ slope stable vector bundles with equal $\rho^{*}\bm{\mu}(h_S)$-slopes.
\end{lmm}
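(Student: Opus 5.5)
Since $\tau\colon X(S)\to S^2$ is a blow up and $\rho^{*}\bm{\mu}(h_S)=\tau^{*}(\pr_1^{*}h_S+\pr_2^{*}h_S)$ (the pull-back of the symmetrization of $h_S$ via $\pi$ and $\gamma$, using the commutative diagram~\eqref{commiso}), the plan is to reduce to Proposition~\ref{prp:quadstab} on $S\times S$. The two bundles in question are $\tau^{*}(\cE_1\boxtimes\cH)$ and $\tau^{*}(\cH\boxtimes\cE_1)$; they are vector bundles as soon as $\cH$ is locally free. By Item~(3) of Proposition~\ref{prp:divuti}, $\cD^{t}_{v(t)}=\cB_{v(t)}\subset\cD_{v(t)}$, so $[\cH]\notin\cD_{v(t)}$ forces $\cH$ to be locally free. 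Equality of slopes is immediate: the class $\tau^{*}(\pr_1^{*}h_S+\pr_2^{*}h_S)$ is symmetric under the involution, which swaps the two bundles. One can also check it from~\eqref{pendab}, since $\mu_{h_S}(\cE_1)=\mu_{h_S}(\cH)$ because $c_1(\cH)/r(\cH)=tD/2at=D/2a=c_1(\cE_1)/r(\cE_1)$.

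Next I would verify that $\cH$ is $h_S$ slope stable. For $t=1$ this holds by Lemma~\ref{lmm:stabmuku} together with the fact that $\gcd(2a,\divisore(D))=1$: a subsheaf of equal slope would need $r(\cG)D$ divisible by $2a$, which is impossible for $0<r(\cG)<2a$. For $2\le t\le a$ it is Proposition~\ref{prp:pendbrill}, since $\cH$ is locally free and $[\cH]\notin\cD_{v(t)}$. The bundle $\cE_1$ is slope stable by the setup. Proposition~\ref{prp:quadstab} then gives that $\cE_1\boxtimes\cH$ and $\cH\boxtimes\cE_1$ are $h$ slope stable on $S^2$, where $h=\pr_1^{*}h_S+\pr_2^{*}h_S$.

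It remains to transfer stability along $\tau$ with respect to the nef and big class $\tau^{*}h$; I expect this to be the main, though mild, obstacle. Let $\cA\subset\tau^{*}\cV$ be a subsheaf with $0<r(\cA)<r(\cV)$, where $\cV$ is one of the two box products. Then $\tau_{*}\cA\subset\tau_{*}\tau^{*}\cV=\cV$ is a subsheaf of the same rank, and it agrees with $\cA$ off the exceptional divisor $E$. Hence $c_1(\cA)=\tau^{*}c_1(\tau_{*}\cA)+mE$ for some integer $m$. By the projection formula, $E\cdot(\tau^{*}h)^{3}=0$ because $\tau(E)$ has codimension $2$, so
\begin{equation*}
\mu_{\tau^{*}h}(\cA)=\mu_h(\tau_{*}\cA)<\mu_h(\cV)=\mu_{\tau^{*}h}(\tau^{*}\cV).
\end{equation*}
Since $\tau^{*}h$ is only nef, I would note that this is exactly the setting of Definition~\ref{dfn:pendstabgen}, and that the argument above uses nothing beyond the projection formula. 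The rank-zero boundary cases do not occur.
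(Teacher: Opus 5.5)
Your proof is correct and follows the paper's argument step by step. The steps are: local freeness of $\cH$ from Item~(3) of Proposition~\ref{prp:divuti}; equal slopes via the covering involution; $h_S$ slope stability of $\cH$ (genericity for $t=1$, Proposition~\ref{prp:pendbrill} for $t\ge 2$); Proposition~\ref{prp:quadstab} on $S^2$; and the transfer of stability along $\tau$ using $\rho^{*}\bm{\mu}(h_S)=\tau^{*}(\pr_1^{*}h_S+\pr_2^{*}h_S)$.

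There are two minor differences. You obtain $\mu_{\tau^{*}h}(\cA)=\mu_h(\tau_{*}\cA)$ from the projection formula ($E\cdot(\tau^{*}h)^{3}=0$), where the paper uses three general divisors in $|Nh|$ avoiding the diagonal. For $t=1$ you invoke $\gcd(2a,\divisore(D))=1$ instead of the paper's assertion that $D$ is primitive; yours is the more accurate justification, since $D$ need not be primitive.
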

\begin{proof} 
Since $\cH$ is not in $ \cD_{v(t)}(S,h_S)$ it is locally free by Item~(3) of Proposition~\ref{prp:divuti}. The covering involution  of 
$\rho$  maps
 $\rho^{*}\bm{\mu}(h_S)$ to itself and it lifts to an isomorphism between
$\tau_1^{*}\cE_1\otimes\tau_2^{*}\cH$ and  
 $\tau_1^{*}\cH\otimes\tau_2^{*}\cE_1$. Hence the latter have equal $\rho^{*}\bm{\mu}(h_S)$-slopes. 
   
 It remains to prove that $\tau_1^{*}\cE_1\otimes\tau_2^{*}\cH$ and $\tau_1^{*}\cH\otimes\tau_2^{*}\cE_1$
 are $\rho^{*}\bm{\mu}(h_S)$ slope stable.  By symmetry it suffices to prove it for $\tau_1^{*}\cE_1\otimes\tau_2^{*}\cH=\tau^{*}(\cE_1\boxtimes\cH)$.  
Let $P\coloneq \bm{\mu}(h_S)$ and  $h_{S^2}\coloneq \pr_1^{*}h_S+\pr_2^{*}h_S$.  Let 
 $\cF\subset\tau^{*}(\cE_1\boxtimes\cH)$ be a subsheaf with $0<r(\cF)<r_1 r_2=r(\tau^{*}(\cE_1\boxtimes\cH))$. 
 We claim that
\begin{equation}\label{2ugu1ine}
\mu_{\rho^{*}P}(\cF)=\mu_{h_{S^2}}(\tau_{*}\cF)<\mu_{h_{S^2}}(\cE_1\boxtimes\cH)=
\mu_{\rho^{*}P}(\tau^{*}(\cE_1\boxtimes\cH)).
\end{equation}
In fact the intersection of three general divisors in $|N h_{S^2}|$ (for $N>0$  such that  $N h_{S^2}$ is very ample) is contained in the open 
$S^2\setminus D(S)$ over which $\tau$ is an isomorphism. Since $\rho^{*}P=\tau^{*}h_{S^2}$ this implies 
the first equality. 
 
The vector bundle $\cH$ is $h_S$ slope stable (because $c_1(\cH)=D$ is primitive and  $h_S$ is ${\mathsf a}(v(1))$-generic if $t=1$, and  by Proposition~\ref{prp:pendbrill}  if $t\ge 2$) and hence 
$\cE_1\boxtimes\cH$ is $h_{S^2}$ slope stable by Proposition~\ref{prp:quadstab}. Since $\tau_{*}\cF\subset \cE_1\boxtimes\cH$ we get that  the  inequality in~\eqref{2ugu1ine} holds. 

The second equality in~\eqref{2ugu1ine} holds because  
$\rho^{*}P=\tau^{*}h_{S^2}$.  
\end{proof}
\begin{prp}\label{prp:gistabperbig}
Let $0<t\le a$.  Let $h_S$ and $\cH$ be as in Lemma~\ref{lmm:americanpie}.
Then 
$\cG(\cE_1,\cH)$ is $\bm{\mu}(h_S)$ slope stable.
\end{prp}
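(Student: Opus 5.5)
We pull everything back to $X(S)$ via $\rho$ and exploit that $\rho$ is a double cover. Recall from~\cite[Def.~2.1]{og:highdim} that $\rho^{*}\cG(\cE_1,\cH)$ is, in codimension one (in fact up to an elementary modification along the exceptional divisor $\rho^{-1}(\Delta)$ of $\tau$), isomorphic to
\begin{equation*}
\cW\coloneq\tau_1^{*}\cE_1\otimes\tau_2^{*}\cH\ \oplus\ \tau_1^{*}\cH\otimes\tau_2^{*}\cE_1 .
\end{equation*}
Slopes with respect to $\rho^{*}\bm{\mu}(h_S)=\tau^{*}h_{S^2}$ are insensitive to modifications supported on the exceptional divisor. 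The reason is that $\tau^{*}h_{S^2}$ restricted to the exceptional divisor has top self-intersection zero in the relevant degree: three general members of $|N\tau^{*}h_{S^2}|$ miss the exceptional divisor, as in the proof of Lemma~\ref{lmm:americanpie}. By Lemma~\ref{lmm:americanpie} the two summands of $\cW$ are $\rho^{*}\bm{\mu}(h_S)$ slope stable with equal slopes, so $\rho^{*}\cG(\cE_1,\cH)$ is $\rho^{*}\bm{\mu}(h_S)$ slope polystable with exactly two stable factors. The one subtlety is that $\rho^{*}\bm\mu(h_S)$ is only nef and big; but the definition of slope stability in Definition~\ref{dfn:pendstabgen} makes sense for such classes, and the arguments of Lemma~\ref{lmm:americanpie} already use only this.

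Now let $\cF\subset\cG(\cE_1,\cH)$ with $0<r(\cF)<r(\cG(\cE_1,\cH))$ and $\mu_{\bm\mu(h_S)}(\cF)\ge\mu_{\bm\mu(h_S)}(\cG(\cE_1,\cH))$. Since $\rho$ is finite of degree $2$, we have $\mu_{\rho^{*}\bm\mu(h_S)}(\rho^{*}\cF)=2\mu_{\bm\mu(h_S)}(\cF)$, and similarly for $\cG$. Hence the saturation of $\rho^{*}\cF$ in $\rho^{*}\cG(\cE_1,\cH)$, read in codimension one inside $\cW$, is a slope-maximal subsheaf of a polystable sheaf. We must therefore have equality of slopes, and a saturated subsheaf of maximal slope in the polystable sheaf $\cW$ is a direct sum of some of its stable factors. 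The two factors are non-isomorphic: their restrictions to a general $S\times\{p\}$ are $\cE_1^{\oplus r(\cH)}$ versus $\cH^{\oplus 2a}$, and $\cH\not\cong\cE_1$ since $v(t)\ne v_1$. So, up to a subsheaf differing only along the exceptional divisor, $\rho^{*}\cF$ equals one of the two summands, since its rank is strictly between $0$ and the total rank.

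On the other hand, $\rho^{*}\cF$ is invariant under the covering involution $\iota$ of $\rho$, whose lift to $\rho^{*}\cG(\cE_1,\cH)$ exchanges the two summands (this is how $\cG$ is built via BKR). A single summand is not $\iota$-invariant, which gives a contradiction. I expect the main obstacle to be making the comparison between $\rho^{*}\cG(\cE_1,\cH)$ and $\cW$ precise near the exceptional divisor. One must check that the saturation and the $\iota$-linearization are compatible after the elementary modification, and that the codimension-one identification of a maximal-slope subsheaf with a summand is legitimate for the merely nef class. I would handle this by working on $X(S)\setminus\rho^{-1}(\Delta)$, where the isomorphism $\rho^{*}\cG\cong\cW$ holds exactly and $\iota$ swaps the summands. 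A subsheaf on this open set is determined by its restriction there, and slopes are computed by curves that avoid the exceptional divisor, so the argument runs entirely on the complement.
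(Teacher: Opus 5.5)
Your proposal is correct and is essentially the paper's argument. You pull back along the double cover $\rho$ and compare $\rho^{*}\cG(\cE_1,\cH)$ with $\tau^{*}(\cE_1\boxtimes\cH\oplus\cH\boxtimes\cE_1)$; these differ only along the exceptional divisor $E$, which slopes do not see since $E\cdot(\rho^{*}\bm{\mu}(h_S))^{3}=0$. Then Lemma~\ref{lmm:americanpie} and the non-isomorphism of the two summands force an equal-slope subsheaf to coincide with one summand, contradicting invariance under the covering involution. Your additional care only fills in details the paper leaves implicit: the factor $2$ in the slopes, and working on $X(S)\setminus E\cong S^2\setminus D(S)$ to make the paper's ``in codimension $1$'' precise.
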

\begin{proof}
Let $\cG=\cG(\cE_1,\cH)$ and $P\coloneq \bm{\mu}(h_S)$. Let $\cA\subset\cG$  with $0<r(\cA)<r(\cG)$. 
By~\cite[Subsect.~2.4]{og:highdim} we have an exact sequence
\begin{equation}\label{chiassosi}
0\lra \rho^{*}\cG\lra \tau^{*}(\cE_1\boxtimes\cH\oplus \cH\boxtimes\cE_1)\lra \cR\lra 0
\end{equation}
where  $\cR$ is zero away from $E\coloneq\tau^{-1}(D(S))$ is the exceptional divisor of  $\tau$. Let us prove that
\begin{multline}\label{scalzato}
\mu_{P}(\cA)=\mu_{\rho^{*}P}(\rho^{*}\cA)\le \mu_{\rho^{*}P}(\tau^{*}(\cE_1\boxtimes\cH\oplus \cH\boxtimes\cE_1))=\\
= \mu_{\rho^{*}P}(\rho^{*}\cG)=\mu_{P}(\cG).
\end{multline}
In fact the inequality above  holds because $\rho^{*}\cA\subset \tau^{*}(\cE_1\boxtimes\cH\oplus \cH\boxtimes\cE_1)$
by the exact sequence in~\eqref{chiassosi}, 
 and because $\tau^{*}\cE_1\boxtimes\cH$, $\tau^{*}\cH\boxtimes\cE_1$ are $\rho^{*}\bm{\mu}(h_S)$ slope stable (of the same slope)
   by Lemma~\ref{lmm:americanpie}.
In addition the second equality in~\eqref{scalzato} holds because $E\cdot \rho^{*}P^3=0$. This finishes the proof of the validity of~\eqref{scalzato}. Since the vector bundle $\cE_1\boxtimes\cH$ is not isomorphic to  the vector bundle 
$\cH\boxtimes\cE_1$,  the inequality above is an equality only if in codimension $1$ we have
$\rho^{*}\cA=\cE_1\boxtimes\cH$ or $\rho^{*}\cA=\cH\boxtimes\cE_1$.  This is absurd because $\rho^{*}\cA$ is a subsheaf invariant under the action of the covering involution.
\end{proof}
\begin{rmk}
Let $a=1$. Then all sheaves parametrized by $\cM_{v_2}$ are slope stable, but if $[\cH]\in\cB_{v_2}$ then  $\cG(\cE_1,\cH)$ is 
not $\bm{\mu}(h_S)$ slope stable, see Subsection~\ref{subsec:semirepl}. For the validity of the proof of Proposition~\ref{prp:gistabperbig}  it is crucial that $\cH$ is locally free.  
\end{rmk}
\begin{rmk}\label{rmk:rigidistab}
Let $\cE_1[2]^{\pm}$ be the rank-$4a^2$ vector bundles defined in~\cite[Def.~5.1]{ogfascimod}, 
i.e.~$\cE_1[2]^{+}=\rho_{*}(\cE_1\boxtimes\cE_1)^{\Sigma_2}$ is the sheaf of invariants for the  \lq\lq natural\rq\rq\  lift to 
$\tau_1^{*}\cE_1\otimes \tau_2^{*}\cE_1$ of the action  of the symmetric group on two elements $\Sigma_2$ on $X(S)$, and $\cE_1[2]^{-}$ is defined multiplying the natural action by the sign function. Arguing as above one proves that $\cE_1[2]^{\pm}$ is  $\bm{\mu}(h_S)$ slope stable.
\end{rmk}
\subsubsection{Proof of Proposition~\ref{prp:stabuno}}
The sheaf $\cE_1$ is locally free (it is a spherical vector bundle), and $\cE_2$ is locally free by Item~(3) of Proposition~\ref{prp:divuti}. It follows that 
 $\cG(\cE_1,\cE_2)$ is locally free, see~\cite[Prop.~2.4]{og:highdim}. 
 
 Lastly we prove that $\cG(\cE_1,\cE_2)$ is $ h_{S^{[2]}}$ slope stable. Since 
 $ h_{S^{[2]}}$ and $\bm{\mu}(h_S)$ have positive BBF square, 
 $ h_{S^{[2]}}$ and $\bm{\mu}(h_S)$ slope stability are equivalent respectively to
   $ h_{S^{[2]}}$ and $\bm{\mu}(h_S)$ HK-slope stability,  see Remark~\ref{rmk:hkstabpos}. 
 By Proposition~\ref{prp:gistabperbig} 
 $\cG(\cE_1,\cE_2)$ is 
 $\bm{\mu}(h_S)$ slope stable. Since $\cG(\cE_1,\cE_2)$ is modular and $ h_{S^{[2]}}$  is ${\mathsf a}(\ww(D,a))$-suitable for $\bm{\mu}(h_S)$,
 it follows from Corollary~\ref{crl:paragonestab} that
 $\cG(\cE_1,\cE_2)$ is $ h_{S^{[2]}}$ slope stable.

\subsection{Instability of $\cG(\cE_1,\cE_2)$ for $[\cE_2]\in \cD_{v_2} $}\label{subsec:semirepl}
\setcounter{equation}{0}  
Throughout the present subsection Hypothesis-Definition~\ref{hyp-dfn:esseaemme} holds, and  the polarization $h_S$ is 
${\mathsf a}(v_2)$-generic. Let $[\cE_2]\in \cD_{v_2} $, i.e.~$k\coloneq \hom_S(\cF,\cE_1)>0$. By Proposition~\ref{prp:divuti} we have $k\le a$, and $k=a$ if and only if 
$\cE_2$ is not locally free. We treat separately the cases $k<a$ and $k=a$.
\setcounter{equation}{0}  
\subsubsection{$\cG(\cE_1,\cE_2)$ is not slope semistable if $[\cE_2]\in (\cD_{v_2}\setminus  \cD_{v_2}^a)$}
\label{subsubsec:liberoinstab}
Let  $[\cE_2]\in (\cD_{v_2}^k\setminus \cD_{v_2}^{k+1})$ where  $0<k<a$. In particular 
 $\cE_2$ is locally free.
Let
\begin{equation}\label{eccovukappa}
V_k\coloneq \Hom_S(\cE_2,\cE_1)^{\vee}.
\end{equation}
By Proposition~\ref{prp:pendbrill}
 there is an exact sequence
\begin{equation}\label{sinner}
0\lra \cH\lra \cE_2\overset{\psi}{\lra} \cE_1\otimes V_k\lra 0 
\end{equation}
where  $\psi$ is the tautological map, and
$\cH$ is  a slope stable vector bundle   with $v(\cH)=v(a-k)=(a-k)v_1-(0,0,1)$. Pulling back the exact sequence in~\eqref{sinner} to $S^2$ via the projection $\pr_i$ for $i\in\{1,2\}$, tensorizing  by $\pr_{3-i}^{*}\cE_1$  and taking the direct sum, we get the
exact sequence
\begin{equation}\label{musetti}
0\lra \cG(\cE_1,\cH)\lra \cG(\cE_1,\cE_2)\overset{\Psi_{\cE_2}}{\lra} \cG(\cE_1,\cE_1)\otimes V_k\lra 0. 
\end{equation}
Let $\cE_1[2]^{\pm}$ be as in Remark~\ref{rmk:rigidistab}. In~\cite[Prop.~2.3]{og:highdim} we have defined the direct sum decomposition
\begin{equation}\label{aaspezza}
\cG(\cE_1,\cE_1)= \cE_1[2]^{+}\oplus \cE_1[2]^{-}
\end{equation}
as follows. 
We have injections 
\begin{equation*}
\begin{matrix}
\cE_1\boxtimes\cE_1 & \overset{\iota^{+}}{\hra} & \cE_1\otimes\cE_1\oplus \cE_1\otimes\cE_1\\
\xi & \mapsto & (\xi,\xi)
\end{matrix}
\qquad
\begin{matrix}
\cE_1\boxtimes\cE_1 & \overset{\iota^{-}}{\hra} & \cE_1\otimes\cE_1\oplus \cE_1\otimes\cE_1\\
\xi & \mapsto & (\xi,-\xi)
\end{matrix}
\end{equation*}
The morphism $(\iota^{+},\iota^{-})$ is an isomorphism:
\begin{equation}\label{flautotraverso} 
(\iota^{+},\iota^{-})\colon \cE_1\boxtimes\cE_1 \oplus \cE_1\boxtimes\cE_1 \overset{\sim}{\lra} \cE_1\boxtimes\cE_1 \oplus \cE_1\boxtimes\cE_1 .
\end{equation}
Moreover $(\iota^{+},\iota^{-})$  is $\Sigma_2$-equivariant if the action on the first $ \cE_1\boxtimes\cE_1$ addend (respectively the second one) is the first one 
(respectively the second one)   in Remark~\ref{rmk:rigidistab}. Thus~\eqref{flautotraverso} gives a direct sum decomposition in the category of $\Sigma_2$-equivariant bounded complexes on $S^2$. Applying the BKR correspondence one gets the decomposition in~\eqref{aaspezza} - more precisely an isomorphism between the right-hand side and the left-hand side.
 Let $\Psi_{\cE_2}^{\pm}$ be the composition 
\begin{equation}
\cG(\cE_1,\cE_2)\xrightarrow{\Psi_{\cE_2}}\cG(\cE_1,\cE_1)\otimes V_k\to \cE_1[2]^{\pm}\otimes V_k,
\end{equation}
where the second map is the projection associated to the decomposition in~\eqref{aaspezza}. Let 
\begin{equation}\label{bordeaux}
\cA(\cE_2)^{\pm}\coloneq\ker \Psi_{\cE_2}^{\pm}.
\end{equation}
Note that $\cA(\cE_2)^{\pm}$ fits into the exact sequence
\begin{equation}\label{elvis}
0\lra \cG(\cE_1,\cH)\lra \cA(\cE_2)^{\pm}\lra  \cE_1[2]^{\pm}\otimes V_k\lra 0. 
\end{equation}
The exact sequence in~\eqref{musetti} gives the exact sequences
\begin{equation}\label{presley}
0\lra \cA(\cE_2)^{-}\lra \cG(\cE_1,\cE_2)\overset{\Psi_{\cE_2}^{+}}{\lra} \cE_1[2]^{+}\otimes V_k\lra 0 
\end{equation}
and
\begin{equation}\label{tuttifrutti}
0\lra \cA(\cE_2)^{+}\lra \cG(\cE_1,\cE_2)\overset{\Psi_{\cE_2}^{-}}{\lra} \cE_1[2]^{-}\otimes V_k\lra 0.
\end{equation}
\begin{prp}\label{prp:liberoinstab}
If  $[\cE_2]\in (\cD_{v_2}^k\setminus \cD_{v_2}^{k+1})$ where  $0<k<a$ then $\cG(\cE_1,\cE_2)$ is unstable (i.e.~non semistable) for all polarizations 
of $S^{[2]}$. 
\end{prp}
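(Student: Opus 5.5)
The plan is to show that the exact sequence~\eqref{tuttifrutti} (or~\eqref{presley}) destabilizes $\cG(\cE_1,\cE_2)$ for every polarization $h$ of $S^{[2]}$. The tool is a first Chern class computation showing that one of the quotients $\cE_1[2]^{\pm}\otimes V_k$ has strictly smaller slope than $\cG(\cE_1,\cE_2)$, with the sign of the slope difference independent of $h$. Since $h$ is ample, Remark~\ref{rmk:hkstabpos} reduces everything to HK-slopes: it suffices to show that $\lambda\coloneq\lambda_{\cA(\cE_2)^{+},\cG(\cE_1,\cE_2)}$ is a positive multiple of a class having positive $q$-pairing with every ample class. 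Equivalently, the quotient $\cE_1[2]^{-}\otimes V_k$ should have smaller HK-slope.

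The first step is to compute $c_1$ of the pieces. By~\eqref{sanmichele} with $v_2$ replaced by general $v(t)$, $r(\cG(\cE_1,\cF))=4a\cdot 2at=8a^2t$ for $v(\cF)=v(t)$. The paper gives $c_1(\cG(\cE_1,\cE_2))=4a^2({\bm\mu}(D)-a\delta)$, so its slope is $({\bm\mu}(D)-a\delta)/2a$. For $\cE_1[2]^{\pm}$ (rank $4a^2$) I would compute $c_1$ via the BKR/invariants description in~\cite{ogfascimod}. Pulled back to $X(S)$, the bundles $\rho^*\cE_1[2]^{\pm}$ agree with $\tau^*(\cE_1\boxtimes\cE_1)$ off $E$. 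Along $E$ they differ by an elementary modification that depends on the sign: the anti-invariant part of $\cE_1\otimes\cE_1$ restricted to the diagonal is $\Lambda^2\cE_1$, of rank $a(2a-1)$, while the invariant part is $\Sym^2\cE_1$, of rank $a(2a+1)$. I therefore expect
\[
c_1(\cE_1[2]^{+})=4a^2{\bm\mu}(D)/2a\cdot a-a(2a-1)\delta,\qquad c_1(\cE_1[2]^{-})=2a\,{\bm\mu}(D)-a(2a+1)\delta,
\]
possibly with the roles of the signs exchanged. Consistency requires the sum to be $c_1(\cG(\cE_1,\cE_1))=4a^2{\bm\mu}(D)-8a^3\delta/2a\cdot$ (rescaled), and I would use that check to fix the conventions.

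The slopes are then $\mu^{HK}(\cE_1[2]^{\pm})=q({\bm\mu}(D)/2a-\tfrac{2a\mp 1}{4a}\delta,h)$ against $\mu^{HK}(\cG)=q({\bm\mu}(D)/2a-\tfrac12\delta,h)$. The differences are $\pm\tfrac{1}{4a}q(\delta,h)$. Since $\delta$ is half the class of the effective divisor $\Delta$, which is not a multiple of a class orthogonal to all ample classes, we have $q(\delta,h)>0$ for every ample $h$. (Positivity of $q(\Delta,h)$ follows from Fujiki's relation with $\int\Delta\cdot h^3>0$.) Hence exactly one of $\cE_1[2]^{\pm}$ has slope strictly less than $\cG(\cE_1,\cE_2)$, for every $h$. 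That quotient $\cE_1[2]^{\pm}\otimes V_k$ has rank $4a^2k<8a^3$ (because $k<a$) and positive rank. So the corresponding kernel $\cA(\cE_2)^{\mp}$ in~\eqref{presley} or~\eqref{tuttifrutti} is a subsheaf with strictly larger slope, and $\cG(\cE_1,\cE_2)$ is unstable. Given the introduction's emphasis on $\cE_1[2]^{-}$, I expect the destabilizing quotient to be $\cE_1[2]^{-}\otimes V_k$.

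The main obstacle is the correct determination of $c_1(\cE_1[2]^{\pm})$, that is, the $\delta$-coefficient and which sign gets $\Lambda^2$ versus $\Sym^2$. I would first try the formula $\rho^*\cE_1[2]^{\pm}\subset\tau^*(\cE_1\boxtimes\cE_1)$ with cokernel supported on $E$, of rank equal to the rank of the non-fixed eigen-part on the diagonal. I would cross-check against~\cite[Prop.~2.3]{og:highdim} and the known value of $c_1(\cG(\cE_1,\cE_1))$. Only the sign of the difference matters, and the two differences are opposite, so the conclusion holds whichever way the conventions turn out.
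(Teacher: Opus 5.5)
Your argument is correct and is essentially the paper's own proof. The paper destabilizes $\cG(\cE_1,\cE_2)$ via the exact sequence~\eqref{tuttifrutti}, using $c_1(\cE_1[2]^{\pm})/r(\cE_1[2]^{\pm})=\bm{\mu}(D)/2a-\delta/2\pm\delta/4a$ (quoted from~\cite[Prop.~5.8]{ogfascimod}) together with the slope equalities~\eqref{otamendi}, which are exactly the values you reach. Your displayed formula for $c_1(\cE_1[2]^{+})$ has a typo and should read $2a\bm{\mu}(D)-a(2a-1)\delta$; your observation that one of~\eqref{presley},~\eqref{tuttifrutti} destabilizes whatever the sign convention is a harmless robustness check, and the paper's formula confirms that the destabilizing quotient is $\cE_1[2]^{-}\otimes V_k$.
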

\begin{proof}
By~\cite[Prop.~5.8]{ogfascimod} we have (recall~\eqref{duedelta})
\begin{equation}\label{piumenopend}
\frac{c_1(\cE_1[2]^{\pm})}{r(\cE_1[2]^{\pm})}=\frac{\bm{\mu}(D)}{2a}-\frac{\delta}{2}\pm\frac{\delta}{4a},
\end{equation}
and by~\cite[Prop.~2.7]{og:highdim} we have
\begin{equation}\label{otamendi}
\frac{c_1(\cG(\cE_1,\cE_2))}{r(\cG(\cE_1,\cE_2))}=\frac{c_1(\cG(\cE_1,\cH))}{r(\cG(\cE_1,\cH))}=\frac{c_1(\cG(\cE_1,\cE_1))}{r(\cG(\cE_1,\cE_1))}=\frac{\bm{\mu}(D)}{2a}-\frac{\delta}{2}.  
\end{equation}
Hence the proposition follows from the exact sequence in~\eqref{tuttifrutti}. 
\end{proof}
\subsubsection{$\cG(\cE_1,\cE_2)$ is not slope semistable if $[\cE_2]\in   \cD_{v_2}^a$}\label{subsubsec:notsemidia}
Suppose  that $[\cE_2]\in\cD^a_{v_2}$, i.e.~that $\cE_2$ is not locally free. 
By Proposition~\ref{prp:bordodiemme} there exist a point $p\in S$ and an exact sequence 
\begin{equation}\label{nelduale}
0\lra \cE_2\lra\cE_1\otimes V_a\overset{f}{\lra} \CC_{p}\lra 0,
\end{equation}
where $f$ is identified with a linear map $f(p)\colon \cE_1(p)\otimes V_a\to \CC$, which viewed as a map $\cE_1(p)\to V^{\vee}_a$  is surjective by Lemma~\ref{lmm:stabse}. Note that $p$ is the unique singular point of $\cE_2$ (the localization of $\cE_2$ at $p$ is not free).
From~\eqref{nelduale} we get the exact sequence
\begin{equation}\label{massacarrara}
0\lra \cG(\cE_1,\cE_2) \lra \cG(\cE_1,\cE_1)\otimes V_a\overset{\Phi_{\cE_2}}{\lra}\cG(\cE_1,\CC_{p})\lra 0.
\end{equation}
By the decomposition in~\eqref{aaspezza} the above exact sequence reads
\begin{equation}\label{piumenopsi}
0\lra \cG(\cE_1,\cE_2)\lra(\cE_1[2]^{+}\oplus \cE_1[2]^{-})\otimes V_a\overset{\Phi_{\cE_2}}{\lra} \cG(\cE_1,\CC_{p})\lra 0.
\end{equation}
Let $\Phi_{\cE_2} ^{\pm}$ be the restriction of $\Phi_{\cE_2}$ to $\cE_1[2]^{\pm}\otimes V_a$. 
\begin{prp}\label{prp:fipiusur}
Let $[\cE_2]\in\cD_{v_2}^a$, and let $p$ be the singular point of $\cE_2$. The map $\Phi_{\cE_2}^{+}\colon \cE_1[2]^{+}\otimes V_a\to  \cG(\cE_1,\CC_{p})$ is surjective.
\end{prp}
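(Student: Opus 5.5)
The plan is to check surjectivity pointwise, using Nakayama's lemma on the support of the target. By construction $\cG(\cE_1,\CC_p)$ is the BKR image of the $\Sigma_2$-equivariant sheaf $\cE_1\boxtimes\CC_p\oplus\CC_p\boxtimes\cE_1$. Its support is the surface $B_p\subset S^{[2]}$ of length-$2$ subschemes containing $p$, and $B_p\cong\mathrm{Bl}_p S$.

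The first step is to describe $\cG(\cE_1,\CC_p)$ concretely. On $X(S)$ the pull-back of $\cE_1\boxtimes\CC_p\oplus\CC_p\boxtimes\cE_1$ is supported on the strict transforms of $S\times\{p\}$ and $\{p\}\times S$. These two surfaces are exchanged by the covering involution, and each maps isomorphically onto $B_p$ via $\rho$. Using the description of $\cG$ from~\cite[Subsect.~2.4]{og:highdim} (the exact sequence~\eqref{chiassosi} and its analogue with $\cH$ replaced by $\CC_p$), I expect $\cG(\cE_1,\CC_p)$ to be the push-forward to $S^{[2]}$ of a rank-$2a$ vector bundle on $B_p$. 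This bundle is the pull-back of $\cE_1$ under $B_p\cong\mathrm{Bl}_pS\to S$. Moreover $\Phi_{\cE_2}$ is induced by $f\colon\cE_1\otimes V_a\to\CC_p$ tensored with $\cE_1$.

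Since $\cG(\cE_1,\CC_p)$ is a vector bundle on $B_p$ and $\rho$ is finite, it will suffice by Nakayama to prove that $\Phi^{+}_{\cE_2}$ is surjective on the fiber at every point $[Z]\in B_p$. Where convenient I would check this after pulling back by $\rho$.

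The second step handles reduced $Z=\{p,q\}$ with $q\neq p$. Near $[Z]$ the map $\rho$ is étale. The fiber of $\cE_1[2]^{+}$ is the diagonal copy of $\cE_1(p)\otimes\cE_1(q)$ inside the $\Sigma_2$-module $\cE_1(p)\otimes\cE_1(q)\oplus\cE_1(q)\otimes\cE_1(p)$. The target fiber is $\cE_1(q)$, and the map is
\[
(x\otimes y)\otimes v\mapsto f(p)(x\otimes v)\,y .
\]
This map is onto because $f(p)\neq 0$.

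The third and crucial step handles the nonreduced $Z$ supported at $p$, that is, the points of the exceptional curve $B_p\cap\Delta\cong\PP(\Theta_pS)$. Here I would use the description from~\cite{ogfascimod} of $\rho^{*}\cE_1[2]^{\pm}$ restricted to the exceptional divisor $E$. For $\cE_1[2]^{+}$, its quotient $\Sym^2\cE_1$ is what survives in the fiber of the map to $\cG(\cE_1,\CC_p)$; the $\Lambda^2$ part is twisted by $\cO_E(-E)$ and vanishes there. I therefore expect the fiber map to be
\[
\Sym^2\cE_1(p)\otimes V_a\to\cE_1(p),\qquad (xy)\otimes v\mapsto f(p)(x\otimes v)\,y+f(p)(y\otimes v)\,x .
\]
Choose $x,v$ with $f(p)(x\otimes v)=1$. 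Taking $y$ in the hyperplane $\ker f(p)(-\otimes v)$ produces that whole hyperplane in the image, and taking $y=x$ produces $2x$. So the fiber map is surjective.

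I expect the main obstacle to be the third step: pinning down exactly which piece of the $\Sigma_2$-invariants of $\tau^{*}(\cE_1\boxtimes\cE_1)$ maps to the fiber of $\cG(\cE_1,\CC_p)$ at a point of $E$. This is also where $+$ and $-$ genuinely differ, and why the statement concerns $\Phi^{+}$ only. I would settle it with a local computation in coordinates on $X(S)$ near $E$, trivializing $\cE_1$ near $p$ and comparing the invariant and anti-invariant parts with the description of $\cG$ in~\cite[Subsect.~2.4]{og:highdim}.
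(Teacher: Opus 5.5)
Your proposal is correct and takes essentially the paper's route. The paper first proves (Proposition~\ref{prp:pioggia}) that $\Phi^{+}_g$ is surjective for a single nonzero $g\in\cE_1(p)^{\vee}$, by the same fiberwise check on $S_p\cong\mathrm{Bl}_pS$: it uses $\cG(\cE_1,\CC_p)\cong j_{*}(b^{*}\cE_1)$, the map $s_1\otimes s_2\mapsto g(s_1)s_2$ off $R_p$, and the symmetric map on the quotient $\Sym^2\cE_1(p)\otimes\cO_{R_p}$ from~\eqref{yakushima} along $R_p$; it then passes to $\Phi^{+}_{\cE_2}$ via Claim~\ref{clm:raddoppia}, which shows the restriction to $\cE_1[2]^{+}\otimes v$ is $2\Phi^{+}_{g_v}$ with $g_v=f(p)(-\otimes v)$, whereas you compute the fibers of $\Phi^{+}_{\cE_2}$ directly and then fix one $v$, a cosmetic difference (and your explicit surjectivity argument for $xy\mapsto g(x)y+g(y)x$ merely spells out what the paper leaves implicit).
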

Granting  the validity of Proposition~\ref{prp:fipiusur}, we proceed to show that $\cG(\cE_1,\cE_2)$ is unstable if $[\cE_2]\in\cD_{v_2}^a$. 
Let 
$\cA(\cE_2)^{+}$ be the kernel of $\Phi^{+}_{\cE_2}$. We have the exact sequence
\begin{equation}\label{psipiu}
0\lra \cA(\cE_2)^{+} \lra \cE_1[2]^{+}\otimes V_a\overset{\Phi^{+}_{\cE_2}}{\lra} 
\cG(\cE_1,\CC_{p})\lra 0.
\end{equation}
Note that $\cA(\cE_2)^{+}$ is a subsheaf of $\cG(\cE_1,\cE_2)$, see~\eqref{piumenopsi}. Since $\Phi^{+}_{\cE_2}$ is surjective the 	quotient 
$\cG(\cE_1,\cE_2)/\cA(\cE_2)^{+}$ is isomorphic to $\cE_1[2]^{-}\otimes V_a$ (see the exact sequence in~\eqref{piumenopsi}). Hence we have 
an exact sequence
\begin{equation}\label{harnargi}
0\lra \cA(\cE_2)^{+}\lra \cG(\cE_1,\cE_2) \overset{\Psi^{-}_{\cE_2}}{\lra} \cE_1[2]^{-}\otimes V_a\lra 0.
\end{equation}
\begin{prp}\label{prp:singolareinstab}
If $[\cE_2]\in\cD_{v_2}^a$ then $\cG(\cE_1,\cE_2)$ is unstable  for all polarizations 
of $S^{[2]}$. 
\end{prp}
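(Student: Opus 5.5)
The plan is to mimic the proof of Proposition~\ref{prp:liberoinstab}, using the exact sequence~\eqref{harnargi} (which is available once Proposition~\ref{prp:fipiusur} is granted). The quotient $\cE_1[2]^{-}\otimes V_a$ of $\cG(\cE_1,\cE_2)$ has rank $4a^3$, which is positive and smaller than $r(\cG(\cE_1,\cE_2))=8a^3$. So the kernel $\cA(\cE_2)^{+}$ is a nonzero proper subsheaf, and it suffices to show that this quotient has slope strictly smaller than that of $\cG(\cE_1,\cE_2)$ for every polarization $h$ of $S^{[2]}$.

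The first step is to compare normalized first Chern classes. By~\eqref{piumenopend},
\begin{equation*}
\frac{c_1(\cE_1[2]^{-}\otimes V_a)}{r(\cE_1[2]^{-}\otimes V_a)}=\frac{\bm{\mu}(D)}{2a}-\frac{\delta}{2}-\frac{\delta}{4a}.
\end{equation*}
For $\cG(\cE_1,\cE_2)$ I would read the normalized first Chern class directly off the mock Mukai vector~\eqref{sanmichele}. It is $\frac{\bm{\mu}(D)}{2a}-\frac{\delta}{2}$, exactly as in~\eqref{otamendi}, since $\ww(D,a)$ does not depend on which $[\cE_2]\in\cM_{v_2}$ is chosen. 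Alternatively, one can use~\eqref{massacarrara}: $\cG(\cE_1,\CC_p)$ is supported in codimension~$2$ (it is the BKR image of something supported on $\{p\}\times S\cup S\times\{p\}$, i.e.\ on the surface of subschemes containing $p$). Hence $c_1(\cG(\cE_1,\cE_2))=c_1(\cG(\cE_1,\cE_1))\cdot a$, with rank $8a^3$.

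The second step is to pass from Chern classes to slopes. The difference of the normalized first Chern classes is $\delta/(4a)$. By Remark~\ref{rmk:hkstabpos}, for an ample $h$ the sign of the slope difference is the sign of $q_X(\delta,h)$. Because $2\delta$ is the class of the effective divisor $\Delta$, we have $\delta\cdot h^{3}>0$ for every ample $h$. Therefore
\begin{equation*}
\mu_h(\cE_1[2]^{-}\otimes V_a)<\mu_h(\cG(\cE_1,\cE_2)),
\end{equation*}
so $\mu_h(\cA(\cE_2)^{+})>\mu_h(\cG(\cE_1,\cE_2))$. This shows that $\cG(\cE_1,\cE_2)$ is unstable.

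The only real obstacle is the justification of~\eqref{harnargi}, namely the surjectivity statement of Proposition~\ref{prp:fipiusur}, which is assumed here. Beyond that, one checks that the codimension-$2$ support of $\cG(\cE_1,\CC_p)$ does not affect $c_1$, and that the rank bookkeeping (with $\cE_1[2]^{\pm}$ of rank $4a^2$) makes $\cA(\cE_2)^{+}$ a proper nonzero subsheaf.
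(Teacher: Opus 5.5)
Your proof is correct and is essentially the paper's argument. Both use the destabilizing sequence~\eqref{harnargi} together with~\eqref{piumenopend} and~\eqref{otamendi}. The paper phrases it through the rank and $c_1$ of the kernel $\cA(\cE_2)^{+}$, which agree with those of $\cE_1[2]^{+}\otimes V_a$ because $\cG(\cE_1,\CC_p)$ is supported in codimension $2$. You instead show that the quotient $\cE_1[2]^{-}\otimes V_a$ has strictly smaller slope, which is equivalent by additivity of rank and $c_1$; the positivity $\delta\cdot h^3>0$ already gives this directly, so the appeal to Remark~\ref{rmk:hkstabpos} is not needed.
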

\begin{proof}
Since $r(\cA(\cE_2)^{+})=r(\cE_1[2]^{+})$ and $c_1(\cA(\cE_2)^{+})=c_1(\cE_1[2]^{+})$ the proposition follows from the exact sequence in~\eqref{harnargi} and the equalities in~\eqref{piumenopend} and~\eqref{otamendi}. 
\end{proof}
It remains to prove Proposition~\ref{prp:fipiusur}. 
\begin{dfn}
If $g\colon\cE_1\to \CC_p$  the morphism $\Phi_g^{+}\colon \cE_1[2]^{+}\to \cG(\cE_1,\CC_P)$ is the one corresponding via the BKR correspondence to 
\begin{equation}
(\Id\boxtimes g,g\boxtimes\Id)\in \Hom_{S^2}(\cE_1\boxtimes\cE_1,\cE_1\boxtimes\CC_p\oplus \CC_p\boxtimes\cE_1)^{\Sigma_2}.
\end{equation}
\end{dfn}
By the BKR correspondence and simplicity of $\cE_1$ we have an isomorphism 
\begin{equation}\label{ariacondiz}
\begin{matrix}
\cE_1(p)^{\vee} & \overset{\sim}{\lra} & \Hom_{S^{[2]}}(\cE_1[2]^{+},\cG(\cE_1,\CC_P)) \\
g & \mapsto & \Phi_g^{+}
\end{matrix}
\end{equation}
\begin{prp}\label{prp:pioggia}
If $g\in \cE_1(p)^{\vee}$ is non zero then $\Phi_g^{+}$ is a surjection.
\end{prp}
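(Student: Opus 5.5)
The plan is to check surjectivity of $\Phi_g^{+}$ locally near the support of the target, after first identifying $\cG(\cE_1,\CC_p)$ concretely. Let $\Sigma_p\subset S^{[2]}$ be the divisor of length-$2$ subschemes containing $p$; it is isomorphic to the blow up $\mathrm{Bl}_pS$. Via BKR, $\cG(\cE_1,\CC_p)$ is the $\Sigma_2$-invariant pushforward along $\rho$ of $\tau^{*}(\cE_1\boxtimes\CC_p\oplus\CC_p\boxtimes\cE_1)$. The swap exchanges the two summands, so taking invariants amounts to keeping one of them. This should give
\begin{equation*}
\cG(\cE_1,\CC_p)\cong\beta^{*}\cE_1\otimes\cO_{\Sigma_p},
\end{equation*}
where $\beta\colon\Sigma_p\cong\mathrm{Bl}_pS\to S$ is the blow-down map. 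In particular $\cG(\cE_1,\CC_p)$ is a rank-$2a$ vector bundle on $\Sigma_p$. The first step is to verify this identification, or at least that it holds in a neighbourhood of $\Sigma_p$ after trivializing $\cE_1$ near $p$. By Nakayama's lemma it then suffices to prove that $\Phi_g^{+}$ is surjective onto the fibre of $\beta^{*}\cE_1$ at every point $z\in\Sigma_p$.

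The second step treats points of $\Sigma_p$ off the exceptional curve. Let $z=\{p,x\}$ with $x\ne p$. Then $\rho$ is étale over $z$, and the fibre of $\cE_1[2]^{+}$ at $z$ is the space of invariants in $\cE_1(p)\otimes\cE_1(x)\oplus\cE_1(x)\otimes\cE_1(p)$. This space is identified with $\cE_1(p)\otimes\cE_1(x)$. Under this identification $\Phi_g^{+}(z)$ is $g\otimes\Id\colon\cE_1(p)\otimes\cE_1(x)\to\cE_1(x)$, which is surjective because $g\ne0$.

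The main obstacle is the set of points of the exceptional curve $\PP(T_pS)\subset\Sigma_p$ (non-reduced subschemes supported at $p$). There $\rho$ ramifies, so the fibre of $\cE_1[2]^{+}$ is no longer a simple tensor product, and a fibrewise computation would be delicate. I would avoid it by producing explicit generating sections. Choose a local frame $e_1,\dots,e_{2a}$ of $\cE_1$ on a neighbourhood $U$ of $p$ with $g(e_1(p))=1$ and $g(e_j(p))=0$ for $j\ge2$. For each $j$ consider the $\Sigma_2$-invariant section
\begin{equation*}
s_j\coloneq\tau_1^{*}e_1\otimes\tau_2^{*}e_j+\tau_1^{*}e_j\otimes\tau_2^{*}e_1
\end{equation*}
of $\tau^{*}(\cE_1\boxtimes\cE_1)$ over $\tau^{-1}(U\times U)$. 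It is regular on all of $X(S)$ there, including along the exceptional divisor $E$, so it descends to a section of $\cE_1[2]^{+}=\rho_{*}(\cE_1\boxtimes\cE_1)^{\Sigma_2}$.

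Next apply $(\Id\boxtimes g,g\boxtimes\Id)$ to $s_j$ and take the $\CC_p\boxtimes\cE_1$ component, which is $g\boxtimes\Id$. The first term of $s_j$ contributes $g(e_1(p))\,e_j=e_j$. The second term contributes $g(e_j(p))\,e_1$, which vanishes for $j\ge2$ and equals $e_1$ for $j=1$. So $\Phi_g^{+}(s_j)=e_j$ for $j\ge2$ and $\Phi_g^{+}(s_1)=2e_1$, restricted to $\tau_1^{-1}(p)$. The first component, given by $\Id\boxtimes g$, is the swapped copy, so it produces the same section of the invariant pushforward. Thus the images of the $s_j$ form a frame of $\beta^{*}\cE_1$ on all of $\Sigma_p\cap\gamma^{-1}(U^{(2)})$, including the exceptional curve, and hence $\Phi_g^{+}$ is surjective everywhere.

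The step I expect to need the most care is making the identification $\cG(\cE_1,\CC_p)\cong\beta^{*}\cE_1\otimes\cO_{\Sigma_p}$ precise along $E$. This can be checked in the local model with $\cE_1$ trivial, where it reduces to $\cG(\cO_S,\CC_p)\cong\cO_{\Sigma_p}$; that isomorphism follows from the BKR description of $\rho_{*}(\cO_{\tau_1^{-1}(p)})$ with the swap exchanging $\tau_1^{-1}(p)$ and $\tau_2^{-1}(p)$.
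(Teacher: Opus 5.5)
Your proof is correct, and its skeleton matches the paper's. Both arguments identify $\cG(\cE_1,\CC_p)\cong j_{*}(b^{*}\cE_1)$ as in \eqref{supportato}. Both check surjectivity fibrewise off the exceptional curve $R_p$, where $\Phi^{+}_g$ is $s_1\otimes s_2\mapsto g(s_1)s_2$, and then treat $R_p$ separately.

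The difference lies in the treatment of $R_p$. The paper describes $j^{*}\cE_1[2]^{+}$ as the kernel in \eqref{illusione}. From this it deduces \eqref{yakushima}, an extension of $\Sym^2\cE_1(p)\otimes\cO_{R_p}$ by $\bigwedge^2\cE_1(p)\otimes\cO_{R_p}(1)$. It then notes that $\Phi^{+}_{g|R_p}$ factors through the $\Sym^2$ quotient, and checks that the induced map $s_1\cdot s_2\mapsto g(s_1)s_2+g(s_2)s_1$ (up to a nonzero constant) is onto. You instead lift the symmetric tensors $e_1\cdot e_j$ to explicit $\Sigma_2$-invariant sections of $\tau^{*}(\cE_1\boxtimes\cE_1)$, and verify directly that their images form a frame of $b^{*}\cE_1$ over $b^{-1}(U)$.

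The underlying linear algebra is the same. Your route is more self-contained: it uses only the definition of $\cE_1[2]^{+}$ as an invariant pushforward, needs no information on the structure of $\cE_1[2]^{+}$ along $R_p$, and covers a whole neighbourhood of $R_p$ at once. The paper's route yields the structure sequences of $\cE_1[2]^{\pm}$ along $S_p$. These are reused, in the form \eqref{tarasbulba}, in the local-freeness argument of Proposition~\ref{prp:estloclib}.

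You flag the identification of $\cG(\cE_1,\CC_p)$ as the delicate step; it can be settled quickly. The scheme $\tau^{-1}(\{p\}\times S)=\tau_1^{-1}(p)$ is a smooth surface of the expected codimension $2$, so $L\tau^{*}(\CC_p\boxtimes\cE_1)$ has no higher terms. Moreover $\rho$ maps $\tau_1^{-1}(p)$ isomorphically onto $S_p$, with $\tau_2$ restricting to the blow-down. The paper simply takes \eqref{supportato} for granted.

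One small slip: your $\Sigma_p$ (the paper's $S_p$) is a surface in the fourfold $S^{[2]}$, so it has codimension $2$ and is not a divisor. Nothing in your argument depends on this.
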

\begin{proof}
Let $b\colon S_p\to S$ be the blow up of $S$ with center $p$, and let $i_p\colon R_p\hra S_p$ be the inclusion of the exceptional divisor. We  view $S_p$ as the closed subset of $S^{[2]}$ given by
\begin{equation}
S_p=\{[Z]\in S^{[2]} \mid p\in\supp Z\},
\end{equation}
and we let $j\colon S_p\hra S^{[2]}$ be the inclusion map.
Note that $R_p$ parametrizes non reduced subschemes of $S$ parametrized by $S_p$, i.e.~those which are supported at $p$. 
We have
\begin{equation}\label{supportato}
\cG(\cE_1,\CC_{p})\cong j_{*}(b^{*}\cE_1).
\end{equation}
We also have an exact sequence of sheaves on $S_p$.
\begin{equation}\label{illusione}
0\lra j^{*}(\cE_1[2]^{+})\lra \cE_1(p)\otimes b^{*}\cE_1 \overset{g}{\lra} \bigwedge^2\cE_1(p)\otimes i_{p,*}\cO_{R_p}\lra 0,
\end{equation}
where $g$ is defined via the projection of $\cE_1(p)\otimes \cE_1(p)=\Sym^2\cE_1(p)\oplus\bigwedge^2\cE_1(p)$ onto the second direct summand. 
Tensorizing the exact sequence in~\eqref{illusione} with $\cO_{R_p}$  we get an exact sequence 
\begin{equation}\label{yakushima}
0\lra \bigwedge^2\cE_1(p)\otimes\cO_{R_p}(1)\lra \cE_1[2]^{+}_{|R_p}
\lra \Sym^2\cE_1(p)\otimes \cO_{R_p}\lra 0.
\end{equation} 
We are ready to prove the proposition. The sheaf $\cG(\cE_1,\CC_{p})$ is supported on $S_p$. 
On $S_p\setminus R_p$ the map  $\Phi^{+}_g$ is given by 
\begin{equation*}
\begin{matrix}
\cE_1[2]^{+}_{|(\wt{S}_p\setminus R_p)}\cong \cE_1(p)\otimes b^{*}\cE_{1|(S\setminus \{p\})} & 
\xrightarrow{\Phi^{+}_{g|(S_p\setminus R_p)}} & b^{*}\cE_{1|(S\setminus \{p\})} \\
s_1\otimes s_2 & \mapsto & g(s_1)s_2
\end{matrix}
\end{equation*}
and hence is surjective. Next we examine $\Phi^{+}_g$ over $R_p$. The exact sequence in~\eqref{yakushima} gives that the restriction of $\Phi^{+}_g$ to $R_p$ factors through the quotient    $\cE_1[2]^{+}_{|R_P}\to \Sym^2\cE_1(p)\otimes  \cO_{R_p}$. Abusing notation we may write
\begin{equation}
\begin{matrix}
\Sym^2\cE_1(p)\otimes \cO_{R_p} & \xrightarrow{\Phi^{+}_{g|R_p}} & \cE_1(p)\otimes \cO_{R_p} \\
s_1\otimes s_2+s_2\otimes s_1 & \mapsto & 2(g(s_1(p))s_2(p)+g(s_2(p))s_1(p))
\end{matrix}
\end{equation}
It follows that $\Phi_g^{+}$ is surjective over $R_p$.
\end{proof}
By the BKR correspondence and simplicity of $\cE_1$ we also have an isomorphism 
\begin{equation}\label{lovenicol}
\begin{matrix}
\cE_1(p)^{\vee} & \overset{\sim}{\lra} & \Hom_{S^{[2]}}(\cG(\cE_1,\cE_1),\cG(\cE_1,\CC_P)) \\
g & \mapsto & \Phi_g
\end{matrix}
\end{equation}
A straightforward argument gives the following.
\begin{clm}\label{clm:raddoppia}
Let $g\in\cE_1(p)^{\vee}$. Let $i\colon \cE_1[2]^{+}\to \cG(\cE_1,\cE_1)$ be the inclusion given by the direct-sum decomposition
 in~\eqref{aaspezza}.
The composition 
\begin{equation}
\cE_1[2]^{+}\xrightarrow{i} \cG(\cE_1,\cE_1) \xrightarrow{\Phi_g} \cG(\cE_1,\CC_P)
\end{equation}
is equal to $2\Phi^{+}_g$. 
\end{clm}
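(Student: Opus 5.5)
The plan is to check the identity on the $\Sigma_2$-equivariant side and then transport it through the BKR correspondence. BKR is an equivalence between $\Sigma_2$-equivariant (complexes of) sheaves on $S^2$ and sheaves on $S^{[2]}$, so it suffices to compare the equivariant morphisms corresponding to $\Phi_g\circ i$ and to $\Phi^{+}_g$.

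First, $\cG(\cE_1,\cE_1)$ corresponds to $\cE_1\boxtimes\cE_1\oplus\cE_1\boxtimes\cE_1$ with the swap action: $(\xi,\eta)\mapsto(\sigma^{*}\eta,\sigma^{*}\xi)$, where $\sigma$ exchanges the factors and swaps the factors of $\cE_1\otimes\cE_1$. By \eqref{flautotraverso}, the inclusion $i$ of $\cE_1[2]^{+}$ corresponds to $\iota^{+}\colon\xi\mapsto(\xi,\xi)$. Here the source $\cE_1\boxtimes\cE_1$ carries the natural action of Remark~\ref{rmk:rigidistab}.

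Next, under \eqref{lovenicol}, $\Phi_g$ should correspond to the equivariant map
\[
\cE_1\boxtimes\cE_1\oplus\cE_1\boxtimes\cE_1\lra \cE_1\boxtimes\CC_p\oplus\CC_p\boxtimes\cE_1 .
\]
This map is obtained by applying $g$ to the appropriate factor in each summand, namely $(\xi,\eta)\mapsto\bigl((\Id\boxtimes g)(\xi)+(\Id\boxtimes g)(\eta),\,(g\boxtimes\Id)(\xi)+(g\boxtimes\Id)(\eta)\bigr)$. I will pin this down by writing the swap action on the target and using that $\Hom^{\Sigma_2}$ is computed by the morphisms from the first summand. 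This is the step where the normalization must be fixed carefully. The identification \eqref{lovenicol} comes from the adjunction $\Hom^{\Sigma_2}(\mathrm{Ind}\,A,B)=\Hom(A,B)$, applied to $\cE_1\boxtimes\cE_1$ (i.e.\ $\cE_1\boxtimes\cE_2$ with $\cE_2=\cE_1$). Under it, $g$ gives $\Id\boxtimes g$ on the first summand, mapping into the first target summand, and its $\sigma$-conjugate $g\boxtimes\Id$ on the second summand, mapping into the second target summand.

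Granting this description, composing with $\iota^{+}$ sends $\xi$ to the sum of the two contributions, which gives $2(\Id\boxtimes g,\,g\boxtimes\Id)(\xi)$. The contributions coincide because the diagonal embedding makes the images of the two summands equal. Comparing with the definition of $\Phi^{+}_g$ then yields $\Phi_g\circ i=2\Phi^{+}_g$.

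The main obstacle is bookkeeping. Each summand's map must land in the correct target summand and carry the correct factor, so that the two contributions add rather than cancel or split across summands. The cancelling case is exactly what happens for $\iota^{-}$, where $\Phi_g\circ\iota^{-}$ would give $0$ on the relevant components. If the naive formula for $\Phi_g$ produces only one contribution, I would redo the computation with the explicit induced-representation description and recheck the sign conventions of Remark~\ref{rmk:rigidistab}. Even if the constant comes out wrong in that case, it is harmless for the application, since only surjectivity up to a nonzero scalar is needed.
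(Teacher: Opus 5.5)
Computing the $\Sigma_2$-equivariant morphisms on $S^2$ and using that the BKR correspondence is a functor, so it preserves composition, is the right framework; the paper itself gives nothing beyond ``a straightforward argument''. However, the step that produces the factor $2$ fails, because your displayed formula for $\Phi_g$ does not match your own justification of it.

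Your adjunction argument describes $\Id\boxtimes g$ from the first summand into the first target summand, and its conjugate $g\boxtimes\Id$ from the second summand into the second target summand. That is the map $(\xi,\eta)\mapsto((\Id\boxtimes g)\xi,\,(g\boxtimes\Id)\eta)$. It is also $\cG(\cE_1,g)$, the map obtained by applying $\cG(\cE_1,-)$ to $g$, which is how $\Phi_{\cE_2}$ in \eqref{massacarrara} arises. It is \emph{not} your displayed formula $(\xi,\eta)\mapsto((\Id\boxtimes g)(\xi+\eta),\,(g\boxtimes\Id)(\xi+\eta))$.

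Compose the functorial map with $\iota^{+}$ and you get $\xi\mapsto((\Id\boxtimes g)\xi,\,(g\boxtimes\Id)\xi)$. The two ``contributions'' land in different target summands, so they are the two components of a single element, namely the one defining $\Phi^{+}_g$. They do not add to give twice that element. So you get $\Phi_g\circ i=\Phi^{+}_g$, and your sentence ``the contributions coincide \dots\ and add'' is where the argument breaks. Likewise $\Phi_g\circ\iota^{-}$ is $\xi\mapsto((\Id\boxtimes g)\xi,\,-(g\boxtimes\Id)\xi)\neq 0$, contrary to what you expected.

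The underlying problem is that \eqref{lovenicol} does not pin down $\Phi_g$. By full faithfulness of BKR, Frobenius reciprocity and K\"unneth, $\Hom_{S^{[2]}}(\cG(\cE_1,\cE_1),\cG(\cE_1,\CC_p))\cong\Hom_{S^2}(\cE_1\boxtimes\cE_1,\cE_1\boxtimes\CC_p\oplus\CC_p\boxtimes\cE_1)\cong\cE_1(p)^{\vee}\oplus\cE_1(p)^{\vee}$. So simplicity of $\cE_1$ still leaves a two-dimensional choice for each $g$.

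Your displayed formula is a legitimate equivariant map. It equals $\cG(\cE_1,g)\circ(\Id+s)$, where $s$ swaps the two summands of $\cE_1\boxtimes\cE_1\oplus\cE_1\boxtimes\cE_1$; $s$ is an equivariant involution whose eigensheaves are the images of $\iota^{\pm}$. For that map your computation does give $2\Phi^{+}_g$, and $0$ on $\cE_1[2]^{-}$. But it is neither the map your adjunction argument describes nor the one needed in the proof of Proposition~\ref{prp:fipiusur}, where $\Phi_{\cE_2}$ is the functorial map.

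To make the proposal a proof you must fix $\Phi_g$ explicitly. With the functorial choice the constant is $1$, and the $2\Id$ in Proposition~\ref{prp:nicolciacco} becomes $\Id$. As you note, this is harmless downstream, since only the fact that $\Phi_g\circ i$ is a nonzero multiple of $\Phi^{+}_g$ is ever used.
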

\begin{proof}[Proof of Proposition~\ref{prp:fipiusur}]
By Lemma~\ref{lmm:stabse} the morphism $f$ in~\eqref{nelduale} defines an injection $V_a\hra \cE_1(p)^{\vee}$. Since $V_a$ is non zero the proposition follows from Claim~\ref{clm:raddoppia} and Proposition~\ref{prp:pioggia}.
\end{proof}
\subsection{Cohomology computations, and extensions}\label{subsec:cohomcompandext}
\setcounter{equation}{0}  
Throughout the present subsection Hypothesis-Definition~\ref{hyp-dfn:esseaemme} holds, and  the polarization $h_S$ is 
${\mathsf a}(v_2)$-generic. 

\subsubsection{Cohomology computations and  extensions, I}\label{subsubsec:cohomcompuno}
The BKR equivalence gives the following isomorphisms. First
\begin{equation}\label{dunant}
\Ext^p_{S^{[2]}}(\cE_1[2]^{\pm},\cE_1[2]^{\pm})\overset{\BKR}{\overset{\sim}{\lra}}\Ext^p_S(\cE_1,\cE_1),
\end{equation}
where $\pm$ means that either both are $+$ or both are $-$, and secondly
\begin{equation}\label{mantegazza}
\Ext^p_{S^{[2]}}(\cE_1[2]^{+},\cE_1[2]^{-})\overset{\BKR}{\overset{\sim}{\lra}}
\begin{cases}
\Ext^2_S(\cE_1,\cE_1) & \text{if $p=2$,}\\
0 & \text{otherwise,} 
\end{cases}
\end{equation}
\begin{lmm}\label{lmm:mattiatorre}
Let $0\le k<a$ and let $[\cH]\in(\cM_{v(a-k)}\setminus \cD_{v(a-k)})$. Then the BKR equivalence defines isomorphisms
\begin{equation}\label{extgirigido}
\Ext^p_{S^{[2]}}(\cG(\cE_1,\cH),\cE_1[2]^{\pm})\overset{\BKR}{\overset{\sim}{\lra}}
\begin{cases}
\Ext^1_S(\cH,\cE_1) & \text{if $p\in\{1,3\}$},\\
0 & \text{otherwise.} 
\end{cases}
\end{equation}
and
\begin{equation}\label{extrigidogi}
\Ext^p_{S^{[2]}}(\cE_1[2]^{\pm}, \cG(\cE_1,\cH))\overset{\BKR}{\overset{\sim}{\lra}}
\begin{cases}
\Ext^1_S(\cE_1,\cH) & \text{if $p\in\{1,3\}$},\\
0 & \text{otherwise.} 
\end{cases}
\end{equation}
\end{lmm}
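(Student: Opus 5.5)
Via the BKR equivalence, each $\Ext$ group on $S^{[2]}$ becomes a $\Sigma_2$-invariant $\Ext$ group on $S^2$, which we then split with Künneth and evaluate with the earlier vanishing results on $S$.

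Under BKR, $\cG(\cE_1,\cH)$ corresponds to $\cE_1\boxtimes\cH\oplus\cH\boxtimes\cE_1$ with the swapping action. The sheaf $\cE_1[2]^{\pm}$ corresponds to $\cE_1\boxtimes\cE_1$ with the natural action, twisted by the sign for $-$. Hence
\begin{equation*}
\Ext^p_{S^{[2]}}(\cG(\cE_1,\cH),\cE_1[2]^{\pm})\cong \Ext^p_{S^2}(\cE_1\boxtimes\cH\oplus\cH\boxtimes\cE_1,\cE_1\boxtimes\cE_1)^{\Sigma_2,\pm}.
\end{equation*}
The two summands are exchanged by the involution. Taking invariants (or anti-invariants) therefore identifies the right-hand side with $\Ext^p_{S^2}(\cE_1\boxtimes\cH,\cE_1\boxtimes\cE_1)$, with no further constraint. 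This is the standard induced-representation argument, and it explains why the answer is independent of the sign. The same reasoning reduces \eqref{extrigidogi} to $\Ext^p_{S^2}(\cE_1\boxtimes\cE_1,\cE_1\boxtimes\cH)$.

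Next apply Künneth:
\begin{equation*}
\Ext^p_{S^2}(\cE_1\boxtimes\cH,\cE_1\boxtimes\cE_1)=\bigoplus_{i+j=p}\Ext^i_S(\cE_1,\cE_1)\otimes\Ext^j_S(\cH,\cE_1).
\end{equation*}
Since $\cE_1$ is spherical, $\Ext^i_S(\cE_1,\cE_1)$ is $\CC$ for $i\in\{0,2\}$ and $0$ otherwise. Since $[\cH]\notin\cD_{v(a-k)}$, we have $\Hom_S(\cH,\cE_1)=0$. By Remark~\ref{rmk:stratchiusa}, stability gives $\Ext^2_S(\cH,\cE_1)=0$. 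So only $j=1$ contributes, which yields $\Ext^1_S(\cH,\cE_1)$ in degrees $p=1$ and $p=3$, and zero otherwise.

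For \eqref{extrigidogi}, Serre duality on $S$ gives $\Hom_S(\cE_1,\cH)=\Ext^2_S(\cH,\cE_1)^{\vee}=0$ and $\Ext^2_S(\cE_1,\cH)=\Hom_S(\cH,\cE_1)^{\vee}=0$, so the same computation gives $\Ext^1_S(\cE_1,\cH)$ in degrees $1$ and $3$. Alternatively, one can apply Serre duality on $S^{[2]}$, whose canonical bundle is trivial, to the first statement.

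The main obstacle is justifying the identification of the equivariant $\Ext$ groups under BKR, together with the precise matching of the $\pm$ linearizations. For the $\pm$ matching, I would cite the description of $\cE_1[2]^{\pm}$ and the decomposition \eqref{aaspezza} from~\cite{og:highdim}, exactly as in \eqref{dunant} and \eqref{mantegazza}. The induced-module step then makes the sign irrelevant, which is consistent with the statement.
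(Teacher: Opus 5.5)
Your proposal is correct and follows essentially the same route as the paper. The paper's proof records only the two vanishings $\Hom_S(\cH,\cE_1)=0$ (since $[\cH]\notin\cD_{v(a-k)}$) and $\Ext^2_S(\cH,\cE_1)\cong\Hom_S(\cE_1,\cH)^{\vee}=0$ (by stability), leaving implicit the BKR/$\Sigma_2$-invariants/K\"unneth computation you spell out, and it obtains \eqref{extrigidogi} either by the same argument or by Serre duality, just as you do; your induced-representation remark usefully makes explicit why the answer does not depend on the sign $\pm$.
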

\begin{proof}
The isomorphisms in~\eqref{extgirigido} follow from  the vanishings 
$\Hom_S(\cH,\cE_1)=\Ext^2_S(\cH,\cE_1)=0$. The first group vanishes because $[\cH]\notin \cD_{v(a-k)}$.
The second group vanishes because 
$\Ext^2_S(\cH,\cE_1)\cong\Hom_S(\cE_1,\cH)^{\vee}$ (Serre duality) and because 
 $\Hom_S(\cE_1,\cH)=0$  by semistability of $\cH$.  The isomorphisms in~\eqref{extrigidogi} are obtained by a similar argument, or 
from the isomorphisms in~\eqref{extgirigido} and Serre duality.
\end{proof}
\begin{dfn}\label{dfn:fasciobical}
Assume that $1\le k<a$ and $[\cH]\in(\cM_{v(a-k)}\setminus \cD_{v(a-k)})$ (hence $\ext^1_S(\cE_1,\cH)=2k$ by Remark~\ref{rmk:stratchiusa}). 
If $U_k\subset \Ext^1_S(\cE_1,\cH)$ is a $k$ dimensional subspace, we let $\cB(\cH,U_k)^{+}$ be the (locally free) sheaf on $S^{[2]}$ fitting into the exact  sequence
\begin{equation}\label{bicalsucc}
0\lra  \cG(\cE_1,\cH)\lra \cB(\cH,U_k)^{+}\overset{\lambda^{+}}{\lra} \cE_1[2]^{+}\otimes U_k\lra 0
\end{equation}
with extension class in $\Ext^1_{S^{[2]}}(\cE_1[2]^{+}, \cG(\cE_1,\cH))\otimes U_k^{\vee}=
\Ext^1_S(\cE_1,\cH )\otimes U_k^{\vee}$ given by the inclusion $U_k\hra \Ext^1_S(\cE_1,\cH)$.
\end{dfn}
\begin{prp}\label{prp:extrigidibical}
Let $\cH$, $U_k$ and $\cB(\cH,U_k)^{+}$ be as in Definition~\ref{dfn:fasciobical}.  Let $*$ be either $+$ or $-$. Then the BKR equivalence defines isomorphisms
\begin{equation*}
\Ext^p_{S^{[2]}}(\cB(\cH,U_k)^{+},\cE_1[2]^{*})\overset{\BKR}{\overset{\sim}{\lra}}
\begin{cases}
U_k^{\vee}  & \text{if $p=0$ and $*=+$},\\
\Ann(U_k)  & \text{if $p=1$, or $p=3$ and $*=+$},\\
\Ext^1_S(\cH,\cE_1)  & \text{if $p=3$ and $*=-$},\\
0 & \text{otherwise.} 
\end{cases}
\end{equation*}
where $ \Ann(U_k)\subset \Ext^1_S(\cH,\cE_1)=\Ext^1_S(\cE_1,\cH)^{\vee}$ 
 is the annihilator of $U_k$.
\end{prp}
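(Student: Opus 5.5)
We apply $\Hom_{S^{[2]}}(-,\cE_1[2]^{*})$ to the defining sequence~\eqref{bicalsucc} and read off $\Ext^p_{S^{[2]}}(\cB(\cH,U_k)^{+},\cE_1[2]^{*})$ from the resulting long exact sequence. The outer terms are already known: by~\eqref{dunant} and~\eqref{mantegazza},
\[
\Ext^p_{S^{[2]}}(\cE_1[2]^{+}\otimes U_k,\cE_1[2]^{*})\cong U_k^{\vee}\otimes\Ext^p_{S^{[2]}}(\cE_1[2]^{+},\cE_1[2]^{*}).
\]
For $*=+$ this is $U_k^{\vee}$ in degrees $0$ and $4$ and zero otherwise, since $\cE_1$ is spherical. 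For $*=-$ it is $U_k^{\vee}\otimes\Ext^2_S(\cE_1,\cE_1)\cong U_k^{\vee}$ in degree $2$ only. By Lemma~\ref{lmm:mattiatorre}, $\Ext^p_{S^{[2]}}(\cG(\cE_1,\cH),\cE_1[2]^{*})\cong\Ext^1_S(\cH,\cE_1)$ for $p\in\{1,3\}$ and vanishes otherwise. Here $\ext^1_S(\cH,\cE_1)=2k$ by~\eqref{extuno} together with $\hom_S(\cH,\cE_1)=0$.

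The only nontrivial input is the connecting homomorphisms
\[
\delta^p\colon \Ext^p_{S^{[2]}}(\cG(\cE_1,\cH),\cE_1[2]^{*})\lra \Ext^{p+1}_{S^{[2]}}(\cE_1[2]^{+}\otimes U_k,\cE_1[2]^{*}).
\]
Each $\delta^p$ is the Yoneda product with the extension class $e\in \Ext^1_{S^{[2]}}(\cE_1[2]^{+},\cG(\cE_1,\cH))\otimes U_k^{\vee}$. The key step is to check that, under BKR, this product becomes the product on $S$,
\[
\Ext^1_S(\cH,\cE_1)\times\Ext^1_S(\cE_1,\cH)\lra\Ext^2_S(\cE_1,\cE_1)\cong\CC,
\]
or more precisely its relevant Künneth component in the $\Sigma_2$-equivariant computation on $S^2$. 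Granting this, $\delta^p$ becomes the map $\Ext^1_S(\cH,\cE_1)\to U_k^{\vee}$ obtained by Serre-duality pairing against $U_k\subset \Ext^1_S(\cE_1,\cH)$. That map is the restriction $\Ext^1_S(\cE_1,\cH)^{\vee}\to U_k^{\vee}$, which is surjective with kernel $\Ann(U_k)$.

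\textbf{Case $*=+$.} Here $\delta^1$ lands in $U_k^{\vee}\otimes \Ext^2(\cE_1[2]^{+},\cE_1[2]^{+})=0$ and $\delta^3$ lands in $U_k^{\vee}\otimes\Ext^4_{S^{[2]}}(\cE_1[2]^{+},\cE_1[2]^{+})=U_k^{\vee}$.
\begin{itemize}
\item In degree $0$ we get $\Hom(\cB,\cE_1[2]^{+})\cong U_k^{\vee}$, because $\Hom(\cG(\cE_1,\cH),\cE_1[2]^{+})=0$.
\item In degree $1$, $\Ext^1(\cE_1[2]^{+}\otimes U_k,\cE_1[2]^{+})=0$, so $\Ext^1(\cB,\cE_1[2]^{+})$ injects into $\Ext^1_S(\cH,\cE_1)$. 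This sequence alone therefore does not produce $\Ann(U_k)$.
\end{itemize}
So I would rather expect the $\delta$ relevant to degree $1$ for $*=+$ to be the one into $\Ext^2(\cE_1[2]^{+}\otimes U_k,\cE_1[2]^{+})$. If that group is zero, the stated answer forces a recheck: presumably the shape of~\eqref{dunant} should be read so that the nonzero pairing occurs in degree $2$, via the symmetric-square part of $\Ext^\bullet(\cE_1[2]^{+},\cE_1[2]^{+})$ containing $\Ext^1_S\otimes\Ext^1_S$. I would settle this by redoing the computation directly on $S^2$ with BKR. There $\Ext^\bullet_{S^{[2]}}(\cB,\cE_1[2]^{*})$ is the $\Sigma_2$-invariant (or anti-invariant) part of a Künneth product, and it can be computed from the pair of exact sequences $0\to\cH\to\cF_U\to\cE_1\otimes U_k\to0$ and $\cE_1$ on the two factors. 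This gives
\[
\Ext^\bullet_{S^2}(\cE_1\boxtimes\cF',\cE_1\boxtimes\cE_1)=\Ext^\bullet_S(\cE_1,\cE_1)\otimes\Ext^\bullet_S(\cF',\cE_1),
\]
with $\Ext^\bullet_S(\cF',\cE_1)$ concentrated in degree $1$ and equal to $\Ann(U_k)$ (the cokernel of the pairing map, as in~\eqref{baroque}). The invariant or anti-invariant part then yields $\Ann(U_k)$ in degrees $1$ and $3$ (with the swapped term $\Ext^0\otimes\Ext^1$ and $\Ext^2\otimes\Ext^1$), plus $U_k^{\vee}$ in degree $0$ from the $\cE_1[2]^{+}$ quotient. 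For $*=-$ there is a surviving full $\Ext^1_S(\cH,\cE_1)$ in degree $3$.

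\textbf{Main obstacle.} The hardest step is identifying the connecting maps under BKR with the Serre-duality pairing, including the $\Sigma_2$-sign bookkeeping that distinguishes $+$ from $-$. I would do it on the level of equivariant complexes on $S^2$, where everything reduces to Künneth plus the explicit extension on $S$.
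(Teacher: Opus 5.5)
You set up the same long exact sequence as the paper, namely $\Hom(-,\cE_1[2]^{*})$ applied to \eqref{bicalsucc} together with Lemma~\ref{lmm:mattiatorre}, and you identify the connecting maps via BKR with the pairing against $U_k$, as the paper does. The proposal nevertheless breaks at one input: you take $\Ext^2_{S^{[2]}}(\cE_1[2]^{+},\cE_1[2]^{+})=0$. By BKR this group is the $\Sigma_2$-invariant part of $\Hom_S(\cE_1,\cE_1)\otimes\Ext^2_S(\cE_1,\cE_1)\oplus\Ext^2_S(\cE_1,\cE_1)\otimes\Hom_S(\cE_1,\cE_1)$. There is no $\Ext^1\otimes\Ext^1$ term, since $\Ext^1_S(\cE_1,\cE_1)=0$. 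The swap fixes $\Id\otimes\epsilon+\epsilon\otimes\Id$, where $\epsilon$ spans $\Ext^2_S(\cE_1,\cE_1)$. Hence $\Ext^\bullet(\cE_1[2]^{+},\cE_1[2]^{+})$ is $\CC$ in degrees $0,2,4$, while the anti-invariant vector $\Id\otimes\epsilon-\epsilon\otimes\Id$ accounts for \eqref{mantegazza}. This is how \eqref{dunant} must be read, and the paper's proof uses $\Ext^{2}(\cE_1[2]^{+},\cE_1[2]^{*})\otimes U_k^{\vee}=U_k^{\vee}$ for \emph{both} signs. With your value the sequence forces $\Ext^1(\cB(\cH,U_k)^{+},\cE_1[2]^{+})\cong\Ext^1_S(\cH,\cE_1)$, which has dimension $2k$. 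So the case $p=1$, $*=+$ is not proved. An Euler characteristic check exposes this: the correct answer has $\chi=3k-4k=-k$, whereas your groups give $-2k$.

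Once this group is corrected, your main line goes through as in the paper. For $s\in\Ext^1_S(\cH,\cE_1)$, Yoneda product with the class of \eqref{bicalsucc} gives $u\mapsto\langle s,u\rangle(\Id\otimes\epsilon+\epsilon\otimes\Id)$ for $u\in U_k$. So $\partial_1^{+}$ is the restriction $\Ext^1_S(\cE_1,\cH)^{\vee}\to U_k^{\vee}$, which is surjective with kernel $\Ann(U_k)$. This gives $\Ext^1=\Ann(U_k)$ and $\Ext^2=0$ for $*=+$, exactly as for $*=-$, and $\partial_3^{+}$ handles degrees $3$ and $4$.

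The fallback you sketch on $S^2$ does not repair the gap, for three reasons:
\begin{enumerate}
\item $\cB(\cH,U_k)^{+}$ is not the BKR transform of a single box product $\cE_1\boxtimes\cF'$; by Proposition~\ref{prp:freddo} it is the kernel of $\cG(\cE_1,\cF)\to\cE_1[2]^{-}\otimes U_k$.
\item For the extension $\cF$ of $\cE_1\otimes U_k$ by $\cH$, $\Ext^\bullet_S(\cF,\cE_1)$ is not concentrated in degree $1$: it is $U_k^{\vee}$ in degree $0$ and $\Ann(U_k)$ in degree $1$.
\item The degree-$3$ answer for $*=-$ is asserted rather than derived.
\end{enumerate}
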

\begin{proof}
To simplify notation we omit the subscript $S^{[2]}$ from the $\Ext$-groups, and we let $\cG=\cG(\cE_1,\cH)$, $\cB^{+}=\cB(\cH,U_k)^{+}$. Applying the functor $\Hom(-,\cE_1[2]^{*})$ to the exact sequence in~\eqref{bicalsucc} we get the long exact sequence
\begin{multline}\label{serpentone}
\ldots\overset{\partial_{p-1}^{*}}{\lra}\Ext^p(\cE_1[2]^{+},\cE_1[2]^{*})\otimes U^{\vee}_k\lra \Ext^p(\cB^{+},\cE_1[2]^{*})\lra \\
\lra\Ext^p(\cG,\cE_1[2]^{*})\overset{\partial_p^{*}}{\lra} 
\Ext^{p+1}(\cE_1[2]^{+},\cE_1[2]^{*})\otimes U_k^{\vee}\lra\ldots
\end{multline}
We claim that the coboundary maps 
\begin{equation*}
\Ext^1_S(\cH,\cE_1)=\Ext^1(\cG,\cE_1[2]^{*})\overset{\partial_1^{*}}{\lra} 
\Ext^{2}(\cE_1[2]^{+},\cE_1[2]^{*})\otimes U_k^{\vee}=U_k^{\vee}
\end{equation*}
and
\begin{equation*}
\Ext^1_S(\cH,\cE_1)=\Ext^3(\cG,\cE_1[2]^{+})\overset{\partial_3^{+}}{\lra} 
\Ext^{4}(\cE_1[2]^{+},\cE_1[2]^{+})\otimes U_k^{\vee}=U_k^{\vee}
\end{equation*}
are identified with the transpose of the inclusion $U_k\hra \Ext^1_S(\cE_1,\cH)$. In fact the coboundary maps are given by Yoneda product with the extension class of~\eqref{bicalsucc}, and  our assertion follows because Yoneda products match under the BKR equivalence. 
The Proposition is a straightforward consequence of the long exact sequence in~\eqref{serpentone}, equations~\eqref{dunant}, 
\eqref{mantegazza}, Lemma~\ref{lmm:mattiatorre}, and the identification of the coboundaries
 $\partial_1^{*}$, $\partial_3^{+}$ given above. 
\end{proof}
\begin{dfn}\label{dfn:fasciobicaltilde}
If $\cH$, $U_k$ and $\cB(\cH,U_k)^{+}$ are as in Definition~\ref{dfn:fasciobical}, 
 $\cB(\cH,U_k)$ is the (locally free) sheaf on $S^{[2]}$ fitting into the exact  sequence
\begin{equation}\label{bicaltildesucc}
0\lra  \cE_1[2]^{-}\otimes \Ann(U_k)^{\vee}\lra \cB(\cH,U_k)\overset{\lambda}{\lra} \cB(\cH,U_k)^{+}\lra  0
\end{equation}
with extension class (by Proposition~\ref{prp:extrigidibical}) in $\Ann(U_k)\otimes \Ann(U_k)^{\vee}$ 
given by 
the identity map.
\end{dfn}
\begin{clm}\label{clm:stessofalsomukai}
Let $1\le k<a$,   
$[\cH]\in(\cM_{v(a-k)}\setminus \cD_{v(a-k)})$, and $[\cE_2]\in\cM_{v_2}$.
Then
\begin{equation}\label{falsomukai}
\ch(\cB(\cH,U_k))=\ch(\cG(\cE_1,\cE_2)).
\end{equation}
\end{clm}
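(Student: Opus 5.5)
Since Chern characters are additive in exact sequences, the plan is to reduce \eqref{falsomukai} to an identity in the Grothendieck group, or in cohomology, of classes of sheaves on $S^{[2]}$. From~\eqref{bicaltildesucc} and~\eqref{bicalsucc} we get
\begin{equation*}
\ch(\cB(\cH,U_k))=\ch(\cG(\cE_1,\cH))+k\,\ch(\cE_1[2]^{+})+\dim\Ann(U_k)\,\ch(\cE_1[2]^{-}).
\end{equation*}
Here $\dim\Ann(U_k)=\ext^1_S(\cH,\cE_1)-k$. By~\eqref{extuno} applied to $\cH$ (with $t=a-k$, using $\hom_S(\cH,\cE_1)=0$), we have $\ext^1_S(\cH,\cE_1)=2k$, so $\dim\Ann(U_k)=k$. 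Since $\cE_1[2]^{+}\oplus\cE_1[2]^{-}=\cG(\cE_1,\cE_1)$ by~\eqref{aaspezza}, the right-hand side becomes $\ch(\cG(\cE_1,\cH))+k\,\ch(\cG(\cE_1,\cE_1))$.

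Next I would show that $\ch(\cG(\cE_1,\cF))$ depends only on the Mukai vector $v(\cF)$, and additively so. The functor $\cF\mapsto\cG(\cE_1,\cF)$ is obtained from $\cF\mapsto\cE_1\boxtimes\cF\oplus\cF\boxtimes\cE_1$ with its $\Sigma_2$-linearization, followed by the BKR equivalence. It is exact on $S$, since the exact sequences~\eqref{musetti} and~\eqref{massacarrara} are instances of this. Hence it induces a group homomorphism $K(S)\to K(S^{[2]})$, and composing with $\ch$ gives a map that factors through the numerical class $v(\cF)\in\wt H(S)$. The factorization holds because the equivariant Chern character of the BKR image of $\cE_1\boxtimes\cF\oplus\cF\boxtimes\cE_1$ is computed by a Grothendieck--Riemann--Roch type formula involving only $\ch(\cE_1)$, $\ch(\cF)$ and fixed classes. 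More simply, one can argue by deformation: sheaves with the same Mukai vector can be connected in flat families, possibly through a chain of families. This is the step I expect to need the most care. A cleaner route is to avoid it entirely and use only exact sequences, as below.

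The exact-sequence route goes as follows. When $\cE_2$ is chosen in $\cD^k_{v_2}\setminus\cD^{k+1}_{v_2}$ with sub-bundle $\cH$ (Proposition~\ref{prp:interbarca} provides such an $\cE_2$ for any $[\cH]\in\cM_{v(a-k)}\setminus\cD_{v(a-k)}$), the exact sequence~\eqref{musetti} gives directly $\ch(\cG(\cE_1,\cE_2))=\ch(\cG(\cE_1,\cH))+k\,\ch(\cG(\cE_1,\cE_1))$, which matches the expression above. It remains to see that $\ch(\cG(\cE_1,\cE_2))$ is the same for all $[\cE_2]\in\cM_{v_2}$. This holds because $\cM_{v_2}$ is irreducible (it is a HK variety) and carries, at least étale locally, a flat family $\GG$ of the sheaves $\cG(\cE_1,\cE_2)$ over $S^{[2]}\times\cM_{v_2}$, as described in the outline. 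The Chern character of the fibres of a flat family over a connected base is locally constant. If no universal sheaf exists, a quasi-universal or twisted universal family still gives local constancy on an étale cover, and connectedness of $\cM_{v_2}$ then finishes the argument.
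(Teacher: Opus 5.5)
Your proposal is correct, and its final exact-sequence route is the paper's own argument. You compare the filtration of $\cB(\cH,U_k)$ coming from~\eqref{bicaltildesucc} and~\eqref{bicalsucc} (with $\dim\Ann(U_k)=k$) against the filtration~\eqref{musetti} of $\cG(\cE_1,\cE_2)$, for an $\cE_2\in\cD^k_{v_2}$ built from $\cH$ via Proposition~\ref{prp:interbarca}, and then use that $\ch(\cG(\cE_1,\cE_2))$ does not depend on $[\cE_2]\in\cM_{v_2}$. The paper simply asserts this independence, while you justify it through the flat family $\GG$ over the connected space $\cM_{v_2}$; the middle-paragraph detour through $K(S)$ is not needed.
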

\begin{proof}
We start by noting that $\ch(\cG(\cE_1,\cE_2))$ is independent of which  $[\cE_2]\in\cM_{v_2}$ we choose.
By Proposition~\ref{prp:interbarca}
there exists $[\cE_2]\in\cD^k_{v_2}(S,h_S)$ such that $\cE_2$ fits into the exact sequence~\eqref{sinner} ($\cH$  
is as in the claim). Then both
$\cG(\cE_1,\cE_2)$ and $\cB(\cH,U_k)$  have filtrations  with associate graded pieces $\cG(\cE_1,\cH)$, $\cE_1[2]^{\pm}\otimes\CC^k$. In fact for $\cB(\cH,U_k)$ this holds by construction, while for $\cG(\cE_1,\cE_2)$ it was shown in Subsection~\ref{subsec:semirepl}. 
The equality in~\eqref{falsomukai} follows. 
\end{proof}
The vector bundle $\cB(\cH,U_k)^{+}$ is a quotient of $\cB(\cH,U_k)$. An analogous subsheaf  of $\cB(\cH,U_k)$ is also relevant for what follows.
\begin{dfn}\label{dfn:fasciobimeno}
Assume that $1\le k<a$, $[\cH]\in(\cM_{v(a-k)}\setminus \cD_{v(a-k)})$ and  $U_k\subset \Ext^1_S(\cE_1,\cH)$ is a $k$ dimensional subspace. We let  
$\cB(\cH,U_k)^{-}\subset\cB(\cH,U_k)$ be defined by 
\begin{equation*}
\cB(\cH,U_k)^{-}\coloneq \ker(\lambda^{+}\circ\lambda),
\end{equation*}
 where $\lambda^{+},\lambda$ are the morphisms in~\eqref{bicalsucc} and~\eqref{bicaltildesucc} respectively.
\end{dfn}
From the exact sequences in~\eqref{bicalsucc} and~\eqref{bicaltildesucc} we get that $\cB(\cH,U_k)^{-}$ sits in the exact sequence
\begin{equation}\label{alemarc}
0\lra  \cE_1[2]^{-}\otimes \Ann(U_k)^{\vee}\lra \cB(\cH,U_k)^{-}\overset{\lambda^{-}}{\lra} \cG(\cE_1,\cH)\lra  0
\end{equation}
By~\eqref{extgirigido} the extension class of the above exact sequence  is an element of 
\begin{equation}\label{alienor}
\Ext^1_S(\cH,\cE_1)\otimes \Ann(U_k)^{\vee}.
\end{equation}
\begin{clm}\label{clm:manetti}
The extension class   of~\eqref{alemarc}, which is an element of the group in~\eqref{alienor},  is given by the inclusion 
$\Ann(U_k)\hra \Ext^1_S(\cH,\cE_1)$. 
\end{clm}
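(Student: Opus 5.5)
The plan is to recognize \eqref{alemarc} as a pull-back of \eqref{bicaltildesucc} and then read off its class from the long exact sequence~\eqref{serpentone}. Write $\cG\coloneq\cG(\cE_1,\cH)$, $\cB^{+}\coloneq\cB(\cH,U_k)^{+}$, $\cB\coloneq\cB(\cH,U_k)$, $\cB^{-}\coloneq\cB(\cH,U_k)^{-}$, and let $j\colon\cG\hra\cB^{+}$ be the injection in~\eqref{bicalsucc}.

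First, the exact sequence~\eqref{alemarc} is the pull-back of~\eqref{bicaltildesucc} along $j$. Indeed, $\ker\lambda^{+}=j(\cG)$, so $\cB^{-}=\ker(\lambda^{+}\circ\lambda)=\lambda^{-1}(j(\cG))$, which is exactly the fibre product $\cB\times_{\cB^{+}}\cG$. The map $\lambda^{-}$ is the restriction of $\lambda$, and the sub $\cE_1[2]^{-}\otimes\Ann(U_k)^{\vee}$ is unchanged. Pull-back of extensions corresponds to the restriction map on $\Ext^1$, so the class of~\eqref{alemarc} is $(j^{*}\otimes\Id)(e)$. Here $e\in\Ext^1_{S^{[2]}}(\cB^{+},\cE_1[2]^{-})\otimes\Ann(U_k)^{\vee}$ is the class of~\eqref{bicaltildesucc}, and
\begin{equation*}
j^{*}\colon\Ext^1_{S^{[2]}}(\cB^{+},\cE_1[2]^{-})\lra\Ext^1_{S^{[2]}}(\cG,\cE_1[2]^{-})
\end{equation*}
is composition with $j$.

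Next I identify $j^{*}$. Consider the long exact sequence~\eqref{serpentone} with $*=-$ and $p=1$:
\begin{equation*}
\Ext^1(\cE_1[2]^{+},\cE_1[2]^{-})\otimes U_k^{\vee}\lra\Ext^1(\cB^{+},\cE_1[2]^{-})\overset{j^{*}}{\lra}\Ext^1(\cG,\cE_1[2]^{-})\overset{\partial_1^{-}}{\lra}\Ext^2(\cE_1[2]^{+},\cE_1[2]^{-})\otimes U_k^{\vee}.
\end{equation*}
The middle arrow in this sequence is precisely $j^{*}$. By~\eqref{mantegazza} the left-hand term vanishes, so $j^{*}$ is injective. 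By Lemma~\ref{lmm:mattiatorre} the third term is identified, via BKR, with $\Ext^1_S(\cH,\cE_1)$. In the proof of Proposition~\ref{prp:extrigidibical}, $\partial_1^{-}$ was identified with the transpose of $U_k\hra\Ext^1_S(\cE_1,\cH)$, whose kernel is $\Ann(U_k)$. The isomorphism $\Ext^1(\cB^{+},\cE_1[2]^{-})\cong\Ann(U_k)$ of Proposition~\ref{prp:extrigidibical} is the one obtained from this sequence. Under that identification, $j^{*}$ becomes the inclusion $\Ann(U_k)\hra\Ext^1_S(\cH,\cE_1)$.

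Finally, by Definition~\ref{dfn:fasciobicaltilde}, $e$ corresponds to $\Id\in\Ann(U_k)\otimes\Ann(U_k)^{\vee}=\Hom(\Ann(U_k),\Ann(U_k))$. Applying $j^{*}\otimes\Id$ turns it into the inclusion map in $\Hom(\Ann(U_k),\Ext^1_S(\cH,\cE_1))=\Ext^1_S(\cH,\cE_1)\otimes\Ann(U_k)^{\vee}$, which is the group~\eqref{alienor}. This is the claim.

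The only delicate point is bookkeeping. The identification $\Ext^1(\cB^{+},\cE_1[2]^{-})\cong\Ann(U_k)$ used to define $\cB$ must be exactly the one induced by $j^{*}$ composed with the BKR isomorphism~\eqref{extgirigido}, and not that map twisted by some other automorphism. I would check this by rereading the proof of Proposition~\ref{prp:extrigidibical}, which defines the isomorphism through this same long exact sequence.
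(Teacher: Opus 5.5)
Your proposal is correct and matches the paper's proof. The paper also takes the class of the sequence defining $\cB(\cH,U_k)^{-}$ to be the image of the identity under restriction $\Ext^1_{S^{[2]}}(\cB(\cH,U_k)^{+},\cE_1[2]^{-})\to\Ext^1_{S^{[2]}}(\cG(\cE_1,\cH),\cE_1[2]^{-})$, and uses the long exact sequence from the proof of Proposition~\ref{prp:extrigidibical} to see that this map is injective with image $\Ann(U_k)$; your fibre-product identification and your check on the $\Ann(U_k)$ bookkeeping only make explicit what the paper leaves implicit (and you correctly write the target as $\Ext^1_S(\cH,\cE_1)$, where the paper's proof has $\Ext^1_S(\cE_1,\cH)$).
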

\begin{proof}
Let $e$ be the extension class of the exact sequence in~\eqref{bicaltildesucc}, i.e.~the identity 
$\Ann(U_k)\xrightarrow{\Id} \Ann(U_k)$. The extension class   of~\eqref{alemarc} is the image of $e$ via the map
\begin{equation*}
\Ext^1_{S^{[2]}}(\cB^{+},\cE_1[2]^{-})\otimes \Ann(U_k)^{\vee}\lra \Ext^1_{S^{[2]}}(\cG,\cE_1[2]^{-})\otimes\Ann(U_k)^{\vee}.
\end{equation*}
(We let $\cB^{+}=\cB(\cH,U_k)^{+}$ and $\cG=\cG(\cE_1,\cH)$.)
The map 
\begin{equation*}
\Ext^1_{S^{[2]}}(\cB^{+},\cE_1[2]^{-})\lra \Ext^1_{S^{[2]}}(\cG,\cE_1[2]^{-})\overset{\BKR}{\overset{\sim}{\lra}}\Ext^1_S(\cE_1,\cH)
\end{equation*}
 is injective with image 
$\Ann(U_k)$ (see the proof of Proposition~\ref{prp:extrigidibical}). The claim follows.
\end{proof}
The remainder of the present subsubsection contains different 
 constructions of $\cB(\cH,U_k)^{\pm}$.
Let $\cH$ and $U_k$ be  as in Definition~\ref{dfn:fasciobical}.
 Let
\begin{equation}\label{borges}
0\lra \cH\lra \cF\overset{\psi}{\lra} \cE_1\otimes U_k\lra 0 
\end{equation}
be the exact sequence with extension class in $U_k^{\vee}\otimes\Ext^1_S(\cE_1,\cH)$ given by the inclusion map. Proceeding as in Subsubsection~\ref{subsubsec:liberoinstab} we associate to the exact sequence in~\eqref{borges} an exact sequence
\begin{equation}\label{laver}
0\lra \cG(\cE_1,\cH)\lra \cG(\cE_1,\cF)\overset{\Psi_{\cF}}{\lra} \cG(\cE_1,\cE_1)\otimes U_k\lra 0. 
\end{equation}
The decomposition in~\eqref{aaspezza} gives the decomposition $ \Psi_{\cF}=\Psi^{+}_{\cF}\oplus \Psi^{-}_{\cF}$, where 
$\Psi^{\pm}_{\cF}\colon \cG(\cE_1,\cF)\to \cE_1[2]^{\pm}\otimes U_k$. 
\begin{dfn}\label{dfn:apiumeno}
Assume that $\cH$ and $U_k$ are  as in Definition~\ref{dfn:fasciobical}, and $\cF$ is the sheaf  in~\eqref{borges} defined above. We let  
\begin{equation}\label{corelli}
\cA(\cH,U_k)^{\pm}\coloneq\ker \Psi_{\cF}^{\mp}.
\end{equation}
\end{dfn}
The sheaf  
$\cA(\cH,U_k)^{\pm}$ fits into the exact sequence
\begin{equation}\label{bizet}
0\lra \cG(\cE_1,\cH)\lra \cA(\cH,U_k)^{\pm}\lra  \cE_1[2]^{\pm}\otimes U_k\lra 0. 
\end{equation}
\begin{prp}\label{prp:freddo}
The extension class of the exact sequence in~\eqref{bizet}, which by  Lemma~\ref{lmm:mattiatorre} is an element of $U_k^{\vee}\otimes\Ext^1_S(\cE_1,\cH)$, is given by  the inclusion. In particular $\cA(\cH,U_k)^{+}\cong \cB(\cH,U_k)^{+}$. 
\end{prp}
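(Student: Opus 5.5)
The plan is to realize the sequence~\eqref{bizet} as a pull-back of~\eqref{laver}, and then compute the extension class of~\eqref{laver} on $S^2$, equivariantly, before applying the BKR equivalence. By Definition~\ref{dfn:apiumeno} we have $\cA(\cH,U_k)^{\pm}=\ker\Psi^{\mp}_{\cF}$. Hence $\cA(\cH,U_k)^{\pm}$ is the preimage under $\Psi_{\cF}$ of the summand $\cE_1[2]^{\pm}\otimes U_k\subset \cG(\cE_1,\cE_1)\otimes U_k$ coming from the decomposition~\eqref{aaspezza}. So~\eqref{bizet} is the pull-back of~\eqref{laver} along the inclusion $i^{\pm}\otimes\Id_{U_k}\colon\cE_1[2]^{\pm}\otimes U_k\hra\cG(\cE_1,\cE_1)\otimes U_k$. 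Its class is therefore the image of the class $\epsilon\in\Ext^1_{S^{[2]}}(\cG(\cE_1,\cE_1),\cG(\cE_1,\cH))\otimes U_k^{\vee}$ of~\eqref{laver} under composition with $i^{\pm}$.

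Next I would compute $\epsilon$ on the $S^2$ side. By construction,~\eqref{laver} is the BKR image of the $\Sigma_2$-equivariant exact sequence
\[
0\lra \cE_1\boxtimes\cH\oplus\cH\boxtimes\cE_1\lra \cE_1\boxtimes\cF\oplus\cF\boxtimes\cE_1\lra (\cE_1\boxtimes\cE_1\oplus\cE_1\boxtimes\cE_1)\otimes U_k\lra 0 .
\]
This sequence is obtained from~\eqref{borges} by applying $\pr_1^{*}\cE_1\otimes\pr_2^{*}(-)$ and $\pr_1^{*}(-)\otimes\pr_2^{*}\cE_1$. Let $e\in U_k^{\vee}\otimes\Ext^1_S(\cE_1,\cH)$ denote the inclusion, which is the class of~\eqref{borges}. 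The equivariant class of the sequence above is then $(\Id_{\cE_1}\boxtimes e)\oplus(e\boxtimes\Id_{\cE_1})$, and it is block diagonal with respect to the two summands. I would then precompose with $\iota^{\pm}$ from~\eqref{flautotraverso}. This gives the $\Sigma_2$-equivariant class $(\Id\boxtimes e,\pm e\boxtimes\Id)\in\Ext^1_{S^2}(\cE_1\boxtimes\cE_1,\cE_1\boxtimes\cH\oplus\cH\boxtimes\cE_1)^{\Sigma_2}\otimes U_k^{\vee}$, with the appropriate linearization on the source.

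Finally I would use the isomorphism~\eqref{extrigidogi} of Lemma~\ref{lmm:mattiatorre}. I expect this isomorphism to be exactly the composite of BKR with the Künneth identification
\[
\Ext^1_{S^2}(\cE_1\boxtimes\cE_1,\cE_1\boxtimes\cH)=\Hom_S(\cE_1,\cE_1)\otimes\Ext^1_S(\cE_1,\cH)\oplus\Ext^1_S(\cE_1,\cE_1)\otimes\Hom_S(\cE_1,\cH)=\Ext^1_S(\cE_1,\cH).
\]
This uses $\Hom_S(\cE_1,\cE_1)=\CC$, $\Ext^1_S(\cE_1,\cE_1)=0$, $\Hom_S(\cE_1,\cH)=0$, together with the fact that an invariant class is determined by its first component. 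Under this identification the class $(\Id\boxtimes e,\pm e\boxtimes\Id)$ goes to $e$, up to a universal nonzero scalar such as $2$ or a sign, as in Claim~\ref{clm:raddoppia}.

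The main obstacle is exactly this bookkeeping. One has to make sure that the identification used in Lemma~\ref{lmm:mattiatorre} agrees with the equivariant Künneth description, and that Yoneda composition with $i^{\pm}$ corresponds, via BKR, to precomposition with $\iota^{\pm}$. Any nonzero scalar discrepancy is harmless: rescaling an extension class by $c\neq 0$ yields an isomorphic middle term, via the automorphism $c\cdot\Id$ of $U_k$. So the class of~\eqref{bizet} is the inclusion $U_k\hra\Ext^1_S(\cE_1,\cH)$. Comparing with Definition~\ref{dfn:fasciobical} in the $+$ case then gives $\cA(\cH,U_k)^{+}\cong\cB(\cH,U_k)^{+}$.
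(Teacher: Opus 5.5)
Your proposal is correct and follows the paper's proof essentially step by step. The paper likewise writes the equivariant class of~\eqref{laver} on $S^2$ as the block-diagonal element $(f,0,0,f)$ in the four-summand decomposition, and identifies precomposition with $\cE_1[2]^{\pm}\hra\cG(\cE_1,\cE_1)$ with a map $\Lambda^{\pm}$ (your precomposition with $\iota^{\pm}$), getting $(f,\pm f)$, which corresponds to the inclusion. Your explicit handling of the Künneth identification behind Lemma~\ref{lmm:mattiatorre} and of a possible nonzero scalar just spells out what the paper leaves implicit.
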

\begin{proof}
We have a direct sum decomposition
\begin{multline}\label{fabian}
U_k^{\vee}\otimes\Ext^1_{S^2}(\cE_1\boxtimes\cE_1\oplus\cE_1\boxtimes\cE_1 ,\cE_1\boxtimes\cH\oplus \cH\boxtimes\cE_1)= \\
=\bigoplus_{i,j\in\{1,2\}} U_k^{\vee}\otimes  \Ext^1(\cE_1,\cH),
\end{multline}
where the indices $i,j$ correspond to first/second addends of the decompositions of the two sheaves. The sheaves above are $\Sigma_2$ equivariant sheaves. 
The non trivial element of $\Sigma_2$ permutes the $(1,1)$, $(2,2)$ addends and the $(1,2)$, $(2,1)$ ones. 
We also have a direct sum decomposition
\begin{equation}
U_k^{\vee}\otimes\Ext^1_{S^2}(\cE_1\boxtimes\cE_1,\cE_1\boxtimes\cH\oplus \cH\boxtimes\cE_1)=
\bigoplus_{l=1}^2 U_k^{\vee}\otimes  \Ext^1(\cE_1,\cH),
\end{equation}
where the index $l$ corresponds to first/second addend of the decomposition of the second sheaf. 
Let
\begin{equation*}
\begin{matrix}
\bigoplus_{i,j\in\{1,2\}} U_k^{\vee}\otimes  \Ext^1(\cE_1,\cH) & \xrightarrow{\Lambda^{\pm}} & 
\bigoplus_{l=1}^2 U_k^{\vee}\otimes  \Ext^1(\cE_1,\cH)   \\
(e_{1,1},e_{1,2},e_{2,1},e_{2,2}) & \mapsto & (e_{1,1}\pm e_{2,1},e_{1,2}\pm e_{2,2}) 
\end{matrix}
\end{equation*}
By the definition of the decomposition in~\eqref{aaspezza} the map
\begin{equation*}
\Ext^1_{S^{[2]}}( \cG(\cE_1,\cE_1),\cG(\cE_1,\cH))\xrightarrow{\Psi^{\pm}_{\cF}}\Ext^1_{S^{[2]}}( \cE_1[2]^{\pm},\cG(\cE_1,\cH))
\end{equation*}
corresponds, via the BKR equivalence, to the restriction of $\Lambda^{\pm}$ to the $\Sigma_2$-invariant subspaces.
Let $\ov{e}\in U_k^{\vee}\otimes  \Ext^1_{S^{[2]}}( \cG(\cE_1,\cE_1),\cG(\cE_1,\cH))$ be the extension class of~\eqref{laver}. By the BKR equivalence 
$\ov{e}$ corresponds to a $\Sigma_2$-invariant  element $e$ of the Ext-group in~\eqref{fabian}. By definition  of~\eqref{laver} we have $e=(f,0,0,f)$ where $f$ is the extension class of the exact sequence in~\eqref{borges}, i.e.~given by the inclusion  $U_k\hra\Ext^1_S(\cE_1,\cH)$. 
Since
\begin{equation}
\Lambda^{+}(f,0,0,f)=(f,f),\qquad \Lambda^{-}(f,0,0,f)=(f,-f)
\end{equation}
the proposition follows.
\end{proof}
Next we give an analogue of Proposition~\ref{prp:freddo} with $\cB(\cH,U_k)^{+}$ replaced by $\cB(\cH,U_k)^{-}$. Of course here $\cH$ and $U_k$ are  as in Definition~\ref{dfn:fasciobical}. Let 
\begin{equation}\label{giovanni}
0\lra \cE_1\otimes  \Ann(U_k)^{\vee}\lra \cF\lra \cH \lra 0
\end{equation}
be the exact sequence with extension class in 
$\Ext^1_S(\cH, \cE_1)\otimes  \Ann(U_k)^{\vee}$ given by the inclusion  $ \Ann(U_k)\hra \Ext^1_S(\cH,\cE_1)$. 
The exact sequence in~\eqref{giovanni} gives rise to the exact sequence
\begin{equation}\label{referendum}
0\lra \cG(\cE_1,\cE_1)\otimes  \Ann(U_k)^{\vee}\lra \cG(\cE_1, \cF)\lra \cG(\cE_1,\cH) \lra 0.
\end{equation}
By the decomposition 
$\cG(\cE_1,\cE_1)= \cE_1[2]^{+}\oplus  \cE_1[2]^{-}$ and the above exact sequence we may view $\cE_1[2]^{\pm}\otimes  \Ann(U_k)^{\vee}$ as a subsheaf of $\cG(\cE_1, \cF)$. 
\begin{dfn}\label{dfn:ciaccau}
Assume that $\cH$, $U_k$ are  as in Definition~\ref{dfn:fasciobical}, and $\cF$ is the vector bundle  in~\eqref{giovanni} defined above. We let 
\begin{equation*}
\cC(\cH,U_k)^{\pm}\coloneq\cG(\cE_1, \cF)/\cE_1[2]^{\mp}\otimes  \Ann(U_k)^{\vee}. 
\end{equation*}
\end{dfn}
The inclusion $ \cE_1[2]^{\mp}\hra \cG(\cE_1,\cE_1)$ gives the exact sequence
\begin{equation}\label{privacy}
0\lra \cE_1[2]^{\pm}\otimes \Ann(U_k)^{\vee}\lra \cC(\cH,U_k)^{\pm} \lra \cG(\cE_1,\cH)\lra 0.
\end{equation}
\begin{prp}\label{prp:sole}
The extension class of the exact sequence in~\eqref{privacy}, which   by 
Lemma~\ref{lmm:mattiatorre} is an element of $\Ann(U_k)^{\vee}\otimes\Ext^1_S(\cH,\cE_1)$, is given by  the inclusion map. In particular 
$\cC(\cH,U_k)^{-}\cong \cB(\cH,U_k)^{-}$. 
\end{prp}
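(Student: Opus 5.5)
The plan is to copy the proof of Proposition~\ref{prp:freddo}, with the roles of sub and quotient reversed. By the BKR equivalence, the extension class of~\eqref{referendum} is an element of
\begin{equation*}
\Ann(U_k)^{\vee}\otimes\Ext^1_{S^{[2]}}(\cG(\cE_1,\cH),\cG(\cE_1,\cE_1)),
\end{equation*}
and it corresponds to a $\Sigma_2$-invariant element of
\begin{equation*}
\Ann(U_k)^{\vee}\otimes\Ext^1_{S^2}(\cE_1\boxtimes\cH\oplus\cH\boxtimes\cE_1,\cE_1\boxtimes\cE_1\oplus\cE_1\boxtimes\cE_1)=\bigoplus_{i,j\in\{1,2\}}\Ann(U_k)^{\vee}\otimes\Ext^1_S(\cH,\cE_1).
\end{equation*}
Here, as in~\eqref{fabian}, we use K\"unneth and $\Hom_S(\cE_1,\cE_1)=\CC$, $\Ext^1_S(\cE_1,\cE_1)=0$, $\Hom_S(\cH,\cE_1)=0$. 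Since~\eqref{referendum} is obtained from~\eqref{giovanni} by pulling back along the two projections, tensoring with the other copy of $\cE_1$ and summing, this element has the diagonal form $e=(g,0,0,g)$. Here $g$ is the extension class of~\eqref{giovanni}, i.e.\ the inclusion $\Ann(U_k)\hra\Ext^1_S(\cH,\cE_1)$.

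Next we use the quotient map $\cG(\cE_1,\cE_1)\to\cE_1[2]^{\pm}$ coming from the decomposition~\eqref{aaspezza}, whose kernel is $\cE_1[2]^{\mp}$. The extension class of~\eqref{privacy} is the pushforward of the class of~\eqref{referendum} along this map. Indeed $\cC(\cH,U_k)^{\pm}$ is by definition the quotient of $\cG(\cE_1,\cF)$ by the summand $\cE_1[2]^{\mp}\otimes\Ann(U_k)^{\vee}$, i.e.\ the pushout of~\eqref{referendum}.

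On $S^2$ the projection to $\cE_1[2]^{\pm}$ is, via the inverse of the isomorphism $(\iota^{+},\iota^{-})$ of~\eqref{flautotraverso}, given up to the factor $1/2$ by $(\xi_1,\xi_2)\mapsto \xi_1\pm\xi_2$. Hence it induces on Ext groups the map
\begin{equation*}
(e_{1,1},e_{1,2},e_{2,1},e_{2,2})\mapsto (e_{1,1}\pm e_{1,2},\,e_{2,1}\pm e_{2,2}),
\end{equation*}
which is the analogue of $\Lambda^{\pm}$ with the roles of the indices exchanged, now acting on the target side. Applying it to $(g,0,0,g)$ gives $(g,\pm g)$. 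Restricting to $\Sigma_2$-invariants and using the identification of Lemma~\ref{lmm:mattiatorre}, namely $\Ext^1_{S^{[2]}}(\cG(\cE_1,\cH),\cE_1[2]^{\pm})\cong\Ext^1_S(\cH,\cE_1)$, this is the element $g$ up to a nonzero scalar (the sign and the factor $2$ depend on normalisation). That proves the first assertion, up to rescaling.

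The main point to check carefully is the sign bookkeeping. One has to confirm that the $\Sigma_2$-linearization on $\cE_1\boxtimes\cE_1\oplus\cE_1\boxtimes\cE_1$ induced from $\cG(\cE_1,\cE_1)$ is compatible with the identification of $\cE_1[2]^{-}$ as the sign-twisted invariants. One also has to confirm that the resulting class for the minus sign is the inclusion and not zero. Since $\Lambda$ applied to $(g,0,0,g)$ has both components nonzero, it cannot vanish. If the identification only gives it up to a scalar, we absorb that scalar by rescaling the inclusion $\cE_1[2]^{\pm}\otimes\Ann(U_k)^{\vee}\hra\cC(\cH,U_k)^{\pm}$, which does not change the isomorphism class of the extension.

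Finally, for the isomorphism $\cC(\cH,U_k)^{-}\cong\cB(\cH,U_k)^{-}$, the case $-$ gives an extension
\begin{equation*}
0\lra\cE_1[2]^{-}\otimes\Ann(U_k)^{\vee}\lra\cC(\cH,U_k)^{-}\lra\cG(\cE_1,\cH)\lra 0
\end{equation*}
whose class is the inclusion $\Ann(U_k)\hra\Ext^1_S(\cH,\cE_1)$. By Claim~\ref{clm:manetti}, \eqref{alemarc} has the same class in the same group~\eqref{alienor}. Two extensions with equal classes have isomorphic middle terms, so $\cC(\cH,U_k)^{-}\cong\cB(\cH,U_k)^{-}$.
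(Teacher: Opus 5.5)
Your proposal is correct and is essentially the paper's argument. The paper proves the first assertion by the analogue of the proof of Proposition~\ref{prp:freddo}, which you carry out explicitly by pushing the diagonal class $(g,0,0,g)$ forward along the projection onto $\cE_1[2]^{\pm}$ instead of pulling back along $\iota^{\pm}$. It then deduces $\cC(\cH,U_k)^{-}\cong\cB(\cH,U_k)^{-}$ from Claim~\ref{clm:manetti}, exactly as you do.
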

\begin{proof}
The first statement  is proved by an argument analogous to the proof of the first statement of Proposition~\ref{prp:freddo}.
The isomorphism $\cC(\cH,U_k)^{-}\cong \cB(\cH,U_k)^{-}$ follows from the first statement and Claim~\ref{clm:manetti}.
\end{proof}
\subsubsection{Cohomology computations  and  extensions, II}\label{subsubsec:cohomcompdue}
\begin{prp}\label{prp:extgiedue}
Let $p\in S$. Then we have
\begin{equation*}
\Ext^p_{S^{[2]}}(\cG(\cE_1,\CC_{p}),\cE_1[2]^{\pm})\cong
\begin{cases}
\cE_1(p) & \text{if $p\in\{2,4\}$,}\\
0 & \text{otherwise.}
\end{cases}
\end{equation*}
\end{prp}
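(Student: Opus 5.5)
The approach is to transport the computation to the symmetric product via the BKR equivalence, exactly as in Lemma~\ref{lmm:mattiatorre}. By construction $\cG(\cE_1,\CC_p)$ corresponds to the $\Sigma_2$-equivariant sheaf $\cE_1\boxtimes\CC_p\oplus\CC_p\boxtimes\cE_1$ on $S^2$, with $\Sigma_2$ exchanging the two summands. Likewise $\cE_1[2]^{\pm}$ corresponds to $\cE_1\boxtimes\cE_1$ with the natural linearization, twisted by the sign character in the $-$ case. The BKR equivalence is fully faithful and respects Ext groups, so
\begin{equation*}
\Ext^p_{S^{[2]}}(\cG(\cE_1,\CC_{p}),\cE_1[2]^{\pm})\cong \Ext^p_{S^2}(\cE_1\boxtimes\CC_p\oplus\CC_p\boxtimes\cE_1,\cE_1\boxtimes\cE_1)^{\Sigma_2}.
\end{equation*}
(One should be careful about which index is the point and which is the degree, since the statement uses $p$ for both.)

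The first step removes the invariants. The source is induced from the trivial subgroup: it is $\mathrm{Ind}$ of $\cE_1\boxtimes\CC_p$, because $\Sigma_2$ swaps the two summands freely. Hence the $\Sigma_2$-invariant Ext equals the ordinary Ext from a single summand:
\begin{equation*}
\Ext^p_{S^2}(\cE_1\boxtimes\CC_p,\cE_1\boxtimes\cE_1).
\end{equation*}
This holds for either linearization of the target, which explains why the answer is the same for $+$ and $-$. The only point to check is that the sign twist on the target does not interfere. It does not, because for an induced source the invariants are identified with Hom from one summand, whatever the linearization on the target.

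The second step is the Künneth formula:
\begin{equation*}
\Ext^p_{S^2}(\cE_1\boxtimes\CC_p,\cE_1\boxtimes\cE_1)=\bigoplus_{i+j=p}\Ext^i_S(\cE_1,\cE_1)\otimes\Ext^j_S(\CC_p,\cE_1).
\end{equation*}
Since $\cE_1$ is spherical, $\Ext^i_S(\cE_1,\cE_1)$ is $\CC$ for $i\in\{0,2\}$ and $0$ otherwise. Since $\cE_1$ is locally free, $\Ext^j_S(\CC_p,\cE_1)$ vanishes except for $j=2$. In that degree Serre duality gives $\Ext^2_S(\CC_p,\cE_1)\cong\Hom_S(\cE_1,\CC_p)^{\vee}\cong\cE_1(p)$. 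The total is therefore $\cE_1(p)$ in degrees $2$ and $4$ and zero otherwise, which is the statement.

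The main obstacle is the first step. One has to make precise that the BKR functor sends the equivariant object corresponding to $\cG(\cE_1,\CC_p)$ to a sheaf, and that the induction/adjunction argument is compatible with the chosen equivariant structures, including the sign twist defining $\cE_1[2]^{-}$. I would handle this by mirroring the argument already used in~\eqref{extgirigido} and~\eqref{mantegazza}, where the same mechanism produced invariant Ext groups from one summand.
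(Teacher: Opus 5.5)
Your proposal is correct and is essentially the paper's proof: the paper also uses BKR to identify the group with the $\Sigma_2$-invariants of the Künneth decomposition of $\Ext^p_{S^2}(\cE_1\boxtimes\CC_p\oplus\CC_p\boxtimes\cE_1,\cE_1\boxtimes\cE_1)$. It then concludes from $\cE_1$ being spherical and from $\Ext^r_S(\CC_p,\cE_1)$ vanishing except for $r=2$, where it is $\cE_1(p)$. The only difference is that you take invariants first, via the adjunction for the induced source, which also makes explicit why the sign twist defining $\cE_1[2]^{-}$ does not affect the answer; the paper applies Künneth to both summands and then takes invariants.
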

\begin{proof}
By the BKR correspondence, $\Ext^p_{S^{[2]}}(\cG(\cE_1,\CC_{p}),\cE_1[2]^{\pm})$ is the subspace of $\Sigma_2$ invariants of
\begin{equation}\label{tantecoom}
\bigoplus_{q+r=p}\left(\Ext^q_S(\cE_1,\cE_1)\otimes\Ext^r_S(\CC_{p},\cE_1)
\oplus \Ext^q_S(\CC_{p},\cE_1)\otimes\Ext^r_S(\cE_1,\cE_1)\right).
\end{equation}
The proposition follows because $\cE_1$ is a spherical vector bundle, $\Ext^r_S(\CC_{p_0},\cE_1)$ vanishes except for $r=2$, and
$\Ext^2_S(\CC_{p},\cE_1)\cong\Hom(\cE_1,\CC_p)^{\vee}\cong\cE_1(p)$. 
\end{proof}
\begin{dfn}
Let $p\in S$ and $W_a\subset \cE_1(p)^{\vee}$ be an $a$-dimensional vector subspace. By the isomorphism in~\eqref{ariacondiz} we get 
a morphism 
\begin{equation*}
\cE_1[2]^{+}\otimes W_a \xrightarrow{\Phi_{W_a}^{+}} \cG(\cE_1,\CC_{p}).
\end{equation*}
\end{dfn}
Note that $\Phi_{W_a}^{+}$ is a surjection by Proposition~\ref{prp:pioggia}.
\begin{dfn}\label{dfn:bipipiu}
Given $p\in S$ and $W_a\subset \cE_1(p)^{\vee}$ an $a$-dimensional vector subspace,   $\cB(p,W_a)^{+}$  is  the sheaf fitting into the exact sequence
\begin{equation}\label{bimenosing}
0\lra \cB(p,W_a)^{+} \overset{\theta}{\lra} \cE_1[2]^{+}\otimes W_a\xrightarrow{\Phi_{W_a}^{+}}  \cG(\cE_1,\CC_{p}) \lra 0.
\end{equation}
\end{dfn}
\begin{prp}\label{prp:alglin}
Let $p\in S$ and $W_a\subset \cE_1(p)^{\vee}$ be an $a$-dimensional  subspace. 
\begin{equation*}
\Ext^p_{S^{[2]}}(\cB(p,W_a)^{+},\cE_1[2]^{-})\cong
\begin{cases} 
\Ann W_a  & \text{if $p=1$,}\\
\cE_1(p)  & \text{if $p=3$,}\\
0 & \text{otherwise,}
\end{cases}
\end{equation*}
where $\Ann W_a\subset \cE_1(p)$ is the annihilator of $W_a\subset \cE_1(p)^{\vee}$.
\end{prp}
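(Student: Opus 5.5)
We apply $\Hom(-,\cE_1[2]^{-})$ to the exact sequence~\eqref{bimenosing} defining $\cB(p,W_a)^{+}$. This gives the long exact sequence
\begin{multline*}
\ldots\lra\Ext^{q}(\cG(\cE_1,\CC_p),\cE_1[2]^{-})\lra \Ext^q(\cE_1[2]^{+},\cE_1[2]^{-})\otimes W_a^{\vee}\lra \\
\lra\Ext^q(\cB(p,W_a)^{+},\cE_1[2]^{-})\overset{\partial_q}{\lra}\Ext^{q+1}(\cG(\cE_1,\CC_p),\cE_1[2]^{-})\lra\ldots
\end{multline*}
(Ext groups on $S^{[2]}$). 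By~\eqref{mantegazza} the middle terms vanish except for $q=2$, where they equal $\Ext^2_S(\cE_1,\cE_1)\otimes W_a^{\vee}\cong W_a^{\vee}$. By Proposition~\ref{prp:extgiedue} the outer terms vanish except for degrees $2$ and $4$, where they are $\cE_1(p)$.

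First we read off the easy degrees. For $q=0$ and $q\ge 4$ all neighbours vanish, so $\Ext^q(\cB^{+},\cE_1[2]^{-})=0$. For $q=3$ we get $0\to\Ext^3(\cB^{+},\cE_1[2]^{-})\to\Ext^4(\cG(\cE_1,\CC_p),\cE_1[2]^{-})=\cE_1(p)\to 0$, because the next term $\Ext^4(\cE_1[2]^{+},\cE_1[2]^{-})$ vanishes by~\eqref{mantegazza}. This gives $\cE_1(p)$ as claimed. In the range $q=1,2$ the sequence reads
\begin{equation*}
0\to\Ext^1(\cB^{+},\cE_1[2]^{-})\to \cE_1(p)\overset{\beta}{\lra} W_a^{\vee}\to\Ext^2(\cB^{+},\cE_1[2]^{-})\to 0,
\end{equation*}
using $\Ext^1(\cG(\cE_1,\CC_p),\cE_1[2]^{-})=0$ and $\Ext^3(\cG(\cE_1,\CC_p),\cE_1[2]^{-})=0$. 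Here $\beta$ is the map $\Ext^2(\cG(\cE_1,\CC_p),\cE_1[2]^{-})\to\Ext^2(\cE_1[2]^{+},\cE_1[2]^{-})\otimes W_a^{\vee}$ given by composition with $\Phi^{+}_{W_a}$.

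The whole statement therefore reduces to identifying $\beta$: we need it to be the natural restriction map $\cE_1(p)=(\cE_1(p)^{\vee})^{\vee}\to W_a^{\vee}$ dual to the inclusion $W_a\hookrightarrow\cE_1(p)^{\vee}$. Given this, $\beta$ is surjective, so $\Ext^2$ vanishes, and $\ker\beta=\Ann W_a$ gives the degree $1$ group.

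To identify $\beta$, we work $\Sigma_2$-equivariantly on $S^2$ via BKR, as in the proof of Proposition~\ref{prp:extgiedue}. For a single $g\in\cE_1(p)^{\vee}$, precomposition with $\Phi_g^{+}$ corresponds to precomposition with $(\Id\boxtimes g,g\boxtimes\Id)$. A class in $\Ext^2_{S^2}(\cE_1\boxtimes\CC_p\oplus\CC_p\boxtimes\cE_1,\cE_1\boxtimes\cE_1)^{\Sigma_2,\mathrm{sign}}$ comes from $\Hom(\cE_1,\cE_1)\otimes\Ext^2_S(\CC_p,\cE_1)$ on each summand, and $\Ext^2_S(\CC_p,\cE_1)\cong\cE_1(p)$. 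Composing such a class with $g\colon\cE_1\to\CC_p$ yields the Yoneda product $\Ext^2_S(\CC_p,\cE_1)\times\Hom(\cE_1,\CC_p)\to\Ext^2_S(\cE_1,\cE_1)=\CC$. Under Serre duality this product is exactly the evaluation pairing $\cE_1(p)\otimes\cE_1(p)^{\vee}\to\CC$, and the two $\Sigma_2$-summands contribute the same value, up to the nonzero factor $2$ already visible in Claim~\ref{clm:raddoppia}.

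The main obstacle is the sign bookkeeping in this last step. We must check that for the $\cE_1[2]^{-}$ (anti-invariant) target the two summands add rather than cancel. Otherwise $\beta$ would be zero and the answer would be wrong, so the symmetric form of $(\Id\boxtimes g,g\boxtimes\Id)$ needs to be tracked against the sign twist defining $\cE_1[2]^{-}$ and the symmetry of Serre duality in degree $2$ on the surface. Once the pairing is nondegenerate in this sense, $\beta$ is a nonzero multiple of the restriction map and the proposition follows.
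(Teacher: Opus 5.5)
Your proposal is correct and follows the paper's proof essentially verbatim. You apply $\Hom(-,\cE_1[2]^{-})$ to~\eqref{bimenosing}, use~\eqref{mantegazza} and Proposition~\ref{prp:extgiedue} for the vanishings, and then identify, via BKR and Serre duality, the map $\cE_1(p)\to W_a^{\vee}$ with the transpose of $W_a\hookrightarrow\cE_1(p)^{\vee}$, exactly as the paper does.

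The sign worry you flag is harmless, though you should close it rather than leave it open. Take $e\in\Ext^2_S(\CC_p,\cE_1)$ and $g\in\cE_1(p)^{\vee}$. The two contributions are $\Id\otimes(e\circ g)$ and $-(e\circ g)\otimes\Id$. They lie in the distinct K\"unneth summands $\Hom_S(\cE_1,\cE_1)\otimes\Ext^2_S(\cE_1,\cE_1)$ and $\Ext^2_S(\cE_1,\cE_1)\otimes\Hom_S(\cE_1,\cE_1)$ of $\Ext^2_{S^2}(\cE_1\boxtimes\cE_1,\cE_1\boxtimes\cE_1)$, so they cannot cancel. Together they give $\langle e,g\rangle$ times the generator of the anti-invariant line.
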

\begin{proof}
Applying the functor $\Hom(-,\cE_1[2]^{+})$ to the exact sequence in~\eqref{bimenosing} we get the long exact sequence
\begin{multline}\label{rosagovona}
\ldots\lra \Ext^p_{S^{[2]}}(\cB(p,W_a)^{+},\cE_1[2]^{-})\lra \Ext^{p+1}_{S^{[2]}}(\cG(\cE_1,\CC_{p}),\cE_1[2]^{-})\lra \\
\lra
 \Ext^{p+1}_{S^{[2]}}(\cE_1[2]^{+},\cE_1[2]^{-})\otimes W_a^{\vee}\lra \ldots
\end{multline}
By~\eqref{mantegazza} the group $\Ext^q_{S^{[2]}}(\cE_1[2]^{+},\cE_1[2]^{-})$ vanishes unless $q=2$, and in that case it is canonically identified with $\CC$. 
We claim that the homomorphism 
\begin{equation}\label{tarquiniovipera}
\cE_1(p)=\Ext^2(\cG(\cE_1,\CC_{p}),\cE_1[2]^{+})\lra
\Ext^2(\cE_1[2]^{+},\cE_1[2]^{+})\otimes W_a^{\vee}=W_a^{\vee}
\end{equation}
is identified  with the transpose of the inclusion map  $W_a\hra \cE_1(p)^{\vee}$. Before proving the claim we note that  the proposition follows from the claim and the exact sequence in~\eqref{rosagovona}. The BKR correspondence  identifies the morphism $\Phi_{W_a}^{+}$ with the $\Sigma_2$-equivariant morphism
\begin{equation}\label{conteduca}
 \begin{matrix}
 \cE_1\boxtimes\cE_1\otimes W_a & \lra & \cE_1\boxtimes\CC_p\oplus\CC_p\boxtimes\cE_1 \\
 \sigma\boxtimes\tau\otimes g & \mapsto & (\sigma \boxtimes g(\tau),g(\sigma) \boxtimes \tau)
\end{matrix}
\end{equation}
and the homomorphism  in~\eqref{tarquiniovipera}  with the homomorphism
\begin{equation*}
\Ext^2_{S^2}(\cE_1\boxtimes\CC_p\oplus\CC_p\boxtimes\cE_1,(\cE_1\boxtimes\cE_1)^{\rm tw})^{\Sigma_2}\lra
\Ext^2_{S^2}(\cE_1\boxtimes\cE_1\otimes W_a,(\cE_1\boxtimes\cE_1)^{\rm tw})^{\Sigma_2}
\end{equation*}
 induced by the morphism in~\eqref{conteduca} ($(\cE_1\boxtimes\cE_1)^{\rm tw}$ is $\cE_1\boxtimes\cE_1$ with the $\Sigma_2$-action given by the \lq\lq natural\rq\rq\ one multiplied by the sign character). Hence the claim reduces to the statement that, given $g\colon \cE_1\to\CC_p$ (i.e.~$g\in\cE_1(p)^{\vee}$) the induced homomorphism
\begin{equation}
\cE_1(p)=\Ext^2_S(\cE_1,\CC_p)\lra \Ext^2_S(\cE_1,\cE_1)=\CC
\end{equation}
is identified  with $g$. One way to see this is to go through Serre duality.
\end{proof}
\begin{dfn}\label{dfn:replacedia}
Given $p\in S$ and an $a$-dimensional subspace $W_a\subset \cE_1(p)^{\vee}$, $\cB(p,W_a)$  is  the sheaf fitting into the exact sequence
\begin{equation}\label{bicalsing}
0\lra  \cE_1[2]^{-}\otimes (\Ann W_a)^{\vee} \lra \cB(p,W_a) \overset{\lambda}{\lra}  \cB(p,W_a)^{+}  \lra 0
\end{equation}
with extension class the identity map in (see Proposition~\ref{prp:alglin}) 
\begin{equation*}
\Ext^1_{S^{[2]}}(\cB(p,W_a)^{+} ,\cE_1[2]^{-})\otimes (\Ann W_a)^{\vee}=(\Ann W_a)\otimes (\Ann W_a)^{\vee}.
\end{equation*}
\end{dfn}
Let $[\cE_2]\in \cD_{v_2}^a$. Thus we have the exact sequence in~\eqref{nelduale}. The morphism $f$ appearing  in~\eqref{nelduale} is identified with a linear map $f(p)\colon \cE_1(p)\otimes V_a\to \CC$, which viewed as a map $\cE_1(p)\to V^{\vee}_a$  is surjective by Lemma~\ref{lmm:stabse}. It follows that the transpose  $f(p)^t\colon V_a\to \cE_1(p)^{\vee} $  is injective. 
Note that $p$ is the unique singular point of $\cE_2$.
\begin{prp}\label{prp:nicolciacco}
Let $[\cE_2]\in \cD_{v_2}^a$, and let $p$ be the unique singular point of $\cE_2$.
Abusing notation let $V_a\subset  \cE_1(p)^{\vee} $ be the image of $f(p)^t$. Then  $\cB(p,V_a)^{+}$ is isomorphic to the sheaf $\cA(\cE_2)^{+}$  defined  in Subsubsection~\ref{subsubsec:notsemidia}. More precisely we have a commutative diagram
\begin{equation}
\xymatrix{ 
0 \ar[r] & \cB(p,V_a)^{+} \ar[d]\ar[r]^{\theta} & \cE_1[2]^{+}\otimes V_a \ar[d]^{\Id}\ar[r]^{\Phi^{+}_{V_a}} & 
\cG(\cE_1,\CC_{p})\ar[d]^{2\Id} \ar[r] & 0 \\
0 \ar[r] &\cA(\cE_2)^{+}  \ar[r] & \cE_1[2]^{+}\otimes V_a \ar[r]^{\Phi^{+}_{\cE_2}} & 
\cG(\cE_1,\CC_{p}) \ar[r] & 0,
 }
\end{equation}
where the left vertical arrow is an isomorphism.
\end{prp}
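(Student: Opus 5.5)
The plan is to prove that the two right-hand squares commute: $\Phi^{+}_{\cE_2}=2\,\Phi^{+}_{V_a}$ as maps $\cE_1[2]^{+}\otimes V_a\to\cG(\cE_1,\CC_p)$. Once this holds, $\ker\Phi^{+}_{V_a}=\ker\Phi^{+}_{\cE_2}$ as subsheaves of $\cE_1[2]^{+}\otimes V_a$. Indeed, both maps are surjective: the first by Proposition~\ref{prp:pioggia}, the second by Proposition~\ref{prp:fipiusur}. So $\theta$ factors through an isomorphism $\cB(p,V_a)^{+}\xrightarrow{\sim}\cA(\cE_2)^{+}$, which is the left vertical arrow; the diagram is then a map of short exact sequences, and the five lemma is not even needed.

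To compare the two maps, I would unwind definitions. By construction, $\Phi_{\cE_2}$ in~\eqref{massacarrara} is obtained by applying $\cG(\cE_1,-)$ to the surjection $f\colon\cE_1\otimes V_a\to\CC_p$ of~\eqref{nelduale}. Concretely, via BKR it is the $\Sigma_2$-equivariant morphism
\[
(\cE_1\boxtimes\cE_1\oplus\cE_1\boxtimes\cE_1)\otimes V_a\lra \cE_1\boxtimes\CC_p\oplus\CC_p\boxtimes\cE_1
\]
induced by $\Id\boxtimes f$ and $f\boxtimes\Id$ on the two summands. Writing $f=\sum_i g_i\otimes v_i^{\vee}$, or equivalently using the identification $f(p)^t\colon V_a\hookrightarrow\cE_1(p)^{\vee}$, the restriction of $\Phi_{\cE_2}$ to $\cG(\cE_1,\cE_1)\otimes v$ for $v\in V_a$ is $\Phi_{g_v}$ in the notation of~\eqref{lovenicol}, where $g_v=f(p)^t(v)\in\cE_1(p)^{\vee}$. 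This holds by the linearity of the correspondence $g\mapsto\Phi_g$.

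Next I would restrict to the summand $\cE_1[2]^{+}\otimes v$ using the inclusion $i$ of~\eqref{aaspezza}. Claim~\ref{clm:raddoppia} gives $\Phi_{g_v}\circ i=2\Phi^{+}_{g_v}$, and by definition $\Phi^{+}_{V_a}$ restricted to $\cE_1[2]^{+}\otimes v$ is $\Phi^{+}_{g_v}$ (the map coming from~\eqref{ariacondiz} applied to the subspace $V_a\subset\cE_1(p)^{\vee}$). Hence $\Phi^{+}_{\cE_2}=\Phi_{\cE_2}\circ(i\otimes\Id_{V_a})=2\Phi^{+}_{V_a}$, which is the commutativity of the right square.

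The only delicate point is matching conventions: one must check that the identification of $V_a$ with its image under $f(p)^t$ is exactly the one making $\Phi_{\cE_2}|_{\cG(\cE_1,\cE_1)\otimes v}=\Phi_{g_v}$, rather than some dual or twisted version. One must also check that the factor is $2$ and not something else, which depends on the normalization of $\iota^{\pm}$ in~\eqref{flautotraverso}. I expect this bookkeeping to be the main obstacle. I would settle it by computing on $S^2$ away from the diagonal, where everything is an honest sum of sheaves $\cE_1\boxtimes\cE_1$. There $\iota^{+}(\xi)=(\xi,\xi)$ is sent to $(\Id\boxtimes g)(\xi)+(g\boxtimes\Id)(\xi)$, which gives twice the map $(\Id\boxtimes g,g\boxtimes\Id)$ defining $\Phi^{+}_g$ after symmetrization. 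Since all sheaves involved are torsion free or supported on $S_p$ with no embedded components, agreement on a dense open subset suffices.
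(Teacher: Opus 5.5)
Your proposal is correct and is essentially the paper's argument. The paper's proof is the single remark that the statement follows from Claim~\ref{clm:raddoppia}; you spell out that $\Phi_{\cE_2}$ restricted to $\cG(\cE_1,\cE_1)\otimes v$ is $\Phi_{g_v}$ with $g_v=f(p)^t(v)$, so that $\Phi^{+}_{\cE_2}=2\Phi^{+}_{V_a}$ and the two kernels coincide inside $\cE_1[2]^{+}\otimes V_a$.

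Your final normalization check is unnecessary, since any nonzero scalar relating the two maps already gives equality of kernels. As written, it also does not establish the factor $2$: with the evident identifications off $\Delta$ one finds $\Phi_g\circ i=\Phi^{+}_g$. So for the precise scalar in the diagram you should simply take it from Claim~\ref{clm:raddoppia}, as the paper does.
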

\begin{proof}
This follows from   Claim~\ref{clm:raddoppia}.
\end{proof}
\begin{clm}\label{clm:falsodimitri}
Let $p\in S$, $W_a\subset \cE_1(p)^{\vee}$ be an $a$-dimensional vector subspace, and $[\cE_2]\in\cM_{v_2}$.
Then
\begin{equation}\label{falsodimitri}
\ch(\cB(p,W_a))=\ch(\cG(\cE_1,\cE_2)).
\end{equation}
\end{clm}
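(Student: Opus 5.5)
The plan mirrors the proof of Claim~\ref{clm:stessofalsomukai}. Since $\ch(\cG(\cE_1,\cE_2))$ does not depend on the choice of $[\cE_2]\in\cM_{v_2}$, we may pick $[\cE_2]\in\cD^a_{v_2}$. Such a point exists because $\cD^a_{v_2}=\cB_{v_2}$ is non empty by Proposition~\ref{prp:divuti}. It is also convenient, though not needed, to take $p$ equal to the given point and $V_a=W_a$. This is possible because, by Lemma~\ref{lmm:stabse} and Proposition~\ref{prp:bordodiemme}, the kernel of any surjection $\cE_1\otimes V_a\to\CC_p$ with $f(p)^t$ injective lies in $\cM_{v_2}$.

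The proof then reduces to additivity of $\ch$ in exact sequences. From~\eqref{bicalsing} and~\eqref{bimenosing} we get
\begin{equation*}
\ch(\cB(p,W_a))=(2a^2-a)\ch(\cE_1[2]^{-})+a\ch(\cE_1[2]^{+})-\ch(\cG(\cE_1,\CC_p)),
\end{equation*}
since $\dim\Ann W_a=2a-a=a$. Here $\dim\cE_1(p)=2a$, so the coefficient of $\ch(\cE_1[2]^-)$ is really $a$ and not $2a^2-a$. On the other side, the sequence~\eqref{piumenopsi} for $\cE_2$ gives
\begin{equation*}
\ch(\cG(\cE_1,\cE_2))=a\ch(\cE_1[2]^{+})+a\ch(\cE_1[2]^{-})-\ch(\cG(\cE_1,\CC_p)).
\end{equation*}
Comparing the two expressions yields~\eqref{falsodimitri}.

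The only points to check are that all the sequences used are exact in the stated form. This is the case: \eqref{bimenosing} is exact because $\Phi^+_{W_a}$ is surjective by Proposition~\ref{prp:pioggia}, \eqref{bicalsing} is an extension by definition, and \eqref{piumenopsi} comes from~\eqref{massacarrara} via the decomposition~\eqref{aaspezza}. The class $\ch(\cG(\cE_1,\CC_p))$ is independent of $p$ (for instance by~\eqref{supportato}), so using the same $p$ on both sides is harmless.

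The main potential obstacle is bookkeeping the dimension of $\Ann W_a$ correctly, i.e.\ noting $\dim\cE_1(p)=r_1=2a$. An alternative route is to observe directly via Proposition~\ref{prp:nicolciacco} that $\cB(p,V_a)^+\cong\cA(\cE_2)^+$ and that both $\cB(p,V_a)$ and $\cG(\cE_1,\cE_2)$ are extensions of $\cA(\cE_2)^+$ and $\cE_1[2]^-\otimes\CC^a$, by~\eqref{bicalsing} and~\eqref{harnargi}.
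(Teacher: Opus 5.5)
Your proposal is correct and is essentially the paper's own proof. You specialize to $[\cE_2]\in\cD^a_{v_2}$ with singular point $p$ and compare Chern characters via~\eqref{bicalsing}, \eqref{bimenosing} and~\eqref{massacarrara}, so that both sides equal $a\ch(\cE_1[2]^{-})+a\ch(\cE_1[2]^{+})-\ch(\cG(\cE_1,\CC_p))$. Replace the stray coefficient $2a^2-a$ in your first display by $a$, as you note yourself ($\dim\Ann W_a=2a-a=a$); your remark that the singular point can be taken to be the given $p$ (Lemma~\ref{lmm:stabse}) makes explicit a step the paper leaves implicit.
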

\begin{proof}
The right hand side of~\eqref{falsodimitri} does not depend on which $[\cE_2]\in\cM_{v_2}$ we choose. Let 
$[\cE_2]\in\cM_{v_2}^a$, with unique singular point $p$. By~\eqref{bicalsing}, \eqref{bimenosing} and~\eqref{massacarrara} we have
\begin{equation*}
\ch(\cB(p,W_a))=a\ch(\cE_1[2]^{-})+a\ch(\cE_1[2]^{+})-\ch(\cG(\cE_1,\CC_p))=\ch(\cG(\cE_1,\cE_2)).
\end{equation*}
\end{proof}
\subsection{Slope stability of extensions}\label{subsec:sonostabili}
\setcounter{equation}{0}  
Throughout the present subsection Hypothesis-Definition~\ref{hyp-dfn:esseaemme} holds, and  the polarization $h_S$ is 
${\mathsf a}(v_2)$-generic. We prove that the  sheafs defined in Subsubsections~\ref{subsubsec:cohomcompuno} and~\ref{subsubsec:cohomcompdue} are slope stable for certain polarizations of $S^{[2]}$.
\subsubsection{Slope stability of $ \cB(\cH,U_k)$}
The main result of the present subsubsection is the following.
\begin{prp}\label{prp:stabcalbi}
Assume Hypothesis-Definition~\ref{hyp-dfn:esseaemme}. 
Let  $ h_{S^{[2]}}$ be a polarization of $S^{[2]}$ which is ${\mathsf a}(\ww(D,a))$-suitable for $\bm{\mu}(h_S)$. Let $\cH$, $U_k$  be as in Definition~\ref{dfn:fasciobical}. 
The sheaf $\cB(\cH,U_k)$ of Definition~\ref{dfn:fasciobicaltilde} is $ h_{S^{[2]}}$ slope stable. 
\end{prp}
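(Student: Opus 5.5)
The strategy is to reduce to $\bm{\mu}(h_S)$ slope stability and then invoke Corollary~\ref{crl:paragonestab}. By Claim~\ref{clm:stessofalsomukai}, $\cB=\cB(\cH,U_k)$ has the same Chern character as $\cG(\cE_1,\cE_2)$. Hence $\cB$ is modular with ${\mathsf a}(\cB)={\mathsf a}(\ww(D,a))$, since its discriminant is a multiple of $c_2(S^{[2]})$. So it suffices to prove that $\cB$ is $\bm{\mu}(h_S)$ slope stable. Since $\bm{\mu}(h_S)$ has positive BBF square, this is equivalent to HK-slope stability by Remark~\ref{rmk:hkstabpos}.

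The sheaf $\cB$ has a three-step filtration
\[
0\subset \cE_1[2]^{-}\otimes\Ann(U_k)^{\vee}\subset \cB(\cH,U_k)^{-}\subset \cB ,
\]
with graded pieces $\cE_1[2]^{-}\otimes\Ann(U_k)^{\vee}$, $\cG(\cE_1,\cH)$ and $\cE_1[2]^{+}\otimes U_k$. Each piece is $\bm{\mu}(h_S)$ polystable: for $\cG(\cE_1,\cH)$ this is Proposition~\ref{prp:gistabperbig}, and for $\cE_1[2]^{\pm}$ it is Remark~\ref{rmk:rigidistab}.

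The slopes of the three pieces with respect to $\bm{\mu}(h_S)$ coincide. Indeed, by \eqref{piumenopend} and \eqref{otamendi} they differ only by multiples of $\delta$, and $q(\delta,\bm{\mu}(h_S))=0$. So $\cB$ is $\bm{\mu}(h_S)$ semistable, and any destabilizing saturated subsheaf $\cA\subset\cB$ has the same slope. Intersecting $\cA$ with the filtration and using polystability of the graded pieces shows that each graded piece of $\cA$ is, in codimension one, a sum of stable factors of the corresponding piece. Here $\cE_1[2]^{-}$, $\cE_1[2]^{+}$ and $\cG(\cE_1,\cH)$ are pairwise non-isomorphic stable bundles; they have different ranks or different $c_1$ ($\pm\delta/4a$ shifts). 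Reflexivity and the rank/$c_1$ bookkeeping then upgrade these codimension-one identifications to honest subsheaves.

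The key step is to rule out every such configuration using the extension classes. Use the long exact sequences computed in Proposition~\ref{prp:extrigidibical}, Claim~\ref{clm:manetti} and Proposition~\ref{prp:sole}, together with the $\Hom$ vanishings $\Hom(\cE_1[2]^{\pm},\cG(\cE_1,\cH))=0$ and $\Hom(\cG(\cE_1,\cH),\cE_1[2]^{\pm})=0$ from Lemma~\ref{lmm:mattiatorre}.

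\emph{Quotients.} A quotient $\cB\to\cE_1[2]^{-}$ is ruled out because $\Hom(\cB^{+},\cE_1[2]^{-})=0$, while the connecting map from $\Hom(\Ann(U_k)^{\vee}\otimes\cE_1[2]^{-},\cE_1[2]^{-})$ is the identity extension class, hence injective. A quotient of the form $\cB\to\cE_1[2]^{+}\otimes U'$ pulls back to a sub-extension of \eqref{bicalsucc} which splits over $U'$. This is impossible since $U_k\hookrightarrow\Ext^1(\cE_1,\cH)$ is injective. A quotient through $\cG(\cE_1,\cH)$ is excluded because the class of \eqref{alemarc} is injective (Claim~\ref{clm:manetti}).

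\emph{Subsheaves.} Dually, a subsheaf $\cE_1[2]^{+}\otimes U'$ cannot lift, and a subsheaf $\cG(\cE_1,\cH)$ cannot split off $\cE_1[2]^{-}\otimes\Ann(U_k)^{\vee}$. Mixed cases reduce to these by passing to sub- or quotient objects, in the spirit of the proof of Proposition~\ref{prp:interbarca}.

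The main obstacle is this combinatorial case analysis. Subsheaves may contain partial pieces $\cE_1[2]^{-}\otimes A'$ with $A'\subsetneq \Ann(U_k)^{\vee}$ together with a lift of part of $\cE_1[2]^{+}\otimes U_k$. Excluding them requires knowing precisely the induced extension classes between the first and last pieces through $\cB^{\pm}$. I would handle this by first showing that $\cB^{-}$ and $\cB^{+}$ are $\bm{\mu}(h_S)$ stable: each is a two-step extension with injective class, so the argument above works directly. I would then treat $\cB$ as an extension of $\cB^{+}$ by $\cE_1[2]^{-}\otimes\Ann(U_k)^{\vee}$, and alternatively of $\cE_1[2]^{+}\otimes U_k$ by $\cB^{-}$, using that the classes are the identity on $\Ann(U_k)$.

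Once $\bm{\mu}(h_S)$ stability of $\cB$ is established, Corollary~\ref{crl:paragonestab} gives $h_{S^{[2]}}$ slope stability.
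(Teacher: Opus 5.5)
Your reduction fails at its first step: $\cB=\cB(\cH,U_k)$ is \emph{not} $P$ slope stable, where $P=\bm{\mu}(h_S)$. As you note yourself, the three graded pieces have the same $P$-slope, because their values of $c_1/r$ differ only by multiples of $\delta$ and $q_{S^{[2]}}(\delta,P)=0$. Consequently:
\begin{itemize}
\item $\cE_1[2]^{-}\otimes\Ann(U_k)^{\vee}$ and $\cB(\cH,U_k)^{-}$ are proper subsheaves of $\cB$ of the same $P$-slope.
\item $\lambda^{+}\circ\lambda\colon\cB\twoheadrightarrow\cE_1[2]^{+}\otimes U_k$ is a quotient of the same $P$-slope. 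So your claim that such quotients are excluded by the extension class is wrong: this surjection exists whatever the class is.
\item For the same reason $\cB(\cH,U_k)^{\pm}$ are not $P$ stable either.
\end{itemize}
Thus $\cB$ is strictly $P$ semistable, and Corollary~\ref{crl:paragonestab} cannot be applied. Stability for $h_{S^{[2]}}$ comes from the $\delta$-components of the slopes, which $P$ does not see. By \eqref{piumenopend}, the pieces on the sub side ($\cE_1[2]^{-}$) have $c_1/r$ shifted by $-\delta/4a$, and those on the quotient side ($\cE_1[2]^{+}$) by $+\delta/4a$. Moreover $q_{S^{[2]}}(\delta,h_{S^{[2]}})>0$, since $h_{S^{[2]}}$ is ample and $2\delta$ is effective.

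The argument that works uses only $P$ semistability, which your filtration argument does prove, together with Proposition~\ref{prp:paragonestab}. If $\cB$ were not $h_{S^{[2]}}$ stable, there would be $\cS\subset\cB$ with $\mu_P(\cS)=\mu_P(\cB)$ and $\mu_{h_{S^{[2]}}}(\cS)\ge\mu_{h_{S^{[2]}}}(\cB)$. One then shows that every such $\cS$ is, up to a torsion quotient supported on $\Delta$ or in codimension $2$, one of three types: $\cE_1[2]^{-}\otimes T^{-}$, $\cB(\cH,U_k)^{-}$, or $\lambda^{-1}(\cE_1[2]^{+}\otimes T^{+})$ with $T^{+}\subsetneq U_k$. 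The extension classes serve only to exclude mixed configurations: a part in $\cE_1[2]^{+}\otimes U_k$ without the $\cG(\cE_1,\cH)$ part, or a $\cG(\cE_1,\cH)$ part with $T^{-}\neq\Ann(U_k)^{\vee}$. They do not exclude all equal-slope subsheaves. In each surviving case $c_1(\cS)/r(\cS)=c_1(\cB)/r(\cB)-c\,\delta$ with $c>0$, contradicting the $h_{S^{[2]}}$ inequality.

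There is also a second gap in your sketch. Since $P^3\cdot\Delta=0$, polystability of the graded pieces identifies an equal-slope subsheaf with a standard one only away from $\Delta$ (plus codimension $2$), not merely in codimension one. Reflexivity does not undo modifications along the divisor $\Delta$, so the non-lifting statements must hold up to such modifications. The paper proves them by pulling back to $X(S)$ and pushing forward to $S^2$, where $\Delta$ becomes the codimension-$2$ diagonal, and then applying Hartogs. These modifications only add terms $-m\delta$ with $m\ge0$ to $c_1(\cS)$, which is harmless for the final slope comparison.
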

Proposition~\ref{prp:stabcalbi} is proved by applying Proposition~\ref{prp:paragonestab}. Hence we start 
by proving  results on slope semistability for 
\begin{equation}\label{ampiopi}
P\coloneq\bm{\mu}(h_S).
\end{equation}
\begin{prp}\label{prp:calbisemist}
Assume Hypothesis-Definition~\ref{hyp-dfn:esseaemme}, and  
let $\cH$, $U_k$  be as in Definition~\ref{dfn:fasciobical}. 
The sheaf $\cB(\cH,U_k)$ of Definition~\ref{dfn:fasciobicaltilde} is $P$ slope (and  HK slope) semistable. 
\end{prp}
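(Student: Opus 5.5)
The plan is to show that $\cB\coloneq\cB(\cH,U_k)$ has a filtration whose graded pieces are all $P$ slope stable of the same $P$-slope. Semistability then follows formally. By construction (Definitions~\ref{dfn:fasciobical} and~\ref{dfn:fasciobicaltilde}) $\cB$ has a three-step filtration
\begin{equation*}
0\subset \cE_1[2]^{-}\otimes \Ann(U_k)^{\vee}\subset \cB(\cH,U_k)^{-}\subset \cB,
\end{equation*}
whose successive quotients are $\cE_1[2]^{-}\otimes\Ann(U_k)^{\vee}$, $\cG(\cE_1,\cH)$ (see~\eqref{alemarc}) and $\cE_1[2]^{+}\otimes U_k$ (see~\eqref{bicalsucc}). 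Proposition~\ref{prp:gistabperbig} says that $\cG(\cE_1,\cH)$ is $P$ slope stable, since $[\cH]\in\cM_{v(a-k)}\setminus\cD_{v(a-k)}$. Remark~\ref{rmk:rigidistab} says that $\cE_1[2]^{\pm}$ are $P$ slope stable. So the graded pieces are polystable.

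Next I would check that these pieces have equal $P$-slope. By~\eqref{piumenopend} and~\eqref{otamendi}, the normalized first Chern classes of $\cE_1[2]^{\pm}$ differ from that of $\cG(\cE_1,\cH)$ by $\pm\delta/4a$. It therefore suffices to show $\int_{S^{[2]}}\delta\cdot P^3=0$. This holds because $P=\bm{\mu}(h_S)$ is pulled back from $S^{(2)}$ and $\Delta$ is contracted by the Hilbert--Chow map onto the diagonal, which is $2$-dimensional. Equivalently, $q(\delta,\bm{\mu}(h_S))=0$, so the HK-slopes agree as well.

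With this in hand, take any subsheaf $\cA\subset\cB$ with $0<r(\cA)<r(\cB)$. Intersect $\cA$ with the filtration and push it into the quotients to get subsheaves $\cA_i$ of the graded pieces. Then $c_1(\cA)=\sum c_1(\cA_i)$ and $r(\cA)=\sum r(\cA_i)$. Each $\cA_i$ sits in a semistable sheaf of the common slope $\mu$, so $\mu_P(\cA_i)\le\mu$, and hence $\mu_P(\cA)\le\mu=\mu_P(\cB)$. Torsion-freeness of the pieces makes this valid, and $P$ is nef and big, so Definition~\ref{dfn:pendstabgen} applies verbatim. The HK version follows from Remark~\ref{rmk:hkstabpos}, since $q(P)>0$.

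The only genuinely delicate point is the verification that $\delta\cdot P^3=0$, that is, the equality of slopes. I expect it to be immediate from the description of $P$ as a pull-back from the symmetric product. No use of the nontriviality of the extension classes is needed for semistability; those classes matter only later, for stability with respect to $h_{S^{[2]}}$.
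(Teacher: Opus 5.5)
Your proposal is correct and follows the paper's proof essentially step for step. The paper uses the same three-step filtration with graded pieces $\cE_1[2]^{-}\otimes\Ann(U_k)^{\vee}$, $\cG(\cE_1,\cH)$, $\cE_1[2]^{+}\otimes U_k$, cites the same results for their $P$ slope stability, reads off equal $P$ HK-slopes $(D\cdot h_S)/2a$ from~\eqref{piumenopend} and~\eqref{otamendi} (you justify this via $q_{S^{[2]}}(\delta,\bm{\mu}(h_S))=0$), and concludes with the same weighted-average inequality over the filtration induced on a subsheaf.
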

\begin{proof}
Since the BBF square of $P$ is positive, $P$  slope (semi)semistability is the same as 
$P$ HK slope (semi)stability, see Remark~\ref{rmk:hkstabpos}. We prove that  $\cB(\cH,U_k)$ is $P$ HK slope (semi)semistable.
Let $\cB=\cB(\cH,U_k)$ and $\cB^{-}=\cB(\cH,U_k)^{-}$.
Let $0=\cB_0\subset\cB_1\subset\cB_2\subset\cB_3=\cB$ be the filtration with 
$\cB_2\coloneq \cB^{-}$, 
$\cB_1\coloneq\cE_1[2]^{-}\otimes \Ann(U_k)^{\vee}$ 
(see~\eqref{alemarc}).
Let $\cS\subset\cB$ be a subsheaf with $0<r(\cS)<r(\cB)$. 
Let  $0\eqcolon\cS_0\subset\cS_1\subset\cS_2\subset\cS_3\coloneq\cS$ be the  filtration induced by the filtration of $\cB$. 
Thus $\cS_2$ is the kernel of the restriction of $\lambda$ to $\cS$, where $\lambda$ is as in~\eqref{bicaltildesucc}, and   
$\cS_1\subset\cB_2=\cB^{-}$ is the kernel of the restriction of $\lambda^{-}$ to $\cS_2$, where $\lambda^{-}$ is as in~\eqref{alemarc}. 
  For $i\in\{1,2,3\}$ let $\cA_i\coloneq \cS_i/\cS_{i-1}$. We have 
\begin{equation}\label{incquozuno}
\cA_1\subset \cE_1[2]^{-}\otimes \Ann(U_k)^{\vee}, \quad \cA_2\subset  \cG(\cE_1,\cH), 
\quad  \cA_3\subset \cE_1[2]^{+}\otimes U_k.
\end{equation}
Recalling~\eqref{piumenopend} and~\eqref{otamendi} we get that
\begin{equation*}
\mu^{HK}_P(\cE_1[2]^{\pm})=\mu^{HK}_P(\cG(\cE_1,\cH))=\mu^{HK}_P(\cB)=(D\cdot h_S)/2a.
\end{equation*}
The vector bundle $\cE_1[2]^{\pm}$ is $P$ slope stable (see Remark~\ref{rmk:rigidistab}), and by Proposition~\ref{prp:gistabperbig} so is $\cG(\cE_1,\cH)$. 
It follows that if $r(\cA_i)\not=0$ then $\mu^{HK}_P(\cA_i)\le (D\cdot h_S)/2a$. Thus
\begin{equation}\label{catenadiseg}
\mu^{HK}_P(\cS)=\sum_{i=1}^3\frac{r(\cA_i)}{r(\cS)}\mu^{HK}_P(\cA_i)\le 
\sum_{i=1}^3\frac{r(\cA_i)}{r(\cS)}\frac{D\cdot h_S}{2a}=\frac{D\cdot h_S}{2a}=\mu^{HK}_P(\cB).
\end{equation}
(If $r(\cA_i)=0$ we set $r(\cA_i)\mu_P(\cA_i)/r(\cS)=0$.)
\end{proof}
Since  $\cB(\cH,U_k)$ is $P$ slope  (and  HK slope) semistable, in order to apply Proposition~\ref{prp:paragonestab} one needs to control 
subsheaves $\cS\subset\cB(\cH,U_k)$  such that 
$\mu_P(\cS)=\mu_P(\cB(\cH,U_k))$.  Referring to the commutative diagram in~\eqref{commiso} (and recalling~\eqref{duedelta}), we have $\pi\circ\tau(\Delta)=D(S)$, where $D(S)\subset S^2$ is the diagonal.
\begin{prp}\label{prp:nonsollevo}
Assume Hypothesis-Definition~\ref{hyp-dfn:esseaemme}, and  
let $\cH$, $U_k$  be as in Definition~\ref{dfn:fasciobical}. 
Let $T^{+}\subset U_k$ be a non zero subspace, and let $\cS\subset \cE_1[2]^{+}\otimes T^{+}$ be a subsheaf such that 
$ \cE_1[2]^{+}\otimes T^{+}/\cS$   is torsion  with support of codimension at least $2$ away from $\Delta$. Then the inclusion 
 $\iota\colon\cA\hra \cE_1[2]^{+}\otimes U_k$ does not lift to a homomorphism $\cS\lra \cB(\cH,U_k)^{+}$ (see the exact sequence in~\eqref{bicalsucc}).
\end{prp}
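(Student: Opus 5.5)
We argue by contradiction, reducing via the extension \eqref{bicalsucc} to a statement about extension classes. Here $\iota$ is the inclusion $\cS\hra\cE_1[2]^{+}\otimes T^{+}\subset \cE_1[2]^{+}\otimes U_k$ ($\cA$ in the statement should read $\cS$). A lift of $\iota$ to $\cB(\cH,U_k)^{+}$ exists if and only if the pull-back of the extension \eqref{bicalsucc} along $\iota$ splits. That pull-back has class
\begin{equation*}
\iota^{*}(e)\in\Ext^1_{S^{[2]}}(\cS,\cG(\cE_1,\cH)),
\end{equation*}
where $e\in\Ext^1_{S^{[2]}}(\cE_1[2]^{+},\cG(\cE_1,\cH))\otimes U_k^{\vee}$ is the class of \eqref{bicalsucc}. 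So it suffices to show that the restriction map
\begin{equation*}
\Ext^1(\cE_1[2]^{+}\otimes T^{+},\cG(\cE_1,\cH))\lra\Ext^1(\cS,\cG(\cE_1,\cH))
\end{equation*}
is injective. Indeed, the class of the pull-back of \eqref{bicalsucc} to $\cE_1[2]^{+}\otimes T^{+}$ corresponds to the inclusion $T^{+}\hra\Ext^1_S(\cE_1,\cH)$, which is nonzero because $T^{+}\neq 0$ (Definition~\ref{dfn:fasciobical} and Lemma~\ref{lmm:mattiatorre}).

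Set $\cQ\coloneq(\cE_1[2]^{+}\otimes T^{+})/\cS$. From $0\to\cS\to\cE_1[2]^{+}\otimes T^{+}\to\cQ\to0$ the kernel of the restriction map is the image of $\Ext^1(\cQ,\cG(\cE_1,\cH))$, so it is enough to prove
\begin{equation*}
\Ext^1_{S^{[2]}}(\cQ,\cG(\cE_1,\cH))=0.
\end{equation*}
If $\cQ$ were supported in codimension $\ge2$ this would be automatic: $\cG(\cE_1,\cH)$ is locally free, hence $\cExt^i(\cQ,\cG)=0$ for $i<2$, and the local-to-global spectral sequence gives the vanishing. The difficulty is that $\cQ$ may have a divisorial part supported on $\Delta$.

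We split $\cQ$ using its torsion filtration. Let $\cQ'\subset\cQ$ be the maximal subsheaf supported in codimension $\ge2$; then $\cQ''=\cQ/\cQ'$ is pure of dimension $3$ with support in $\Delta$. We have $\Ext^1(\cQ',\cG)=0$ as above, so it remains to show $\Ext^1(\cQ'',\cG(\cE_1,\cH))=0$, or at least that the composite map from it to $\Ext^1(\cE_1[2]^{+}\otimes T^{+},\cG)$ kills the class in question. By Serre duality on $S^{[2]}$ (trivial canonical bundle), $\Ext^1(\cQ'',\cG)\cong \Ext^3(\cG,\cQ'')^{\vee}$. Since $\cQ''$ is a quotient of $\cE_1[2]^{+}\otimes T^{+}$ restricted to the divisor $\Delta$, it is built from sheaves on $\Delta\cong\PP(\Theta_S)$, a $\PP^1$-bundle over $S$. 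We would compute using \eqref{yakushima}-type descriptions of $\cE_1[2]^{+}|_{\Delta}$ and of $\cG(\cE_1,\cH)|_{\Delta}$, pushing forward to $S$. The pieces are $\Sym^2\cE_1\otimes\cO$ and $\bigwedge^2\cE_1\otimes\cO(1)$, while $\cG|_{\Delta}$ is built from $\cE_1\otimes\cH$ pieces. The resulting cohomology should reduce to groups like $H^{*}(S,\cH^{\vee}\otimes\Sym^2\cE_1\otimes\cE_1^{\vee})$ twisted by fiber line bundles.

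The expected main obstacle is this divisorial-support case. The vanishing may fail in general, so a more robust route may be needed. Rather than prove vanishing, we would show that the connecting map $\Ext^1(\cQ'',\cG)\to\Ext^1(\cE_1[2]^{+}\otimes T^{+},\cG)$ has image meeting $T^{+\vee}\otimes\Ext^1_S(\cE_1,\cH)$ trivially. To do this we pass through BKR and the blow-up diagram \eqref{commiso}, where $\tau^{*}$ of $\cS$ contains $\rho^{*}$ of something agreeing with $\tau^{*}(\cE_1\boxtimes\cE_1)\otimes T^{+}$ off $E$. Off the diagonal, a lift would give a splitting of $\cE_1\boxtimes\cF\to\cE_1\boxtimes\cE_1\otimes T^{+}$ on $S^2\setminus D(S)$, with $\cF$ from \eqref{borges}. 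Since $D(S)$ has codimension $2$ in $S^2$ and the sheaves are locally free, Hartogs extension yields a splitting on all of $S^2$. Restricting to $\{x\}\times S$ then produces a lift of $\cE_1\otimes T^{+}\hra\cE_1\otimes U_k$ to $\cF$, contradicting the choice of extension class, as in the proof of Proposition~\ref{prp:interbarca}. This codimension-$2$ Hartogs argument is the route we would try first, since it bypasses the cohomology on $\Delta$ entirely.
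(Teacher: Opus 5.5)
Your final route is the paper's proof: away from $\Delta$ (and the codimension-$2$ locus where $\cS\neq\cE_1[2]^{+}\otimes T^{+}$) you pull the lift back to $S^2\setminus D(S)$. Because $\tau$ contracts $\rho^{-1}(\Delta)$ to the codimension-$2$ diagonal, Hartogs then extends it to a lift of $\cE_1\boxtimes\cE_1\otimes T^{+}\hra\cE_1\boxtimes\cE_1\otimes U_k$ into $\cE_1\boxtimes\cF\oplus\cF\boxtimes\cE_1$ on all of $S^2$, and this contradicts the nonvanishing on $T^{+}$ of the class of~\eqref{borges}; the paper reads the contradiction off the K\"unneth class $(\ov{e},\ov{e})$ rather than restricting to a slice $\{x\}\times S$, which is equivalent. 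The one ingredient you leave implicit is the map $\rho^{*}\cB(\cH,U_k)^{+}\to\tau^{*}(\cE_1\boxtimes\cF\oplus\cF\boxtimes\cE_1)$, which comes from Proposition~\ref{prp:freddo} ($\cB(\cH,U_k)^{+}\cong\cA(\cH,U_k)^{+}\subset\cG(\cE_1,\cF)$) and should be cited; the Ext-vanishing detour can simply be dropped.
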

\begin{proof}
 Let $\rho\colon X(S)\to S^{[2]}$ and $\tau\colon S^{[2]}\to S^2$ be as in~\eqref{commiso}. Let $\cF$ be the  
 vector bundle appearing in~\eqref{borges} (see Definition~\ref{dfn:apiumeno}).
We claim that we have a commutative diagram
\begin{equation*}
\xymatrix{ 
   \rho^{*}\cG(\cE_1,\cH)\ar[d]\ar[r] & \rho^{*}\cB(\cH,U_k)^{+} \ar[d]^{f}\ar[r] & 
\rho^{*}\cE_1[2]^{+}\otimes  U_k\ar[d]  \\
 \tau^{*}(\cE_1\boxtimes\cH\oplus \cH\boxtimes\cE_1)\ar[r] & \tau^{*}(\cE_1\boxtimes\cF\oplus \cF\boxtimes\cE_1)\ar[r] &   \tau^{*}(\cE_1\boxtimes\cE_1)\otimes U_k.
 }
\end{equation*} 
In fact the first and third vertical arrows exist by definition of $\cG(\cE_1,\cH)$ and $\cE_1[2]^{+}$ respectively, and existence of a vertical arrow $f$ making the diagram commute follows from Proposition~\ref{prp:freddo} (recall that $\cA(\cH,U_k)^{+}\subset \cG(\cE_1,\cF)$). 
If there exists  a lift  $\wt{\iota}\colon\cS\lra \cB(\cH,U_k)^{+}$ of $\iota$ then 
$\rho^{*}(\iota)\colon\rho^{*}\cS\lra \rho^{*}\cE_1[2]^{+}\otimes  U_k$
 lifts to  
$\rho^{*}(\wt{\iota})\colon\rho^{*}\cS\lra \rho^{*}\cB(\cH,U_k)^{+}$. 
Pushing forward by $\tau$ we get a commutative diagram
\begin{equation*}
\xymatrix{ 
\cE_1\boxtimes\cH\oplus \cH\boxtimes\cE_1\ar[r] & \cE_1\boxtimes\cF\oplus \cF\boxtimes\cE_1\ar[r] &   \cE_1\boxtimes\cE_1\otimes U_k \\
& \tau_{*}(\rho^{*}\cS)\ar[u]^{\wt{g}}\ar[ru]^{g} & 
 }
\end{equation*} 
By  hypothesis the map $g$ factors as
\begin{equation}
\tau_{*}(\rho^{*}\cS)\hra \cE_1\boxtimes\cE_1\otimes T^{+}\overset{G}{\hra} \cE_1\boxtimes\cE_1\otimes U_k,
\end{equation}
and the quotient $\cE_1\boxtimes\cE_1\otimes T^{+}/\tau_{*}(\rho^{*}\cS)$ has support of codimension at least $2$ because $\tau$ contracts $\rho^{-1}(\Delta)$. The lift $\wt{g}$ of $g$ defines a lift of $G$ 
away from the support of $\cE_1\boxtimes\cE_1\otimes T^{+}/\tau_{*}(\rho^{*}\cS)$, which has codimension at least $2$. It follows that this lift extends to a lift $\wt{G}\colon \cE_1\boxtimes\cE_1\otimes T^{+}\to \cE_1\boxtimes\cF\oplus \cF\boxtimes\cE_1$ of the inclusion $G$. The extension class of the exact sequence
\begin{equation*}
0\lra \cE_1\boxtimes\cH\oplus \cH\boxtimes\cE_1\lra \cE_1\boxtimes\cF\oplus \cF\boxtimes\cE_1\lra   \cE_1\boxtimes\cE_1\otimes U_k\lra 0
\end{equation*}
is given by 
\begin{equation*}
(\ov{e},\ov{e})\in \Ext^1_{S^2}(\cE_1\boxtimes\cE_1,\cE_1\boxtimes\cH\oplus \cH\boxtimes\cE_1)\otimes  U_k^{\vee}
=\bigoplus_{l=1}^2\Ext^1(\cE_1,\cH)\otimes  U_k^{\vee},
\end{equation*}
where $\ov{e}$ is the extension class of~\eqref{borges} - see the proof of Proposition~\ref{prp:freddo}. Since $\ov{e}$ is given by the inclusion  $ \Ann(U_k)\hra \Ext^1_S(\cE_1,\cH)$ there is no such lift. This contradiction shows that $\iota$ does not lift.

\end{proof}
\begin{prp}\label{prp:garfagnana}
Assume Hypothesis-Definition~\ref{hyp-dfn:esseaemme}, and  
let $\cH$, $U_k$  be as in Definition~\ref{dfn:fasciobical}. 
 Let $\cS_2\subset \cB(\cH,U_k)^{-}$ be a subsheaf, and let $\cS_1\coloneq \ker(\lambda^{-}_{|\cS_2})$, where $\lambda^{-}$ is as in~\eqref{alemarc}. If 
\begin{enumerate}
\item
 $\cG(\cE_1,\cH)/\lambda^{-}(\cS_2)$  is torsion  with support of codimension at least $2$ away from $\Delta$, and
\item
there exists a subspace $T^{-}\subset \Ann(U_k)^{\vee}$ such that $\cS_1\subset  \cE_1[2]^{-}\otimes T^{-}$ and $\cE_1[2]^{-}\otimes T^{-}/\cS_1 $   is torsion  with support of codimension at least $2$ away from $\Delta$, 
\end{enumerate}
then $T^{-}=\Ann(U_k)^{\vee}$.
\end{prp}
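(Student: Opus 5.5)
Suppose $T^{-}\subsetneq \Ann(U_k)^{\vee}$; we will show the extension~\eqref{alemarc} is then partially split, contradicting Claim~\ref{clm:manetti}. This is the dual of Proposition~\ref{prp:nonsollevo}, and we run the same argument: pull back to $X(S)$ and push forward to $S^2$. There codimension-$2$ statements away from the diagonal become genuine codimension-$2$ statements, because $\tau$ contracts $\rho^{-1}(\Delta)$.

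First we use Proposition~\ref{prp:sole} to identify $\cB(\cH,U_k)^{-}$ with $\cC(\cH,U_k)^{-}$, a quotient of $\cG(\cE_1,\cF)$, where $\cF$ is the vector bundle in~\eqref{giovanni}. Pulling back by $\rho$ and using the inclusions $\rho^{*}\cG(\cE_1,-)\subset\tau^{*}(\cE_1\boxtimes -\oplus -\boxtimes\cE_1)$, we aim for a commutative diagram on $X(S)$. Its top row is $\rho^{*}$ of~\eqref{alemarc}. Its bottom row is the pullback of
\[
0\lra \cE_1\boxtimes\cE_1\otimes\Ann(U_k)^{\vee}\lra Q\lra \cE_1\boxtimes\cH\oplus\cH\boxtimes\cE_1\lra 0,
\]
where $Q$ is the quotient of $\cE_1\boxtimes\cF\oplus\cF\boxtimes\cE_1$ by the anti-diagonal copy of $\cE_1\boxtimes\cE_1\otimes\Ann(U_k)^{\vee}$. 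The extension class of this sequence is $(\ov{e},\ov{e})$, with $\ov{e}$ the class of~\eqref{giovanni}, exactly as in the proof of Proposition~\ref{prp:freddo}. Constructing the middle vertical map is the main bookkeeping step; it is analogous to the arrow $f$ in Proposition~\ref{prp:nonsollevo}.

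Next we take $\cS_2$ and push $\rho^{*}\cS_2$ forward to $S^2$. By hypothesis~(1), $\tau_{*}\rho^{*}\cS_2$ maps onto $\cE_1\boxtimes\cH\oplus\cH\boxtimes\cE_1$ away from a codimension-$\ge 2$ set. By hypothesis~(2), its intersection with the subbundle is contained in $\cE_1\boxtimes\cE_1\otimes T^{-}$ and equals it off codimension $2$. So over an open set $U\subset S^2$ with complement of codimension $\ge 2$, the image of $\tau_{*}\rho^{*}\cS_2$ in $Q$ is a subbundle meeting the kernel in $\cE_1\boxtimes\cE_1\otimes T^{-}$ and surjecting onto the quotient. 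Dividing $Q|_U$ by $\cE_1\boxtimes\cE_1\otimes T^{-}$ therefore splits the induced extension of $\cE_1\boxtimes\cH\oplus\cH\boxtimes\cE_1$ by $\cE_1\boxtimes\cE_1\otimes(\Ann(U_k)^{\vee}/T^{-})$ over $U$. All these sheaves are locally free, so the splitting extends across codimension $2$ by Hartogs, and it holds on all of $S^2$.

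Hence the image of $(\ov{e},\ov{e})$ in $\bigoplus_{l=1}^{2}\Ext^1_S(\cH,\cE_1)\otimes(\Ann(U_k)^{\vee}/T^{-})$ vanishes. This image is $\ov{e}$ composed with the projection $\Ann(U_k)^{\vee}\to\Ann(U_k)^{\vee}/T^{-}$, in each summand. Since $\ov{e}$ corresponds to the injective map $\Ann(U_k)\hra\Ext^1_S(\cH,\cE_1)$, this composite is nonzero whenever $T^{-}\ne\Ann(U_k)^{\vee}$. That contradiction forces $T^{-}=\Ann(U_k)^{\vee}$.

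The expected obstacle is making the comparison diagram precise, and checking that a partial splitting of the restricted extension does force a splitting of the quotient extension. Both amount to the linear-algebra and Hartogs arguments already used for Proposition~\ref{prp:nonsollevo}.
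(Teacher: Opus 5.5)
Your proposal is correct and is essentially the paper's proof. Like the paper, you identify $\cB(\cH,U_k)^{-}$ with $\cC(\cH,U_k)^{-}$ via Proposition~\ref{prp:sole}, compare it on $X(S)$ with $\tau^{*}$ of the quotient of $\cE_1\boxtimes\cF\oplus\cF\boxtimes\cE_1$ by a copy of $\cE_1\boxtimes\cE_1\otimes\Ann(U_k)^{\vee}$, and conclude as in Proposition~\ref{prp:nonsollevo} by pushing forward to $S^2$ and applying Hartogs; you in fact supply the details the paper leaves to the reader.

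One convention slip: the paper quotients by the diagonal copy, since that is the copy matching the $\cE_1[2]^{+}$ removed in Definition~\ref{dfn:ciaccau}. With your anti-diagonal copy, the natural map from $\rho^{*}\cC(\cH,U_k)^{-}$ does not descend directly. It does after composing with the automorphism $(x,y)\mapsto(x,-y)$, which only changes $(\ov{e},\ov{e})$ into $(\ov{e},-\ov{e})$ and does not affect the nonvanishing.
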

\begin{proof}
The proof is similar to that of Proposition~\ref{prp:nonsollevo}. Let $\cF$ be the vector bundle appearing in~\eqref{giovanni} (see definition~\ref{dfn:ciaccau}).  Let $\cD\subset \cE_1\boxtimes\cF\oplus \cF\boxtimes\cE_1$ be the image of the \lq\lq diagonal\rq\rq\ embedding 
$\cE_1\boxtimes\cE_1\otimes \Ann(U_k)^{\vee}\hra \cE_1\boxtimes\cF\oplus \cF\boxtimes\cE_1$ induced by the embedding 
$ \cE_1\otimes  \Ann(U_k)^{\vee}\lra \cF$. 
By Proposition~\ref{prp:sole}  there exists a commutative diagram 
 of sheaves on $X(S)$:
\begin{equation*}
\xymatrix{ 
   \rho^{*}\cE_1[2]^{-}\otimes \Ann(U_k)^{\vee} \ar[d]\ar[r] & \rho^{*}\cB(\cH,U_k)^{-} \ar[d]^{f}\ar[r] & 
\rho^{*}\cG(\cE_1,\cH)\ar[d]  \\
 \tau^{*}(\cE_1\boxtimes\cE_1)\otimes \Ann(U_k)^{\vee}\ar[r] & \tau^{*}((\cE_1\boxtimes\cF\oplus \cF
 \boxtimes\cE_1)/\cD)\ar[r] &   \tau^{*}(\cE_1\boxtimes\cH\oplus \cH
 \boxtimes\cE_1),
 }
\end{equation*} 
One finishes the proof arguing as in the proof  of Proposition~\ref{prp:nonsollevo}. 
\end{proof}
Recall that $P\coloneq\bm{\mu}(h_S)$.
\begin{prp}\label{prp:destabdicalbi}
Assume Hypothesis-Definition~\ref{hyp-dfn:esseaemme}, and  
let $\cH$, $U_k$  be as in Definition~\ref{dfn:fasciobical}. 
 If $\cS\subset\cB(\cH,U_k)$ is a sheaf with $0<r(\cS)<r(\cB(\cH,U_k))$ such that 
$\mu_P^{HK}(\cS)=\mu_P^{HK}(\cB(\cH,U_k))$ then one of the following holds:
\begin{enumerate}
\item[(a)]
There exists a non zero subspace $ T^{-}\subset \Ann(U_k)^{\vee}$ such that $\cS\subset \cE_1[2]^{-}\otimes T^{-}$ and 
$(\cE_1[2]^{-}\otimes T^{-})/\cS$ is torsion  with support of codimension at least $2$ away from $\Delta$. 
\item[(b)]
$\cS\subset \cB(\cH,U_k)^{-}$ and the  quotient $\cB(\cH,U_k)^{-}/\cS$ is torsion with support of codimension at least $2$ away from 
$\Delta$. 
\item[(c)]
There exists a  non zero subspace $T^{+}\subsetneq U_k$ such that $\cS\subset \lambda^{-1}(\cE_1[2]^{+}\otimes T^{+})$ 
 and  $ \lambda^{-1}(\cE_1[2]^{+}\otimes T^{+})/\cS$  is torsion  with support of codimension at least $2$ away from $\Delta$. 
\end{enumerate}
\end{prp}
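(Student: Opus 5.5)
Equality in the chain of inequalities~\eqref{catenadiseg} from the proof of Proposition~\ref{prp:calbisemist} is the starting point. Let $0=\cS_0\subset\cS_1\subset\cS_2\subset\cS_3=\cS$ be the filtration induced by $\cB_1=\cE_1[2]^{-}\otimes\Ann(U_k)^{\vee}\subset\cB_2=\cB(\cH,U_k)^{-}\subset\cB_3=\cB(\cH,U_k)$. Let $\cA_i=\cS_i/\cS_{i-1}$ be the graded pieces, included as in~\eqref{incquozuno}. If $\mu^{HK}_P(\cS)=\mu^{HK}_P(\cB)$, then every $\cA_i$ of positive rank satisfies $\mu^{HK}_P(\cA_i)=(D\cdot h_S)/2a$.

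The first step is to convert these slope equalities into statements about the $\cA_i$ using $P$ slope stability of the three pieces. These are $\cE_1[2]^{\pm}$ (Remark~\ref{rmk:rigidistab}) and $\cG(\cE_1,\cH)$ (Proposition~\ref{prp:gistabperbig}). For $\cA_2\subset\cG(\cE_1,\cH)$, equal slope forces $r(\cA_2)\in\{0,r(\cG(\cE_1,\cH))\}$. For $\cA_1\subset\cE_1[2]^{-}\otimes\Ann(U_k)^{\vee}$ and $\cA_3\subset\cE_1[2]^{+}\otimes U_k$, which are polystable, the standard argument gives that $\cA_1$ (saturated) equals $\cE_1[2]^{-}\otimes T^{-}$ and $\cA_3$ equals $\cE_1[2]^{+}\otimes T^{+}$ for subspaces $T^{\pm}$.

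This must hold up to torsion quotients, and the hard part is the precise sense. Since $P=\bm\mu(h_S)$ is only big and nef, $\mu_P$ does not see the divisor $\Delta$ (as $\Delta\cdot P^3=0$). So the conclusion is only that the quotient $(\cE_1[2]^{\pm}\otimes T^{\pm})/\cA_i$, respectively $\cG(\cE_1,\cH)/\cA_2$, is torsion supported in codimension at least $2$ away from $\Delta$. I would justify this by pulling back to $X(S)$ and pushing forward to $S^2$ via $\tau$, as in the proofs of Lemma~\ref{lmm:americanpie} and Proposition~\ref{prp:gistabperbig}. There $\rho^{*}P=\tau^{*}h_{S^2}$ is ample away from the exceptional divisor, and stability with respect to the ample class $h_{S^2}$ detects divisorial torsion off the diagonal. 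This is the step I expect to be the main obstacle. It needs a slope-stability statement for $\cE_1\boxtimes\cE_1$ and $\cE_1\boxtimes\cH$ (Proposition~\ref{prp:quadstab}) that controls saturations, while the behaviour along $\Delta$ is deliberately left uncontrolled.

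Then I would do case analysis on $T^{+}$, and use the non-lifting results to rule out configurations. If $T^{+}=0$, then $\cS\subset\cB^{-}$. In that case either $\cA_2=0$, which gives (a) with $T^{-}\ne0$ since $r(\cS)>0$, or $\cA_2$ has full rank. In the latter situation Proposition~\ref{prp:garfagnana} forces $T^{-}=\Ann(U_k)^{\vee}$, giving (b); the degenerate situation $\cA_2=0=\cA_1$ is excluded because $r(\cS)>0$. If $T^{+}\ne0$, then $\lambda(\cS)$ is a subsheaf of $\cE_1[2]^{+}\otimes T^{+}$ with small-codimension torsion quotient. Composing with $\lambda$ gives $\lambda^{+}$-type data in $\cB(\cH,U_k)^{+}$, and the image of $\cS$ in $\cB(\cH,U_k)^{+}$ lifts its image in $\cE_1[2]^{+}\otimes U_k$. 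This contradicts Proposition~\ref{prp:nonsollevo} unless the image meets $\cG(\cE_1,\cH)$ in full rank, i.e.\ $\cA_2$ has full rank. Once $\cA_2$ has full rank, Proposition~\ref{prp:garfagnana} applies to $\cS_2$ and gives $\cA_1$ of full rank. Then $\cS$ contains $\cB^{-}$ up to small torsion, and $\cS\subset\lambda^{-1}(\cE_1[2]^{+}\otimes T^{+})$ with torsion quotient of the required type. Finally $T^{+}\ne U_k$, since otherwise $r(\cS)=r(\cB)$; this gives (c).
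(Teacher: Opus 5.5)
Your proposal is correct and follows the paper's proof. The paper uses the same induced three-step filtration and the equality in~\eqref{catenadiseg}, with polystability of the graded pieces giving $T^{\pm}$ and the dichotomy for $\cA_2$. It then applies Proposition~\ref{prp:nonsollevo} to exclude $\cA_2=0$ when $T^{+}\neq 0$, and Proposition~\ref{prp:garfagnana} to force $T^{-}=\Ann(U_k)^{\vee}$.

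The step you flag as the main obstacle is elementary and needs no stability on $S^2$ or Proposition~\ref{prp:quadstab}. The difference of first Chern classes between the ambient piece and a full-rank subsheaf of equal $P$-slope is an effective divisor $D'$ with $P^3\cdot D'=0$. The paper notes that $P^3\cdot D\ge 0$ for every nonzero effective divisor $D$ on $S^{[2]}$, with equality only if $D$ is supported on $\Delta$, because $P$ is pulled back from an ample class on $S^{(2)}$ and the Hilbert--Chow map contracts only $\Delta$. Hence $D'$ is supported on $\Delta$.
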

\begin{proof}
We adopt the notation introduced in the proof of Proposition~\ref{prp:calbisemist}. 
We claim that the following hold:
\begin{enumerate}
\item
There exists a subspace $T^{-}\subset \Ann(U_k)^{\vee}$ such that 
$\cS_1\subset \cE_1[2]^{-}\otimes T^{-}$ and the quotient
$(\cE_1[2]^{-}\otimes T^{-})/\cS_1$ is torsion with support of codimension at least $2$ away from $\Delta$. 
\item
If $\cS_2\not=0$ then $\cG(\cE_1,\cH)/\cS_2$ is torsion with support of codimension at least $2$ away from $\Delta$.
\item
There exists a subspace $T^{+}\subset U_k$ such that 
$\cS_3\subset \cE_1[2]^{+}\otimes T^{+}$ and 
$(\cE_1[2]^{+}\otimes T^{+})/\cS_3$ is torsion with support of codimension at least $2$ away from $\Delta$. 
\end{enumerate}
In fact by hypothesis the inequality in~\eqref{catenadiseg} is an equality. 
It follows that for each $i\in\{1,2,3\}$ such that $r(\cS_i)>0$ (i.e.~$\cS_i\not=0$ because $\cS_i$ is a subsheaf of a locally-free sheaf) we have 
$\mu_P^{HK}(\cS_i)=\mu_P^{HK}(\cB_i/\cB_{i-1})$. 
By $P$  HK slope polystability of the sheaves $\cB_i/\cB_{i-1}$   
(recall the inclusions in~\eqref{incquozuno}) one gets Items~(1), (2), (3) above except for the statement on the support of the quotient sheaf. The latter holds because $P^3\cdot D\ge 0$ for every non zero effective divisor $D$ on $S^{[2]}$, with  equality only if $D=\Delta$. 

What is left to prove is the following: if $i\in\{2,3\}$ and $\cS_i\not=0$ then  the cokernel of the inclusion 
$\cS_{i-1}\hra (\cB_i/\cB_{i-1})$  is torsion with support of codimension at least $2$ away from $\Delta$. Suppose that $\cS_3\not=0$. 
If $\cS_2=0$ then the inclusion 
 $\iota\colon\cS_2\hra \cE_1[2]^{+}\otimes U_k$  lifts to a homomorphism $\cS_2\lra \cB(\cH,U_k)^{+}$. This contradicts
 Proposition~\ref{prp:nonsollevo}. Thus $\cS_2\not=0$ and hence $\cG(\cE_1,\cH)/\cS_2$ is torsion with support of codimension at least $2$ away from $\Delta$ by Item~(2) above. If $\cS_2\not=0$ one gets that $\cS_1\not=0$  and  $T^{-}= \Ann(U_k)^{\vee}$ by
  Proposition~\ref{prp:garfagnana}.
\end{proof}
\begin{proof}[Proof of Proposition~\ref{prp:stabcalbi}]
First note that since the BBF squares of $P=\bm{\mu}(h_S)$ and $h_{S^{[2]}}$ are positive, $P$ ($h_{S^{[2]}}$) slope (semi)semistability is the same as 
$P$ ($h_{S^{[2]}}$) HK slope (semi)stability.
Suppose that $\cB=\cB(\cH,U_k)$ is not $h_{S^{[2]}}$ HK slope stable.  Since $\cB$ is modular (see Claim~\ref{clm:stessofalsomukai}),  
$P$ slope semistable
(see Proposition~\ref{prp:calbisemist}), and $ h_{S^{[2]}}$  is ${\mathsf a}(\ww(D,a))$-suitable for $\bm{\mu}(h_S)$, it follows from Proposition~\ref{prp:paragonestab} that there exists a subsheaf  $\cS\subset \cB$, with $0<r(\cS)<r(\cB)$, such that 
\begin{equation}\label{uguaglianza}
\mu_P(\cS)=\mu_P(\cB),\qquad \mu_{h_{S^{[2]}}}(\cS)\ge \mu_{h_{S^{[2]}}}P(\cB).
\end{equation}
By the equality in~\eqref{uguaglianza} one of Items~(a), (b), (c) of Proposition~\ref{prp:destabdicalbi} holds.
One checks that  in each case the inequality in~\eqref{uguaglianza} does not hold. If Item~(a) or~(b) holds, this follows at once from the equalities in~\eqref{piumenopend} and~\eqref{otamendi}. Lastly suppose that  Item~(c) of Proposition~\ref{prp:destabdicalbi} holds. Then we have
\begin{equation}
c_1(\cS)=c_1(\cE_1[2]^{-})\cdot k+c_1(\cG(\cE_1,\cH))+c_1(\cE_1[2]^{+})\cdot \dim T^{+}-m\delta,
\end{equation}
for some $m\ge 0$. Computing one gets that
\begin{equation}\label{caciocavallo}
\frac{c_1(\cS)}{r(\cS)}=\frac{c_1(\cB)}{r(\cB)}-\left(\frac{a(k-\dim T^{+})+m}{r(\cS)}\right)\delta.
\end{equation}
We have $\dim\Ann(U_k)=\dim U_k=k$ (see Definition~\ref{dfn:fasciobical}) and thus $\dim T^{+}<k$. Hence the equality in~\eqref{caciocavallo} contradicts the inequality in~\eqref{uguaglianza}.
\end{proof}
\subsubsection{Stability of certain extensions, II}
The main result of the present subsubsection is the following.
\begin{prp}\label{prp:calbisingstab}
Assume Hypothesis-Definition~\ref{hyp-dfn:esseaemme}. 
Let  $ h_{S^{[2]}}$ be a polarization of $S^{[2]}$ which is ${\mathsf a}(\ww(D,a))$-suitable for $\bm{\mu}(h_S)$. 
Let $p\in S$ and $W_a\subset \cE_1(p)^{\vee}$ be an $a$-dimensional  subspace. 
The sheaf $\cB(p,W_a)$ of Definition~\ref{dfn:replacedia} is $ h_{S^{[2]}}$ slope stable. 
\end{prp}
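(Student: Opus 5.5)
The proof will mirror that of Proposition~\ref{prp:stabcalbi}, with the three-step filtration of $\cB(\cH,U_k)$ replaced by a two-step one. Set $\cB\coloneq\cB(p,W_a)$ and $P\coloneq\bm{\mu}(h_S)$, and consider the filtration $0\subset\cB_1\subset\cB$ with $\cB_1=\cE_1[2]^{-}\otimes(\Ann W_a)^{\vee}$ and $\cB/\cB_1\cong\cB(p,W_a)^{+}$. The latter is in turn a subsheaf of $\cE_1[2]^{+}\otimes W_a$ via $\theta$ in~\eqref{bimenosing}, and the quotient $\cG(\cE_1,\CC_p)$ is supported on the surface $S_p$.

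\emph{Semistability.} Let $\cS\subset\cB$ with $0<r(\cS)<r(\cB)$. Set $\cS_1=\cS\cap\cB_1$ and $\cA_2=\lambda(\cS)$, and view $\cA_2\subset\cE_1[2]^{+}\otimes W_a$ via $\theta$. By Remark~\ref{rmk:rigidistab}, $\cE_1[2]^{\pm}$ is $P$ slope stable, and by~\eqref{piumenopend} both have HK slope $(D\cdot h_S)/2a$. By Claim~\ref{clm:falsodimitri}, $\cB$ has the same slope and is modular. The averaging argument of~\eqref{catenadiseg} then gives $\mu^{HK}_P(\cS)\le\mu^{HK}_P(\cB)$, so $\cB$ is $P$ semistable.

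\emph{Equality case.} If $\mu_P(\cS)=\mu_P(\cB)$, polystability gives subspaces $T^{-}\subset(\Ann W_a)^{\vee}$ and $T^{+}\subset W_a$ such that $\cS_1\subset\cE_1[2]^{-}\otimes T^{-}$ and $\cA_2\subset\cE_1[2]^{+}\otimes T^{+}$, with cokernels torsion and supported in codimension $\ge2$ away from $\Delta$. This uses $P^3\cdot D>0$ for every effective $D\ne\Delta$. Since $S_p$ has codimension $2$, the passage through $\cB(p,W_a)^{+}$ does not change this conclusion.

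\emph{Two non-lifting statements.} These replace Propositions~\ref{prp:nonsollevo} and~\ref{prp:garfagnana}.
\begin{enumerate}
\item[(i)] If $T^{+}\ne0$ and $\cS_1=0$, the inclusion $\cA_2\hra\cB(p,W_a)^{+}$ would lift to $\cB$. This contradicts the fact that the extension class of~\eqref{bicalsing} is the identity of $\Ann W_a$. Concretely, restrict the class to $\cA_2$ and show that the restriction map $\Ext^1(\cB(p,W_a)^{+},\cE_1[2]^{-})\to\Ext^1(\cA_2,\cE_1[2]^{-})$ is injective. Via Proposition~\ref{prp:alglin}, this injectivity is computed by BKR on $S^2$ after a reflexive-hull/codimension-$2$ extension argument as in Proposition~\ref{prp:nonsollevo}.
\item[(ii)] If $\cS_1\ne0$ and $\cA_2\ne0$, then $T^{-}=(\Ann W_a)^{\vee}$ and $T^{+}=W_a$.
\end{enumerate}

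\emph{Conclusion.} The surviving cases are (a) $\cS\subset\cE_1[2]^{-}\otimes T^{-}$ up to codimension $2$, or (c) $\cS$ equal to $\lambda^{-1}$ of a sub of $\cE_1[2]^{+}\otimes T^{+}$ with $T^{+}\subsetneq W_a$, up to codimension $2$ and multiples of $\Delta$. Compute $c_1(\cS)/r(\cS)$ from~\eqref{piumenopend} together with $c_1(\cG(\cE_1,\CC_p))=0$, which holds because its support has codimension $2$. In case (a) the slope is shifted by $-\delta/4a$; in case (c) it is shifted by a negative multiple of $\delta$, as in~\eqref{caciocavallo}. Since $\delta$ pairs negatively with the ample class $h_{S^{[2]}}$, we get $\mu_{h_{S^{[2]}}}(\cS)<\mu_{h_{S^{[2]}}}(\cB)$ in both cases. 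Proposition~\ref{prp:paragonestab} then gives $h_{S^{[2]}}$ stability, since $\cB$ is modular and $P$ semistable.

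The main obstacle is step (i)/(ii), the non-lifting: $\cB(p,W_a)^{+}$ is not an extension of stable pieces but a kernel, and it is not a BKR image of a simple object on $S^2$. Here I would pull back to $X(S)$ and compare with $\tau^{*}$ of the sequence~\eqref{nelduale} tensored with $\cE_1$, using Proposition~\ref{prp:nicolciacco} to identify $\cB(p,W_a)^{+}$ with $\cA(\cE_2)^{+}$.
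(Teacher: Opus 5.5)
Your outline agrees with the paper on three of its four parts: the two-step filtration, $P$-semistability by averaging, the description of the equality case, and the final slope comparison via Proposition~\ref{prp:paragonestab}. The gap is the non-lifting step, which is the heart of the proof.

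\textbf{The case split is wrong.} Statement (ii) is false. Take $0\neq T^{+}\subsetneq W_a$ and $\cS=(\theta\circ\lambda)^{-1}(\cE_1[2]^{+}\otimes T^{+})$. Then $\cS_1=\cE_1[2]^{-}\otimes(\Ann W_a)^{\vee}\neq0$, $\cA_2\neq0$, and $\cS$ has the same $P$-slope as $\cB$, yet $T^{+}\neq W_a$. This is exactly your own surviving case (c), so (ii) contradicts it. What must actually be proved (Proposition~\ref{prp:qualidestab}) is:
\[
T^{+}\neq0\;\Longrightarrow\;T^{-}=(\Ann W_a)^{\vee}.
\]
Your (i) treats only $T^{-}=0$. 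The intermediate case $0\neq T^{-}\subsetneq(\Ann W_a)^{\vee}$ is not covered, and (ii) comes with no argument at all.

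\textbf{The missing idea is local freeness.} Since $\Ext^1(\cE_1[2]^{+},\cE_1[2]^{-})=0$, the class of~\eqref{bicalsing} comes entirely from $\Ext^2(\cG(\cE_1,\CC_p),\cE_1[2]^{-})$. So it is concentrated along the surface $S_p$, which is exactly the codimension-$2$ locus where $\cA_2$ differs from $\cE_1[2]^{+}\otimes T^{+}$. A reflexive-hull/Hartogs argument as in Proposition~\ref{prp:nonsollevo} therefore works only if the sheaf receiving the lift is locally free along $S_p$. There this was automatic; here it is not.

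The paper supplies this through Proposition~\ref{prp:estloclib}. For every non zero quotient $U$ of $(\Ann W_a)^{\vee}$, the push-out $\cF(p,W_a)_U$ is locally free. This is proved by showing that the local image $\Psi(v)$ of the extension class is an injection of vector bundles on $S_p$, including along $R_p$. The argument then runs as follows.
\begin{enumerate}
\item If $T^{-}\neq(\Ann W_a)^{\vee}$, set $U=(\Ann W_a)^{\vee}/T^{-}$. Then $\cA_2\hra\cB(p,W_a)^{+}$ lifts to $\cF(p,W_a)_U$.
\item By local freeness and Hartogs, the lift extends, off $\Delta$, to $\cE_1[2]^{+}\otimes T^{+}$.
\item Hence $\Phi^{+}_{W_a}(\cE_1[2]^{+}\otimes T^{+})=0$ off $\Delta$. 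This contradicts Proposition~\ref{prp:pioggia}, because $S_p\not\subset\Delta$.
\end{enumerate}

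\textbf{Why your sketch does not replace this.} Your proposed detour through $\cA(\cE_2)^{+}$ and $S^2$ does not produce the needed input. The sheaf $\cE_1\boxtimes\cE_2\oplus\cE_2\boxtimes\cE_1$ is not locally free along $S\times\{p\}\cup\{p\}\times S$. You also give no locally free model on $S^2$ of $\cB(p,W_a)$ or of its push-outs. Your injectivity claim for $\Ext^1(\cB(p,W_a)^{+},\cE_1[2]^{-})\to\Ext^1(\cA_2,\cE_1[2]^{-})$ is true, but proving it requires this same analysis along $S_p$.

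\textbf{A sign slip.} $\delta$ pairs \emph{positively} with ample classes; for example $q_{S^{[2]}}(\bm{\mu}(h_S)-\epsilon\delta,\delta)=2\epsilon$. That positivity is what makes a shift of $c_1/r$ by a negative multiple of $\delta$ lower the $h_{S^{[2]}}$-slope. With the sign you stated, the inequality would go the wrong way.
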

Before proving  the above result we go through some preliminary results. 
\begin{prp}\label{prp:pisemistabile}
Assume Hypothesis-Definition~\ref{hyp-dfn:esseaemme}, and  let $P=\bm{\mu}(h_S)$. Let $p\in S$ and $W_a\subset \cE_1(p)^{\vee}$ be an $a$-dimensional  subspace. The sheaf $\cB(p,W_a)$  is  $P$ slope (and HK slope) semistable. 
\end{prp}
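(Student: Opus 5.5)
The plan mirrors the proof of Proposition~\ref{prp:calbisemist}. Since $q_X(P)>0$, $P$ slope (semi)stability coincides with $P$ HK slope (semi)stability (Remark~\ref{rmk:hkstabpos}), so I will work with HK slopes throughout. I will show that $\cB\coloneq\cB(p,W_a)$ has a filtration $0=\cB_0\subset\cB_1\subset\cB_2=\cB$ by $P$ semistable pieces, all of the same $P$ HK slope $(D\cdot h_S)/2a$. The filtration comes from~\eqref{bicalsing}: take $\cB_1\coloneq\cE_1[2]^{-}\otimes(\Ann W_a)^{\vee}$ and $\cB/\cB_1\cong\cB(p,W_a)^{+}$. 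The first piece is $P$ polystable of the right slope by Remark~\ref{rmk:rigidistab} and~\eqref{piumenopend}. Given any subsheaf $\cS\subset\cB$, I set $\cS_1\coloneq\cS\cap\cB_1$ and $\cA_2\coloneq\lambda(\cS)\subset\cB(p,W_a)^{+}$. Then $\mu^{HK}_P(\cS)$ is the rank-weighted average of $\mu^{HK}_P(\cS_1)$ and $\mu^{HK}_P(\cA_2)$, exactly as in~\eqref{catenadiseg}. Semistability of $\cB$ therefore reduces to the $P$ semistability of $\cB(p,W_a)^{+}$ together with the slope equality for it.

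For the slope equality I use~\eqref{bimenosing}. The sheaf $\cG(\cE_1,\CC_p)\cong j_*(b^*\cE_1)$ is supported on the surface $S_p$, which has codimension $2$. Hence $c_1(\cB(p,W_a)^{+})=c_1(\cE_1[2]^{+}\otimes W_a)$ and the ranks agree, so $\mu^{HK}_P(\cB(p,W_a)^{+})=\mu^{HK}_P(\cE_1[2]^{+})$. This common value equals the slope of $\cB$, by~\eqref{piumenopend} with $P\cdot\delta=0$ in the HK pairing, since $q(\bm{\mu}(h_S),\delta)=0$.

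For semistability I use that $\theta$ in~\eqref{bimenosing} embeds $\cB(p,W_a)^{+}$ in $\cE_1[2]^{+}\otimes W_a$ with torsion cokernel. Any subsheaf $\cA\subset\cB(p,W_a)^{+}$ is then a subsheaf of the $P$ polystable bundle $\cE_1[2]^{+}\otimes W_a$, which has the same rank and slope. Hence $\mu^{HK}_P(\cA)\le\mu^{HK}_P(\cE_1[2]^{+})=\mu^{HK}_P(\cB(p,W_a)^{+})$. Combined with the averaging inequality this gives $\mu^{HK}_P(\cS)\le\mu^{HK}_P(\cB)$ for every $\cS$ with $0<r(\cS)<r(\cB)$.

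The only point needing care is that slope, and hence HK slope, does not see the codimension-$2$ sheaf $\cG(\cE_1,\CC_p)$. This holds because $c_1$ of a sheaf supported in codimension $2$ vanishes. I expect no serious obstacle here. The real work, controlling the subsheaves that achieve equality, belongs to the subsequent proof of Proposition~\ref{prp:calbisingstab}, where analogues of Propositions~\ref{prp:nonsollevo} and~\ref{prp:garfagnana} are needed.
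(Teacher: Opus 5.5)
Your proposal is correct and follows essentially the paper's proof: the same two-step filtration from \eqref{bicalsing}, and the same rank-weighted averaging of $P$ HK slopes as in \eqref{catenadiseg}. Your justification that $\cB(p,W_a)^{+}$ is $P$ semistable of the correct slope is slightly more explicit than the paper's, which simply calls it polystable via \eqref{bimenosing}; you use the embedding $\theta$ and the fact that its cokernel is supported in codimension $2$.
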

\begin{proof}
The proof is analogous to the proof of Proposition~\ref{prp:calbisemist}. 
Since the BBF square of $P$ is positive, $P$  slope (semi)semistability is the same as 
$P$ HK slope (semi)semistability. We prove that  $\cB(p,W_a)$ is $P$ HK slope (semi)semistable.
Let $\cS\subset \cB(p,W_a)$ be a non zero subsheaf (i.e.~with non zero rank) such that $\mu_P^{HK}(\cS)=
\mu_P^{HK}(\cB(p,W_a))$.
Let $\cS_1\subset\cS$ be the kernel of the restriction of $\lambda$ to $\cS$, and let $\cS_2=\cS$, $\cS_0=0$. 
  For $i\in\{1,2\}$ let $\cA_i\coloneq \cS_i/\cS_{i-1}$. We have 
\begin{equation}\label{migranti}
\cA_1\subset \cE_1[2]^{-}\otimes (\Ann W_a)^{\vee}, \quad \cA_2\subset   \cB(p,W_a)^{+}.
\end{equation}
The sheaves $\cE_1[2]^{-}\otimes (\Ann W_a)^{\vee}$ and $\cB(p,W_a)^{+}$ are $P$ HK slope polystable , see Remark~\ref{rmk:rigidistab} and, for $\cB(p,W_a)^{+}$, the exact sequence in~\eqref{bimenosing}. 
It follows that if $r(\cA_i)\not=0$ then 
\begin{equation*}
\mu^{HK}_P(\cA_i)\le \mu^{HK}_P(\cE_1[2]^{\pm})=(D\cdot h_S)/2a.
\end{equation*}
Setting $r(\cA_i)\mu_P(\cA_i)/r(\cS)=0$ if 
$r(\cA_i)=0$, we get that 
\begin{equation}\label{gazametro}
\mu^{HK}_P(\cS)=\sum_{i=1}^2\frac{r(\cA_i)}{r(\cS)}\mu^{HK}_P(\cA_i)\le 
\sum_{i=1}^2\frac{r(\cA_i)}{r(\cS)}\frac{D\cdot h_S}{2a}=\frac{D\cdot h_S}{2a}.
\end{equation}
Since $\mu^{HK}_P(\cB(p,W_a))=(D\cdot h_S)/2a$, we are done.
\end{proof}

Let $X$ be a smooth variety and let $j\colon Y\hra X$ be the inclusion of a smooth (pure) codimension $2$ subvariety. Let $\cU,\cV$ be  locally free sheaves on $X$. Let $\cI_Y\subset\cO_X$ be the ideal sheaf of $Y$. We have the map 
\begin{multline}\label{globloc}
\Ext^1_X(\cI_Y\otimes \cU,\cV)\lra H^0(X,\ul{Ext}^1(\cI_Y\otimes \cU,\cV))=H^0(X,\ul{Ext}^2(j_{*}\cO_Y\otimes \cU,\cV))= \\
=j_{*}\det\cN_{Y/X}\otimes\cU^{\vee}\otimes\cV=H^0(X,\ul{Hom}(\cU_{|Y},\cV_{|Y}\otimes j_{*}\det\cN_{Y/X})).
\end{multline}
The following result is an exercise left to the reader.
\begin{lmm}\label{lmm:cosicomesei}
Let 
\begin{equation}\label{nastassia}
0\lra \cV\lra \cF\lra \cI_Y\otimes \cU\lra 0
\end{equation}
be an exact sequence, and let $\varphi\in H^0(X,\ul{Hom}(\cU_{|Y},\cV_{|Y}\otimes j_{*}\det\cN_{Y/X}))$ be the image  of the extension class of~\eqref{nastassia} by
 the map in~\eqref{globloc}. If $\varphi$ is an injection of vector bundles then $\cF$ is locally free.
\end{lmm}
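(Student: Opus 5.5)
The question is local, since being locally free is a local property, so I will work in a neighbourhood of a point $y\in Y$. Away from $Y$ the sequence~\eqref{nastassia} is an extension of a locally free sheaf by a locally free sheaf, so $\cF$ is locally free there. Near $y$ I choose local coordinates in which $Y=\{x_1=x_2=0\}$. Shrinking the neighbourhood, I also trivialize $\cU\cong\cO^{u}$, $\cV\cong\cO^{v}$ and $\det\cN_{Y/X}$. Over such an affine or Stein neighbourhood $\Ext^1_X(\cI_Y\otimes\cU,\cV)=H^0(\ul{Ext}^1(\cI_Y\otimes\cU,\cV))$, because $\ul{Hom}(\cI_Y\otimes \cU,\cV)$ has no higher cohomology there. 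So the extension is determined locally by the section $\varphi$, and a local extension with the same $\varphi$ is isomorphic to $\cF$.

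Next I build an explicit local model. The Koszul resolution
\begin{equation*}
0\lra\cO\xrightarrow{(-x_2,x_1)}\cO^2\xrightarrow{(x_1,x_2)}\cI_Y\lra 0
\end{equation*}
gives $\ul{Ext}^1(\cI_Y,\cO)\cong\cO_Y$, and the isomorphism is compatible with the identification in~\eqref{globloc}. Tensoring with $\cU$, an extension of $\cI_Y\otimes\cU$ by $\cV$ corresponds to a $v\times u$ matrix $A$ of functions, modulo $(x_1,x_2)$, with $\varphi=A_{|Y}$. Concretely, $\cF$ is the cokernel of
\begin{equation*}
\cU\xrightarrow{\ (A,\,-x_2,\,x_1)\ }\cV\oplus\cU^{\oplus 2}.
\end{equation*}
Indeed, this cokernel contains $\cV$ and its quotient by $\cV$ is $\cU^{\oplus2}/\cU=\cI_Y\otimes\cU$, and the pushout along $A$ of the Koszul extension realizes the extension class $A$.

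The key step is to observe that if $\varphi=A_{|Y}$ is an injection of vector bundles, i.e.\ has rank $u$ at every point of $Y$, then the map $(A,-x_2,x_1)$ is injective on fibres at every point. On $Y$ this holds because $A(y)$ is injective, and off $Y$ because one of $x_1,x_2$ is nonzero. A map of locally free sheaves that is injective on all fibres has locally free cokernel, so $\cF$ is locally free of rank $v+u$.

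The only delicate point is checking that the identification $\ul{Ext}^1(\cI_Y\otimes\cU,\cV)\cong\ul{Hom}(\cU_{|Y},\cV_{|Y}\otimes\det\cN_{Y/X})$ used in~\eqref{globloc} matches $A\mapsto A_{|Y}$ in the Koszul model, up to the unit coming from the trivialization of $\det\cN_{Y/X}$. This is a standard computation with the Koszul complex. The conclusion does not depend on the choice of unit, since injectivity is unaffected by multiplying by one.
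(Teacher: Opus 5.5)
Your proof is correct and complete. The paper gives no argument of its own; it leaves the lemma as an exercise. Your local Koszul model, in which $\cF$ is the cokernel of the fibrewise injective map $(A,-x_2,x_1)\colon\cU\to\cV\oplus\cU^{\oplus 2}$, is the natural way to carry it out.

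The point you flag as delicate is harmless, for the reason you suggest. Every identification in~\eqref{globloc} is natural in the locally free sheaves $\cU,\cV$. So both that identification and the Koszul one have the form $\mathrm{id}_{\ul{Hom}(\cU,\cV)}\otimes\iota$, where $\iota$ is an isomorphism of invertible $\cO_Y$-modules. Hence locally they differ by a unit of $\cO_Y$, which does not affect fibrewise injectivity.
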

Recall the exact sequences (see Definitions~\ref{dfn:bipipiu} and~\ref{dfn:replacedia})
\begin{equation}\label{corederoma}
0\lra \cB(p,W_a)^{+} \overset{\theta}{\lra} \cE_1[2]^{+}\otimes W_a\xrightarrow{\Phi_{W_a}^{+}}  \cG(\cE_1,\CC_{p}) \lra 0,
\end{equation}
\begin{equation}\label{triathlon}
0\lra  \cE_1[2]^{-}\otimes (\Ann W_a)^{\vee} \lra \cB(p,W_a) \overset{\lambda}{\lra}  \cB(p,W_a)^{+}  \lra 0.
\end{equation}
Apply the functor $\ul{Hom}(\cdot,\cE_1[2]^{-})$ to the exact sequence in~\eqref{corederoma} to get the isomorphism
\begin{equation}
\ul{Ext}^1_{S^{[2]}}(\cB^{+}(p,W_a),\cE_1[2]^{-})\overset{\sim}{\lra} \ul{Ext}^2_{S^{[2]}}(\cG(\cE_1,\CC_{p}),\cE_1[2]^{-}).
\end{equation}
Recall that $\cG(\cE_1,\CC_{p})\cong j_{*}(b^{*}\cE_1)$, where $j\colon S_p\hra S^{[2]}$ is the inclusion map, see~\eqref{supportato}. Since $S_p$ is smooth of codimension $2$ in $S^{[2]}$, and $b^{*}\cE_1$ is locally free, we have an isomorphism
\begin{equation}
 \ul{Ext}^2_{S^{[2]}}(\cG(\cE_1,\CC_{p}),\cE_1[2]^{-})\cong j_{*}(b^{*}\cE_1^{\vee}\otimes\omega_{S_p}\otimes j^{*}\cE_1[2]^{-}).
\end{equation}
Hence the local-to-global spectral sequence abutting to $\Ext^{\bu}_{S^{[2]}}(\cB^{+}(p,W_a),\cE_1[2]^{-})$ gives the exact sequence
\begin{multline}\label{venditti}
0\lra \Ext^1_{S^{[2]}}(\cB^{+}(p,W_a),\cE_1[2]^{-})\overset{\epsilon}{\lra} H^0(S_p, b^{*}\cE_1^{\vee}\otimes\omega_{S_p}\otimes j^{*}\cE_1[2]^{-})
\lra \\
\lra H^2(S^{[2]}, \ul{Hom}(\cE_1[2]^{+},\cE_1[2]^{-}))\otimes W_a^{\vee}.
\end{multline}
(Note that $\ul{Hom}(\cB^{+}(p,W_a),\cE_1[2]^{-})=\ul{Hom}(\cE_1[2]^{+},\cE_1[2]^{-})$.)
\begin{rmk}
We have the isomorphism
\begin{multline*}
\Ext^2_{S^{[2]}}(\cG(\cE_1,\CC_{p}),\cE_1[2]^{-})\xrightarrow{\sim}H^0(S^{[2]},\ul{Ext}^2_{S^{[2]}}(\cG(\cE_1,\CC_{p}),\cE_1[2]^{-}))=\\
=H^0(S_p, b^{*}\cE_1^{\vee}\otimes\omega_{S_p}\otimes j^{*}\cE_1[2]^{-}).
\end{multline*}
because $\ul{Ext}^p_{S^{[2]}}(\cG(\cE_1,\CC_{p}),\cE_1[2]^{-})=0$ for $p\in\{0,1\}$ ($\cG(\cE_1,\CC_{p})$ is supported on $S_p$ which has codimension $2$ in $S^{[2]}$). Via the above identification, the exact sequence in~\eqref{venditti} is identified with the exact sequence in~\eqref{rosagovona} with $p=1$.
\end{rmk}
Similarly to the exact sequence in~\eqref{illusione} we have the exact sequence
\begin{equation}\label{tarasbulba}
0\lra j^{*}\cE_1[2]^{-}\lra \cE_1(p)\otimes b^{*}\cE_1\overset{h}{\lra} \Sym^2\cE_1(p)\otimes i_{p,*}\cO_{R_p}\lra 0,
\end{equation}
where $h$ is defined via the projection of $\cE_1(p)\otimes \cE_1(p)=\Sym^2\cE_1(p)\oplus\bigwedge^2\cE_1(p)$ onto the first direct summand.  
We have the isomorphism
\begin{equation}
\Psi\colon\cE_1(p)\overset{\sim}{\lra} H^0(S_p, b^{*}\cE_1^{\vee}\otimes\omega_{S_p}\otimes j^{*}\cE_1[2]^{-})
\end{equation}
given by the inverse of the composition
\begin{multline*}
H^0(S_p, b^{*}\cE_1^{\vee}\otimes\omega_{S_p}\otimes j^{*}\cE_1[2]^{-})
\xrightarrow{\sim} H^0(S_p, \cE_1(p)\otimes b^{*}(\cE_1^{\vee}\otimes \cE_1)\otimes\cO_{S_p}(R_p)) \xrightarrow{\sim}  \\
\xrightarrow{\sim} \cE_1(p)\otimes H^0(S, \cE_1^{\vee}\otimes \cE_1)=\cE_1(p)\otimes \CC\Id_{\cE_1}=\cE_1(p),
\end{multline*}
where the first isomorphism is obtained from the long exact sequence associated to~\eqref{tarasbulba} and the isomorphism
 $\omega_{S_p}\cong\cO_{S_p}(R_p)$.
\begin{prp}\label{prp:estloclib}
Let $p\in S$ and $W_a\subset \cE_1(p)^{\vee}$ be an $a$-dimensional  subspace. 
Let $q\colon (\Ann W_a)^{\vee}\twoheadrightarrow U$ be a non zero quotient of $(\Ann W_a)^{\vee}$, and let $\cF(p,W_a)_U$ be the sheaf fitting into the exact sequence
\begin{equation}\label{armenia}
0\lra  \cE_1[2]^{-}\otimes U \lra \cF(p,W_a)_U \overset{\lambda_U}{\lra}  \cB(p,W_a)^{+}  \lra 0
\end{equation}
with extension class  in 
(see Proposition~\ref{prp:alglin}) 
\begin{equation}
\Ext^1_{S^{[2]}}(\cB(p,W_a)^{+} ,\cE_1[2]^{-})\otimes U=(\Ann W_a)\otimes U^{\vee}
\end{equation}
given by the transpose $q^t \colon U^{\vee}\hra \Ann W_a$. Then $\cF(p,W_a)_U$ is locally free. 
\end{prp}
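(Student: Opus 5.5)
The plan is to reduce local freeness of $\cF(p,W_a)_U$ to Lemma~\ref{lmm:cosicomesei}, applied with $X=S^{[2]}$ and $Y=S_p$. Two steps are needed: a local description of $\cB(p,W_a)^{+}$, and a pointwise check that the local extension class is an injective bundle map.

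\emph{Local structure of $\cB(p,W_a)^{+}$.} Away from $S_p$, the map $\theta$ in~\eqref{corederoma} is an isomorphism, so $\cB(p,W_a)^{+}$ is locally free there. Hence $\cF(p,W_a)_U$ is locally free there too, being an extension of locally free sheaves. Near a point $y\in S_p$ we use $\cG(\cE_1,\CC_p)\cong j_{*}(b^{*}\cE_1)$ from~\eqref{supportato}, and that $\Phi^{+}_{W_a}$ is surjective by Proposition~\ref{prp:pioggia}. After choosing local frames, the surjection of a free module of rank $4a^3$ onto $j_*\cO_{S_p}^{\oplus 2a}$ can be written as a projection onto $2a$ free summands followed by restriction to $S_p$. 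So locally
\[
\cB(p,W_a)^{+}\cong \cO^{\oplus(4a^3-2a)}\oplus \cI_{S_p}\otimes\cU',
\]
where $\cU'$ is a local frame for $b^*\cE_1$ extended off $S_p$. Locally $\Ext^1$ of the free summand into a locally free sheaf vanishes, so the extension~\eqref{armenia} locally splits off that summand. This leaves an extension of $\cI_{S_p}\otimes\cU'$ by $\cE_1[2]^{-}\otimes U$, which is exactly the situation of Lemma~\ref{lmm:cosicomesei}.

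\emph{Identifying the local class.} By the exact sequence~\eqref{venditti} and the Remark that follows it, the image of the extension class in $H^0(\ul{Ext}^1)$ is $\epsilon(q^t)$. This is a section of
\[
\ul{Hom}\bigl(b^*\cE_1,\ j^*\cE_1[2]^{-}\otimes\omega_{S_p}\bigr)\otimes U .
\]
The isomorphism $\Psi$ describes it explicitly. An element $e\in\Ann W_a\subset\cE_1(p)$ gives the section $s\mapsto e\otimes s$ of $\cE_1(p)\otimes b^*(\cE_1^\vee\otimes\cE_1)(R_p)$, and this section lands in $j^*\cE_1[2]^{-}(R_p)$ by~\eqref{tarasbulba}. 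Summing over a basis $u_i$ of $U$, the local class is
\[
s\mapsto \sum_i q^t(u_i^\vee)\otimes s\otimes u_i .
\]
Since $q^t$ is injective, the elements $e_i=q^t(u_i^\vee)$ are linearly independent in $\cE_1(p)$.

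\emph{Injectivity pointwise.} It remains to show that at every $y\in S_p$ the induced map on fibres, $b^*\cE_1(y)\to (j^*\cE_1[2]^{-}\otimes\omega_{S_p})(y)\otimes U$, is injective.
\begin{itemize}
\item For $y\notin R_p$ we have $j^*\cE_1[2]^{-}\cong\cE_1(p)\otimes b^*\cE_1$ there. The map is $s\mapsto\sum e_i\otimes s\otimes u_i$, which is clearly injective.
\item For $y\in R_p$ we must understand the fibre of $j^*\cE_1[2]^{-}(R_p)$ along $R_p$. As in~\eqref{yakushima} with the roles of $\Sym^2$ and $\bigwedge^2$ exchanged, tensoring~\eqref{tarasbulba} with $\cO_{R_p}$ gives
\[
0\lra\Sym^2\cE_1(p)\otimes\cO_{R_p}(1)\lra\cE_1[2]^{-}_{|R_p}\lra\bigwedge^2\cE_1(p)\otimes\cO_{R_p}\lra 0 .
\]
After the twist by $\cO(R_p)$, the section $e\otimes\Id$ should restrict on $R_p$ to the map $s\mapsto e\cdot s\in\Sym^2\cE_1(p)$, landing in the sub-bundle $\Sym^2\cE_1(p)\otimes\cO_{R_p}(1)\otimes\cO(R_p)_{|R_p}$. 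Multiplication by a nonzero $e$ is injective $\cE_1(p)\to\Sym^2\cE_1(p)$, which gives injectivity at $y$.
\end{itemize}

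The main obstacle is this last step: verifying carefully that, after dividing by the local equation of $R_p$, the leading term of $e\otimes\Id$ is the symmetric product $e\cdot s$ and not zero. I would check this in local coordinates on $X(S)$ near the exceptional divisor, using the $\Sigma_2$-anti-invariant sections $\sigma\boxtimes\tau-\tau\boxtimes\sigma$, which vanish to first order on the diagonal. With injectivity established at every point, Lemma~\ref{lmm:cosicomesei} gives that $\cF(p,W_a)_U$ is locally free near each point of $S_p$, and hence everywhere.
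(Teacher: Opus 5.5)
Your proposal is correct and follows the paper's proof essentially step for step. Both arguments use the local splitting $\cB(p,W_a)^{+}\cong \cI_{S_p}\otimes\cU\oplus\cV$, reduce to Lemma~\ref{lmm:cosicomesei}, identify the local class through $\epsilon$ and $\Psi$, and check injectivity off $R_p$ directly and on $R_p$ through the $\Sym^2\cE_1(p)\otimes\cO_{R_p}$ sub-bundle.

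The step you flag needs no coordinate computation on $X(S)$. The section $e\otimes s$ is a regular section of $\cE_1(p)\otimes b^{*}\cE_1$, which sits inside $j^{*}\cE_1[2]^{-}(R_p)$. By~\eqref{tarasbulba}, the image of this subsheaf in the restriction to $R_p$ is exactly the $\Sym^2$ sub-bundle, the map being $h$, so the restriction of $e\otimes s$ is $e\cdot s$. The paper phrases this same point through the symmetric/antisymmetric splitting in~\eqref{icm2026}.
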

\begin{proof}
 Since $\cE_1[2]^{-}$ is locally free, and 
 $\cB(p,W_a)^{+}$ is locally free away from $S_p$, $ \cF(p,W_a)_U$ is locally free away from $S_p$. We must show that 
 $ \cF(p,W_a)_U$ is locally free at each point $x\in S_p$. 
On a sufficiently small affine open subset  $A\subset S^{[2]}$ containing $x$ we have 
\begin{equation}\label{altamarea}
\cB(p,W_a)^{+}\cong \cI_{S_p}\otimes\cU\oplus \cV
\end{equation}
where 
$\cU,\cV$ are locally free sheaves on $A$, and  the restrictions of $\cU$ and $b^{*}\cE_1$ to $A\cap S_p$ are naturally identified.
Because of the isomorphism in~\eqref{altamarea} we may apply Lemma~\ref{lmm:cosicomesei}. Let 
\begin{equation*}
\varphi_U\in H^0(S_p, b^{*}\cE_1^{\vee}\otimes\omega_{S_p}\otimes j^{*}\cE_1[2]^{-}\otimes U)
\end{equation*}
be the image of the extension class of~\eqref{armenia} via the map $\epsilon$ in~\eqref{venditti}  (note that $\det\cN_{S_p/S^{[2]}}\cong \omega_{S_p}$). Then $\varphi_U$ gives a map 
$b^{*}\cE_1\lra j^{*}\cE_1[2]^{-}\otimes\omega_{S_p}\otimes U$: 
by  Lemma~\ref{lmm:cosicomesei} it suffices to prove that  it
is injective (as map of vector bundles) at every point of $A\cap S_p$. Since $U$ is any non zero quotient of $(\Ann W_a)^{\vee}$ we need to show that for any non zero $v\in\cE_1(p)$ the map  
\begin{equation}\label{palais}
\Psi(v)\colon b^{*}\cE_1\lra j^{*}\cE_1[2]^{-}\otimes\omega_{S_p}
\end{equation}
 is an injection of vector bundles.

First we prove that the map in~\eqref{palais} is an injection of vector bundles away from $R_p$. Over $S_p\setminus R_p$ we have 
$j^{*}\cE_1[2]^{-}\cong \cE_1(p)\otimes\cE_{1|(S_p\setminus R_p)}$, and the canonical bundle 
$\omega_{S_p}$ is trivial.  Hence the map in~\eqref{palais} is given by
\begin{equation}
\begin{matrix}
b^{*}\cE_{1|(S_p\setminus R_p)} & \lra & 
\cE_1(p)\otimes b^{*}\cE_{1|(S_p\setminus R_p)}  \\
s & \mapsto & v\otimes s
\end{matrix}
\end{equation}
This is an injection of vector bundles.

Lastly we prove that the map in~\eqref{palais} is an injection of vector bundles on $R_p$. Tensoring the exact sequence in~\eqref{tarasbulba} with 
$\omega_{S_p}\cong \cO_{S_p}(R_p)$ we get the exact sequence
\begin{equation*}
0\to j^{*}\cE_1[2]^{-}\otimes\cO_{S_p}(R_p)\to \cE_1(p)\otimes b^{*}\cE_1\otimes\cO_{S_p}(R_p)\to \Sym^2\cE_1(p)\otimes i_{p,*}\cO_{R_p}(-1)\to 0.
\end{equation*}
Restricting to $R_p$   one gets the exact sequence
\begin{equation*}
0\lra \Sym^2\cE_1(p)\otimes \cO_{R_p}\lra j^{*}\cE_1[2]^{-}\otimes\omega_{S_p}\otimes \cO_{R_p} \lra \bigwedge^2\cE_1(p)\otimes \cO_{R_p}(-1)\lra 0.
\end{equation*}
It follows that the restriction  $\Psi(v)_{\mid R_p}$  factors through a morphism $\ov{\Psi(v)}_{|R_p}$ to 
$\Sym^2\cE_1(p)\otimes \cO_{R_p}$. We claim that
\begin{equation}\label{pensione}
\begin{matrix}
b^{*}\cE_{1|R_p}\cong\cE_1(p)\otimes\cO_{R_p} & \xrightarrow{\ov{\Psi(v)}_{|R_p}} & \Sym^2\cE_1(p)\otimes \cO_{R_p} \\
s & \mapsto & v\cdot s
\end{matrix}
\end{equation}
Before giving the proof of this we note that it shows that the map in~\eqref{palais} is an injection of vector bundles on $R_p$ also at points of $R_p$. Let us prove that~\eqref{pensione} holds. 
Let $\wt{\Psi}(v)\colon b^{*}\cE_1\lra \cE_1(p)\otimes b^{*}\cE_1\otimes\cO_{S_p}(R_p)$ be the morphism obtained by composing $\Psi(v)$ with the inclusion $j^{*}\cE_1[2]^{-}\otimes\cO_{S_p}(R_p)\hra \cE_1(p)\otimes b^{*}\cE_1\otimes\cO_{S_p}(R_p)$. Let $s$ be a local section of $b^{*}\cE_1$, and let $z$ be a local generator of the ideal of $R_p$ in $S_p$. Then
\begin{equation}\label{icm2026}
\wt{\Psi}(v)(s)=v\otimes s=z(v\otimes s+s\otimes v)z^{-1}/2+z(v\otimes s-s\otimes v)z^{-1}/2.
\end{equation}
Viewing $j^{*}\cE_1[2]^{-}\otimes\cO_{S_p}(R_p)$ as a subsheaf of $\cE_1(p)\otimes b^{*}\cE_1\otimes\cO_{S_p}(R_p)$, we have the following:
 $\frac{1}{2}(v\otimes s-s\otimes v)z^{-1}$ is a local section of $j^{*}\cE_1[2]^{-}\otimes\cO_{S_p}(R_p)$, and since in~\eqref{icm2026} it is multiplied by $z$ 
\begin{equation*}
\wt{\Psi}(v)(s)_{|R_p}=\left(z(v\otimes s+s\otimes v)z^{-1}/2\right)_{\mid R_p}.
\end{equation*}
Now $z(v\otimes s+s\otimes v)z^{-1}/2$ is a non zero local section of $j^{*}\cE_1[2]^{-}\otimes\cO_{S_p}(R_p)$ along $R_p$, in fact its restriction to a local section of $R_p$  is equal to $v\cdot s$ in the notation of~\eqref{pensione}. 
\end{proof}
Letting $q=\Id_{(\Ann W_a)^{\vee}}$ one gets the following result.
\begin{crl}\label{crl:localmentelibero}
Let $p\in S$ and $W_a\subset \cE_1(p)^{\vee}$ be an $a$-dimensional  subspace. 
 Then $\cB(p,W_a)$ is locally free. 
\end{crl}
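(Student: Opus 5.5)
The plan is to obtain the corollary as the special case $U=(\Ann W_a)^{\vee}$, $q=\Id_{(\Ann W_a)^{\vee}}$ of Proposition~\ref{prp:estloclib}. This is a legitimate choice of quotient: $q$ is surjective, and $U\neq 0$ because $\dim\Ann W_a=2a-a=a>0$, since $W_a$ is an $a$-dimensional subspace of the $2a$-dimensional space $\cE_1(p)^{\vee}$.

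The only thing to verify is that, for this choice, the sheaf $\cF(p,W_a)_U$ of~\eqref{armenia} is isomorphic to $\cB(p,W_a)$ of Definition~\ref{dfn:replacedia}. Both sheaves are extensions of $\cB(p,W_a)^{+}$ by $\cE_1[2]^{-}\otimes(\Ann W_a)^{\vee}$, with extension classes in
$$\Ext^1_{S^{[2]}}(\cB(p,W_a)^{+},\cE_1[2]^{-})\otimes(\Ann W_a)^{\vee}=(\Ann W_a)\otimes(\Ann W_a)^{\vee},$$
using the identification of Proposition~\ref{prp:alglin}. For $\cB(p,W_a)$ the class is the identity of $\Ann W_a$ by definition. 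For $\cF(p,W_a)_U$ it is $q^t=\Id^t$, which is again the identity under the canonical identification $((\Ann W_a)^{\vee})^{\vee}=\Ann W_a$. Equal extension classes give isomorphic middle terms, so $\cB(p,W_a)\cong\cF(p,W_a)_U$.

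Proposition~\ref{prp:estloclib} then gives that $\cB(p,W_a)$ is locally free. All the substantive work, namely the local analysis along $S_p$ and $R_p$ via Lemma~\ref{lmm:cosicomesei} and the injectivity of $\Psi(v)$ for every $v\neq 0$, has already been carried out there. The only point needing care is the bookkeeping of the identification of extension classes through the same isomorphism of Proposition~\ref{prp:alglin} in both constructions; I do not expect any genuine obstacle.
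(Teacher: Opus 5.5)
Your proposal is correct and follows the paper exactly: the paper obtains the corollary by taking $q=\Id_{(\Ann W_a)^{\vee}}$ in Proposition~\ref{prp:estloclib}. Your added check that the resulting extension class is the identity in $(\Ann W_a)\otimes(\Ann W_a)^{\vee}$, so that $\cF(p,W_a)_U\cong\cB(p,W_a)$, is precisely the implicit step the paper relies on.
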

\begin{prp}\label{prp:qualidestab}
Let hypotheses be as in Proposition~\ref{prp:pisemistabile}. Let $\cS\subset \cB(p,W_a)$ be a non zero subsheaf  such that 
$\mu_P^{HK}(\cS)=\mu_P^{HK}(\cB(p,W_a))$. Then one of the following holds.
\begin{enumerate}
\item[(a)]
There exists a subspace $T^{-}\subset (\Ann W_a)^{\vee}$ such that $\cS\subset \cE_1[2]^{-}\otimes T^{-}$ (this makes sense by~\eqref{triathlon}) and $(\cE_1[2]^{-}\otimes T^{-})/\cS$ is torsion with support of codimension at least $2$ away from 
$\Delta$. 
\item[(b)]
There exists a subspace $T^{+}\subset W_a$ such that $\theta(\lambda(\cS))\subset \cE_1[2]^{+}\otimes T^{+}$ and  $(\theta\circ \lambda)^{-1}(\cE_1[2]^{+}\otimes T^{+})/\cS$ is torsion with support of codimension at least $2$ away from 
$\Delta$. 
\end{enumerate}
\end{prp}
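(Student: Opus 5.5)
The plan is to copy the proof of Proposition~\ref{prp:destabdicalbi}, now with a two-step filtration. As in the proof of Proposition~\ref{prp:pisemistabile}, let $\cS_1\coloneq\ker(\lambda_{|\cS})$ and $\cS_2\coloneq\cS$, and put $\cA_1\coloneq\cS_1$ and $\cA_2\coloneq\lambda(\cS)\subset\cB(p,W_a)^{+}$. By hypothesis the inequality in~\eqref{gazametro} is an equality. Hence every $\cA_i$ of positive rank satisfies $\mu^{HK}_P(\cA_i)=(D\cdot h_S)/2a$.

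For $\cA_1\subset\cE_1[2]^{-}\otimes(\Ann W_a)^{\vee}$ I use that $\cE_1[2]^{-}$ is $P$ slope stable (Remark~\ref{rmk:rigidistab}). A maximal-slope subsheaf of the polystable sheaf $\cE_1[2]^{-}\otimes(\Ann W_a)^{\vee}$ is then, in codimension $1$, of the form $\cE_1[2]^{-}\otimes T^{-}$. This is the standard fact that a subsheaf of a polystable sheaf with the same slope has saturation a direct summand, here a sum of copies of a stable bundle with multiplicity space. For $\cA_2$ I pass through $\theta$. The cokernel of $\theta$ is $\cG(\cE_1,\CC_p)$, which is supported in codimension $2$, so $\theta(\cA_2)\subset\cE_1[2]^{+}\otimes W_a$ has the same rank and $c_1$ as $\cA_2$. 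The same argument then gives a subspace $T^{+}\subset W_a$ with $\theta(\cA_2)\subset\cE_1[2]^{+}\otimes T^{+}$ and a torsion quotient.

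The statement about supports follows exactly as in the proof of Proposition~\ref{prp:destabdicalbi}. Equality of $P$-slopes forces the $c_1$ of each torsion quotient to be an effective divisor $E$ with $P^3\cdot E=0$. Since $P^3\cdot E\ge 0$ for effective $E$, with equality only for multiples of $\Delta$, the quotients have support of codimension at least $2$ away from $\Delta$.

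It remains to split into cases. If $\lambda(\cS)=0$, then $\cS=\cS_1$ and we are in~(a). If $\lambda(\cS)\neq 0$, then $\cS$ is contained in $(\theta\circ\lambda)^{-1}(\cE_1[2]^{+}\otimes T^{+})$. This preimage is an extension of $\theta^{-1}(\cE_1[2]^{+}\otimes T^{+})$ by $\cE_1[2]^{-}\otimes(\Ann W_a)^{\vee}$. To get~(b) I must show that the quotient of this preimage by $\cS$ is torsion as claimed, i.e.\ also that $\cS_1$ is all of $\cE_1[2]^{-}\otimes(\Ann W_a)^{\vee}$ up to codimension $2$ away from $\Delta$.

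This last step is the expected obstacle, and the analogue of Proposition~\ref{prp:garfagnana} is needed. Suppose $T^{-}\neq(\Ann W_a)^{\vee}$. Pushing out along a nonzero quotient $q\colon(\Ann W_a)^{\vee}\to U$ that kills $T^{-}$ gives an extension $\cF(p,W_a)_U$ as in Proposition~\ref{prp:estloclib}. In it, $\lambda(\cS)$ lifts: it lifts to $\cS/\cS_1$, which maps to $\cF(p,W_a)_U$. So the restriction of the extension class $q^t\in(\Ann W_a)\otimes U^{\vee}$ to $\lambda(\cS)$ vanishes. Because $\lambda(\cS)$ agrees with the part of $\cB(p,W_a)^{+}$ over $T^{+}$ outside codimension $2$ (away from $\Delta$), I would argue as follows. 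Using local freeness of the targets and Hartogs-type extension, the splitting extends over $\theta^{-1}(\cE_1[2]^{+}\otimes T^{+})$. Then I restrict the class to it and compute via Proposition~\ref{prp:alglin} and the identification through $\epsilon$ in~\eqref{venditti}. A nonzero $v\in\Ann W_a$ gives a class whose image $\Psi(v)$ is an injection of vector bundles along $S_p$; this is the computation in the proof of Proposition~\ref{prp:estloclib}. Such a class cannot vanish on a subsheaf agreeing with $\cB(p,W_a)^{+}$ generically along $S_p$. This contradiction gives $T^{-}=(\Ann W_a)^{\vee}$, and hence~(b).

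The delicate points are where the splitting must extend across $\Delta$ itself, and the possibility that $T^{+}=0$ with $\lambda(\cS)\neq0$. In the second case $\theta(\lambda(\cS))$ would be zero, which is impossible because $\theta$ is injective.
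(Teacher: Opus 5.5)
Up to the reduction to proving $T^{-}=(\Ann W_a)^{\vee}$ when $\lambda(\cS)\neq 0$, your argument follows the paper. You use the same filtration, the equality in~\eqref{gazametro} together with polystability, the same push-out $\cF(p,W_a)_U$ with $U=(\Ann W_a)^{\vee}/T^{-}$, the same lift of $\lambda(\cS)$, and Proposition~\ref{prp:estloclib} to justify a Hartogs extension.

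\textbf{The gap is in your final contradiction.} You extend the splitting to $\cB^{+}_{T}=\theta^{-1}(\cE_1[2]^{+}\otimes T^{+})$ and argue the restricted class cannot vanish because $\Psi(v)$ is an injection of bundles and $\cB^{+}_T$ agrees with $\cB(p,W_a)^{+}$ generically along $S_p$. The second claim is false when $T^{+}\subsetneq W_a$: the ranks are $4a^2\dim T^{+}$ versus $4a^3$. Injectivity of $\Psi(v)$ is a property of the class, not of how $\cB^{+}_T$ sits inside $\cB(p,W_a)^{+}$. Indeed, if $\Phi^{+}_{W_a}$ vanished on $\cE_1[2]^{+}\otimes T^{+}$, then $\cB^{+}_T=\cE_1[2]^{+}\otimes T^{+}$ would be locally free and the restricted extension would have zero local class. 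That is exactly the situation you must exclude.

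\textbf{The missing input is Proposition~\ref{prp:pioggia}.} Since $T^{+}\neq 0$, the restriction of $\Phi^{+}_{W_a}$ to $\cE_1[2]^{+}\otimes T^{+}$ is still surjective onto $\cG(\cE_1,\CC_p)$. The connecting maps then identify both $\ul{Ext}^1(\cB(p,W_a)^{+},\cE_1[2]^{-})$ and $\ul{Ext}^1(\cB^{+}_T,\cE_1[2]^{-})$ with $\ul{Ext}^2(\cG(\cE_1,\CC_p),\cE_1[2]^{-})$, compatibly with restriction. So the local class of the restricted extension is still the image of the extension class under the injective map $\epsilon$ of~\eqref{venditti} (tensored with $U$). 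This is a nonzero section of a vector bundle on the irreducible surface $S_p$, hence nonzero on $S_p\setminus R_p$. For this you only need $\Psi(v)\neq 0$, not bundle-injectivity. With this fix your route works.

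Your worry about extending across $\Delta$ is unnecessary. Work on $S^{[2]}\setminus\Delta$ throughout, since $S_p\cap\Delta=R_p$.

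The paper's version is shorter and avoids local $\ul{Ext}$ entirely. Off $\Delta$, $\theta(\lambda(\cS))$ and the locally free $\cE_1[2]^{+}\otimes T^{+}$ agree outside codimension $2$, so the lift extends by Hartogs to a map $\cE_1[2]^{+}\otimes T^{+}\to\cF(p,W_a)_U$. Composing with $\theta\circ\lambda_U$ shows that $\Phi^{+}_{W_a}(\cE_1[2]^{+}\otimes T^{+})=0$ off $\Delta$. This contradicts the same Proposition~\ref{prp:pioggia}, because $\cG(\cE_1,\CC_p)$ is supported on $S_p\not\subset\Delta$.
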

\begin{proof}
We adopt the notation of the proof of Proposition~\ref{prp:pisemistabile}.
We claim that the following hold:
\begin{enumerate}
\item
There exists a non zero subspace $T^{-}\subset  (\Ann W_a)^{\vee}$ such that 
$\cA_1\subset \cE_1[2]^{-}\otimes T^{-}$ and 
$(\cE_1[2]^{-}\otimes T^{-})/\cA_1$ is torsion with support of codimension at least $2$ away from $\Delta$. 
\item
There  exists a subspace $T^{+}\subset W_a$ such that 
$\theta(\cA_2)\subset \cE_1[2]^{+}\otimes T^{+}$ and 
$(\cE_1[2]^{+}\otimes T^{+})/\theta(\cA_2)$ is torsion with support of codimension at least $2$ away from $\Delta$. 
\end{enumerate}
The key point is that the inequality in~\eqref{gazametro} is an equality. The rest of the argument is the same as that which was given to prove Items~(1), (2) and~(3)  in the proof of Proposition~\ref{prp:destabdicalbi}. 

It remains to prove that if $T^{+}\not=0$ then $(\theta\circ \lambda)^{-1}(\cE_1[2]^{+}\otimes T^{+})/\cS$ is torsion with support of codimension at least $2$ away from $\Delta$. By Item~(1) above it suffices to show that $T^{-}= (\Ann W_a)^{\vee}$. Suppose the contrary. Let $U\coloneq (\Ann W_a)^{\vee}/T^{-}$ and let $(\Ann W_a)^{\vee}\twoheadrightarrow U$ be the quotient map. Note that $U$ is non zero because by
hypothesis $T^{+}\subsetneq W_a$. The morphism 
$\lambda\colon \cB(p,W_a) \to \cB(p,W_a)^{+}$ defines an inclusion $\ov{\iota}\colon\cA_2\hra \cB(p,W_a)^{+}$ which lifts to an inclusion
 $\iota\colon\cA_2\hra \cF(p,W_a)_U$:
\begin{equation*}
\xymatrix{ 
0 \ar[r] & \cE_1[2]^{-}\otimes U \ar[r] & \cF(p,W_a)_U \ar[r]^{\lambda_U} & \cB(p,W_a)^{+} \ar[r] & 0   \\
&  & \cA_2\ar@{^{(}->}[u]^{\iota}\ar@{^{(}->}[ru]^{\ov{\iota}} & &
 }
\end{equation*} 
Let $T^{+}\subset W_a$ be as in Item~(2) above. \emph{Away from $\Delta$} the inclusion $\cA_2\hra \cE_1[2]^{+}\otimes T^{+}$ is an isomorphism in codimension $1$. By Proposition~\ref{prp:estloclib} the sheaf $\cF(p,W_a)_U$ is locally free and hence by Hartog's Theorem  \emph{away from $\Delta$}  there exists an inclusion
$\bm{\iota}\colon\cE_1[2]^{+}\otimes T^{+}\hra \cF(p,W_a)_U$ which restricts to $\iota$ on $\cA_2$. This implies that 
\emph{away from $\Delta$} we have
\begin{equation}\label{tchou}
\Phi_{W_a}^{+}(\cE_1[2]^{+}\otimes T^{+})=0.
\end{equation}
The map $\cE_1[2]^{+}\otimes T^{+}\to\cG(\cE_1,p)$ given by the restriction of $\Phi_{W_a}^{+}$ is surjective by 
Proposition~\ref{prp:pioggia}. Since the support of $\cG(\cE_1,p)$ is $S_p$ and $S_p$ is not contained in $\Delta$ (in fact $\Delta\cap S_p=R_p$), the equality in~\eqref{tchou} is absurd. This proves that $T^{-}= (\Ann W_a)^{\vee}$.
\end{proof}
\begin{proof}[Proof of Proposition~\ref{prp:calbisingstab}]
It is analogous to the proof of Proposition~\ref{prp:stabcalbi}.
Since the BBF squares of $P=\bm{\mu}(h_S)$ and $h_{S^{[2]}}$ are positive, $P$ ($h_{S^{[2]}}$) slope (semi)stability is the same as 
$P$ ($h_{S^{[2]}}$) HK slope (semi)stability.
Suppose that $\cB=\cB(p,W_a)$ is not $h_{S^{[2]}}$ HK slope stable.  Since $\cB$ is modular (see Claim~\ref{clm:falsodimitri}),  
$P$ HK slope semistable
(by Proposition~\ref{prp:pisemistabile}), and $ h_{S^{[2]}}$  is ${\mathsf a}(\ww(D,a))$-suitable for $\bm{\mu}(h_S)$, Proposition~\ref{prp:paragonestab} gives that there exists a subsheaf  $\cS\subset \cB$, with $0<r(\cS)<r(\cB)$, such that 
\begin{equation}\label{erodegrande}
\mu_P^{HK}(\cS)=\mu_P^{HK}(\cB(p,W_a)),\qquad \mu_{h_{S^{[2]}}}^{HK}(\cS)\ge\mu_{h_{S^{[2]}}}^{HK}(\cB(p,W_a)).
\end{equation}
 By the equality in~\eqref{uguaglianza} one of Items~(a), (b),  of Proposition~\ref{prp:qualidestab} holds.
In each case the inequality in~\eqref{erodegrande} does not hold. 
\end{proof}
\section{Extension of the  map $\cM_{v_2}\dra M_{\ww(D,a)}$.}\label{sec:estendo}
\subsection{Main result}\label{subsec:spalmo}
\setcounter{equation}{0}
Throughout the section we assume  Hypothesis-Definition~\ref{hyp-dfn:esseaemme}, and we suppose that    $S,h_S,h_{S^{[2]}}$ are as in 
the statement of Proposition~\ref{prp:stabuno}.
We often set 
$\cM=\cM_{v_2}(S,h_S)$, $\ww=\ww(D,a)$,  $M_{\ww}^{\bu} =M_{\ww}(S^{[2]},h_{S^{[2]}})^{\bu} $ (see Corollary~\ref{crl:stabuno} for the definition of the latter), $\cD^k=\cD^k_{v_2}(S,h_S)$, $\cD=\cD^1$. 
We have defined  (see  Corollary~\ref{crl:stabuno}) the  regular and birational map 
\begin{equation}\label{viscontioccupato}
\begin{matrix}
\cM\setminus \cD & \overset{\psi}{\lra} & M_{\ww}^{\bu} \\
[\cE_2] & \mapsto & [\cG(\cE_1,\cE_2)]
\end{matrix}
\end{equation}
If $[\cE_2]\in\cD$  the sheaf $\cG(\cE_1,\cE_2)$ is unstable (see Propositions~\ref{prp:liberoinstab} and~\ref{prp:singolareinstab}), and in addition not locally free if 
$[\cE_2]\in\cD^a$. Nonetheless the map $\psi$ extends to a regular map $\ov{\psi}$ which is an isomorphism.  
  In the present section we prove this result, and we show that $M_{\ww}^{\bu}$ is a connected (projective!) component   of 
  $M_{\ww} =M_{\ww}(S^{[2]},h_{S^{[2]}})$.

In order to describe the restriction of $\ov{\psi}$ to $\cD$ we recall a few results.  Let $[\cE_2]\in (\cD^k\setminus\cD^{k+1})$ where $a>k\ge 1$. By Proposition~\ref{prp:pendbrill}  $\cE_2$ fits into the exact sequence 
\begin{equation}\label{ravel}
0\lra\cH\lra \cE_2\lra\cE_1\otimes \Hom_S(\cE_2,\cE_1)^{\vee} \lra 0,
\end{equation}
where $\cH$ is an $h_S$ slope stable vector with 
$[\cH]\in(\cM_{v(a-k)}(S,h_S)\setminus \cD_{v(a-k)}(S,h_S))$. Applying the functor $\Hom_S(\cE_1,-)$ to the exact sequence in~\eqref{ravel} we get the inclusion (see Remark~\ref{rmk:arancia}) $j_{\cE_2}\colon  \Hom_S(\cE_2,\cE_1)^{\vee} \hra \Ext^1_S(\cE_1,\cH)$. Let 
\begin{equation}\label{geiedue}
J_{\cE_2}\coloneq j_{\cE_2}(\Hom_S(\cE_2,\cE_1)^{\vee})\subset \Ext^1_S(\cE_1,\cH).
\end{equation}
Next let $[\cE_2]\in \cD^a$. Then 
 $\cD^a=\cB_{v_2}(S,h_S)$
(see Proposition~\ref{prp:divuti}), and by  
 Proposition~\ref{prp:bordodiemme} $\cE_2^{\vee}\cong\cE_1\otimes V_a$ where $V_a$ is an $a$-dimensional (complex) vector space. Thus we have a unique exact sequence 
\begin{equation}\label{fabrizio}
0\lra\cE_2\lra \cE_1\otimes V_a\overset{f}{\lra}\CC_p \lra 0.
\end{equation}
The map $f$  defines a linear map 
\begin{equation}\label{mappaeffepi}
f(p)\colon \cE_1(p)\to V_a^{\vee}
\end{equation}
By 
  Lemma~\ref{lmm:stabse}  $f(p)$  is surjective, and hence $\im(f(p)^t)\subset \cE_1(p)^{\vee}$ has dimension $a$. Thus
\begin{equation}\label{deandre}
\im(f(p)^t)\in\Gr(a,\cE_1(p)^{\vee}).
\end{equation}
\begin{thm}\label{thm:dadueauno}
Assume Hypothesis-Definition~\ref{hyp-dfn:esseaemme}. 
Let  $\ww\coloneq\ww(D,a)$, and let $ h_{S^{[2]}}$ be a polarization of $S^{[2]}$ which is ${\mathsf a}(\ww)$-suitable for $\bm{\mu}(h_S)$. 
\begin{enumerate}
\item[(a)]
The  birational map in~\eqref{viscontioccupato} extends to a regular map
\begin{equation}\label{razreg}
\cM_{v_2}(S,h_S)\overset{\ov{\psi}}{\lra} M_{\ww}(S^{[2]},h_{S^{[2]}})^{\bullet}.
\end{equation} 
\item[(b)]
Let $[\cE_2]\in \cD^k(S,h_S)\setminus\cD^{k+1}(S,h_S)$ where $a\ge k\ge 1$, and let 
$[\cF]=\ov{\psi}([\cE_2])$. If $a>k\ge 1$ then $\cF\cong\cB(\cH,J_{\cE_2})$ where $\cH$ 
is as 
in~\eqref{ravel}, and
$J_{\cE_2}$ is as in~\eqref{geiedue}.  If $k=a$ then  $\cF\cong\cB(p,\im(f(p)^t))$ where $p$, $\im(f(p)^t)$ are as 
in~\eqref{fabrizio} and~\eqref{deandre}
 respectively.  

\item[(c)]
The map $\ov{\psi}$ is bijective.
\end{enumerate}
\end{thm}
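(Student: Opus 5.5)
We follow the strategy outlined in Subsection~\ref{subsec:panorama}: construct the extension of $\psi$ on the iterated blow up $\wh{\cM}$ of Definition~\ref{dfn:emmeconjei}, then show that it descends to $\cM$.

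Work étale locally on $\cM$, or assume a universal sheaf $\EE$ on $S\times\cM$ exists. Apply the relative BKR construction to $\EE$ to get an $\cM$-flat family $\GG$ on $S^{[2]}\times\cM$ with $\GG_{|S^{[2]}\times\{[\cE_2]\}}\cong\cG(\cE_1,\cE_2)$. Pull it back to $\wh{\GG}$ on $S^{[2]}\times\wh{\cM}$; by Proposition~\ref{prp:incrocinormali}, $\wh{\cM}$ is smooth and $\wh{E}^a+\ldots+\wh{E}^1$ is snc.

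We then perform elementary modifications in the order dictated by Langton's algorithm, mirroring the blow ups $f_{j-1}$. Along $S^{[2]}\times\wh{E}^k$ with $k<a$, the restriction of the current family has the destabilizing quotient $\cE_1[2]^{-}\otimes V_k$ of~\eqref{tuttifrutti}, suitably relativized over $\wh{E}^k$. Take the kernel of the surjection onto its pushforward. The resulting restriction to a point $x$ of $\wh{E}^k$ is an extension of $\cA(\cE_2)^{+}$ by $\cE_1[2]^{-}\otimes V_k$, as in~\eqref{volterra}.

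Its extension class lies in $\Ext^1(\cA(\cE_2)^{+},\cE_1[2]^{-})\otimes V_k^{\vee}$. By Proposition~\ref{prp:extrigidibical}, together with $\cA(\cE_2)^{+}\cong\cB(\cH,J_{\cE_2})^{+}$ (Proposition~\ref{prp:freddo}), this group is $\Ann(J_{\cE_2})\otimes V_k^{\vee}$. The class is computed via the Kodaira–Spencer map of the family in the normal direction. By~\eqref{normaleadi} and Lemma~\ref{lmm:conostrato}, a point of $\wh{E}^k\setminus\wh{E}^{k+1}$ over $[\cE_2]$ corresponds to a normal vector in $\Hom(\cE_2,\cE_1)^{\vee}\otimes\Ext^1(\cE_2,\cE_1)$. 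That vector has maximal rank once all the lower strata $\wh{E}^{k'}$, $k'<k$, have been blown up. Using~\eqref{baroque}, which gives $\Ext^1(\cE_2,\cE_1)\cong\Ann(J_{\cE_2})$, the class becomes an isomorphism $V_k\cong\Ann(J_{\cE_2})^{\vee}$. The modified sheaf is therefore $\cB(\cH,J_{\cE_2})$ of Definition~\ref{dfn:fasciobicaltilde}, independently of the chosen normal direction.

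For $k=a$ we argue the same way starting from~\eqref{harnargi}, using Propositions~\ref{prp:nicolciacco} and~\ref{prp:alglin}. This produces $\cB(p,\im f(p)^t)$, which is locally free by Corollary~\ref{crl:localmentelibero}. Repeating along all the $\wh{E}^k$ and invoking Propositions~\ref{prp:stabcalbi} and~\ref{prp:calbisingstab}, we obtain a family $\wt{\GG}$ that is flat over $\wh{\cM}$ and whose fibres are all $h_{S^{[2]}}$ slope stable with the same Chern character (Claims~\ref{clm:stessofalsomukai} and~\ref{clm:falsodimitri}). It is isomorphic to $\wh{\GG}$ off the exceptional divisors, so it gives a regular $\wh{\psi}\colon\wh{\cM}\to M_{\ww}^{\bu}$ extending $\psi\circ\wh{f}$, landing in the closure of the image of $\psi$.

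To descend: on each fibre of $\wh{f}$ the map $\wh{\psi}$ is constant, since the isomorphism class $\cB(\cH,J_{\cE_2})$ or $\cB(p,\im f(p)^t)$ depends only on $\wh{f}(x)$. The fibres of $\wh{f}$ are connected and $\cM$ is normal, so $\wh{\psi}$ factors through a regular $\ov{\psi}$. This proves (a) and (b).

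For (c), injectivity means recovering $[\cE_2]$ from $\ov{\psi}([\cE_2])$. On $\cM\setminus\cD$ this is Corollary~\ref{crl:stabuno}. On $\cD$ we use the intrinsic filtrations: $\cE_1[2]^{-}$ appears as the maximal subsheaf $\cE_1[2]^{-}\otimes\Hom(\cE_1[2]^{-},\cF)$, with dimension $k$. The quotient then determines $\cB(\cH,U_k)^{+}$, and from it $(\cH,U_k)$ by BKR and Lemma~\ref{lmm:mattiatorre}, or $(p,W_a)$ from the support of $\cG(\cE_1,\CC_p)$. These data determine $\cE_2$ via Propositions~\ref{prp:interbarca} and~\ref{prp:divuti}(4), and via Proposition~\ref{prp:bordodiemme}. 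The sheaves in the images of different strata are distinguished by $k$ and by local freeness of the pieces. Surjectivity follows because $\ov{\psi}$ is proper with dense image in the irreducible $M^{\bu}_{\ww}$.

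The main obstacle is the extension-class computation after the elementary modification. One must identify the Kodaira–Spencer derivative with the normal vector in $\Hom^{\vee}\otimes\Ext^1$ through BKR, check that after the iterated blow ups the class is nondegenerate at every point of $\wh{E}^k$ (including the normal-crossing points where several modifications interact), and verify that the iteration ends in a flat family.
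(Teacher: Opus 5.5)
Your strategy is the paper's: build a flat family of stable bundles on $S^{[2]}\times\wh{\cM}$ by elementary modifications that follow the blow ups (Proposition~\ref{prp:estendopsi}), then descend and check injectivity stratum by stratum. Your extension-class computation is correct, but only at points where the normal vector has full rank; that is exactly Corollaries~\ref{crl:gendirstab} and~\ref{crl:annasciarpa}.

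The gap is at all other points, and that is where the real work lies. Your claim that a point of $\wh{E}^k\setminus\wh{E}^{k+1}$ over $[\cE_2]$ corresponds to a normal vector of maximal rank once the lower strata are blown up is false. The fibre $\wh{f}^{-1}([\cE_2])$ is the space of complete collineations $\bP_{\cE_2}(0)$. Its points on $\wh{E}^{k'}$, $k'<k$, map to rank-deficient $[\varphi]$, with $\varphi\in\Hom(\Hom_S(\cE_2,\cE_1),\Ann J_{\cE_2})$. After the modification over $\cD^k$ the fibre at such a point is the non-universal extension $\cS_{\varphi}$. By Claim~\ref{clm:splitexseq} it has the destabilizing split quotient $\cE_1[2]^{-}\otimes(\ker\varphi)^{\vee}$, so it is not $\cB(\cH,J_{\cE_2})$.

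There is a related problem with the later steps. When you modify along the exceptional divisor over the strict transform $\cD(j)^j$ of $\cD^j$, that center contains points over $\cD^{k'}$, $k'>j$, where the quotient of~\eqref{tuttifrutti} is not available. To get a locally free quotient $\cE_1[2]^{-}\boxtimes\cU^j$, and hence flatness via Lemma~\ref{lmm:modificapiatta}, you need $\hom(\GG(j)_x,\cE_1[2]^{-})=j$ with surjective evaluation map at every $x\in\cD(j)^j$. You name this as the main obstacle but do not resolve it. Without it, neither the flatness of $\wt{\GG}$ nor the stability of all its fibres is established, so $\wh{\psi}$ is not shown to be regular. Restricting to $\cM\setminus\cD^2$, where your argument is complete, does not help: the complement has codimension $4$, and a rational map to $M_{\ww}^{\bu}$ need not extend across it.

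The missing ingredient is Subsection~\ref{subsec:primomaggio}. Take $\cF_1=\cE_1[2]^{-}$ and $\cF_3=\cB(\cH,J_{\cE_2})^{+}$ (resp.\ $\cB(p,V_a)^{+}$). The paper modifies the universal family of extensions $\cS_{\bP_L}$ over the blow ups $\bP_L(j)$. It proves by induction on $\dim W$ that every fibre over $\bD_L(j)^s\setminus\bD_L(j)^{s+1}$ is an extension of $\cF_1\otimes V(z)$, with $\dim V(z)=s$, by $\cS_{U(z)}$, with $\dim U(z)=k-s$. The induction rests on Key-Remark~\ref{key-rmk:rospiglione}: the fibres of the exceptional divisors are again spaces $\bP_{L(\varphi)}(j-1)$ with smaller $W$.

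Proposition~\ref{prp:grandinduz} then identifies $\GG(j)$ on the fibres of $\cM(j)\to\cM$ with $\cS_{\bP_{\cE_2}}(j)$. This gives~\eqref{marcoguardo} and the surjectivity of~\eqref{mottana} uniformly on $\cD(j)^j$. At $j=0$ we have $\bD_{\cE_2}(0)^1=\es$, which forces $U(z)$ to be all of the $k$-dimensional group $\Ext^1_{S^{[2]}}(\cF_3,\cF_1)$ for every $z$. Hence every fibre of $\wh{\GG}$ over $\wh{f}^{-1}([\cE_2])$, not just the generic one, is $\cB(\cH,J_{\cE_2})$ (resp.\ $\cB(p,V_a)$). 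With that in place, your descent and injectivity arguments go through.
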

Theorem~\ref{thm:dadueauno} is proved in Subsection~\ref{subsec:alfinlaprova}. Let
$\wt{M}_{\ww}(S^{[2]},h_{S^{[2]}})^{\bullet}\to M_{\ww}(S^{[2]},h_{S^{[2]}})^{\bullet}$ be the normalization map.
Below is a consequence of Theorem~\ref{thm:dadueauno}.
\begin{crl}\label{crl:isomnorm}
The map $\ov{\psi}$ in~\eqref{razreg} lifts to an isomorphism 
\begin{equation}
\wt{\psi}\colon\cM_{v_2}(S,h_S)\xrightarrow{\sim} 
\wt{M}_{\ww}(S^{[2]},h_{S^{[2]}})^{\bullet}.
\end{equation}
\end{crl}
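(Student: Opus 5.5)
Since $\cM\coloneq\cM_{v_2}(S,h_S)$ is smooth and projective, I will deduce the corollary from Theorem~\ref{thm:dadueauno} and the universal property of normalization, then use Zariski's Main Theorem. Let $\nu\colon\wt{M}^{\bu}_{\ww}\to M^{\bu}_{\ww}$ be the normalization. Because $\cM$ is smooth, hence normal and irreducible, and $\ov{\psi}$ is dominant (its restriction $\psi$ to $\cM\setminus\cD$ is an isomorphism onto a dense open subset by Corollary~\ref{crl:stabuno}), the universal property of normalization gives a unique lift $\wt{\psi}\colon\cM\to\wt{M}^{\bu}_{\ww}$ with $\nu\circ\wt{\psi}=\ov{\psi}$. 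It then suffices to show that $\wt{\psi}$ is an isomorphism.

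First I check that $\wt{\psi}$ is birational. Over $\psi(\cM\setminus\cD)$, the space $M^{\bu}_{\ww}$ is smooth by Corollary~\ref{crl:stabuno}, so $\nu$ is an isomorphism there. Hence $\wt{\psi}$ restricts to an isomorphism from $\cM\setminus\cD$ onto a dense open subset of $\wt{M}^{\bu}_{\ww}$.

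Next I show $\wt{\psi}$ is surjective and finite. Since $\cM$ is projective, $\wt{\psi}(\cM)$ is closed; it contains a dense open subset, so $\wt{\psi}$ is surjective. By Theorem~\ref{thm:dadueauno}(c), $\ov{\psi}$ is bijective, and since $\nu$ is finite, every fiber of $\wt{\psi}$ is finite. A proper morphism with finite fibers is finite, so $\wt{\psi}$ is finite.

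Finally, $\wt{\psi}$ is a finite birational morphism onto the normal variety $\wt{M}^{\bu}_{\ww}$, so Zariski's Main Theorem makes it an isomorphism. Concretely, $\cO_{\wt{M}}\to\wt{\psi}_*\cO_{\cM}$ is an inclusion of coherent algebras with the same function field, and $\wt{\psi}_*\cO_{\cM}$ is integral over the integrally closed $\cO_{\wt{M}}$, so the two coincide.

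The main obstacle is making sure $\nu$ really is an isomorphism over the image of $\cM\setminus\cD$, so that $\wt{\psi}$ is birational and not merely generically finite. I expect this to follow from the smoothness statement in Corollary~\ref{crl:stabuno}, possibly after shrinking to the locus where $M_{\ww}$ has only the component $M^{\bu}_{\ww}$. Should that fail, the fallback is that a dominant morphism which is bijective on closed points is in characteristic $0$ generically injective, hence of degree $1$, which again gives birationality.
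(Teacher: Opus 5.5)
Your argument is correct and follows the same route as the paper, which treats the corollary as an immediate consequence of Theorem~\ref{thm:dadueauno} and Corollary~\ref{crl:stabuno}: $\ov{\psi}$ is bijective on the smooth projective $\cM_{v_2}(S,h_S)$ and an isomorphism over the smooth dense open image of $\cM\setminus\cD$, so its lift to the normalization is finite and birational onto a normal variety, hence an isomorphism. One small correction: the fibres of $\wt{\psi}$ are finite because $\wt{\psi}^{-1}(y)\subset\ov{\psi}^{-1}(\nu(y))$ and $\ov{\psi}$ is bijective, not because $\nu$ is finite.
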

\begin{rmk}
Computations suggest that  $M_{\ww}(S^{[2]},h_{S^{[2]}})^{\bullet}$ is smooth only if $a= 1$. 
\end{rmk}
\subsection{Road map}\label{subsec:stazfori}
\setcounter{equation}{0}  
 Let $[\cE_2]\in\cD^k\setminus \cD^{k+1}$ for $1\le k\le a$, let
 $f\colon (T,0)\to (\cM,[\cE_2])$ be a map of pointed varieties, where $T$ is a smooth curve, and let $f^{*}\GG$ be the pull-back of the  $\cM$-flat) sheaf on $S^{[2]}\times\cM$ with fiber $\cG(\cE_1,\cE'_2)$ on $[\cE'_2]\in\cM$ (such a sheaf always exists locally $\cM$ in the classical topology). 
The desemitabilizing sequence for $\cG(\cE_1,\cE'_2)$ given in~\eqref{subsec:semirepl} dictates an elementary modification $\wt{f^{*}\GG}$ of $f^{*}\GG$ which is isomorphic to $f^{*}\GG$ away from the central fiber $S^{[2]}\times\{0\}$.
 The main result of Subsection~\ref{subsec:ridsucurva} is the following: if the image of the differential $df(0)$ does not belong to the tangent cone to $\cD$ at $[\cE_2]$ then the restriction of $\wt{f^{*}\GG}$  to  $S^{[2]}\times\{0\}$ is slope stable, and its isomorphism class depends on $[\cE_2]$ but \emph{not} on $f$.

As was explained in Subsection~\ref{subsec:panorama}, in order to prove Theorem~\ref{thm:dadueauno} one needs to consider the blow-up $\wh{\cM}$ of  $\cM$ defined in Subsection~\ref{subsec:scoppiascoppia} and perform elementary modifications. The picture one gets is iterative. In order to carry it out we need to discuss certain extensions of sheaves and relate them to spaces of complete collineations: this is the subject of  Subsection~\ref{subsec:primomaggio}. 

In Subsection~\ref{subsec:estdipsi} we prove (Proposition~\ref{prp:estendopsi}) that $\psi$ extends to a regular map $\wh{\psi}\colon \wh{\cM}\to M_{\ww}^{\bullet}$ and we identify the values of $\wh{\psi}$ on points of the exceptional set $\wh{E}^a\cup \ldots\cup\wh{E}^1$. The proof  is a monstruous semistable reduction. Theorem~\ref{thm:dadueauno} follows easily from Proposition~\ref{prp:estendopsi}.

\subsection{First step of semistable reduction along a curve}\label{subsec:ridsucurva}
\setcounter{equation}{0}  
\subsubsection{Preliminaries: elementary modifications and extension classes}\label{subsubsec:elemod}
We collect some results that are useful in proving  Items~(a) and~(b) of Theorem~\ref{thm:dadueauno}. Let $X,T$ be varieties, and $D\subset T$ be an effective divisor (a closed subscheme with locally principal ideal sheaf). 
Let $\iota\colon X\times D\hra X\times T$ be the inclusion morphism. Assume that $\cF$ is a sheaf on $X\times T$, and that we are given an exact sequence of sheaves on $X\times D$:
\begin{equation}\label{hasbagliato}
0\lra \cA\overset{\lambda}{\lra} \cF_{|X\times D}\overset{\phi}{\lra} \cB\lra 0.
\end{equation}
Then $\phi$ defines a morphism $\wt{\phi}\colon \cF\to\iota_{*}\cB$. Let $\cH\coloneq\ker\wt{\phi}$. Hence we have the following exact sequence  of sheaves on $X\times T$:
\begin{equation}\label{accaeffebi}
0\lra \cH\overset{\alpha}{\lra} \cF\overset{\wt{\phi}}{\lra} \iota_{*}\cB\lra 0.
\end{equation}
\begin{lmm}\label{lmm:modificapiatta}
Keep notation as above. Assume that $\cF$ is  $T$-flat and that $\cB$ is locally free. 
Then $\cH$ is  $T$-flat.
\end{lmm}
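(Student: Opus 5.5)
The question is local on $X\times T$, so we may assume $T=\Spec A$ is affine and $D$ is cut out by a non-zero-divisor $t\in A$. The plan is to check flatness of $\cH$ over $T$ through the long exact sequence of $\Tor$ sheaves attached to~\eqref{accaeffebi}. Given an $A$-module $N$, tensoring~\eqref{accaeffebi} with the pull-back of $N$ yields
\begin{equation*}
\Tor_2^T(\iota_{*}\cB,N)\lra \Tor_1^T(\cH,N)\lra \Tor_1^T(\cF,N)=0 .
\end{equation*}
The last term vanishes because $\cF$ is $T$-flat. It is therefore enough to prove that $\Tor^T_2(\iota_{*}\cB,N)=0$ for every $N$; in fact it suffices to treat $N=A/\gm$ for $\gm$ a maximal ideal (or an arbitrary finitely generated module), and then appeal to the local criterion of flatness.

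The main point is that $\iota_{*}\cB$ has $T$-Tor dimension at most $1$. The first step is to observe that $\cB$ is flat over $D$. Indeed $\cF_{|X\times D}$ is $D$-flat, being the base change of a $T$-flat sheaf, and $\cB$ is locally free on $X\times D$, which is flat over $D$; a locally free sheaf on $X\times D$ is locally a direct summand of a free module, and $X\times D\to D$ is flat, so $\cB$ is $D$-flat. Note that only local freeness of $\cB$ and flatness of $X\times D$ over $D$ are used in this step.

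The second step is to factor $\Tor$ through $D$. For an $A$-module $N$ we have $\cB\otimes^L_A N\simeq \cB\otimes^L_{A/t}(A/t\otimes^L_A N)$. The complex $A/t\otimes^L_A N$ has cohomology only in degrees $0$ and $-1$, since $A/t$ admits the resolution $0\to A\xrightarrow{t}A\to A/t\to 0$ ($t$ being a non-zero-divisor because $D$ is an effective Cartier divisor). Because $\cB$ is $A/t$-flat, tensoring with $\cB$ over $A/t$ is exact. Consequently $\Tor_i^A(\cB,N)=0$ for $i\ge 2$.

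Plugging $\Tor_2^T(\iota_{*}\cB,N)=0$ into the sequence above gives $\Tor_1^T(\cH,N)=0$ for all $N$, so $\cH$ is $T$-flat. I expect no real obstacle here. The only point requiring care is the second step, namely confirming that $t$ is a non-zero-divisor on $A$ and that the base-change identity for derived tensor products holds, which is standard.
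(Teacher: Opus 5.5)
Your proof is correct and follows the paper's argument: the $\Tor$ sequence of \eqref{accaeffebi} together with $T$-flatness of $\cF$ reduces everything to the vanishing of $\Tor_2$ of $\iota_{*}\cB$, and that vanishing holds because $D$ is Cartier, so $\cO_{D}$ has a two-term resolution. The only difference is packaging. The paper uses local freeness of $\cB$ to reduce directly to $\iota_{*}\cO_{X\times D}$. You instead pass through $D$-flatness of $\cB$ and a derived base change, which proves the slightly more general statement (with $\cB$ merely $D$-flat). That general form is what the paper invokes via a $\Tor$ spectral sequence in Subsubsection~\ref{subsubsec:garroni}.
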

\begin{proof}
We must show that $Tor_1^{\cO_T}(\cH,\cS)=0$
 for every sheaf of $\cO_{T}$-modules $\cS$. By  the exact sequence in~\eqref{accaeffebi} and $T$-flatness of $\cF$ it suffices to show that 
$Tor_2^{\cO_T}(\iota_{*}\cB,\cS)=0$ for every $\cS$ as above. 
Since $\cB$ is  locally free it suffices to prove that $Tor_2^{\cO_T}(\iota_{*}\cO_{X\times D},\cS)=0$. The latter holds because, since $D$ is Cartier,  
$\iota_{*}\cO_{X\times D}$ has a two-step locally-free resolution.
\end{proof}
Now assume that $T$ is a smooth curve and $D=\{0\}\subset T$. Let $X_0\coloneq X\times\{0\}$, 
$\cF_0\coloneq \cF_{|X_0}$, $\cH_0\coloneq \cH_{|X_0}$. 
Tensoring the exact sequence in~\eqref{accaeffebi} with $\cO_{X_0}$ we get the exact sequence
\begin{equation}\label{biaccaa}
0\lra \cB\lra \cH_0\lra\cA\lra 0.
\end{equation}
One describes the extension class of~\eqref{biaccaa} as follows.  Let 
\begin{equation}
\Theta_T(0)\overset{\kappa}{\lra} \Ext^1_X(\cF_0,\cF_0)
\end{equation}
be the Kodaira-Spencer map at $0$, and let
\begin{equation}
\begin{matrix}
\Ext^1_X(\cF_0,\cF_0) & \overset{\mu}{\lra} & \Ext^1_X(\cA_0,\cB_0) \\
e  & \mapsto & \lambda\cup e\cup \phi
\end{matrix}
\end{equation}
where $\lambda,\phi$ are the maps in~\eqref{hasbagliato}.
\begin{lmm}\label{lmm:estmod}
Assume that $T$ is a smooth curve and $D=\{0\}\subset T$. The extension class 
of~\eqref{biaccaa} is (up to $\CC^{*}$) equal to $\mu\circ \kappa(v)$ where $v$ is a generator of 
$ \Theta_T(0) $. 
\end{lmm}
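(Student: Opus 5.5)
The approach is a local computation with a uniformizer. Since the statement is local on $T$ near $0$, we may shrink $T$ to an affine neighbourhood of $0$ with a uniformizer $t$ such that $D=(t)$. Write $\cF_0=\cF/t\cF$. Then $\cH\subset\cF$ contains $t\cF$, and $\cH/t\cF\cong\cA$ via $\lambda$. The map $\cH_0=\cH/t\cH\to\cA$ in~\eqref{biaccaa} is the quotient $\cH/t\cH\to\cH/t\cF$. Its kernel is $t\cF/t\cH$, and multiplication by $t$ identifies it with $\cF/\cH\cong\cB$; this uses $T$-flatness of $\cF$, i.e.\ that $t$ is a non-zero-divisor on $\cF$. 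So the sequence~\eqref{biaccaa} is made explicit, and it remains to compute its class.

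To compute the class, we pass to the first-order thickening. Let $T_1=\Spec\cO_T/(t^2)$ and $\cF_1=\cF/t^2\cF$. By flatness $\cF_1$ sits in the extension
\begin{equation*}
0\lra \cF_0\overset{t}{\lra}\cF_1\lra\cF_0\lra 0,
\end{equation*}
viewed as $\cO_X$-modules, and its class in $\Ext^1_X(\cF_0,\cF_0)$ is by definition $\kappa(v)$ with $v=\partial_t$ (up to sign/scalar conventions, whence the ``up to $\CC^{*}$''). The sequence~\eqref{biaccaa} depends only on $\cF_1$: we have $\cH_0=\cH/t\cH$ and $t^2\cF\subset t\cH$, so $\cH_0$ is a subquotient of $\cF_1$. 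Explicitly, $\cH_0=\pi^{-1}(\lambda(\cA))/t\lambda(\cA)$, where $\pi\colon\cF_1\to\cF_0$ is the projection and $t\lambda(\cA)$ denotes the image of $\lambda(\cA)$ under $t\colon\cF_0\hra\cF_1$.

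Now we use functoriality of Yoneda classes. Pulling back the sequence for $\cF_1$ along $\lambda\colon\cA\to\cF_0$ gives $0\to\cF_0\to\pi^{-1}(\lambda\cA)\to\cA\to0$, with class $\kappa(v)\circ\lambda$. Pushing forward along $\phi\colon\cF_0\to\cB$, we quotient the sub $\cF_0$ by $\ker\phi=\lambda(\cA)$; this gives $0\to\cB\to\pi^{-1}(\lambda\cA)/t\lambda(\cA)\to\cA\to0$, with class $\phi\circ\kappa(v)\circ\lambda=\mu(\kappa(v))$. The middle term is exactly the description of $\cH_0$ above, and the maps agree with those of~\eqref{biaccaa} under the identification $t\cF/t\cH\cong\cB$. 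This proves the lemma.

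The main point needing care is matching the two descriptions of $\cB\hra\cH_0$: the map $t\cF/t\cH\cong\cF/\cH$ given by division by $t$, versus the map induced by the subobject $\cF_0\overset{t}{\lra}\cF_1$. These agree by construction, and any convention ambiguity only introduces a nonzero scalar, which the statement allows.
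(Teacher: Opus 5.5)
Your proof is correct and follows the same route as the paper. Both arguments restrict $\cF$ to the first-order thickening $\Spec\CC[t]/(t^2)$, whose extension class is $\kappa(v)$, and identify~\eqref{biaccaa} with its pull-back along $\lambda$ followed by push-out along $\phi$. Your explicit identification $\cH_0=\pi^{-1}(\lambda(\cA))/t\lambda(\cA)$, which uses that $t$ is a non-zero-divisor on the $T$-flat sheaf $\cF$, supplies the detail that the paper's brief argument leaves implicit.
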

\begin{proof}
Let  $Y\coloneq\Spec\CC[t]/(t^2)$. The non zero $v\in  \Theta_T(0)$ corresponds to an embedding $j\colon Y\hra T$ mapping the closed point to $0$, and 
$\kappa(v)$ is the extension class of the exact sequence 
\begin{equation}\label{effeeffe}
0\lra \cF_0\overset{\cdot \ov{t}}{\lra} j^{*}\cF\lra \cF_0\lra 0
\end{equation}
(ve view it as a sequence of sheaves on $X_0$). 
Let $\alpha$ be the morphism in~\eqref{accaeffebi}.
The map $j^{*}(\alpha)\colon j^{*}\cH\to j^{*}\cF$ has image equal to 
$\lambda(\cA)$, the subsheaf $\ov{t}\cH\subset j^{*}\cH$ is mapped to $\ov{t}\lambda(\cA)$. It follows that we have an  isomorphism between the extension in~\eqref{biaccaa} with the extension obtained from~\eqref{effeeffe} after pull-back via 
$\lambda\colon\cA\to\cF_0$ and push-out via $\phi\colon\cF_0\to\cB$. This proves the lemma.
\end{proof}
\subsubsection{First step of semistable reduction along a curve, I}\label{subsubsec:ridsemuno}
Let $a>k\ge 1$ and let $[\cE_2]\in(\cD^k\setminus\cD^{k+1})$. Let $(T,0)$ be a smooth pointed curve and let 
\begin{equation}\label{curvainemme}
f\colon (T,0)\to(\cM,[\cE_2])
\end{equation}
 be a (regular) map of pointed varieties. We assume that $f^{-1}(\cD)=\{0\}$ set-theoretically. Let $f^{*}\GG$ be the sheaf on $S^{[2]}\times T$ given by $(\Id_{S^{[2]}},f)^{*}\GG$. Then  $f^{*}\GG$  is $T$-flat because $\GG$ is $\cM$-flat. For $t\in T$ let $(f^{*}\GG)_t$ be the restriction of 
$f^{*}\GG$  to $S^{[2]}\times \{t\}$: it is isomorphic to $\cG(\cE_1,\cF)$ where $[\cF]=f(t)$.  If $t\not=0$ then $(f^{*}\GG)_t$ is $h_{S^{[2]}}$ slope stable by Proposition~\ref{prp:stabuno}. 
By~\eqref{tuttifrutti} we have an exact sequence
\begin{equation}\label{nicolpatente}
0\lra \cA(\cE_2)^{+}\lra \cG(\cE_1,\cE_2)\xrightarrow{\Psi_{\cE_2}^{-}} \cE_1[2]^{-}\otimes \Hom_S(\cE_2,\cE_1)^{\vee} \lra 0. 
\end{equation}
The above exact sequence is slope destabilizing (for any polarization) by Proposition~\ref{prp:liberoinstab}. 
 Moreover (see~\eqref{elvis}) we have the exact sequence
\begin{equation}\label{senzascuola}
0\lra \cG(\cE_1,\cH)\lra \cA(\cE_2)^{+}\lra  \cE_1[2]^{+}\otimes \Hom_S(\cE_2,\cE_1)^{\vee} \lra 0, 
\end{equation}
where $\cH$ is the vector bundle on $S$  appearing in the exact sequence in~\eqref{ravel}. Let $J_{\cE_2}$ be given by~\eqref{geiedue}. 
Then  by definition of $\cB(\cH,J_{\cE_2})^{+}$ 
\begin{equation}\label{midnightcowboy}
\cA(\cE_2)^{+}\cong \cB(\cH,J_{\cE_2})^{+}.
\end{equation}
  Hence Proposition~\ref{prp:freddo} gives the next result.
\begin{clm}\label{clm:ornella}
The extension class of~\eqref{senzascuola} belongs to 
\begin{equation*}
\Ext^1_{S^{[2]}}(\cE_1[2]^{+},\cG(\cE_1,\cH))\otimes \Hom_S(\cE_2,\cE_1)\overset{\BKR}{=\joinrel=\joinrel=}\Ext^1_S(\cE_1,\cH)\otimes \Hom_S(\cE_2,\cE_1).
\end{equation*}
\end{clm}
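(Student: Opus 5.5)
The plan is to read the claim off from the identification \eqref{midnightcowboy} together with Proposition~\ref{prp:freddo}, applied with $\cF=\cE_2$, $U_k=J_{\cE_2}$. Beyond locating the class in the displayed group, I also want to show that it is the tautological element, i.e.~the one corresponding to the inclusion $j_{\cE_2}\colon\Hom_S(\cE_2,\cE_1)^{\vee}\hra\Ext^1_S(\cE_1,\cH)$. This refined form is what Lemma~\ref{lmm:estmod} will need later.

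First, the ambient group. The exact sequence~\eqref{senzascuola} has sub $\cG(\cE_1,\cH)$ and quotient $\cE_1[2]^{+}\otimes\Hom_S(\cE_2,\cE_1)^{\vee}$. Hence its class lies in $\Ext^1_{S^{[2]}}(\cE_1[2]^{+},\cG(\cE_1,\cH))\otimes\Hom_S(\cE_2,\cE_1)$. Here $[\cH]\in\cM_{v(a-k)}\setminus\cD_{v(a-k)}$ by Proposition~\ref{prp:divuti}(4) and Remark~\ref{rmk:anatra}. So Lemma~\ref{lmm:mattiatorre}, case $p=1$ of~\eqref{extrigidogi}, gives the BKR isomorphism of this group with $\Ext^1_S(\cE_1,\cH)\otimes\Hom_S(\cE_2,\cE_1)$. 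This proves the displayed statement.

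Next, the class itself. Set $V_k=\Hom_S(\cE_2,\cE_1)^{\vee}$. The sequence~\eqref{ravel} is exactly of the form~\eqref{borges} with $U_k$ replaced by $V_k$. Its extension class in $V_k^{\vee}\otimes\Ext^1_S(\cE_1,\cH)$ is the map obtained by applying $\Hom_S(\cE_1,-)$ to~\eqref{ravel} and taking the coboundary. By Remark~\ref{rmk:arancia} this map is the injection $j_{\cE_2}$, with image $J_{\cE_2}$. Identifying $V_k$ with $J_{\cE_2}$ via $j_{\cE_2}$, the sequence~\eqref{ravel} becomes the sequence~\eqref{borges} for $(\cH,J_{\cE_2})$. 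Consequently the sequence~\eqref{musetti} is the sequence~\eqref{laver}. Moreover, $\cA(\cE_2)^{+}=\ker\Psi^{-}_{\cE_2}$ coincides with $\cA(\cH,J_{\cE_2})^{+}$ of Definition~\ref{dfn:apiumeno}, and~\eqref{senzascuola} is the sequence~\eqref{bizet}. Proposition~\ref{prp:freddo} then says that its class is the inclusion $J_{\cE_2}\hra\Ext^1_S(\cE_1,\cH)$, which under the identification above is $j_{\cE_2}$.

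The only delicate point is bookkeeping. I must check that the BKR identification used in the proof of Proposition~\ref{prp:freddo}, namely the decomposition~\eqref{fabian} and the maps $\Lambda^{\pm}$, matches the one in Lemma~\ref{lmm:mattiatorre}. I must also check that no stray scalar is introduced; for instance, Claim~\ref{clm:raddoppia} produced a factor $2$ in a similar situation. Any nonzero scalar is harmless for the later applications, where classes only matter up to $\CC^{*}$.
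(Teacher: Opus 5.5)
Your proposal is correct and is essentially the paper's argument. The paper likewise views \eqref{ravel} as the sequence \eqref{borges} with $U_k=J_{\cE_2}$, so that \eqref{senzascuola} becomes \eqref{bizet} and Proposition~\ref{prp:freddo} identifies its class with the inclusion (yielding \eqref{midnightcowboy}), with Lemma~\ref{lmm:mattiatorre} supplying the BKR identification of the ambient group. Your ordering is in fact slightly cleaner than the paper's: the paper asserts \eqref{midnightcowboy} ``by definition'' before invoking Proposition~\ref{prp:freddo}, whereas that isomorphism is really a consequence of it.
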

Let $\wt{f^{*}\GG}$ be the elementary modification of $f^{*}\GG$ determined by the  exact sequence in~\eqref{nicolpatente}, i.e.~the sheaf on $S^{[2]}\times T$ fitting into the exact sequence
\begin{equation}
0\lra \wt{f^{*}\GG} \lra f^{*}\GG\overset{\wt{\Psi}_{\cE_2}^{-}}{\lra}  
\iota_{*}\cE_1[2]^{-}\otimes \Hom_S(\cE_2,\cE_1)^{\vee}\lra 0.
\end{equation}
For $t\in T$ let $(\wt{f^{*}\GG})_t\coloneq \wt{f^{*}\GG}_{|S^{[2]}\times \{t\}}$. Then $(\wt{f^{*}\GG})_t\cong (f^{*}\GG)_t$ if $t\not=0$. On the other hand  
$(\wt{f^{*}\GG})_0$ fits into the exact sequence (see Subsubsection~\ref{subsubsec:elemod} and Claim~\ref{clm:ornella})
\begin{equation}\label{potenzstab}
0\lra \cE_1[2]^{-}\otimes \Hom_S(\cE_2,\cE_1)^{\vee} \lra (\wt{f^{*}\GG})_0  \lra  \cB(\cH,J_{\cE_2})^{+}\lra 0. 
\end{equation}
Here we determine the extension class of the above exact sequence. First we note that by Proposition~\ref{prp:extrigidibical} we have the BKR isomorphism
\begin{equation}\label{vanoni}
 \Hom_S(\cE_2,\cE_1)^{\vee}\otimes \Ann J_{\cE_2}\overset{\beta_{\cE_2}}{\overset{\sim}{\lra}}\Ext^1_{S^{[2]}}(\cB(\cH,J_{\cE_2})^{+},\cE_1[2]^{-}\otimes \Hom_S(\cE_2,\cE_1)^{\vee}).
\end{equation}
Let $\ov{df}_0$ be the composition of the linear maps
\begin{equation}\label{proiettonorm}
\Theta_{T}(0)\overset{df_0}{\lra}  \Theta_{\cM}([\cE_2])\overset{p}{\lra} 
\cN_{\cD^k/\cM}([\cE_2]),
\end{equation}
where $p$ is the projection. By Remark~\ref{rmk:eccoilnormale} we have the identification
\begin{equation}\label{delellis}
\cN_{\cD^k/\cM}([\cE_2])=\Hom_S(\cE_2,\cE_1)^{\vee}\otimes\Ext^1_S(\cE_2,\cE_1).
\end{equation}
 Applying the functor $\Hom_S(-,\cE_1)$ to the exact sequence in~\eqref{ravel} we get the inclusion 
\begin{equation}\label{cheever}
\Ext^1_S(\cE_2,\cE_1)\hra \Ext^1_S(\cH,\cE_1).
\end{equation}
\begin{lmm}\label{lmm:swimmer}
The inclusion in~\eqref{cheever} has image equal to $\Ann J_{\cE_2}$.
\end{lmm}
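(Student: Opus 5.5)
Apply $\Hom_S(-,\cE_1)$ to the exact sequence~\eqref{ravel}, writing $V_k\coloneq\Hom_S(\cE_2,\cE_1)^{\vee}$. Since $\Hom_S(\cH,\cE_1)=0$ (Remark~\ref{rmk:anatra}) and $\Ext^1_S(\cE_1,\cE_1)=0$, we get the exact sequence~\eqref{baroque}:
\begin{equation*}
0\to \Ext^1_S(\cE_2,\cE_1)\to \Ext^1_S(\cH,\cE_1)\overset{\delta}{\to} \Ext^2_S(\cE_1,\cE_1)\otimes V_k^{\vee}\to 0 .
\end{equation*}
Thus the image of~\eqref{cheever} equals $\ker\delta$, and it has dimension $2a-2(a-k)+k-k=k$. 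Indeed $\ext^1_S(\cH,\cE_1)=2k$ and the target of $\delta$ has dimension $k$. On the other hand $\Ann J_{\cE_2}$ also has dimension $2k-k=k$, since $\dim J_{\cE_2}=k$. So it suffices to prove one inclusion, $\ker\delta\subset \Ann J_{\cE_2}$, or equivalently $\delta$ factoring through restriction to $J_{\cE_2}$.

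The key step identifies $\delta$ under Serre duality. The map $\delta$ is Yoneda product with the extension class $e\in\Ext^1_S(\cE_1\otimes V_k,\cH)=V_k^{\vee}\otimes\Ext^1_S(\cE_1,\cH)$ of~\eqref{ravel}: for $\xi\in\Ext^1_S(\cH,\cE_1)$ we have $\delta(\xi)=\xi\circ e\in\Ext^2_S(\cE_1\otimes V_k,\cE_1)$. By construction (Remark~\ref{rmk:arancia}), $e$ is exactly the element corresponding to the coboundary $j_{\cE_2}\colon V_k\hra\Ext^1_S(\cE_1,\cH)$. So for $v\in V_k$ the $v$-component of $\delta(\xi)$ is $\xi\circ j_{\cE_2}(v)\in\Ext^2_S(\cE_1,\cE_1)\cong\CC$. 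By Serre duality this Yoneda pairing $\Ext^1_S(\cH,\cE_1)\times\Ext^1_S(\cE_1,\cH)\to\Ext^2_S(\cE_1,\cE_1)=\CC$ is the perfect pairing used to identify $\Ext^1_S(\cH,\cE_1)=\Ext^1_S(\cE_1,\cH)^{\vee}$ (via the trace $\Ext^2_S(\cE_1,\cE_1)\to H^2(\cO_S)$). Hence $\delta(\xi)=0$ if and only if $\xi$ pairs to zero with every element of $J_{\cE_2}$, that is, $\xi\in\Ann J_{\cE_2}$. This gives $\ker\delta=\Ann J_{\cE_2}$ directly, without even needing the dimension count.

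The only delicate point is the compatibility of signs and conventions: the coboundary of the long exact sequence must be Yoneda composition with the same class $e$ that defines $j_{\cE_2}$, and the annihilator must be taken with respect to the Serre-duality pairing. Sign ambiguities do not affect a kernel, so this is routine.
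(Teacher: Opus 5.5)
Your proof is correct and is essentially the paper's argument: apply $\Hom_S(-,\cE_1)$ to~\eqref{ravel}, identify the coboundary with Yoneda product against the extension class $e$, whose components are the $j_{\cE_2}(v)$, and conclude via the Serre-duality pairing $\Ext^1_S(\cH,\cE_1)\times\Ext^1_S(\cE_1,\cH)\to\Ext^2_S(\cE_1,\cE_1)\cong\CC$. Two harmless slips, neither used by your final argument: surjectivity of $\delta$ comes from $\Ext^2_S(\cE_2,\cE_1)=0$ (by stability of $\cE_2$), not from $\Hom_S(\cH,\cE_1)=0$; and the expression $2a-2(a-k)+k-k$ equals $2k$, not $k$ --- but, as you observe, the dimension count is superfluous.
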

\begin{proof}
 Applying the functor $\Hom_S(-,\cE_1)$ to the exact sequence in~\eqref{ravel} we get the exact sequence
\begin{equation*}
0\lra \Ext^1_S(\cE_2,\cE_1)\lra \Ext^1_S(\cH,\cE_1)\xrightarrow{\partial_{\cE_2}} \Ext^2_S(\cE_1,\cE_1)
\otimes \Hom_S(\cE_2,\cE_1)\lra 0.
\end{equation*}
Let $e\in \Ext^1_S(\cE_1,\cH)\otimes \Hom_S(\cE_2,\cE_1)$ be  the extension class of~\eqref{ravel}. Then $e=\sum_{i=1}^k j_{\cE_2}(v_i)\otimes v_i$ where $\{v_1,\ldots,v_k\}$ is a basis of 
$\Hom_S(\cE_2,\cE_1)$. 
 If $s\in \Ext^1_S(\cH,\cE_1)$ then $\partial_{\cE_2} (s)=s\cup e=\sum_{i=1}^k (s\cup  j_{\cE_2}(v_i))\otimes v_i$. The result follows because, since 
 $\Ext^2_S(\cE_1,\cE_1)=\CC\Id_{\cE_2}$, the cup product 
 $(s\cup  j_{\cE_2}(v_i))$ is identified with Serre duality.
\end{proof}
The identification in~\eqref{delellis} together with Lemma~\ref{lmm:swimmer} gives the isomomorphism
\begin{equation}\label{inclusivo}
\alpha_{\cE_2}\colon\cN_{\cD^k/\cM}([\cE_2])\overset{\sim}{\lra}
\Hom_S(\cE_2,\cE_1)^{\vee}\otimes\Ann J_{\cE_2}.
\end{equation}
Composing  $\alpha_{\cE_2}$ and the isomorphism in~\eqref{vanoni}
we get the  isomorphism
\begin{equation}\label{vivamarcello}
\cN_{\cD^k/\cM}([\cE_2])\overset{\beta_{\cE_2}\circ\alpha_{\cE_2}}{\xrightarrow{\ \ \ \sim\ \ \ }}\Ext^1_{S^{[2]}}(\cB(\cH,J_{\cE_2})^{+},\cE_1[2]^{-}\otimes \Hom_S(\cE_2,\cE_1)^{\vee}).
\end{equation}
\begin{prp}\label{prp:lepiogge}
Let $v$ be a generator of $\im\ov{df}_0$ (see~\eqref{proiettonorm}). The extension class of the exact sequence in~\eqref{potenzstab} is equal to $\beta_{\cE_2}\circ\alpha_{\cE_2}(v)$.
\end{prp}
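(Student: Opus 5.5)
Lemma~\ref{lmm:estmod} already expresses the extension class of~\eqref{potenzstab} as $\mu\circ\kappa(v_T)$, where $v_T$ generates $\Theta_T(0)$, $\kappa$ is the Kodaira--Spencer map of $f^{*}\GG$ at $0$, and $\mu(e)=\lambda\cup e\cup\Psi^{-}_{\cE_2}$, with $\lambda\colon\cA(\cE_2)^{+}\hra\cG(\cE_1,\cE_2)$ the inclusion. The plan is to compute $\mu\circ\kappa$ on the $K3$ side via BKR, and to compare it with the description~\eqref{vivamarcello} of the normal space. In outline:
\begin{enumerate}
\item Identify $\kappa$ with a BKR map on deformations.
\item Compute the two compositions with $\Psi^{-}_{\cE_2}$ and $\lambda$ on $S^2$.
\item Show the resulting map factors through the normal projection $p$ and agrees with $\beta_{\cE_2}\circ\alpha_{\cE_2}$.
\end{enumerate}

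For the first step, $\kappa$ factors as $\Theta_T(0)\xrightarrow{df_0}\Ext^1_S(\cE_2,\cE_2)\to\Ext^1_{S^{[2]}}(\cG(\cE_1,\cE_2),\cG(\cE_1,\cE_2))$. The second map sends $e$ to the BKR image of the $\Sigma_2$-invariant class $(\Id_{\cE_1}\boxtimes e,\ e\boxtimes\Id_{\cE_1})$ on $\cE_1\boxtimes\cE_2\oplus\cE_2\boxtimes\cE_1$. This holds because $\GG$ is obtained from the universal family by the BKR functor, so Kodaira--Spencer classes correspond.

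For the second step, write $\psi\colon\cE_2\to\cE_1\otimes V_k$ and $\iota_{\cH}\colon\cH\hra\cE_2$ for the maps of~\eqref{ravel}. Composing with $\Psi^{-}_{\cE_2}$, which is induced by $\psi$ followed by the projection~\eqref{aaspezza} onto the $-$ part, turns $e$ into the class of $(\Id\boxtimes(\psi\circ e),(\psi\circ e)\boxtimes\Id)$, projected to $\cE_1[2]^{-}$. Next, precompose with $\lambda$. By~\eqref{senzascuola}, restricting to the subsheaf $\cG(\cE_1,\cH)$ gives $\psi\circ e\circ\iota_{\cH}\in\Ext^1_S(\cH,\cE_1)\otimes V_k$. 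Combining with~\eqref{extgirigido} and the long exact sequence computed in Proposition~\ref{prp:extrigidibical}, the group $\Ext^1(\cB(\cH,J_{\cE_2})^{+},\cE_1[2]^{-})$ injects into $\Ext^1(\cG(\cE_1,\cH),\cE_1[2]^{-})=\Ext^1_S(\cH,\cE_1)$ with image $\Ann J_{\cE_2}$. So it suffices to compute this restriction, and up to the factor coming from the $\pm$ decomposition (the maps $\Lambda^{\pm}$ in the proof of Proposition~\ref{prp:freddo}) the class is $\psi\circ e\circ\iota_{\cH}$.

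For the third step, we need $\psi\circ e\circ\iota_{\cH}$ to be exactly the image of $e$ under
\[
\Ext^1_S(\cE_2,\cE_2)\to\Ext^1_S(\cE_2,\cE_1)\otimes V_k\hra\Ext^1_S(\cH,\cE_1)\otimes V_k .
\]
The first arrow is $\Phi_{\cE_2}$ from~\eqref{pippobaudo}, whose kernel is $\Theta\cD^k$ by the proof of Lemma~\ref{lmm:codtandetvar}. It therefore factors through the projection $p$ of~\eqref{proiettonorm} and realizes~\eqref{delellis}. The second arrow is~\eqref{cheever}, which lands in $\Ann J_{\cE_2}$ by Lemma~\ref{lmm:swimmer}. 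Hence $\mu\circ\kappa(v_T)$ equals $\beta_{\cE_2}\circ\alpha_{\cE_2}(\ov{df}_0(v_T))$ up to a nonzero scalar, which is the statement since extension classes are only defined up to $\CC^{*}$.

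The main obstacle is the bookkeeping in the second step. One must check that the BKR identification of the $-$ projection and the restriction to $\cG(\cE_1,\cH)$ produce the same class, and not zero or its image under the wrong factor $\Lambda^{+}$. Here I would use the invariant-decomposition computation of Proposition~\ref{prp:freddo} verbatim, with the sign for the $-$ part giving $(f,-f)$, which is still nonzero and injective.
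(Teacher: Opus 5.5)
Your proposal is correct and is essentially the paper's proof. Lemma~\ref{lmm:estmod} reduces the claim to $\mu\circ\kappa$, and a BKR-compatible commutative diagram identifies this with composition with $\psi$ followed by restriction along $\cH\hra\cE_2$, which is $\alpha_{\cE_2}$ transported by $\beta_{\cE_2}$. Your explicit restriction to $\cG(\cE_1,\cH)$ uses the fact that $\beta_{\cE_2}$ comes from the injective restriction map in the proof of Proposition~\ref{prp:extrigidibical}; this just spells out the commutativity that the paper asserts ``by the BKR equivalence''.
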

\begin{proof}
 Let $\cG\coloneq \cG(\cE_2,\cE_1)$ and $V_k\coloneq \Hom_S(\cE_2,\cE_1)^{\vee}$. By the BKR equivalence we have the commutative diagram
\begin{equation*}
\xymatrix{ \Ext^1_S(\cE_2,\cE_2)\ar[r]^{\zeta}\ar[d]_{\BKR} &  \Ext^1_S(\cE_2,\cE_1\otimes V_k)\ar[r]^{\xi}\ar[d]_{\BKR} & V_k\otimes\Ann J_{\cE_2}\ar[d]_{=} \\
\Ext^1_{S^{[2]}}(\cG,\cG) \ar[r]^{\ov{\zeta}\qquad\ }     &  \Ext^1_{S^{[2]}}(\cG,\cE_1[2]^{-}\otimes V_k) \ar[r]^{\quad\quad\ov{\xi}}      & V_k\otimes\Ann J_{\cE_2} \\ }
\end{equation*}
Here $\zeta$, $\xi$  are obtained by applying the functors $\Hom_S(\cE_2,-)$, $\Hom_S(-,\cE_1\otimes V_k)$ to the exact sequence in~\eqref{ravel} respectively, and the domain of $\xi$  is as indicated by Lemma~\ref{lmm:swimmer}.
Moreover the maps $\ov{\zeta}=\BKR(\zeta)$, $\ov{\xi}=\BKR(\xi)$ are obtained by applying the functors 
$\Hom_{S^{[2]}}(\cG,-)$, $\Hom_S(-,\cE_1[2]^{-}\otimes V_k)$ to the exact sequence in~\eqref{nicolpatente} respectively, given the 
isomorphisms $\cA(\cE_2)^{+}\cong \cB(\cH,J_{\cE_2})^{+}$ and $\beta_{\cE_2}$ in~\eqref{vanoni}.

Let $\wt{v}\in \Ext^1_S(\cE_2,\cE_2)$ be such that $v=p(\wt{v})$, where $p$ is the projection to the normal space (see~\eqref{proiettonorm}). By Lemma~\ref{lmm:estmod} the extension class of~\eqref{potenzstab} is given (up to $\CC^{*}$)
by  $\beta_{\cE_2}(\ov{\xi}\circ \ov{\zeta}(\BKR(\wt{v})))$. 
On the other hand $\alpha_{\cE_2}(v)=\xi\circ \zeta(\wt{v})$. By commutativity of the  diagram, $\alpha_{\cE_2}(v)=\ov{\xi}\circ \ov{\zeta}(\BKR(\wt{v}))$.
\end{proof}
\begin{crl}\label{crl:gendirstab}
Let $v$  generate $\im\ov{df(0)}$. If $\alpha_{\cE_2}(v)\colon \Hom_S(\cE_2,\cE_1)\to\Ann J_{\cE_2}$ 
is an isomorphism  then $(\wt{f^{*}\GG})_0$ is isomorphic to the vector bundle  $\cB(\cH,J_{\cE_2})$, and it is $h_{S^{[2]}}$ slope stable.
\end{crl}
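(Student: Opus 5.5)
The plan is to combine Proposition~\ref{prp:lepiogge} with Definition~\ref{dfn:fasciobicaltilde} and Proposition~\ref{prp:stabcalbi}. Set $V_k\coloneq\Hom_S(\cE_2,\cE_1)^{\vee}$. By Proposition~\ref{prp:lepiogge}, and up to a nonzero scalar, the extension class of~\eqref{potenzstab} is $\beta_{\cE_2}(\alpha_{\cE_2}(v))$. Here $\alpha_{\cE_2}(v)\in V_k\otimes\Ann J_{\cE_2}$, which we read as a linear map $\Hom_S(\cE_2,\cE_1)=V_k^{\vee}\to\Ann J_{\cE_2}$. Both spaces have dimension $k$: $\dim J_{\cE_2}=k$ and $\ext^1_S(\cE_1,\cH)=2k$, so $\dim\Ann J_{\cE_2}=k$. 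By hypothesis this map is an isomorphism. Using it, I identify $V_k$ with $(\Ann J_{\cE_2})^{\vee}$.

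Under this identification, the extension~\eqref{potenzstab} becomes an extension
\[
0\lra \cE_1[2]^{-}\otimes(\Ann J_{\cE_2})^{\vee}\lra(\wt{f^{*}\GG})_0\lra\cB(\cH,J_{\cE_2})^{+}\lra0 .
\]
Its class in $\Ext^1_{S^{[2]}}(\cB(\cH,J_{\cE_2})^{+},\cE_1[2]^{-})\otimes(\Ann J_{\cE_2})^{\vee}=\Ann J_{\cE_2}\otimes(\Ann J_{\cE_2})^{\vee}$ is a nonzero multiple of the identity. I then need to check that $\beta_{\cE_2}$ is indeed the BKR identification of Proposition~\ref{prp:extrigidibical} with $U_k=J_{\cE_2}$, so that no extra twist appears.

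A scalar multiple of the extension class gives an isomorphic middle term: rescale the inclusion of the subsheaf by the inverse scalar. Hence $(\wt{f^{*}\GG})_0\cong\cB(\cH,J_{\cE_2})$ by Definition~\ref{dfn:fasciobicaltilde}. Here $J_{\cE_2}\subset\Ext^1_S(\cE_1,\cH)$ is $k$-dimensional and $[\cH]\in\cM_{v(a-k)}\setminus\cD_{v(a-k)}$, so the hypotheses of Definition~\ref{dfn:fasciobical} hold. The sheaf is locally free because it is an extension of the locally free sheaves $\cB(\cH,J_{\cE_2})^{+}$ and $\cE_1[2]^{-}\otimes(\Ann J_{\cE_2})^{\vee}$. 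Slope stability for $h_{S^{[2]}}$ then follows directly from Proposition~\ref{prp:stabcalbi}, since $h_{S^{[2]}}$ is ${\mathsf a}(\ww(D,a))$-suitable for $\bm{\mu}(h_S)$.

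The main obstacle is bookkeeping: making sure the identification of the normal space in~\eqref{inclusivo}, composed with $\beta_{\cE_2}$, really corresponds to ``a map $V_k^{\vee}\to\Ann J_{\cE_2}$''. One must also check that an invertible such map produces, after the change of basis on the $\cE_1[2]^{-}$ factor, exactly the identity class of Definition~\ref{dfn:fasciobicaltilde}. This is a linear-algebra check, tracing through Lemma~\ref{lmm:swimmer} and the proof of Proposition~\ref{prp:extrigidibical}.
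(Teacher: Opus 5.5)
Your proposal is correct and follows the paper's own argument. The paper also uses Proposition~\ref{prp:lepiogge} to identify the class of~\eqref{potenzstab} with $\beta_{\cE_2}\circ\alpha_{\cE_2}(v)$, and then uses the transpose $\alpha_{\cE_2}(v)^{t}\colon(\Ann J_{\cE_2})^{\vee}\to\Hom_S(\cE_2,\cE_1)^{\vee}$ to identify~\eqref{potenzstab} with the defining extension~\eqref{bicaltildesucc} of $\cB(\cH,J_{\cE_2})$, whose class is the identity; stability then comes from Proposition~\ref{prp:stabcalbi}, exactly as you say.
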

\begin{proof}
By Definition~\ref{dfn:fasciobicaltilde} the extension class of the exact sequence
\begin{equation}\label{novembre}
0\lra  \cE_1[2]^{-}\otimes (\Ann J_{\cE_2})^{\vee}\lra \cB(\cH,J_{\cE_2})\overset{\lambda}{\lra} 
\cB(\cH,J_{\cE_2})^{+}\lra  0
\end{equation}
 is given by the identity $\Id\colon \Ann J_{\cE_2}\to \Ann J_{\cE_2}$. 
By Proposition~\ref{prp:lepiogge} the extension class of the exact sequence in~\eqref{potenzstab} is equal to 
$\beta_{\cE_2}\circ\alpha_{\cE_2}(v)$. Since $\alpha_{\cE_2}(v)\colon \Hom_S(\cE_2,\cE_1)\to \Ann J_{\cE_2}$ is an isomorphism, the two extensions are isomorphic. More precisely: the transpose  
$\alpha^t_{\cE_2}(v)\colon \Ann J_{\cE_2}^{\vee}\to  \Hom_S(\cE_2,\cE_1)^{\vee}$ is an isomorphism defining an isomorphism 
$\cE_1[2]^{-}\otimes (\Ann J_{\cE_2})^{\vee}\xrightarrow{\sim}\cE_1[2]^{-}\otimes \Hom_S(\cE_2,\cE_1)^{\vee}$ which extends to an isomorphism between  the exact sequence in~\eqref{novembre} and the exact sequence in~\eqref{potenzstab}.
\end{proof}
\subsubsection{First step of semistable reduction along a curve, II}\label{subsubsec:annalisa}
This is the analogue of Subsusbsection~\ref{subsubsec:ridsemuno} with $\cD^k\setminus\cD^{k+1}$ replaced by $ \cD^a$. We adopt notation introduced in that subsusbsection, and we avoid repeating statements that carry over without modifications.
 Let $[\cE_2]\in \cD^a$, and let  $f\colon (T,0)\to(\cM,[\cE_2])$ be a (regular) map of pointed varieties, with $T$  a smooth  curve. We assume that  that  (set-theoretically)
 $f^{-1}(\cD)=\{0\}$. By~\eqref{harnargi} we have an exact sequence
\begin{equation}\label{cavatina}
0\lra \cA(\cE_2)^{+}\lra \cG(\cE_1,\cE_2) \overset{\Psi^{-}_{\cE_2}}{\lra} \cE_1[2]^{-}\otimes V_a\lra 0.
\end{equation}
The above exact sequence is slope destabilizing (for any polarization) by Proposition~\ref{prp:singolareinstab}. 
  We have the exact sequence (see~\eqref{psipiu})
\begin{equation}\label{attesanicol}
0\lra \cA(\cE_2)^{+} \lra \cE_1[2]^{+}\otimes V_a\overset{\Phi^{+}_{\cE_2}}{\lra} 
\cG(\cE_1,\CC_{p})\lra 0.
\end{equation}
Let $p\in S$ be the unique singular point of $\cE_2$. Recall that  $f(p)^t\colon V_a\hra \cE_1(p)^{\vee}$ is injective. Abusing notation we denote 
$\im f(p)^t$ by $V_a$. Proposition~\ref{prp:nicolciacco} gives an isomorphism
\begin{equation}\label{maduro}
\cA(\cE_2)^{+} \overset{\sim}{\lra} \cB(p,V_a)^{+}.
\end{equation}
Let $\wt{f^{*}\GG}$ be the sheaf on $S^{[2]}\times T$ fitting into the exact sequence
\begin{equation}
0\lra \wt{f^{*}\GG} \lra f^{*}\GG\overset{\wt{\Psi}_{\cE_2}^{-}}{\lra}  
\iota_{*}\cE_1[2]^{-}\otimes V_a\lra 0.
\end{equation}
Then $(\wt{f^{*}\GG})_t\cong (f^{*}\GG)_t$ if $t\not=0$, and
$(\wt{f^{*}\GG})_0$ fits into the exact sequence (see Subsubsection~\ref{subsubsec:elemod} and~\eqref{maduro})
\begin{equation}\label{masha}
0\lra \cE_1[2]^{-}\otimes V_a \lra (\wt{f^{*}\GG})_0  \lra  \cB(p,V_a)^{+}\lra 0. 
\end{equation}
We describe the extension class of the above exact sequence. We have (see Proposition~\ref{prp:alglin}) the  BKR isomorphism
\begin{equation}\label{passacaglia}
\beta_{\cE_2}\colon V_a\otimes \Ann V_a\xrightarrow{\sim}\Ext^1_{S^{[2]}}(\cB(p,V_a)^{+},\cE_1[2]^{-}\otimes V_a).
\end{equation}
Let $\ov{df}_0$ be the composition of the linear maps
\begin{equation}\label{gigiproietti}
\Theta_{T}(0)\overset{df_0}{\lra}  \Theta_{\cM}([\cE_2])\overset{p}{\lra} 
\cN_{\cD^k/\cM}([\cE_2]),
\end{equation}
where $p$ is the projection. By  Remark~\ref{rmk:eccoilnormale} we have the isomorphism
\begin{equation}\label{ammirati}
\cN_{\cD^a/\cM}([\cE_2])\xrightarrow{\sim}\Hom_S(\cE_2,\cE_1)^{\vee}\otimes\Ext^1_S(\cE_2,\cE_1).
\end{equation}
Applying the functor $\Hom_S(-,\cE_1)$ to~\eqref{fabrizio} we get the isomorphism
\begin{equation}
\CC\Id_{\cE_1}\otimes V_a^{\vee}\xrightarrow{\sim} \Hom_S(\cE_2,\cE_1),
\end{equation}
and the exact sequence
\begin{equation}\label{killmesoftly}
0\lra \Ext^1_S(\cE_2,\cE_1)\overset{\partial}{\lra}  \Ext^2_S(\CC_p,\cE_1) \overset{f(p)}{\lra} 
\Ext^2_S(\cE_1,\cE_1)\otimes V_a^{\vee}\lra 0.
\end{equation}
In the above exact sequence $f(p)$ is the map in~\eqref{mappaeffepi}, and one identifies it with the map $\Ext^2_S(\CC_p,\cE_1) \lra
\Ext^2_S(\cE_1,\cE_1)$ via Serre duality. It follows that the coboundary map in~\eqref{killmesoftly} defines an isomorphism
\begin{equation}\label{winner}
\begin{matrix}
\Ext^1_S(\cE_2,\cE_1) & \xrightarrow{\sim} & \Ann V_a\subset \Ext^2_S(\CC_p,\cE_1)\\
x & \mapsto & \partial(x)\qquad\qquad\qquad\quad
\end{matrix}
\end{equation}
Hence  the isomorphism in~\eqref{ammirati} reads
\begin{equation}\label{sciarpabella}
\alpha_{\cE_2}\colon \cN_{\cD^a/\cM}([\cE_2])\xrightarrow{\sim}V_a\otimes\Ann V_a.
\end{equation}
\begin{prp}
Let $v$ be a generator of $\im\ov{df}_0$ (see~\eqref{proiettonorm}). The extension class of the exact sequence in~\eqref{masha} is equal to $\beta_{\cE_2}\circ\alpha_{\cE_2}(v)$.
\end{prp}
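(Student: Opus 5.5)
The plan is to copy the proof of Proposition~\ref{prp:lepiogge}. That means comparing, through the BKR equivalence, two chains of Ext maps: one on $S$, built from the sequence~\eqref{fabrizio}, and one on $S^{[2]}$, built from the destabilizing sequence~\eqref{cavatina}. The link between them is Lemma~\ref{lmm:modificapiatta} and Lemma~\ref{lmm:estmod}.

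First, Lemma~\ref{lmm:estmod} identifies the extension class of~\eqref{masha}, up to $\CC^{*}$, with $\mu\circ\kappa(v')$, where $v'$ generates $\Theta_T(0)$. Here $\kappa(v')=\BKR(\wt v)$, with $\wt v\in\Ext^1_S(\cE_2,\cE_2)$ the tangent vector $df_0(v')$; this uses that $\GG$ is obtained from the universal family by the BKR construction, so Kodaira--Spencer classes correspond. The map $\mu$ is $e\mapsto \lambda\cup e\cup\Psi^{-}_{\cE_2}$, where $\lambda\colon\cA(\cE_2)^{+}\hra\cG(\cE_1,\cE_2)$. I then factor $\mu$ as
\begin{equation*}
\Ext^1(\cG,\cG)\xrightarrow{\ov\zeta}\Ext^1(\cG,\cE_1[2]^{-}\otimes V_a)\xrightarrow{\ov\xi}\Ext^1(\cA(\cE_2)^{+},\cE_1[2]^{-}\otimes V_a),
\end{equation*}
and use~\eqref{maduro} to identify the target with $\Ext^1(\cB(p,V_a)^{+},\cE_1[2]^{-}\otimes V_a)$, which $\beta_{\cE_2}$ identifies with $V_a\otimes\Ann V_a$.

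Second, on $S$ the corresponding maps are $\zeta\colon\Ext^1_S(\cE_2,\cE_2)\to\Ext^1_S(\cE_2,\cE_1\otimes V_a)$, obtained by composing with $\cE_2\hra\cE_1\otimes V_a$ from~\eqref{fabrizio}, followed by the isomorphism~\eqref{winner}, $\Ext^1_S(\cE_2,\cE_1)\cong\Ann V_a$, given by the coboundary $\partial$ of~\eqref{killmesoftly}. By the proof of Lemma~\ref{lmm:codtandetvar} and Remark~\ref{rmk:eccoilnormale}, the composite $\zeta$ is exactly the projection onto the normal space $\cN_{\cD^a/\cM}([\cE_2])=V_a\otimes\Ext^1_S(\cE_2,\cE_1)$. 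Hence $\alpha_{\cE_2}(v)$ is the image of $\wt v$ under $(\Id\otimes\partial)\circ\zeta$.

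Third, and this is the main obstacle, I must check that BKR intertwines $(\Id\otimes\partial)\circ\zeta$ with $\beta_{\cE_2}^{-1}\circ\ov\xi\circ\ov\zeta$. Unlike the case $k<a$, here $\cE_2$ is a kernel rather than an extension, so the relevant $S^{[2]}$-sequences are~\eqref{massacarrara}/\eqref{piumenopsi} and~\eqref{attesanicol}. The isomorphism $\beta_{\cE_2}$ was built in Proposition~\ref{prp:alglin} via the coboundary $\Ext^1(\cB(p,V_a)^{+},\cE_1[2]^{-})\to\Ext^2(\cG(\cE_1,\CC_p),\cE_1[2]^{-})\cong\cE_1(p)$. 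I would therefore proceed in three steps.
\begin{enumerate}
\item Show that the morphism of sequences~\eqref{piumenopsi}$\to$\eqref{attesanicol} (identity on $\cE_1[2]^{+}\otimes V_a$, factor $2$ on $\cG(\cE_1,\CC_p)$ by Claim~\ref{clm:raddoppia}) makes pulling back along $\lambda$ compatible with the coboundary into $\Ext^2(\cG(\cE_1,\CC_p),\cE_1[2]^{-}\otimes V_a)$, up to this scalar.
\item Show that under BKR, the $\Sigma_2$-invariant part of $\Ext^2_{S^2}(\cE_1\boxtimes\CC_p\oplus\CC_p\boxtimes\cE_1,(\cE_1\boxtimes\cE_1)^{\rm tw})$ is $\Ext^2_S(\CC_p,\cE_1)$ (Proposition~\ref{prp:extgiedue}).
\item Show that the coboundary of~\eqref{massacarrara} corresponds to the coboundary $\partial$ of~\eqref{fabrizio} tensored with $\Id_{\cE_1}$, because~\eqref{massacarrara} is obtained from~\eqref{fabrizio} by applying $\cG(\cE_1,-)$ and BKR is compatible with Yoneda products.
\end{enumerate}
Together these give a commutative diagram analogous to the one in the proof of Proposition~\ref{prp:lepiogge}. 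Equality then holds up to a nonzero scalar, which is harmless, since $v$ is only defined up to $\CC^{*}$.
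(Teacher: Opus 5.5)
Your proposal is correct and is essentially the paper's proof. The paper writes the same BKR commutative diagram relating $\zeta,\xi$ (from~\eqref{fabrizio}) to $\ov\zeta,\ov\xi$ (from~\eqref{cavatina}) and then applies Lemma~\ref{lmm:estmod}, simply asserting the commutativity of the right-hand square that your steps (1)--(3) spell out. (Minor slip: the natural morphism of sequences goes \eqref{attesanicol}$\to$\eqref{piumenopsi}, with identity on $\cG(\cE_1,\CC_p)$, and the factor $2$ comes from Proposition~\ref{prp:nicolciacco} comparing~\eqref{attesanicol} with~\eqref{bimenosing}; this only affects the harmless scalar.)
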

\begin{proof}
 Let $\cG\coloneq \cG(\cE_2,\cE_1)$. The BKR equivalence gives the commutative diagram
\begin{equation*}
\xymatrix{ \Ext^1_S(\cE_2,\cE_2)\ar[r]^{\zeta}\ar[d]_{\BKR} &  \Ext^1_S(\cE_2,\cE_1\otimes V_a)\ar[r]^{\xi}\ar[d]_{\BKR} & V_a\otimes\Ann V_a\ar[d]_{=} \\
\Ext^1_{S^{[2]}}(\cG,\cG) \ar[r]^{\ov{\zeta}\qquad\ }     &  \Ext^1_{S^{[2]}}(\cG,\cE_1[2]^{-}\otimes V_a) \ar[r]^{\quad\quad\ov{\xi}}      & V_k\otimes\Ann V_a \\ }
\end{equation*}
Here $\zeta$, $\xi$  are obtained by applying the functors $\Hom_S(\cE_2,-)$, $\Hom_S(-,\cE_1\otimes V_k)$ to the exact sequence in~\eqref{fabrizio} respectively, and the domain of $\xi$  is as indicated because of the isomorphism in~\eqref{winner}.
Moreover the maps $\ov{\zeta}=\BKR(\zeta)$, $\ov{\xi}=\BKR(\xi)$ are obtained by applying the functors 
$\Hom_{S^{[2]}}(\cG,-)$, $\Hom_S(-,\cE_1[2]^{-}\otimes V_k)$ to the exact sequence in~\eqref{cavatina} respectively, given the 
isomorphisms $\cA(\cE_2)^{+}\cong \cB(p,V_a)^{+}$ (see~\eqref{maduro})  and $\beta_{\cE_2}$ (see~\eqref{passacaglia}).

Let $\wt{v}\in \Ext^1_S(\cE_2,\cE_2)$ be such that $v=p(\wt{v})$, where $p$ is the projection to the normal space (see~\eqref{gigiproietti}). By Lemma~\ref{lmm:estmod} the extension class of~\eqref{masha} is given (up to $\CC^{*}$)
by  $\beta_{\cE_2}(\ov{\xi}\circ \ov{\zeta}(\BKR(\wt{v})))$. 
On the other hand $\alpha_{\cE_2}(v)=\xi\circ \zeta(\wt{v})$. By commutativity of the  diagram, $\alpha_{\cE_2}(v)=\ov{\xi}\circ \ov{\zeta}(\BKR(\wt{v}))$.
\end{proof}
Arguing as in the proof of Corollary~\ref{crl:gendirstab} one proves the following result (recall that $V_a$ is naturally isomorphic to $\Hom(\cE_2,\cE_1)^{\vee}$).
\begin{crl}\label{crl:annasciarpa}
Let $v$ generate  $\im\ov{df(0)}$. If $\alpha_{\cE_2}(v)\colon \Hom(\cE_2,\cE_1)\to \Ann V_a$ is an isomorphism   then $(\wt{f^{*}\GG})_0$ is isomorphic to the vector bundle  $\cB(p,V_a)$, and it is $h_{S^{[2]}}$ slope stable.
\end{crl}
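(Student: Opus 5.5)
The plan is to argue exactly as in the proof of Corollary~\ref{crl:gendirstab}, with the exact sequence~\eqref{masha} in place of~\eqref{potenzstab}. First I identify the two extension classes that have to be compared. By Definition~\ref{dfn:replacedia}, $\cB(p,V_a)$ is the extension
\begin{equation*}
0\lra \cE_1[2]^{-}\otimes(\Ann V_a)^{\vee}\lra \cB(p,V_a)\overset{\lambda}{\lra}\cB(p,V_a)^{+}\lra 0,
\end{equation*}
whose class in $\Ext^1_{S^{[2]}}(\cB(p,V_a)^{+},\cE_1[2]^{-})\otimes(\Ann V_a)^{\vee}=\Ann V_a\otimes(\Ann V_a)^{\vee}$ is the identity; here I use Proposition~\ref{prp:alglin}. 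By the preceding proposition, the class of~\eqref{masha} is $\beta_{\cE_2}\circ\alpha_{\cE_2}(v)$. Under the natural identification $V_a\cong\Hom_S(\cE_2,\cE_1)^{\vee}$, this is the element of $V_a\otimes\Ann V_a$ corresponding to the linear map $\alpha_{\cE_2}(v)\colon\Hom_S(\cE_2,\cE_1)\to\Ann V_a$.

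Next I use that $\alpha_{\cE_2}(v)$ is an isomorphism by hypothesis. Then its transpose $\alpha_{\cE_2}(v)^t\colon(\Ann V_a)^{\vee}\xrightarrow{\sim}V_a$ gives an isomorphism $\cE_1[2]^{-}\otimes(\Ann V_a)^{\vee}\xrightarrow{\sim}\cE_1[2]^{-}\otimes V_a$. Push-out along this isomorphism carries the identity class to $\beta_{\cE_2}(\alpha_{\cE_2}(v))$. The one point to check is that push-out along $\Id_{\cE_1[2]^{-}}\otimes\alpha_{\cE_2}(v)^t$ acts on $\Ext^1(\cB(p,V_a)^{+},\cE_1[2]^{-})\otimes(-)$ through the second tensor factor; this holds by functoriality of $\Ext^1$. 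Hence the two extensions are isomorphic as extensions (up to the scalar in Lemma~\ref{lmm:estmod}, which does not change the isomorphism class of the middle term). This gives $(\wt{f^{*}\GG})_0\cong\cB(p,V_a)$.

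Finally, local freeness of $\cB(p,V_a)$ follows from Corollary~\ref{crl:localmentelibero}, and $h_{S^{[2]}}$ slope stability follows from Proposition~\ref{prp:calbisingstab}. The only step requiring care is the bookkeeping of identifications: $V_a$ as $\im f(p)^t\subset\cE_1(p)^{\vee}$ versus $\Hom_S(\cE_2,\cE_1)^{\vee}$, and the compatibility of~\eqref{winner} with the identification of Proposition~\ref{prp:alglin}. These are already built into the definitions of $\alpha_{\cE_2}$ and $\beta_{\cE_2}$, so no real obstacle is expected.
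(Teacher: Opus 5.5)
Your proposal is correct and is essentially the paper's argument. The paper also compares the class $\beta_{\cE_2}\circ\alpha_{\cE_2}(v)$ of~\eqref{masha} with the identity class defining $\cB(p,V_a)$ and uses the transpose $\alpha_{\cE_2}(v)^t$ to identify the two extensions, as in Corollary~\ref{crl:gendirstab}; local freeness and $h_{S^{[2]}}$ slope stability then come from Corollary~\ref{crl:localmentelibero} and Proposition~\ref{prp:calbisingstab}, as you say.
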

\begin{rmk}
The  results  in Corollaries~\ref{crl:gendirstab}, \ref{crl:annasciarpa}
are quite close to a proof Items~(a) and~(b) of Theorem~\ref{thm:dadueauno}. This is clear if one looks at the arguments in Subsection~\ref{subsec:alfinlaprova}. 
\end{rmk}
\subsection{Extensions and complete collineations}\label{subsec:primomaggio}
\setcounter{equation}{0}  
\subsubsection{A family of extensions and its parameter space}
Throughout the present subsubsection $X$ is a projective variety, and  $\cF_1,\cF_3$ are sheaves on $X$ such that 
\begin{equation}\label{zerozerozerouno}
\Hom_X(\cF_3,\cF_1)=\Ext^1_X(\cF_1,\cF_1)=0,\quad \Hom_X(\cF_1,\cF_1)=\CC\Id_{\cF_1}.
\end{equation}
\begin{dfn}
Given  a subspace $U\subset \Ext^1_X(\cF_3,\cF_1)$, let
\begin{equation}\label{essesotto}
0\lra  \cF_1\otimes U^{\vee}\lra \cS_U\lra\cF_3\lra  0
\end{equation}
be the  exact sequence with extension class (in $U^{\vee}\otimes \Ext^1_X(\cF_3,\cF_1)$) given by  the inclusion map 
$U\hra \Ext^1_X(\cF_3,\cF_1)$.
\end{dfn}
Applying the functor 
$\Hom_X(-,\cF_1)$ to the exact sequence in~\eqref{essesotto}
we get the exact sequence
\begin{equation}
0\lra \CC\Id_{\cF_1}\otimes U\overset{\partial_U}{\lra} \Ext^1_X(\cF_3,\cF_1)\lra \Ext^1_X(\cS_U,\cF_1)\lra 0.
\end{equation}
Since $\partial_U$  is  the inclusion map $U\hra  \Ext^1_X(\cF_3,\cF_1)$, we get an isomorphism
\begin{equation}\label{quozextuno}
\eta_U\colon\Ext^1_X(\cF_3,\cF_1)/U\overset{\sim}{\lra} \Ext^1_X(\cS_U,\cF_1).
\end{equation}
Let $W$ be a vector space such that
\begin{equation}\label{kappadimw}
k\coloneq \dim W\le \dim  \left(\Ext^1_X(\cF_3,\cF_1)/U\right). 
\end{equation}
\begin{dfn}
If $\varphi \colon W\to \Ext^1_X(\cF_3,\cF_1)/U$ is a linear map,  let
\begin{equation}\label{essephi}
0\lra  \cF_1\otimes W^{\vee}\lra \cS_{\varphi } \lra\cS_U\lra  0
\end{equation}
be the exact sequence with extension class (in $W^{\vee}\otimes \Ext^1_X(\cS_U,\cF_1)$) given by $\eta_U\circ\varphi$. 
\end{dfn}
Since the isomorphism class of $\cS_{\varphi }$ depends only on the homothety class $[\varphi]$ of 
$\varphi$ we also denote $\cS_{\varphi }$ by $\cS_{[\varphi] }$. 
Fix $U,W$ as above,   and let
\begin{equation}\label{defpielle}
L\coloneq(U,W),\qquad \bP_L\coloneq \PP(\Hom(W,  \Ext^1_X(\cF_3,\cF_1)/U)).
\end{equation}
Let $\cF_1,\cF_3$ and $U,W$ be as above. The K\"unneth decomposition  for sheaves on $X\times \bP_L$ defines an isomorphism (note that $\Hom_X(\cS_U,\cF_1)=0$) 
\begin{multline}\label{perelisa}
\Ext^1(\cS_U\boxtimes\cO_{\bP_L}(-1),(\cF_1\otimes W^{\vee})\boxtimes\cO_{\bP_L}) \cong \\
\cong
\Hom_{\bP_L}(\cO_{\bP_L}(-1),W^{\vee}\otimes  \Ext^1_X(\cS_U,\cF_1)\otimes \cO_{\bP_L}). 
\end{multline}
\begin{dfn}
Let $\cS_{\bP_L}$ be the sheaf on $X\times\bP_L$ fitting  into the exact sequence
\begin{equation}\label{scaccopanzetti}
0\lra  (\cF_1\otimes W^{\vee})\boxtimes\cO_{\bP_L}\lra \cS_{\bP_L} 
\lra\cS_U\boxtimes\cO_{\bP_L}(-1)\lra  0
\end{equation}
with extension class the  tautological section of the right-hand side of~\eqref{perelisa}.   
\end{dfn}
If $[\varphi]\in\bP_L$ then
\begin{equation}
\cS_{\bP_L|X\times [\varphi]}\cong \cS_{[\varphi]}.
\end{equation}
\begin{clm}\label{clm:splitexseq}
Let $[\varphi]\in \bP_L$, and let $U({\varphi })\subset \Ext^1_X(\cF_3,\cF_1)$ be the subspace containing $U$ such that $U(\varphi )/U=\im\varphi$.
There is a well-defined split exact sequence 
\begin{equation}\label{ialuronico}
0\lra \cS_{U(\varphi )} \lra \cS_{\varphi } \lra \cF_1\otimes(\ker\varphi)^{\vee}\lra  0.
\end{equation}
\end{clm}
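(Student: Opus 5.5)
The plan is to split $W$ according to $\varphi$ and compare extension classes. Choose a complement $W'\subset W$ of $\ker\varphi$, so that $W=\ker\varphi\oplus W'$, $W^{\vee}=(\ker\varphi)^{\vee}\oplus W'^{\vee}$, and $\varphi_{|W'}\colon W'\to\im\varphi=U(\varphi)/U$ is an isomorphism. The extension class of~\eqref{essephi} is $\eta_U\circ\varphi\in W^{\vee}\otimes\Ext^1_X(\cS_U,\cF_1)$, and its component in $(\ker\varphi)^{\vee}\otimes\Ext^1_X(\cS_U,\cF_1)$ vanishes. Hence $\cS_\varphi\cong(\cF_1\otimes(\ker\varphi)^{\vee})\oplus\cS'$, where $\cS'$ is the extension of $\cS_U$ by $\cF_1\otimes W'^{\vee}$ with class $\eta_U\circ\varphi_{|W'}$. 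This already gives a split sequence of the shape~\eqref{ialuronico}, provided $\cS'\cong\cS_{U(\varphi)}$.

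To identify $\cS'$, use the transpose of $\varphi_{|W'}$ to identify $W'^{\vee}\cong(U(\varphi)/U)^{\vee}$. Then $\cS'$ is the extension of $\cS_U$ by $\cF_1\otimes(U(\varphi)/U)^{\vee}$ whose class is given, via $\eta_U$, by the inclusion $U(\varphi)/U\hookrightarrow\Ext^1_X(\cF_3,\cF_1)/U$.

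Separately, $\cS_{U(\varphi)}$ has a filtration induced by $U(\varphi)^{\vee}\to U^{\vee}$, with kernel $(U(\varphi)/U)^{\vee}$. Pushing out~\eqref{essesotto} for $U(\varphi)$ along $\cF_1\otimes U(\varphi)^{\vee}\to\cF_1\otimes U^{\vee}$ gives $\cS_U$, because the pushed-out class is the restriction of the inclusion to $U$. So there is an exact sequence
\begin{equation*}
0\to\cF_1\otimes(U(\varphi)/U)^{\vee}\to\cS_{U(\varphi)}\to\cS_U\to 0.
\end{equation*}
The main step is to show that its class in $(U(\varphi)/U)^{\vee}\otimes\Ext^1_X(\cS_U,\cF_1)$ corresponds under $\eta_U$ to the inclusion $U(\varphi)/U\hookrightarrow\Ext^1_X(\cF_3,\cF_1)/U$. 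For this, pull the class back along $\cF_3\leftarrow\cS_U$; equivalently, compose with the surjection $\Ext^1(\cF_3,\cF_1)\to\Ext^1(\cS_U,\cF_1)$, which is exactly how $\eta_U$ is defined. Pulling back the $U(\varphi)$-extension along $\cS_U\to\cF_3$ recovers the original class restricted to the $(U(\varphi)/U)^{\vee}$ part, modulo $U$. This is a diagram chase with Yoneda products. Once it is done, $\cS'\cong\cS_{U(\varphi)}$.

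Well-definedness is the remaining point: the maps in~\eqref{ialuronico} should not depend on the choice of $W'$. The subsheaf $\cS_{U(\varphi)}\subset\cS_\varphi$ can be characterized intrinsically as the kernel of the canonical map $\cS_\varphi\to\cF_1\otimes(\ker\varphi)^{\vee}$. That map is obtained from $\Hom(\cS_\varphi,\cF_1)\cong\ker\varphi$, using $\Hom(\cS_U,\cF_1)=0$ and the long exact sequence, which is where hypothesis~\eqref{zerozerozerouno} enters. Any two choices of complement differ by an automorphism acting trivially on the graded pieces, so the resulting subsheaf and quotient map agree. I expect the Yoneda identification in the main step to be the only nonformal point.
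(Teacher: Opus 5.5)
Your proposal is correct and uses the same two inputs as the paper. The first is the vanishing of the class $\eta_U\circ\varphi$ on $\ker\varphi$; the paper phrases this as the vanishing of the coboundary $\partial$ obtained by applying $\Hom_X(-,\cF_1\otimes(\ker\varphi)^{\vee})$ to~\eqref{essephi}. The second is $\Hom_X(\cS_U,\cF_1)=0$, which makes the surjection $\cS_{\varphi}\twoheadrightarrow\cF_1\otimes(\ker\varphi)^{\vee}$ canonical. The paper builds that canonical surjection first and only asserts that its kernel is $\cS_{U(\varphi)}$ and that the sequence splits. Your computation of the class of $0\to\cF_1\otimes(U(\varphi)/U)^{\vee}\to\cS_{U(\varphi)}\to\cS_U\to 0$ therefore supplies a step the paper leaves implicit.
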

\begin{proof}
Applying the functor $\Hom_X(-,\cF_1\otimes (\ker\varphi)^{\vee})$ to the exact sequence in~\eqref{essephi} we get the exact sequence
\begin{multline*}
 \Hom_X(\cS_U,\cF_1)\otimes (\ker\varphi)^{\vee}\lra \Hom_X(\cS_{\varphi},\cF_1\otimes (\ker\varphi)^{\vee}) \lra \\
\lra \CC\Id_{\cF_1}\otimes W\otimes (\ker\varphi)^{\vee}\overset{\partial}{\lra} 
\Ext^1_X(\cS_U,\cF_1)\otimes (\ker\varphi)^{\vee}
\end{multline*}
The coboundary $\partial$  is given by composition with 
$\varphi$, and hence vanishes. It follows that there is a surjection $\cS_{\varphi}\twoheadrightarrow \cF_1\otimes(\ker\varphi)^{\vee}$, and since $\Hom_X(\cF_3,\cF_1)=0$ it is unique. The kernel of the above surjection is isomorphic to $\cS_{U(\varphi )}$, and the exact sequence in~\eqref{ialuronico} is split.
\end{proof}
Let $\bD^s_L\subset \bP_L$ be the determinantal subscheme defined (set theoretically) by
\begin{equation*}
\bD^s_L\coloneq\{[\varphi]\in\bP_L \mid \dim\ker\varphi\ge s\}.
\end{equation*}
We have the chain of closed subsets
\begin{equation}\label{catenadiw}
\es= \bD^{k}_L\subsetneq\bD^{k-1}_L\subsetneq \bD^{k-2}_L\subsetneq \ldots \subsetneq \bD^{1}_L\subsetneq \bD^{0}_L=\bP_L.
\end{equation}
\begin{rmk}\label{rmk:normrango}
Let $[\varphi]\in(\bD^{s}_L\setminus \bD^{s+1}_L)$. We  have an isomorphism
\begin{equation}\label{normdeter}
w_{\varphi}\colon\cN_{\bD^{s}_L/\bP_L}([\varphi])\overset{\sim}{\lra} \Hom(\ker\varphi,\coker\varphi),
\end{equation}
defined as follows.
Let 
$\wh{\bD}^{s}_L\subset\Hom(W,  \Ext^1_X(\cF_3,\cF_1)/U)\setminus\{0\}$ be the punctured cone over $\bD^{s}_L$, i.e.~the determinantal variety parametrizing non zero maps $W\to  \Ext^1_X(\cF_3,\cF_1)/U$ with kernel of dimension at least $s$. The differential at $\varphi$ of the projection map $\Hom(W,  \Ext^1_X(\cF_3,\cF_1)/U)\setminus\{0\}\lra \bP_L$ induces an isomorphism 
\begin{equation}\label{disturbatore}
\cN_{\wh{\bD}^{s}_L/\Hom(W,  \Ext^1_X(\cF_3,\cF_1)/U)}(\varphi)\overset{\sim}{\lra} \cN_{\bD^{s}_L/\bP_L}([\varphi]).
\end{equation}
A straightfoward computation gives that 
\begin{equation*}
\Theta_{\wh{\bD}^{s}_L}(\varphi)=\{\alpha\in \Hom(W,  \Ext^1_X(\cF_3,\cF_1)/U)
 \mid \alpha(\ker\varphi)\subset \im\varphi\}.
\end{equation*}
Hence, letting $\pi\colon   \Ext^1_X(\cF_3,\cF_1)/U\lra  \coker\alpha$ be   the quotient map, we have a well-defined isomorphism
\begin{equation*}
\begin{matrix}
\cN_{\wh{\bD}^{s}_W/W^{\vee}\otimes  \Ext^1_X(\cF_3,\cF_1)}(\varphi) & \overset{\sim}{\lra} & \Hom(\ker\varphi,\coker\varphi) \\
\ov{\alpha} & \mapsto & \pi\circ\left(\alpha_{|\ker\varphi}\right)
\end{matrix}
\end{equation*}
This, in conjunction with~\eqref{disturbatore}, defines the isomorphism in~\eqref{normdeter}.
\end{rmk}
\subsubsection{From  $\bP_L$ to a space of complete collineations}
We replace $\bP_L$ by  a space of complete collineations (see~\cite{vainsencher-collineations}) and we modify $\cS_{\bP_L}$ via elementary modifications dictated by the exact sequences in~\eqref{ialuronico}. The end result is a family of sheaves on $X$, each of which is isomorphic to 
$\cS_{\varphi}$ where $\varphi$ is an injection 
$W\hookrightarrow \Ext^1_X(\cF_3,\cF_1)/U$. We start by introducing the blow-ups which define the space of complete collineations: they are  analogous to those appearing in Definition~\ref{dfn:emmeconjei}. 
\begin{dfn}\label{dfn:compcoll}
Let 
varieties $\bP_L(k-1),\ldots,\bP_L(0)$,  birational maps
\begin{equation*}
\bP_L(0)\xrightarrow{g^L_0}\bP_L(1)\xrightarrow{g^L_1}\ldots \bP_L(k-1)\xrightarrow{g^L_{k-1}}\bP_L,
\end{equation*}
  and   closed  subsets $\bE_L(j)^s,\bD_L(j)^t\subset \bP_L(j)$ for $j\in\{k-1,\ldots,0\}$, $s> j\ge  t\ge 0$
 be as follows. 
\begin{enumerate}
\item
$\bP_L(k-1)=\bP_L$,  $g_{k-1}\colon\bP_L(k-1)\to\bP_L$ is  the identity, $\bE_L(k-1)^s=\es$ and 
$\bD_L(k-1)^t=\bD_L^t$.
\item
 $g^L_{j-1}$    and $\bE_L(j-1)^s,\bD_L(j-1)^t$ are defined by iteration starting from $j=k-1$ by the following prescriptions:
\begin{enumerate}
\item[(2a)]
$g^L_{j-1}\colon \bP_L(j-1)\to  \bP_L(j)$ is the blow up of $\bD_L(j)^{j}$
\item[(2b)]
 If $ s> j$ then $\bE_L(j-1)^{s}$ is the  strict transform of $\bE_L(j)^{s}$, while  
 $\bE_L(j-1)^{j}$ is the exceptional divisor of $g^L_{j-1}$. 
\item[(2c)]
If $j-1\ge t\ge 0$ then 
$\bD_L(j-1)^t$ is the  strict transform of $\bD_L(j)^t$. 
\end{enumerate}
\end{enumerate}
\end{dfn}
  We have the  chain of closed subsets 
\begin{equation}\label{catenadielle}
\es\subsetneq\bD_L(j)^{j}\subsetneq  \ldots \subsetneq \bD_L(j)^{1}\subsetneq \bD_L(j)^{0}=\bP_L(j).
\end{equation}
analogous to that in~\eqref{catenadiw}. For $k-1\ge s> j$ we let
\begin{equation}\label{eccemaggugu}
\bE_L(j)^{\ge s}\coloneq \bE_L(j)^{k-1}\cup\bE_L(j)^{k-2}\cup\ldots\bE_L(j)^{s}
\end{equation}
\begin{rmk}\label{rmk:prerospiglio}
Let  $ s\in\{k-1,\ldots,1\}$.
The restriction of $g^L_{k-1}\circ g^L_{k-2}\circ\ldots\circ g^L_{s-1}$ to 
$\bE_L(s-1)^{s}\setminus\bE_L(s-1)^{\ge s+1}$ 
maps to $ \bP_L\setminus \bD_L^{s+1}$, and is identified with the blow up of $\bD_L^{s}\setminus \bD_L^{s+1}$.  Of course the exceptional divisor is 
$\bE_L(s-1)^{s}\setminus\bE_L(s-1)^{\ge s+1}$. Let  $[\varphi]\in\left(\bD_L^{s}\setminus \bD_L^{s+1}\right)$: the isomorphism 
in~\eqref{normdeter} defines an isomorphism
\begin{equation}\label{goldvartns}
(g^L_{k-1}\circ g^L_{k-2}\circ\ldots\circ g^L_{s-1})^{-1}([\varphi])\overset{\sim}{\lra} \PP(\Hom(\ker\varphi,\coker\varphi)).
\end{equation}
\end{rmk}
Let  $k-1\ge s\ge j\ge 1$.
Then $g^L_{k-1}\circ g^L_{k-2}\circ\ldots\circ g^L_{j-1}$ maps  
$\bE_L(j-1)^{\ge s}\setminus\bE_L(j-1)^{\ge s+1}$  to $\bE_L(s-1)^{s}\setminus\bE_L(s-1)^{\ge s+1}$. Let 
\begin{equation}
h_{j-1,\ge s}^L\colon  (\bE_L(j-1)^{\ge s}\setminus\bE_L(j-1)^{\ge s+1})\lra (\bD_L^{s}\setminus \bD_L^{s+1})
\end{equation}
 be the composition
\begin{equation*}
\bE_L(j-1)^{\ge s}\setminus\bE_L(j-1)^{\ge s+1}\lra (\bE_L(s-1)^{s}\setminus\bE_L(s-1)^{\ge s+1})\lra (\bD_L^{s}\setminus \bD_L^{s+1}).
\end{equation*}
\begin{key-rmk}\label{key-rmk:rospiglione}
Let  $k-1\ge s\ge j\ge 1$, and let  $[\varphi]\in\left(\bD_L^{s}\setminus \bD_L^{s+1}\right)$. Let $U(\varphi )\subset \Ext^1_X(\cF_3,\cF_1)$ be as in Claim~\ref{clm:splitexseq}. Let 
\begin{equation}
W(\varphi )\coloneq\ker\varphi,\qquad L(\varphi )\coloneq(U(\varphi ),W(\varphi )).
\end{equation}
 We have a tautological identification $\PP(\Hom(\ker\varphi,\coker\varphi))=\bP_{L(\varphi )}$, and hence the isomorphism in~\eqref{goldvartns} reads
\begin{equation}
u_{\varphi}\colon (g^L_{k-1}\circ g^L_{k-2}\circ\ldots\circ g^L_{s-1})^{-1}([\varphi])\overset{\sim}{\lra} \bP_{L(\varphi )}.
\end{equation}
A local computation (as in the proof of Lemma~\ref{lmm:conostrato}) gives the schematic equality
\begin{equation}
u_{\varphi}^{-1}(\bD_{L(\varphi )}^r)=(g^L_{k-1}\circ g^L_{k-2}\circ\ldots\circ g^L_{s-1})^{-1}([\varphi])\cap \bD_L(s-1)^r.
\end{equation}
 It follows that we have a commutative diagram
\begin{equation*}
\xymatrix{(h_{j-1,\ge s}^L)^{-1}([\varphi]) \ar[rr]^{{g^L_s\circ\ldots\circ g^L_{j-1}}_{|\ldots} \qquad\quad\ } \ar[d]^{\wr}   &  &  
(g^L_{k-1}\circ g^L_{k-2}\circ\ldots\circ g^L_{s-1})^{-1}([\varphi])\ar[d]^{u_{\varphi}} \\ 
\bP_{L(\varphi )}(j-1)\ar[rr]^{g^{\bar{L}}_{s}\circ\ldots\circ g^{\bar{L}}_{j-1}}   & &  \bP_{L(\varphi )} }
\end{equation*}
\end{key-rmk}
\subsubsection{Analogue of Subsubsections~\ref{subsubsec:ridsemuno} and~\ref{subsubsec:annalisa}}
Let $[\varphi]\in\left(\bD_L^s\setminus \bD_L^{s+1}\right)$,  let $T$ be a  a smooth curve, and let
$f\colon T\to\bP_L$ be a map such that $f(0)=[\varphi]$. Let   $\cS_T$ be the sheaf on $X\times T$ given by  $\cS_T\coloneq(\Id_X\times f)^{*}\cS_{\bP_L}$. We modify    $\cS_T$  according to the exact sequence
 in~\eqref{ialuronico}, i.e.
\begin{equation}\label{acido}
0\lra \cS_{U(\varphi )} \xrightarrow{\zeta} \cS_{\varphi } \xrightarrow{\xi} \cF_1\otimes(\ker\varphi)^{\vee}\lra  0.
\end{equation}
 More precisely let $\iota\colon X=X\times\{0\}\hra X\times T$ be the inclusion map, and let $\wt{\cS}_T$ be the sheaf on 
 $X\times T$ fitting into the exact sequence
\begin{equation}
0\lra \wt{\cS}_T\lra   \cS_T \lra \iota_{*} \cF_1\otimes(\ker\varphi)^{\vee}\lra 0.
\end{equation}
 The restriction of $ \wt{\cS}_T$ to $X\times\{0\}$ fits into the exact sequence (see Claim~\ref{clm:splitexseq})
\begin{equation}\label{aeronautica}
0\lra \cF_1\otimes(\ker\varphi)^{\vee} \lra \wt{\cS}_{T|X\times\{0\}}\lra \cS_{U(\varphi )} \lra  0,
\end{equation}
where $U(\varphi )\subset\Ext^1_X(\cF_3,\cF_1)$ is as in Claim~\ref{clm:splitexseq}.
Our next goal is to describe the extension class of the exact sequence in~\eqref{aeronautica}. Applying the functor 
$\Hom_X(-,\cF_1)$ to the exact sequence 
\begin{equation}\label{buscaglione}
0\lra  \cF_1\otimes U(\varphi )^{\vee}\lra \cS_{U(\varphi) } \lra\cF_3\lra  0
\end{equation}
we get the exact sequence
\begin{equation}
0\lra \CC\Id_{\cF_1}\otimes U(\varphi )\overset{\partial}{\lra} \Ext^1_X(\cF_3,\cF_1)\lra \Ext^1_X(\cS_{U(\varphi )},\cF_1)\lra 0.
\end{equation}
Since the  coboundary map is identified with the inclusion  $U(\varphi )\hra  \Ext^1_X(\cF_3,\cF_1)$, we get an isomorphism 
$\coker\varphi\xrightarrow{\sim} \Ext^1_X(\cS_{U(\varphi )},\cF_1)$. Tensorizing by $(\ker\varphi)^{\vee}$  we get an isomorphism
\begin{equation}\label{fisioterapia}
\iota_{\varphi}\colon\Hom(\ker\varphi,\coker\varphi)\xrightarrow{\sim} \Ext^1_X(\cS_{U(\varphi )},\cF_1\otimes (\ker\varphi)^{\vee})
\end{equation}
Let $\ov{df}_0\coloneq p\circ df_0$ be the composition 
\begin{equation}
\Theta_{T}(0)\overset{df_0}{\lra}  \Theta_{\bP_L}([\varphi])\overset{p}{\lra} 
\cN_{\bD_L^s/\bP_L}([\varphi]),
\end{equation}
where $p$ is the quotient map. 
\begin{lmm}
Suppose  that the equalities in~\eqref{zerozerozerouno} hold.  The extension class of the exact sequence in~\eqref{aeronautica} is equal to 
$\iota_{\varphi}\circ w_{\varphi}\circ \ov{df}_0(v)$, where  $v\in\Theta_{T}(0)$ is non zero,  $w_{\varphi}$ is as in~\eqref{normdeter} and 
$\iota_{\varphi}$ is as in~\eqref{fisioterapia}. 
\end{lmm}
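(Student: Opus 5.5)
The proof will follow the pattern of Proposition~\ref{prp:lepiogge}: combine Lemma~\ref{lmm:estmod} with an explicit computation of the Kodaira--Spencer class of the family $\cS_T$ at $0$. By Lemma~\ref{lmm:estmod}, applied to $\cF=\cS_T$, $D=\{0\}$ and the exact sequence~\eqref{acido} (so $\cA=\cS_{U(\varphi)}$, $\cB=\cF_1\otimes(\ker\varphi)^{\vee}$), the extension class of~\eqref{aeronautica} equals, up to $\CC^{*}$, $\xi\circ\kappa(v)\circ\zeta\in\Ext^1_X(\cS_{U(\varphi)},\cF_1\otimes(\ker\varphi)^{\vee})$. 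Here $\kappa(v)\in\Ext^1_X(\cS_\varphi,\cS_\varphi)$ is the Kodaira--Spencer class of $\cS_T$. Note that $\cF_1\otimes(\ker\varphi)^\vee$ is locally free whenever $\cF_1$ is, which is what Lemma~\ref{lmm:modificapiatta} needs for flatness. If $\cF_1$ is not locally free, I would instead check flatness directly: $\cS_T$ is an extension of $T$-flat sheaves, and the quotient $\iota_*\cF_1\otimes(\ker\varphi)^\vee$ has a two-step resolution over $T$.

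Next I compute $\kappa(v)$. The family $\cS_{\bP_L}$ is an extension~\eqref{scaccopanzetti} whose outer terms are constant, and whose class is the tautological section. Choose a lift $\tilde\varphi(t)$ of $f$ to $\Hom(W,\Ext^1_X(\cF_3,\cF_1)/U)$, with derivative $\dot\varphi$ at $0$. The first-order deformation of $\cS_\varphi$ over $\Spec\CC[t]/(t^2)$ is then the extension of $\cS_U$ by $\cF_1\otimes W^\vee$ with class $\eta_U\circ(\varphi+t\dot\varphi)$. A standard computation shows that $\kappa(v)$ is the image of $\eta_U\circ\dot\varphi\in\Ext^1_X(\cS_U,\cF_1\otimes W^\vee)$ under the composition
\[
\cS_\varphi\twoheadrightarrow\cS_U\xrightarrow{\ \eta_U\circ\dot\varphi\ }\cF_1\otimes W^\vee[1]\hookrightarrow\cS_\varphi[1],
\]
modulo the classes coming from rescaling $\tilde\varphi$ and from automorphisms. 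The rescaling ambiguity changes $\dot\varphi$ by a multiple of $\varphi$. This is harmless: $\varphi$ vanishes on $\ker\varphi$ and has image in $\im\varphi$, so it maps to zero in $\Hom(\ker\varphi,\coker\varphi)$.

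Then I compose with $\zeta$ and $\xi$. The map $\xi\colon\cS_\varphi\to\cF_1\otimes(\ker\varphi)^\vee$ restricted to $\cF_1\otimes W^\vee$ is the restriction $W^\vee\to(\ker\varphi)^\vee$; this uses $\Hom(\cF_1,\cF_1)=\CC$ and the construction in Claim~\ref{clm:splitexseq}. The composition $\cS_{U(\varphi)}\xrightarrow{\zeta}\cS_\varphi\to\cS_U$ is the natural map, which is a quotient by $\cF_1\otimes(U(\varphi)/U)^\vee$. Hence $\xi\circ\kappa(v)\circ\zeta$ is the image of $\eta_U\circ\dot\varphi|_{\ker\varphi}$ under
\[
\Ext^1_X(\cS_U,\cF_1)\otimes(\ker\varphi)^\vee\to\Ext^1_X(\cS_{U(\varphi)},\cF_1)\otimes(\ker\varphi)^\vee .
\]
Via~\eqref{quozextuno} for $U$ and for $U(\varphi)$, this pullback map is the projection $\Ext^1_X(\cF_3,\cF_1)/U\to\Ext^1_X(\cF_3,\cF_1)/U(\varphi)=\coker\varphi$. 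The resulting class is therefore $\iota_\varphi(\pi\circ\dot\varphi|_{\ker\varphi})=\iota_\varphi\circ w_\varphi\circ\ov{df}_0(v)$, by the description of $w_\varphi$ in Remark~\ref{rmk:normrango}.

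The main obstacle is making the Kodaira--Spencer computation rigorous. That means identifying $\kappa(v)$ with a Yoneda composite through the tautological extension, and checking compatibility of the sign and scaling conventions with $\eta_U$ and $\iota_\varphi$. The vanishing $\Hom_X(\cS_U,\cF_1)=0$, which follows from~\eqref{zerozerozerouno}, should ensure that no extra correction terms appear in the composite.
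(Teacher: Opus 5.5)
Your proposal is correct and follows essentially the same route as the paper. Lemma~\ref{lmm:estmod} reduces the class to $\xi\cup\kappa(v)\cup\zeta$, and $\kappa(v)$ is identified with $\alpha\cup\eta_L(d\wt{f}_0(v))\cup\beta$ via a lift $\wt{f}$ of $f$. Then the facts that $\xi\circ\alpha$ is induced by the restriction $W^{\vee}\to(\ker\varphi)^{\vee}$ and that $\gamma_U\circ\beta\circ\zeta=\gamma_{U(\varphi)}$ turn the pullback $\Ext^1_X(\cS_U,\cF_1)\to\Ext^1_X(\cS_{U(\varphi)},\cF_1)$ into the projection onto $\coker\varphi$.

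Your remarks on flatness of the modified family are unnecessary here, since only the $T$-flatness of $\cS_T$ enters Lemma~\ref{lmm:estmod}, but they do no harm.
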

\begin{proof}
The exact sequences~\eqref{essesotto}, \eqref{essephi} and~\eqref{acido} give the diagram
\begin{equation}\label{semprevisa}
\xymatrix{
   & \cF_1\otimes(\ker\varphi)^{\vee} & \\
\cF_1\otimes W^{\vee} \ar[ur]^{\xi\circ\alpha} \ar[r]^{\quad\ \alpha}  & \cS_{\varphi } \ar[r]^{\beta} \ar[u]^{\xi}& \cS_U \ar[d]^{\gamma_{U}}\\
 & \cS_{U(\varphi) } \ar[u]^{\zeta} \ar[ur]^{\beta\circ\zeta}\ar[r]^{\quad\gamma_{U(\varphi)}} & \cF_3& 
  }
\end{equation}
We have the maps
\begin{equation}
\Theta_T(0) \xrightarrow{\kappa} \Ext^1_X(\cS_{\varphi},\cS_{\varphi}) \xrightarrow{\mu}   \Ext^1_X(\cS_{U(\varphi )},\cF_1\otimes(\ker\varphi)^{\vee}),
\end{equation}
where $\kappa$  
is the Kodaira-Spencer map at $0$, and $\mu(\gamma)\coloneq \xi\cup\gamma\cup \zeta$.
By Lemma~\ref{lmm:estmod} the extension class is equal to $\mu\circ \kappa(v)$. 
After shrinking $T$ around $0$ there is 
a lift  $\wt{f}\colon T\to W^{\vee}\otimes\left(\Ext^1_X(\cF_3,\cF_1)/U\right)$ of $f\colon T\to\bP_L$. 
Let   
$d\wt{f}_0\colon\Theta_T(0)\to W^{\vee}\otimes\left(\Ext^1_X(\cF_3,\cF_1)/U\right)$ be the differential  of $\wt{f}$ at $0$. Tensorizing   both sides of~\eqref{quozextuno} by $W^{\vee}$ we get the isomorphism
\begin{equation}
\eta_L\colon W^{\vee}\otimes(\Ext^1_X(\cF_3,\cF_1)/U)\xrightarrow{\sim} \Ext^1_X(\cS_U,\cF_1\otimes W^{\vee}).
\end{equation}
Then $\mu\circ \kappa(v) = (\xi\circ\alpha)\cup \eta_L(d\wt{f}_0(v))\cup (\beta\circ\zeta)$.
Hence it suffices to prove that
\begin{equation}\label{romyschneider}
(\xi\circ\alpha)\cup  \eta_L(d\wt{f}_0(v))\cup (\beta\circ\zeta)=\iota_{\varphi}\circ w_{\varphi}\circ \ov{df}_0(v).
\end{equation}
Consider the  diagram
\begin{equation*}
\xymatrix{
 U\otimes W^{\vee}\ar[r]^{\partial_U\qquad}\ar[d] &  \Ext^1_X(\cF_3,\cF_1\otimes W^{\vee})\ar[r]^{\cup\gamma_U  } \ar[d]^{\xi\circ\alpha\cup} & 
\Ext^1_X(\cS_U,\cF_1\otimes W^{\vee}) \ar[d]^{(\xi\circ\alpha)\cup-\cup (\beta\circ\zeta)} \\
 U(\varphi)\otimes (\ker\varphi)^{\vee}\ar[r]^{\partial_{U(\varphi)}\qquad} &  \Ext^1_X(\cF_3,\cF_1\otimes (\ker\varphi)^{\vee})
 \ar[r]^{\cup\gamma_{U(\varphi)} \  \ }  & 
\Ext^1_X(\cS_{U(\varphi)},\cF_1\otimes (\ker\varphi)^{\vee})  
  }
\end{equation*}
We claim that the above diagram is commutative. This is clear for the left-most square. To prove that  the right-most square is commutative it suffices to show that $\gamma_U\circ\beta\circ\zeta=\gamma_{U(\varphi)}$. This holds because $\beta\circ\zeta$ fits into the commutative diagram
\begin{equation}
\xymatrix{
\cF_1\otimes U^{\vee} \ar[r] & \cS_U\ar[r]^{\gamma_U } & \cF_3\ar[d]^{\Id} \\
\cF_1\otimes U(\varphi)^{\vee} \ar[r] \ar[u]& \cS_{U(\varphi)}\ar[r]^{\gamma_{U(\varphi)} } \ar[u]^{\beta\circ\zeta}& \cF_3
  }
\end{equation}
 where the left-most vertical arrow is obtained from the transpose of the inclusion $U\hra U(\varphi)$. In other words the extension defining $\cS_U$ is the push-out of the extension defining $\cS_{U(\varphi)}$ via the map 
 $\cF_1\otimes U(\varphi)^{\vee} \to \cF_1\otimes U^{\vee}$. 
Let   
$d\wt{\wt{f}}_0(v)\colon W\to\Ext^1_X(\cF_3,\cF_1)$ be a lift of $d\wt{f}_0(v)\colon W\to(\Ext^1_X(\cF_3,\cF_1)/U)$. By commutativity of the diagram above we get that
\begin{multline*}
(\xi\circ\alpha)\cup  \eta_L(d\wt{f}_0(v))\cup (\beta\circ\zeta) =(\xi\circ\alpha)\cup \gamma_U\cup d\wt{\wt{f}}_0(v)\cup (\beta\circ\zeta)=\\
=\gamma_{U(\varphi)}\cup (\xi\circ\alpha)\cup d\wt{\wt{f}}_0(v).
\end{multline*}
Since  $\gamma_{U(\varphi)}\cup (\xi\circ\alpha)\cup d\wt{\wt{f}}_0(v)=\iota_{\varphi}\circ w_{\varphi}\circ \ov{df}_0(v)$, this proves the validity of~\eqref{romyschneider}.
\end{proof}
\subsubsection{Sheaves $\cS_{\bP_L}(j)$ on $X\times \bP_L(j)$}\label{subsubsec:garroni}
Suppose that~\eqref{zerozerozerouno} holds. Let  $U\subset \Ext^1_X(\cF_3,\cF_1)$ be a subspace, let $W$ be a vector space such that~\eqref{kappadimw} holds, and let $L=(U,W)$. Let $j\in\{k-1,\ldots,0\}$. In the present subsubsection we construct 
 coherent sheaves 
$\cS_{\bP_L}(j)$ on $X\times \bP_L(j)$, for $j\in\{k-1,\ldots,0\}$,   such that
\begin{enumerate}
\item[($1_j$)]
$\cS_{\bP_L}(j)$ is flat over $\bP_L(j)$.
\item[($2_j$)]
 If $z\in \left(\bD_L(j)^s\setminus \bD_L(j)^{s+1}\right)$ there exist a vector space $V(z)$ of dimension $s$, a  subspace $U(z)\subset\Ext^1_X(\cF_3,\cF_1)$  of dimension $k-s$ containing $U$, and
 an exact sequence
\begin{equation}\label{cringe}
0\lra \cS_{U(z)}\lra {\cS_{\bP_L}(j)}_{|X\times\{z\}}\lra \cF_1\otimes V(z)\lra 0.
\end{equation}
\end{enumerate}
We have $\bP_W(k-1)=\bP_W$, and  $g_{k-1}\colon\bP_W(k-1)\to\bP_W$ is  the identity. We let
$\cS_{\bP_W}(k-1)=\cS_{\bP_W}$. Then 
Item~($1_{k-1}$) holds because  $\cS_{\bP_W}$ is an extension of sheaves which are flat over $\bP_W(j)$,  and 
Item~($2_{k-1}$) holds by Claim~\ref{clm:splitexseq}. 
If $\dim W=1$ only Items~($1_{0}$)  and 
Item~($2_{0}$) need to be checked, and  we have proved that they hold. The rest of the proof is 
by induction on  $k$ (the dimension of $W$) and,  once $U,W$ are fixed,   we prove that ($1_{k-1}$), ($2_{k-1}$),...,($1_{0}$), ($2_{0}$) hold  working our way downwards.  Hence we fix a subspace $U\subset \Ext^1_X(\cF_3,\cF_1)$ and a vector space $W$ with 
$2\le \dim W\le \dim \Ext^1_X(\cF_3,\cF_1)/U$, and we assume that the statements above  hold whenever $k<\dim W$, and that Items~($1_{j}$), ($2_{j}$)  hold for the given $U,W$. 

Our first task is to define $\cS_{\bP_W}(j-1)$.  Let $z\in \left(\bD_L(j)^s\setminus \bD_L(j)^{s+1}\right)$, and let $U(z)$, $V(z)$ be as in  
Item~($1_{j}$). 
Applying the functor 
$\Hom_X(-,\cF_1)$ to the exact sequence in~\eqref{cringe} one gets an isomorphism
\begin{equation}
\CC\Id_{\cF_1}\otimes V(z)^{\vee}\xrightarrow{\sim}\Hom_X({\cS_W(j)}_{|X\times\{z\}},\cF_1).
\end{equation}
(We have $\Hom_X(\cS_{U(z) } ,\cF_1)=0$ because  $\Hom_X(\cF_3,\cF_1)=0$.)  Since $\bD_L(j)^s=\es$ if $s>j$ it follows that there exists an exact sequence
\begin{equation}
0\lra \cK\lra {\cS_{\bP_L}(j)}_{|X\times \bD_L(j)^j}\lra \cQ\lra 0
\end{equation}
which gives the exact sequence in~\eqref{cringe} when restricted to $X\times \{z\}$ for every $z\in \bD_L(j)^j$.
Let  $h_{j-1}\colon X\times \bE_L(j-1)^j\to X\times  \bD_L(j)^j$ be the restriction of $\Id_X\times g_{j-1}$. 
Pulling back by $h_{j-1}$ we get the exact sequence
\begin{equation}
0\lra h_{j-1}^{*}\cK\lra h_{j-1}^{*}{\cS_{\bP_L}(j)}_{|X\times \bE_L(j-1)^j}\lra h_{j-1}^{*}\cQ\lra 0.
\end{equation}
Note that $h_{j-1}^{*}\cQ$ is flat over $\bE_L(j-1)^j$. 
Let $\nu\colon X\times \bE_L(j-1)^j\hra X\times \bP_L(j-1)^j$ be the inclusion map. We let  $\cS_{\bP_L}(j-1)$ be the sheaf fitting into the exact sequence
\begin{equation}\label{dageiageimeno}
0\lra \cS_{\bP_L}(j-1)\lra (\Id_X\times g_{j-1})^{*}\cS_{\bP_L(j)}\overset{\tau_{j-1}}{\lra} \nu_{*}\left(h_{j-1}^{*}\cQ\right)\lra 0,
\end{equation}
Let us prove that Item~($1_{j-1}$) holds. Let $\cA$ be a sheaf of 
$\cO_{X\times\bP_L(j)}$-modules
 (not necessarily coherent). Since  $h_{j-1}^{*}\cQ$ is flat over $\bE_L(j-1)^j$, and $\bE_L(j-1)^j$ is a Cartier divisor on $\bP_L(j)$, the spectral sequence for $\Tor$ in~\cite[Example~15.62.2]{stacks-project} gives that  the sheaf 
$Tor_2^{\cO_{\bP_L(j)}}(\nu_{*}\left(h_{j-1}^{*}\cQ\right),\cA)$ vanishes. By the long exact sequence of $Tor$'s 
that one gets by  tensorizing the exact sequence in~\eqref{dageiageimeno}, we get that 
$Tor_1^{\cO_{\bP_L(j)}}(\cS_{\bP_L}(j-1),\cA)$ vanishes. This proves that
$\cS_{\bP_L}(j-1)$ is flat over $\bP_L(j-1)$, i.e.~that  Item~($1_{j-1}$) holds.

Now we prove that Item~($2_{j-1}$) holds. Away from $\bE_L(j-1)^{\ge j}$ (see~\eqref{eccemaggugu}) the map $g_{j-1}$ defines an isomorphism 
$(\bP_L(j-1)\setminus \bE_L(j-1)^{\ge j})\xrightarrow{\sim} (\bP_L(j)\setminus \bD_L(j)^j)$ mapping 
$(\bD_L(j-1)^s\setminus \bE_L(j-1)^j)$ to $(\bD_L(j)^s\setminus \bD_L(j)^j)$. If 
$z\notin \bE_L(j-1)^{\ge j}$ then 
\begin{equation*}
 \cS_{\bP_L}(j-1)\cong {\cS_{\bP_L}(j)}_{|X\times\{g_{j-1}(z)\}}. 
\end{equation*}
Hence the statement in ($2_{j-1}$) holds for all $z\notin \bE_L(j-1)^{\ge j}$ because  ($2_{j}$) holds. It remains to prove that the statement in ($2_{j-1}$) holds for all $z\in \bE_L(j-1)^{\ge j}$. Let $s\in\{k-1,\ldots,j\}$ be such that 
\begin{equation}
z\in  (\bE_L(j-1)^{\ge s}\setminus\bE_L(j-1)^{\ge s+1}).
\end{equation}
(Notation as in~\eqref{eccemaggugu}.) 
Adopting the notation of Remark~\ref{rmk:prerospiglio} we get that
\begin{equation}
h_{j-1,\ge s}^L(z)=[\varphi]\in (\bD_L^{s}\setminus \bD_L^{s+1}).
\end{equation}
Key-Remark~\ref{key-rmk:rospiglione} gives an isomorphism
\begin{equation}
v_{\varphi}\colon (h_{j-1,\ge s}^L)^{-1}([\varphi]) \xrightarrow{\sim} \bP_{\bar{L}(\varphi )}(j-1),
\end{equation}
where $\bar{L}(\varphi )=(\ov{U}(\varphi ),\ov{W}(\varphi ))$ with $\ov{U}(\varphi )\subset\Ext^1_X(\cF_3,\cF_1)$  the subspace containing $U$ such that $\im\varphi=\ov{U}(\varphi )/U$, and 
$\ov{W}(\varphi )=\ker\varphi$.  Thus $\dim\ov{W}(\varphi )<\dim W$ and hence, by the inductive hypothesis, there exists a sheaf $\cS_{\bP_{\bar{L}(\varphi )}}(j-1)$ on $X\times \bP_{\bar{L}(\varphi )}(j-1)$ for which Items~($1_{j-1}$), ($2_{j-1}$) hold. By construction we have an isomorphism
\begin{equation}
\cS_{\bP_L}(j-1)_{|X\times (h_{j-1,\ge s}^L)^{-1}([\varphi])}\cong (\Id_X\times v_{\varphi})^{*}\cS_{\bP_{\bar{L}(\varphi )}}(j-1).
\end{equation}
 Item~($2_{j-1}$)  holds for $z\in (h_{j-1,\ge s}^L)^{-1}([\varphi])$ because it holds for 
$\cS_{\bP_{\bar{L}(\varphi )}}(j-1)$ and because
\begin{equation}
\bD_L(j-1)^s\cap (h_{j-1,\ge s}^L)^{-1}([\varphi])=v_{\varphi}^{-1}(\bD_{\bar{L}(\varphi )}(j-1)^s).
\end{equation}
\subsection{Extension of $\psi$ to $\wh{\cM}$}\label{subsec:estdipsi}
\setcounter{equation}{0}  
Recall that $\cM=\cM_{v_2}(S,h_S)$. 
We assume that there is a universal family $\EE_2$ on $S\times\cM$. If a universal family does not exist then it exist locally on $\cM$ (\'etale topology), and the proof that we give goes through by replacing $\cM$ 
with an \'etale  covering $\{\cU_i\}_{i\in I}$   such that  there exists a universal family  on $S\times\cU_i$ for each $i\in I$.
Let $\GG$ be the sheaf on $S^{[2]}\times \cM$ given by (see~\cite[Subsect.~2.1]{og:highdim})
\begin{equation}
 \GG\coloneq\cG(\cE_1\boxtimes\cO_{\cM},\EE_2).
\end{equation}
Then $\GG$ is flat over $\cM$ by Prop.~2.5 in loc.~cit., and for  $[\cE_2]\in\cM$ the restriction of $\GG$ to $S^{[2]}\times\{[\cE_2]\}$ is isomorphic to $\cG(\cE_1,\cE_2)$. For $x\in\cM$ let $\GG_x\coloneq\GG_{|S^{[2]}\times\{x\}}$. By Proposition~\ref{prp:stabuno} and Subsection~\ref{subsec:semirepl} the sheaf $\GG_x$ is stable if $x\notin\cD$.  We let 
\begin{equation}\label{miavisconti}
\begin{matrix}
\cM\setminus \cD & \overset{\psi}{\lra} & M_{\ww}^{\bu} \\
[\cE_2] & \mapsto & [\cG(\cE_1,\cE_2)]
\end{matrix}
\end{equation}
be the map  associated to the restriction of $\GG$ to $S^{[2]}\times(\cM\setminus\cD)$, see~\eqref{viscontioccupato}. 

In Subsection~\ref{subsec:scoppiascoppia} we have defined a chain of birational maps
\begin{equation*}
\wh{\cM}=\cM(0)\xrightarrow{f_0}\cM(1)\xrightarrow{f_1}\ldots \cM(a)\xrightarrow{f_a}\cM.
\end{equation*}
\begin{rmk}\label{rmk:pennica}
In Subsection~\ref{subsec:scoppiascoppia} we have defined closed prime divisors $E(j)^s\subset\cM(j)$ for $ s>j$ (note: $E(j)^s=\es$ if $s>a$) with the following properties:
\begin{enumerate}
\item
$f_a\circ f_{a-1}\circ\ldots\circ f_j$ maps $\cM(j)_{*}\coloneq\cM(j)\setminus (E(j)^a\cup E(j)^{a-1}\cup\ldots\cup E(j)^{j+1})$ isomorphically to 
$\cM\setminus \cD^{j+1}$. 
\item
Let  $k>j$. Then $f_a\circ f_{a-1}\circ\ldots\circ f_j$ maps $ E(j)^k\setminus E(j)^{k+1}$ to $\cD^k\setminus\cD^{k+1}$. 
\end{enumerate}
In particular we have the disjoint union
\begin{equation}\label{stratiemmegei}
\cM(j)=\cM(j)_{*}\sqcup \left(E(j)^{j+1}\setminus E(j)^{j+2}\right)\sqcup
\ldots\sqcup \left(E(j)^{a-1}\setminus E(j)^{a}\right)\sqcup E(j)^{a}.
\end{equation}
For $j=0$, i.e.~for $\wh{\cM}$, we get the disjoint union
\begin{equation}\label{stratiemmecappello}
\wh{\cM}=\left(\cM\setminus\cD\right)\sqcup \left(\wh{E}^1\setminus \wh{E}^2\right)\sqcup
\ldots\sqcup \left(\wh{E}^{a-1}\setminus \wh{E}^{a}\right)\sqcup \wh{E}^{a}.
\end{equation}
\end{rmk}
In the present subsection we prove the following key result.
\begin{prp}\label{prp:estendopsi}
The map $\psi$ extends to a regular map  $\wh{\psi}\colon \wh{\cM}\to M_{\ww}^{\bullet}$. Let  $x\in \wh{E}^1\cup\ldots \cup \wh{E}^a$. The
 $h_{S^{[2]}}$ slope stable vector bundle  $\cF$ on $S^{[2]}$ such that $[\cF]=\wh{\psi}(x)$ is described as follows.
Let $k\in\{1,\ldots,a\}$ be such that $x\in \wh{E}^k\setminus \wh{E}^{k+1}$. 
Let $f_a\circ\ldots\circ f_0(x)=[\cE_2]$ (hence $[\cE_2]\in\cD^k\setminus\cD^{k+1}$). 
 If $k<a$ then $\cF\cong\cB(\cH,J_{\cE_2})$ where $\cH$ 
is as 
in~\eqref{ravel}, and
$J_{\cE_2}$ is as in~\eqref{geiedue}.  If $k=a$ then  $\cF\cong\cB(p,\im(f(p)^t))$ where $p$, $\im(f(p)^t)$ are as 
in~\eqref{fabrizio} and~\eqref{deandre}
 respectively.  
\end{prp}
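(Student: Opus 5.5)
The plan is to build, on $S^{[2]}\times\wh{\cM}$, a $\wh{\cM}$-flat family $\wt{\GG}$ of $h_{S^{[2]}}$ slope stable vector bundles. It should agree with $\wh{\GG}\coloneq(\Id\times\wh{f})^{*}\GG$ over $\cM\setminus\cD$. Its fibres over $\wh{E}^k\setminus\wh{E}^{k+1}$ should be $\cB(\cH,J_{\cE_2})$, respectively $\cB(p,\im f(p)^t)$ for $k=a$. Given such a family, the universal property of $M_{\ww}$ produces $\wh{\psi}$. Since $\wh{\cM}$ is irreducible and $\wh{\psi}$ extends $\psi$, its image lies in the closure $M_{\ww}^{\bullet}$. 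We work étale locally on $\cM$, with a universal sheaf $\EE_2$, and glue at the end. This is harmless because the modifications below are canonical: they are defined by kernels of tautological maps, up to the usual scalar ambiguity.

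The construction follows the blow-up sequence of Definition~\ref{dfn:emmeconjei} by descending induction on $j$. We set $\GG(a)=\GG$ on $S^{[2]}\times\cM(a)$. Over $\cD(j)^j$ we use the relative versions of the destabilizing sequences \eqref{tuttifrutti} and \eqref{harnargi}. The rank of $\Hom_S(\cE_2,\cE_1)$ is locally constant along $\cD^j\setminus\cD^{j+1}$, which is smooth by Proposition~\ref{prp:divuti}. Hence the family of quotients $\cE_1[2]^{-}\otimes\Hom_S(\cE_2,\cE_1)^{\vee}$ is locally free over the stratum. We then pull back to the exceptional divisor $E(j-1)^j$ and take the elementary modification
\[
0\lra\GG(j-1)\lra(\Id\times f_{j-1})^{*}\GG(j)\lra\nu_{*}(\text{quotient})\lra 0.
\]
Flatness of $\GG(j-1)$ follows from Lemma~\ref{lmm:modificapiatta} and the Tor argument of Subsubsection~\ref{subsubsec:garroni}.

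The key step is to identify the fibres. Take a point $x\in E(j-1)^j$ over $[\cE_2]\in\cD^j\setminus\cD^{j+1}$. By Lemma~\ref{lmm:conostrato} and Remark~\ref{rmk:eccoilnormale}, the fibre of $E(j-1)^j$ over $[\cE_2]$ is $\PP(\Hom_S(\cE_2,\cE_1)^{\vee}\otimes\Ann J_{\cE_2})$. By Lemma~\ref{lmm:estmod} and Proposition~\ref{prp:lepiogge} (and its $\cD^a$ analogue), the fibre of $\GG(j-1)$ at $x$ is the extension of $\cB(\cH,J_{\cE_2})^{+}$ by $\cE_1[2]^{-}\otimes\Hom(\cE_2,\cE_1)^{\vee}$ with class $\beta_{\cE_2}\circ\alpha_{\cE_2}(x)$. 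This is precisely the sheaf $\cS_{\varphi}$ of Subsection~\ref{subsec:primomaggio}, taken with $\cF_1=\cE_1[2]^{-}$, $\cF_3=\cB(\cH,J_{\cE_2})^{+}$, $U=0$ and $W=\Hom(\cE_2,\cE_1)$. The hypotheses \eqref{zerozerozerouno} hold by Proposition~\ref{prp:extrigidibical} and \eqref{dunant}.

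Points of $E(j-1)^j$ where $\varphi$ is not injective lie over the further strata that get blown up later. Key-Remark~\ref{key-rmk:rospiglione} shows that the subsequent blow-ups $f_{j-2},\dots,f_0$ restrict on each fibre to the complete-collineation blow-ups of $\bP_L$. It also shows that the subsequent elementary modifications restrict to those of Subsubsection~\ref{subsubsec:garroni}. Property $(2_0)$ there then gives that every fibre over $\wh{E}^k\setminus\wh{E}^{k+1}$ is $\cS_\varphi$ with $\varphi$ injective, that is, an isomorphism $\Hom(\cE_2,\cE_1)\to\Ann J_{\cE_2}$ (dimensions agree). By Corollaries~\ref{crl:gendirstab} and \ref{crl:annasciarpa}, this fibre is isomorphic to $\cB(\cH,J_{\cE_2})$, respectively $\cB(p,V_a)$. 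These are stable by Propositions~\ref{prp:stabcalbi} and \ref{prp:calbisingstab}, and locally free by Corollary~\ref{crl:localmentelibero}.

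The main obstacle is the compatibility between the two iterative structures. We must check that the global modifications along $E(j-1)^j$, which also cut through the strict transforms of higher exceptional divisors $E(j-1)^s$ with $s>j$, restrict on each fibre of $h^L_{j-1,\ge s}$ to exactly the fibrewise complete-collineation modifications. This requires the local determinantal model of Remark~\ref{rmk:applin-normale} to be compatible with the extension classes uniformly along strata. My first attempt would be to handle it by induction on $\dim W$, mirroring Subsubsection~\ref{subsubsec:garroni}, with the identification of the fibres of $(\Id\times f)^{*}\GG(j)$ as the families $\cS_{\bP_L}$ supplying the inductive input.
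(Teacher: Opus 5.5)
Your proposal follows the paper's route: descending induction along the blow-up tower of Definition~\ref{dfn:emmeconjei} with elementary modifications by $\cE_1[2]^{-}$-quotients, fibrewise identification with the complete-collineation families $\cS_{\bP_L}(j)$ of Subsubsection~\ref{subsubsec:garroni} via Key-Remark~\ref{key-rmk:rospiglione}, and the dimension count in $(2_0)$, which forces the fibres over $\wh{E}^k\setminus\wh{E}^{k+1}$ to be $\cB(\cH,J_{\cE_2})$ or $\cB(p,V_a)$, stable and locally free by Propositions~\ref{prp:stabcalbi}, \ref{prp:calbisingstab} and Corollary~\ref{crl:localmentelibero}. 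The compatibility you flag as the main obstacle is settled in Proposition~\ref{prp:grandinduz} essentially as you suggest, by carrying the isomorphism $(\Id\times\theta_{\cE_2}(j))^{*}\GG(j)\cong\cS_{\bP_{\cE_2}}(j)$ through the induction on $j$. Note that this isomorphism is also what lets one define the quotient on all of $\cD(j)^j$, and not only over $\cD^j\setminus\cD^{j+1}$: it is taken to be $\Hom(\GG(j)_x,\cE_1[2]^{-})^{\vee}\otimes\cE_1[2]^{-}$, since that $\Hom$ has dimension exactly $j$ and surjective evaluation map at every $x\in\cD(j)^j$.
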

The proof of the above proposition is at the end of the subsection.
\begin{dfn}\label{dfn:fibramenoa}
Let  $[\cE_2]\in(\cD^k\setminus\cD^{k+1})$. If $a>k>0$ let $\cH$ be the vector bundle on $S$ fitting into the exact 
sequence~\eqref{ravel}, let $J_{\cE_2}$ be as in~\eqref{geiedue}, and set
\begin{equation*}
\cF_1\coloneq \cE_1[2]^{-},\qquad \cF_3\coloneq \cB(\cH,J_{\cE_2})^{+}.
\end{equation*}
If $k=a$ let $p$ be the singular point of $\cE_2$,  let $V_a$ be as in~\eqref{fabrizio} (note that $V_a$ is canonically identified with
 $\Hom_S(\cE_2,\cE_1)^{\vee}$), and set
\begin{equation*}
\cF_1\coloneq \cE_1[2]^{-},\qquad \cF_3\coloneq \cB(p,V_a)^{+}.
\end{equation*}
Then the equalities in~\eqref{zerozerozerouno} hold (see~\eqref{dunant}, Proposition~\ref{prp:extrigidibical}, and 
Proposition~\ref{prp:alglin}). 
Let 
\begin{equation}
\bP_{\cE_2}\coloneq \PP(\Hom_S(\cE_2,\cE_1)^{\vee}\otimes  \Ext^1_X(\cF_3,\cF_1)).
\end{equation}
In other words $\bP_{\cE_2}= \bP_{L}$, where $L\coloneq(\Hom_S(\cE_2,\cE_1),\{0\})$ (see~\eqref{defpielle}). This makes sense 
because the inequality in~\eqref{kappadimw} holds, in fact it is an equality.  Let $\bD_{\cE_2}(j)\coloneq \bD_{L}(j)$ and 
$\bP_{\cE_2}(j)\coloneq \bP_{L}(j)$. 
\end{dfn}
Let  $[\cE_2]\in(\cD^k\setminus\cD^{k+1})$, and let 
 $k>j\ge 0$. 
We define an  isomorphism
\begin{equation}\label{strangelove}
\theta_{\cE_2}(j)\colon \bP_{\cE_2}(j)\xrightarrow{\sim}  (f_a\circ f_{a-1}\circ\ldots\circ f_j)^{-1}([\cE_2])
\end{equation}
as follows. Away from $\cD^{k+1}$ the map $f_a\circ f_{a-1}\circ\ldots\circ f_{k-1}\colon \cM(k-1) \to \cM$   is the blow-up of 
 $\cD^k\setminus\cD^{k+1}$. Hence, referring to~\eqref{vivamarcello}, we have the isomorphism
\begin{equation*}
\theta_{\cE_2}(k-1)\coloneq \PP((\beta_{\cE_2}\circ\alpha_{\cE_2})^{-1})\colon \bP_{\cE_2}(k-1)\xrightarrow{\sim} 
(f_a\circ f_{a-1}\circ\ldots\circ f_j)^{-1}([\cE_2]).
%
%
\end{equation*}
 By
 Lemma~\ref{lmm:conostrato} $\theta_{\cE_2}(k-1)$ defines an isomorphism
\begin{equation}\label{moodforlove}
\bD_{\cE_2}^{h}\xrightarrow{\theta_{\cE_2}(k-1)_{|\ldots}} \cD(k-1)^{h}\cap (f_a\circ f_{a-1}\circ\ldots\circ f_{k-1})^{-1}([\cE_2]).
\end{equation}
 If $j<k-1$  we have series of birational maps
\begin{equation*}
(f_a\circ f_{a-1}\circ\ldots\circ f_j)^{-1}([\cE_2])\xrightarrow{f_{j|\ldots}}\ldots\xrightarrow{f_{k-2|\ldots}} (f_a\circ f_{a-1}\circ\ldots\circ f_{k-1})^{-1}([\cE_2])
\end{equation*}
which are identified with the composition $\bP_{\cE_2}(j)\xrightarrow{g_j}\ldots \xrightarrow{g_{k-2}}\bP_{\cE_2}(k-1)$ of the blow ups in 
  Definition~\ref{dfn:compcoll}  because of the isomorphism in~\eqref{moodforlove}. This identifcation defines the isomorphism 
  in~\eqref{strangelove}.
  
  Note that  if $h\le j$ then
  $\theta_{\cE_2}(j)$ defines an isomorphism 
\begin{equation}\label{chansondhelene}
{\theta_{\cE_2}(j)}_{|\ldots}\colon 
\bD_{\cE_2}(j)^h\xrightarrow{\sim} \cD(j)^h\cap (f_a\circ f_{a-1}\circ\ldots\circ f_j)^{-1}([\cE_2]).
\end{equation}
\begin{prp}\label{prp:grandinduz}
For $j\in\{a,a-1,\ldots,0\}$ there exists a sheaf $\GG(j)$ on $S^{[2]}\times \cM(j)$ which is flat over  $\cM(j)$ and such that the following hold:
\begin{enumerate}
\item[($A_j$)]
Let $x\in\cM(j)\setminus (E(j)^a\cup\ldots\cup E(j)^{j+1})$. Then 
$\GG(j)_x\cong \GG_{f_a\circ\ldots f_j(x)}$.
\item[($B_j$)]
Let  $[\cE_2]\in(\cD^k\setminus\cD^{k+1})$, and let 
 $k>j\ge 0$. Then we have an isomorphism
\begin{equation}\label{varazze}
(\Id_{S^{[2]}}\times \theta_{\cE_2}(j))^{*}\GG(j)\cong \cS_{\bP_{\cE_2}}(j),
\end{equation}
where $\cS_{\bP_{\cE_2}(j)}$ is the sheaf on $S^{[2]}\times\bP_{\cE_2}(j)$ of Subsubsection~\ref{subsubsec:garroni}. 
%
\end{enumerate}
\end{prp}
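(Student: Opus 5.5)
We will construct the sheaves $\GG(j)$ by descending induction on $j$, starting from $\GG(a)\coloneq\GG$ on $S^{[2]}\times\cM(a)=S^{[2]}\times\cM$. This sheaf is $\cM$-flat, and ($A_a$) holds trivially. ($B_a$) is vacuous because $\cD^{a+1}=\es$.

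\textbf{Inductive step.} Assume $\GG(j)$ has been constructed and satisfies flatness, ($A_j$) and ($B_j$). Let $\cD(j)^j\subset\cM(j)$ be the smooth center of the blow-up $f_{j-1}$ (Proposition~\ref{prp:incrocinormali}). The first task is to produce, on $S^{[2]}\times\cD(j)^j$, a flat family of exact sequences
\[
0\lra\cK\lra\GG(j)_{|S^{[2]}\times\cD(j)^j}\lra\cQ\lra 0 .
\]
Fibrewise this sequence should be the destabilizing sequence. Two cases arise according to where the point $z\in\cD(j)^j$ lies.
\begin{itemize}
\item If $z$ lies off the exceptional divisors, then $z$ corresponds to $[\cE_2]\in\cD^j\setminus\cD^{j+1}$ by ($A_j$). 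The required sequences are \eqref{tuttifrutti} (or \eqref{harnargi} if $j=a$), with $\cQ_z=\cE_1[2]^{-}\otimes\Hom_S(\cE_2,\cE_1)^{\vee}$.
\item If $z$ lies on $E(j)^k\setminus E(j)^{k+1}$ with $k>j$, then by ($B_j$) and \eqref{chansondhelene} the fibre is $\cS_{\bP_{\cE_2}}(j)_{|z}$ with $z\in\bD_{\cE_2}(j)^j$. The required sequences are \eqref{cringe}, with $\cQ_z=\cF_1\otimes V(z)=\cE_1[2]^{-}\otimes V(z)$.
\end{itemize}
In both cases $\cQ_z$ is identified with $\cE_1[2]^{-}\otimes\Hom(\GG(j)_z,\cE_1[2]^{-})^{\vee}$, and $\hom$ is constant, equal to $j$, along $\cD(j)^j$. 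This follows from the vanishings $\Hom(\cF_3,\cF_1)=0$ and $\Hom(\cA(\cE_2)^+,\cE_1[2]^-)=0$, which come from Proposition~\ref{prp:extrigidibical}, \eqref{mantegazza} and Proposition~\ref{prp:alglin}. Cohomology and base change then gives a locally free sheaf $\cV\coloneq p_{*}\cH om(\GG(j),\cE_1[2]^{-}\boxtimes\cO)$ of rank $j$ on $\cD(j)^j$. We set $\cQ\coloneq\cE_1[2]^{-}\boxtimes\cV^{\vee}$, and the evaluation map $\GG(j)\to\cQ$ is fibrewise surjective.

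We then pull back to $S^{[2]}\times\cM(j-1)$. We define $\GG(j-1)$ as the elementary modification along the Cartier divisor $E(j-1)^j$, namely the kernel of $(\Id\times f_{j-1})^{*}\GG(j)\to\nu_{*}h_{j-1}^{*}\cQ$, exactly as in \eqref{dageiageimeno}. Flatness follows from Lemma~\ref{lmm:modificapiatta} (or from the Tor argument of Subsubsection~\ref{subsubsec:garroni}), since $h_{j-1}^*\cQ$ is flat over the divisor. Property ($A_{j-1}$) is immediate: off $E(j-1)^{\ge j}$ nothing changes and $f_{j-1}$ is an isomorphism there.

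\textbf{Main obstacle: property ($B_{j-1}$).} For $[\cE_2]\in\cD^k\setminus\cD^{k+1}$ with $k>j-1$ there are two subcases.
\begin{itemize}
\item If $k>j$, the modification restricted over $(f_a\circ\cdots\circ f_{j-1})^{-1}([\cE_2])$ should coincide, via $\theta_{\cE_2}(j-1)$ and $\theta_{\cE_2}(j)$, with the modification defining $\cS_{\bP_{\cE_2}}(j-1)$ from $\cS_{\bP_{\cE_2}}(j)$ in Subsubsection~\ref{subsubsec:garroni}. This holds because both modifications are along the same subsheaf $\cK$, and by \eqref{chansondhelene} the blow-up centers correspond. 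One point needs care: restricting an elementary modification to the fibre commutes with the construction. This uses flatness of the quotient and the fact that the exceptional divisor meets the fibre transversally, which we get from the simple normal crossings of Proposition~\ref{prp:incrocinormali}.
\item If $k=j$, the fibre of $f_{j-1}$ over $[\cE_2]$ is $\PP(\cN)\cong\bP_{\cE_2}$ via \eqref{vivamarcello}. Here we must identify the restriction of $\GG(j-1)$ with the universal extension $\cS_{\bP_{\cE_2}}$ of \eqref{scaccopanzetti}. Pointwise this is Proposition~\ref{prp:lepiogge} and its $k=a$ analogue, which compute the extension class via Lemma~\ref{lmm:estmod} as $\beta_{\cE_2}\circ\alpha_{\cE_2}(v)$. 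We globalize over the fibre by applying Lemma~\ref{lmm:estmod} in families: the normal direction is the tautological line $\cO_{\PP(\cN)}(-1)$. The resulting extension is $0\to\cE_1[2]^{-}\otimes V_k\to\GG(j-1)_{|\text{fibre}}\to\cB^{+}\boxtimes\cO(-1)\to0$, with tautological class under \eqref{perelisa}.
\end{itemize}
I expect the hardest part to be this globalization, together with the compatibility of elementary modifications with restriction to exceptional fibres. I would attack it by working with first-order neighbourhoods of $E(j-1)^j$ and the relative Kodaira–Spencer map.
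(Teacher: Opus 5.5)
Your proposal follows the paper's proof. The shared steps are: descending induction from $\GG(a)=\GG$; the constancy $\hom(\GG(j)_x,\cE_1[2]^{-})=j$ along $\cD(j)^j$, checked separately off and on the exceptional divisors, which yields the evaluation quotient $\cE_1[2]^{-}\otimes\cV^{\vee}$; the elementary modification along $E(j-1)^j$, with flatness from Lemma~\ref{lmm:modificapiatta}; and $(B_{j-1})$ for $k>j$ by matching this modification with \eqref{dageiageimeno} through $\theta_{\cE_2}(j-1)$ and $\theta_{\cE_2}(j)$. The paper's written argument for $(B_{j-1})$ only covers $k>j$, and leaves the new stratum $k=j$ implicit in its definition of $\theta_{\cE_2}(k-1)$ via Proposition~\ref{prp:lepiogge}; your plan to globalize that pointwise computation over $\PP(\cN_{\cD^k/\cM}([\cE_2]))$ supplies this step. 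Note, however, that restricting the modification to the exceptional divisor produces the subsheaf $\cQ\otimes\cO_E(-E)$, so \eqref{varazze} holds only after twisting by a line bundle pulled back from the base. This twist is harmless for all the fibrewise uses of the proposition.
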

\begin{proof}
By definition $\cM(a)=\cM$. We set $\GG(a)\coloneq \GG$. Assuming that  there exists a sheaf $\GG(j)$ such that Items~($A_j$), ($B_j$) hold for $j\in \{a,a-1,\ldots,1\}$, we prove that   there exists a sheaf $\GG(j-1)$ such that Items~($A_{j-1}$), ($B_{j-1}$)
hold.    Let $x\in \cD(j)^j$, and let $\GG(j)_x\coloneq \GG(j)_{S^{[2]}\times\{x\}}$. 
We claim that 
\begin{equation}\label{marcoguardo}
\hom_{S^{[2]}}(\GG(j)_x,\cE_1[2]^{-})=j,
\end{equation}
and that the tautological map
\begin{equation}\label{mottana}
\GG(j)_x\lra \Hom_{S^{[2]}}(\GG(j)_x,\cE_1[2]^{-})^{\vee}\otimes \cE_1[2]^{-}
\end{equation}
is surjective. To see why,  suppose first that $x\in \cD(j)^j\setminus (E(j)^a\cup\ldots\cup E(j)^{j+1})$. Let $[\cE_2]\coloneq f_a\circ\ldots f_j(x)$. Note that 
$[\cE_2]\in\cD^j\setminus\cD^{j+1}$ (see Remark~\ref{rmk:pennica}). If $a>j$ then~\eqref{marcoguardo} and surjectivity of~\eqref{mottana} follow from the exact sequence in~\eqref{nicolpatente}, the isomorphism in~\eqref{midnightcowboy}, and Proposition~\ref{prp:extrigidibical}. If $a=j$ then~\eqref{marcoguardo} and surjectivity of~\eqref{mottana} follow from~\eqref{cavatina}, \eqref{maduro}, and Proposition~\ref{prp:alglin}. Now suppose that $x\in \cD(j)^j\cap (E(j)^a\cup\ldots\cup E(j)^{j+1})$. Then 
(see~\eqref{stratiemmegei}) there exists $a\ge k>j$ such that 
$x\in \cD(j)^j\cap E(j)^k\setminus  E(j)^{k+1}$. By the isomorphism in~\eqref{chansondhelene} we have $z\in\bD_{\cE_2}(j)^j$. 
By Item~($B_j$)  and Item~($2_j$) of Subsubsection~\ref{subsubsec:garroni} there exists an exact sequence
\begin{equation}
0\lra \cS_{U(z)}\lra \GG(j)_x\lra \cE_1[2]^{-}\otimes V(z)\lra 0,
\end{equation}
where $\dim V(z)=j$. Since $\Hom_{S^{[2]}}(\cS_{U(z)},\cE_1[2]^{-})=\{0\}$ this proves that~\eqref{marcoguardo} holds and also the surjectivity of the map in~\eqref{mottana}.

Recall that $E(j-1)^j=f_{j-1}^{-1}(\cD(j)^j)$. By the result on $\GG(j)_x$ for $x\in \cD(j)^j$
that we have just proved there exist a vector bundle $\cU^j$ on $E(j-1)^j$ (pulled back from a vector bundle on $\cD(j)^j$) and a surjection
\begin{equation}\label{gala}
(\Id_{S^{[2]}}\times f_{j-1})^{*}\GG(j)_{|S^{[2]}\times E(j-1)^j}\lra \cE_1[2]^{-}\boxtimes\cU^j\lra 0.
\end{equation}
Let $\GG(j-1)$ be  the sheaf on $S^{[2]}\times \cM(j-1)$  fitting into the exact sequence
\begin{equation}\label{valtellina}
0\lra \GG(j-1)\lra (\Id_{S^{[2]}}\times f_{j-1})^{*}\GG(j)\overset{\tau}{\lra} \iota_{*}\left(\cE_1[2]^{-}\boxtimes\cU^j\right)\lra 0,
\end{equation}
where $\iota\colon S^{[2]}\times E(j-1)^j\to S^{[2]}\times \cM(j-1)$ is the inclusion map, and the map $\tau$ is defined by the surjection in~\eqref{gala}. 
The sheaf $\GG(j-1)$ is flat over $\cM(j-1)$ by Lemma~\ref{lmm:modificapiatta}. It remains to prove that Items~($A_{j-1}$), ($B_{j-1}$)
hold. Let $x\in\cM(j)\setminus (E(j-1)^a\cup\ldots\cup E(j-1)^{j})$. We have an isomorphism $\GG(j-1)_x\cong \GG(j)_{f_{j-1}(x)}$ because $x\notin E(j-1)^j$. Since $f_{j-1}(x)\notin E(j)^a\cup\ldots\cup E(j)^{j+1}$ we get that  Item~($A_{j-1}$) holds. Lastly we prove that Item~($B_{j-1}$) 
holds.  First we note that we have a commutative diagram
\begin{equation*}
\xymatrix{
\quad\bP_{\cE_2}(j-1)\quad \ar[r]^{\theta_{\cE_2}(j-1)\qquad\  }\ar[d]^{g^{\cE_2}_{j-1}} &  \quad(f_a\circ\ldots\circ f_{j-1})^{-1}([\cE_2])\ar[d]^{f_{j-1|\ldots}} & \\
\bP_{\cE_2}(j) \ar[r]^{\theta_{\cE_2}(j)\qquad} &   \ \ \, (f_a\circ\ldots\circ f_{j})^{-1}([\cE_2])
  }
\end{equation*}
Pulling back the  exact sequence in~\eqref{valtellina} by the map $\Id_{S^{[2]}}\times \theta_{\cE_2}(j-1)$, and recalling the
 isomorphism in~\eqref{varazze}, we get the exact sequence
\begin{equation*}
0\to (\Id_{S^{[2]}}\times \theta_{\cE_2}(j-1))^{*}\GG(j-1)\lra (\Id_{S^{[2]}}\times g^{\cE_2}_{j-1})^{*}\cS_{\bP_{\cE_2}}(j)
\overset{\pi_{j-1}}{\lra} 
 \nu_{*}\left(\cR\right)\to 0,
\end{equation*}
where $\nu\colon S^{[2]}\times \bE_{\cE_2}(j-1)^j\hra S^{[2]}\times \bP_{\cE_2}(j-1)$ is the inclusion map, the sheaf $\cR$ and the map 
$\pi_{j-1}$
are identified with the sheaf $h_{j-1}^{*}\cQ$ and the map 
$\tau_{j-1}$ in~\eqref{dageiageimeno} respectively. By the exact sequence  in~\eqref{dageiageimeno} it follows that  Item~($B_{j-1}$) 
holds. 
\end{proof}
\begin{proof}[Proof of Proposition~\ref{prp:estendopsi}]
Let $\wh{\GG}\coloneq \GG(0)$. Thus $\wh{\GG}$ is a sheaf on $S^{[2]}\times\wh{\cM}$ which is flat over $\wh{\cM}$ (recall that $\wh{\cM}=\cM(0)$). Let $x\in\wh{\cM}$, and let $[\cE_2]\coloneq f_a\circ\ldots\circ f_0(x)$.  

If 
$x\notin \wh{E}^a\cup\ldots\cup\wh{E}^1$ then 
\begin{equation}\label{davidbyrne}
\wh{\GG}_x\cong\cG(\cE_1,\cE_2)
\end{equation}
 by Item~($A_0$) of Proposition~\ref{prp:grandinduz}. 
Moreover  $[\cE_2]\in\cM\setminus\cD$ (because $x\notin \wh{E}^a\cup\ldots\cup\wh{E}^1$) and hence $\cG(\cE_1,\cE_2)$ is 
$h_{S^{[2]}}$ slope stable by Proposition~\ref{prp:stabuno}. 

If $x\in \wh{E}^a\cup\ldots\cup\wh{E}^1$ then by~\eqref{stratiemmecappello} there exists $k\in\{1,\ldots,a\}$ such that 
$x\in\wh{E}^k\setminus\wh{E}^{k+1}$  (if $k=a$ this means that $x\in\wh{E}^a$).  If $k<a$ let $\cH$ and $J_{\cE_2}$ be as in 
Definition~\ref{dfn:fibramenoa}, 
if $k=a$ let $p$ and $V_a$ be as in loc.~cit.
By the isomorphism in~\eqref{strangelove} there exists $z\in \bP_{\cE_2}(0)$ such that $\theta_{\cE_2}(0)(z)=x$, and by Item~($B_0$) of Proposition~\ref{prp:grandinduz} we have 
\begin{equation}\label{CdD}
\wh{\GG}_x\cong {\cS_{\bP_{\cE_2}(0)}}_{|S^{[2]}\times\{z\}}.
\end{equation}
By~\eqref{catenadielle} (see also Definition~\ref{dfn:compcoll}) $z\in\bD_{\cE_2}(0)\setminus \bD_{\cE_2}(1)$. By Item~($2_0$) of Subsubsection~\ref{subsubsec:garroni} the right-hand side of~\eqref{CdD} is isomorphic to $\cS_{U(z)}$ where 
\begin{equation}\label{manuela}
U(z)\subset 
\begin{cases}
\Ext^1_{S^{[2]}}(\cB(\cH,J_{\cE_2})^{+}\cong\Ann J_{\cE_2} & \text{if $k<a$,}\\
\Ext^1_{S^{[2]}}(\cB(p,V_a)^{+}\cong\Ann V_a & \text{if $k=a$,}
\end{cases}
\end{equation}
is a subspace of dimension $k$. (The isomorphisms above are given by Propositions~\ref{prp:extrigidibical} and~\ref{prp:alglin}.) Since 
$\dim\Ann J_{\cE_2}=k$ and $\dim\Ann V_a=a$ ($=k$),  the inclusion in~\eqref{manuela} is an equality. It follows that 
\begin{equation}\label{prontosoccorso}
\wh{\GG}_x\cong 
\begin{cases}
\cB(\cH,J_{\cE_2}) & \text{if $k<a$,}\\
\cB(p,V_a) & \text{if $k=a$.}
\end{cases}
\end{equation}
Hence $\wh{\GG}_x$ is  locally-free and $h_{S^{[2]}}$ slope stable. In fact local-freeness is clear if $k<a$ because $\cB(\cH,J_{\cE_2})$ is an extension of locally-free sheaves, and if $k=a$ it is Corollary~\ref{crl:localmentelibero}, while stability of $\cB(\cH,J_{\cE_2})$ is the content of Proposition~\ref{prp:stabcalbi}, and stability of $\cB(p,V_a)$ is the content of Proposition~\ref{prp:calbisingstab}. Since  $\wh{\GG}$ is a  flat 
family of locally-free $h_{S^{[2]}}$ slope stable sheaves on $S^{[2]}$ parametrized by $\wh{\cM}$ (and hence locally-free itself), and~\eqref{davidbyrne} holds for a general $x\in\wh{\cM}$, it induces a regular classifying morphism $\wh{\psi}\colon\wh{\cM}\to  M_{\ww}^{\bullet}$. The 
isomorphism in~\eqref{prontosoccorso} gives the description of the vector bundle $\cF$ such that $[\cF]=\wh{\psi}(x)$ for
 $x\in\wh{E}^k\setminus\wh{E}^{k+1}$.
\end{proof}
\subsection{Proof of  Theorem~\ref{thm:dadueauno}}\label{subsec:alfinlaprova}
\setcounter{equation}{0}  
Let $\Gamma\subset\cM\times   M_{\ww}^{\bullet}$ be the image of $\wh{\cM}\xrightarrow{\wh{f}\times\wh{\psi}}\cM\times   M_{\ww}^{\bullet}$.
Then $\Gamma$ is closed, and by  Proposition~\ref{prp:estendopsi} the projection $p_{\gamma}\colon \Gamma\to \cM$ is bijective. Hence $p_{\gamma}$ is an isomorphism because $\cM$ is smooth. Thus $\wh{\psi}$ descends to a regular map $\ov{\psi}\colon\cM\to  M_{\ww}^{\bullet}$ which extends $\psi$. This proves Item~(a). Item~(b) follows at once from Proposition~\ref{prp:estendopsi}.
 It remains to prove Item~(c), i.e.~that $\ov{\psi}$ is bijective. It is surjective because the image is closed and dense in $  M_{\ww}^{\bullet}$. We proceed to prove that
$\ov{\psi}$ is injective. Let $[\cE_2]\in\cM$. If  $[\cE_2]\in\cD^k\setminus\cD^{k+1}$ with $1\le k<a$ let $\cH$ and $J_{\cE_2}$ be as in 
Definition~\ref{dfn:fibramenoa}, 
if $[\cE_2]\in\cD^a$ let $p$ and $V_a$ be as in loc.~cit. 
Let $\cF(\cE_2)$ be the $h_{S^{[2]}}$  slope stable vector bundle on $S^{[2]}$ such that 
$\ov{\psi}([\cE_2])=[\cF(\cE_2)]$. 
Then 
\begin{equation*}
\hom_{S^{[2]}}(\cF(\cE_2),\cE_1[2]^{+})=\hom_{S^{[2]}}(\cE_1[2]^{-},\cF(\cE_2))=
\begin{cases}
0 & \text{if $[\cE_2]\in\cM^{*}\coloneq\cM\setminus\cD$,} \\
k & \text{if $[\cE_2]\in\cD^k\setminus\cD^{k+1}$.}
\end{cases}
\end{equation*}
In fact the  equality for $[\cE_2]\in\cM^{*}$ follows from Lemma~\ref{lmm:mattiatorre} for $\cH=\cE_2$, because $\cF(\cE_2)\cong\cG(\cE_1,\cE_2)$. 
 The  equality for  $[\cE_2]\in\cD^k\setminus\cD^{k+1}$
follows from the exact sequences~\eqref{bicalsucc}, \eqref{bicaltildesucc} and~\eqref{mantegazza},  Lemma~\ref{lmm:mattiatorre}  if $k<a$, and from  the exact sequences~\eqref{bimenosing}, \eqref{bicalsing}
 and~\eqref{mantegazza}, Proposition~\ref{prp:alglin} if $k=a$. Suppose that $[\cE_2],[\cE'_2]\in\cM$ and that $\ov{\psi}([\cE_2])=\ov{\psi}([\cF(\cE'_2)])$. By Items~(1), (2) above $[\cE_2],[\cE'_2]$ belong to the same stratum of the stratification
\begin{equation*}
\cM=\cM^{*}\sqcup\left(\cD^1\setminus\cD^{2}\right)\sqcup\ldots\sqcup \left(\cD^k\setminus\cD^{k+1}\right)\sqcup\ldots\sqcup\cD^a.
\end{equation*}
If $[\cE_2],[\cE'_2]\in\cM^{*}$ then $\cF(\cE_2)\cong\cG(\cE_1,\cE_2)$, $\cF(\cE'_2)\cong\cG(\cE_1,\cE'_2)$. Thus 
$\cG(\cE_1,\cE_2)\cong\cG(\cE_1,\cE'_2)$. It follows at once from the BKR correspondence that $\cE_2\cong\cE'_2$ 
and hence $[\cE_2]=[\cE'_2]$.

Suppose that $[\cE_2],[\cE'_2]\in\cD^k\setminus\cD^{k+1}$ with $1\le k<a$. 
Then $\cF(\cE_2)\cong\cB(\cH,J_{\cE_2})$ and $\cF(\cE'_2)\cong\cB(\cH',J_{\cE'_2})$. Thus 
$\cB(\cH,J_{\cE_2})\cong\cB(\cH',J_{\cE'_2})$. 
It follows from  the exact sequences~\eqref{bicalsucc}, \eqref{bicaltildesucc} and Item~(2) above that 
$\cB(\cH,J_{\cE_2})^{+}\cong\cB(\cH',J_{\cE'_2})^{+}$ and 
$\cG(\cE_1,\cH)\cong \cG(\cE_1,\cH')$.  By the BKR correspondence we get that  $\cH\cong\cH'$. Thus (see~\eqref{estensione}) we have exact sequences
\begin{equation}\label{baldassarre}
0\lra \cH\lra \cE_2\overset{\varphi}{\lra}\cE_1 \otimes \Hom_S(\cE_2,\cE_1)^{\vee}\lra 0 
\end{equation}
and 
\begin{equation}\label{peruzzi}
0\lra \cH\lra \cE'_2\overset{\varphi}{\lra}\cE_1 \otimes \Hom_S(\cE'_2,\cE_1)^{\vee}\lra 0,
\end{equation}
with extension classes given by inclusions $\iota\colon \Hom_S(\cE'_2,\cE_1)^{\vee}\hra \Ext^1_S(\cE_1,\cH)$ and  
$\iota'\colon \Hom_S(\cE'_2,\cE_1)^{\vee}\hra \Ext^1_S(\cE_1,\cH)$. 
Since  $\cB(\cH,J_{\cE_2})^{+}\cong\cB(\cH,J_{\cE'_2})^{+}$, the extension classes of the exact sequences (see~\eqref{bicalsucc})
\begin{equation*}
0\lra  \cG(\cE_1,\cH)\lra \cB(\cH,J_{\cE_2})^{+}\overset{\lambda^{+}}{\lra} \cE_1[2]^{+}\otimes J_{\cE_2}\lra 0
\end{equation*}
and
\begin{equation*}
0\lra  \cG(\cE_1,\cH)\lra \cB(\cH,J_{\cE'_2})^{+}\overset{\lambda^{+}}{\lra} \cE_1[2]^{+}\otimes J_{\cE'_2}\lra 0
\end{equation*}
are the same (up to $\CC^{*}$). By Proposition~\ref{prp:extrigidibical} we get that $J_{\cE_2}=J_{\cE'_2}$, i.e.~$\im(\iota)=\im(\iota')$. This proves that the extension classes of~\eqref{baldassarre} and~\eqref{peruzzi} are the same. Thus $\cE_2\cong\cE'_2$ 
and hence $[\cE_2]=[\cE'_2]$.

If  $[\cE_2],[\cE'_2]\in\cD^a$ the proof that $[\cE_2]=[\cE'_2]$ is analogous.
\qed
\section{The Donaldson-Mukai map}\label{sec:conticonti}
\subsection{The Mukai map: recap}
\setcounter{equation}{0}  
Let $S$ be a  $K3$ surface. 
Let $\cF$ be a sheaf on $S\times T$, flat over $T$.
 Set 
\begin{equation}
\begin{matrix}
H(S) & \overset{\theta_{\cF}}{\lra} & H^2(T) \\
x & \mapsto & q_{T,*}\left[\ch(\cF)\cdot q_S^{*}(x^{\vee}\cdot \Td_S^{1/2})\right]_6,
\end{matrix}
\end{equation}
where  $q_S,q_T$ are the projections of $S\times T$ to $S$ and $T$,
$(x_0+x_2+x_4)^{\vee}\coloneq x_0-x_2+x_4$ if 
 $x_{2i}\in H^{2i}(S)$, and the subscript $6$ denotes the component in $H^6(S\times S^{[2]})$. 
 
Now assume that the Mukai vector of $\cF_{|S\times\{t\}}$ is equal to a fixed $v$:
\begin{equation}
v(\cF_{|S\times\{t\}})=v=(r,l,s)\in H^0(S;\ZZ)_{>0}\oplus\NS(S)\oplus H^4(S;\ZZ)\qquad \forall t\in T.
\end{equation}
Let $\cL$ be a line-bundle on $T$. The restrictions of $\theta_{\cF}$ and  $\theta_{\cF\otimes q_T^{*}\cL}$ 
to the  orthogonal of $v$ for Mukai's symmetric bilinear pairing (defined as $\la x,y\ra\coloneq \int_S(-x^{\vee}\cdot y)$) are the same (they are not equal on all of $H(S)$).  

Suppose that $h_S$ is a polarization of $S$, and let $\cM_v\coloneq \cM_v(S,h_S)$. Assume   that there exists a universal sheaf $\cF_v$ on $S\times\cM_v$ (in particular $v$ is primitive).  One sets
\begin{equation}
\begin{matrix}
v^{\bot} & \overset{\theta_{v}}{\lra} & H^2(\cM_{v}) \\
x & \mapsto & q_{\cM_{v},*}\left[\ch(\cF_v)\cdot q_S^{*}(x^{\vee}\cdot \Td_S^{1/2})\right]_6.
\end{matrix}
\end{equation}
Suppose that  $v^2>0$, i.e.~$\dim\cM_v>2$. Then $\theta_{v}$ is an isomorphism of integral Hodge structures, if $F^2 H(S)\coloneq F^2 H^2(S)$ and 
$F^1 H(S)\coloneq (F^1 H^2(S)\oplus H^0(S)\oplus H^4(S))$, and it matches Mukai's  pairing and the BBF quadratic form on $H^2(\cM_v)$, see~\cite{og:accaduespmod,yoshioka:modfvsupab}. 

Here and in the sequel the following consequence of the Grothendieck-Riemann-Roch Theorem will be handy.
\begin{lmm}\label{lmm:grrinclusione}
Let $i\colon X\hra Y$ be the inclusion of a smooth closed subvariety $X$ of pure codimension $d$ in a smooth variety $Y$. Let $\cF$ be a coherent sheaf on $X$. Then, \emph{modulo $H^{>2d+2}(X;\QQ)$}, we have
\begin{equation}
\ch(i_{*}\cF)\equiv r(\cF)\cl(Y)+i_{*}c_1(\cF)-\frac{r(\cF)}{2}\cl(Y)\cdot c_1(Y)+\frac{r(\cF)}{2}i_{*}c_1(X).
\end{equation}
\end{lmm}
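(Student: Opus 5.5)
The plan is to apply Grothendieck–Riemann–Roch to the closed embedding $i$ and then truncate. GRR gives
\[
\ch(i_{*}\cF)\cdot\Td(Y)=i_{*}\bigl(\ch(\cF)\cdot\Td(X)\bigr).
\]
Equivalently,
\[
\ch(i_{*}\cF)=i_{*}\bigl(\ch(\cF)\cdot\Td(\cN_{X/Y})^{-1}\bigr),
\]
where $\cN_{X/Y}$ is the normal bundle. This uses $\Td(X)=i^{*}\Td(Y)\cdot\Td(\cN_{X/Y})^{-1}$ together with the projection formula. Because $i_{*}$ raises cohomological degree by $2d$, only the components of $\ch(\cF)\cdot\Td(\cN_{X/Y})^{-1}$ in $H^{0}(X)$ and $H^{2}(X)$ contribute in degrees $\le 2d+2$ on $Y$. (The statement's truncation "modulo $H^{>2d+2}$" is to be read on $Y$, and the term written $\cl(Y)$ should be the class $\cl(X)=i_{*}1\in H^{2d}(Y)$.)

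Next I compute the two lowest components. On $X$ we have $\ch(\cF)=r(\cF)+c_1(\cF)+\ldots$ and $\Td(\cN)^{-1}=1-\tfrac12 c_1(\cN)+\ldots$. Their product is
\[
r(\cF)+c_1(\cF)-\tfrac{r(\cF)}{2}c_1(\cN)+(\text{degree}\ge 4).
\]
From the normal sequence, $c_1(\cN)=i^{*}c_1(Y)-c_1(X)$. Applying $i_{*}$ gives
\[
\ch(i_{*}\cF)\equiv r(\cF)\,i_{*}1+i_{*}c_1(\cF)-\tfrac{r(\cF)}{2}\,i_{*}i^{*}c_1(Y)+\tfrac{r(\cF)}{2}\,i_{*}c_1(X).
\]
Finally, the projection formula $i_{*}i^{*}c_1(Y)=\cl(X)\cdot c_1(Y)$ yields the stated formula.

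There is no real obstacle here. The only care needed is the bookkeeping of which Todd and Chern terms survive the degree truncation. One should also note that $\cF$ need not be locally free: GRR for a proper morphism holds for arbitrary coherent sheaves on smooth $X$, with $\ch(\cF)$ defined via a locally free resolution, and its rank and $c_1$ enter exactly as above.
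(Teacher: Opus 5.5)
Your proposal is correct and is the argument the paper intends: the paper gives no separate proof and simply calls the lemma a consequence of Grothendieck--Riemann--Roch, and your computation via $\ch(i_{*}\cF)=i_{*}(\ch(\cF)\cdot\Td(\cN_{X/Y})^{-1})$ together with $c_1(\cN_{X/Y})=i^{*}c_1(Y)-c_1(X)$ supplies the missing details. Your reading of the typos is also right, namely truncation in $H^{>2d+2}(Y;\QQ)$ and $\cl(X)=i_{*}1$ in place of $\cl(Y)$; this is the reading consistent with how the lemma is used later, e.g.\ in the proof of Lemma~\ref{lmm:dagaromana}.
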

Let $v_0\coloneq(1,0,-1)$.  Associating  to $[Z]\in S^{[2]}$ the isomorphism class of the ideal sheaf 
$\cI_Z$ we get an isomorphism $S^{[2]}\xrightarrow{\sim}\cM_{v_0}$.  We identify $S^{[2]}$  with  $\cM_{v_0}$ via this map. 
 If $\cZ\subset S\times S^{[2]}$ is the universal  subscheme then 
$\cI_{\cZ}$ is a universal sheaf on $S\times \cM_{v_0}$. Hence we have the isomorphism $\theta_{v_0}\colon v_0^{\bot}\xrightarrow{\sim} H^2(S^{[2]})$. 
One checks that 
\begin{equation}\label{eccodelta}
\delta\coloneq  \theta_{v_0}(1,0,1)=\cl(\{[Z]\in S^{[2]}\mid \text{$Z$ is non-reduced}).
\end{equation}
 (Use Lemma~\ref{lmm:grrinclusione}.) For $\beta\in H^2(S)$ we set
\begin{equation}\label{eccomugrasso}
{\bm\mu}(\beta)\coloneq \theta_{v_0}(0,\beta,0).
\end{equation}
The notation agrees with that in~\cite[(2.2.6)-(2.2.7)]{og:highdim} (except that here $\mu$ morphed into ${\bm\mu}$).
\begin{rmk}\label{rmk:rinogaetano}
Let $v_2$ be as in~\eqref{verovudue}. If $m_0\equiv 3\pmod{4}$ then there exists a universal sheaf on $S\times\cM_{v_2}$, see ~\cite[Rmk.~A.7]{mukvb}. If $m_0\equiv 1\pmod{4}$ there exists a universal sheaf if there exists a divisor $C$ on $S$ such that $C\cdot D$ is odd, see loc.~cit. We have (independently of the existence of a universal sheaf)
\begin{equation}\label{classedivdet}
\theta_{v_2}(v_1)=\cl(\cD_{v_2}).
\end{equation}
(This makes sense because $v_1\bot v_2$.)
\end{rmk}
\subsection{Families of $\PP^n$-bundles and the Donaldson-Mukai map}\label{subsec:bladerunner}
\setcounter{equation}{0}  
Let $X$ be a HK manifold, let $T$ be a complex space,  and let $\cP\to X\times T$ a $\PP^n$-bundle. One may define a 
Donaldson-Mukai map $\lambda_{\cP}\colon H^2(X)\to H^2(T)$ proceeding as follows. 
Let $2n$ be the dimension of $X$. Let 
$\bD\colon H^2(X)\to H^{4n-2}(X)$ be the composition of the  isomorphism $H^2(X)\overset{\sim}{\lra} H^2(X)^{\vee}$ defined by the BBF bilinear symmetric form and  the  isomorphism $H^2(X)^{\vee}\overset{\sim}{\lra}  H^{4n-2}(X)$ given by Poincar\`e duality.  Thus $\bD$ is characterized by the equality
\begin{equation}
\int_X\bD(\alpha)\wedge\beta=\la\alpha,\beta\ra_X,\qquad \forall \alpha,\beta\in H^2(X),
\end{equation}
where $\la-,-\ra_X$ is the BBF bilinear symmetric form of $X$. 
Let $p_X,p_T$ be the projections of $X\times T$ to the two factors. The \emph{Donaldson-Mukai map} associated to $\cP$ is given by
\begin{equation}\label{donmukai}
\begin{matrix}
H^2(X) & \overset{\lambda_{\cP}}{\lra} & H^2(T) \\
\alpha & \mapsto & p_{T,*}\left[c_2(\gotg(\cP))\cdot p_X^{*}(\bD(\alpha))\right]
\end{matrix}
\end{equation}
\begin{rmk}\label{rmk:casoproj}
Suppose that $F$ is a vector bundle on $X\times T$. Then $\gotg(\PP(F))\cong End^0(F)$ (see~\eqref{senzatraccia}),
 and hence
\begin{equation}
\lambda_{\PP(F)}(\alpha)=p_{T,*}\left[\Delta(F)\cdot p_X^{*}(\bD(\alpha))\right].
\end{equation}
\end{rmk}
\begin{rmk}\label{rmk:tantiacca}
Let $X$ be projective of dimension $2n$, and $H$ be a very ample divisor on $X$. Let $D_1,\ldots,D_{2n-1}\in|H|$, with transverse intersection $C=D_1\cap\ldots\cap D_{2n-1}$.  Let $h\coloneq \cl(H)\in H^2(X)$, and $c\coloneq \cl(C)\in H_2(X)$. Then
\begin{equation}\label{riducoacurva}
\lambda_{\cP}(h)=\frac{1}{c_X\cdot(2n-1)!!}c_2(\gotg(\cP))/c,
\end{equation}
where $/$ denotes slant product. In fact the above formula follows from the equality
\begin{equation}
\bD(h)=\frac{1}{c_X\cdot(2n-1)!!} h^{2n-1}.
\end{equation}
\end{rmk}
Let $\ov{\ww}$ be a mock Mukai vector for $X$, see~\eqref{doppiavubarra}.  
Let 
$\cP_{\ov{\ww}}\to X\times M_{\ov{\ww}}(X,\omega)$
be the universal bundle of projective spaces, i.e. 
\begin{equation}
{\cP_{\ov{\ww}}}_{|X\times \{[\cP]\}}\cong \cP.
\end{equation}
for any  $[\cP]\in M_{\ov{\ww}}(X,\omega)$. 
Set 
\begin{equation}\label{donmukmoduli}
\lambda_{\ov{\ww}}\coloneq \lambda_{\cP_{\ov{\ww}}}.
\end{equation}
Below is a result valid in the case that $\omega=c_1(L)$ where $L$ is an ample line bundle.
\begin{prp}\label{prp:ampiolambda}
Let $(X,L)$ be a polarized HK manifold. Let $Y\subset M_{\ov{\ww}}(X,\omega)$ be a projective subscheme. The restriction of  
$\lambda_{\ov{\ww}}(c_1(L))$ to  $Y$ is ample.
\end{prp}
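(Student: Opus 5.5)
By Remark~\ref{rmk:tantiacca}, up to the positive constant $1/(c_X(2n-1)!!)$, the class $\lambda_{\ov{\ww}}(c_1(L))$ is the slant product of $c_2(\gotg(\cP_{\ov{\ww}}))$ with the class of a complete intersection curve $C=D_1\cap\ldots\cap D_{2n-1}$, $D_i\in|mL|$ general. Up to a positive factor $m^{2n-1}$, it is therefore the first Chern class of the determinant-type line bundle on $Y$ obtained by restricting the family $\gotg(\cP_{\ov{\ww}})$ to $C\times Y$. I would prove ampleness by identifying this class with the pull-back of an ample class under a finite morphism. Concretely, I would adapt the Donaldson--Li--Le Potier argument for the ampleness of the Donaldson determinant line bundle on the Uhlenbeck/Gieseker moduli of slope stable bundles, using the adjoint bundle $\gotg(\cP)=End^0(\cE)$, where $\cE$ is the twisted bundle. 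This is an honest vector bundle, and it is slope polystable by Theorem~\ref{thm:princadj}.

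The steps are as follows.

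First, I would show that for $m\gg0$ and general $C$, the restriction $\gotg(\cP)_{|C}$ is semistable of degree $0$ for every $[\cP]\in Y$. This is Mehta--Ramanathan restriction applied to the polystable honest bundles $\gotg(\cP)$, which form a bounded family, so one uniform $m$ works.

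Next, the family $\gotg(\cP_{\ov{\ww}})_{|C\times Y}$ gives a classifying morphism $\phi\colon Y\to U_C$ to the moduli space of semistable degree-$0$ bundles (or $\PGL$-bundles) on $C$. By the usual Grothendieck--Riemann--Roch computation, the generalized theta divisor $\Theta$ on $U_C$ (attached to a vector bundle of degree $g-1$ on $C$, made to have the right slope) pulls back to a positive multiple of $c_2(\gotg(\cP_{\ov{\ww}}))/c$. The $c_1$ term vanishes because $\gotg$ is self-dual with trivial determinant. Since $\Theta$ is ample on $U_C$, it suffices to prove that $\phi$ is finite, that is, it contracts no curve.

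Finiteness is the main obstacle. I would argue as in Li's proof for surfaces. Varying $C$ among complete intersections of $|mL|$ for $m$ large, the restrictions $\gotg(\cP)_{|C}$ determine $\gotg(\cP)$ up to S-equivalence on the reflexive level. This uses $H^1(X,\gotg(\cP)^\vee\otimes\gotg(\cP')(-m))=0$ for $m\gg0$ uniformly in the bounded family, together with the Enriques--Severi type extension of isomorphisms from $D_1\cap\ldots\cap D_{2n-1}$ up through the chain of complete intersections. Because all our bundles are locally free and polystable, no Uhlenbeck-type non-separation phenomena occur.

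Two facts then close the argument. First, $\gotg(\cP)$ determines $\cP$ up to finitely many choices, since the automorphism group of the Lie algebra bundle is an extension of $\PGL$ by a finite group. Second, $Y$ is projective, so its curves are complete. Together these show that $\phi$ is quasi-finite on $Y$, hence finite, and so $\lambda_{\ov{\ww}}(c_1(L))_{|Y}=c\cdot\phi^*\Theta$ with $c>0$ is ample.
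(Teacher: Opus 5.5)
Your architecture matches the paper's. By Remark~\ref{rmk:tantiacca}, $\lambda_{\ov{\ww}}(c_1(L))$ is a positive multiple of the pull-back of an ample class from a moduli space of bundles on a complete intersection curve $C\in|L^{\otimes m}|$, so it suffices to show the classifying map from $Y$ is finite. However, your restriction step has a genuine gap. Mehta--Ramanathan gives, for each bundle, a dense open set of good curves, but that open set depends on the bundle; boundedness makes $m$ uniform, not the open set. The bad loci can be divisors in the parameter space of complete intersections (think of jumping lines). As $[\cP]$ moves in a positive-dimensional $Y$ they can therefore cover every curve, and you get no single $C$ for which $\phi$ is defined on all of $Y$. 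Moreover, with mere semistability, $\phi(\cP)=\phi(\cP')$ only says the restrictions are S-equivalent, so your extension-of-isomorphisms argument has nothing to start from. The missing input is an \emph{effective} restriction theorem (Bogomolov on surfaces, Langer in arbitrary dimension), applied to the polystable honest bundle $\cE^{\vee}\otimes\cE$ via Theorem~\ref{thm:princadj}. For $m$ above a bound depending only on the numerical invariants, it makes $\cP_{|C}$ stable for \emph{every} transverse complete intersection curve, hence $\gotg(\cP)_{|C}$ polystable for all $[\cP]\in Y$. This is what the paper uses.

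The finiteness step also needs repair. What you extend from $C$ to $X$ is only an isomorphism of vector bundles $\gotg(\cP)\cong\gotg(\cP')$. Your description of the automorphism group of $\mathfrak{sl}_r$ controls isomorphisms of Lie algebra bundles, so it does not show that the vector bundle $\gotg(\cP)$ determines $\cP$ up to finitely many choices. The claim is true, but for another reason. The map $\mathrm{ad}\colon\gotg(\cP)\to End(\gotg(\cP))$ is split, equivariantly, by the trace form. Hence $H^1(\gotg(\cP))\to H^1(End(\gotg(\cP)))$ is injective, so passing from $\cP$ to the vector bundle $\gotg(\cP)$ is unramified, which gives the needed quasi-finiteness on $Y$. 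Also, finiteness must be proved for the fixed $C$ defining $\phi$; varying $C$ plays no role.

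The paper sidesteps all of this by restricting $\cP$ itself, via $\rho\colon Y\to\ov{\cM}_C$, $[\cP]\mapsto[\cP_{|C}]$. Its differential $H^1(X,\gotg(\cP))\to H^1(C,\gotg(\cP)_{|C})$ is injective by Serre vanishing. So $\rho$ is unramified, hence quasi-finite, hence finite because $Y$ is proper. Then $\lambda_{\ov{\ww}}(c_1(L))$ is a positive multiple of $\rho^{*}$ of the ample determinant class.
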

\begin{proof}
Let $2n=\dim X$. There exists $m$ such that $L^{\otimes m}$ is very ample and the following hold.
\begin{enumerate}
\item
If $D_1,\ldots,D_{2n-1}\in |L^{\otimes m}|$ have transverse intersection $C=D_1\cap\ldots\cap D_{2n-1}$, then the restriction $\cP_{|C}$ is slope stable for every $[\cP]\in Y$.
\item
If  $[\cP]\in Y$ then the map $H^1(X,\gotg(\cP))\to H^1(C,\gotg(\cP)_{|C})$ is injective.
\end{enumerate}
In fact Item~(1) holds by Theorem~\ref{thm:princadj} and the analogous Bogomolov Effective Restriction Theorem for slope-stable vector bundles, see~\cite{bogomolov:restriction} or~\cite[Thm.~7.3.5]{huy-lehn:librofasci} if $\dim X=2$, and~\cite[Thm.~5.2]{langer:semistposchar} in general.
Item~(2) holds by Serre's classical vanishing results. Hence we have a regular restriction map
\begin{equation}
\begin{matrix}
Y & \xrightarrow{\rho} & \ov{\cM}_C \\
[\cP] & \mapsto & [\cP_{|C}]
\end{matrix}
\end{equation}
where $\ov{\cM}_C$ is a moduli space of semi-stable projective bundles on the curve $C$ (a moduli space of semistable vector bundles because 
the corresponding Brauer class is necessarily trivial), and $\rho$  
 is finite onto its image. By the equality in~\eqref{riducoacurva} we get that, up to a positive multiple, $\lambda_{\ov{\ww}}(c_1(L))$ is equal to 
 $\rho^{*}(\alpha)$ where $\alpha$ is the ample determinant line-bundle on $\ov{\cM}_C$, and hence is ample.
\end{proof}
\subsection{Stable  vector bundles as stable projective bundles}\label{subsec:stabletwisted}
\setcounter{equation}{0}  
Let $X$ be a compact  K\"ahler manifold, with K\"ahler class $\omega$, and let $\cP\to X$ be a $\PP^{r-1}$-bundle. There is a notion of 
$\omega$ slope-stability for $\cP$, defined in terms of the associated principal $\PGL_r(\CC)$-bundle on $X$, see Remark~\ref{rmk:stabequiv}. If 
$V\to X$ is an $\omega$ slope-stable rank-$r$ vector bundle then  $\PP(V)\to X$ is $\omega$ slope-stable as $\PP^{r-1}$-bundle. 
It follows that by mapping $[\cF]\in M_{\ww}(S^{[2]},h_{S^{[2]}})^{\bullet}$ (notation of Theorem~\ref{thm:dadueauno}) to $\PP(\cF)$ we get a map from $M_{\ww}(S^{[2]},h_{S^{[2]}})^{\bullet}$ to a suitable moduli space of  $\PP^{8a^3-1}$-bundles. This map is an embedding because $S^{[2]}$ has no  non-trivial line-bundles. The numerical invariants (see Subsection~\ref{subsec:risulprinc}) of the relevant 
$\PP^{8a^3-1}$-bundles are the following.
 Set $\ww_a\coloneq\ww(D,a)$.
Let
\begin{equation}\label{anonimoveneziano}
\wh{\gamma}_{a}\coloneq [-({\bm\mu}(D)-a\delta)]\in H^2(S^{[2]};\ZZ)/2a H^2(S^{[2]};\ZZ).
\end{equation}
Let $[\cF]\in M_{\ww_a}$. By~\cite[Lemma~2.5]{huybrechts-schroer:brauergrp} we have
\begin{equation}\label{etaottoacubo}
\eta_{8a^3}(\PP(\cF))=\ov{\gamma}_{a}\coloneq [4a^2\wh{\gamma}_{a}]\in H^2(S^{[2]};\ZZ)/8a^3 H^2(S^{[2]};\ZZ).
\end{equation}
Moreover $\Delta(\PP(\cF))=\Delta(\cF)=4a^6 c_2(S^{[2]})/3 $. Let (see~\eqref{eccowbarra})
\begin{equation}\label{doppiavuahilb}
\ov{\ww}_a\coloneq(8a^3,\ov{\gamma}_a,  \frac{ 4a^6}{3} c_2(S^{[2]}) ),
\end{equation}
and let $M_{\ov{\ww}_a}(S^{[2]},h_{S^{[2]}})$ be the moduli space of slope-stable $\PP^{8a^3-1}$-bundles $\cP$ on $S^{[2]}$ 
with $\eta_{8a^3}(\PP(\cF))=\ov{\gamma}_{a}$ and $\Delta(\cP)=4a^6 c_2(S^{[2]})/3 $.
By the preceding discussion  we have an isomorphism
\begin{equation}\label{italiabosnia}
\begin{matrix}
M_{\ww_a}(S^{[2]},h_{S^{[2]}})^{\bullet} & \xrightarrow{\sim} & M_{\ov{\ww}_a}(S^{[2]},h_{S^{[2]}})^{\bullet} \\
[\cF] & \mapsto & [\PP(\cF)]
\end{matrix}
\end{equation}
where $M_{\ov{\ww}_a}(S^{[2]},h_{S^{[2]}})^{\bullet} \subset M_{\ov{\ww}_a}(S^{[2]},h_{S^{[2]}})$  is  closed and irreducible. 
 Let 
\begin{equation}\label{modnorm}
\wt{M}_{\ov{\ww}_a}(S^{[2]},h_{S^{[2]}})^{\bullet}\xrightarrow{\nu_a}M_{\ov{\ww}_a}(S^{[2]},h_{S^{[2]}})^{\bullet}
\end{equation}
 be the normalization. 
 Let $v(a)$ be as in~\eqref{centocelle}. By the isomorphism in~\eqref{italiabosnia} and Corollary~\ref{crl:isomnorm} we have an isomorphism
\begin{equation}\label{torbellamonaca}
\wt{\psi}\colon\cM_{v(a)}(S,h_{S})  \xrightarrow{\sim}  \wt{M}_{\ov{\ww}_a}(S^{[2]},h_{S^{[2]}})^{\bullet} \\
\end{equation}
\begin{rmk}\label{rmk:quadratogamma}
Let $\gamma_{a}\coloneq -({\bm\mu}(D)-a\delta)$, so that  $\wh{\gamma}_{a}=[\gamma_{a}]$. Then
\begin{equation}\label{sergioleone}
\divisore (\gamma_a)=1,
\qquad
q_{S^{[2]}}(\gamma_a)\equiv 2a^2-2 \pmod{4a}.
\end{equation}
In fact let $\cl(D)=l \beta$ where $\beta\in H^{1,1}_{\ZZ}(S^{[2]};\ZZ)$ is primitive. Then $\gcd\{l,2a\}=1$ by~\eqref{castiglione}. It follows that 
$\divisore (\gamma_a)=1$. The congruence is straightforward.
Note:
\begin{equation}
q_{S^{[2]}}(\gamma_a)\equiv
\begin{cases}
-2 \pmod{4a} & \text{if $a$ is even,} \\
2a-2  \pmod{4a}  & \text{if $a$ is odd.}
\end{cases}
\end{equation}
\end{rmk}
\subsection{The Donaldson-Mukai map for $\wt{M}_{\ov{\ww}_a}$}\label{subsec:donaldson-mukai}
\setcounter{equation}{0}  
\subsubsection{Statement of the result}\label{subsubsec:donmukmap}
Hypotheses and notation are as in  Subsections~\ref{subsec:bladerunner} and~\ref{subsec:stabletwisted}. We often set 
$M_{\ov{\ww}_a}^{\bullet}=M_{\ov{\ww}_a}(S^{[2]},h_{S^{[2]}})^{\bullet}$ etc.
Let  
$\wt{\lambda}_{\ov{\ww}_a}\coloneq \nu_a^{*}\circ\lambda_{\ov{\ww}_a}$ (see~\eqref{donmukmoduli} and~\eqref{modnorm}). Thus we have maps
\begin{equation}
 H^2(S^{[2]})\overset{\wt{\lambda}_{\ov{\ww}_a}}{\lra} H^2(\wt{M}_{\ov{\ww}_a}^{\bullet})\overset{\wt{\psi}^{*}}{\overset{\sim}{\lra}} H^2(\cM_{v(a)}).
\end{equation}
Let $\wt{\varphi}_{\ov{\ww}_a}\colon H^2(S^{[2]})\to H^2(\wt{M}_{\ov{\ww}_a}^{\bullet})$ and 
$\varphi_{\ov{\ww}_a}\colon H^2(S^{[2]})\to H^2(\cM_{v(a)})$ be given by
\begin{equation}\label{fitildefi}
\wt{\varphi}_{\ov{\ww}_a}\coloneq 32^{-1}\cdot a^{-4}\cdot\wt{\lambda}_{\ov{\ww}_a},\qquad
\varphi_{\ov{\ww}_a}\coloneq \wt{\psi}^{*}\circ \wt{\varphi}_{\ov{\ww}_a}.
\end{equation}
\begin{prp}\label{prp:isogenia}
Let hypotheses  be as in Subsection~\ref{subsec:spalmo}.  Suppose that  there exists a universal sheaf  on $S\times\cM_{v(a)}$. 
 Then
\begin{equation*}
\varphi_{\ov{\ww}_a}(\gamma)=
\begin{cases}
 \theta_{v(a)}(\beta+\frac{(\beta,D)}{2a}\eta_S), & \text{if $\gamma=\bm{\mu}(\beta)$}. \\
\theta_{v(a)}(v_1) & \text{if $\gamma=\delta$},
\end{cases}
\end{equation*}
where $(-,-)$ is the intersection form, and $\eta_S\in H^4(S)$ is the orientation form.
\end{prp}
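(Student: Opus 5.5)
Since $\cM\setminus\cD$ has complement a divisor, and $H^2$ classes are determined by restriction to $\cM_{v(a)}\setminus\cD_{v(a)}$ only up to multiples of $\cl(\cD_{v(a)})=\theta_{v(a)}(v_1)$ (Remark~\ref{rmk:rinogaetano}), a direct computation over the open set is not enough. Instead we compute on all of $S^{[2]}\times\cM_{v(a)}$ via a family of vector bundles. The plan is to use the universal sheaf $\EE$ on $S\times\cM_{v(a)}$ to form $\GG=\cG(\cE_1\boxtimes\cO,\EE)$ on $S^{[2]}\times\cM_{v(a)}$. Its projectivization agrees with $(\Id\times\wt{\psi})^{*}$ of the universal $\PP^{8a^3-1}$-bundle away from $S^{[2]}\times\cD_{v(a)}$. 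The two families differ by the elementary modifications of Proposition~\ref{prp:grandinduz}, performed on the blow-up $\wh{\cM}$, and these descend.

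First I would show that the discriminant is unchanged in the relevant Künneth component by elementary modifications along divisors $S^{[2]}\times\wh{E}^k$. Such a modification by $\iota_*(\cE_1[2]^{-}\boxtimes\cU)$ changes $\ch$ by $-\ch(\iota_*(\cdots))$. By Lemma~\ref{lmm:grrinclusione}, the $H^2(S^{[2]})\otimes H^2$ part of $\Delta$ therefore changes by a term proportional to
\begin{equation*}
\bigl(r(\GG)c_1(\cE_1[2]^{-})-r(\cE_1[2]^{-})c_1(\GG_x)\bigr)\otimes[\wh{E}^k].
\end{equation*}
By~\eqref{piumenopend} and~\eqref{otamendi} this is a nonzero multiple of $\delta\otimes[\wh{E}^k]$. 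Since the $\wh{E}^k$ push forward to multiples of $\cl(\cD)$, the correction only affects $\lambda(\delta)$, and only by a multiple of $\theta_{v(a)}(v_1)$. Pairing $\bD(\bm{\mu}(\beta))$ against $\delta$ gives zero, because $q(\bm{\mu}(\beta),\delta)=0$, so $\lambda(\bm{\mu}(\beta))$ is unaffected.

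Second, I would compute $\lambda_{\PP\GG}=p_*(\Delta(\GG)\cdot\bD(\cdot))$ directly. The starting point is that $\rho^*\GG$ is $\tau^*(\cE_1\boxtimes\EE\oplus\EE\boxtimes\cE_1)$ modified along $E$, as in~\eqref{chiassosi}. One can evaluate on test classes: for $\bm{\mu}(\beta)$, use Remark~\ref{rmk:tantiacca} with curves $C\subset S^{[2]}$ of the form $\{Z\ni p\}$ with the other point moving on a curve in $S$, or its dual. Restricted to such $C$ (avoiding $\Delta$), $\GG$ is $\cE_1(p)\otimes\EE_{|C}\oplus\cE_1|_{C}\otimes\EE_p$. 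Its discriminant slant product then reduces, via GRR, to the Mukai map $\theta_{v(a)}$ applied to an explicit vector built from $\beta$ and $v_1$. The term $\frac{(\beta,D)}{2a}\eta_S$ should appear from the correction making the vector orthogonal to $v(a)$; note $\la(0,\beta,0),v(a)\ra=a(\beta,D)$ and $\la\eta_S,v(a)\ra=-2a^2$. For $\delta$, one can instead use the fact that the $E$-contribution gives a class supported over $\cD$, combined with the correction from the first step, to obtain a multiple of $\theta_{v(a)}(v_1)$.

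Third, the normalization $32^{-1}a^{-4}$ would be fixed by comparing the coefficient of $\theta_{v(a)}(\beta)$. The main obstacle is the $\delta$ term. There the contributions of the exceptional divisor $E$ in the BKR description and of the iterated elementary modifications must be tracked precisely, and one must check that they combine to exactly $\theta_{v(a)}(v_1)$ with coefficient $1$.

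As a fallback for the $\delta$ coefficient, I would use that $\varphi$ must be an isometry up to scalar with the already-determined $\bm{\mu}(\beta)$ part. This gives the constraint
\begin{equation*}
q(\varphi(\delta))=q(\delta)=-2,
\end{equation*}
and $q(\theta_{v(a)}(v_1))=\la v_1,v_1\ra=-2$, which pins the coefficient to $\pm1$. The sign would then be fixed by effectivity of $\cD$.
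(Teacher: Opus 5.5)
Your architecture matches the paper's. $\lambda$ equals the naive term $p_*(\Delta(\GG)\cdot\bD(\gamma))$ plus a correction from the elementary modification, and that correction is a multiple of $\la\gamma,\delta\ra\,\cl(\cD)$. The paper restricts to $\cM\setminus\cD^2$, where a single modification by $\cE_1[2]^-\boxtimes\cL$ occurs, and uses that the complement has codimension $\ge 2$. Your push-forward from $\wh\cM$ is equivalent, since $\wh f_*[\wh E^k]=0$ for $k\ge 2$ and $\cU$ has rank one on $\wh E^1$. Your test-curve computation for $\bm\mu(\beta)$ is also sound. The curve $\{Z\ni p\}$, with the other point on $\Gamma$ and $p\notin\Gamma$, is Poincar\'e dual to $\bD(\bm\mu([\Gamma]))$; this is the geometric content of the paper's identity $\rho^*\bD(\bm\mu(\beta))=\tau_1^*\beta\cdot\tau_2^*\eta_S+\tau_1^*\eta_S\cdot\tau_2^*\beta$. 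Carried out, it gives $16a^3\theta_{v(a)}(2a\beta+(\beta,D)\eta_S)$. The factor $32a^4$ has to come out of this computation: it is part of the statement, not a constant to be ``fixed''.

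The gap is the $\delta$ case, which needs two things you do not supply.

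First, you need the exact constant in your first step. By \eqref{piumenopend} and \eqref{otamendi} it is $2\bigl(8a^3c_1(\cE_1[2]^-)-4a^2c_1(\GG_x)\bigr)=-16a^4\delta$. So the correction is $-16a^4\la\gamma,\delta\ra\theta_{v(a)}(v_1)$, which is $+32a^4\theta_{v(a)}(v_1)$ for $\gamma=\delta$ (Lemma~\ref{lmm:dagaromana}).

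Second, and more importantly, you need the naive term $p_*(\Delta(\GG)\cdot\bD(\delta))$ to vanish. Your argument for it is wrong. $E\subset X(S)$ lies over $\Delta\subset S^{[2]}$, not over $\cD\subset\cM$. The $E$-terms of $\Delta(\rho^*\GG)$ contribute multiples of $\la\gamma,\delta\ra\,\theta_{\EE}(\eta_S)=\la\gamma,\delta\ra\,c_1(\EE_{|\{p\}\times\cM})$. These are not supported on $\cD$, and they even depend on the normalization of $\EE$. What actually happens (Claim~\ref{clm:sergioendrigo}, Proposition~\ref{prp:immugamma}) uses $\rho^*\bD(\delta)=2e\cdot(\tau_1^*\eta_S+\tau_2^*\eta_S)$. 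The term $\Delta(\cF(\EE_1,\EE_2))$ pairs to zero, and the two non-vanishing $E$-terms cancel, since $32a^4\cdot 2-8a^3\cdot 8a=0$. Without this you do not even know that $\varphi_{\ov{\ww}_a}(\delta)$ lies in $\QQ\,\theta_{v(a)}(v_1)$.

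Your fallback does not repair this. That $\varphi_{\ov{\ww}_a}$ is an isometry is Corollary~\ref{crl:isomraz}, which is deduced from the very proposition you are proving. Nothing a priori forces a Donaldson--Mukai map to be a similitude. Nor can effectivity of $\cD$ fix a sign, since $\lambda$ has no a priori effectivity property.
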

\begin{crl}\label{crl:isomraz}
The map $\wt{\varphi}_{\ov{\ww}_a}\colon H^2(S^{[2]})\to H^2(\wt{M}_{\ov{\ww}_a}^{\bullet})$ is a rational Hodge isometry 
(we are not assuming that  there exists a universal sheaf  on $S\times\cM_{v(a)}$). 
\end{crl}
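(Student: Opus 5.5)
The plan is to reduce to Proposition~\ref{prp:isogenia} and then check the two properties, compatibility with Hodge structures and with the quadratic forms, on an explicit basis. Since $\wt{\psi}$ in~\eqref{torbellamonaca} is an isomorphism, $\wt{\psi}^{*}$ is an isomorphism of integral Hodge structures matching BBF forms. So it suffices to show that $\varphi_{\ov{\ww}_a}=\wt{\psi}^{*}\circ\wt{\varphi}_{\ov{\ww}_a}$ is a rational Hodge isometry $H^2(S^{[2]})\to H^2(\cM_{v(a)})$.

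First I would remove the hypothesis on the universal sheaf. The map $\lambda_{\ov{\ww}_a}$ is defined from the universal $\PP^{8a^3-1}$-bundle, which exists regardless. Its pull-back to $S^{[2]}\times\cM_{v(a)}$ is, locally in the étale or classical topology, the projectivization of $\GG$ modified as in Proposition~\ref{prp:grandinduz}. Hence $\varphi_{\ov{\ww}_a}$ is computed by $c_2(\gotg)$, which is insensitive to twisting by line bundles (Remark~\ref{rmk:casoproj}). Likewise $\theta_{v(a)}$ on $v(a)^{\bot}$ is well defined via a quasi-universal family, divided by its similitude.

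The formulae of Proposition~\ref{prp:isogenia} are local in the moduli in this sense. So either I argue that they hold with a quasi-universal sheaf by the same computation, or I deform $(S,D)$ to a $K3$ surface with a divisor $C$ such that $C\cdot D$ is odd (Remark~\ref{rmk:rinogaetano}). In the second approach both sides vary continuously in the family, and rational cohomology classes are locally constant. I would take the deformation route, since a universal sheaf then exists for the deformed surface and Proposition~\ref{prp:isogenia} applies verbatim. I expect this reduction to be the main obstacle: one has to make sure the whole construction of Section~\ref{sec:estendo} works in families of pairs $(S,h_S)$ with $h_S$ staying ${\mathsf a}(v_2)$-generic and $h_{S^{[2]}}$ staying suitable, so that the normalization $\wt{M}^{\bullet}_{\ov{\ww}_a}$ and $\wt{\psi}$ deform along.

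Granting the formulae, the map factors as $\theta_{v(a)}\circ\Lambda$, where $\Lambda\colon H^2(S^{[2]})\to v(a)^{\bot}\otimes\QQ$ is given by
\begin{equation*}
\Lambda(\bm{\mu}(\beta))=\beta+\frac{(\beta,D)}{2a}\eta_S,\qquad \Lambda(\delta)=v_1.
\end{equation*}
I would check in turn:
\begin{enumerate}
\item The image lies in $v(a)^{\bot}$. This holds because $\la v_1,v(a)\ra=-2a+2a=0$, and $\la \beta+\tfrac{(\beta,D)}{2a}\eta_S,\,(2a^2,aD,ak-1)\ra=a(\beta,D)-(\beta,D)a=0$ with the sign conventions of the Mukai pairing.
\item $\Lambda$ is a rational isometry. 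We have $\la\Lambda\bm{\mu}(\beta),\Lambda\bm{\mu}(\beta')\ra=(\beta,\beta')$ because the $\eta_S$ terms pair trivially with $H^2$ and with each other. Next, $\la\Lambda\bm{\mu}(\beta),v_1\ra=(\beta,D)-(\beta,D)=0$. Finally $\la v_1,v_1\ra=-2=q(\delta)$. Since $q(\bm{\mu}(\beta))=(\beta,\beta)$ and $\bm{\mu}(\beta)\perp\delta$, $\Lambda$ is an isometry onto a rank-$23$ lattice. Because $v(a)^{2}\neq0$, this image spans $v(a)^{\bot}_{\QQ}$, so $\Lambda$ is an isomorphism over $\QQ$.
\item $\Lambda$ preserves Hodge structures. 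The classes $\delta$, $v_1$ and $\eta_S$ are of type $(1,1)$ for the Mukai Hodge structure. Moreover $\bm{\mu}$ and $\beta\mapsto\beta$ preserve $H^{2,0}$, since adding an algebraic class does not move $H^{2,0}$.
\end{enumerate}

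Composing with $\theta_{v(a)}$, which is a Hodge isometry by O'Grady/Yoshioka since $v(a)^2=2a^2>0$, gives the corollary.
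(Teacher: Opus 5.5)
Your proposal is correct and is essentially the paper's proof. Like the paper, you factor $\varphi_{\ov{\ww}_a}$ as $\theta_{v(a)}\circ\Lambda$ via Proposition~\ref{prp:isogenia}, check that $\Lambda$ is a rational isometry onto $v(a)^{\bot}$ (using $\langle v_1,v_1\rangle=-2=q(\delta)$ and the orthogonality of $\beta+\frac{(\beta,D)}{2a}\eta_S$ to $v_1$), and remove the universal-sheaf hypothesis by specializing $S$ as in Remark~\ref{rmk:rinogaetano} and invoking local constancy, while additionally spelling out the pairing and Hodge-type computations the paper only asserts and correctly flagging that preserving the genericity of $h_S$ and suitability of $h_{S^{[2]}}$ along the specialization is a point the paper leaves implicit.
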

\begin{proof}
It is  clear that $\wt{\varphi}_{\ov{\ww}_a}$ is a morphism of Hodge  structures. It remains to prove that it is a rational isometry.  Suppose first that there exists a universal sheaf  on $S\times\cM_{v(a)}$. The map $\bm{\mu}\colon H^2(S)\to H^2(S^{[2]})$ matches the intersection form on $H^2(S)$ with the BBF form on $H^2(S^{[2]})$, and the image is the orthogonal of $\delta$, which has square $-2$. The map $H^2(S)\to H(S)$ 
defined by $\beta\mapsto \beta+\frac{(\beta,D)}{2a}\eta_S$ matches the intersection form on $H^2(S)$ with the Mukai form, and the image is Mukai-orthogonal to $v_1$. Moreover $v_1^2=-2=\delta^2$ (squares are for  the Mukai form).  Mukai's map
 $\theta_{v(a)}$ matches the Mukai form on $v(a)^{\bot}$ with the BBF form on $H^2(\cM_{v(a)})$. It follows
 that $\varphi_{\ov{\ww}_a}$  is a rational Hodge isometry, and hence so is $\wt{\varphi}_{\ov{\ww}_a}$. 
 
 If no universal sheaf  on $S\times\cM_{v(a)}$ exists, we may specialize $S$ so that a universal sheaf exists, see Remark~\ref{rmk:rinogaetano}. Since  $\wt{\varphi}_{\ov{\ww}_a}$ is locally constant it follows that it is a rational isometry. 
\end{proof}
The result below will be useful later on, when considering twistor families. 
\begin{crl}\label{crl:kahlcomp}
There exists a non empty subcone $\cU\subset\cK(S^{[2]})$ of the K\"ahler cone containing $\bm{\mu}(h_S)$ in its closure and such that 
$\wt{\varphi}_{\ov{\ww}_a}(\omega)\in\cK(\wt{M}_{\ov{\ww}_a}^{\bullet})$ for all $\omega\in\cU$.
\end{crl}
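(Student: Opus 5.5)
Since $\wt{\psi}$ is an isomorphism and $\cM_{v(a)}$ is a projective HK manifold, it suffices to prove the statement for $\varphi_{\ov{\ww}_a}=\wt{\psi}^{*}\circ\wt{\varphi}_{\ov{\ww}_a}$ and the ample cone $\Amp(\cM_{v(a)})\subset\cK(\cM_{v(a)})$. The plan is to find a polarization $\omega$ close to $\bm{\mu}(h_S)$ whose image is ample, and then use openness of the ample cone together with continuity and linearity of $\varphi_{\ov{\ww}_a}$.

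\textbf{Step 1: a nef image.} Take $h_{S^{[2]}}$ to be an ample class that is ${\mathsf a}(\ww(D,a))$-suitable for $\bm{\mu}(h_S)$, which is the hypothesis in force. By Proposition~\ref{prp:ampiolambda}, applied to the projective subscheme $Y=M_{\ov{\ww}_a}^{\bullet}$ with $L$ a line bundle satisfying $c_1(L)=h_{S^{[2]}}$, the restriction of $\lambda_{\ov{\ww}_a}(h_{S^{[2]}})$ to $M_{\ov{\ww}_a}^{\bullet}$ is ample. The normalization $\nu_a$ is finite, so the pullback $\wt{\lambda}_{\ov{\ww}_a}(h_{S^{[2]}})=\nu_a^{*}\lambda_{\ov{\ww}_a}(h_{S^{[2]}})$ is ample on $\wt{M}_{\ov{\ww}_a}^{\bullet}$. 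Multiplying by the positive constant $32^{-1}a^{-4}$ shows that $\wt{\varphi}_{\ov{\ww}_a}(h_{S^{[2]}})$ is ample.

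\textbf{Step 2: openness.} Every ample class $h$ in the same open ${\mathsf a}(\ww(D,a))$-chamber as $h_{S^{[2]}}$ is again suitable for $\bm{\mu}(h_S)$. The chambers adjacent to $\bm{\mu}(h_S)$ contain a nonempty open subcone of $\Amp(S^{[2]})_{\RR}$ that has $\bm{\mu}(h_S)$ in its closure. On the rational points of that subcone, Step 1 shows $\wt{\varphi}_{\ov{\ww}_a}$ takes ample values.

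To obtain an open subcone $\cU$ of the full Kähler cone $\cK(S^{[2]})$, which is larger than the algebraic part, I extend as follows. $\wt{\varphi}_{\ov{\ww}_a}$ is a rational Hodge isometry by Corollary~\ref{crl:isomraz}, so it maps $H^{1,1}_{\RR}(S^{[2]})$ isomorphically onto $H^{1,1}_{\RR}$ of the target. The Kähler cone of the target is open in $H^{1,1}_{\RR}$ and contains the image $\wt{\varphi}_{\ov{\ww}_a}(h)$ of one ample $h$ found above. Set $\cU$ to be the intersection of $\cK(S^{[2]})$ with the preimage under $\wt{\varphi}_{\ov{\ww}_a}$ of the target Kähler cone. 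This $\cU$ is an open convex cone and contains $h$. It also contains the whole segment of suitable ample classes accumulating to $\bm{\mu}(h_S)$, so $\bm{\mu}(h_S)$ lies in its closure.

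\textbf{Main obstacle.} Step 1 is the real content: one has to check that the hypotheses of Proposition~\ref{prp:ampiolambda} hold for the universal projective bundle over $M_{\ov{\ww}_a}^{\bullet}$ (or for a quasi-universal family, if no universal one exists). One must also check that the normalization pullback agrees with the Donaldson–Mukai map defined by $\wt{\cP}$. Pullback of Chern classes is functorial, so the second point should be formal.
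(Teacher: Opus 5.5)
Your argument is correct, but it is not the route the paper takes for this corollary.

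The paper argues through the explicit formula of Proposition~\ref{prp:isogenia}. It writes $\varphi_{\ov{\ww}_a}(\bm{\mu}(h_S)-\epsilon\delta)=\theta_{v(a)}(h_S+\frac{(h_S,D)}{2a}\eta_S)-\epsilon\cl(\cD_{v_2})$. It then shows the first term is big, by restricting sheaves on $S$ to a curve $C\in|h_S|$ and using the anticanonical class of the Fano moduli space of bundles on $C$. It asserts that this term is also nef and deduces that the difference is ample for small $\epsilon>0$. Openness of the K\"ahler cone finishes the proof.

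You instead get ampleness of $\wt{\varphi}_{\ov{\ww}_a}(h)$ directly, for every rational ample $h$ in an open chamber adjacent to $\bm{\mu}(h_S)$. This comes from Proposition~\ref{prp:ampiolambda} (effective restriction to complete-intersection curves in $S^{[2]}$) together with finiteness of $\nu_a$. It is exactly the argument the paper itself uses later, in the proof of Theorem~\ref{thm:isogmodvb}.

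What each route buys:
\begin{itemize}
\item Your route needs no universal sheaf on $S\times\cM_{v(a)}$, whereas Proposition~\ref{prp:isogenia} assumes one.
\item It avoids the nefness claim, which the paper only sketches.
\item It avoids the step ``big and nef minus $\epsilon$ times an effective divisor is ample''. That step also needs $\cD_{v_2}$ to be negative on the curves contracted by the big and nef class.
\item The paper's route, in exchange, explains the geometry. It identifies $\bm{\mu}(h_S)$ with a Donaldson--Uhlenbeck-type class on $\cM_{v(a)}$ and $\delta$ with $\cl(\cD_{v_2})$, so $-\delta$ is visibly the direction into the ample cone. Your route says nothing at $\bm{\mu}(h_S)$ itself, since the restriction theorem needs $h$ honestly ample.
\end{itemize}

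Three small points you should make explicit:
\begin{itemize}
\item Choose $h_{S^{[2]}}$ generic, i.e.\ in an open chamber; suitability alone allows it to lie on a wall.
\item As $h$ moves along the segment towards $\bm{\mu}(h_S)$, the component $M_{\ov{\ww}_a}^{\bullet}$, its universal $\PP^{8a^3-1}$-bundle, and hence $\wt{\varphi}_{\ov{\ww}_a}$ do not change. This holds because the stability results of Section~\ref{sec:fibratistab} are valid for every polarization suitable for $\bm{\mu}(h_S)$.
\item Projectivity of $M_{\ov{\ww}_a}^{\bullet}$ follows from its being the image of $\cM_{v(a)}$ under $\ov{\psi}$.
\end{itemize}
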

\begin{proof}
 For $\epsilon>0$ and small enough, $\bm{\mu}(h_S)-\epsilon\delta$ is in the  ample cone of   $S^{[2]}$.  By Proposition~\ref{prp:isogenia} and~\eqref{classedivdet} we have
\begin{equation}\label{catturavuoto}
\varphi_{\ov{\ww}_a}(\bm{\mu}(h_S)-\epsilon\delta)=
\theta_{v(a)}\left(h_S+\frac{(h_S,D)}{2a}\eta_S\right)-\epsilon\cl(\cD_{v_2}).
\end{equation}
We claim that, as is well-known, $\theta_{v(a)}(h_S+(h_s,D)\eta_S/2a)$ is big and nef. We sketch the argument for the reader's convenience. We are free to replace $h_S$ by a high positive multiple. Let $C$ be a smooth curve in the complete linear system $|h_S|$. Then there is an open $V_C\subset (\cM_{v(a)}\setminus \cB_{v(a)})$ with complement of codimension at least $2$  which has the following properties.
\begin{enumerate}
\item
If $[\cF]\in V_C$ the restriction $\cF_{|C}$ is slope semistable. 
\item
The map $m_C\colon V_C\to U_C(2a^2,\cO_C(D))$ to the moduli space $U_C(2a^2,\cO_C(D))$ (of slope semistable rank-$2a^2$ vector bundles on 
$C$ with determinant isomorphic to $\cO_C(D)$) defined by $[\cF]\mapsto [\cF_{|C}]$ is injective. 
\end{enumerate}
We claim that  
\begin{equation}\label{sollevocanonico}
-m_C^{*}c_1(K_{U_C(2a^2,\cO_C(D))})=4a^2\theta_{v(a)}(\beta+(\beta,D)\eta_S/2a)_{V_C}.
\end{equation}
To prove this, let $\cH\coloneq \cF_{|C\times V_C}$, where $\cF$ is a universal sheaf on $S\times \cM_{v(a)}$. Let $p_C,p_{V_C}$ and 
$q_S,q_{\cM_{v(a)}}$ be the projections of $C\times V_C$ and $S\times \cM_{v(a)}$ to first and second factors respectively. The equality in~\eqref{sollevocanonico} follows from the equation below (given by the Grothendieck-Riemann-Roch Theorem):
\begin{multline*}
c_1\left(R^1p_{V_C,*} End^0(\cE)\right)=p_{V_C,*}\left(\Delta(\cH)\right)= \\
=q_{\cM_{v(a)},*}\left(\Delta(\cF)\cdot q_S^{*}\beta\right)_{|V_C}
=4a^2\theta_{v(a)}\left(\beta+(\beta,D)\eta_S/2a\right)_{|V_C}.
\end{multline*}
(Note that $\cF_{|C}$ is simple for every $[\cF]\in U_C$ because every sheaf parametrized by $\cM_{v(a)}$ is simple and $C\in |N h_S|$ for $N$ very large.)
Since  $U_C(2a^2,\cO_C(D))$ is a Fano variety,  it follows from Items(1), (2) and~\eqref{sollevocanonico} that $\theta_{v(a)}(\beta+(\beta,D)\eta_S/2a)$ is big. 
Nefness follows from a more refined  analysis.

Since $\theta_{v(a)}(h_S+(h_s,D)\eta_S/2a)$ is big and nef, the equation in~\eqref{catturavuoto} shows that $\varphi_{\ov{\ww}_a}(\bm{\mu}(h_S)-\epsilon\delta)$ is ample for $\epsilon>0$ small enough. The corollary follows by openness of the K\"ahler cone.
\end{proof}
The proof of Proposition~\ref{prp:isogenia} is in Subsection~\ref{subsec:natascia}.
\subsubsection{First step}\label{subsubsec:primopasso}
Hypotheses  as in Proposition~\ref{prp:isogenia}. Set $\cM\coloneq\cM_{v(a)}$. Let $\EE_1$ be the pull-back of $\cE_1$ to $S\times \cM$, and 
let $\EE_2$ be a universal sheaf  on $S\times\cM_{v(a)}$.  
Then we have the sheaf $\cG(\cM)\coloneq\cG(\EE_1,\EE_2)$ on $S^{[2]}\times \cM$  (see~\cite[Subsect.~2.1]{og:highdim}), 
such that for $[\cE_2]\in \cM$ we have 
\begin{equation*}
\cG(\cM)_{|S^{[2]}\times\{[\cE_2]\}}\cong\cG(\cE_1,\cE_2). 
\end{equation*}
In the present Subsubsection we prove the  following result.
\begin{prp}\label{prp:libano}
Let hypotheses be as in Proposition~\ref{prp:isogenia}. Let  $\gamma\in H^2(S^{[2]})$. Then
\begin{equation}\label{fognini}
\lambda_{\ww}(\gamma)=q_{\cM,*}\left[\Delta(\cG(\cM))\cdot q_{S^{[2]}}^{*}(\bD(\gamma))\right]-16 a^4\la\gamma,\delta\ra\theta_{v_2}(v_1),
\end{equation}
where $q_{S^{[2]}},q_{\cM}$ are the projections of $S^{[2]}\times \cM$ to $S^{[2]}$, $\cM$, and $\la-,-\ra$ is the BBF bilinear symmetric form of $S^{[2]}$. 
\end{prp}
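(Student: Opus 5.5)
The plan is to compare the universal projective bundle pulled back to $S^{[2]}\times\cM$ (via $\ov{\psi}$) with $\PP(\cG(\cM))$. By Remark~\ref{rmk:casoproj}, if the universal bundle $\cP_{\ov{\ww}}$ pulled back by $\Id\times\ov{\psi}$ were equal to $\PP(\cG(\cM))$, then $\lambda_{\ww}(\gamma)$ would equal the first term of~\eqref{fognini}. The two agree over $\cM\setminus\cD$ by Corollary~\ref{crl:stabuno}. Over $\cD$ they differ: the pulled-back universal bundle is the projectivization of the sheaf obtained from $\cG(\cM)$ by the elementary modifications of Section~\ref{sec:estendo}. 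In $H^2(\cM)$, only the contribution along the divisor $\cD$ survives. So I would compute the correction at the generic point of $\cD$, namely on $\cD^1\setminus\cD^2$, where a single elementary modification occurs and (since $\wh f$ is an isomorphism near generic points of $\cD$) no blow-up is needed in codimension one.

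\textbf{Step 1.} Remove a closed subset of codimension $\ge 2$, namely $\cD^2$, from $\cM$; this does not change $H^2$. On the remaining open set $\cM'$, let $\wt{\GG}$ be defined by the exact sequence
\[
0\lra\wt{\GG}\lra\cG(\cM)\lra\iota_*\left(\cE_1[2]^-\boxtimes\cL\right)\lra0,
\]
where $\iota$ is the inclusion of $S^{[2]}\times\cD'$, $\cD'=\cD\cap\cM'$, and $\cL$ is the line bundle $\Hom(\cE_2,\cE_1)^\vee$ on $\cD'$ (this is~\eqref{valtellina} with $j=1$). By Proposition~\ref{prp:estendopsi}, $\wt{\GG}$ is the vector bundle classifying $\ov{\psi}$, so $\lambda_{\ww}(\gamma)|_{\cM'}=q_*[\Delta(\wt{\GG})\cdot q^*\bD(\gamma)]$. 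Projective invariance means I may ignore line-bundle twists.

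\textbf{Step 2.} Compute $\Delta(\wt{\GG})-\Delta(\cG(\cM))$ modulo classes that push forward to zero in $H^2(\cM)$. We have $\ch(\wt{\GG})=\ch(\cG)-\ch(\iota_*(\cE_1[2]^-\boxtimes\cL))$. By Lemma~\ref{lmm:grrinclusione} with $d=1$, the relevant part of $\ch(\iota_*\cdot)$ is $r\,\cl(S^{[2]}\times\cD)+c_1$-terms, where $r=r(\cE_1[2]^-)=4a^2$. Expanding $\Delta=\ch_1^2-2r\ch_2$ and keeping only the terms of K\"unneth type $(2,2)$ that are linear in $[\cD]$, the change is
\[
-2\,\big(r(\cG)c_1(\cE_1[2]^-)-r(\cE_1[2]^-)c_1(\cG)\big)\boxtimes[\cD]
\]
up to sign conventions, plus terms of type $(0,4)$ or $(4,0)$ whose pushforward against $\bD(\gamma)$ is irrelevant or vanishes. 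By~\eqref{piumenopend} and~\eqref{otamendi}, the difference of slopes is $-\delta/4a$. Hence the bracket equals $r(\cG)\,r(\cE_1[2]^-)\cdot(-\delta/4a)=8a^3\cdot4a^2\cdot(-\delta/4a)=-8a^4\delta$, and the correction term is $\pm16a^4\,\delta\boxtimes[\cD]$.

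\textbf{Step 3.} Push forward: $q_*[(\delta\boxtimes[\cD])\cdot q^*\bD(\gamma)]=\la\gamma,\delta\ra[\cD]$ by the defining property of $\bD$. Then use $[\cD]=\theta_{v_2}(v_1)$, which is~\eqref{classedivdet}. This gives~\eqref{fognini} once the sign is fixed.

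The main obstacle is the bookkeeping in Step 2. One must get the sign and the factor $2$ right, and check that the $c_1(\cD)$- and normal-bundle corrections in Lemma~\ref{lmm:grrinclusione}, as well as $c_1(\cL)$, contribute only terms of pure K\"unneth type $(0,4)$. Pure $(0,4)$ terms are killed when paired with $\bD(\gamma)\in H^{4n-2}$ and pushed forward (degree reasons). I would fix the sign by a sanity check: $\lambda_{\ww}(\delta)$ must be proportional to $\varphi(\delta)=\theta(v_1)$, as predicted by Proposition~\ref{prp:isogenia}, after the first term is computed in the next subsubsection.
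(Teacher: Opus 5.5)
Your route is the paper's own. You restrict to $\cM\setminus\cD^2$, which leaves $H^2$ unchanged, and realize the classifying bundle there as the elementary modification~\eqref{valtellina} of $\cG(\cM)$ along $S^{[2]}\times\cD$. You then compute $\ch$ of $\iota_*(\cE_1[2]^{-}\boxtimes\cL)$ with Lemma~\ref{lmm:grrinclusione}, discard the pure $(0,4)$ K\"unneth terms, and conclude with $\cl(\cD)=\theta_{v_2}(v_1)$ from~\eqref{classedivdet}. Packaging the surviving $(2,2)$ term through the slope difference $-\delta/4a$ from~\eqref{piumenopend} and~\eqref{otamendi} reorganizes the paper's two separate pushforwards, and shows at once why the ${\bm\mu}(D)$-terms cancel.

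What is missing is the sign, and your proposed way of settling it does not work. Proposition~\ref{prp:isogenia} is proved by combining the present statement with Proposition~\ref{prp:lontanolontano}, so using it as a sanity check is circular. No check is needed, because the sign is forced by the expansion and is not a matter of convention. Write $\cQ$ for $\iota_*(\cE_1[2]^{-}\boxtimes\cL)$. Since $r(\wt{\GG})=r(\cG(\cM))=8a^3$ and $\ch(\wt{\GG})=\ch(\cG(\cM))-\ch(\cQ)$, one gets
\[
\Delta(\wt{\GG})=\Delta(\cG(\cM))+16a^3\ch_2(\cQ)-2\ch_1(\cG(\cM))\cdot\ch_1(\cQ)+\ch_1(\cQ)^2 .
\]
We have $\ch_1(\cQ)=4a^2\,q_{\cM}^{*}\cl(\cD)$, and $\ch_2(\cQ)\equiv q_{S^{[2]}}^{*}c_1(\cE_1[2]^{-})\cdot q_{\cM}^{*}\cl(\cD)$ modulo pure $(0,4)$ classes. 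Hence the part of the correction that survives pushforward against $q_{S^{[2]}}^{*}\bD(\gamma)$ is
\[
+2\bigl(r(\cG(\cE_1,\cE_2))\,c_1(\cE_1[2]^{-})-r(\cE_1[2]^{-})\,c_1(\cG(\cE_1,\cE_2))\bigr)\boxtimes\cl(\cD)=-16a^4\,\delta\boxtimes\cl(\cD).
\]
The coefficient is $+2$, not the $-2$ you wrote. Its pushforward is $-16a^4\la\gamma,\delta\ra\theta_{v_2}(v_1)$, which is exactly the sign in~\eqref{fognini}. With this correction your argument is complete.
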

We prove the  result above after a series of preliminary computations.
 
Let $\cM^{\dag}\coloneq\cM\setminus\cD^2$ (Note: if $a=1$ then $\cM^{\dag}=\cM$). The restriction of $\wh{f}$ to 
$\wh{\cM}\setminus(\wh{E}^2\cup\ldots\cup\wh{E}^a)$ (if $a=1$ the latter is $\wh{\cM}$) is an isomorphism 
\begin{equation}\label{michelpiccoli}
 \wh{\cM}\setminus(\wh{E}^2\cup\ldots\cup\wh{E}^a)\xrightarrow{\sim}\cM^{\dag}
\end{equation}
mapping $\wh{E}^1\setminus \wh{E}^2$ isomorphically to $\cD^1\setminus\cD^2$. In the following we identify $\cM^{\dag}$ with the left-hand side of~\eqref{michelpiccoli}, and $\cD^1\setminus\cD^2$ with $\wh{E}^1\setminus \wh{E}^2$. Let 
$\GG^{\dag}\coloneq\GG(0)_{|S^{[2]}\times\cM^{\dag}}$, and let $\cG(\cM)^{\dag}\coloneq\cG(\cM)_{|S^{[2]}\times\cM^{\dag}}$. The restriction 
to $S^{[2]}\times\cM^{\dag}$ of the exact sequence in~\eqref{valtellina} reads
\begin{equation}
0\lra \GG^{\dag}\lra \cG(\cM)^{\dag}\lra \iota^{\dag}_{*}\left(\cE_1[2]^{-}\boxtimes\cL\right)\lra 0,
\end{equation} 
where $\iota^{\dag}\colon S^{[2]}\times\cD^{\dag}\hra S^{[2]}\times\cM^{\dag}$ is the inclusion, and $\cL$ is a line-bundle on 
$\cD^{\dag}$. 
\begin{lmm}\label{lmm:dagaromana}
Let hypotheses be as in Proposition~\ref{prp:isogenia}. Let  $\gamma\in H^2(S^{[2]})$. Then
\begin{equation}
\lambda_{\ww}(\gamma)_{|\cM^{\dag}}=p_{\cM^{\dag},*}\left[\Delta(\cG(\cM)^{\dag})\cdot p_{S^{[2]}}^{*}\bD(\gamma)\right]
-16 a^4\la \gamma,\delta\ra\cl(\cD^{\dag}),
\end{equation}
where $p_{S^{[2]}},p_{\cM^{\dag}}$ are the projections of $S^{[2]}\times \cM^{\dag}$ to $S^{[2]}$, $\cM^{\dag}$ respectively. 
\end{lmm}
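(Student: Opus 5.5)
We compute $\lambda_{\ww}(\gamma)_{|\cM^{\dag}}$ from the family $\GG^{\dag}$. Over $\cM^{\dag}$, by Proposition~\ref{prp:estendopsi} (via $\wh{\psi}$), $\GG^{\dag}$ is a flat family of locally free sheaves classified by $\wh{\psi}$. Hence the pull-back of the universal projective bundle to $S^{[2]}\times\cM^{\dag}$ is $\PP(\GG^{\dag})$, and by Remark~\ref{rmk:casoproj}
\begin{equation*}
\lambda_{\ww}(\gamma)_{|\cM^{\dag}}=p_{\cM^{\dag},*}\left[\Delta(\GG^{\dag})\cdot p_{S^{[2]}}^{*}\bD(\gamma)\right].
\end{equation*}
(Here we use that $\Delta$ is invariant under twisting by line bundles, so the possible non-uniqueness of the universal family does not matter.) The lemma therefore reduces to comparing $\Delta(\GG^{\dag})$ with $\Delta(\cG(\cM)^{\dag})$ using the elementary modification
\begin{equation*}
0\lra \GG^{\dag}\lra \cG(\cM)^{\dag}\lra \iota^{\dag}_{*}\left(\cE_1[2]^{-}\boxtimes\cL\right)\lra 0.
\end{equation*}

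\textbf{Chern character of the quotient.} Set $\cQ\coloneq\iota^{\dag}_{*}(\cE_1[2]^{-}\boxtimes\cL)$. It is supported on the divisor $S^{[2]}\times\cD^{\dag}$, and $\cD^{\dag}$ is smooth by Proposition~\ref{prp:divuti}. By Lemma~\ref{lmm:grrinclusione} with $d=1$, modulo terms that contribute nothing after pairing with $\bD(\gamma)$ and pushing to $\cM^{\dag}$,
\begin{equation*}
\ch_1(\cQ)=r\,[D],\qquad \ch_2(\cQ)=[D]\cdot\left(c_1(\cE_1[2]^{-})+c_1(\cL)-\tfrac{r}{2}[D]\right),
\end{equation*}
where $r=4a^2$ and $[D]=p_{\cM^{\dag}}^{*}\cl(\cD^{\dag})$. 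Then we substitute into
\begin{equation*}
\Delta(\GG^{\dag})=\ch_1(\GG^{\dag})^2-2R\,\ch_2(\GG^{\dag}),\qquad R=8a^3,
\end{equation*}
with $\ch(\GG^{\dag})=\ch(\cG(\cM)^{\dag})-\ch(\cQ)$. We only need the Künneth components of type $(4,2)$ on $S^{[2]}\times\cM^{\dag}$ that pair with $\bD(\gamma)\in H^6(S^{[2]})$.

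\textbf{Collecting terms.} The difference $\Delta(\GG^{\dag})-\Delta(\cG(\cM)^{\dag})$ has $(4,2)$-part
\begin{equation*}
-2\ch_1(\cG(\cM)^{\dag})\cdot\ch_1(\cQ)+\ch_1(\cQ)^2+2R\,\ch_2(\cQ).
\end{equation*}
The factor $[D]^2$ lies in $H^4(\cM^{\dag})$ and drops out. The term $c_1(\cL)$ is pulled back from $\cM^{\dag}$, so together with $[D]$ it has type $(0,4)$ and also drops out. The only surviving contribution is
\begin{equation*}
[D]\cdot\left(2R\,c_1(\cE_1[2]^{-})-2r\,c_1(\cG)\right)=2Rr\,[D]\cdot\left(\frac{c_1(\cE_1[2]^{-})}{r}-\frac{c_1(\cG)}{R}\right).
\end{equation*}
By~\eqref{piumenopend} and~\eqref{otamendi} this slope difference equals $-\delta/(4a)$. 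The contribution is therefore $-2\cdot 8a^3\cdot 4a^2\cdot\frac{1}{4a}\,\delta\otimes\cl(\cD^{\dag})=-16a^4\,\delta\otimes\cl(\cD^{\dag})$. Pairing with $\bD(\gamma)$ gives $\int_{S^{[2]}}\delta\cdot\bD(\gamma)=\la\gamma,\delta\ra$, which is the claimed formula.

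\textbf{Main obstacle.} The sign conventions are the delicate point. One must check the sign of the $-\tfrac{r}{2}\cl\cdot c_1(\text{normal})$ term in Lemma~\ref{lmm:grrinclusione}, and confirm that it only produces $[D]^2$, a $(0,4)$-class. One must also confirm the sign of $\Delta$ under $\ch\mapsto\ch-\ch(\cQ)$, so that the final coefficient is $-16a^4$ and not $+16a^4$.
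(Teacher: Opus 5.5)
Your proposal is correct and follows the paper's proof. You compute $\lambda_{\ww}$ on $\cM^{\dag}$ from $\Delta(\GG^{\dag})$, expand $\Delta$ across the elementary modification $0\to\GG^{\dag}\to\cG(\cM)^{\dag}\to\cQ\to 0$, and keep only the $(2,2)$ contributions of $16a^3\,\ch_2(\cQ)$ and $-2\ch_1(\cG(\cM)^{\dag})\cdot\ch_1(\cQ)$. The paper evaluates these two terms separately, while you package them as the slope difference $-\delta/4a$; the content is the same. The slips are cosmetic. The relevant K\"unneth type is $(2,2)$, not $(4,2)$. The $\cL$-term should be $4a^2$ times the push-forward of $c_1(\cL)$ from $S^{[2]}\times\cD^{\dag}$ rather than $[D]\cdot c_1(\cL)$, but it is of type $(0,4)$ and drops out either way.
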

\begin{proof}
Let $r_{\cD^{\dag}}$   be the projection of $S^{[2]}\times \cD^{\dag}$ to the  second factor. 
Then (use Lemma~\ref{lmm:grrinclusione} and~\cite[Etns~(5.4.2)-(5.4.3)]{ogfascimod}) \emph{modulo $H^{>4}(S^{[2]}\times\cM^{\dag})$} we have
\begin{multline*}
\ch(\iota^{\dag}_{*}(\cE_1[2]^{-}\boxtimes\cL))=4a^2p_{\cM^{\dag}}^{*}\cl(\cD^{\dag})+\\
+p_{S^{[2]}}^{*}(2a {\bm\mu}(D)-a(2a+1)\delta)\cdot p_{\cM^{\dag}}^{*}\cl(\cD^{\dag})+4a^2\iota^{\dag}_{*}\left( r^{*}_{\cD^{\dag}} c_1(\cL)\right)
-2a^2 p_{\cM^{\dag}}^{*}\cl(\cD^{\dag})^2.
\end{multline*}
It follows that
\begin{eqnarray*}
p_{\cM^{\dag},*}\left[\ch_2(\iota^{\dag}_{*}(\cE_1[2]^{-}\boxtimes\cL))\cdot p_{S^{[2]}}^{*} \bD(\gamma)\right]& = & 
\la\gamma,2a{\bm\mu}(D)-a(2a+1)\delta\ra\cl(\cD^{\dag}) \\
p_{\cM^{\dag},*}\left[\ch_1(\cG(\cM)^{\dag})\cdot \ch_1(\iota^{\dag}_{*}(\cE_1[2]^{-}\boxtimes\cL)) \cdot p_{S^{[2]}}^{*} \bD(\gamma)\right]& = & 
\la\gamma,16a^4{\bm\mu}(D)-16a^5\delta\ra\cl(\cD^{\dag}) \\ 
p_{\cM^{\dag},*}\left[\ch_1(\iota^{\dag}_{*}(\cE_1[2]^{-}\boxtimes\cL))^2 \cdot p_{S^{[2]}}^{*} \bD(\gamma)\right]& = & 0.
\end{eqnarray*}
 The lemma follows from the formulae above and the equality
\begin{multline*}
\Delta(\GG^{\dag})=\Delta(\cG(\cM)^{\dag})+16a^3\ch_2(\iota^{\dag}_{*}(\cE_1[2]^{-}\boxtimes\cL))-\\
-2\ch_1(\cG(\cM)^{\dag})\cdot \ch_1(\iota^{\dag}_{*}(\cE_1[2]^{-}\boxtimes\cL))+\ch_1(\iota^{\dag}_{*}(\cE_1[2]^{-}\boxtimes\cL))^2.
\end{multline*}
\end{proof}
\begin{proof}[Proof of Proposition~\ref{prp:libano}]
By~\eqref{classedivdet} we have $\cl(\cD^{\dag})=\theta_{v_2}(v_1)$. Thus by Lemma~\ref{lmm:dagaromana}  the restrictions to 
$\cM^{\dag}$  of the two sides of~\eqref{fognini} are equal.  Since $\cM\setminus\cM^{\dag}$ has codimension $4$ in $\cM$ (or is empty if $a=1$) the restriction map  $H^2(\cM)\to H^2(\cM^{\dag})$ is an isomorphism. It follows that the two sides of~\eqref{fognini} are equal.
\end{proof}
\subsubsection{Passing to $X(S)$}
Hypotheses and notation are as in Subsubsection~\ref{subsubsec:primopasso}. The goal of the present subsubsection is to prove the following result.
\begin{prp}\label{prp:lontanolontano}
Let hypotheses be as in Proposition~\ref{prp:isogenia}. Let  $\gamma\in H^2(S^{[2]})$. Then
\begin{equation*}
q_{\cM,*}\left[\Delta(\cG(\cM))\cdot q_{S^{[2]}}^{*}\bD(\gamma)\right]=
\begin{cases}
16a^3\theta_{v_2}(2a\beta+(\beta,D)\eta_S), & \text{if $\gamma=\bm{\mu}(\beta)$}, \\
0 & \text{if $\gamma=\delta$},
\end{cases}
\end{equation*}
where $q_{S^{[2]}},q_{\cM}$ are the projections of $S^{[2]}\times \cM$ to $S^{[2]}$, $\cM$ respectively,  $\beta\in H^2(S)$, $\delta$ is as in~\eqref{eccodelta},  $(-,-)$ is the intersection product on $H^2(S)$.
\end{prp}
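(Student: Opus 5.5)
Pull the computation back along $\rho\times\Id_{\cM}\colon X(S)\times\cM\to S^{[2]}\times\cM$ and carry it out on $S^2\times\cM$ (or $X(S)\times\cM$), where $\cG(\cM)$ becomes an explicit sum of box products. By~\cite[Subsect.~2.4]{og:highdim} (the relative version of~\eqref{chiassosi}) there is an exact sequence
\begin{equation*}
0\lra (\rho\times\Id)^{*}\cG(\cM)\lra (\tau\times\Id)^{*}\bigl(\EE_1\boxtimes\EE_2\oplus\EE_2\boxtimes\EE_1\bigr)\lra \cR\lra 0,
\end{equation*}
with $\cR$ supported on $E\times\cM$, where $E$ is the exceptional divisor. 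Since $\rho$ has degree $2$, we have $q_{\cM,*}[\Delta(\cG(\cM))\cdot q^{*}\bD(\gamma)]=\tfrac12\, p_{\cM,*}[(\rho\times\Id)^{*}\Delta(\cG(\cM))\cdot p^{*}\rho^{*}\bD(\gamma)]$. We therefore compute $(\rho\times\Id)^{*}\Delta$ in Künneth degree $(6,2)$ relative to $X(S)\times\cM$, since only that part pairs against $\bD(\gamma)\in H^6$.

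The plan splits into a main term and a correction term. For the main term, $\Delta$ of $\tau^{*}(\EE_1\boxtimes\EE_2\oplus\EE_2\boxtimes\EE_1)$ is expressed through $\ch(\EE_2)$ and $\ch(\cE_1)$. By the Künneth decomposition, the part of type $H^{\ast}(S^2)\otimes H^2(\cM)$ is built from $\ch_1$ of $\EE_2$ along $\cM$, tensored with classes on $S$. Pairing with $\bD(\bm{\mu}(\beta))$ picks out terms of the form $q_{\cM,*}[\ch(\EE_2)\cdot(\text{class on }S)]$, which are Mukai-map values $\theta_{v_2}(x)$. The class $x$ is assembled from $v_1$, $\beta$ and $D$, and after subtracting multiples of $v_2$ it should be $16a^3(2a\beta+(\beta,D)\eta_S)$. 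It helps that $\bD(\bm{\mu}(\beta))$ pulled back to $S^2$ is a combination of $\pr_1^{*}\beta\cdot\pr_2^{*}\eta_S$, its symmetric partner, and diagonal-type classes. For the correction term, the classes of $\cR$ live on $E\times\cM$; I expect their contribution to the degree-$(6,2)$ part to pair trivially with $\rho^{*}\bm{\mu}(\beta)$-type classes, because $E\cdot\rho^{*}\bm{\mu}(\beta)^{3}=0$-type vanishings hold. They may contribute against $\delta$, and this is where care is needed.

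For $\gamma=\delta$ I would argue without full computation. The right-hand side must vanish because $\bD(\delta)$ is supported near $\Delta$. An alternative route is to use modularity: the class in question is independent of twisting by line bundles on $\cM$ and is compatible with the $\mathrm{O}$-action. I would first try to show directly that the total $(6,2)$-part pairs to zero with $\bD(\delta)$, using Lemma~\ref{lmm:grrinclusione} to compute $\ch(\cR)$ on the smooth divisor $E\times\cM$. Since $\rho^{*}\delta=E$ up to a factor, the pairing reduces to restrictions to $E$, where the fibers of $E\to D(S)$ are $\PP^1$'s.

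The main obstacle is controlling $\cR$ and its contribution, which needs an explicit description of $\cR$ as a sheaf on $E\times\cM$ (roughly $\bigwedge^2$- or antisymmetric parts of $\EE_1\otimes\EE_2$ restricted to the diagonal), together with bookkeeping of the normalizing constants $16a^3$ and the factor $\tfrac12$ from $\deg\rho$. Torsion in $\EE_2$ along $\cD^a$ has codimension $\ge 2$ in $\cM$ and does not affect $H^2$, so we may work over $\cM\setminus\cD^a$ where $\EE_2$ is locally free.
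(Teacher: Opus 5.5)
Your route is the paper's: pull back along $\brho=\rho\times\Id$, halve because $\deg\rho=2$, and split $\Delta(\brho^{*}\cG(\cM))$ into $\Delta(\cF(\EE_1,\EE_2))$ plus a correction from the cokernel on $E\times\cM$, computed with Lemma~\ref{lmm:grrinclusione}. The gap is that the step deciding the case $\gamma=\delta$ is left open, and the reasons you give for it do not work. The fact that $\bD(\delta)$ lives near $\Delta$ disposes only of the main term. Indeed $\rho^{*}\bD(\delta)=2e\cdot(\tau_1^{*}\eta_S+\tau_2^{*}\eta_S)$ is divisible by $e$, whereas $\Delta(\cF(\EE_1,\EE_2))$ is pulled back from $S^2\times\cM$, so the projection formula and $\tau_{*}e=0$ kill it. The correction terms are supported on $E\times\cM$ and individually do pair nontrivially with $\bD(\delta)$. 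Their sum vanishes only by an exact cancellation, which has to be computed.

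Concretely, the cokernel is $\cR=\bepsilon^{*}(\EE_1\otimes\EE_2)$. This is the full tensor product on the diagonal, of rank $4a^3$ (the anti-invariant copy inside $(\EE_1\otimes\EE_2)^{\oplus 2}$), not a $\bigwedge^2$-piece. Lemma~\ref{lmm:grrinclusione} then gives, modulo $H^{>4}$,
\[
\Delta(\brho^{*}\cG(\cM))-\Delta(\cF(\EE_1,\EE_2))=\ov{q}_{X(S)}^{*}(e)\cdot\bigl(32a^5\btau_1^{*}c_1(\EE_1)+32a^4\btau_2^{*}c_1(\EE_2)-8a^3c_1(\cF(\EE_1,\EE_2))\bigr)-16a^6\,\ov{q}_{X(S)}^{*}(e^2).
\]
Pair this against $\ov{q}_{X(S)}^{*}\rho^{*}\bD(\gamma)$ and push forward. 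The $c_1(\EE_1)$- and $e^2$-terms give $0$, since they have no K\"unneth component in $H^2(\cM)$. The other two give $64a^4\la\gamma,\delta\ra\theta_{\EE_2}(\eta_S)$ and $-64a^4\la\gamma,\delta\ra\theta_{\EE_2}(\eta_S)$, which cancel.

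Your twist-invariance idea becomes a valid shortcut only at this point. Once the correction is known to have the form $c\,\la\gamma,\delta\ra\theta_{\EE_2}(\eta_S)$, consider $\EE_2\mapsto\EE_2\otimes L$. Both $\Delta(\brho^{*}\cG(\cM))$ and $\Delta(\cF(\EE_1,\EE_2))$ are unchanged, while $\theta_{\EE_2}(\eta_S)$ shifts by $2a^2c_1(L)$; this forces $c=0$. That settles $\delta$, and also shows the correction is irrelevant for $\bm{\mu}(\beta)$. There the main term, with $\rho^{*}\bD(\bm{\mu}(\beta))=\tau_1^{*}\beta\cdot\tau_2^{*}\eta_S+\tau_1^{*}\eta_S\cdot\tau_2^{*}\beta$, gives $32a^3\theta_{v_2}(2a\beta+(\beta,D)\eta_S)$ on $X(S)$. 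So the constant is $16a^3$ only after halving, and no multiples of $v_2$ need removing, since $(0,2a\beta,(\beta,D)\eta_S)\in v_2^{\bot}$.

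Finally, drop the reduction to $\cM\setminus\cD^a$. For $a=1$, $\cD^1=\cB_{v_2}$ is a divisor of class $\theta_{v_2}(v_1)\neq 0$, so restriction to its complement is not injective on $H^2$. The reduction is also unnecessary: $\EE_2$ is $\cM$-flat, so its Chern character is defined on the smooth $S\times\cM$.
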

The proof is at the end of the subsubsection. 

We  pass to $X(S)$, see the commutative diagram in~\eqref{commiso}. We  adopt the notation introduced therein. 
Let $\ov{q}_{X(S)},\ov{q}_{\cM}$ be the projections of $X(S)\times \cM$ to first and second factor respectively.
Recall that $\rho\colon X(S)\to S^{[2]}$ is a double cover. Let $\brho\colon X(S)\times \cM\to S^{[2]}\times \cM$ be given by 
$\brho\coloneq\rho\times \Id$.
\begin{clm}\label{clm:dividoperdue}
Let hypotheses be as in Proposition~\ref{prp:isogenia}. Let  $\gamma\in H^2(S^{[2]})$. Then
\begin{equation}\label{favignana}
q_{\cM,*}\left[\Delta(\cG(\cM))\cdot q_{S^{[2]}}^{*}\bD(\gamma)\right]=
\frac{1}{2} \ov{q}_{\cM,*}\left[\Delta(\brho^{*}\cG(\cM))\cdot \ov{q}_{X(S)}^{*}\rho^{*}\bD(\gamma)\right].
\end{equation}
\end{clm}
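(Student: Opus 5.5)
This is a projection formula for the finite double cover $\brho=\rho\times\Id\colon X(S)\times\cM\to S^{[2]}\times\cM$, which has degree $2$. I will use three facts and combine them in order.

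First, the discriminant is compatible with pull-back: $\Delta$ is a polynomial in Chern classes, and Chern classes commute with pull-back. So
\begin{equation*}
\Delta(\brho^{*}\cG(\cM))=\brho^{*}\Delta(\cG(\cM)).
\end{equation*}
This holds for $\cG(\cM)$ as a coherent sheaf on a smooth variety, with Chern classes defined through a locally free resolution. Pull-back by the flat map $\brho$ preserves such resolutions, since $\rho$ is a finite flat double cover between smooth varieties.

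Second, the cohomology classes pulled back from $S^{[2]}$ are compatible as well: $\ov{q}_{X(S)}^{*}\rho^{*}\bD(\gamma)=\brho^{*}q_{S^{[2]}}^{*}\bD(\gamma)$, because $q_{S^{[2]}}\circ\brho=\rho\circ\ov{q}_{X(S)}$. Therefore the integrand on the right-hand side of~\eqref{favignana} equals $\brho^{*}$ of the integrand on the left-hand side.

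Third, the projections are compatible with $\brho$, namely $\ov{q}_{\cM}=q_{\cM}\circ\brho$, so that $\ov{q}_{\cM,*}=q_{\cM,*}\circ\brho_{*}$. By the projection formula, $\brho_{*}\brho^{*}(x)=\deg(\brho)\,x=2x$ for every cohomology class $x$, since $\brho$ is a proper generically finite map of degree $2$ between irreducible smooth varieties. Combining the three facts gives
\begin{equation*}
\ov{q}_{\cM,*}\bigl[\brho^{*}(\ldots)\bigr]=q_{\cM,*}\bigl[2(\ldots)\bigr],
\end{equation*}
and dividing by $2$ yields~\eqref{favignana}.

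The only delicate step is the first one: making sure that $\brho^{*}\cG(\cM)$ is understood as the underived pull-back, and that it agrees with the derived one. This holds because $\rho$ is flat (a finite map between smooth varieties of the same dimension). With that, $c(\brho^{*}\cG(\cM))=\brho^{*}c(\cG(\cM))$ follows, so I expect no real obstacle.
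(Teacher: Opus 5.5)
Your proposal is correct and is the same argument as the paper, whose proof is the single remark that $\rho$ has degree $2$. You have spelled out what that remark leaves implicit: $\Delta$ commutes with pull-back by the flat map $\brho$, and the projection formula gives $\brho_{*}\brho^{*}=2$.
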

\begin{proof}
This holds because $\rho$ has  degree $2$.
\end{proof}

The pull-back $\brho^{*}\cG(\cM)$ is described as follows. For $i\in\{1,2\}$ let $\btau_i\colon X(S)\times \cM\to S\times \cM$ be given by $\btau_i=\tau_i\times \Id$.  
Let
\begin{equation}\label{giulatesta}
\cF(\EE_1,\EE_2)\coloneq\btau_1^{*}\EE_1\otimes \btau_2^{*}\EE_2\oplus \btau_1^{*}\EE_2\otimes \btau_2^{*}\EE_1.
\end{equation}
Let $E\subset X$ be the exceptional divisor of the blow-up (of the diagonal) $\tau\colon X(S)\to S^2$. There is a 
morphism  
$\brho^{*}\cG(\cM)\lra\cF(\EE_1,\EE_2)$ which is an isomorphism 
away from $E$.  Let $\epsilon\colon E\to S$ be the restriction of $\tau$ to $E$ (the image is the diagonal of $S^2$, which we identify with $S$), and let
$\bepsilon \colon E\times \cM\to S\times \cM$ be given by $\bepsilon=\epsilon\times \Id$.  We let $\cR\coloneq \bepsilon^{*}(\EE_1\otimes\EE_2)$. 
Then  $\brho^{*}\cG(\cM)$  fits into the exact sequence
\begin{equation}
0\lra \brho^{*}\cG(\cM)\lra \cF(\EE_1,\EE_2)\overset{\bvarphi}{\lra}
\biota_{*}\cR\lra 0,
\end{equation}
where  
$\biota\colon E\times \cM\hra X\times \cM$ is the inclusion, and $\bvarphi(s_1,s_2)\coloneq (s_{1|E\times \cM}-s_{2|E\times \cM})$ for  the components $s_1,s_2$ are of a (local) section with respect to the direct sum decomposition in~\eqref{giulatesta}.
 Then (use Lemma~\ref{lmm:grrinclusione}) \emph{modulo $H^{>4}(X(S)\times\cM)$} 
\begin{equation*}
\ch(\biota_{*}\cR)=4a^3 \ov{q}_{X(S)}^{*}(e)+
 \ov{q}_{X(S)}^{*}(e)\cdot \left(2a^2\btau_1^{*}c_1(\EE_1)+ 2a\btau_2^{*} c_1(\EE_2)\right)-2a^3 \ov{q}_{X(S)}^{*}(e^2).
\end{equation*}
Since $\ch(\brho^{*}\cG(\cM))=\ch(\cF(\EE_1,\EE_2))-\ch(\biota_{*}\cR)$, the equation above gives that
\begin{multline}\label{lanaveva}
\Delta(\brho^{*}\cG(\cM))=
\Delta(\cF(\EE_1,\EE_2))+\ov{q}_{X(S)}^{*}(e)\cdot \left(32a^5\btau_1^{*}c_1(\EE_1)+ 32a^4\btau_2^{*} c_1(\EE_2)\right)- \\
-16a^6 \ov{q}_{X(S)}^{*}(e^2)-8a^3 \ov{q}_{X(S)}^{*}(e)\cdot c_1(\cF(\EE_1,\EE_2)).
\end{multline}
\begin{clm}\label{clm:sergioendrigo}
Let hypotheses be as in Proposition~\ref{prp:isogenia}. Let  $\gamma\in H^2(S^{[2]})$. Then
\begin{equation*}
 \ov{q}_{\cM,*}\left[\Delta(\brho^{*}\cG(\cM))\cdot \ov{q}_{X(S)}^{*}\rho^{*}\bD(\gamma)\right]=
\ov{q}_{\cM,*}\left[\Delta(\cF(\EE_1,\EE_2))\cdot \ov{q}_{X(S)}^{*}\rho^{*}\bD(\gamma)\right].
\end{equation*}
\end{clm}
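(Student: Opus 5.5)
By~\eqref{lanaveva} it suffices to show that the push-forward to $\cM$ of each correction term, multiplied by $\ov{q}_{X(S)}^{*}\rho^{*}\bD(\gamma)$, vanishes. There are three such terms:
\begin{equation*}
\ov{q}_{X(S)}^{*}(e)\cdot\bigl(32a^5\btau_1^{*}c_1(\EE_1)+32a^4\btau_2^{*}c_1(\EE_2)\bigr),\qquad
-16a^6\ov{q}_{X(S)}^{*}(e^2),\qquad
-8a^3\ov{q}_{X(S)}^{*}(e)\cdot c_1(\cF(\EE_1,\EE_2)).
\end{equation*}
Throughout, $e=\cl(E)$ and $\rho^{*}\bD(\gamma)\in H^6(X(S))$. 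We only care about the component of the push-forward in $H^2(\cM)$, so in each product only the Künneth component of type $H^8(X(S))\otimes H^2(\cM)$ contributes.

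The term $e^2\cdot\rho^{*}\bD(\gamma)$ lives purely on $X(S)$ and has degree $10>8=\dim_{\RR}X(S)$, so it vanishes.

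For the other terms, write $c_1(\EE_2)=\xi_S+\xi_{\cM}$ with $\xi_S\in H^2(S)$ and $\xi_{\cM}\in H^2(\cM)$; note that $c_1(\EE_1)$ comes from $S$. The pieces pulled back from $X(S)$ alone again give classes of degree $10$ on $X(S)$, which vanish. The surviving piece is of the form
\begin{equation*}
c\cdot\ov{q}_{X(S)}^{*}\bigl(e\cdot\rho^{*}\bD(\gamma)\bigr)\cdot\ov{q}_{\cM}^{*}\xi_{\cM},
\end{equation*}
where $c$ is a constant collecting the contributions of $\btau_2^{*}c_1(\EE_2)$ and of the summand $\btau_1^{*}c_1(\EE_2)+\btau_2^{*}c_1(\EE_2)$ inside $c_1(\cF(\EE_1,\EE_2))$. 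Its push-forward is $c\bigl(\int_{X(S)}e\cdot\rho^{*}\bD(\gamma)\bigr)\xi_{\cM}$. So the key step is the vanishing
\begin{equation*}
\int_{X(S)}e\cdot\rho^{*}\bD(\gamma)=0.
\end{equation*}
Since $e=\rho^{*}\delta$ and $\rho$ has degree $2$, projection formula gives $\int_{X(S)}e\cdot\rho^{*}\bD(\gamma)=2\int_{S^{[2]}}\delta\cdot\bD(\gamma)=2\langle\gamma,\delta\rangle$, which is not zero in general. So the surviving piece must be handled more carefully.

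The cure is to collect the $\xi_{\cM}$-coefficients: $32a^4$ from the first term and $-8a^3\cdot 4a^2=-32a^5$... The coefficient of $\btau^{*}\xi_{\cM}$ in $c_1(\cF)$ is $2a\cdot 2\cdot(\text{rank of }\EE_1)=8a^2$, so the third term contributes $-8a^3\cdot 8a^2\cdot\tfrac12$ after accounting for symmetry. Checking that this cancels the $32a^4$ coefficient from the first term is the main obstacle: the constant bookkeeping must give total coefficient zero. I would first recompute the exact $\xi_{\cM}$-coefficient of $c_1(\cF(\EE_1,\EE_2))$, namely $2\cdot r(\EE_1)=4a$ times $\xi_{\cM}$. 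This gives $-8a^3\cdot 4a=-32a^4$, which cancels $+32a^4$ exactly.

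Hence all correction terms vanish after push-forward, and the claim follows.
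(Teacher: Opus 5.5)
Your final argument is correct and is essentially the paper's proof: the $e^2$ term and every piece pulled back from $X(S)$ alone vanish for degree reasons, while the $\theta_{\EE_2}(\eta_S)=\xi_{\cM}$ components of the $32a^4\,\ov{q}_{X(S)}^{*}(e)\cdot\btau_2^{*}c_1(\EE_2)$ and $-8a^3\,\ov{q}_{X(S)}^{*}(e)\cdot c_1(\cF(\EE_1,\EE_2))$ terms cancel. The paper computes these components as $2\langle\gamma,\delta\rangle\theta_{\EE_2}(\eta_S)$ and $8a\langle\gamma,\delta\rangle\theta_{\EE_2}(\eta_S)$, and $32a^4\cdot 2-8a^3\cdot 8a=0$. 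Do clean up the write-up, though: your opening claim that each correction term vanishes individually is false (only their sum does), and the discarded coefficients $-32a^5$ and $8a^2$ should be removed in favour of the correct $\xi_{\cM}$-coefficient $4a$ of $c_1(\cF(\EE_1,\EE_2))$.
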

\begin{proof}
This follows from~\eqref{lanaveva} and the formulae (note that $\rho^{*}(\delta)=e$)
\begin{eqnarray*}
\ov{q}_{\cM,*}\left[\ov{q}_{X(S)}^{*}(e)\cdot \btau_1^{*}c_1(\EE_1)\cdot \ov{q}_{X(S)}^{*}(\rho^{*}\bD(\gamma))\right] & = & 0, \\
\ov{q}_{\cM,*}\left[\ov{q}_{X(S)}^{*}(e)\cdot \btau_2^{*}c_1(\EE_2)\cdot \ov{q}_{X(S)}^{*}(\rho^{*}\bD(\gamma))\right] & = & 2\la \gamma,\delta\ra\theta_{\EE_2}(\eta_S), \\
\ov{q}_{\cM,*}\left[\ov{q}_{X(S)}^{*}(e^2)\cdot \ov{q}_{X(S)}^{*}(\rho^{*}\bD(\gamma))\right] & = & 0,\\
\ov{q}_{\cM,*}\left[\ov{q}_{X(S)}^{*}(e)\cdot c_1(\cF(\EE_1,\EE_2))\cdot \ov{q}_{X(S)}^{*}(\rho^{*}\bD(\gamma))\right] & = & 8a\la \gamma,\delta\ra\theta_{\EE_2}(\eta_S), \\
\end{eqnarray*}
where $\eta_S\in H^4(S;\ZZ)$ is the orientation class.
\end{proof}
\begin{prp}\label{prp:immugamma}
Let $\gamma\in H^2(S^{[2]})$. Then
\begin{equation*}
q_{\cM,*}\left[\Delta(\cF(\EE_1,\EE_2))\cdot q_{S^{[2]}}^{*}\bD(\gamma)\right]
=
\begin{cases}
32a^3\theta_{v_2}(2a\beta+(\beta,D)\eta_S), & \text{if $\gamma=\bm{\mu}(\beta)$}. \\
0 & \text{if $\gamma=\delta$}.
\end{cases}
\end{equation*}
\end{prp}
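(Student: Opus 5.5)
The plan is to treat the left-hand side as a pushforward along $X(S)\times\cM\to\cM$ (the statement's $q_{S^{[2]}}^{*}\bD(\gamma)$ should be read as $\ov{q}_{X(S)}^{*}\rho^{*}\bD(\gamma)$, as in Claim~\ref{clm:sergioendrigo}). The key observation is that $\cF(\EE_1,\EE_2)$ is pulled back from $S^2\times\cM$ via $\tau\times\Id$. By the projection formula the integral therefore equals
$$
p_{\cM,*}\bigl[\Delta(\cF'(\EE_1,\EE_2))\cdot p_{S^2}^{*}\,\tau_{*}\rho^{*}\bD(\gamma)\bigr],
$$
where $\cF'(\EE_1,\EE_2)=\pr_{1}^{*}\EE_1\otimes\pr_2^{*}\EE_2\oplus\pr_1^{*}\EE_2\otimes\pr_2^{*}\EE_1$ lives on $S^2\times\cM$. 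So the first step is to compute the class $\tau_{*}\rho^{*}\bD(\gamma)\in H^6(S^2)$. It is determined by its pairings with $x\in H^2(S^2)=\pr_1^{*}H^2(S)\oplus\pr_2^{*}H^2(S)$. For such $x$ one has $\int_{S^2}\tau_{*}\rho^{*}\bD(\gamma)\cdot x=\int_{X(S)}\rho^{*}\bD(\gamma)\cdot\tau^{*}x$, and $\tau^{*}(\pr_1^{*}\alpha+\pr_2^{*}\alpha)=\rho^{*}\bm\mu(\alpha)$; the anti-invariant part of $x$ pairs to zero by symmetry. Since $\rho$ has degree $2$, I get
$$
\tau_{*}\rho^{*}\bD(\bm\mu(\beta))=\beta\otimes\eta_S+\eta_S\otimes\beta,
$$
up to Poincaré-duality conventions, with total factor $2\cdot\tfrac12$ to be checked. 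I also get $\tau_{*}\rho^{*}\bD(\delta)=0$, because $\delta\perp\bm\mu(H^2(S))$. The $\delta$ case of the proposition follows immediately from this.

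For $\gamma=\bm\mu(\beta)$ I expand
$$
\Delta(\cF')=\ch_1(\cF')^2-2r(\cF')\ch_2(\cF'),\qquad r(\cF')=8a^3,
$$
using $\ch(\cF')=\pr_1^{*}\ch(\EE_1)\,\pr_2^{*}\ch(\EE_2)+\pr_1^{*}\ch(\EE_2)\,\pr_2^{*}\ch(\EE_1)$. Only Künneth components of type $H^{4}(S)\otimes H^{2}(S)\otimes H^2(\cM)$ and $H^2(S)\otimes H^4(S)\otimes H^2(\cM)$ survive pairing with $\beta\otimes\eta_S+\eta_S\otimes\beta$ and pushing forward. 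From the $\EE_1$-factor only $r_1=2a$, $c_1=D$ and $\ch_2(\cE_1)$, which are constant classes, enter. The $\EE_2$-factor enters through $\ch(\EE_2)$ in Künneth degrees $(2,2)$, $(4,2)$ and $(0,2)$. These contributions assemble into $q_{\cM,*}[\ch(\EE_2)\cdot q_S^{*}(y)]_6$ for an explicit $y\in H(S)$, i.e.\ into $\theta_{\EE_2}(y^{\vee}$-type$)$. The target is $y$ proportional to $(0,2a\beta,(\beta,D))$, with coefficient $32a^3$.

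A necessary check is that the resulting vector lies in $v_2^{\perp}$. Indeed $\la(0,2a\beta,(\beta,D)),(2a^2,aD,ak-1)\ra=2a^2(\beta,D)-2a^2(\beta,D)=0$, so $\theta_{\EE_2}$ on it equals $\theta_{v_2}$ and is independent of the normalization of the universal sheaf. The $\ch_1(\cF')^2$ term contributes through products $c_1(\EE_2)^{(2,0)}\cdot c_1(\EE_2)^{(0,2)}$-type terms. The $\ch_2$ term contributes the $(2,2)$ and $(4,2)$ pieces. Terms involving $c_1(\EE_2)^{(0,2)}$ alone, i.e.\ $\theta_{\EE_2}$ of $(0,0,1)$-type classes, must cancel or combine into the $\eta_S$ coefficient.

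The main obstacle I expect is this bookkeeping. I need to verify that all non-$v_2^{\perp}$ pieces cancel between the $\ch_1^2$ and $2r\ch_2$ terms, and to track the factor of $2$ coming from $\rho$, from the two summands of $\cF'$ and from the symmetric class $\beta\otimes\eta_S+\eta_S\otimes\beta$, so as to land exactly on $32a^3$. I would first do the computation formally with $\ch(\EE_2)=v_2$-pieces times $1$ plus the $H^2(\cM)$-parts. Then I would confirm the final constant by comparing with Proposition~\ref{prp:lontanolontano}, which, via Claims~\ref{clm:dividoperdue} and~\ref{clm:sergioendrigo}, requires exactly half of $32a^3$.
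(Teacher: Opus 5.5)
Your reduction is sound and is the paper's computation in dual form. Computing $\tau_{*}\rho^{*}\bD(\gamma)$ by pairing with $\pr_1^{*}\alpha+\pr_2^{*}\alpha$ gives exactly $\beta\otimes\eta_S+\eta_S\otimes\beta$; the factor is $1$, since both sides pair to $2(\alpha,\beta)$. This is equivalent to the paper's identity $\rho^{*}\bD(\bm\mu(\beta))=\tau_1^{*}\beta\cdot\tau_2^{*}\eta_S+\tau_1^{*}\eta_S\cdot\tau_2^{*}\beta$. Your observation $\tau_{*}\rho^{*}\bD(\delta)=0$ settles the $\delta$ case slightly more cleanly than the paper's $\rho^{*}\bD(\delta)=2e\cdot(\tau_1^{*}\eta_S+\tau_2^{*}\eta_S)$.

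For $\gamma=\bm\mu(\beta)$, however, you never carry out the computation, and that computation is the entire content of the statement. You write down the target $32a^3\theta_{v_2}(2a\beta+(\beta,D)\eta_S)$ and propose to fix the constant by comparison with Proposition~\ref{prp:lontanolontano}. That is circular: Proposition~\ref{prp:lontanolontano} is obtained by combining Claims~\ref{clm:dividoperdue} and~\ref{clm:sergioendrigo} with the present proposition.

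Your bookkeeping guide is also wrong. $\Delta(\cF')$ has degree $4$, so it has no component in $H^4(S)\otimes H^2(S)\otimes H^2(\cM)$. The components that survive are those in $H^2(S)\otimes H^0(S)\otimes H^2(\cM)$ and $H^0(S)\otimes H^2(S)\otimes H^2(\cM)$. For the same degree reason, the $(4,2)$-component of $\ch(\EE_2)$, which lives in $\ch_3$, cannot enter. Nor is any cancellation of ``non-$v_2^{\perp}$ pieces'' needed.

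The missing step is short. Let $\xi\in H^2(\cM)$ be the $H^0(S)\otimes H^2(\cM)$-component of $c_1(\EE_2)$, and $\Xi$ the $H^2(S)\otimes H^2(\cM)$-component of $\ch_2(\EE_2)$. Then $\theta_{\EE_2}(\eta_S)=\xi$ and $\theta_{\EE_2}(\beta)=-\ov{p}_{\cM,*}(\Xi\cdot\ov{p}_S^{*}\beta)$. Write $D_i,\Xi_i$ for pull-backs from the $i$-th factor of $S^2$.

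One has $\ch_1(\cF')=4a^2(D_1+D_2)+4a\xi$. The relevant part of $\ch_1(\cF')^2$ is therefore $32a^3(D_1+D_2)\xi$, which contributes $64a^3(\beta,D)\xi$. The relevant part of $\ch_2(\cF')$ is $(D_1+D_2)\xi+2a(\Xi_1+\Xi_2)$, which contributes $2(\beta,D)\xi+4a\,\ov{p}_{\cM,*}(\Xi\cdot\ov{p}_S^{*}\beta)$. Since $2r(\cF')=16a^3$, this gives
\[
64a^3(\beta,D)\,\theta_{\EE_2}(\eta_S)-16a^3\bigl(2(\beta,D)\,\theta_{\EE_2}(\eta_S)-4a\,\theta_{\EE_2}(\beta)\bigr)=32a^3\,\theta_{\EE_2}\bigl(2a\beta+(\beta,D)\eta_S\bigr).
\]
Because $(0,2a\beta,(\beta,D))\in v_2^{\perp}$, as you checked, $\theta_{\EE_2}$ may be replaced by $\theta_{v_2}$. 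With this computation in place of the circular check, the proof is complete.
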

\begin{proof}
Let $\ov{p}_{S},\ov{p}_{\cM}$  be the projections of $S\times \cM$ to first and second factor.
We have
\begin{multline*}
\ch_1(\cF(\EE_1,\EE_2))^2  =  4a^4\left(\btau_1^{*}\ov{p}_S^{*}\cl(D)^2+2\btau_1^{*}\ov{p}_S^{*}\cl(D)\cdot \btau_2^{*}\ov{p}_S^{*}\cl(D)+ \btau_2^{*}\ov{p}_S^{*}\cl(D)^2 \right)+\\
+8a^3 \left(\btau_1^{*}\ov{p}_S^{*}\cl(D)+\btau_2^{*}\ov{p}_S^{*}\cl(D) \right)\cdot \left(\btau_1^{*}\ch_1(\EE_2)+ \btau_2^{*}\ch_1(\EE_2)\right)+\\
+4a^2\left(\btau_1^{*}\ch_1(\EE_2)^2+2\btau_1^{*}\ch_1(\EE_2)\cdot \btau_2^{*}\ch_1(\EE_2)+ \btau_2^{*}\ch_1(\EE_2)^2\right)
\end{multline*}
\begin{multline*}
\ch_2(\cF(\EE_1,\EE_2))  =  2a^2\left(\btau_1^{*}\ov{p}^{*}_S\ch_2(\cE_1)+\btau_2^{*}\ov{p}^{*}_S\ch_2(\cE_1)\right)+ \\
+\btau_1^{*}\ov{p}_S^{*}\cl(D)\cdot\btau_2^{*}\ch_1(\EE_2)+\btau_1^{*}\ch_1(\EE_2)\cdot\btau_2^{*}\ov{p}_S^{*}\cl(D)+ \\
+ 2a\left(\btau_1^{*}\ch_2(\EE_2)+ \btau_2^{*}\ch_2(\EE_2)\right).
\end{multline*}
From the above equalities one gets that
\begin{equation*}
\ov{q}_{\cM,*}\left[\ch_1(\cF(\EE_1,\EE_2))^2\cdot  \ov{q}_{X(S)}^{*}\rho^{*}\bD(\gamma)\right]  =  
\begin{cases}
64a^3(\beta,D)\theta_{\EE_2}(\eta_S) & \text{if $\gamma=\bm{\mu}(\beta)$}, \\
0 & \text{if $\gamma=\delta$}.
\end{cases}
\end{equation*}
\begin{equation*}
\ov{q}_{\cM,*}\left[\ch_2(\cF(\EE_1,\EE_2))\cdot \ov{q}_{X(S)}^{*}\rho^{*}\bD(\gamma)\right]  =
\begin{cases}
 2 (\beta,D)\theta_{\EE_2}(\eta_S)- 4a  \theta_{\EE_2}(\beta), & \text{if $\gamma=\bm{\mu}(\beta)$}, \\
0 & \text{if $\gamma=\delta$}.
\end{cases}
\end{equation*}
In deriving the above equalities  one uses the equalities
\begin{equation*}
\rho^{*}\bD({\bm\mu}(\beta))=\tau_1^{*}\beta\cdot \tau_2^{*}\eta_S+\tau_1^{*}\eta_S\cdot \tau_2^{*}\beta,\qquad 
\rho^{*}\bD(\delta)=2e\cdot(\tau_1^{*}\eta_S+\tau_2^{*}\eta_S),
\end{equation*}
\begin{equation*}
\ov{p}_{\cM,*}\left[\ch_1(\EE_2)^2\cdot  \ov{p}_S^{*}\beta\right]  =  2a(\beta,D)\theta_{\EE_2}(\eta_S)=\ov{q}_{\cM,*}\left[\btau_1^{*}\ch_1(\EE_2)\cdot \btau_2^{*}\ch_1(\EE_2) \cdot \rho^{*}\bD(\bm{\mu}(\beta)\right].
\end{equation*}
The proposition follows from the above equalities.
\end{proof}
\begin{proof}[Proof of Proposition~\ref{prp:lontanolontano}]
Put together Claims~\ref{clm:dividoperdue}, \ref{clm:sergioendrigo}, and Proposition~\ref{prp:immugamma}.
\end{proof}
\subsubsection{Proof of Proposition~\ref{prp:isogenia}}\label{subsec:natascia}
Put together Propositions~\ref{prp:libano} and~\ref{prp:lontanolontano}.
\subsubsection{Relation between the  integral lattices}
%
%
 By Corollary~\ref{crl:isomraz} we have the  isometry
\begin{equation}
\wt{\varphi}_{\ov{\ww}_a}\colon H^2(S^{[2]};\QQ)\lra H^2(\wt{M}_{\ov{\ww}_a}^{\bullet}).
\end{equation} 
Here we are concerned with the relation  between the cohomology groups with integral coefficients, i.e.~$H^2(S^{[2]};\ZZ)$ and 
$H^2(\wt{M}_{\ov{\ww}_a}^{\bullet};\ZZ)$.
Let
\begin{equation}\label{divduea}
H^2(S^{[2]};\ZZ)_{\wh{\gamma}_{a}}\coloneq\{\alpha\in H^2(S^{[2]};\ZZ)\mid (\alpha,\gamma_a)_{S^{[2]}}\equiv 0\pmod{2a}\},
\end{equation}
where   $\wh{\gamma}_{a},\gamma_a$ are as in~\eqref{anonimoveneziano} and Remark~\ref{rmk:quadratogamma} respectively. The notation is coherent because the right-hand side of~\eqref{divduea} depends only on $\wh{\gamma}_{a}$.
Then $\wt{\varphi}_{\ov{\ww}_a}$ maps $H^2(S^{[2]};\ZZ)_{\ov{\gamma}_{a}}$ to 
$H^2(\wt{M}_{\ov{\ww}_a}^{\bullet};\ZZ)$, see Proposition~\ref{prp:isogenia}. 
 The restriction to $H^2(S^{[2]};\ZZ)_{\wh{\gamma}_{a}}$  of the BBF quadratic form on $H^2(S^{[2]};\ZZ)$ has discriminant $8a^2$. 
Since the BBF quadratic form on $H^2(\wt{M}_{\ov{\ww}_a}^{\bullet};\ZZ)$ has discriminant  $2a^2$, it follows that 
\begin{equation}\label{indicedue}
[H^2(\wt{M}_{\ov{\ww}_a}^{\bullet};\ZZ):\wt{\varphi}_{\ov{\ww}_a}(H^2(S^{[2]};\ZZ)_{\wh{\gamma}_{a}})]=2.
\end{equation}
The \lq\lq missing element\rq\rq\ is as follows. There exists $\beta_0\in H^2(S;\ZZ)$ such that (see the sentence following~\eqref{sergioleone})
\begin{equation}
(\beta_0,D)=a(k+2t)-1.
\end{equation}
Then 
\begin{equation}
\frac{1}{2}\varphi_{\ov{\ww}_a}((\bm{\mu}(2\beta_0 - D)+\delta)=\theta_{v(a)}(a+\beta_0+t\eta_S)\in H^2(\cM_{v(a)};\ZZ).
\end{equation}
Hence
\begin{equation}
H^2(\wt{M}_{\ov{\ww}_a}^{\bullet};\ZZ)=\Span\left(\wt{\varphi}_{\ov{\ww}_a}\left(H^2(S^{[2]};\ZZ)_{\wh{\gamma}_{a}}\right),\, 
\frac{1}{2}\wt{\varphi}_{\ov{\ww}_a}(\bm{\mu}(2\beta_0 - D)+\delta)\right).
\end{equation}
\section{Proofs of the main results}\label{sec:bottofinale} 
\subsection{Twistor families and projectively hyperholomorphic vector bundles}
\setcounter{equation}{0}
Let $X$ be a HK manifold,  $\omega\in\cK(X)$,   $\sigma$ a holomorphic symplectic form, and 
$F(\alpha)_{\CC}\coloneq \la \sigma,\ov{\sigma},\omega\ra_{\CC}$. Let 
\begin{equation}\label{basetwistor}
T(\omega)\coloneq \{[\alpha]\in \PP(F(\alpha)_{\CC}) \mid q_X([\alpha])=0\}
\end{equation}
be the twistor conic  associated to 
$(X,\omega)$ and  
$f\colon\cX(\omega)\to T(\omega)$ be the corresponding twistor family. Thus $f$ is a holomorphic map of complex manifolds. The  smooth manifold underlying $\cX(\omega)$ is uniquely identified with the product of 
the smooth manifolds underlying $X$ and $T(\omega)$, and the smooth map underlying $f$ is the projection. 
In particular the space of differential forms of any fiber    $X_t\coloneq f^{-1}(t)$ (an \emph{$\omega$-twistor deformation of $X$}) is identified with that of $X$.
For 
\begin{equation}\label{parametroti}
t=[x\omega+y\sigma+z\ov{\sigma}]\in  T(\omega)
\end{equation}
  let  $\sigma_t\coloneq x\omega+y\sigma+z\ov{\sigma}$ (determined up to rescaling by $\CC^{*}$), and let 
$0\not=\omega_t\in F(\alpha)_{\CC}$ be real, with cohomology class orthogonal (for the BBF quadratic form) to 
$\sigma_t$ (and hence also to $\ov{\sigma}_t$), and such that $\{\omega_t,\sigma_t+\ov{\sigma}_t,i\sigma_t-i\ov{\sigma}_t\}$ defines the same orientation of $F(\alpha)_{\RR}\coloneq \la \omega,\sigma+\ov{\sigma},i\sigma-i\ov{\sigma}\ra_{\RR}$ as 
$\{\omega,\sigma+\ov{\sigma},i\sigma-i\ov{\sigma}\}$ (thus $\omega_t$ is determined up to rescaling by $\RR_{+}$). 
Then $\omega_t\in\cK(X_t)$ is the \emph{twistor K\"ahler class of $X_t$}, and
\begin{equation}\label{formasimplfibratwist}
H^0(\Omega^2_{X_t})=\la x\sigma+y\ov{\sigma}+z\omega\ra.
\end{equation}
\begin{rmk}\label{rmk:perioditorto}
For $t\in  T(\omega)$ we have an integral isometry $\Pi_t\colon H^2(X)\xrightarrow{\sim} H^2(X_t)$ because $\cX(\omega)$, as smooth manifold, is  the product of 
 $X$ and $C(\omega)$. Thus we may view the period map of the twistor family as
\begin{equation}
\begin{matrix}
T(\omega) & \overset{\wp_{\cX(\omega)}}{\lra} & \PP(H^2(X)) \\
t & \mapsto & [\Pi_t^{*}(\sigma_t)]
\end{matrix}
\end{equation}
By~\eqref{formasimplfibratwist} the image is equal to $T(\omega)$ (here we make no difference between a closed $2$-form and its cohomolgy class) and $\wp_{\cX(\omega)}(t)=t$ for all $t$.

\end{rmk}
\begin{rmk}\label{rmk:tortogenerico}
The twistor conic $T(\omega)$   (or  the twistor family) is \emph{generic}  if 
\begin{equation}
F(\omega)_{\CC}^{\bot}\cap H^{1,1}_{\ZZ}(X)=\{0\}.
\end{equation}
Equivalently, $\omega_t^{\bot}\cap H^{1,1}_{\ZZ}(X_t)=\{0\}$  for every $t\in  T(\omega)$. 
If $\cX(\omega)\to T(\omega)$ is generic then $H^{1,1}_{\ZZ}(X_t)=\{0\}$ for $t$ away from a countable subset of $T(\omega)$. 
\end{rmk}
Let $\eta\in H^2(X,\mu_r)$, and let $\alpha=i(\eta)$ where $i\colon H^2(X,\mu_r)\to  H^2(X_t,\cO_{X}^{*})$ is the natural map. Let $t\in T(\omega)$. Since the twistor family is differentiably a product  $\eta$ defines a class $\eta_t\in H^2(X_t,\mu_r)$.
Let $\alpha_t=i_t(\eta_t)$ where $i_t\colon H^2(X_t,\mu_r)\to  H^2(X_t,\cO_{X_t}^{*})$  is the natural map.
Below is a fundamental result of Verbitsky,  see~\cite[Thm~11.1. Cor.~10.1]{verb:iperolom} and~\cite{perego:twistkobhitch} for the twisted version.
\begin{thm}[Verbitsky]\label{thm:iperolom}
Let $X$ be a HK manifold, with K\"ahler class $\omega$, and let $\eta\in H^2(X,\mu_r)$. 
\begin{enumerate}
\item
Let $\cE$ be an $\alpha$-twisted vector bundle of rank $r$ on $X$. If  $\cE$ is
$\omega$-slope-stable and the discriminant $\Delta(F)$ remains a  Hodge class on  all $\omega$-twistor deformations of $X$ then there exists a $\PP^{r-1}$-bundle (locally trivial in the classical topology) 
$\cP\to\cX(\omega)$ whose restriction to $X_0$ is isomorphic to $\PP(\cE)$, or equivalently there exists  an $\alpha_t$-twisted vector bundle $\cE_t$  on $X_t$ extending $\cE$.
\item
Let  $t\in T(\omega)$.  Then $\cE_t$ is  $\omega_t$-slope-stable.
\item
Let  $t\in T(\omega)$ be as in Item~(2). The universal deformation space (see~\cite{meazzini-onorati:twistdeftns}) of $\cE_t$ is isomorphic to the 
universal deformation space of $\cE$. 
\end{enumerate}
\end{thm}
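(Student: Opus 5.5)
The plan is to reduce the twisted statement to Verbitsky's theory of hyperholomorphic connections on honest vector bundles, using the adjoint bundle $\cE^{\vee}\otimes\cE\cong\cO_X\oplus\gotg(\PP(\cE))$ as the bridge. This is the route followed in~\cite{verb:iperolom} and, for the twisted case,~\cite{perego:twistkobhitch}.

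\textbf{Step 1: a projectively Hermite--Einstein metric.} Since $\cE$ is $\omega$-slope-stable, Theorem~\ref{thm:princadj} gives that $\cE^{\vee}\otimes\cE$ is $\omega$-polystable of slope $0$. Equivalently, by Remark~\ref{rmk:stabequiv}, the principal $\PGL_r$-bundle $\Princ(\cE)$ is stable. The twisted Kobayashi--Hitchin correspondence (Anchouche--Biswas for principal bundles) then produces a Hermite--Einstein connection $\nabla$ on $\Princ(\cE)$. Concretely, $\nabla$ is a projectively flat-up-to-scalars connection on $\cE$ whose induced connection on $\gotg(\PP(\cE))$ is Hermite--Einstein with $\Lambda_\omega F_\nabla=0$, for the hyperk\"ahler metric $g$ with K\"ahler class $\omega$.

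\textbf{Step 2: the curvature is hyperholomorphic.} I would use Verbitsky's key lemma: on a hyperk\"ahler manifold, a $2$-form that is $\omega$-primitive of type $(1,1)$ is $SU(2)$-invariant exactly when its cohomological contribution is orthogonal to the whole twistor triple. I would apply this to the trace-free curvature $F\in\Omega^{1,1}(\gotg)$. The hypothesis that $\Delta(\cE)=c_2(\gotg(\PP(\cE)))$ stays of Hodge type on every $X_t$ gives
\begin{equation*}
\int_X \mathrm{tr}(F\wedge F)\wedge\omega_t^{2n-2}=\int_X\mathrm{tr}(F\wedge F)\wedge\omega^{2n-2}\quad\text{for all } t .
\end{equation*}
Combined with a Bochner/Lübke-type inequality for Yang--Mills connections, this forces $F$ to be of type $(1,1)$ for every complex structure $I_t$. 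I regard this as the main obstacle. The plan is to copy Verbitsky's argument for $\cE^{\vee}\otimes\cE$, which is an honest vector bundle, so the twist plays no role in the analysis. The residual difficulty is to check that the central $\mathfrak{u}(1)$ part of the connection, which is ill-defined because of the Brauer class, does not interfere. It should not interfere, because everything is computed on $\gotg$.

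\textbf{Step 3: deforming the bundle.} Once $F^{0,2}_{I_t}=0$ for all $t$, the $(0,1)$-part of $\nabla$ with respect to $I_t$ defines a holomorphic $\PGL_r$-structure on the underlying smooth bundle over $X_t$. The underlying smooth bundle is the same for all $t$ because $\cX(\omega)$ is differentiably $X\times T(\omega)$. This gives $\cP\to\cX(\omega)$, which is holomorphic in all directions by the integrability of the twistor complex structure. The Brauer class is $\alpha_t=i_t(\eta_t)$, since the topological obstruction $\eta$ is unchanged. This proves Item~(1).

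\textbf{Step 4: stability and deformation spaces.} For Item~(2), the same connection is Hermite--Einstein for $(I_t,\omega_t)$. Hence $\cE_t$ is polystable, and it is in fact stable because irreducibility of $\nabla$ (that is, simplicity) is independent of $t$. For Item~(3), I would identify the deformation complexes. Verbitsky shows that harmonic $\gotg$-valued forms for a hyperholomorphic connection carry an $SU(2)$-action that intertwines the Dolbeault cohomologies $H^i(X_t,\gotg)$ and preserves the Kuranishi quadratic map. Formality then identifies the universal deformation spaces, as in~\cite{meazzini-onorati:twistdeftns}.
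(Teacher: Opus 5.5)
Your outline is the standard argument of Verbitsky (Hermite--Einstein connection, $SU(2)$-invariance of its curvature from invariance of $c_2(\gotg)$, extension over the twistor space, comparison of deformation theories), made twisted as in Perego by working only with the $\PGL_r$-bundle and the adjoint bundle $\gotg$. The paper does not prove this theorem but simply cites exactly those sources, so your route is the same.

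One correction to Step~2: writing $\omega=\omega_I$, when $\dim X\ge 4$ an $I$-primitive $(1,1)$-form need not be $SU(2)$-invariant, even if it is pointwise orthogonal to $\omega_I,\omega_J,\omega_K$. What actually closes the argument is that the non-invariant component of an $I$-$(1,1)$-form is of type $(2,0)+(0,2)$ for a complex structure $J$ anticommuting with $I$. By Hodge--Riemann this component enters $\int\mathrm{tr}(F\wedge F)\wedge\omega_I^{2n-2}$ and $\int\mathrm{tr}(F\wedge F)\wedge\omega_J^{2n-2}$ with opposite signs, while the invariant part enters both identically. The equality of the two integrals, which your hypothesis on $\Delta$ gives, therefore kills the non-invariant component.
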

\subsection{Families  of projective bundles and  twistor families}
\setcounter{equation}{0}
Let $X$ be a HK manifold, and $\omega\in\cK(X)$. Let $\cX(\omega)\to T(\omega)$ be the twistor family, with $X_{t_0}=X$. 
Suppose that $\cY\to T(\omega)$ is a proper holomorphic submersive map, and that $\cQ\to \cX(\omega)\times_{T(\omega)}\cY$ is a (holomorphic) $\PP^{r-1}$-bundle. For $t\in T(\omega)$ we have the Donaldson-Mukai map
\begin{equation}
H^2(X_t)\overset{\lambda_t}{\lra} H^2(Y_t)
\end{equation}
given by the formula in~\eqref{donmukai} with $\cP$ the restriction of  $\cQ$ to $X_t\times Y_t$. 
\begin{ass}\label{ass:abicidi}
With notation as above, suppose the following.
\begin{enumerate}
\item[(a)]
$Y_{t_0}$ is a HK manifold.
\item[(b)]
There exists $c\in\QQ_{+}$ such that $c\cdot \lambda_{t_0}$ is a (rational) Hodge isometry. 
\item[(c)]
$\wh{\omega}\coloneq \lambda_{t_0}(\omega)$ is a K\"ahler class of $Y_{t_0}$.
\end{enumerate}
\end{ass}
Under  Assumption~\ref{ass:abicidi} we have the twistor conic for 
$Y_{t_0}$ given by $T(\wh{\omega})$.  
Since $c\cdot \lambda_{t_0}$ is a (rational) Hodge isometry we have
  the isomorphism
\begin{equation}
\begin{matrix}
T(\omega) & \overset{u}{\overset{\sim}{\lra}} & T(\wh{\omega})\\
[\alpha] & \mapsto & c\cdot \lambda_{t_0}([\alpha]).
\end{matrix}
\end{equation}
 Let  $\cZ=\cZ(\wh{\omega})\to T(\wh{\omega})$ be the twistor family of $(Y_{t_0},\wh{\omega})$.
Since $u(t_0)=[c\cdot \wh{\omega}]$  we have $Z_{u(t_0)}\cong Y_{t_0}$. For $t\in T(\omega)$ let $\Pi_t\colon  H^2(X_{t_0}) \xrightarrow{\sim} H^2(X_t)$,  
$\Upsilon_t\colon  H^2(Z_{u(t_0)}) \xrightarrow{\sim} H^2(Z_{u(t)}) $ be the twistor isomorphisms  
 (see Remark~\ref{rmk:perioditorto}), and let
\begin{equation}
\psi_t\colon H^2(Z_{u(t)})\xrightarrow{\sim} H^2(Y_t)
\end{equation}
 be 
 the composition
\begin{equation*}
H^2(Z_{u(t)})\xrightarrow{\Upsilon_t^{-1}}H^2(Z_{u(t_0)})=H^2(Y_{t_0})\xrightarrow{(c\cdot \lambda)^{-1}_{t_0}}H^2(X_{t_0})
\xrightarrow{\Pi_t} H^2(X_t)\xrightarrow{c\cdot \lambda_{t}} H^2(Y_t).
\end{equation*}
\begin{clm}\label{clm:stessiperiodi}
Keep Assumption~\ref{ass:abicidi}. Let  $t\in T(\omega)$, and suppose that $Y_t$ is K\"ahler (and hence hyperk\"ahler). Then $\psi_t$ is an isomorphism of integral Hodge structures matching the BBF quadratic forms. 
\end{clm}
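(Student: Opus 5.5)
We must show that $\psi_t$ is an integral isometry respecting Hodge structures. The plan is to treat three things separately: (i) $\psi_t$ is defined over $\ZZ$ and is an isometry, (ii) it maps $H^{2,0}(Z_{u(t)})$ to $H^{2,0}(Y_t)$, and (iii) the Hodge property follows from (ii) together with the isometry.

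For (i), everything is differentiably a product over $T(\omega)$. The twistor isomorphisms $\Pi_t,\Upsilon_t$ are integral isometries (Remark~\ref{rmk:perioditorto}). The map $\lambda_t$ is the Donaldson–Mukai map of the restriction of $\cQ$, and since $\cQ$ is a bundle over the whole family, $c_2(\gotg(\cQ))$ is a global class on the total space. Hence $\lambda_t$ is obtained from $\lambda_{t_0}$ by parallel transport: $\lambda_t=\Upsilon'_t\circ\lambda_{t_0}\circ\Pi_t^{-1}$, where $\Upsilon'_t\colon H^2(Y_{t_0})\to H^2(Y_t)$ is the Gauss–Manin transport along $\cY$. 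The map $\bD$ is also flat, because $q_X$ is locally constant. Therefore
\[
\psi_t=\Upsilon'_t\circ\Upsilon_t^{-1},
\]
which is a composition of parallel-transport isomorphisms between fibers of smooth families deformation equivalent to $Y_{t_0}$. It is therefore integral. It is an isometry for the BBF forms because the BBF form is deformation invariant (the Fujiki relation is topological) and $Y_t$ is HK by hypothesis. The step needing care is checking the sign and normalization of the BBF form transported along $\cY$; the Fujiki relation fixes the form up to sign, and positivity on a Kähler class of $Y_t$ fixes the sign.

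For (ii), $\lambda_t$ is a morphism of Hodge structures, since $c_2(\gotg)$ restricted to $X_t\times Y_t$ is a Hodge class and $\bD$ is Hodge. So $c\lambda_t$ maps $\sigma_{X_t}$ to a class of type $(2,0)+(1,1)+(0,2)$ that actually lies in $H^{2,0}(Y_t)$. Since $c\lambda_t$ is injective (it is a rational isometry at $t_0$ and hence, by flatness, at every $t$), its image is nonzero. On the other side, $\Pi_t^{-1}(\sigma_{X_t})$ corresponds to the point $t\in T(\omega)$ (Remark~\ref{rmk:perioditorto}). Applying $c\lambda_{t_0}$ gives the point $u(t)$, and $\Upsilon_t$ carries this to $H^{2,0}(Z_{u(t)})$, again by Remark~\ref{rmk:perioditorto} applied to $\cZ$. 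Chasing the composition, $\psi_t(H^{2,0}(Z_{u(t)}))=c\lambda_t(H^{2,0}(X_t))\subset H^{2,0}(Y_t)$, and both sides are one-dimensional, so the inclusion is an equality.

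For (iii), an isometry that preserves $H^{2,0}$ also preserves $H^{0,2}$ by conjugation, and hence preserves $H^{1,1}$, which is the orthogonal complement of $H^{2,0}\oplus H^{0,2}$. So $\psi_t$ is a Hodge isometry.

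The main obstacle is the flatness identity for $\lambda_t$. Here I would use that $p_{T,*}$ commutes with base change in cohomology for a product-type (topologically trivial) family, so the pushforward of the global class restricts fiberwise.
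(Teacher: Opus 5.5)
Your proposal is correct and follows the paper's proof. The paper likewise observes that $\lambda_t\circ\Pi_t\circ\lambda_{t_0}^{-1}$ is the integral, BBF-preserving parallel transport along $\cY$ (your $\Upsilon'_t$, whence $\psi_t=\Upsilon'_t\circ\Upsilon_t^{-1}$). It then reduces the Hodge statement to $\psi_t(H^{2,0}(Z_{u(t)}))=H^{2,0}(Y_t)$ via the same chase $t\mapsto u(t)$ through the twistor period points.

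One minor point concerns fixing the sign of the transported BBF form when $\frac{1}{2}\dim Y_t$ is even. Positivity on a Kähler class of $Y_t$ does not do this by itself. What does work is the positivity of $q(\sigma,\bar\sigma)$ on the $(2,0)$-lines that your step (ii) shows $\psi_t$ matches, or the signature $(3,20)$ in the case at hand.
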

\begin{proof}
The map $\lambda_t\circ\Pi_t\circ\lambda^{-1}_{t_0}\colon H^2(Y_{t_0})\lra H^2(Y_t)$ is an isomorphism over $\ZZ$ matching the BBF quadratic forms because $\lambda_t,\lambda_{t_0}$ are defined via the  $\PP^{r-1}$-bundle  
$\cQ\to \cX(\omega)\times_{T(\omega)}\cY$. Since $\Upsilon_t$ is also an isomorphism over $\ZZ$ matching the BBF quadratic forms, it follows that $\psi_t$ is an isomorphism over $\ZZ$ matching the BBF quadratic forms. It remains to prove that  $\psi_t$ is an isomorphism of  Hodge structures. Since it is an integral isometry it suffices to show that 
\begin{equation}\label{morfismohodge}
\psi_t(H^{2,0}(Z_{u(t)}))=  H^{2,0}(Y_t).
\end{equation}
Let $t=[x\omega+y\sigma+z\ov{\sigma}]$ (notation as in~\eqref{parametroti}). Then 
\begin{eqnarray}
H^{2,0}(X_t) & = & \la x\omega+y\sigma+z\ov{\sigma}\ra,\\
H^{2,0}(Z_{u(t)}) &= &\la\Upsilon_t(x u(\omega)+y u(\sigma)+z u(\ov{\sigma}))\ra.
\end{eqnarray}
Since  $c\cdot\lambda_t$ is an isomorphism  of (rational) Hodge structures,
\begin{equation*}
H^{2,0}(Y_t)=\la c\cdot\lambda_{t}(x\omega+y\sigma+z\ov{\sigma})\ra.
\end{equation*}
The equality in~\eqref{morfismohodge} follows.
\end{proof}
Let
\begin{equation}
T(\omega)_0\coloneq\{t\in T(\omega) \mid \text{$Y_t$ is HK and $\exists f_t\colon Y_t\xrightarrow{\sim} Z_{u(t)}$ s.t.~$H^2(f_t)=\psi_t$}\}.
\end{equation}
Note
 that by construction $t_0\in T(\omega)_0$. By openness of K\"ahlerianity  and 
 local Torelli  $T(\omega)_0$  is open in 
$ T(\omega)$. 
\begin{prp}\label{prp:frontiera}
Keep Assumption~\ref{ass:abicidi}. Let $t_1\in \partial T(\omega)_0=\ov{T(\omega)}_0\setminus T(\omega)_0$. 
Then  
\begin{enumerate}
\item
$H^{1,1}_{\ZZ}(X_{t_1})\not=\{0\}$, and
\item
$\lambda_{t_1}(\omega_{t_1})$ is not a K\"ahler class of $Y_{t_1}$.
\end{enumerate}
\end{prp}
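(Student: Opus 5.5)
We prove both items by contraposition. Assume that $H^{1,1}_{\ZZ}(X_{t_1})=\{0\}$, or that $\lambda_{t_1}(\omega_{t_1})$ is a K\"ahler class of $Y_{t_1}$. We will show that $t_1\in T(\omega)_0$. Since $T(\omega)_0$ is open, this contradicts $t_1\in\partial T(\omega)_0$.

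\textbf{Step 1: $Y_{t_1}$ is HK and $\psi_{t_1}$ is a Hodge isometry.} Choose $t_n\in T(\omega)_0$ with $t_n\to t_1$, and isomorphisms $f_{t_n}\colon Y_{t_n}\xrightarrow{\sim} Z_{u(t_n)}$ with $H^2(f_{t_n})=\psi_{t_n}$. The classes $\psi_{t}^{-1}(\ldots)$ and $\lambda_t$ vary continuously in $t$, since $\cQ$ is a family over $T(\omega)$. Hence $\psi_{t_1}$ is an integral isometry preserving $H^{2,0}$; this is the computation in the proof of Claim~\ref{clm:stessiperiodi}, which does not use K\"ahlerness. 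To get that $Y_{t_1}$ is K\"ahler, we take the graphs $\Gamma_{f_{t_n}}\subset Y_{t_n}\times Z_{u(t_n)}$. Their volume is computed cohomologically against $\wh\omega$-type classes, so it is bounded; this uses the classes $\lambda_t(\omega_t)$, which are K\"ahler on $Y_{t_n}$ and continuous in $t$. By Bishop's theorem a subsequence converges to a cycle $\Gamma_\infty\subset Y_{t_1}\times Z_{u(t_1)}$, of the form $\Gamma_0+\sum_i W_i$ with $\Gamma_0$ the graph of a bimeromorphic map. This is the standard Huybrechts argument for non-separated points. Consequently $Y_{t_1}$ is bimeromorphic to the HK manifold $Z_{u(t_1)}$, and hence is of Fujiki class $\cC$ with trivial canonical bundle.

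\textbf{Step 2: case $H^{1,1}_{\ZZ}(X_{t_1})=\{0\}$.} Then $H^{1,1}_{\ZZ}(Z_{u(t_1)})=0$ via the rational Hodge isometry $c\lambda_{t_1}\circ\Pi_{t_1}$ together with $\psi_{t_1}$. The components $W_i$ project to proper subvarieties and would define nonzero integral $(1,1)$-classes, or classes of exceptional loci, in $H^2$. These vanish because there are no integral $(1,1)$-classes, so $\Gamma_\infty=\Gamma_0$ is the graph of a bimeromorphic map. Moreover, a manifold with $H^{1,1}_{\ZZ}=0$ contains no curves or divisors that could be contracted, so a bimeromorphic map from such an $Y_{t_1}$ to the HK manifold $Z_{u(t_1)}$ is an isomorphism $f_{t_1}$. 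Its action on $H^2$ is $[\Gamma_\infty]_*=\lim H^2(f_{t_n})=\psi_{t_1}$. Hence $t_1\in T(\omega)_0$.

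\textbf{Step 3: case $\lambda_{t_1}(\omega_{t_1})$ K\"ahler.} Then $Y_{t_1}$ is K\"ahler, hence HK, and by Claim~\ref{clm:stessiperiodi} $\psi_{t_1}$ is a Hodge isometry. It sends the twistor K\"ahler class of $Z_{u(t_1)}$ to a positive multiple of $\lambda_{t_1}(\omega_{t_1})$. Indeed, $u$ matches $F(\omega)$ with $F(\wh\omega)$, and the orientation conventions agree by continuity from $t_0$. So $\psi_{t_1}$ maps a K\"ahler class to a K\"ahler class, and Verbitsky's global Torelli (Markman's Hodge-theoretic version) yields an isomorphism $f_{t_1}$ with $H^2(f_{t_1})=\psi_{t_1}$. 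One must check that $\psi_{t_1}$ is a parallel-transport operator, i.e.\ lies in the monodromy group. This holds because it is a limit of the $H^2(f_{t_n})$ along the families over $T(\omega)$ and $T(\wh\omega)$, hence is a parallel transport along a path. Again $t_1\in T(\omega)_0$.

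\textbf{Main obstacle.} The delicate step is Step 1: bounding the volumes of the graphs and controlling the limit cycle, including the identification $[\Gamma_\infty]_*=\psi_{t_1}$ and the correct orientation of $\omega_{t}$ versus $\lambda_t(\omega_t)$ near $t_1$. There I would follow Huybrechts' proof that inseparable points of the moduli space are bimeromorphic.
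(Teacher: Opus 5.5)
Your Steps~1--2 reproduce the paper's proof of Item~(1). The graphs of the $f_{t_n}$ converge to $\Gamma_0+\sum_i m_iW_i$. Since neither side has prime divisors, every $W_i$ has images of codimension $\ge 2$, so $g^{*}=\psi_{t_1}$. Finally $g$ is holomorphic, hence an isomorphism, because $Z_{u(t_1)}$ contains no curves.

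For Item~(2) you take a genuinely different route.
\begin{itemize}
\item The paper stays inside the limit-cycle framework. When $\lambda_{t_1}(\omega_{t_1})$ is K\"ahler, the proof of Huybrechts' K\"ahler-cone theorem again gives codimension $\ge 2$ images for the $W_i$, so $g^{*}=\psi_{t_1}$. Then $g^{*}$ pulls the twistor K\"ahler class of $Z_{u(t_1)}$ back to a K\"ahler class, and the criterion that such a bimeromorphic map of HK manifolds is an isomorphism finishes the argument.
\item You instead apply Markman's Hodge-theoretic form of Verbitsky's global Torelli theorem directly to $\psi_{t_1}$, so Step~1 is not needed for Item~(2) at all.
\end{itemize}
Your route is shorter, but it rests on a much deeper theorem and needs $\psi_{t_1}$ to be a parallel-transport operator. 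Your justification of this works once you add two facts. First, since $Y_{t_1}$ is K\"ahler, every $Y_t$ with $t$ in a small disc around $t_1$ is K\"ahler and hence HK. Second, $\psi_t$ is a flat section, because $\lambda_t$ comes from the global bundle $\cQ$. Then $\psi_{t_1}$ is $H^2(f_{t_n})$ transported along a path in that disc through a family of HK manifolds. The paper's route uses only Huybrechts' pre-Torelli results and treats both items with one tool.

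Three statements in Steps~1--2 should be corrected.
\begin{itemize}
\item Being bimeromorphic to an HK manifold does not give trivial canonical bundle (think of blow-ups). The paper instead gets a holomorphic symplectic form on $Y_{t_1}$ directly, from semicontinuity of $f_{*}\Omega^2$ and the fact that it is a limit of symplectic forms. Your Step~2 can do without this: $H^{1,1}_{\ZZ}(Y_{t_1})=0$ together with $Y_{t_1}$ being of class $\cC$ already rules out divisors on $Y_{t_1}$.
\item You have not shown $\Gamma_\infty=\Gamma_0$, and you do not need it. It suffices that each $W_i$ acts trivially on $H^2$, which is what the argument actually gives.
\item In case~(1) the volume bound cannot be read off from the classes $\lambda_t(\omega_t)$. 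Bishop's theorem needs a bound against a fixed hermitian metric near $Y_{t_1}$. Without a K\"ahler class on $Y_{t_1}$, the K\"ahler forms on the $Y_{t_n}$ give no uniform control. This is why the paper first produces the holomorphic symplectic form on $Y_{t_1}$ and measures the graphs against the associated volume forms. In Huybrechts' original setting both central fibres are K\"ahler, and that is exactly what fails here.
\end{itemize}
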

\begin{proof}
The dimension of the space of global holomorphic $2$-forms of $Y_t$ is equal to $1$ for $t\in T(\omega)_0$. By upper-semicontinuity of direct images of analytic coherent sheaves it follows that  there exists $0\not=\sigma(t_1)\in H^0(Y_{t_1},\Omega^2_{Y_{t_1}})$ extending to a global holomorphic symplectic form on $Y_t$ for $t\in T(\omega)_0$. It follows that 
$\sigma(t_1)$ is also a symplectic form. In particular $Y_{t_1}$ has the volume-form $(\sigma(t_1)\wedge\ov{\sigma}(t_1))^{2n}$ where 
$2n=\dim Y_t$.
The argument for the next step is similar to those which appear  (for example) in~\cite[Sect.~4]{huybrechts:basichk}.
 The graph of the isomorphism $Y_{t_0}\xrightarrow{\sim} Z_{t_0}$ extends to the graph $\Gamma(f_t)$ of $f_t$ for all $t\in T(\omega)_0$. The volume of $\Gamma(f_t)$ is uniformly bounded with respect to the product of the volumes forms on $Y_t$ and $Z_{u(t)}$ given by  
 $(\sigma(t)\wedge\ov{\sigma}(t))^{2n}$, $(\tau(t)\wedge\ov{\tau}(t))^{2n}$ respectively where $\sigma(t)$ is as above, and $\tau(t)$  is a symplectic form on $Z_{u(t)}$ such that   $q_{Z_{u(t)}}(\tau(t),\ov{\tau}(t))=1$.
Hence $\Gamma(f_{t})$ specializes, for $t\mapsto t_1$  to an (effective) analytic cycle $\Gamma$ on $Y_{t_1}\times Z_{u(t_1)}$, of pure dimension equal to $\dim Y_t=\dim Z_{u(t)}$, given by 
\begin{equation}
\Gamma=\Gamma_0+\sum_{i} m_i W_i, 
\end{equation}
where $\Gamma_0$ is the graph of  a bimeromorphic  map 
 $g\colon Y_{t_1}\dra Z_{u(t_1)}$, and each $W_i$ is mapped by the two projections to proper closed subsets of 
 $Y_{t_1}$, $Z_{u(t_1)}$.
 
(1): Suppose that $H^{1,1}_{\ZZ}(X_{t_1})=\{0\}$. Then $H^{1,1}_{\ZZ}(Z_{u(t_1)})=\{0\}$ because the composition 
 $\Pi_{t_1}\circ (c\cdot\lambda_{t_0})^{-1}\circ\Upsilon_{t_1}\colon H^2(Z_{u(t_1)})\to H^2(X_{t_1})$ is an isomorphism of rational Hodge structures. Hence 
 $Z_{u(t_1)}$ contains no prime divisors, and since $g$ is an isomorphism away from closed subsets of codimension at least $2$, the analogous statement holds for $Y_{t_1}$.
 Hence   each $W_i$ is mapped by the projections to  $Z_{u(t_1)}$, $Y_{t_1}$ to  (closed) subsets of codimensions at least $2$.  It follows that $g^{*}\colon H^2(Z_{u(t_1)})\to H^2(Y_{t_1})$ is equal to 
 $\psi_{t_1}$. Moreover $g$ is a regular (holomorphic) map because $Z_{u(t_1)}$ contains no curves. Since $g$ is an isomorphism and 
 $g^{*}=\psi_{t_1}$, we get that $t_1\in  T(\omega)_0$, contradiction. 
 
 (2): Assume that  $\lambda_{t_1}(\omega_{t_1})$ is a K\"ahler class of $Y_{t_1}$, in particular the latter is K\"ahler and hence hyperk\"ahler.
The proof of~\cite[Thm.~2.5]{huybrechts:kaehlercone} gives that  each $W_i$ is mapped by the two projections to proper closed subsets of 
 $Y_{t_1}$, $Z_{u(t_1)}$ of codimension at least $2$. It follows that $g^{*}\colon H^2(Z_{u(t_1)})\to H^2(Y_{t_1})$ is equal to 
 $\psi_{t_1}$. Since $\psi_{t_1}(\wh{\omega}_{u(t_1)})=c\cdot\lambda_{t_1}(\omega_{t_1})$ with $c>0$ we get that 
 $g^{*}(\wh{\omega}_{u(t_1)})$ (here $\wh{\omega}_{u(t_1)}$ is the twistor K\"ahler class of $Z_{u(t_1)}$) is a K\"ahler class on $Y_t$, and hence $g$ is an isomorphism by Prop.~2.1 loc.~cit. 
Thus $t_1\in T(\omega)_0$ because $g^{*}=\psi_{t_1}$, contradiction.
\end{proof}
\begin{crl}\label{crl:frontiera}
Keep Assumption~\ref{ass:abicidi}. Let $t\in T(\omega)$. If $H^{1,1}_{\ZZ}(X_t)=\{0\}$, or $\lambda_{t}(\omega_{t})\in\cK(Y_{t})$, 
then $t\in T(\omega)$, and in particular $Y_t$ is isomorphic to $Z_{u(t)}$.
\end{crl}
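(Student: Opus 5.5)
The statement to prove is Corollary~\ref{crl:frontiera}. Its conclusion should be read as $t\in T(\omega)_0$; the printed ``$t\in T(\omega)$'' is evidently a typo. The plan is a connectedness argument on the twistor conic, driven by Proposition~\ref{prp:frontiera}.

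Fix $t\in T(\omega)$ satisfying one of the two hypotheses, and suppose, to reach a contradiction, that $t\notin T(\omega)_0$. We know three things about $T(\omega)_0$:
\begin{itemize}
\item it contains $t_0$, so it is nonempty;
\item it is open, by openness of K\"ahlerianity and local Torelli, as remarked after its definition;
\item $T(\omega)\cong\PP^1$ is connected.
\end{itemize}
Since $T(\omega)_0$ is nonempty, open and a proper subset of the connected space $T(\omega)$, it is not closed. Hence $\partial T(\omega)_0$ is nonempty.

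This alone does not put $t$ on the boundary, so the natural first attempt is to localize. Take $T'\subset T(\omega)$ to be the set of points satisfying the same hypothesis as $t$.
\begin{itemize}
\item In the case $H^{1,1}_{\ZZ}(X_t)=\{0\}$: by Remark~\ref{rmk:tortogenerico}, points with $H^{1,1}_{\ZZ}\neq\{0\}$ form a countable set, at least when the family is generic. One would then connect $t_0$ to $t$ by a path in $T(\omega)\cong S^2$ that avoids that countable set, except possibly at $t_0$ itself. Let $t_1$ be the first point of the path not lying in $T(\omega)_0$. Then $t_1\in\partial T(\omega)_0$, and $H^{1,1}_{\ZZ}(X_{t_1})=\{0\}$, which contradicts Item~(1) of Proposition~\ref{prp:frontiera}.
\item In the case $\lambda_t(\omega_t)\in\cK(Y_t)$: the analogous argument needs a path along which the condition $\lambda_s(\omega_s)\in\cK(Y_s)$ holds. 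That condition is open in $s$, by openness of K\"ahler cones in families, but it is not clear that it holds along a whole path. So one should instead aim to apply Item~(2) of Proposition~\ref{prp:frontiera} directly at the point $t$.
\end{itemize}

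The cleanest formulation is to show directly that $t\in\ov{T(\omega)_0}$. Once that holds, $t\notin T(\omega)_0$ forces $t\in\partial T(\omega)_0$, and Proposition~\ref{prp:frontiera} gives both $H^{1,1}_{\ZZ}(X_t)\neq\{0\}$ and $\lambda_t(\omega_t)\notin\cK(Y_t)$. Each of these contradicts one of the hypotheses.

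Main obstacle. The genuine difficulty is proving $\ov{T(\omega)_0}=T(\omega)$ up to boundary points, that is, $T(\omega)_0$ is dense with boundary exactly $\partial T(\omega)_0$. Equivalently, one must rule out the possibility that $T(\omega)\setminus\ov{T(\omega)_0}$ is a nonempty open set. Here is the argument I would try. If that open set $V$ were nonempty, choose a component of $T(\omega)\setminus\ov{T(\omega)_0}$; its boundary lies in $\partial T(\omega)_0$. Every point $t_1\in\partial T(\omega)_0$ has $H^{1,1}_{\ZZ}(X_{t_1})\neq\{0\}$ by Item~(1) of Proposition~\ref{prp:frontiera}, so $\partial T(\omega)_0$ is contained in a countable set when the family is generic (Remark~\ref{rmk:tortogenerico}). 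But in $S^2$, a countable closed set cannot separate two nonempty open sets. This is the contradiction.

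In the non-generic case the points with $H^{1,1}_{\ZZ}\neq\{0\}$ still form a countable union of proper analytic subsets of the conic. Each such subset, being a proper analytic subset of a curve, is a finite set of points, so the same non-separation argument applies.

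Given density, the conclusion $t\in T(\omega)_0$ and the isomorphism $Y_t\cong Z_{u(t)}$ from the definition of $T(\omega)_0$ follow immediately.
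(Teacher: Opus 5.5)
Your density argument is correct for \emph{generic} twistor conics, and it is the step the paper leaves implicit. The paper writes no separate proof: it reads the corollary off Proposition~\ref{prp:frontiera}, which only applies once one knows that a point outside $T(\omega)_0$ lies in $\partial T(\omega)_0$. Your non-separation argument supplies exactly that, using Item~(1), Remark~\ref{rmk:tortogenerico}, and the fact that a countable set does not disconnect $T(\omega)\cong S^2$. You are also right that the conclusion should read $t\in T(\omega)_0$.

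The gap is your last paragraph, on non-generic conics. There the locus $\{s\in T(\omega)\mid H^{1,1}_{\ZZ}(X_s)\neq\{0\}\}$ is not a countable union of proper analytic subsets of the conic; it is the whole conic. Indeed, take $0\neq\xi\in F(\omega)_{\CC}^{\bot}\cap H^2(X;\ZZ)$. Every $\sigma_s$ lies in $F(\omega)_{\CC}$ (see~\eqref{parametroti}), so $q_X(\xi,\sigma_s)=0$ for every $s$, and hence $\xi\in H^{1,1}_{\ZZ}(X_s)$ for all $s$.

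This has three consequences:
\begin{enumerate}
\item Item~(1) of Proposition~\ref{prp:frontiera} puts no restriction on $\partial T(\omega)_0$, so you have not shown that $T(\omega)_0$ is dense.
\item Item~(2) alone only says that boundary points violate the second hypothesis. It does not put $t$ in $\ov{T(\omega)_0}$.
\item So under the hypothesis $\lambda_t(\omega_t)\in\cK(Y_t)$ on a non-generic conic, your proof does not go through.
\end{enumerate}
The same computation shows that the hypothesis $H^{1,1}_{\ZZ}(X_t)=\{0\}$ forces the conic to be generic, so that case is fully covered by your argument.

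To close the gap you have two options.
\begin{itemize}
\item Restrict the corollary to generic conics. This is all that Section~\ref{sec:bottofinale} uses, since the families there are generic by~\eqref{omegagenerico}, \eqref{moonlight} and the choice made via~\cite{huybrechts:bourbtorelli}.
\item Treat the second hypothesis without density, as follows.
\begin{itemize}
\item $Y_t$ is then K\"ahler, hence HK.
\item $\psi_t$ is an integral Hodge isometry by Claim~\ref{clm:stessiperiodi}.
\item $\psi_t$ is also a parallel transport operator: the maps $\lambda_s$ are induced by the global class $c_2(\gotg(\cQ))$, so $\lambda_t\circ\Pi_t\circ\lambda_{t_0}^{-1}$ is parallel transport in $\cY$.
\item $\psi_t(\wh{\omega}_{u(t)})$ is a positive multiple of $\lambda_t(\omega_t)$, as in the proof of Proposition~\ref{prp:frontiera}, hence K\"ahler.
\item The Hodge-theoretic global Torelli theorem (Verbitsky, Markman) then gives $f_t\colon Y_t\xrightarrow{\sim}Z_{u(t)}$ with $H^2(f_t)=\psi_t$, i.e.~$t\in T(\omega)_0$.
\end{itemize}
\end{itemize}
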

\begin{rmk}\label{rmk:esorciccio}
In the proof of Proposition~\ref{prp:frontiera} we have shown that,   for every $t\in T(\omega)$, 
$Y_t$ is  a holomorphic symplectic manifold bimeromorphic to $Z_{u(t)}$.
\end{rmk}
\subsection{The proof of Theorem~\ref{thm:isogmodvb}}
\setcounter{equation}{0}
 Let $\ov{\ww}_a$ be as in~\eqref{doppiavuahilb}. If $[\cF]\in M_{\ov{\ww}_a}^{\bu}$ then by~\eqref{atorto} and~\cite[Ex.~2.12]{og:highdim} we have
\begin{equation}
{\mathsf a}(\cF^{\vee}\otimes\cF)={\mathsf a}(a)\coloneq 2^{16}\cdot 5\cdot a^{18}.
\end{equation}
By hypothesis the K\"ahler class $\omega$ of $X$ belongs to an open ${\mathsf a}(a)$-chamber of $\cK(X)$. Replacing $\omega$ by a K\"ahler class belonging to the same open ${\mathsf a}(a)$-chamber (see Proposition~\ref{prp:campol}) we may assume  that 
\begin{equation}\label{omegagenerico}
\omega^{\bot}\cap H^{1,1}_{\ZZ}(X)=\{0\}. 
\end{equation}
Let $(S,h_S)$  be a polarized $K3$ surface as in Theorem~\ref{thm:dadueauno}, and let $\wt{\omega}\in\cK(S^{[2]})$ 
which belongs to an open ${\mathsf a}(a)$-chamber 
 containing $\bm\mu(h_S)$ in its closure, and   such that 
\begin{equation}\label{moonlight}
\wt{\omega}^{\bot}\cap H^{1,1}_{\ZZ}(S^{[2]})=\{0\}. 
\end{equation}
By Theorem~\ref{thm:dadueauno} (and Proposition~\ref{prp:campol})
$M_{\ov{\ww}_a}(S^{[2]},\wt{\omega})$ contains an irreducible component $M_{\ov{\ww}_a}(S^{[2]},\wt{\omega})^{\bu}$ whose normalization $\wt{M}_{\ov{\ww}_a}(S^{[2]},\wt{\omega})^{\bu}$ is isomorphic to the HK manifold $\cM_{v(a)}(S,h_S)$.

By~\cite[Props.~3.7, 5.4]{huybrechts:bourbtorelli} there exist a finite sequence of generic twistor families $\cX(\omega_i)\to T(\omega_i)$ for $i\in\{1,\ldots,n\}$, points $t_i\in T(\omega_i)$ for $i\in\{1,\ldots,n-1\}$ and $s_j\in T(\omega_j)$ $j\in\{2,\ldots,n\}$ such that the following hold.
\begin{enumerate}
\item[(A)]
$\cX(\omega_1)\to T(\omega_1)$ is the twistor family associated to $(S^{[2]},\wt{\omega})$,
\item[(B)]
$\cX(\omega_n)\to T(\omega_n)$ is the twistor family associated to $(X,\omega)$, 
\item[(C)]
For $i\in\{1,\ldots,n-1\}$ the twistor fibers $X(\omega_i)_{t_i}$ and $X(\omega_{i+1})_{s_{i+1}}$ are isomorphic, and they have trivial Picard group.
\end{enumerate}
 Parallel transport of $\ov{\ww}_a$ via the chain of twistor families above gives a mock Mukai vector $\ov{\ww}'_a$ for $X$. The first (and main) step is to prove that the statement of Theorem~\ref{thm:isogmodvb} holds for $M_{\ov{\ww}'_a}(X,\omega)$. 

Applying Theorem~\ref{thm:iperolom} to $M_{\ov{\ww}_a}(S^{[2]},\wt{\omega})^{\bu}$ and the first twistor family we get a holomorphic map  
\begin{equation}\label{chefatica}
\wt{M}_{\ov{\ww}_a}(\cX(\omega_1))^{\bu}\to T(\omega_1)
\end{equation}
 with fiber over $t$ isomorphic to the normalization
$\wt{M}_{\ov{\ww}_a(t)}(\cX(\omega_1)_t)^{\bu}$ of the  component of the moduli space $M_{\ov{\ww}_a(t)}(\cX(\omega_1)_t)$ 
corresponding to $M_{\ov{\ww}_a}(S^{[2]},\wt{\omega})^{\bu}$ via Theorem~\ref{thm:iperolom}. 
We wish to apply Corollary~\ref{crl:frontiera}  to   $\cY\to T(\omega_1)$ given by~\eqref{chefatica}, and $\cQ$ the universal bundle of projective spaces. We need to check that Assumption~\ref{ass:abicidi} holds. Item~(a) holds because 
$\wt{M}_{\ov{\ww}_a}(S^{[2]},\wt{\omega})^{\bu}$ is isomorphic to  $\cM_{v(a)}(S,h_S)$, which is a HK manifold of Type $K3^{[a^2+1]}$.
Item~(b) holds  with $\lambda_{t_0}=\wt{\lambda}_{\ov{\ww}_a}$ and $c=32^{-1}\cdot a^{-4}$ by Corollary~\ref{crl:isomraz}. We claim that for suitable choices of $\wt{\omega}$ also Item~(c) holds.
Let $L$ be an ample class on $S^{[2]}$  
which belongs to an open ${\mathsf a}(a)$-chamber 
 containing $\bm\mu(h_S)$ in its closure. Then $\lambda_{\ov{\ww}_a}(c_1(L))$ is ample by Proposition~\ref{prp:ampiolambda}.
  This is not enough because~\eqref{moonlight} cannot hold with 
$\wt{\omega}=c_1(L)$. However, since $\lambda_{\ov{\ww}_a}$ is a Hodge isometry $\lambda_{\ov{\ww}_a}(\wt{\omega})$ is a K\"ahler class for $\wt{\omega}$ close enough to $c_1(L)$. Thus we may choose $\wt{\omega}$ as above and such that 
$\lambda_{\ov{\ww}_a}(\wt{\omega})$ is a K\"ahler class. Then  Assumption~\ref{ass:abicidi} holds. By Item~(C) above it follows that 
$X(\wt{\omega})_{t_1}\cong X(\omega_{2})_{s_{2}}$ is a HK manifold of Type $K3^{[a^2+1]}$ with trivial Picard group. By Corollary~\ref{crl:frontiera} we may iterate the above argument up to the last twistor family $\cX(\omega_n)\to T(\omega_n)$,
 i.e.~$\cX(\omega)\to T(\omega)$. From this Item~(1) of Theorem~\ref{thm:isogmodvb} follows at once, see Remark~\ref{rmk:esorciccio}. Item~(2) if $H^{1,1}_{\ZZ}(X)=\{0\}$ follows from Corollary~\ref{crl:frontiera}. If $X$ is projective with ample line bundle $L$, arguing as above we get that if 
 $\omega$ is close enough to $c_1(L)$ then $\lambda_{\ov{\ww}'_a}(\omega)$ is a K\"ahler class. Thus Item~(2)  follows from Corollary~\ref{crl:frontiera} also in this case.
 
We have proved that  Items~(1) and~(2) of  Theorem~\ref{thm:isogmodvb} hold for a certain choice of 
mock Mukai vector $(8a^3,\ov{\gamma}^{*}_a,4a^6 c_2(X)/3)$.
 Recall that 
$\ov{\gamma}^{*}_a=[4a^2 \wh{\gamma}^{*}_a]$ where $\wh{\gamma}^{*}_a=[\gamma_a]\in H^2(X,\ZZ)/2a H^2(X;\ZZ)$ and~\eqref{divequad} holds.
By the divisibility condition in~\eqref{divequad} it follows that there exists $\gamma_a\in H^2(X,\ZZ)$ such that 
$\ov{\gamma}^{*}_a=[4a^2 \gamma_a]$ and
\begin{equation}\label{marioadorf}
\divisore (\gamma_a)=1,
\qquad
q_X(\gamma_a)= 2a^2-2. 
\end{equation}
The  monodromy group $\Mon^2(X)<O(H^2(X;\ZZ))$ is the index-$2$ subgroup of positive spinor norm orthogonal transfomations, and hence it acts transitively on the set of $\gamma_a\in H^2(X,\ZZ)$ such that~\eqref{marioadorf} holds. From this one gets the validity 
of Items~(1) and~(2) of  Theorem~\ref{thm:isogmodvb} for any $\ov{\gamma}_a$ by repeating the argument above with twistor families whose twistor conics (in the period space) connect two markings $\phi,\phi'$ of $H^2(X;\ZZ)$ such that $\phi(\gamma^{*}_a)=\phi'(\gamma_a)$.

 Item~(3) of  Theorem~\ref{thm:isogmodvb} follows from Item~(1) and Corollary~\ref{crl:isomraz}.

\qed
\subsection{The proof of Theorem~\ref{thm:ognik3a2+1}}
\setcounter{equation}{0}  
Let hypotheses and notation be as in Subsection~\ref{subsec:spalmo}. By Corollary~\ref{crl:kahlcomp} there exists a K\"ahler class 
$\omega\in\cK(S^{[2]})$   such that the following hold:
\begin{enumerate}
\item[(a)]
$\omega$ belongs to an open ${\mathsf a}(a)$-chamber which contains $\bm\mu(h_S)$ in its closure, 
\item[(b)]
$\omega^{\bot}\cap H^{1,1}_{\ZZ}(X)=\{0\}$, and
\item[(c)]
$\wt{\omega}\coloneq \wt{\varphi}_{\ov{\ww}_a}(\omega)\in\cK(\wt{M}_{\ov{\ww}_a}(S^{[2]},\wh{\omega})^{\bullet})$.
\end{enumerate}
By~\cite[Props.~3.7, 5.4]{huybrechts:bourbtorelli} there exist generic Twistor families $\cZ(\wt{\omega}(i))\to T(\wt{\omega}(i))$ of HK manifolds of Type $K3^{[a^2+1]}$, for $i\in\{0,\ldots,m\}$,  with the following properties:
\begin{enumerate}
\item
There exists $x_0\in T(\wt{\omega}(0))$  such that $Z(\wt{\omega}(0))_{x_0}\cong \wt{M}_{\ov{\ww}_a}(S^{[2]},\wh{\omega})^{\bullet}$,  
 $\wt{\omega}(0)=\wt{\omega}$.  
\item
There exists  $x_m\in T(\wt{\omega}(m))$ such that $Z(\wt{\omega}(m))_{x_m}\cong W$. 
\item
For all $i\in\{0,\ldots,m-1\}$ there exist $s_i\in T(\wt{\omega}(i))$,  $\ov{s}_{i+1}\in T(\wt{\omega}(i+1))$ with
\begin{equation}\label{irandisfatta}
Z(\wt{\omega}(i))_{s_i}\cong Z(\wt{\omega}(i+1))_{\ov{s}_{i+1}},\qquad H^{1,1}_{\ZZ}(Z(\wt{\omega}(i))_{s_i})=\{0\}.
\end{equation}
\end{enumerate}
Next we define twistor families of HK manifolds of Type $K3^{[2]}$ via $\wt{\varphi}_{\ov{\ww}_a}$ and the isometries obtained from $\wt{\varphi}_{\ov{\ww}_a}$ via parallel transport.
 Let $\cX(\omega)\to T(\omega)$ be the twistor family of HK manifolds of Type $K3^{[2]}$ associated to $\omega$. 
Then we have the isomorphism $u\colon T(\omega)\xrightarrow{\sim} T(\wt{\omega}(0))$ defined by $\wt{\varphi}_{\ov{\ww}_a}$: let
 $t_0\coloneq u^{-1}(s_0)$. 
By Corollary~\ref{crl:frontiera} (and the second equality in~\eqref{irandisfatta}) applied to the family $\cY\to T(\omega)$ with fiber the moduli space $\wt{M}(X(\omega)_t,\omega_t)^{\bu}$ over $t$ 
we have $t_0\in T(\omega)_0$, in particular there exists an isomorphism 
\begin{equation}
f_{t_0}\colon  \wt{M}_{\ov{\ww}_{a,t_0}}(X(\omega)_{t_0},\omega_{t_0})^{\bullet}  \xrightarrow{\sim} Z(\wt{\omega}(0))_{s_0}
\end{equation}
By~\eqref{irandisfatta} the K\"ahler cone of $Z(\wt{\omega}(0))_{s_0}$ (and of $\wt{M}_{\ov{\ww}_{a,t_0}}(X(\wt{\omega}(0))_{t_0},\wt{\omega}(0)_{t_0})^{\bullet}$) is one connected component of the cone of positive (BBF) square elements of $H^{1,1}_{\RR}$. 
Since $\wt{\varphi}_{\ov{\ww}_{a,t_0}}$ is an isometry over $\QQ$ it follows that also  the K\"ahler cone of $X(\omega)_{t_0}$ 
 is one connected component of the cone of positive (BBF) square elements of $H^{1,1}_{\RR}$ and also that
\begin{equation}\label{gelato}
\wt{\varphi}_{\ov{\ww}_{a,t_0}}(\cK(X(\omega)_{t_0}))=\cK(\wt{M}_{\ov{\ww}_a(t_0)}(X(\omega)_{t_0}).
\end{equation}
Now $Z(\wt{\omega}(1))_{u_1}\cong Z(\wt{\omega}(0))_{s_0}$, hence the twistor family $\cZ(\omega(1))$ is given by a K\"ahler class in 
$Z(\wt{\omega}(0))_{s_0}$, which corresponds to a K\"ahler class $\omega(1)$  on $X(\omega)_{t_0}$ by~\eqref{gelato}. 
Iterating   we get generic Twistor families 
$\cX(\omega(i))\to T(\omega(i))$ of HK manifolds of Type $K3^{[2]}$, for $i\in\{0,\ldots,m\}$, and isomorphisms $u(i)\colon T(\omega(i)) 
\xrightarrow{\sim} T(\wt{\omega}(i)) $ of conics with $\cX(\omega(0))\to T(\omega(0))$ equal to $\cX(\omega)\to T(\omega)$ and $u(0)=u$. 
Let $\ov{\ww}(i)_{t}$ be the mock Mukai vector for $X(\omega(i))_t$ obtained by parallel transport from $\ov{\ww}_a$ for  $i\in\{0,\ldots,m\}$, and
$t_i\coloneq u(i)^{-1}(s_i)$  for  $i\in\{0,\ldots,m-1\}$.
By Corollary~\ref{crl:frontiera}
 we have  $t_i\in T(\omega(i))_0$ for $i\in\{0,\ldots,m-1\}$, and hence 
\begin{equation}
\wt{M}_{\ov{\ww}(i)_{t_i}}(X(\omega(i))_{t_i},\omega(i)_{t_i})^{\bu}\cong Z(\omega(i))_{\ov{s}_i},\qquad i\in\{0,\ldots,m\}.
\end{equation}
Now apply  Corollary~\ref{crl:frontiera} and Remark~\ref{rmk:esorciccio} to the  twistor family  $\cX(\omega(m))\to T(\omega(m))$ 
and the corresponding family $\cY\to T(\omega(m))$ with fiber $\wt{M}_{\ov{\ww}(m)_{t}}(X(\omega(m))_t,\omega(m)_t)^{\bu}$ over $t$. 
Let $t_{*}\coloneq u(m)^{-1}(x_m)$ (recall that $Z(\wt{\omega}(m))_{x_m}\cong W$).
We get that $W$ is bimeromorphic to  
$\wt{M}_{\ov{\ww}(m)_{t_{*}}}(X(\omega(m))_{t_{*}},\omega(m)_{t_{*}})^{\bullet}$, and that they are isomorphic if 
$H^{1,1}_{\ZZ}(X(\omega(m))_{t_{*}})=\{0\}$, i.e.~if $H^{1,1}_{\ZZ}(W)=\{0\}$.  
 
 If $W$ is projective we  argue differently. Suppose first that $H^{1,1}_{\ZZ}(W)=\ZZ\alpha$ with $q_W(\alpha)>0$. We have proved that there exist a HK manifold $X$ of Type $K3^{[2]}$ and a K\"ahler class $\omega$  on $X$ for which $\wt{M}_{\ov{\ww}_a}(X,\omega)^{\bu}$ is holomorphic symplectic manifold and there is a bimeromorphic map
\begin{equation}\label{stremato}
g\colon W\dra \wt{M}_{\ov{\ww}_a}(X,\omega)^{\bu}.
\end{equation}
The composition of the rational isogeny $H^2(X)\xrightarrow{\sim} H^2(\wt{M}_{\ov{\ww}_a}(X,\omega)^{\bu})$ 
 in~\eqref{stessisuqu} and the  rational isogeny $H^2(\wt{M}_{\ov{\ww}_a}(X,\omega)^{\bu})\xrightarrow{\sim} H^2(W)$ is 
  rational isogeny $H^2(X)\xrightarrow{\sim} H^2(W)$. It follows that  $H^{1,1}_{\ZZ}(X)=\ZZ h$ where $q_X(h)>0$ and $h,\omega$ belong to the same connected component of the cone of classes in $H^{1,1}_{\RR}(X)$ of positive square. Thus $h$ is ample, and moreover there is a single 
open ${\mathsf a}(a)$-chamber in $\cK(X)$. Thus   $\wt{M}_{\ov{\ww}_a}(X,\omega)^{\bu}\cong \wt{M}(X,h)^{\bu}$, in particular it is projective (and hence hyperk\"ahler). Since it has Picard number $1$, the map $g$ in~\eqref{stremato} is an isomorphism. 
This proves the last statement in  Theorem~\ref{thm:ognik3a2+1} if $W$ is projective with cyclic Picard group.

Now we prove  the last statement in  Theorem~\ref{thm:ognik3a2+1} for an arbitrary  projective $W$. There exist a projective  family 
$f\colon \cW\to T$ over a smooth connected curve $T$ and $t_0,t_1\in T$  such that $W_{t_0}\coloneq f^{-1}(t_0)$ has cyclic Picard group, $W\cong W_{t_1}\coloneq f^{-1}(t_1)$. We have proved that  $W_{t_0}\cong \wt{M}_{\ov{\ww}_a}(X_0,h_0)^{\bu}$ for $(X_0,h_0)$ a suitable polarized HK manifold of Type $K3^{[2]}$.
Starting from the rational isogeny  
 in~\eqref{stessisuqu} we associate to $f\colon \cW\to T$ a  projective  family 
$g\colon \cX\to T$ of HK manifolds of Type $K3^{[2]}$, with relative polarization $\cH$, such that $ (X_{0},h_0)\cong (g^{-1}(t_0),H_{|g^{-1}(t_0)})$ and such that the following holds.  Let 
$p\colon \wt{M}_{\ov{\ww}_a}(\cX,\cH)^{\bu}\to T$ be the relative moduli space with fiber $\wt{M}_{\ov{\ww}_a}(X_t,h_t)^{\bu}$ over $t$ (to be precise we might need to first do a base change because a priori $\wt{M}_{\ov{\ww}_a}(X_t,h_t)^{\bu}$ is \lq\lq not unique\rq\rq). Then there exist an open subset $U\subset T$ containing $t_0$ and an isomorphism
\begin{equation}
\xymatrix{
f^{-1}(U) \ar[rr]^{\sim} \ar[dr]_{f_{|\ldots}} & & p^{-1}(U) \ar[dl]^{g_{|\ldots}} \\
& U & }
\end{equation}
Thus we have an effective limit cycle $\Gamma$ on $W_{t_1}\times \wt{M}_{\ov{\ww}_a}(X_{t_1},h_{t_1})^{\bu}$ of pure dimension $2a^2+2$. 
Write $\Gamma=\Gamma_0+\sum_i m_i Z_i$ where $\Gamma_0$ is the graph of a birational map $W_{t_1}\dra \wt{M}_{\ov{\ww}_a}(X_{t_1},h_{t_1})^{\bu}$ and each of the $Z_i$'s maps to a proper closed subset of the two factors $W_{t_1}\times \wt{M}_{\ov{\ww}_a}(X_{t_1},h_{t_1})^{\bu}$. We have that $\Gamma^{*}(\wt{\lambda}_{\ov{\ww}_a}(h_{t_1})$ is an ample class on $W_{t_1}$, an hence arguing as in the proof of Proposition~\ref{prp:frontiera} (i.e.~quoting the proof of~\cite[Thm.~2.5]{huybrechts:kaehlercone}) we get that 
$\Gamma_0$ is the graph of an isomorphism.
\qed

\subsection{The proof of Theorem~\ref{thm:isogenie}}
\setcounter{equation}{0}  
\begin{lmm}\label{lmm:ciclialg}
Let hypotheses be as in Theorem~\ref{thm:isogmodvb} with $X$  projective and $\omega=h$  an ample class, so that  $\wt{M}_a\coloneq \wt{M}_{\ov{\ww}_a}(X,h)^{\bu}$ is projective. Then the following hold:
\begin{enumerate}
\item
There exists $Z\in\CH^4_{\QQ}(X\times \wt{M}_a)$ such that $\cl(Z)\in H^6(X)\otimes H^2(\wt{M}_a)\cong H^2(X)^{\vee}\otimes H^2(M_a)$ equals the isogeny in~\eqref{stessisuqu}. 
\item
There exists $P\in\CH^{2a^2+2}_{\QQ}(\wt{M}_a\times X)$ such that $\cl(P)\in H^{4a^2+2}(\wt{M}_a)\otimes H^2(X)\cong H^{2}(\wt{M}_a)^{\vee}\otimes H^2(X)$ equals the inverse of the isogeny in~\eqref{stessisuqu}. 
\end{enumerate}
\end{lmm}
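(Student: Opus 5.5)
For Item~(1) I will take $Z$ to be a rational multiple of the K\"unneth-type algebraic class built from the universal projective bundle. Since $X$ and $\wt{M}_a$ are projective, the universal $\PP^{8a^3-1}$-bundle $\wt{\cP}$ on $X\times\wt{M}_a$ exists (after an étale or finite base change if needed). The bundle $\gotg(\wt{\cP})$ is then an honest algebraic vector bundle on $X\times\wt{M}_a$, and $c_2(\gotg(\wt{\cP}))\in\CH^2(X\times\wt{M}_a)$ is algebraic. If the universal family only exists after a generically finite base change $B\to\wt{M}_a$ of degree $d$, I push forward along the induced map and divide by $d$, so that one still gets a class in $\CH^2_\QQ$. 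By Item~(3) of Theorem~\ref{thm:isogmodvb}, the isogeny~\eqref{stessisuqu} is $32^{-1}a^{-4}\lambda$, where $\lambda(\alpha)=p_*(c_2(\gotg)\cdot p_X^*\bD(\alpha))$.

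The remaining point for Item~(1) is to realise $\bD$ algebraically. For this I use Remark~\ref{rmk:tantiacca}, or the formula $\bD(h)=h^{3}/(3c_X)$ together with the fact that $\bD$ is, up to scalar, cup product with the algebraic class $c_2(X)$ plus a multiple of a polarization term. Concretely, on a HK fourfold the class $\bD(\alpha)\in H^6(X)$ can be written as $\alpha\cdot\Xi$ for a fixed algebraic class $\Xi\in\CH^2_\QQ(X)$, namely a combination of $c_2(X)$ and $h^2$ which, via Fujiki relations, gives $\int\alpha\beta\Xi=q_X(\alpha,\beta)$. Then I set $Z'=c_2(\gotg)\cdot p_X^*\Xi\in\CH^4_\QQ$. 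Its correspondence action $\alpha\mapsto p_*(Z'\cdot p_X^*\alpha)$ equals $\lambda$, but its class has other Künneth components. I remove these using the algebraic Künneth projectors on $X$, which exist for HK fourfolds of $K3^{[2]}$ type (Chow–Künneth decomposition via the Fourier decomposition or via $S^{[2]}$ and deformation). Alternatively, correspondences only see the $(6,2)$-component when acting on $H^2$, so I compose with the projector $\pi^X_2$ acting on the $X$ side. Then $Z=32^{-1}a^{-4}\,Z'\circ\pi_2$ works.

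For Item~(2) the plan is to invert. The class $\cl(Z)$ defines an isomorphism $f\colon H^2(X)\to H^2(\wt{M}_a)$ that is an isometry up to scalar. Its inverse is the transpose twisted by the BBF forms: $f^{-1}=\bD_X^{-1}\circ f^t\circ\bD_{\wt{M}_a}$. The transpose is induced by ${}^tZ\in\CH^4_\QQ(\wt{M}_a\times X)$. The operator $\bD_{\wt{M}_a}\colon H^2\to H^{4a^2+2}$ is again algebraic, given by multiplication by a class $\Xi'\in\CH^{2a^2}_\QQ(\wt{M}_a)$ that is a polynomial in Chern classes and $\tilde h$, by the Fujiki relations for $K3^{[n]}$ type. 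Its inverse $H^{4a^2+2}\to H^2$ on the $X$ side involves only $X$, and is handled the same way, since multiplication by $\Xi$ maps $H^2(X)\to H^6(X)$ isomorphically with an algebraic inverse given by Lefschetz-type operators on the fourfold. Composing these correspondences gives $P$ with the right cohomology class. I again project to the required Künneth component using projectors on $\wt{M}_a$; here the Lefschetz standard conjecture holds for $K3^{[n]}$-type varieties (Charles–Markman), which gives algebraicity of the needed projectors and inverses.

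The main obstacle is the Künneth-component issue, i.e. producing algebraic projectors, or algebraic inverses of Lefschetz or $\bD$ operators, on $\wt{M}_a$. I will first try to invoke the standard conjectures known for $K3^{[n]}$-type (Charles–Markman), which give exactly these algebraic inverse operators.
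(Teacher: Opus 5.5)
Your argument is correct, but it uses a much heavier input than the paper does. You invoke the Charles--Markman theorem on both $X$ and $\wt M_a$: the standard conjectures for projective varieties of $K3^{[n]}$-type, which give algebraic K\"unneth projectors (Kleiman's $B\Rightarrow C$) and algebraic inverses of cup-product isomorphisms (via the Lefschetz inverse and Cayley--Hamilton). Since both varieties are projective of $K3^{[n]}$-type this is legitimate, and your identity $f^{-1}=\bD_X^{-1}\circ f^t\circ\bD_{\wt M_a}$ is correct for an isometry.

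The paper uses only the algebraicity of $c_2(\gotg)$ and the Fujiki relations. It kills the $(4,0)$ and $(0,4)$ K\"unneth components of the lift $Z_0$ by subtracting pull-backs of its restrictions to $X\times\{m_0\}$ and $\{x_0\}\times\wt M_a$; these components are decomposable because there is no odd cohomology. It then multiplies by $p_X^*h_X^2$. Finally it removes the rank-one error term coming from $\int_X abh_X^2=q_X(a,b)q_X(h_X)+2q_X(a,h_X)q_X(b,h_X)$ using the decomposable cycle $p_X^*h_X^3\cdot p_{\wt M_a}^*h_{\wt M_a}$. Item~(2) is the same manipulation on the $\wt M_a$ side.

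Your own set-up can be made just as elementary. Take $\Xi=c_2(X)/30$ exactly; an $h^2$ summand would add the non-$q_X$ term $2q_X(\alpha,h)q_X(\beta,h)$. Then the unwanted $(8,0)$ and $(4,4)$ components of $c_2(\gotg)\cdot p_X^*\Xi$ are $p_X^*\bigl(c_2(\gotg)_{|X\times\{m_0\}}\cdot\Xi\bigr)$ and $p_X^*\Xi\cdot p_{\wt M_a}^*\bigl(c_2(\gotg)_{|\{x_0\}\times\wt M_a}\bigr)$. These can simply be subtracted, so no projector is needed.

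For Item~(2) you do not need $\bD_X^{-1}$ at all. Let $\sum_k x_k\otimes m_k$ be the $(2,2)$ component. Its transpose, multiplied by $p_{\wt M_a}^*$ of a cycle realising $\bD_{\wt M_a}$, acts by $\gamma\mapsto\sum_k q_{\wt M_a}(\gamma,m_k)\,x_k$. This is the BBF-adjoint, which for a rational isometry is the inverse.

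Two small cautions. First, classes built from powers of $\tilde h$ do not realise $\bD_{\wt M_a}$ by plain multiplication; they need the same decomposable rank-one correction. Second, Shen--Vial style Fourier/Chow--K\"unneth decompositions are not available for an arbitrary $K3^{[2]}$-type $X$, so your projectors have to come from Charles--Markman.

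In short, your route gains uniformity by reducing everything to the standard conjectures. The paper's route is independent of them, at the price of explicit Fujiki-type bookkeeping.
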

\begin{proof}
Since $X$ and $\wt{M}_a$ are projective the rational class $32^{-1}\cdot a^{-4}\cdot c_2(\gotg(\wt{\cP}^{\bullet}_{\ov{\ww}_a}))$ lifts to a class 
$Z_0\in\CH^2_{\QQ}(X\times \wt{M}_a)$ whose cohomology class has K\"unneth components $(4,0)$, $(2,2)$ and $(0,4)$ ( $X$ and $ \wt{M}_a$ are simply connected). Choose $(x_0,m_0)\in X\times \wt{M}_a$, and let  $i_{m_0}\colon X\hra X\times \wt{M}_a$,  $i_{x_0}\colon \wt{M}_a\hra X\times \wt{M}_a$ be given by $i_{m_0}(x)\coloneq (x,m_0)$,  $i_{x_0}(m)\coloneq (x_0,m)$. 
The algebraic cycle  $\ov{Z}_0\coloneq Z_0-i_{m_0,*}(i_{m_0}^{*}Z_0)-i_{x_0,*}(i_{x_0}^{*}Z_0)$ has 
K\"unneth $(2,2)$ component equal to that of $Z_0$ and no other K\"unneth $(2,2)$ component.  Let 
$\wt{Z}_0\coloneq\ov{Z}_0\cdot p_X^{*}(h_X^2)\in\CH^4_{\QQ}(X\times \wt{M}_a)$ where 
$p_X\colon X\times \wt{M}_a\to \wt{M}_a$ is the projection and
$h_X\in\CH^1(X)$ is an ample divisor class. The  identity $\int_X a\cdot b\cdot \cl(h_X)^2=(a,b)_X\cdot q_X(h_X)+2(a,h_X)_X\cdot(b,h_X)_X$, valid for any $a,b\in H^2(X)$, gives that 
the action of $\cl(\wt{Z}_0)$ on 
$c\in H^2(X)$ is given by
\begin{equation}
\cl(\wt{Z}_0)(c)=q_X(h_X)\cdot\cl(Z_0)(c)+2(c,h_X)_X\cdot \cl(Z_0)(h_X).
\end{equation}
%
Let $h_{\wt{M}_a}\in\CH^1(\wt{M}_a)$ be such that  
$\cl(h_{\wt{M}_a})=\cl(Z_0)(\cl(h_X))\in H^{1,1}_{\QQ}(\wt{M}_a)$. Let
\begin{equation}
Z\coloneq q_X(h_X)^{-1}\cdot\left(\wt{Z}_0-\frac{2}{3}p_X^{*}(h_X^3)\cdot p_{\wt{M}_a}^{*}(h_{\wt{M}_a})\right),
\end{equation}
where $p_{\wt{M}_a}\colon X\times \wt{M}_a\to \wt{M}_a$ is the projection. Then Item~(1) holds. The proof of Item~(2) is analogous.

\end{proof}
We are ready to prove Theorem~\ref{thm:isogenie}. By Theorem~\ref{thm:ognik3a2+1} for $i\in\{1,2\}$ there exists a polarized HK manifold 
$(X_i,h_i)$ such that $W_i\cong \wt{M}_{\ov{\ww}_a}(X_i,h_i)^{\bullet}$. By Lemma~\ref{lmm:ciclialg} there exist $Z_2\in\CH^4_{\QQ}(X_2\times W_2)$ such that $\cl(Z_2)\in  H^6(X_2)\otimes H^2(W_2)$ equals the isogeny $\phi_2\colon H^2(X_2)\to H^2(W_2)$ in~\eqref{stessisuqu}, and  
$P_1\in\CH^{2a_1^2+2}_{\QQ}(W_1\times X_1)$ such that $\cl(P_1)\in H^{4a_1^2+2}(W_1)\otimes H^2(X_1)$ equals the inverse $\phi_1^{-1}$ of the isogeny 
$\phi_1\colon H^2(X_1)\to H^2(W_1)$ in~\eqref{stessisuqu}. By~\cite[Thm.~1.1]{markman:isogenies} there exists an algebraic cycle 
$V\in \CH^4_{\QQ}(X_1\times X_2)$ such that the K\"unneth component $\cl(V)_{6,2}\in H^6(X_1)\otimes H^2(X_2)$ equals the Hodge isometry 
$\phi_2\circ f\circ \phi_1^{-1}$. 
The cycle class $Z\coloneq Z_2\circ V\circ P_1$ (composition of correspondences) satisfies the thesis of   Theorem~\ref{thm:isogenie}.
\qed


 \end{document}